\numberwithin{equation}{section}
\newtheorem{theorem}{Theorem}[section]
\newtheorem{lemma}[theorem]{Lemma}
\newtheorem{corollary}[theorem]{Corollary}
\newtheorem{proposition}[theorem]{Proposition}
\theoremstyle{definition}
\newtheorem{remark}[theorem]{Remark}
\newtheorem{definition}[theorem]{Definition}
\newtheorem{example}[theorem]{Example}
\theoremstyle{remark}
\begin{document}

\title{Weighted local Hardy spaces with variable exponents}

\author{Mitsuo Izuki, Toru Nogayama, Takahiro Noi and Yoshihiro Sawano}

\maketitle

\begin{abstract}
This paper defines local weighted Hardy spaces with variable exponent.
Local Hardy spaces permit
atomic decomposition,
which is one of the main themes in this paper.
A consequence
is that
the atomic decomposition
is obtained
for the functions in the Lebesgue spaces
with exponentially decaying exponent.
As an application, 
we obtain the boundedness of singular integral operators, 
the Littlewood--Paley characterization and wavelet decomposition.
\end{abstract}

{\bf Key words:} variable exponent, Hardy space, 
local Muckenhoupt weight, atomic decomposition, wavelet, modular inequality 
\\
{\bf AMS Subject Classification:} 42B35, 42C40.

\section{Introduction}
Motivated by Bui \cite{Bui82} and Tang \cite{Tang12},
we define weighted Hardy spaces with variable exponents
and obtain some decomposition results
for functions in $L^{p(\cdot)}(w)$
 as an application.
A variable exponent
means a positive measurable function
$p(\cdot):{\mathbb R}^n \to (0,\infty)$.
Here and below
the space
$L^0({\mathbb R}^n)$
denotes
the linear space of all Lebesgue measurable functions
in ${\mathbb R}^n$
and
${\mathbb N}_0\equiv \{0,1,\ldots\}$.

We
begin with the definition of weighted Lebesgue spaces with variable exponents.
Let $w:{\mathbb R}^n \to [0,\infty)$ be a weight.
That is,
$w$ is
a locally integrable function that satisfies
$0<w(x)<\infty$ for almost all $x \in {\mathbb R}^n$.
As in
\cite{CF-book,CMP-book,INS2014,KR},
for a variable exponent 
$p(\cdot) : {\mathbb R}^n \to (0,\infty)$, 
the weighted Lebesgue space
$L^{p(\cdot)}(w)$ 
with a variable exponent 
is defined by
\[
L^{p(\cdot)}(w) \equiv 
\bigcup_{\lambda>0}
\{ f\in L^0({\mathbb R}^n)\, : \, 
 \rho^w_{p(\cdot)}(\lambda^{-1}f) < \infty \} ,
\]
where 
\[
\rho^w_{p(\cdot)}(f) \equiv \|\,|f|^{p(\cdot)}w\,\|_{L^1}.
\]
Moreover,
for $f \in L^{p(\cdot)}(w)$ the 
variable Lebesgue quasi-norm
$\| \cdot \|_{L^{p(\cdot)}(w)}$
is defined
by
\[ \| f \|_{L^{p(\cdot)}(w)}\equiv 
\inf\left( \left\{ \lambda>0 \,:\, \rho^w_{p(\cdot)}(\lambda^{-1}f) \le 1 \right\} \cup \{\infty\}\right).
\]
If $w \equiv 1$, we write $L^{p(\cdot)}(1)=L^{p(\cdot)}({\mathbb R}^n)$ and 
$\| \cdot \|_{L^{p(\cdot)}(1)}=\| \cdot \|_{L^{p(\cdot)}}$.

We write
$\displaystyle w(E)\equiv\int_E w(x) {\rm d}x$ 
and 
$\displaystyle m_E(w)=\dfrac{w(E)}{|E|}$
for a weight $w$ and a measurable set $E$.
We postulate on $w$ and $p(\cdot)$ the following conditions:
As for the weight $w$,
we assume that $w \in A^{\rm loc}_{\infty}$.
The weight $w$ is an $A^{\rm loc}_\infty${\it-weight},
if $0<w<\infty$ almost everywhere,
and
$\displaystyle
[w]_{A^{\rm loc}_\infty}
\equiv 
\sup_{Q \in {\mathcal Q}, |Q| \le 1}
m_Q(w)\exp(-m_Q(\log w))<\infty,
$
where in the sequel
${\mathcal Q}$ stands for the set of all compact cubes
whose edges are parallel to the coordinate axes.
The quantity
$[w]_{A^{\rm loc}_\infty}$ is referred to as
the $A^{\rm loc}_\infty${\it-constant}.
For the variable exponent $p(\cdot)$,
consider two classes:
The class $\mathcal{P}_0$ consists of all
$p(\cdot):\mathbb{R}^n \to (0,\infty)$
such that 
\begin{equation}\label{eq:210712-1}
0<p_-\equiv {\rm essinf}_{x \in {\mathbb R}^n}p(x) \le
p_+\equiv {\rm esssup}_{x \in {\mathbb R}^n}p(x)<\infty,
\end{equation}
while the subclass $\mathcal{P}$ of $\mathcal{P}_0$
collects all exponents $p(\cdot)$ satisfying
$p_->1$.
We consider the local $\log$-H\"{o}lder continuity condition
and 
the $\log$-H\"{o}lder-type decay condition at infinity.
Recall that
$p(\cdot)$ satisfies the local $\log$-H\"{o}lder continuity condition
(denoted by $p(\cdot) \in {\rm L H}_0$)
if
\begin{equation}\label{p-logHolder}
 |p(x)-p(y)|\le\frac{c_*}{\log(|x-y|^{-1})}
 \quad\text{for}\quad
 |x-y|\le\frac12,\ x,y\in{\mathbb R}^n,
\end{equation}
while
the exponent $p(\cdot)$ satisfies 
the $\log$-H\"{o}lder-type decay condition at infinity
(denoted by $p(\cdot) \in {\rm L H}_\infty$)
if
\begin{equation}\label{p-decay}
 |p(x)-p_\infty|\le\frac{c^*}{\log(e+|x|)}
 \quad\text{for}\quad
 x\in{\mathbb R}^n.
\end{equation}
Here $c_*$, $c^*$ and $p_\infty$ are positive constants independent of $x$ and $y$.

Based on the paper by Tang \cite{Tang12}, who followed the idea of Feffereman and Stein
\cite{FeSt72},
we define local weighted Hardy spaces with variable exponents
using grand maximal functions.
To this end,
we recall 
the definition of grand maximal functions by Tang.
Let $L \in {\mathbb N}_0$.
The set ${\mathcal P}_L({\mathbb R}^n)^\perp$
denotes the set of all $f \in L^0({\mathbb R}^n)$
for which
$\langle \cdot \rangle^L f \in L^1({\mathbb R}^n)$
and
$
\int\limits_{{\mathbb R}^n}x^\alpha f(x){\rm d}x=0
$
for all $\alpha \in {\mathbb N}_0^n$ with $|\alpha| \le L$.
By convention, we define
${\mathcal P}_{-1}({\mathbb R}^n)^\perp=L^1({\mathbb R}^n)$.
Such a function $f$
satisfies the 
{\it moment condition of order $L$}.
In this case,
we write $f \perp {\mathcal P}_L({\mathbb R}^n)$.
If $f \perp {\mathcal P}_L({\mathbb R}^n)$
for all $L \in {\mathbb N}_0$,
we write
$f \perp {\mathcal P}({\mathbb R}^n)$.

Let $N \in {\mathbb N}_0$,
which will be specified shortly.
Denote by $B(r)$
the open ball centered at the origin of radius $r>0$.
The set ${\mathcal D}({\mathbb R}^n)$ consists of all infinitely differentiable functions defined on ${\mathbb R}^n$ whose support is compact.
Following \cite[p.457]{Tang12},
we write
\begin{align*}
{\mathcal D}^0_N({\mathbb R}^n)
&\equiv 
\{\varphi \in {\mathcal D}({\mathbb R}^n)
\setminus {\mathcal P}_0({\mathbb R}^n)^\perp\,:\,
|\partial^\alpha\varphi| \le \chi_{B(1)},
|\alpha| \le N+1\},\\
{\mathcal D}_N({\mathbb R}^n)
&\equiv 
\{\varphi \in {\mathcal D}({\mathbb R}^n)
\setminus {\mathcal P}_0({\mathbb R}^n)^\perp\,:\,
|\partial^\alpha\varphi| \le \chi_{B(2^{3n+30})},
|\alpha| \le N+1\}.
\end{align*}
Let $f \in {\mathcal D}'({\mathbb R}^n)$
and $N$ be large enough.
For $\varphi \in {\mathcal D}({\mathbb R}^n)$ and $t>0$,
write $\varphi_t\equiv t^{-n}\varphi(t^{-1}\cdot)$.
Define
three local grand maximal operators by
\begin{align*}
{\mathcal M}^0_N f(x)
&\equiv 
\sup\{|\varphi_t*f(x)|\,:\,0<t<1, \varphi \in {\mathcal D}^0_N({\mathbb R}^n)\},\\
\overline{\mathcal M}^0_N f(x)
&\equiv 
\sup\{|\varphi_t*f(x)|\,:\,0<t<1, \varphi \in {\mathcal D}_N({\mathbb R}^n)\},\\
{\mathcal M}_N f(x)
&\equiv 
\sup\{|\varphi_t*f(z)|\,:\,|z-x|<t<1, \varphi \in {\mathcal D}_N({\mathbb R}^n)\}.
\end{align*}
It is obvious that
$
{\mathcal M}^0_N f
\le
\overline{\mathcal M}^0_N f
\le
{\mathcal M}_N f
$
for any $N \in {\mathbb N}_0$.

We recall 
the notion of the class $A^{\rm loc}_p$ of weights.
Let $1<p<\infty$.
A weight $w$ belongs to $A^{\rm loc}_p$ if 
$$[w]_{A^{\rm loc}_p} \equiv \sup\limits_{Q \in {\mathcal Q}, |Q| \le 1}m_Q(w)m_Q(w^{-\frac{1}{p-1}})^{p-1}<\infty.$$
The quantity
$[w]_{A^{\rm loc}_p}$ is referred to as
the $A^{\rm loc}_p${\it-constant}.
Using the technique employed by Rychkov
\cite{Rychkov01},
we obtain
$A^{\rm loc}_\infty=\bigcup\limits_{q > 1}A^{\rm loc}_{q}$.
We set
\[
q_w\equiv \inf\{p \in [1,\infty)\,:\,w \in A^{\rm loc}_p\}
\]
for $w \in A^{\rm loc}_\infty$.
Similar to Tang \cite[p. 458]{Tang12}, we set
\[
N_{p(\cdot),w}
\equiv 
2+\left[n\left(\frac{q_w}{\min(1,p_-)}-1\right)\right].
\]
Herein we assume
\begin{equation}\label{eq:210712-2}
N \ge N_{p(\cdot),w}.
\end{equation}
Here and below we use the following convention
on the notation $\lesssim$ and $\gtrsim$.
Let $A,B \ge 0$.
Then $A \lesssim B$ and $B \gtrsim A$ mean
that there exists a constant $C>0$
such that $A \le C B$,
where $C$ depends only on the parameters
of importance.
The symbol $A \sim B$ means
that $A \lesssim B$ and $B \lesssim A$
happen simultaneously,
while $A \simeq B$ means that there exists
a constant $C>0$ such that $A=C B$.

The following theorem is the starting point
of this paper.
\begin{theorem}\label{thm:210712-1}
Let
$w \in A^{\rm loc}_{\infty}$.
Also let
$p(\cdot) \in {\mathcal P}_0 \cap {\rm L H}_0 \cap {\rm L H}_\infty$ and $N \in {\mathbb N}$ satisfy
{\rm \eqref{eq:210712-2}}.
Then
\[
\|{\mathcal M}^0_N f\|_{L^{p(\cdot)}(w)}
\le
\|\overline{\mathcal M}^0_N f\|_{L^{p(\cdot)}(w)}
\le
\|{\mathcal M}_N f\|_{L^{p(\cdot)}(w)}
\lesssim 
\|{\mathcal M}^0_N f\|_{L^{p(\cdot)}(w)}
\]
for all $f \in {\mathcal D}'({\mathbb R}^n)$.
\end{theorem}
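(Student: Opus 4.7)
The first two inequalities are immediate from the pointwise chain ${\mathcal M}^0_N f\le \overline{\mathcal M}^0_N f\le {\mathcal M}_N f$ already noted in the text, so the entire content of the theorem is the reverse estimate $\|{\mathcal M}_N f\|_{L^{p(\cdot)}(w)}\lesssim \|{\mathcal M}^0_N f\|_{L^{p(\cdot)}(w)}$. The plan is to reduce this to a boundedness property of the local Hardy--Littlewood maximal operator $M^{\rm loc}$ (the one taking supremum only over cubes of volume at most $1$) on $L^{p(\cdot)/r}(w)$ for a suitably chosen exponent $r$ slightly bigger than $q_w/\min(1,p_-)$. Because $w\in A^{\rm loc}_\infty=\bigcup_{q>1}A^{\rm loc}_q$ and the left-open inequality $r>q_w/\min(1,p_-)$ is strict, I can fix $r>0$ with $r<\min(1,p_-)/q_w$ such that $w\in A^{\rm loc}_{\min(1,p_-)/r}$, and then the variable exponent $p(\cdot)/r$ belongs to $\mathcal P\cap {\rm LH}_0\cap {\rm LH}_\infty$.

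The main step is a pointwise inequality of Tang/Rychkov type, namely
\[
{\mathcal M}_N f(x)\lesssim \bigl(M^{\rm loc}(({\mathcal M}^0_N f)^r)(x)\bigr)^{1/r}\qquad (x\in\mathbb R^n).
\]
To prove it, I would fix $\varphi\in{\mathcal D}_N(\mathbb R^n)$ and $0<t<1$ and analyze $\varphi_t*f(z)$ for $|z-x|<t$. Following \cite{Rychkov01,Tang12}, one applies a local Calder\'on-type reproducing identity: there exist $\psi,\eta\in{\mathcal D}(\mathbb R^n)$ supported in the unit ball with $\psi\in {\mathcal D}^0_{N'}(\mathbb R^n)$ (up to normalization) such that
\[
\varphi_t*f(z)=\sum_{k=0}^{\infty}(\varphi_t*\eta_{2^{-k}t})*(\psi_{2^{-k}t}*f)(z)
\]
(the sum being finite at scales below $1$ and handled by the local machinery for scales near $1$). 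One then estimates the kernel $\varphi_t*\eta_{2^{-k}t}$ by a sum of dilates of $\chi_{B(1)}$, uses the moment conditions available because $\varphi_t*\eta_{2^{-k}t}\in{\mathcal D}^0_{N}(\mathbb R^n)$ after rescaling, and bounds each term by $M^{\rm loc}(|\psi_{2^{-k}t}*f|^r)^{1/r}(x)$. Summing in $k$ converges thanks to the choice $N\ge N_{p(\cdot),w}$, which guarantees enough decay in scale.

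Once the pointwise bound is in hand, the proof concludes by taking $L^{p(\cdot)}(w)$-quasi-norms of both sides and writing $\|({\mathcal M}_N f)^r\|_{L^{p(\cdot)/r}(w)}^{1/r}$ on the left. The weighted local maximal operator $M^{\rm loc}$ is bounded on $L^{p(\cdot)/r}(w)$ by the variable-exponent extension of Rychkov's theorem (available from the paper's hypotheses and established in the cited literature \cite{CF-book,CMP-book,INS2014}, once the log-H\"older conditions and $w\in A^{\rm loc}_{(p(\cdot)/r)_-}$ are verified). This yields the desired estimate. The principal obstacle is the first paragraph's choice of $r$: one must coordinate (i) the geometry of $A^{\rm loc}_\infty$ so that $w\in A^{\rm loc}_{\min(1,p_-)/r}$, (ii) the variable-exponent condition $(p(\cdot)/r)_->1$, and (iii) the moment order $N\ge N_{p(\cdot),w}$ so that the series in $k$ above converges. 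All three are made simultaneously feasible precisely by the definition of $N_{p(\cdot),w}$, which is why the theorem is tight in that parameter.
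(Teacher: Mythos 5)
Your reduction of the first two inequalities to the pointwise chain is correct, and your overall strategy (Calder\'on-type reproducing identity, Peetre-type self-improving estimate, take a small power $r$ and apply a weighted maximal inequality on $L^{p(\cdot)/r}(w)$) is the same as the paper's. However, there is a concrete gap in the key pointwise bound.

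You claim that the reproducing-identity argument yields
\[
{\mathcal M}_N f(x)\lesssim \bigl(M^{\rm loc}\bigl[({\mathcal M}^0_N f)^r\bigr](x)\bigr)^{1/r},
\]
but the Rychkov-type iteration (Lemma \ref{lem:211029-111}) does not produce an estimate controlled by $M^{\rm loc}$ alone. After iterating the local reproducing identity $\phi*\psi+\sum_{j\ge 1}\phi^*_{2^{-j}}*\psi^*_{2^{-j}}=\delta$ the resulting Peetre-type estimate involves integrals of the form
\[
\int_{\mathbb R^n}\frac{|\phi^*_{2^{-k}t}*f(x-y)|^r}{(1+2^j|y|)^{Ar}e^{|y|Br}}\,{\rm d}y,
\]
which are genuinely over all of $\mathbb R^n$; the compact supports of the individual kernels do not localize the iterated estimate, because arbitrarily long ``chains'' of convolutions can shift the evaluation point arbitrarily far, with only exponentially small coefficients. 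This exponential tail is exactly what the weight $e^{-|y|Br}$ in $m_{j,Ar,Br}$ records, and it is not dominated by the local maximal operator $M^{\rm loc}$ (which sees only scales $\le 1$). The paper handles it by splitting $\int_{\mathbb R^n}\frac{|g(x-y)|}{m_{j,A,B}(y)}\,{\rm d}y\lesssim M^{\rm loc}g(x)+K_B g(x)$ and invoking the boundedness of the exponential convolution operator $K_B$ on $L^{p(\cdot)}(w)$ (Lemma \ref{lem:210923-1}). The correct pointwise output is therefore
\[
{\mathcal M}_N f(x)\lesssim \bigl((M^{\rm loc}\circ K_{Br})\bigl[({\mathcal M}^0_N f)^r\bigr](x)\bigr)^{1/r},
\]
and the conclusion then uses \emph{both} Lemma \ref{lem:190409-100} and Lemma \ref{lem:210923-1}. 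Your proposal is entirely silent about the exponential tails and about $K_B$, and since the boundedness of $K_B$ on $L^{p(\cdot)}(w)$ for $w\in A^{\rm loc}_{p(\cdot)}$ is a separate nontrivial ingredient (requiring $B$ large compared with $\log D$), this is a real gap and not a mere omission of detail. Two smaller inaccuracies: the maximal-operator input you want is Lemma \ref{lem:190409-100} from \cite{NS-local}, not the global references \cite{CF-book,CMP-book,INS2014}; and the summability in $k$ is governed by the moment order $L$ of $\phi^*,\psi^*$ (and the parameter $A$), not directly by $N$ — the hypothesis $N\ge N_{p(\cdot),w}$ only guarantees that a suitable $L$, $r$, $A$, $B$ can be chosen simultaneously.
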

Note that
Rychkov \cite{Rychkov01} proved Theorem \ref{thm:210712-1}
 when $p(\cdot)$ is a constant exponent.
Based on Theorem \ref{thm:210712-1},
we define weighted Hardy spaces with variable exponents.
\begin{definition}
Let
$w \in A^{\rm loc}_{\infty}$.
Also let
$p(\cdot) \in {\mathcal P}_0 \cap {\rm L H}_0 \cap {\rm L H}_\infty$ and $N \in {\mathbb N}$ satisfy
{\rm \eqref{eq:210712-2}}.
Then
the weighted local Hardy spaces $h^{p(\cdot)}(w)=h^{p(\cdot),N}(w)$
with variable exponents
is the set of all 
$f \in {\mathcal D}'({\mathbb R}^n)$
for which the quasi-norm
\[
\|f\|_{h^{p(\cdot)}(w)}\equiv 
\|{\mathcal M}_N f\|_{L^{p(\cdot)}(w)}
\]
is finite.
\end{definition}

Note that the norm $\|\cdot\|_{h^{p(\cdot)}}$ is independent of $N$
in the sense that different choices of $N$ satisfying
(\ref{eq:210712-2}) yield equivalent norms.

The main purpose of this note is to investigate
equivalent norms of $h^{p(\cdot)}(w)$.
Among others, we are interested in
the characterization by means of atoms and their related norms.

\begin{definition}
Let
$w \in A^{\rm loc}_{\infty}$,
$q\in(0,\infty]$ and $L \in {\mathbb N}_0 \cup \{-1\}$.
Also let
$p(\cdot) \in {\mathcal P}_0 \cap {\rm L H}_0 \cap {\rm L H}_\infty$ and $N \in {\mathbb N}$ satisfy
{\rm \eqref{eq:210712-2}}.
\begin{enumerate}
\item
If
$q>\max(p_+,q_w)$ and 
$L \ge 
\left[n\left(\frac{q_w}{\min(1,p_-)}-1\right)\right]$,
then
a triplet
$(p(\cdot),q,L)$ is called admissible.
\item
Let $Q$ be a cube with $|Q|<1$.
A function $a \in L^q(w)$ is 
a $(p(\cdot),q,L)_w$-atom supported on $Q$
if
$a \in {\mathcal P}_L({\mathbb R}^n)^\perp$,
$a$ is supported on 
$Q$
and satisfies
$\|a\|_{L^q(w)} \le w(Q)^{\frac1q}$.
\item
Let $Q$ be a cube with $|Q|=1$.
A function $a \in L^q(w)$ is 
a $(p(\cdot),q,L)_w$-atom supported on $Q$
if
$a$ is supported on 
$Q$
and satisfies
$\|a\|_{L^q(w)} \le w(Q)^{\frac1q}$.
\item
Assume $w({\mathbb R}^n)<\infty$,
or equivalently, $1 \in L^1(w)$.
Then we say that
a function $a$ is a single $(p(\cdot),q)_w$-atom
if
$\|a\|_{L^q(w)} \le w({\mathbb R}^n)^{\frac1q}$. 
\end{enumerate}
If $w=1$, then subscript $w$ is omitted in
these notions.
\end{definition}
For example,
$\chi_Q$ is a $(p(\cdot),\infty,-1)_w$-atom
for any cube $Q$.

Unlike \cite{Tang12},
we assume that the volume of the cubes for $(p(\cdot),q,L)_w$-atoms 
is less than or equal to $1$.

We remark that a single $(p(\cdot),q)_w$-atom does not have to belong to ${\mathcal P}_L({\mathbb R}^n)^\perp$.
For example, $1$ is a single $(p(\cdot),q)_w$-atom.
\begin{definition}
Let
$w \in A^{\rm loc}_{\infty}$,
$p(\cdot) \in {\mathcal P}_0 \cap {\rm L H}_0 \cap {\rm L H}_\infty$,
$v \in (0,p_-) \cap [0,1]$
and
$q \in (0,\infty]$.
Let
$L \in {\mathbb Z}$
satisfy
$L \ge \left[n\left(\frac{q_w}{\min(1,p_-)}-1\right)\right]$.
Then the weighted atomic local Hardy space
$h^{p(\cdot),q,L;v}(w)$ is defined as the set of all
$f \in {\mathcal D}'({\mathbb R}^n)$
satisfying that
$f=\sum\limits_{j=0}^{\infty} \lambda_j a_j$,
where
$a_0$ is a single $(p(\cdot),q)_w$-atom,
each $a_j, j \in {\mathbb N}$ is 
a $(p(\cdot),q,L)_w$-atom supported on a cube $Q_j$,
$\{\lambda_j\}_{j=1}^{\infty} \subset {\mathbb C}$
and
\[
{\mathcal A}_{p(\cdot),w,v}(\{\lambda_j\}_{j=1}^{\infty};\{Q_j\}_{j=1}^{\infty})
\equiv 
\left\|\left(
\sum_{j=1}^{\infty} |\lambda_j|^{v}\chi_{Q_j}
\right)^{\frac{1}{v}}\right\|_{L^{p(\cdot)}(w)}
<\infty.
\]
The norm
$\|f\|_{h^{p(\cdot),q,L;v}(w)}$ is defined as
the infimum of
$
|\lambda_0|+
{\mathcal A}_{p(\cdot),w,v}(\{\lambda_j\}_{j=1}^{\infty};\{Q_j\}_{j=1}^{\infty})$
over all expressions of $f$
above.
\end{definition}

We will show that
$h^{p(\cdot)}(w)$ can be characterized
via atoms
by proving that,
with an equivalece of norms, the weighted atomic local Hardy space
$h^{p(\cdot),q,L;v}(w)$ coincides with 
$h^{p(\cdot)}(w)$
as long as $q>\max(q_w,p_+)$,
$v\in(0,p_-) \cap[0,1]$ and 
$L \ge \left[n\left(\frac{q_w}{v}-1\right)\right]$.
\begin{theorem}\label{thm:190406-2}
Let
$p(\cdot) \in {\mathcal P}_0 \cap {\rm L H}_0 \cap {\rm L H}_\infty$ 
and $N \in {\mathbb N}$ satisfy
{\rm \eqref{eq:210712-2}}.
Let $v\in(0,p_-) \cap[0,1]$.
Also let
$w \in A^{\rm loc}_{\infty}$
and
$L \in {\mathbb Z}$
satisfy
\begin{equation}\label{eq:211124-101}
N \ge L \ge \left[n\left(\frac{q_w}{v}-1\right)\right].
\end{equation}
Suppose
that a parameter $q$ satisfies
\begin{equation}\label{eq:211124-102}
\max(q_w,p_+)<q \le \infty.
\end{equation}
Then
$h^{p(\cdot),q,L;v}(w)\cong h^{p(\cdot)}(w)$
with an equivalence of norms.
\end{theorem}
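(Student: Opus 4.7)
\medskip

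The plan is to prove the two continuous inclusions separately, imitating the strategy of Tang \cite{Tang12} but adapted to the weighted variable exponent setting and making use of Theorem \ref{thm:210712-1} whenever possible.

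\textbf{The ``easy'' direction $h^{p(\cdot),q,L;v}(w) \hookrightarrow h^{p(\cdot)}(w)$.}
Suppose $f = \lambda_0 a_0 + \sum_{j \ge 1} \lambda_j a_j$ is an atomic decomposition. I would first dispose of the single atom $a_0$: since ${\mathcal M}_N a_0 \lesssim \|a_0\|_{L^q(w)} \cdot \chi_{{\mathbb R}^n}$-like bound is too crude, I instead use the pointwise control ${\mathcal M}_N a_0 \lesssim M[|a_0|^s]^{1/s}$ for an appropriate $s$ (via the inequality between the grand maximal and Hardy--Littlewood maximal operator), and conclude using that $w({\mathbb R}^n)<\infty$ and H\"older's inequality with exponent $q/s$. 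For $j \ge 1$ the strategy is the usual two-region estimate for atoms supported on a cube $Q_j$ with $|Q_j| \le 1$: on an enlargement $Q_j^{**}$ one bounds ${\mathcal M}_N a_j$ by a power of the Hardy--Littlewood maximal function of $\chi_{Q_j}$, while outside $Q_j^{**}$ the moment condition of order $L$ produces the decay factor $(\ell(Q_j)/|x - x_{Q_j}|)^{n+L+1}$. Combining both one obtains a pointwise inequality of the form
\[
{\mathcal M}_N a_j(x) \lesssim [M \chi_{Q_j}(x)]^{(n+L+1)/n}.
\]
Then the conclusion follows by a vector-valued Fefferman--Stein inequality in $L^{p(\cdot)/v}(w)$ for the maximal operator, whose boundedness is guaranteed by (\ref{eq:211124-101}) (which ensures $(n+L+1)/n > q_w/v$) together with the hypotheses $p(\cdot) \in {\rm L H}_0 \cap {\rm L H}_\infty$ and $w \in A^{\rm loc}_\infty$. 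Taking the $L^v$-quasi-triangle inequality on the cubes $Q_j$ concludes this direction.

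\textbf{The ``hard'' direction $h^{p(\cdot)}(w) \hookrightarrow h^{p(\cdot),q,L;v}(w)$.}
Given $f \in h^{p(\cdot)}(w)$ I would perform the standard Calder\'on--Zygmund/Whitney decomposition at heights $2^k$ relative to ${\mathcal M}_N f$. Set $\Omega_k \equiv \{x : {\mathcal M}_N f(x) > 2^k\}$, take a Whitney family $\{Q_{k,i}\}_i$ of $\Omega_k$, and fix a smooth partition of unity $\{\zeta_{k,i}\}_i$ subordinate to a controlled enlargement. For each Whitney cube of side length $\le 1$ let $P_{k,i}$ be the $L$-th order polynomial projection of $f$ on $Q_{k,i}$ weighted by $\zeta_{k,i}$, and set
\[
b_{k,i} \equiv (f - P_{k,i})\zeta_{k,i},
\qquad
g_k \equiv f - \sum_i b_{k,i}.
\]
For cubes of side length $>1$ (which can only occur for small $k$) I skip the polynomial subtraction, producing single-atom contributions thanks to $1 \in L^1(w)$. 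The telescoping identity $f = g_{k_0} + \sum_{k \ge k_0}(g_{k+1}-g_k)$, with each increment written as a sum of properly normalized molecules (and then atoms after further reduction as in Tang's argument), gives the required decomposition. Convergence in ${\mathcal D}'$ is handled by showing $g_k \to 0$ in a weak sense as $k \to \infty$ using the grand maximal control, and $g_{k_0}$ accounts for the single atom as $k_0 \to -\infty$.

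\textbf{Where I expect the difficulty.}
The most delicate step is the verification of the quasi-norm estimate
\[
|\lambda_0| + {\mathcal A}_{p(\cdot),w,v}(\{\lambda_{k,i}\};\{Q_{k,i}\}) \lesssim \|f\|_{h^{p(\cdot)}(w)},
\]
where $\lambda_{k,i} \simeq 2^k w(Q_{k,i})^{1/q} / $ (normalisation). This amounts to
\[
\Bigl\| \Bigl(\sum_{k,i} 2^{kv} \chi_{Q_{k,i}}\Bigr)^{1/v}\Bigr\|_{L^{p(\cdot)}(w)} \lesssim \|{\mathcal M}_N f\|_{L^{p(\cdot)}(w)},
\]
which, via the bounded overlap of the Whitney cubes, reduces to a level-set inequality of the form $\sum_k 2^{kv} w(\Omega_k) \le \int ({\mathcal M}_N f)^v w$ on each ``slice''. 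This comparison must be carried out \emph{modularly} in $L^{p(\cdot)}(w)$ (since $p(\cdot)$ is variable), and this is where the log-H\"older continuity hypotheses on $p(\cdot)$ together with the $A^{\rm loc}_\infty$ structure of $w$ are actually used. Once this modular comparison is in hand, the remaining verifications (support, size and moment conditions of the $a_{k,i}$) are routine consequences of the construction of the $b_{k,i}$ and of standard Calder\'on--Zygmund estimates for polynomial projections over cubes.
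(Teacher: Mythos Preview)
Your outline for the Calder\'on--Zygmund decomposition in the ``hard'' direction is essentially the same as the paper's, and your identified key inequality
\[
\Bigl\| \Bigl(\sum_{k,i} 2^{kv} \chi_{Q_{k,i}}\Bigr)^{1/v}\Bigr\|_{L^{p(\cdot)}(w)} \lesssim \|{\mathcal M}_N f\|_{L^{p(\cdot)}(w)}
\]
is correct. However, you overestimate its difficulty: no modular comparison is needed. By bounded overlap of the Whitney cubes one reduces to $\bigl\|(\sum_k 2^{kv}\chi_{\Omega_{2^k}})^{1/v}\bigr\|_{L^{p(\cdot)}(w)}$, and the \emph{pointwise} inequality $\sum_k 2^{kv}\chi_{\Omega_{2^k}}(x)\lesssim {\mathcal M}_N f(x)^v$ (a geometric series over $\{k: 2^k<{\mathcal M}_N f(x)\}$) finishes the job immediately. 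The log-H\"older conditions play no role at this particular step.

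The genuine gap is in your ``easy'' direction. Your global pointwise bound
\[
{\mathcal M}_N a_j(x) \lesssim [M\chi_{Q_j}(x)]^{(n+L+1)/n}
\]
is \emph{false} on the enlarged cube $Q_j^{**}$ whenever $q<\infty$: a $(p(\cdot),q,L)_w$-atom satisfies only $\|a_j\|_{L^q(w)}\le w(Q_j)^{1/q}$, which imposes no $L^\infty$ bound, so ${\mathcal M}_N a_j$ can be arbitrarily large on $2Q_j$ while $[M\chi_{Q_j}]^{(n+L+1)/n}\sim 1$ there. The decay estimate from the moment condition is only valid \emph{off} $2Q_j$ (this is Tang's Lemma, cited here as Lemma~\ref{lem:190408-1}). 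On $2Q_j$ the paper controls $\chi_{2Q_j}{\mathcal M}_N a_j$ by $\chi_{2Q_j}M^{\rm loc}a_j$ and then invokes the separate norm estimate of Theorem~\ref{lem:210805-1}: one dualizes $\|(\sum_j|\lambda_j\chi_{2Q_j}M^{\rm loc}a_j|^v)^{1/v}\|_{L^{p(\cdot)}(w)}$ against $g\in L^{(p(\cdot)/v)'}(\Sigma)$, applies H\"older on each cube to bring out $m^{(uv)}_{2Q_j,w}(M^{\rm loc}a_j)\lesssim 1$ (this uses $L^{uv}(w)$-boundedness of $M^{\rm loc}$ since $uv>q_w$), and is left with $\|M_w^{(u'),\rm loc}(gw^{-1})\|_{L^{(p(\cdot)/v)'}(w)}$. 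Bounding this last quantity requires Proposition~\ref{prop:210805-1}, the $L^{P'(\cdot)}(w)$-boundedness of the powered \emph{weighted} local maximal operator $M_w^{(u'),\rm loc}$ for $w\in A_\infty^{\rm loc}$, which is itself a nontrivial ingredient (proved via a pointwise modular inequality in the spirit of Diening, together with the auxiliary exponent $s(\cdot)$ of \eqref{eq:211025-1}). This duality-plus-weighted-maximal step is the actual technical heart of the inclusion $h^{p(\cdot),q,L;v}(w)\hookrightarrow h^{p(\cdot)}(w)$ for finite $q$, and your proposal does not account for it.
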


Let $p(\cdot)$ be a variable exponent
with $1<p_- \le p_+<\infty$.
A locally integrable weight $w$ is an $A^{\rm loc}_{p(\cdot)}${\it-weight},
if $0<w<\infty$ almost everywhere
and
\begin{equation}\label{eq:210921-111}
[w]_{A^{\rm loc}_{p(\cdot)}}
\equiv 
\sup_{Q \in {\mathcal Q}, |Q| \le 1}
\frac{1}{|Q|}
\|\chi_Q\|_{L^{p(\cdot)}(w)}
\|\chi_Q\|_{L^{p'(\cdot)}(\sigma)}<\infty,
\end{equation}
where $\sigma\equiv w^{-\frac{1}{p(\cdot)-1}}$
and $p'(\cdot)$ is the dual exponent given by
$p'(\cdot)=\frac{p(\cdot)}{p(\cdot)-1}$.
If $w \in A^{\rm loc}_{p(\cdot)}$,
then weighted local Hardy spaces with
a variable exponent
and weighted Lebesgue spaces with variable exponent
coincide as given in the following theorem.
\begin{theorem}\label{thm:210712-2}
Let
$w \in A^{\rm loc}_{p(\cdot)}$.
Also let
$p(\cdot) \in {\mathcal P} \cap {\rm L H}_0 \cap {\rm L H}_\infty$ and $N \in {\mathbb N}$ satisfy
{\rm \eqref{eq:210712-2}}.
Then
$h^{p(\cdot)}(w)=L^{p(\cdot)}(w)$
with an equivalence of norms.
\end{theorem}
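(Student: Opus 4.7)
The plan is to deduce Theorem~\ref{thm:210712-2} by establishing the two embeddings $L^{p(\cdot)}(w)\hookrightarrow h^{p(\cdot)}(w)$ and $h^{p(\cdot)}(w)\hookrightarrow L^{p(\cdot)}(w)$ separately. The key auxiliary tool is the boundedness of a local Hardy--Littlewood maximal operator
\[
M^{\rm loc}f(x)\equiv \sup_{x\in Q\in{\mathcal Q},\,|Q|\le 1}m_Q(|f|)
\]
on $L^{p(\cdot)}(w)$; this is the weighted variable-exponent counterpart of Rychkov's theorem and is presumably obtained either in an earlier section of the paper or by adapting Diening-type arguments to the $A^{\rm loc}_{p(\cdot)}$ class. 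I expect this local weighted maximal inequality to be the main obstacle: it is where the defining condition \eqref{eq:210921-111} for $A^{\rm loc}_{p(\cdot)}$, the restriction $|Q|\le 1$, and the log-H\"older conditions on $p(\cdot)$ must be combined (typically by a unit-cube decomposition, replacement of $p(\cdot)$ by a constant exponent on each cube via ${\rm LH}_0$, and handling of the tails via ${\rm LH}_\infty$).

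For $L^{p(\cdot)}(w)\hookrightarrow h^{p(\cdot)}(w)$ I first bound $\mathcal M_N f$ pointwise by $M^{\rm loc}f$. Indeed, for any $\varphi\in\mathcal D_N({\mathbb R}^n)$, $0<t<1$ and $|z-x|<t$, the size bound $|\varphi|\le\chi_{B(2^{3n+30})}$ gives
\[
|\varphi_t*f(z)|\le t^{-n}\int_{|z-y|<2^{3n+30}t}|f(y)|\,{\rm d}y\lesssim M^{\rm loc}f(x),
\]
since the ball $B(z,2^{3n+30}t)$ is covered by a bounded number of cubes in ${\mathcal Q}$ of volume at most $1$ containing $x$ (the constant depending only on $n$). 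Taking the supremum over admissible $(t,z,\varphi)$ yields $\mathcal M_N f\lesssim M^{\rm loc}f$ pointwise, and the local maximal inequality then produces $\|f\|_{h^{p(\cdot)}(w)}\lesssim \|f\|_{L^{p(\cdot)}(w)}$.

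For the reverse embedding, Theorem~\ref{thm:210712-1} reduces the task to controlling $\|f\|_{L^{p(\cdot)}(w)}$ by $\|\mathcal M^0_N f\|_{L^{p(\cdot)}(w)}$. Fix $\psi\in{\mathcal D}({\mathbb R}^n)$ with $\int_{{\mathbb R}^n}\psi\,{\rm d}x=1$ and $|\partial^\alpha \psi|\le\chi_{B(1)}$ for $|\alpha|\le N+1$; since $\int\psi\ne 0$ one has $\psi\in{\mathcal D}^0_N({\mathbb R}^n)$. Because $\mathcal M^0_N f\in L^{p(\cdot)}(w)\subset L^1_{\rm loc}({\mathbb R}^n)$ under the current assumptions, the family $\{\psi_t*f\}_{0<t<1}$ is pointwise dominated by $\mathcal M^0_N f$. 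A weak-$*$ compactness argument in $L^1_{\rm loc}$ combined with the convergence $\psi_t*f\to f$ in ${\mathcal D}'({\mathbb R}^n)$ identifies $f$ with a function in $L^1_{\rm loc}({\mathbb R}^n)$, and a Lebesgue differentiation argument applied to an approximate identity contained in $\mathcal D^0_N$ then yields $|f(x)|\le\mathcal M^0_N f(x)$ almost everywhere. Taking $L^{p(\cdot)}(w)$-norms and invoking Theorem~\ref{thm:210712-1} closes the proof.
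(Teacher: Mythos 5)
Your proposal takes essentially the same route as the paper for the inclusion $h^{p(\cdot)}(w)\hookrightarrow L^{p(\cdot)}(w)$, and it supplies the converse inclusion $L^{p(\cdot)}(w)\hookrightarrow h^{p(\cdot)}(w)$ (via $\mathcal M_N f\lesssim M^{\rm loc}f$ and Lemma~\ref{lem:190409-100}) that the paper's terse proof silently leaves to the reader. So the proof is correct in spirit and more complete than the paper's; a few technical points deserve attention.

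First, the pointwise claim that $B(z,2^{3n+30}t)$ with $|z-x|<t<1$ is ``covered by a bounded number of cubes of volume at most $1$ containing $x$'' is false: every cube containing $x$ of volume $\le 1$ lies within distance $\sqrt n$ of $x$, so for $t$ near $1$ they cannot cover a ball of radius $\approx 2^{3n+30}$. The correct statement is that $B(z,2^{3n+30}t)\subset Q$ for a single cube $Q\ni x$ with $|Q|\lesssim 2^{n(3n+31)}$ and $t^{-n}\sim|Q|^{-1}$, so $|\varphi_t*f(z)|\lesssim M^{{\rm loc},R}f(x)$ for $R\sim 2^{3n+31}$. This is harmless, because \eqref{eq:211127-1} already records that $M^{{\rm loc},R}$ is bounded on $L^{p(\cdot)}(w)$ for every $R>0$ whenever $w\in A^{\rm loc}_{p(\cdot)}$.

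Second, ``weak-$*$ compactness in $L^1_{\rm loc}$'' is not meaningful, since $L^1_{\rm loc}$ is neither normed nor a dual space; the paper instead observes that $\{\psi_t*f\}_{0<t<1}$ is bounded in $L^{p(\cdot)}(w)=(L^{p'(\cdot)}(\sigma))^*$ (here $p_->1$ is used) and extracts a weak-$*$ convergent subsequence in that space, then identifies the weak-$*$ limit with the $\mathcal D'$-limit $f$. Once you know $f\in L^1_{\rm loc}$ by that route, your Lebesgue-differentiation argument giving $|f(x)|\le\mathcal M^0_N f(x)$ a.e.\ is a clean and slightly more explicit way to obtain the norm inequality $\|f\|_{L^{p(\cdot)}(w)}\lesssim\|f\|_{h^{p(\cdot)}(w)}$ than relying on lower semicontinuity of the norm under weak-$*$ convergence, which is the implicit mechanism in the paper.
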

It may be interesting to compare Theorem \ref{thm:210712-2}
with \cite{Ho17}.

Note that Theorem \ref{thm:210712-2}
is proved
by Rychkov \cite{Rychkov01} when $p(\cdot)$ is a constant exponent.

Thanks to
Theorems 
\ref{thm:190406-2}
and
\ref{thm:210712-2},
the following decomposition results
on $L^{p(\cdot)}(w)$
is given.
Here, the moment condition is not necessary.
\begin{theorem}\label{cor:211009-1}
Let
$f \in L^0({\mathbb R}^n)$,
$w \in A^{\rm loc}_{p(\cdot)}$
and
and $L \in {\mathbb N}_0 \cup \{-1\}$.
Also let
$p(\cdot) \in {\mathcal P} \cap {\rm L H}_0 \cap {\rm L H}_\infty$.
Assume that
the parameters $q$ and $q_0$ satisfy
$q,q_0>p_+$ and $\sigma=w^{-\frac{1}{p(\cdot)-1}} \in A_{p'(\cdot)/q_0'}$.
\begin{enumerate}
\item
Suppose
$w \in L^1({\mathbb R}^n)$.
Then the following are equivalent:
\begin{enumerate}
\item[$({\rm I})$]
$f \in L^{p(\cdot)}(w)$.
\item[$({\rm II})$]
There exist
a single $(p(\cdot),q)_w$-atom $a_0$
and
a collection $\{a_j\}_{j=1}^{\infty} \subset L^0({\mathbb R}^n)$,
where
each $a_j, j \in {\mathbb N}$ is 
a $(p(\cdot),q,L)_w$-atom supported on a cube $Q_j$
with $|Q_j| \le 1$,
and a collection
$\{\lambda_j\}_{j=0}^{\infty}$
of complex constants such that
$
f=\sum\limits_{j=0}^{\infty} \lambda_j a_j
$
in $L^{p(\cdot)}(w)$
and that
\[
|\lambda_0|+
{\mathcal A}_{p(\cdot),w,1}(\{\lambda_j\}_{j=1}^{\infty};\{Q_j\}_{j=1}^{\infty})=
|\lambda_0|+
\left\|\sum_{j=1}^{\infty} |\lambda_j|\chi_{Q_j}
\right\|_{L^{p(\cdot)}(w)}
<\infty.
\]
\item[$({\rm III})$]
There exist
a single $(p(\cdot),q_0)$-atom $a_0$
and
a collection $\{a_j\}_{j=1}^{\infty} \subset L^0({\mathbb R}^n)$,
where
each $a_j, j \in {\mathbb N}$ is 
a $(p(\cdot),q_0,L)$-atom supported on a cube $Q_j$
with $|Q_j| \le 1$,
and a collection
$\{\lambda_j\}_{j=0}^{\infty}$
of complex constants such that
$
f=\sum\limits_{j=0}^{\infty} \lambda_j a_j
$
in $L^{p(\cdot)}(w)$
and that
\[
|\lambda_0|+
{\mathcal A}_{p(\cdot),w,1}(\{\lambda_j\}_{j=1}^{\infty};\{Q_j\}_{j=1}^{\infty})=
|\lambda_0|+
\left\|\sum_{j=1}^{\infty} |\lambda_j|\chi_{Q_j}
\right\|_{L^{p(\cdot)}(w)}
<\infty.
\]
\end{enumerate}
In this case,
\[
\|f\|_{L^{p(\cdot)}(w)}
\sim
\inf\left\{
|\lambda_0|+
{\mathcal A}_{p(\cdot),w,1}(\{\lambda_j\}_{j=1}^{\infty};\{Q_j\}_{j=1}^{\infty})
\right\},
\]
where each
$a_j$ and $\lambda_j$ 
move over
all elements in
$L^0({\mathbb R}^n)$
and
${\mathbb C}$
to
satisfy the conditions of $({\rm II})$ or $({\rm III})$.
\item
Suppose
$w \notin L^1({\mathbb R}^n)$.
Then the following are equivalent{\rm:}
\begin{enumerate}
\item[$({\rm I})$] 
$f \in L^{p(\cdot)}(w)$.
\item[$({\rm II})$]
There exist
a collection $\{a_j\}_{j=1}^{\infty} \subset L^0({\mathbb R}^n)$,
where
each $a_j, j \in {\mathbb N}$ is 
a $(p(\cdot),q,L)_w$-atom supported on a cube $Q_j$
with $|Q_j| \le 1$,
and a collection
$\{\lambda_j\}_{j=1}^{\infty}$
of complex constants such that
$
f=\sum\limits_{j=1}^{\infty} \lambda_j a_j
$
in $L^{p(\cdot)}(w)$
and that
\[
{\mathcal A}_{p(\cdot),w,1}(\{\lambda_j\}_{j=1}^{\infty};\{Q_j\}_{j=1}^{\infty})=
\left\|\sum_{j=1}^{\infty} |\lambda_j|\chi_{Q_j}
\right\|_{L^{p(\cdot)}(w)}
<\infty.
\]
\item[$({\rm III})$]
There exist
a collection $\{a_j\}_{j=1}^{\infty} \subset L^0({\mathbb R}^n)$,
where
each $a_j, j \in {\mathbb N}$ is 
a $(p(\cdot),q_0,L)$-atom supported on a cube $Q_j$
with $|Q_j| \le 1$,
and a collection
$\{\lambda_j\}_{j=1}^{\infty}$
of complex constants such that
$
f=\sum\limits_{j=1}^{\infty} \lambda_j a_j
$
in $L^{p(\cdot)}(w)$
and that
\[
{\mathcal A}_{p(\cdot),w,1}(\{\lambda_j\}_{j=1}^{\infty};\{Q_j\}_{j=1}^{\infty})=
\left\|\sum_{j=1}^{\infty} |\lambda_j|\chi_{Q_j}
\right\|_{L^{p(\cdot)}(w)}
<\infty.
\]
\end{enumerate}
In this case,
\[
\|f\|_{L^{p(\cdot)}(w)}
\sim
\inf
{\mathcal A}_{p(\cdot),w,1}
(\{\lambda_j\}_{j=1}^{\infty};\{Q_j\}_{j=1}^{\infty}),
\]
where each
$a_j$ and $\lambda_j$ 
move over
all elements in
$L^0({\mathbb R}^n)$
and
${\mathbb C}$ to
satisfy the conditions of $({\rm II})$ or $({\rm III})$.
\end{enumerate}
\end{theorem}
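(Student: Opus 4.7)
The plan is to deduce Theorem~\ref{cor:211009-1} by chaining Theorem~\ref{thm:210712-2} with Theorem~\ref{thm:190406-2} applied at the endpoint $v=1$. Since $p(\cdot)\in\mathcal{P}$ forces $p_->1$, the choice $v=1\in(0,p_-)\cap[0,1]$ is admissible. Fix $L^\sharp\equiv\max(L,[n(q_w-1)])$ and pick $N\in\mathbb{N}$ satisfying $N\ge L^\sharp$ and $N\ge N_{p(\cdot),w}$. Then Theorem~\ref{thm:210712-2} gives $L^{p(\cdot)}(w)=h^{p(\cdot)}(w)$ with equivalent norms, while Theorem~\ref{thm:190406-2} yields $h^{p(\cdot)}(w)\cong h^{p(\cdot),q,L^\sharp;1}(w)$. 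Because $L\le L^\sharp$, any $(p(\cdot),q,L^\sharp)_w$-atom is also a $(p(\cdot),q,L)_w$-atom, producing the decomposition in~({\rm II}) from $f\in L^{p(\cdot)}(w)$. For the reverse direction, I would establish the uniform size bound $\|a_j\|_{L^{p(\cdot)}(w)}\lesssim\|\chi_{Q_j}\|_{L^{p(\cdot)}(w)}$ from the normalization $\|a_j\|_{L^q(w)}\le w(Q_j)^{1/q}$ via the variable-exponent H\"older inequality (legitimate since $q>p_+$), then Cauchy-sum the partial sums $\sum_{j=N}^M\lambda_j a_j$ in $L^{p(\cdot)}(w)$ using the triangle inequality (available since $p_-\ge 1$) and dominated convergence, thereby bounding $\|f\|_{L^{p(\cdot)}(w)}$ by $\mathcal{A}_{p(\cdot),w,1}(\{\lambda_j\};\{Q_j\})$.

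The split between items~(1) and~(2) is dictated by whether $w\in L^1(\mathbb{R}^n)$. When $w\in L^1(\mathbb{R}^n)$, constant functions lie in $L^{p(\cdot)}(w)$ because $\rho^w_{p(\cdot)}(c)=\int|c|^{p(x)}w\,dx<\infty$, so $|\lambda_0|\,\|a_0\|_{L^{p(\cdot)}(w)}\lesssim|\lambda_0|\,\|\chi_{\mathbb{R}^n}\|_{L^{p(\cdot)}(w)}<\infty$; the single atom is needed to capture the nonvanishing ``mean'' of $f$, yielding the extra term in case~(1). When $w\notin L^1(\mathbb{R}^n)$, no nonzero constant lies in $L^{p(\cdot)}(w)$ and a Whitney-type argument within Theorem~\ref{thm:190406-2} shows that the atomic series contains no global summand, leaving the decomposition of case~(2).

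The equivalence ({\rm I})$\Leftrightarrow$({\rm III}) substitutes $L^{q_0}$-normalized atoms for weighted ones. The hypothesis $\sigma=w^{-1/(p(\cdot)-1)}\in A_{p'(\cdot)/q_0'}$ encodes the Muckenhoupt-type bound that yields, for any function $a$ supported in $Q$ with $\|a\|_{L^{q_0}}\le|Q|^{1/q_0}$, the size estimate $\|a\|_{L^{p(\cdot)}(w)}\lesssim\|\chi_Q\|_{L^{p(\cdot)}(w)}$. Once this substitute estimate is in hand, the convergence and norm argument of the previous paragraph transfers verbatim to give ({\rm III})$\Rightarrow$({\rm I}). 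To produce ({\rm I})$\Rightarrow$({\rm III}), I would revisit the proof of Theorem~\ref{thm:190406-2}, replacing the $L^q(w)$-based Calder\'on--Zygmund decomposition of each level set of the grand maximal function by its $L^{q_0}$-based counterpart; the condition on $\sigma$ ensures that the new unweighted normalization is still compatible with $\|\chi_{Q_j}\|_{L^{p(\cdot)}(w)}$, giving the same coefficient sequence $\{\lambda_j\}$.

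The main obstacle is the passage from weighted to unweighted atoms through the hypothesis on $\sigma$, together with the upgrade of convergence from $\mathcal{D}'(\mathbb{R}^n)$, where Theorem~\ref{thm:190406-2} delivers the series, to the ambient Banach space $L^{p(\cdot)}(w)$. Once the uniform atom-size bound $\|a\|_{L^{p(\cdot)}(w)}\lesssim\|\chi_Q\|_{L^{p(\cdot)}(w)}$ is secured in both atom classes, the asserted norm equivalence $\|f\|_{L^{p(\cdot)}(w)}\sim\inf\{\cdots\}$ follows from the triangle inequality and the already-established identifications.
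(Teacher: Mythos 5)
The forward direction of your proposal is fine (and is what the paper does): for~$({\rm I})\Longrightarrow({\rm II})/({\rm III})$ one simply chains Theorem~\ref{thm:210712-2} with Theorem~\ref{thm:190406-2} at~$v=1$, which is admissible because $p_->1$.

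The gap is in your treatment of $({\rm II})/({\rm III})\Longrightarrow({\rm I})$. You propose to bound $\|f\|_{L^{p(\cdot)}(w)}$ by first securing a per-atom size estimate $\|a_j\|_{L^{p(\cdot)}(w)}\lesssim\|\chi_{Q_j}\|_{L^{p(\cdot)}(w)}$ and then applying the triangle inequality to the partial sums. But the triangle inequality only yields
\[
\|f\|_{L^{p(\cdot)}(w)}
\le
\sum_{j=1}^{\infty}|\lambda_j|\,\|a_j\|_{L^{p(\cdot)}(w)}
\lesssim
\sum_{j=1}^{\infty}|\lambda_j|\,\|\chi_{Q_j}\|_{L^{p(\cdot)}(w)},
\]
which is an $\ell^1$-type bound and can be strictly larger than the quantity
\[
{\mathcal A}_{p(\cdot),w,1}(\{\lambda_j\};\{Q_j\})
=
\Bigl\|\sum_{j=1}^{\infty}|\lambda_j|\chi_{Q_j}\Bigr\|_{L^{p(\cdot)}(w)}
\]
you need to hit. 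For instance when the $Q_j$ are pairwise disjoint and $p(\cdot)\equiv p>1$ the right-hand side of the estimate you want behaves like an $\ell^p$-norm, while the triangle inequality only produces the $\ell^1$-norm. So without exploiting the geometric overlap structure, the naive term-by-term estimate is quantitatively wrong; it does not imply the claimed norm equivalence, and it does not even obviously give the convergence of the partial sums, since summability of $\sum|\lambda_j|\,\|\chi_{Q_j}\|_{L^{p(\cdot)}(w)}$ is not assumed.

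The paper instead establishes $({\rm II})\Longrightarrow({\rm I})$ via Theorem~\ref{lem:210805-1} at $v=1$: one dualizes against $g\in L^{P'(\cdot)}(\Sigma)$, applies H\"{o}lder twice to replace $a_j$ by averages $m_{Q_j,w}^{(u)}(\cdot)$ and $m_{Q_j,w}^{(u')}(g w^{-1})$, dominates the latter by the powered local weighted maximal operator $M_w^{(u'),\rm loc}$, and then invokes its $L^{P'(\cdot)}(w)$-boundedness (Proposition~\ref{prop:210805-1}). Similarly $({\rm III})\Longrightarrow({\rm I})$ is Theorem~\ref{thm:211201-1}: dualize, apply H\"{o}lder twice to arrive at $M^{\rm loc}[g^{q_0'}]^{1/q_0'}$, and use the hypothesis $\sigma\in A^{\rm loc}_{p'(\cdot)/q_0'}$ to bound it on $L^{p'(\cdot)}(\sigma)$ via Lemma~\ref{lem:190409-100}. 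It is precisely these duality-plus-maximal-operator arguments --- not the triangle inequality --- that convert the pointwise overlap control $\sum|\lambda_j|\chi_{Q_j}$ into the required norm bound. Your remark that the hypothesis on $\sigma$ \lq\lq encodes a Muckenhoupt-type bound\rq\rq\ correctly identifies where it enters, but it feeds into the dual estimate, not into a size estimate that survives the triangle inequality.
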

We verify that $q$ satisfying
all the requirements in 
Theorem \ref{cor:211009-1} exists
(see
Lemma \ref{lem:210921-112}).

The implication $({\rm I})\Longrightarrow({\rm II})/({\rm III})$ is included in
Theorems 
\ref{thm:190406-2}
and
\ref{thm:210712-2}.
We prove the implication $({\rm II})/({\rm III})\Longrightarrow({\rm I})$.

Note that Theorem \ref{cor:211009-1}
overlaps with the previous studies on
weighted Lebesgue and Hardy spaces 
($w \in A_p, p(\cdot)$ is constant)
\cite[Chapter IIIV]{StTo-text-89},
Lebesgue spaces with variable exponents
$(w=1)$,
\cite{NaSa12,Sawano13},
Musielak--Orlicz spaces with general Young functions
($\Phi(x,t)=t^{p(x)}w(x)$,
$w \in A_\infty$)
\cite[Theorem 3.7]{INS21}
and
mixed Lebesgue spaces
\cite[Theorem 3]{NOSS21}.

Here,
we briefly address
the development of Hardy spaces with variable exponents
along with some related results.
We also compare these works
with the results obtained in this paper.
Meyer
\cite{Meyer-book} established several equivalent
wavelet characterization of $H^1({\mathbb R}^n)$.
Liu
\cite{Liu92} developed
an equivalent wavelet
characterization of the weak Hardy space 
$H^{1,\infty}({\mathbb R}^n)$.
Wu
\cite[Theorem 3.2]{Wu92} 
reported
the wavelet characterization of the weighted Hardy space 
$H^p({\mathbb R}^n)$ for
any $p \in (0,1]$.
Later Garc\'{i}a-Cuerva and Martell 
\cite{GaMa01}
characterized
$H^p({\mathbb R}^n)$
for any 
$p \in (0, 1]$ in terms of wavelets without compact supports using
the vector-valued Calder\'{o}n--Zygmund theory.
See \cite{LLL09} for the wavelet characterization of dual spaces.
Our results are new in the sense that we do not assume
$p_+ \le 1$.
For example, in \cite{YaYa12}, D. Yang and S. Yang assumed that
$\varphi$ is of uniformly upper type $1$,
which corresponds to the assumption $p_+ \le 1$.
Note that the normed space of sequences
used in this paper differs from the ones
in
\cite{Tang12,YaYa12}.
In fact,
as we mentioned,
in
\cite{Tang12,YaYa12}
the authors assumed a condition corresponding to $p_+ \le 1$.
In this case,
arguing similarly to \cite[Theorem 3.12]{INS21},
we learn that the sequence norm
$
{\mathcal A}_{p(\cdot),w,v}(
\{\lambda_j\}_{j=1}^\infty,\{Q_j\}_{j=1}^\infty)
$
can be replaced by
\[
{\mathcal A}_{p(\cdot),w}^\dagger(
\{\lambda_j\}_{j=1}^\infty,\{Q_j\}_{j=1}^\infty)\equiv
\inf\left\{
\lambda>0\,:\,
\sum_{j=1}^\infty
\int_{Q_j}
\left(\frac{|\lambda_j|}{\lambda}\chi_{Q_j}(x)
\right)^{p(x)}w(x){\rm d}x \le 1
\right\}.
\]
Several approaches define Hardy spaces based
on general Banach lattices or characterize them
in terms of wavelets.
However, most  require
that the underlying Banach
lattices
be rearrangement invariant
or the Hardy--Littlewood maximal operator
be bounded there.
For example,
see \cite{Sawano-text-2018,HSYY17,Soardi97,WYYZ19,WYY20} for example.
We remark that our results do not fall under the scope
of \cite{WYY20} since F. Wang, D. Yang and S. Yang assumed
the boundedness property of the Hardy--Littlewood maximal operator.
Nevertheless,
the present paper is based
on the idea of \cite{Tang12,WYY20}.
Since the variable exponents and weights distort
the function spaces strongly,
we can not hope for such a situation.
Therefore, an alternative approach using
the local Hardy--Littlewood maximal operator,
given by (\ref{eq:local maximal operator}),
is necessary.

The rest of this paper is organized as follows.
Section \ref{s52}
collects preliminary facts.
Section \ref{s53}
discusses
the
fundamental properties of function spaces.
We prove 
Theorems \ref{thm:210712-1}
and \ref{thm:190406-2}
in
Sections \ref{s54} and \ref{s55},
respectively.
Section \ref{s60}
is oriented to the applications of
Theorems \ref{thm:210712-1}
and \ref{thm:190406-2}.
We mainly discuss the boundedness property of singular integral operators.
Section \ref{s60} has some commonality with \cite{Ho19}.
Section \ref{s77}
is devoted to the Littlewood--Paley characterization
of $h^{p(\cdot)}(w)$,
which is a further application of the results in 
Section \ref{s60}.
Further examples and the relations to other function spaces are provided in Section \ref{s9}. 
Section \ref{s80}
considers wavelet characterization.
In Section \ref{s88},
we compare the definition of weighted Lebesgue spaces
with variable exponents.
There are many attempts to extend the classical 
Muckenhoupt class to the setting of variable exponents
inspired by
the works
\cite{CFN2003,CFN2004,DieningMIA2004}.
For example,
see
\cite{CDH-2011,CF-book,CFN-2012,CrWa17,DiHapre}.
Here we consider the local counterpart of the work
\cite{DiHapre} and compare it with the results
in \cite{NS-local}.
We remark that
\cite{DiHapre} is a preprint.
So, we gave details
for the facts related \cite{DiHapre}.
However, our results related to \cite{DiHapre}
 are essentially
minor modifications of
\cite{DiHapre}.

\section{Preliminaries}
\label{s52}

Many tools are necessary
to establish our results.
First, we recall 
the notion of generalized dyadic grids
in Section \ref{subsection:Dyadic grids}.
Section \ref{subsection:Weighted variable Lebesgue spaces}
collects
norm inequalities.
We establish
some boundedness properties
of the weighted maximal operator
in Section
\ref{subsection:Maximal inequalities}.
Section \ref{s2.20} refines the openness property
obtained by Hyt\"{o}nen and P\'{e}rez \cite{HyPe13}.
Section
\ref{subsection:operator}
is oriented to the boundedness of operators
including their vector-valued boundedness.
To establish the theory of
the atomic decomposition,
we depend on the boundedness
properties of some operators,
which are adapted to our class of weights.
Thus, we will carefully collect the results
on the boundedness of operators.
To develop the atomic decomposition theory,
we consider the power
of the normed spaces.
This is necessary
since $p_- \le 1$.
In Section \ref{subsection:Powered local weighted maximal operator}
we transform our results obtained
in the previous sections
to consider
the case of $w \in A_\infty^{\rm loc}$.
Finally, keeping in mind
that our characterization
of $h^{p(\cdot)}(w)$
includes the one by the Littlewood--Paley operators,
in Section \ref{subsection:distribution}
we recall some important inequalities
obtained by Rychkov \cite{Rychkov01}.

\subsection{Generalized dyadic grids}
\label{subsection:Dyadic grids}
Let
\[
{\mathcal D}_{k,a}^0
\equiv
\{2^{-k}[m+a/3,m+a/3+1)\,:\,m \in {\mathbb Z}\}
\]
for
$k \in {\mathbb Z}$ and $a=0,1,2$.
Consider
\[
{\mathcal D}_{k,{\bf a}}
\equiv
\{Q_1 \times Q_2 \times \cdots \times Q_n\,:\,
Q_j \in {\mathcal D}_{k,a_j}^0, j=1,2,\ldots,n\}
\]
for
$k \in {\mathbb Z}$ and ${\bf a}=(a_1,a_2,\ldots,a_n) \in \{0,1,2\}^n$.
Herein,
a (generalized) dyadic grid is
in the family
$\mathcal{D}_{\bf a} \equiv \bigcup\limits_{k \in {\mathbb Z}}{\mathcal D}_{k,{\bf a}}$
for ${\bf a} \in \{0,1,2\}^n$.
It is noteworthy that for any cube $Q$
there exists
$R\in\bigcup\limits_{{\bf a} \in \{0,1,2\}^n}{\mathcal D}_{\bf a}$
such that $Q \subset R$ and that $|R| \le 6^n|Q|$.
Proving the boundedness property of the Hardy--Littlewood
maximal operator
or the local maximal operator
$M^{\rm loc}$
defined by (\ref{eq:local maximal operator}) below,
this property allows us to handle
the operator $M^{{\mathcal D}_{\bf a}}$ generated
by ${\mathcal D}_{\bf a}$
instead of these maximal operators.
See (\ref{eq:D-maximal operator}) and (\ref{eq:220205-71}), below.
Recall that
we can handle ${\mathcal D}_{\bf a}$
similarly
for other values of ${\bf a} \in \{0,1,2\}^n$
so that in particular,
we consider the dyadic grid ${\mathfrak D}={\mathcal D}_{(1,1,\ldots,1)}$.
Denote by
${\mathfrak D}_k=
{\mathcal D}_{k,(1,1,\ldots,1)}$
the set of all cubes in ${\mathfrak D}$ with
$\ell(Q)=2^{-k}$.
Here, given a cube $Q$,
it is denoted
by $\ell(Q)$, which is the sidelength of $Q$:
$\ell(Q)\equiv|Q|^{1/n}$,
where $|Q|$ denotes the volume of cube $Q$.
Two cubes $Q_1,Q_2$ in ${\mathfrak D}$
may intersect at a point but that the difference set
$Q_1 \ominus Q_2=(Q_1 \setminus Q_2) \cup (Q_2 \setminus Q_1)$
is not empty.

For
$f \in L^0({\mathbb R}^n)$,
we define the local (Hardy--Littlewood) maximal operator $M^{\rm loc}$
by 
\begin{equation}\label{eq:local maximal operator}
M^{\rm loc} f(x)\equiv 
\sup_{Q \in {\mathcal Q}, |Q| \le 1}\frac{\chi_Q(x)}{|Q|}\int_Q |f(y)|{\rm d}y
\quad (x \in {\mathbb R}^n).
\end{equation}
Note that this definition is analogous to 
the Hardy--Littlewood maximal operator $M$ defined 
by 
\begin{equation}\label{eq:maximal operator}
M f(x)\equiv 
\sup_{Q \in {\mathcal Q}}\frac{\chi_Q(x)}{|Q|}\int_Q |f(y)|{\rm d}y
\quad (x \in {\mathbb R}^n).
\end{equation}
Let $M^{{\mathcal D}_{\bf a}}$,
${\bf a} \in \{0,1,2\}^n$,
be the maximal operator generated by  grid
${\mathcal D}_{\bf a}$ given by
\begin{equation}\label{eq:D-maximal operator}
M^{{\mathcal D}_{\bf a}}f(x)=\sup_{Q \in {{\mathcal D}_{\bf a}}}
\frac{\chi_Q(x)}{|Q|}\int_Q|f(y)|{\rm d}y
\quad (x \in {\mathbb R}^n).
\end{equation}
Using the above property of the grid ${\mathcal D}_{\bf a}$,
${\bf a} \in \{0,1,2\}^n$,
we prove
\begin{equation}\label{eq:220205-71}
M f(x)\le 6^n\sum_{{\bf a} \in \{0,1,2\}^n}M^{{\mathcal D}_{\bf a}}f(x)
\end{equation}
for $f \in L^0({\mathbb R}^n)$.
Once we prove the boundedness property
of $M^{{\mathcal D}_{\bf a}}$,
${\bf a} \in \{0,1,2\}^n$ on $L^{p(\cdot)}(w)$,
(\ref{eq:220205-71}) yields the one of $M$.
In \cite[\S 4]{NS-local},
we also establish that
the boundedness property
of $M^{{\mathcal D}_{\bf a}}$,
${\bf a} \in \{0,1,2\}^n$ on $L^{p(\cdot)}(w)$
yields the one of
$M^{\rm loc}$.
\subsection{Weighted variable Lebesgue spaces}
\label{subsection:Weighted variable Lebesgue spaces}

For any measurable subset $\Omega \subset \mathbb{R}^n$, denote
\[
p_+(\Omega)
\equiv
{\rm esssup}_{x \in \Omega} p(x),
\quad
p_-(\Omega)
\equiv
{\rm essinf}_{x \in \Omega} p(x).
\]

Let
$p(\cdot)$ satisfy $1\le p(\cdot) \le \infty.$
If $p(\cdot) \in {\rm LH}_{0}$ then $p'(\cdot) \in {\rm LH}_{0}$.
Likewise, if $p(\cdot) \in {\rm LH}_{\infty}$ then $p'(\cdot) \in {\rm LH}_{\infty}$.
Furthermore, $(p_\infty)'=(p')_\infty$. 

Recall 
the generalized H\"{o}lder inequality.
\begin{lemma}[Generalized H\"{o}lder inequality]\label{thm:gHolder}
Let $p(\cdot) :{\mathbb R}^n \to [1,\infty]$ be a variable exponent.
Then for all $f\in L^{p(\cdot)}({\mathbb R}^n)$ 
and $g\in L^{p'(\cdot)}({\mathbb R}^n)$, 
\begin{equation}\label{gHolder}
 \|f \cdot g\|_{L^1} 
 \le 
 r_p 
 \|f\|_{L^{p(\cdot)}} 
 \|g\|_{L^{p'(\cdot)}},
\end{equation}
where
\begin{equation}\label{rp}
 r_p\equiv 1+\frac{1}{p_-} -\frac{1}{p_+}=
\frac{1}{p_-}+\frac{1}{(p')_-} \le 2.
\end{equation}
\end{lemma}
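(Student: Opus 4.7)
The plan is to follow the standard variable-exponent proof of H\"{o}lder's inequality: integrate the pointwise Young inequality and translate the resulting modular bound back to the Luxemburg quasinorm. First I would dispose of the trivial cases: if either $\|f\|_{L^{p(\cdot)}}$ or $\|g\|_{L^{p'(\cdot)}}$ is $0$, then $fg=0$ almost everywhere, and if either is $\infty$ the inequality is vacuous. Otherwise, by homogeneity I would fix $\lambda>\|f\|_{L^{p(\cdot)}}$ and $\mu>\|g\|_{L^{p'(\cdot)}}$ (so that the modulars $\rho_{p(\cdot)}(f/\lambda)$ and $\rho_{p'(\cdot)}(g/\mu)$ are both $\le 1$) and aim at
\[
\int_{\mathbb R^n}\frac{|f(x)g(x)|}{\lambda\mu}\,{\rm d}x\le r_p,
\]
which upon letting $\lambda\downarrow\|f\|_{L^{p(\cdot)}}$ and $\mu\downarrow\|g\|_{L^{p'(\cdot)}}$ yields \eqref{gHolder}.

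For the central estimate I would partition $\mathbb R^n$ into $E\equiv\{1<p(x)<\infty\}$, $E_1\equiv\{p(x)=1\}$ and $E_\infty\equiv\{p(x)=\infty\}$. On $E$, Young's inequality applied pointwise gives
\[
\frac{|f(x)g(x)|}{\lambda\mu}\le\frac{1}{p(x)}\left(\frac{|f(x)|}{\lambda}\right)^{p(x)}+\frac{1}{p'(x)}\left(\frac{|g(x)|}{\mu}\right)^{p'(x)},
\]
and integrating, together with the uniform bounds $1/p(x)\le 1/p_-$ and $1/p'(x)\le 1/(p')_-$, dominates $\int_E|fg|/(\lambda\mu)\,{\rm d}x$ by $\tfrac{1}{p_-}\rho_{p(\cdot)}(f\chi_E/\lambda)+\tfrac{1}{(p')_-}\rho_{p'(\cdot)}(g\chi_E/\mu)$. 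On $E_\infty$ one has $|f|/\lambda\le 1$ a.e.\ by the essential-supremum part of the Luxemburg norm, so $\int_{E_\infty}|fg|/(\lambda\mu)\le\rho_{p'(\cdot)}(g\chi_{E_\infty}/\mu)$; the symmetric estimate holds on $E_1$. Summing the three contributions yields $\int_{\mathbb R^n}|fg|/(\lambda\mu)\le\tfrac{1}{p_-}\rho_{p(\cdot)}(f/\lambda)+\tfrac{1}{(p')_-}\rho_{p'(\cdot)}(g/\mu)\le\tfrac{1}{p_-}+\tfrac{1}{(p')_-}$.

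It remains to identify the constant. Since $t\mapsto t/(t-1)$ is decreasing on $(1,\infty]$, one has $(p')_-=p_+/(p_+-1)$, so $1/(p')_-=1-1/p_+$, giving $1/p_-+1/(p')_-=1+1/p_--1/p_+=r_p$. The bound $r_p\le 2$ is then immediate from $1/p_-\le 1$ and $1/p_+\ge 0$.

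There is no deep obstacle; this is essentially a one-line computation in the classical case. The only mildly delicate point is the endpoint handling where $p(x)\in\{1,\infty\}$, at which Young's inequality degenerates and one must instead invoke the essential-supremum characterization of the Luxemburg norm on those sets. Once those sets are handled separately, the combined modular bound produces exactly the constant $r_p$ claimed in \eqref{rp}.
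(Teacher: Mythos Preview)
Your argument is correct and is precisely the standard proof of the generalized H\"older inequality in variable exponent spaces. The paper does not supply its own proof of this lemma; it is stated there as a known result (introduced with ``Recall the generalized H\"older inequality''), so there is nothing to compare against beyond noting that your approach is the usual one found in the literature on $L^{p(\cdot)}$ spaces.
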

Now
let us recall some properties for the variable Lebesgue space 
$L^{p(\cdot)}({\mathbb R}^n)$.
The first one concerns the norm growth.
\begin{lemma} {\rm \cite[Lemma 2.1]{DiHapre}, \cite[Lemma 2.2]{NaSa12}} \label{lem:190613-3}
Let $p(\cdot) \in {\mathcal P} \cap {\rm LH}_0 \cap {\rm LH}_\infty$.
\begin{enumerate}
\item[$(1)$]
For all cubes $Q$ with $|Q| \le 1$, we have 
$
|Q|^{{1}/{p_{-}(Q)}} \lesssim |Q|^{{1}/{p_{+}(Q)}}.
$
In particular, we have
$
|Q|^{{1}/{p_{-}(Q)}} \sim |Q|^{{1}/{p_{+}(Q)}} \sim |Q|^{{1}/{p(z)}} \sim \|\chi_Q\|_{L^{p(\cdot)}}.
$
\item[$(2)$]
For all cubes $Q$ with $|Q| \ge 1$, we have 
$
\|\chi_Q\|_{L^{p(\cdot)}} \sim |Q|^{{1}/{p_\infty}}.
$
\end{enumerate} 
\end{lemma}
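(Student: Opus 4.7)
The plan is to reduce both parts to a direct analysis of the Luxemburg modular $\rho_{p(\cdot)}(\chi_Q/\lambda)=\int_Q\lambda^{-p(x)}\,dx$ and read off the norm from it.

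For part (1), the crux is to establish
\[
|Q|^{\frac{1}{p_-(Q)}-\frac{1}{p_+(Q)}}\lesssim 1\qquad\text{whenever }|Q|\le 1.
\]
Writing the exponent as $(p_+(Q)-p_-(Q))/(p_-(Q)p_+(Q))$ and using $p_-(Q)p_+(Q)\ge p_-^{\,2}>0$, it suffices to bound $(p_+(Q)-p_-(Q))\log(1/|Q|)$. When $\sqrt n\,\ell(Q)\le 1/2$, $\mathrm{LH}_0$ applied to $x,y\in Q$ with $|x-y|\le\sqrt n\,\ell(Q)$ gives $p_+(Q)-p_-(Q)\le c_*/\log(1/(\sqrt n\,\ell(Q)))$, and since $\log(1/|Q|)=n\log(1/\ell(Q))$ is comparable to $\log(1/(\sqrt n\,\ell(Q)))$, the product is $O(nc_*)$; in the residual range where $\ell(Q)$ is bounded below, $|Q|$ is bounded away from $0$ and the inequality is trivial. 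Once this key bound is in hand, the fact that $|Q|\le 1$ makes $a\mapsto |Q|^a$ decreasing, so from $1/p_+(Q)\le 1/p(z)\le 1/p_-(Q)$ we get
\[
|Q|^{1/p_-(Q)}\le |Q|^{1/p(z)}\le |Q|^{1/p_+(Q)}\lesssim |Q|^{1/p_-(Q)},
\]
giving the three-way equivalence. The last equivalence $\|\chi_Q\|_{L^{p(\cdot)}}\sim |Q|^{1/p_\pm(Q)}$ is obtained by testing $\lambda=c|Q|^{1/p_-(Q)}$ and $\lambda=c|Q|^{1/p_+(Q)}$ in the modular and invoking the same key estimate.

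For part (2), I would check that $\int_Q |Q|^{-p(x)/p_\infty}\,dx\sim 1$ for $|Q|\ge 1$, which identifies $\|\chi_Q\|_{L^{p(\cdot)}}\sim |Q|^{1/p_\infty}$. Writing the integrand as $|Q|^{-1}\cdot |Q|^{(p_\infty-p(x))/p_\infty}$, the task reduces to $\int_Q |Q|^{(p_\infty-p(x))/p_\infty}\,dx\sim |Q|$. Split $Q$ according to whether $e+|x|\ge\ell(Q)$ or not. On the first region, $\mathrm{LH}_\infty$ yields
\[
\Big|\tfrac{p_\infty-p(x)}{p_\infty}\Big|\log|Q|\;\le\;\tfrac{c^*}{p_\infty}\cdot\tfrac{n\log\ell(Q)}{\log(e+|x|)}\;\lesssim\; 1,
\]
so the integrand is $O(1)$ pointwise and contributes $\lesssim |Q|$. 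On the complementary region $\{x\in Q:e+|x|<\ell(Q)\}$, which is nonempty only when $Q$ hugs the origin, I would perform a dyadic spherical decomposition in $|x|$: in the $k$-th shell $\{2^k\le e+|x|<2^{k+1}\}$ the exponent bound $|p(x)-p_\infty|\le c^*/(k\log 2)$ gives $|Q|^{|p_\infty-p(x)|/p_\infty}\le |Q|^{c^*/(p_\infty k\log 2)}$, and the shell volume is $\lesssim 2^{kn}$. Summing $k$ up to $O(\log\ell(Q))$, an elementary minimization shows the sum is sub-polynomial in $|Q|$, hence $o(|Q|)$, so this region contributes lower-order. The matching lower bound on the modular is obtained by running the same decomposition with the inequalities reversed.

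The main obstacle is precisely this near-origin case in part (2): $\mathrm{LH}_\infty$ provides no quantitative decay of $|p(x)-p_\infty|$ when $|x|$ is small, so the argument must exploit the geometric fact that the part of $Q$ close to the origin sits inside a ball whose volume is tiny compared to $|Q|$ once $\ell(Q)$ is large. The dyadic spherical decomposition is tailored for this, and once combined with the pointwise bound $(e+|x|)^{|p(x)-p_\infty|}\le e^{c^*}$ it yields an absolutely convergent estimate. Everything else is standard modular bookkeeping.
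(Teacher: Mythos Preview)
The paper does not supply its own proof of this lemma; it is simply quoted from \cite{DiHapre} and \cite{NaSa12}. So I assess your proposal on its own merits.

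Part~(1) is fine: bounding $(p_+(Q)-p_-(Q))\log(1/|Q|)$ via $\mathrm{LH}_0$ is the standard argument. (The displayed inequality $|Q|^{1/p_-(Q)-1/p_+(Q)}\lesssim 1$ is trivially true since $|Q|\le 1$; what your argument actually establishes, and what is needed for the equivalence, is the reverse bound $|Q|^{-(1/p_-(Q)-1/p_+(Q))}\lesssim 1$.)

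Part~(2) has a genuine gap in the near-origin piece. First, the threshold is miscalibrated: for a large cube containing the origin in dimension $n\le 3$ one has $|x|\le\sqrt n\,\ell(Q)/2<\ell(Q)-e$ for every $x\in Q$, so the ``good'' region $\{e+|x|\ge\ell(Q)\}$ is empty and the whole cube falls into the dyadic sum. Second, and more seriously, the sum $\sum_k 2^{kn}|Q|^{c^*/(p_\infty k\log 2)}$ is \emph{not} sub-polynomial in $|Q|$: the top shell $k\approx\log_2\ell(Q)$ already contributes $\sim|Q|$, and the bottom shell $k=O(1)$ contributes $\sim|Q|^{c^*/(p_\infty k\log 2)}$, which exceeds $|Q|$ whenever $c^*>p_\infty k\log 2$ --- nothing in the hypotheses excludes this. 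The error is that you replaced the signed exponent $(p_\infty-p(x))/p_\infty$ by $|p_\infty-p(x)|/p_\infty$, discarding the crucial pointwise bound $(p_\infty-p(x))/p_\infty\le 1-p_-/p_\infty<1$. A clean repair is to split at $e+|x|=\ell(Q)^\theta$ for a fixed $\theta\in(0,p_-/p_\infty)$: on the far part $\mathrm{LH}_\infty$ still forces the integrand to be $\sim 1$ and the volume is $\sim|Q|$, while the near part has volume $\lesssim|Q|^\theta$ and integrand $\le|Q|^{1-p_-/p_\infty}$, contributing $O(|Q|^{\theta+1-p_-/p_\infty})=o(|Q|)$. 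This handles both bounds without any shell summation.
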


Next, consider the modular inequality.
\begin{lemma} {\rm \cite[Lemma 2.2]{CFN-2012}, \cite[Lemma 2.17]{NoSa12}} \label{lem:190410-1}
Let $p(\cdot)\,:\,\mathbb{R}^n \to [1,\infty)$ be a variable exponent such that $p_+<\infty$.
Then given any
measurable set $\Omega$ and any $f \in L^0({\mathbb R}^n)$,
we have the following:
\begin{enumerate}
\item[$(1)$]
If $\|\chi_{\Omega}f\|_{L^{p(\cdot)}} \le 1$, then
$\|\chi_{\Omega}f\|_{L^{p(\cdot)}}^{p_+(\Omega)}
\le
\int\limits_{\Omega} |f(x)|^{p(x)} {\rm d}x
\le
\|\chi_{\Omega}f\|_{L^{p(\cdot)}}^{p_-(\Omega)}
$.
\item[$(2)$]
If $\|\chi_{\Omega}f\|_{L^{p(\cdot)}} \ge 1$, then
$\|\chi_{\Omega}f\|_{L^{p(\cdot)}}^{p_-(\Omega)}
\le
\int\limits_{\Omega} |f(x)|^{p(x)} {\rm d}x
\le
\|\chi_{\Omega}f\|_{L^{p(\cdot)}}^{p_+(\Omega)}$.
\end{enumerate}
For a variable exponent $p(\cdot) :\mathbb{R}^n \to [1, \infty)$ and $f \in L^0({\mathbb R}^n)$,
$\|f\|_{L^{p(\cdot)}(w)} \le 1$ if and only if
$\int\limits_{\mathbb{R}^n} |f(x)|^{p(x)}w(x) {\rm d}x \le 1$.
\end{lemma}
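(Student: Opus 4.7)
The three assertions all rest on a single normalization identity: if $g \in L^{p(\cdot)}({\mathbb R}^n)$ satisfies $\lambda \equiv \|g\|_{L^{p(\cdot)}} \in (0,\infty)$, then under the hypothesis $p_+ < \infty$ one has $\int_{{\mathbb R}^n} |g(x)/\lambda|^{p(x)}\,{\rm d}x = 1$. I would establish this first. The upper bound $\le 1$ comes from taking admissible $\lambda_k \downarrow \lambda$ in the defining infimum, for which $\rho_{p(\cdot)}(g/\lambda_k) \le 1$, and then passing to the limit by Fatou's lemma. For the lower bound $\ge 1$, I would choose $\lambda_k \uparrow \lambda$ with $\lambda_k > 0$; by definition these $\lambda_k$ fail the admissibility condition, so $\rho_{p(\cdot)}(g/\lambda_k) > 1$, and monotone convergence (the integrands $|g/\lambda_k|^{p(x)}$ increase in $k$) yields the inequality in the limit.

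With that identity in hand, statements (1) and (2) are quick. Apply the identity to $g \equiv \chi_\Omega f$, set $\lambda \equiv \|\chi_\Omega f\|_{L^{p(\cdot)}}$, and write
\[
1 = \int_\Omega |f(x)|^{p(x)} \lambda^{-p(x)} \,{\rm d}x.
\]
On $\Omega$ we have $p_-(\Omega) \le p(x) \le p_+(\Omega)$, so by monotonicity of $t \mapsto \lambda^{-t}$: if $\lambda \le 1$ then $\lambda^{-p_-(\Omega)} \le \lambda^{-p(x)} \le \lambda^{-p_+(\Omega)}$, while if $\lambda \ge 1$ the inequalities reverse. Sandwiching the displayed integral using these pointwise bounds and rearranging gives $\lambda^{p_+(\Omega)} \le \int_\Omega |f|^{p(x)}\,{\rm d}x \le \lambda^{p_-(\Omega)}$ in case (1) and $\lambda^{p_-(\Omega)} \le \int_\Omega |f|^{p(x)}\,{\rm d}x \le \lambda^{p_+(\Omega)}$ in case (2).

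For the last statement (the unit-ball property for $L^{p(\cdot)}(w)$), the argument is even simpler: if $\|f\|_{L^{p(\cdot)}(w)} \le 1$, then taking admissible $\lambda_k \downarrow 1$ and applying Fatou to $\rho^w_{p(\cdot)}(f/\lambda_k) \le 1$ yields $\rho^w_{p(\cdot)}(f) \le 1$. Conversely, if $\rho^w_{p(\cdot)}(f) \le 1$ then $\lambda = 1$ lies in the admissible set in the defining infimum, so $\|f\|_{L^{p(\cdot)}(w)} \le 1$. The one step to watch is the normalization identity in the first paragraph: without $p_+ < \infty$, the dominated/monotone convergence step can fail (the modular need not be left-continuous at $\lambda$), which is precisely why the hypothesis $p_+ < \infty$ is imposed. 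Once that is dealt with, the rest is bookkeeping.
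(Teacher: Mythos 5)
Your overall route is the standard modular--norm argument, and the paper itself does not supply a proof of this lemma but cites \cite[Lemma 2.2]{CFN-2012} and \cite[Lemma 2.17]{NoSa12}; items (1), (2), and the unit-ball property do all follow once the normalization identity is in hand. There is, however, a real gap in your lower-bound step of that identity. When you take $\lambda_k \uparrow \lambda$, the integrands $|g/\lambda_k|^{p(x)}$ \emph{decrease} in $k$, not increase, so the version of monotone convergence you invoke does not apply; for a decreasing sequence one needs the leading term to be integrable, and $\rho_{p(\cdot)}(g/\lambda_k)>1$ by itself says nothing about finiteness. The repair is to actually deploy the hypothesis $p_+<\infty$, which you correctly flag as the crucial assumption but never put to work. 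Having already established $\rho_{p(\cdot)}(g/\lambda)\le 1$ via Fatou, one has for each $k$
\begin{equation*}
\rho_{p(\cdot)}(g/\lambda_k)
=\int_{{\mathbb R}^n}\left(\frac{\lambda}{\lambda_k}\right)^{p(x)}\left|\frac{g(x)}{\lambda}\right|^{p(x)}{\rm d}x
\le\left(\frac{\lambda}{\lambda_k}\right)^{p_+}\rho_{p(\cdot)}(g/\lambda)
\le\left(\frac{\lambda}{\lambda_k}\right)^{p_+}<\infty,
\end{equation*}
so $|g/\lambda_1|^{p(\cdot)}$ is an integrable majorant; dominated convergence then gives $\rho_{p(\cdot)}(g/\lambda)=\lim_k\rho_{p(\cdot)}(g/\lambda_k)\ge 1$, hence $\rho_{p(\cdot)}(g/\lambda)=1$. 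With this repair, your derivations of (1), (2), and the unit-ball characterization go through; you should also note the degenerate cases $\lambda=0$ (then $f=0$ a.e.\ on $\Omega$ and both sides vanish) and $\lambda=\infty$ (possible only in (2), where the scaling bound $\rho_{p(\cdot)}(\chi_\Omega f/\mu)\le\mu^{-p_-}\rho_{p(\cdot)}(\chi_\Omega f)$ shows the modular must also be infinite), which must be disposed of before the normalization identity is invoked.
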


We apply Lemma \ref{lem:190410-1}
to compare $w(Q)$ and $\|\chi_Q\|_{L^{p(\cdot)}(w)}$.
\begin{remark} \label{rem:200807-1}
Let $Q$ be a cube.
In Lemma \ref{lem:190410-1},
let $f=w^{\frac{1}{p(\cdot)}}\chi_Q$
to obtain the following equivalence:
\begin{equation}\label{eq:191116-1}
\|\chi_Q\|_{L^{p(\cdot)}(w)} \le 1 
\Longleftrightarrow 
w(Q) \le 1.
\end{equation}
A direct consequence of 
(\ref{eq:191116-1}) is the following:
\begin{enumerate}
\item[$(1)$]
If
$\|\chi_Q\|_{L^{p(\cdot)}(w)}\le1$,
then
\begin{equation}\label{eq:210719-1}
\|\chi_Q\|_{L^{p(\cdot)}(w)}^{p_+(Q)}
\le
w(Q)
\le
\|\chi_Q\|_{L^{p(\cdot)}(w)}^{p_-(Q)}.
\end{equation}
\item[$(2)$]
If
$\|\chi_Q\|_{L^{p(\cdot)}(w)}\ge1$,
then
\begin{equation}\label{eq:210719-2}
\|\chi_Q\|_{L^{p(\cdot)}(w)}^{p_-(Q)}
\le
w(Q)
\le
\|\chi_Q\|_{L^{p(\cdot)}(w)}^{p_+(Q)}.
\end{equation}
\end{enumerate}
\end{remark}

Finally,
recall 
the localization principle due to H\"asto.
We state it in a form
we use in the present paper.
\begin{lemma}{\rm \cite{Hasto2009}} \label{lem:190627-11}
Let $p(\cdot) \in {\mathcal P} \cap {\rm LH}_0 \cap {\rm LH}_\infty$
and $k_0 \in {\mathbb Z}$.
Then
\[
\|f\|_{L^{p(\cdot)}}
\sim
\left(
\sum_{Q \in {\mathfrak D}_{k_0}}
(\|\chi_Q f\|_{L^{p(\cdot)}})^{p_\infty}
\right)^{\frac1{p_\infty}}
\]
for all $f \in L^0({\mathbb R}^n)$.
\end{lemma}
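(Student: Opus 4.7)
The plan is to translate the desired equivalence into a modular statement via Lemma \ref{lem:190410-1} and then use ${\rm LH}_0$ and ${\rm LH}_\infty$ to compare $p(\cdot)|_Q$ with the constant $p_\infty$. As a harmless preliminary, I reduce to the case when every $Q\in{\mathfrak D}_{k_0}$ has diameter at most $1/2$: if $k_0'>k_0$, each such cube splits into exactly $2^{n(k_0'-k_0)}$ sub-cubes of ${\mathfrak D}_{k_0'}$, so an elementary triangle-type comparison shows the right-hand sides at scales $k_0$ and $k_0'$ are comparable with constants depending only on $k_0'-k_0$, $n$, and $p_\infty$. Hence I may assume $2^{-k_0}\sqrt{n}\le 1/2$, so that Lemma \ref{lem:190613-3} becomes available and ${\rm LH}_0$ gives bounded oscillation of $p(\cdot)$ on each $Q$.

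For the direction $\|f\|_{L^{p(\cdot)}}\lesssim\bigl(\sum_Q\|\chi_Q f\|_{L^{p(\cdot)}}^{p_\infty}\bigr)^{1/p_\infty}$, I normalize so the right-hand side equals $1$. Then $\|\chi_Q f\|_{L^{p(\cdot)}}\le 1$ for every $Q$, and Lemma \ref{lem:190410-1}(1) gives $\int_Q|f(x)|^{p(x)}{\rm d}x\le\|\chi_Q f\|_{L^{p(\cdot)}}^{p_-(Q)}$. To convert $p_-(Q)$ into $p_\infty$, I split ${\mathfrak D}_{k_0}$ using a threshold: writing $x_Q$ for the center of $Q$ and fixing $M>n/p_-$, I separate cubes with $\|\chi_Q f\|_{L^{p(\cdot)}}\ge(e+|x_Q|)^{-M}$ from the rest. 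On the large cubes, ${\rm LH}_\infty$ gives $|p_-(Q)-p_\infty|\le c^*/\log(e+|x_Q|)$, and since $(e+R)^{a/\log(e+R)}=e^{a}$ for every $R\ge 0$ and $a\in\mathbb{R}$, the correction factor $\|\chi_Q f\|_{L^{p(\cdot)}}^{p_-(Q)-p_\infty}$ is bounded by $e^{Mc^*}$, so $\int_Q|f|^{p(x)}{\rm d}x\lesssim\|\chi_Q f\|_{L^{p(\cdot)}}^{p_\infty}$. On the small cubes, the direct bound $\int_Q|f|^{p(x)}{\rm d}x\le(e+|x_Q|)^{-Mp_-}$ is summable in $Q$ because $Mp_->n$. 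Adding the two pieces yields $\rho_{p(\cdot)}(f)\le C$ for an absolute constant $C$, and a final rescaling by $C^{1/p_-}$ produces $\|f\|_{L^{p(\cdot)}}\lesssim 1$.

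The reverse direction is symmetric. Normalizing $\|f\|_{L^{p(\cdot)}}=1$ forces $\rho_{p(\cdot)}(f)=1$, so $\|\chi_Q f\|_{L^{p(\cdot)}}\le 1$ and Lemma \ref{lem:190410-1}(1) gives $\|\chi_Q f\|_{L^{p(\cdot)}}^{p_+(Q)}\le\int_Q|f|^{p(x)}{\rm d}x$. The same threshold split, this time using $|p_+(Q)-p_\infty|\le c^*/\log(e+|x_Q|)$, converts $p_+(Q)$ into $p_\infty$ at the cost of a bounded multiplicative loss on the "large" cubes and an $\ell^1$-summable remainder $\sum_Q(e+|x_Q|)^{-Mp_-}$ on the "small" ones. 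Summing over $Q$ yields $\sum_Q\|\chi_Q f\|_{L^{p(\cdot)}}^{p_\infty}\lesssim 1$, finishing the equivalence.

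The main obstacle is exactly the threshold splitting: this is the only place where ${\rm LH}_\infty$ is genuinely used, and it hinges on the identity $(e+R)^{a/\log(e+R)}=e^a$ together with the choice $M>n/p_-$ that simultaneously keeps the correction factor on the "large" cubes bounded and makes the residual series on the "small" cubes summable. Everything else reduces to bookkeeping with Lemma \ref{lem:190410-1} and the dyadic subdivision step above.
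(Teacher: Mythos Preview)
The paper does not supply its own proof of this lemma; it is simply quoted from H\"ast\"o \cite{Hasto2009}. Your argument is a correct self-contained proof and is in the same spirit as H\"ast\"o's original: reduce to small cubes, pass between norm and modular via Lemma~\ref{lem:190410-1}, and absorb the exponent mismatch $|p_\pm(Q)-p_\infty|\lesssim 1/\log(e+|x_Q|)$ by a threshold split at level $(e+|x_Q|)^{-M}$ with $M>n/p_-$. The reduction step to diameter $\le 1/2$ is fine (each cube splits into a fixed number $N=2^{n(k_0'-k_0)}$ of subcubes, and the $\ell^{p_\infty}$-sums at the two scales are comparable with constants depending only on $N$ and $p_\infty$ via the triangle inequality), and the two normalizations together with the summability of $\sum_{Q\in{\mathfrak D}_{k_0}}(e+|x_Q|)^{-Mp_-}$ close both directions. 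One cosmetic point: in the ``small cubes'' part of the reverse direction you actually get the sharper tail $(e+|x_Q|)^{-Mp_\infty}$, which is of course dominated by your stated $(e+|x_Q|)^{-Mp_-}$; either bound is summable.
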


\subsection{Maximal inequalities}
\label{subsection:Maximal inequalities}
For
$f \in L^0({\mathbb R}^n)$,
we recall the local (Hardy--Littlewood) maximal operator $M^{\rm loc}$
by 
\[
M^{\rm loc} f(x)\equiv 
\sup_{Q \in {\mathcal Q}, |Q| \le 1}\frac{\chi_Q(x)}{|Q|}\int_Q |f(y)|{\rm d}y
\quad (x \in {\mathbb R}^n).
\]
We can replace $M^{\rm loc}$ by
\[
M^{{\rm loc},R} f(x)\equiv 
\sup_{Q \in {\mathcal Q}, |Q| \le R^n}
\frac{\chi_Q(x)}{|Q|}\int_Q |f(y)|{\rm d}y
\quad (x \in {\mathbb R}^n).
\]
Here $R>0$.
Although 
$M^{\rm loc}$
is typically defined by (\ref{eq:local maximal operator}),
sometimes replace $1$ by $R$ as above.
This substitution
avoids the self-composition of
$M^{\rm loc}$ defined by (\ref{eq:local maximal operator}). 
We remark that 
the definition of the $A_{p(\cdot)}^{\rm loc}$-norm 
is essentially independent of the cube size restriction. 
That is, given an exponent $p(\cdot)\,:\,\mathbb{R}^n \to (1,\infty)$
with $p_->1$,
a positive number $R >0$ and a weight $w$, 
we say that $w \in A^{{\rm loc},R}_{p(\cdot)}$ if 
$
[w]_{A^{{\rm loc},R}_{p(\cdot)}} 
\equiv 
\sup\limits_{|Q|\le R^n} 
|Q|^{-1} \|\chi_Q\|_{L^{p(\cdot)}(w)}\|\chi_Q\|_{L^{p'(\cdot)}(\sigma)}<\infty,
$
where $\sigma \equiv w^{-\frac{1}{p(\cdot)-1}}$ as before
and the supremum is taken over all cubes $Q \in \mathcal{Q}$
with 
volumes less than or equal to $R^n$.
Then, $w \in A^{{\rm loc},R}_{p(\cdot)}$ 
if and only if $w \in A^{{\rm loc}}_{p(\cdot)}.$
For more detail, see \cite[Section 3.3]{NS-local}.
Furthermore, we have
\begin{equation}\label{eq:211127-1}
\|M^{{\rm loc},R}f\|_{L^{p(\cdot)}(w)}
\lesssim
\|f\|_{L^{p(\cdot)}(w)}
\end{equation}
for all $f \in L^{p(\cdot)}(w)$
whenever $R>0$ and $w \in A^{{\rm loc},1}_{p(\cdot)}=A^{{\rm loc}}_{p(\cdot)}$.
For this reason, we subsume the parameter $R>0$
like this.

The next lemma is analogous of \cite[Theorem 1.5]{CFN-2012}.
In our earlier work \cite{NS-local},
we established that
the class $A_{p(\cdot)}^{\rm loc}$
is suitable for
this maximal operator.
\begin{lemma}{\rm \cite[Theorem 1.2]{NS-local}}\label{lem:190409-100}
Let $p(\cdot) \in {\mathcal P} \cap {\rm LH}_0 \cap {\rm LH}_\infty$.
Also let $w$ be a weight.
Then there exists a constant $D>0$ such that
\begin{equation}\label{eq:210713-1}
\|M^{\rm loc}f\|_{L^{p(\cdot)}(w)}
\le D
\|f\|_{L^{p(\cdot)}(w)}
\end{equation}
for all $f \in L^{p(\cdot)}(w)$
if and only if
$w \in A^{\rm loc}_{p(\cdot)}$.
\end{lemma}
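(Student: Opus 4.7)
The plan is to prove the two implications separately, following the scheme of Cruz-Uribe--Fiorenza--Neugebauer in \cite{CFN-2012} adapted to the local class $A^{\rm loc}_{p(\cdot)}$.

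For necessity ($\Rightarrow$), I would test (\ref{eq:210713-1}) on $f=\sigma\chi_Q$ for an arbitrary cube $Q$ with $|Q|\le 1$, where $\sigma\equiv w^{-1/(p(\cdot)-1)}$. The pointwise lower bound
\[
M^{\rm loc}f(x)\ge \frac{\sigma(Q)}{|Q|}\chi_Q(x)
\]
combined with the boundedness hypothesis yields
\[
\frac{\sigma(Q)}{|Q|}\|\chi_Q\|_{L^{p(\cdot)}(w)}\le D\,\|\sigma\chi_Q\|_{L^{p(\cdot)}(w)}.
\]
The pointwise identity $\sigma(x)^{p(x)}w(x)=\sigma(x)$ makes the $L^{p(\cdot)}(w)$-modular of $\sigma\chi_Q$ coincide with the $L^{p'(\cdot)}(\sigma)$-modular of $\chi_Q$. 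Feeding this into the modular inequality of Lemma \ref{lem:190410-1} yields $\|\sigma\chi_Q\|_{L^{p(\cdot)}(w)}\sim\|\chi_Q\|_{L^{p'(\cdot)}(\sigma)}$ up to a factor independent of $Q$, which is exactly the $A^{\rm loc}_{p(\cdot)}$ condition (\ref{eq:210921-111}).

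For sufficiency ($\Leftarrow$), I would first invoke (\ref{eq:220205-71}) and the dyadic-grid discussion of Section \ref{subsection:Dyadic grids} to reduce to bounding each dyadic local maximal operator $M^{{\mathcal D}_{\bf a}}$ restricted to cubes of sidelength at most one on $L^{p(\cdot)}(w)$. I would then proceed in the following steps:
\begin{enumerate}
\item[(a)] apply the openness property of $A^{\rm loc}_{p(\cdot)}$ (the content of Section \ref{s2.20}) to find $s>1$ with $w$ still in a suitable $A^{\rm loc}_{p(\cdot)/s}$-type class;
\item[(b)] split $f=f_1+f_2$ with $f_1=f\chi_{\{|f|>1\}}$ and $f_2=f\chi_{\{|f|\le 1\}}$, and on each local cube $Q$ use the log-Hölder continuity of $p(\cdot)$ together with Lemma \ref{lem:190613-3} to replace $p(\cdot)$ by the constant $p(z_Q)$ at a controllable cost;
\item[(c)] use ${\rm LH}_\infty$ to dominate the contribution of $f_2$ by an integrable tail of the form $(e+|x|)^{-np_-}$, independent of $f$;
\item[(d)] conclude by invoking Rychkov's classical inequality for $M^{\rm loc}$ on constant-exponent weighted Lebesgue spaces with local Muckenhoupt weights.
\end{enumerate}

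The main obstacle is the sufficiency direction. In the non-local Cruz-Uribe--Fiorenza--Neugebauer argument one has access to the full $A_{p(\cdot)}$ condition and $Mf$ over arbitrary cubes, whereas here one must restrict to cubes of sidelength at most one throughout and rely on the log-Hölder decay at infinity to bridge the local estimates into a global one. Establishing the appropriate openness principle for $A^{\rm loc}_{p(\cdot)}$, which is exactly the content of Section \ref{s2.20}, is therefore the most delicate technical input; once that is in hand, the remaining steps are essentially local versions of the existing variable-exponent machinery.
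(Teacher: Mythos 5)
First, note that the paper does not actually prove this lemma: it is cited verbatim from \cite[Theorem 1.2]{NS-local} and used as a black box. The closest argument in the paper is the proof in Section \ref{s88} of Theorem \ref{thm:9.3}, which establishes the analogous characterization for the dyadic variant $\tilde{A}_{p(\cdot)}^{\mathfrak D}$ and is given only so as to deduce the monotonicity Proposition \ref{prop:211027-111}. With that caveat, your attempt can be compared to that argument.

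Your necessity direction has a genuine gap. You test on $f=\sigma\chi_Q$ and assert that the modular identity $\rho^w_{p(\cdot)}(\sigma\chi_Q)=\rho^\sigma_{p'(\cdot)}(\chi_Q)=\sigma(Q)$, fed through Lemma \ref{lem:190410-1}, gives $\|\sigma\chi_Q\|_{L^{p(\cdot)}(w)}\sim\|\chi_Q\|_{L^{p'(\cdot)}(\sigma)}$. That equivalence is false, already for constant $p$: then $\|\sigma\chi_Q\|_{L^{p}(w)}=\sigma(Q)^{1/p}$ while $\|\chi_Q\|_{L^{p'}(\sigma)}=\sigma(Q)^{1/p'}$, which differ unless $p=2$ or $\sigma(Q)=1$. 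Equal modulars do not yield equal quasi-norms once the exponents differ, so Lemma \ref{lem:190410-1} cannot produce such an equivalence. What your computation actually needs is the product estimate $\|\sigma\chi_Q\|_{L^{p(\cdot)}(w)}\,\|\chi_Q\|_{L^{p'(\cdot)}(\sigma)}\lesssim\sigma(Q)$, which is true but requires the refined norm-to-modular comparisons of the type in Corollary \ref{cor:210907-1}, not the bare modular inequality. A cleaner route that avoids all of this is to leave $f$ arbitrary: since $m_Q(|f|)\chi_Q\le M^{\rm loc}f$ for any cube $Q$ with $|Q|\le1$, the hypothesis gives $m_Q(|f|)\,\|\chi_Q\|_{L^{p(\cdot)}(w)}\le D\|f\|_{L^{p(\cdot)}(w)}$; taking the supremum over $f$ in the unit ball of $L^{p(\cdot)}(w)$ and using the variable-exponent H\"older inequality (Lemma \ref{thm:gHolder}) together with the norm-conjugate formula gives $\sup_f\int_Q|f|\sim\|\chi_Q\|_{L^{p'(\cdot)}(\sigma)}$, which is exactly condition (\ref{eq:210921-111}).

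Your sufficiency sketch has the right ingredients listed but substantially underestimates the work, and one of the listed steps would not survive contact with the weighted setting. The split $f=f\chi_{\{|f|>1\}}+f\chi_{\{|f|\le1\}}$ is the standard unweighted device; in the weighted local setting the threshold $1$ is not the right normalization. In the proof of Lemma \ref{lem:210803-2} one instead fixes a small scale $r_0$, works on $3Q$, replaces $p(\cdot)$ by $q(\cdot)=p(\cdot)/p_-(3Q)$, and introduces the weight-dependent level $\beta=\max(1,w(3Q)^{1/p_-(3Q)})$ into the Young-type splitting; the openness step is a local Hyt\"onen--P\'erez reverse H\"older on the family $A_q^{\mathfrak D}(3Q)$, not the global openness of $A^{\rm loc}_{p(\cdot)}$. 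One then still needs H\"ast\"o's localization (Lemma \ref{lem:190627-11}) to patch the small cubes together, and separate machinery (Lemma \ref{lem:220201-1}, Corollary \ref{cor:220121-1}, and the analysis of $M^{\mathfrak D}_{\ge r_0}$) to control the large-scale part of the dyadic maximal operator that survives after the passage from $M^{\rm loc}$ to the grids $M^{{\mathcal D}_{\bf a}}$. Your step (d), an appeal to Rychkov's constant-exponent theorem, is only the last brick, not a shortcut through the earlier steps. So the outline is plausible in spirit but a complete proof would have to be built along the lines of Section \ref{s88} (or \cite{NS-local}) rather than the four bullets you list.
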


Define $A_1^{\rm loc}$ as the collection of all weights
for which there exists a constant $C>0$ such that
$M^{\rm loc}w \le C w$.
The infimum of such $C$ is called the $A_1^{\rm loc}$-constant 
and is denoted by $[w]_{A_1^{\rm loc}}$.
Remark that
a similar remark for $A_{p(\cdot)}^{\rm loc}$ for $R \ge 1$ applies
to $A_1^{\rm loc}$.
We use the following local reverse H\"{o}lder property:
\begin{lemma}\label{lem:210921-113}
Let 
$w \in A_1^{\rm loc}$.
If we set
\[
\varepsilon\equiv\frac{1}{2^{n+11}[w]_{A_1^{\rm loc}}}>0,
\]
then
$w^{1+\varepsilon} \in A_1^{\rm loc}$.
\end{lemma}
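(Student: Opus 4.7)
The plan is to reduce the claim to a quantitative local reverse H\"older inequality combined with the defining pointwise bound of $A_1^{\rm loc}$. First, observe that $A_1^{\rm loc}\subset A_\infty^{\rm loc}$ with $[w]_{A_\infty^{\rm loc}}\le[w]_{A_1^{\rm loc}}$: indeed, for any cube $Q$ with $|Q|\le1$, the $A_1^{\rm loc}$ inequality $m_Q(w)\le[w]_{A_1^{\rm loc}}\,{\rm essinf}_Q w$ implies $\log m_Q(w)\le\log[w]_{A_1^{\rm loc}}+m_Q(\log w)$, hence $m_Q(w)\exp(-m_Q(\log w))\le[w]_{A_1^{\rm loc}}$.

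Next, the key analytic input is a sharp local reverse H\"older inequality of Hyt\"onen--P\'erez type, adapted to the class $A_\infty^{\rm loc}$: for $w\in A_\infty^{\rm loc}$ and $\varepsilon=\frac{1}{2^{n+11}[w]_{A_\infty^{\rm loc}}}$, one has
\[
m_Q(w^{1+\varepsilon})^{\frac{1}{1+\varepsilon}}\le 2\,m_Q(w)\quad\text{for every cube $Q\in\mathcal Q$ with $|Q|\le 1$.}
\]
This is proved by a localized Calder\'on--Zygmund stopping-time decomposition inside $Q$ at geometrically increasing heights, with the precise tracking of the stopping constants producing the exponent $\frac{1}{2^{n+11}[w]_{A_\infty^{\rm loc}}}$; the requirement $|Q|\le 1$ restricts the recursion to admissible cubes in $A_\infty^{\rm loc}$. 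Because the $\varepsilon$ in the statement of Lemma~\ref{lem:210921-113} satisfies $\varepsilon\le\frac{1}{2^{n+11}[w]_{A_\infty^{\rm loc}}}$ by the previous paragraph, this sharp reverse H\"older inequality applies with the very $\varepsilon$ given.

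With both ingredients in hand, the self-improvement is a short computation. For a.e.\ $x\in Q$ with $|Q|\le 1$, the defining bound gives $m_Q(w)\le[w]_{A_1^{\rm loc}}w(x)$, so raising to the $(1+\varepsilon)$-th power and combining with the reverse H\"older yields
\[
m_Q(w^{1+\varepsilon})\le 2^{1+\varepsilon}m_Q(w)^{1+\varepsilon}\le\bigl(2[w]_{A_1^{\rm loc}}\bigr)^{1+\varepsilon}w(x)^{1+\varepsilon}.
\]
Taking the supremum over all $Q\ni x$ with $|Q|\le 1$ produces $M^{\rm loc}(w^{1+\varepsilon})(x)\le(2[w]_{A_1^{\rm loc}})^{1+\varepsilon}w(x)^{1+\varepsilon}$ for a.e.\ $x$, which is exactly $w^{1+\varepsilon}\in A_1^{\rm loc}$ with $[w^{1+\varepsilon}]_{A_1^{\rm loc}}\le(2[w]_{A_1^{\rm loc}})^{1+\varepsilon}$.

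The one nontrivial step is the quantitative local reverse H\"older inequality with the explicit exponent $\frac{1}{2^{n+11}[w]_{A_\infty^{\rm loc}}}$; the rest of the argument is the standard $A_1$ self-improvement reduction, and the only point where locality matters is that the stopping-time recursion must be kept within the regime $|Q|\le 1$ so that the $A_\infty^{\rm loc}$ hypothesis is usable throughout.
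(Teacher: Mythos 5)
Your proposal follows essentially the same route as the paper: the paper's proof of this lemma is a one-line citation of \cite[Theorem 2.3]{HyPe13} together with the assertion that it localizes, and your argument is precisely the structure of that theorem's proof (Jensen to pass from $A_1^{\rm loc}$ to $A_\infty^{\rm loc}$ with comparable constant, a local reverse H\"older inequality, then the $A_1^{\rm loc}$ pointwise bound raised to the power $1+\varepsilon$) carried out on cubes of volume at most one. One small calibration detail: the Hyt\"onen--P\'erez reverse H\"older inequality with exponent $1/(2^{n+11}[w]_{A_\infty})$ is stated for the Fujii--Wilson $A_\infty$ characteristic rather than for the exponential one that this paper uses to define $[w]_{A_\infty^{\rm loc}}$, but since both characteristics are dominated by $[w]_{A_1^{\rm loc}}$---Fujii by the pointwise $A_1$ inequality, exponential by Jensen as you observe---the chain of inequalities and hence the conclusion is unaffected.
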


\begin{proof}
This is a local version of \cite[Theorem 2.3]{HyPe13}.
We omit the further details.
\end{proof}

We collect some corollaries from
$(\ref{eq:210921-111})$
and 
Lemmas \ref{lem:190409-100} and \ref{lem:210921-113}.
As a byproduct,
we learn that $q$ satisfying
all requirements in 
Theorem \ref{cor:211009-1} exists. 
The next assertions are known for the global Muckenhoupt class.
However, the corresponding assertion for local class is missing.
So, we supply the proof.
\begin{lemma}\label{lem:210921-112}
Let
$w$ be a weight
and let
$p(\cdot) \in {\mathcal P} \cap {\rm LH}_0 \cap {\rm LH}_\infty$.
\begin{enumerate}
\item
The following are equivalent:
\begin{itemize}
\item
$w \in A_{p(\cdot)}^{\rm loc}$.
\item
$w^{-\frac{1}{p(\cdot)-1}} \in A_{p'(\cdot)}^{\rm loc}$.
\end{itemize}
\item
Let
$w \in A_{p(\cdot)}^{\rm loc}$.
Then
there exists $\varepsilon>0$ such that
$1+\varepsilon<p_-$ and that
$w \in A_{p(\cdot)/(1+\varepsilon)}^{\rm loc}$.
\end{enumerate}
\end{lemma}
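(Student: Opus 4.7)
\textbf{Part (1)} is a direct symmetry argument. Setting $\sigma = w^{-1/(p(\cdot)-1)}$, the algebraic identity $p'(\cdot) - 1 = 1/(p(\cdot)-1)$ gives $\sigma^{-1/(p'(\cdot)-1)} = w$, so the weight canonically dual to $\sigma$ with respect to $p'(\cdot)$ is $w$ itself. Consequently, $[\sigma]_{A^{\rm loc}_{p'(\cdot)}}$ equals the supremum over cubes with $|Q|\le1$ of $\frac{1}{|Q|}\|\chi_Q\|_{L^{p'(\cdot)}(\sigma)}\|\chi_Q\|_{L^{p(\cdot)}(w)}$, which is exactly $[w]_{A^{\rm loc}_{p(\cdot)}}$ in \eqref{eq:210921-111}. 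Hence (1) follows, with the stronger statement $[\sigma]_{A^{\rm loc}_{p'(\cdot)}}=[w]_{A^{\rm loc}_{p(\cdot)}}$.

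\textbf{Part (2)} is the openness-from-below of the class $A^{\rm loc}_{p(\cdot)}$, and the plan parallels the classical $A_p$ argument. First, (1) yields $\sigma \in A^{\rm loc}_{p'(\cdot)}$. Using the generalized H\"older inequality (Lemma \ref{thm:gHolder}) together with Lemma \ref{lem:190613-3} to replace the variable exponent by $p_\pm$ on each cube with $|Q|\le1$, both $w$ and $\sigma$ are seen to lie in $A^{\rm loc}_\infty$. Next, apply the refined local reverse H\"older estimate developed in Section \ref{s2.20} (a Hyt\"onen--P\'erez style improvement of Lemma \ref{lem:210921-113}) to produce $\delta>0$ such that $\sigma$ satisfies a uniform local reverse H\"older inequality of order $1+\delta$ over all cubes with $|Q|\le1$. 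Finally, choose $\varepsilon\in(0,p_--1)$ small enough that the perturbed dual weight $\tau = w^{-1/(p(\cdot)/(1+\varepsilon)-1)}$ is pointwise dominated by a constant multiple of $\sigma^{1+\delta'}$ on every cube $|Q|\le 1$ for some $\delta'\le\delta$; the reverse H\"older estimate combined with the modular characterization of Lemma \ref{lem:190410-1} then bounds $\|\chi_Q\|_{L^{(p(\cdot)/(1+\varepsilon))'}(\tau)}$ by a constant multiple of $\|\chi_Q\|_{L^{p'(\cdot)}(\sigma)}$, while $\|\chi_Q\|_{L^{p(\cdot)/(1+\varepsilon)}(w)}\lesssim\|\chi_Q\|_{L^{p(\cdot)}(w)}$ on such $Q$ by Remark \ref{rem:200807-1}. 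Multiplying these two bounds and dividing by $|Q|$ yields $w\in A^{\rm loc}_{p(\cdot)/(1+\varepsilon)}$.

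The main obstacle is that, unlike in the constant-exponent setting, $\tau$ is \emph{not} a pointwise power of $\sigma$, so the reverse H\"older inequality for $\sigma$ cannot be applied directly to $\tau$. Bridging this gap relies on the ${\rm LH}_0\cap{\rm LH}_\infty$ regularity of $p(\cdot)$: on cubes of volume at most $1$ the oscillation of $p(\cdot)$ is logarithmic in $\ell(Q)$ and can be absorbed into polynomial factors through the norm-growth estimates of Lemma \ref{lem:190613-3} and the modular inequality of Lemma \ref{lem:190410-1}. One must also arrange that $\varepsilon$---constrained both by the reverse-H\"older exponent $\delta$ (which depends only on $[\sigma]_{A^{\rm loc}_\infty}$ and the dimension) and by the requirement $1+\varepsilon<p_-$---can be chosen simultaneously, which forces the relation between $\varepsilon$ and $\delta$ to involve $p_-$ much as in the classical case.
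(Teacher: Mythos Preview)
Your Part~(1) is correct and is exactly what the paper does: the symmetry of definition \eqref{eq:210921-111} under the substitution $(w,p(\cdot))\leftrightarrow(\sigma,p'(\cdot))$ gives $[w]_{A^{\rm loc}_{p(\cdot)}}=[\sigma]_{A^{\rm loc}_{p'(\cdot)}}$.

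Your Part~(2), however, takes a substantially different route from the paper and contains a genuine gap. The paper does \emph{not} attempt to control $\tau=w^{-1/(p(\cdot)/(1+\varepsilon)-1)}$ directly. Instead it works through the maximal-operator characterization (Lemma~\ref{lem:190409-100}) via the Rubio de Francia algorithm: given $f\in L^{p(\cdot)}(w)$, set $F=\sum_{k\ge0}(2D)^{-k}(M^{\rm loc})^kf$, so that $M^{\rm loc}F\le 2DF$ and hence $F\in A_1^{\rm loc}$ with constant independent of $f$. Then Lemma~\ref{lem:210921-113} applied to $F$ gives $M^{\rm loc}(F^{1+\varepsilon})^{1/(1+\varepsilon)}\le 2M^{\rm loc}F$ for some $\varepsilon\in(0,p_--1)$ depending only on $D$. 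Chaining $|f|\le F$, this reverse H\"older bound, and the $L^{p(\cdot)}(w)$-boundedness of $M^{\rm loc}$ yields $\|M^{\rm loc}(|f|^{1+\varepsilon})^{1/(1+\varepsilon)}\|_{L^{p(\cdot)}(w)}\le 8D\|f\|_{L^{p(\cdot)}(w)}$, which is exactly the boundedness of $M^{\rm loc}$ on $L^{p(\cdot)/(1+\varepsilon)}(w)$, and Lemma~\ref{lem:190409-100} then gives $w\in A^{\rm loc}_{p(\cdot)/(1+\varepsilon)}$. The variable exponent never interacts with the reverse H\"older step, because the latter is applied to the $A_1^{\rm loc}$ weight $F$, not to $\sigma$.

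The gap in your argument is precisely the obstacle you flag: writing $\tau(x)=\sigma(x)^{\alpha(x)}$ with $\alpha(x)=(1+\varepsilon)(p(x)-1)/(p(x)-1-\varepsilon)$, the claimed pointwise domination $\tau\le C\sigma^{1+\delta'}$ fails in general, since the required inequality flips direction depending on whether $\sigma(x)\gtrless1$, and no a~priori pointwise control on $\sigma$ is available. Your proposed fix via the ${\rm LH}$ conditions and Lemmas~\ref{lem:190613-3}--\ref{lem:190410-1} controls oscillation of $p(\cdot)$ over a cube, but not the size of $\sigma$ itself, so it does not close the gap. The auxiliary inequality $\|\chi_Q\|_{L^{p(\cdot)/(1+\varepsilon)}(w)}\lesssim\|\chi_Q\|_{L^{p(\cdot)}(w)}$ is also not what Remark~\ref{rem:200807-1} gives (the two quantities are related to different powers of $w(Q)$ and the comparison depends on $w(Q)\gtrless1$). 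The Rubio de Francia argument sidesteps all of this by never touching $\tau$.
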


\begin{proof}\
\begin{enumerate}
\item
This is an immediate consequence
of 
$(\ref{eq:210921-111})$.
\item
Let
$f \in L^{p(\cdot)}(w) \setminus \{0\}$.
We employ the Rubio de Francia algorithm.
Then define
\[
F
\equiv
\sum_{k=0}^\infty \frac{(M^{\rm loc})^k f}{2^kD^k}
\]
where
$D>0$ is the constant in (\ref{eq:210713-1}),
$(M^{\rm loc})^k$ denotes the $k$-fold composition if $k \ge 1$ and
$(M^{\rm loc})^0 f\equiv|f|$.
Then
$F \le M^{\rm loc} F \le 2DF$.
Hence, $F \in A_1^{\rm loc}$ and $[F]_{A_1^{\rm loc}} \le 2D$.
Due to Lemma \ref{lem:210921-113},
there exists a constant $\varepsilon\in(0,p_--1)$ which depends on
$D$ and $p(\cdot)$
such that
$M^{\rm loc}(F^{1+\varepsilon})^{\frac{1}{1+\varepsilon}}\le 2M^{\rm loc}F$.
Thus,
\begin{align*}
\|M^{\rm loc}(|f|^{1+\varepsilon})^{\frac{1}{1+\varepsilon}}\|_{L^{p(\cdot)}(w)}
&\le
\|M^{\rm loc}(F^{1+\varepsilon})^{\frac{1}{1+\varepsilon}}\|_{L^{p(\cdot)}(w)}\\
&\le
2
\|M^{\rm loc}F\|_{L^{p(\cdot)}(w)}\\
&\le
4D
\|F\|_{L^{p(\cdot)}(w)}\\
&\le
8D
\|f\|_{L^{p(\cdot)}(w)}.
\end{align*}
Since $f \in L^{p(\cdot)}(w) \setminus \{0\}$ is arbitrary,
it follows
from Lemma \ref{lem:190409-100}
 that
$M^{\rm loc}$ is bounded on $L^{p(\cdot)/(1+\varepsilon)}(w)$.
Hence
$w \in A_{p(\cdot)/(1+\varepsilon)}^{\rm loc}$.
\end{enumerate}
\end{proof}
We use the following monotone property
of the class $A_{p(\cdot)}^{\rm loc}$.
The proof is postponed until Appendix;
see the remark after Corollary \ref{cor:9.4}.
\begin{proposition}\label{prop:211027-111}
Let
$p(\cdot),q(\cdot)
\in {\mathcal P} \cap {\rm LH}_0 \cap {\rm LH}_\infty$.
If $q(\cdot) \ge p(\cdot)$,
then $A_{q(\cdot)}^{\rm loc} \supset A_{p(\cdot)}^{\rm loc}$.
\end{proposition}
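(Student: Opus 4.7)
The statement is the variable-exponent analogue of the classical monotonicity $A_p \subset A_q$ for $p\le q$, which in the constant-exponent case follows in one line from Jensen's inequality applied to $m_Q(\sigma_p^{(p-1)/(q-1)})$, where $\sigma_p=w^{-1/(p-1)}$. The obstruction in our setting is that the closed form $\|\chi_Q\|_{L^p(w)}=w(Q)^{1/p}$ is replaced only by the two-sided comparisons of Remark \ref{rem:200807-1}, with distinct exponents $p_{\pm}(Q)$.

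The natural detour is through the maximal-operator characterization of Lemma \ref{lem:190409-100}: it suffices to show that boundedness of $M^{\rm loc}$ on $L^{p(\cdot)}(w)$ implies boundedness on $L^{q(\cdot)}(w)$. Applying the self-improvement Lemma \ref{lem:210921-112}(2) I would first produce $\varepsilon>0$ with $w\in A_{p(\cdot)/(1+\varepsilon)}^{\rm loc}$; combining the pointwise Jensen bound $M^{\rm loc}f\le (M^{\rm loc}|f|^{1+\varepsilon})^{1/(1+\varepsilon)}$ with the Luxemburg scaling $\||h|^{1+\varepsilon}\|_{L^{p(\cdot)/(1+\varepsilon)}(w)}=\|h\|_{L^{p(\cdot)}(w)}^{1+\varepsilon}$ sets up an interpolation between $L^{p(\cdot)/(1+\varepsilon)}(w)$ and $L^\infty(w)=L^\infty$, on which $M^{\rm loc}$ is trivially bounded, aiming to reach any $L^{q(\cdot)}(w)$ with $q(\cdot)\ge p(\cdot)$.

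The paper, however, defers the proof to the remark after Corollary \ref{cor:9.4}, which signals that the efficient route passes through Section \ref{s88}, where $A_{p(\cdot)}^{\rm loc}$ is identified with the Diening--H\"asto-style Musielak--Orlicz local Muckenhoupt class formulated via Luxemburg averages. In that framework the pointwise identity $\sigma_q(x)=\sigma_p(x)^{\theta(x)}$ with $\theta(x)=(p(x)-1)/(q(x)-1)\in(0,1]$, combined with Jensen on the Luxemburg average restricted to a cube $Q$ with $|Q|\le 1$, recovers the clean monotonicity argument in the style of the classical proof. My plan is therefore: (i) translate $w\in A_{p(\cdot)}^{\rm loc}$ into the Diening--H\"asto form via Corollary \ref{cor:9.4}, (ii) run Jensen in that framework to deduce the Diening--H\"asto $A_{q(\cdot)}^{\rm loc}$ condition, and (iii) translate back through Corollary \ref{cor:9.4}.

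The main obstacle I anticipate is step (ii): because $\theta(\cdot)$ is genuinely variable, Jensen does not yield a clean pointwise improvement, and one must absorb the fluctuation of $\theta(\cdot)$ over each cube $Q$ with $|Q|\le 1$ using the local log-H\"older continuity together with Lemma \ref{lem:190613-3}, so that the multiplicative errors incurred are uniform in $Q$.
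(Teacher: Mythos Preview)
Your overall architecture matches the paper's: pass from $A_{p(\cdot)}^{\rm loc}$ to the dyadic class $A_{p(\cdot)}^{\mathfrak D}$, identify the latter with the Diening--H\"ast\"o class $\tilde{A}_{p(\cdot)}^{\mathfrak D}$ (defined by $\sup_Q |Q|^{-p_Q}\,w(Q)\,\|w^{-1}\|_{L^{p'(\cdot)/p(\cdot)}(Q)}<\infty$), prove monotonicity for $\tilde{A}$, and translate back. Two corrections, though. First, the translation step is Theorem~\ref{thm:9.3}, not Corollary~\ref{cor:9.4}; the latter \emph{is} the dyadic monotonicity you are trying to prove. Second, and more substantively, the paper does not use Jensen for step~(ii) but rather the generalized H\"older inequality (Lemma~\ref{lem:210803-1}): defining $\alpha(\cdot)$ by $\tfrac{1}{\alpha(\cdot)}=q(\cdot)-p(\cdot)\ge 0$, one has
\[
\|w^{-1}\|_{L^{1/(q(\cdot)-1)}(Q)}
\;\lesssim\;
\|w^{-1}\|_{L^{1/(p(\cdot)-1)}(Q)}\,\|\chi_Q\|_{L^{\alpha(\cdot)}},
\]
and $\|\chi_Q\|_{L^{\alpha(\cdot)}}\sim|Q|^{1/\alpha_Q}$ combines with the $|Q|^{-q_Q}$ prefactor to yield exactly $|Q|^{-p_Q}$, using only that $\alpha(\cdot)\in{\rm LH}_0\cap{\rm LH}_\infty$ and Lemma~\ref{lem:190613-3}. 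This completely sidesteps the variable-$\theta$ obstacle you flagged: H\"older handles the variable exponent natively, whereas your Jensen route would force you to freeze $\theta$ on each cube and then control the error, which is workable but unnecessary. Your plan is sound, but swap Jensen for H\"older in step~(ii) and the anticipated difficulty disappears.
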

A clarifying remark may be in order.
\begin{remark}\label{rem:211031-11}
Let
$p(\cdot),q(\cdot)
\in {\mathcal P} \cap {\rm LH}_0 \cap {\rm LH}_\infty$.
Combining the result by Diening and H\"{a}sto
\cite{DiHapre} and the one by 
Cruz-Uribe, Fiorenza and Neugebauer \cite{CFN-2012}, 
we learn that
$A_{q(\cdot)} \supset A_{p(\cdot)}$
whenever
$q(\cdot) \ge p(\cdot)$.
In this paper, we will follow the idea of \cite{DiHapre}
to prove Proposition \ref{prop:211027-111}.
\end{remark}
We 
move on to the vector-valued inequality, which
is an extension
of \cite{CFMP} to the setting of the $A_\infty^{\rm loc}$-class.
\begin{lemma}\label{lem:190408-100}{\rm \cite[Theorem 1.11]{NS-local}}
Let $p(\cdot) \in {\mathcal P} \cap {\rm LH}_0 \cap {\rm LH}_\infty$.
Let $1<q \le \infty$ and $w \in A^{\rm loc}_{p(\cdot)}$.
Then
\[
\left\|\left(\sum_{j=1}^{\infty}(M^{\rm loc}f_j)^q\right)^{\frac1q}\right\|_{L^{p(\cdot)}(w)}
\lesssim
\left\|\left(\sum_{j=1}^{\infty}|f_j|^q\right)^{\frac1q}\right\|_{L^{p(\cdot)}(w)}
\]
for all $\{f_j\}_{j=1}^\infty \subset L^0({\mathbb R}^n)$.
\end{lemma}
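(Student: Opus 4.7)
The plan is to combine a dyadic reduction with a local version of the Rubio de Francia extrapolation developed in \cite{CFMP}. Using \eqref{eq:220205-71} together with the analogous reduction for $M^{\rm loc}$ in \cite[\S 4]{NS-local}, it suffices to prove the inequality with $M^{\rm loc}$ replaced by the dyadic maximal operator $M^{{\mathcal D}_{\bf a}}$ for each ${\bf a}\in\{0,1,2\}^n$. The case $q=\infty$ is immediate from Lemma \ref{lem:190409-100} and the pointwise bound $\sup_j M^{\rm loc}f_j\le M^{\rm loc}(\sup_j|f_j|)$, so assume $1<q<\infty$.

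First I would establish the constant-exponent version: for every $p_0\in(1,\infty)$, every $q\in(1,\infty]$ and every $W\in A^{\rm loc}_{p_0}$,
\[
\Bigl\|\Bigl(\sum_j(M^{\rm loc}f_j)^q\Bigr)^{1/q}\Bigr\|_{L^{p_0}(W)}
\lesssim
\Bigl\|\Bigl(\sum_j|f_j|^q\Bigr)^{1/q}\Bigr\|_{L^{p_0}(W)}.
\]
When $q>p_0$ this follows by interpolating the scalar bound (Lemma \ref{lem:190409-100}) with the $q=\infty$ case; when $1<q\le p_0$ one dualizes the $\ell^q$-norm against $\ell^{q'}$ and applies the scalar bound for the conjugate pair $(p_0',W^{1-p_0'})$, which lies in $A^{\rm loc}_{p_0'}$ by Lemma \ref{lem:210921-112}(1).

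Next I would extrapolate to variable $p(\cdot)$. Using Lemma \ref{lem:210921-112}(2), choose $\varepsilon\in(0,p_--1)$ with $w\in A^{\rm loc}_{p(\cdot)/(1+\varepsilon)}$ and set $r\equiv 1+\varepsilon$. Write $G\equiv(\sum_j(M^{\rm loc}f_j)^q)^{1/q}$, $h\equiv(\sum_j|f_j|^q)^{1/q}$, and $\sigma_r\equiv w^{-1/(p(\cdot)/r-1)}$. Variable-exponent duality in $L^{p(\cdot)/r}(w)$ gives
\[
\|G\|_{L^{p(\cdot)}(w)}^{r}=\|G^r\|_{L^{p(\cdot)/r}(w)}\sim\sup_{\varphi\ge 0}\int G^r\,\varphi\,{\rm d}x,
\]
the supremum taken over $\varphi$ with $\|\varphi\|_{L^{(p(\cdot)/r)'}(\sigma_r)}\le 1$. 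Define $\Phi\equiv\sum_{k\ge 0}(M^{\rm loc})^k\varphi/(2D)^k$, where $D$ is the norm of $M^{\rm loc}$ on $L^{(p(\cdot)/r)'}(\sigma_r)$; then $\Phi\ge\varphi$, $\|\Phi\|_{L^{(p(\cdot)/r)'}(\sigma_r)}\le 2\|\varphi\|_{L^{(p(\cdot)/r)'}(\sigma_r)}$, and $\Phi\in A^{\rm loc}_1\subset A^{\rm loc}_r$ with $[\Phi]_{A^{\rm loc}_1}\le 2D$. Hence
\begin{align*}
\int G^r\varphi\,{\rm d}x
&\le\int G^r\Phi\,{\rm d}x
\lesssim\int h^r\Phi\,{\rm d}x\\
&\le\|h^r\|_{L^{p(\cdot)/r}(w)}\|\Phi\|_{L^{(p(\cdot)/r)'}(\sigma_r)}
\lesssim\|h\|_{L^{p(\cdot)}(w)}^r,
\end{align*}
where the middle inequality is the constant-exponent Fefferman--Stein estimate applied with $p_0=r$ and weight $\Phi$, and the fourth is the generalized H\"older inequality.

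The main obstacle is verifying that the Rubio de Francia majorant $\Phi$ genuinely lies in $A^{\rm loc}_1$ with a quantitative constant independent of $\varphi$. The classical reverse H\"older inequality must be replaced by its local counterpart (Lemma \ref{lem:210921-113}), and the iteration produces only the local pointwise bound $M^{\rm loc}\Phi\le 2D\cdot\Phi$ rather than $M\Phi\le 2D\cdot\Phi$. All subsequent steps are legitimate only because the constant-exponent Fefferman--Stein estimate was itself proved for $A^{\rm loc}_{p_0}$ weights rather than $A_{p_0}$ weights; maintaining this distinction throughout the extrapolation is the delicate point.
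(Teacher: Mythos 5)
This lemma is imported verbatim from \cite[Theorem 1.11]{NS-local}; the paper gives no proof, so there is no in-paper argument to compare against, and what you have written is a reconstruction of the cited proof. The skeleton you propose --- prove the constant-exponent Fefferman--Stein inequality for $A^{\rm loc}_{p_0}$ weights, then pass to $L^{p(\cdot)}(w)$ via a Rubio de Francia algorithm run with $M^{\rm loc}$ --- is the standard route and is consistent with the machinery this paper relies on elsewhere (the same $\Phi=\sum_k (M^{\rm loc})^k f/(2D)^k$ device already appears in the proof of Lemma~\ref{lem:210921-112}(2)). Your extrapolation step is correct as written: $w\in A^{\rm loc}_{p(\cdot)/r}$ gives $\sigma_r\in A^{\rm loc}_{(p(\cdot)/r)'}$ by Lemma~\ref{lem:210921-112}(1), so $M^{\rm loc}$ is bounded on $L^{(p(\cdot)/r)'}(\sigma_r)$, and the iterated-maximal majorant satisfies $M^{\rm loc}\Phi\le 2D\Phi$ by termwise application of the scalar bound, whence $\Phi\in A^{\rm loc}_1\subset A^{\rm loc}_r$ with $[\Phi]_{A^{\rm loc}_1}\le 2D$ uniformly. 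Your closing worry that $\Phi\in A^{\rm loc}_1$ might be delicate is unfounded; that much is routine.

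The one step I would not accept as stated is the constant-exponent case $1<q\le p_0$. Dualizing $L^{p_0}(W;\ell^q)$ against $L^{p_0'}(W^{1-p_0'};\ell^{q'})$ and then ``applying the scalar bound for $(p_0',W^{1-p_0'})$'' does not close the argument: after invoking the Fefferman--Stein pointwise dualization
\[
\int_{{\mathbb R}^n}M^{\rm loc}f\cdot g\,{\rm d}x\lesssim\int_{{\mathbb R}^n}|f|\,M^{{\rm loc},R}g\,{\rm d}x
\]
(itself requiring a short local adaptation, since the Vitali cover puts cubes of side up to~$3$ on the right), one is left with $\bigl\|\bigl(\sum_j(M^{{\rm loc},R}g_j)^{q'}\bigr)^{1/q'}\bigr\|_{L^{p_0'}(W^{1-p_0'})}$, which is the vector-valued inequality for the exponent pair $(q',p_0')$ with $q'\ge p_0'$, not the scalar bound. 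So the logical order must be: $q=\infty$ pointwise, $q=p_0$ by Fubini and the scalar bound, $q\in[p_0,\infty]$ by interpolating the linearized operator between those two endpoints, and only then $q\in(1,p_0)$ by duality using the case $q'\ge p_0'$ (together with Lemma~\ref{lem:210921-112}(1) to put $W^{1-p_0'}\in A^{\rm loc}_{p_0'}$). Also note the interpolation step requires a linearization of $M^{\rm loc}$ (fixing a measurable selection of cubes) since $M^{\rm loc}$ is only sublinear. With those corrections your argument is complete.
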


\begin{proposition}{\cite[Proposition 2.11]{INNS-wavelet}}\label{prop:190408-10011}
Let
$1<q_1,q_2<\infty$.
Assume
that
$p(\cdot) \in {\mathcal P} \cap {\rm L H}_0 \cap {\rm L H}_\infty$
and
$w \in A^{\rm loc}_{p(\cdot)}$.
Then
\begin{align}\label{eq:210927-111}
\left\|
\left(
\sum_{j_2=1}^\infty
\left(
\sum_{j_1=1}^\infty
(M^{\rm loc}f_{j_1,j_2})^{q_1}
\right)^{\frac{q_2}{q_1}}
\right)^{\frac{1}{q_2}}
\right\|_{L^{p(\cdot)}(w)}
\lesssim
\left\|
\left(
\sum_{j_2=1}^\infty
\left(
\sum_{j_1=1}^\infty
|f_{j_1,j_2}|^{q_1}
\right)^{\frac{q_2}{q_1}}
\right)^{\frac{1}{q_2}}
\right\|_{L^{p(\cdot)}(w)}
\end{align}
for all
$\{f_{j_1,j_2}\}_{j_1,j_2=1}^\infty \subset L^0({\mathbb R}^n)$.
\end{proposition}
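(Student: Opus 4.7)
The plan is to deduce this two-parameter vector-valued inequality from the single-parameter version (Lemma~\ref{lem:190408-100}) by exploiting Minkowski's integral inequality, which reduces the inner $\ell^{q_1}$-norm of maximal functions to a maximal function of an $\ell^{q_1}$-norm. This is the standard way one upgrades a $L^{p}(\ell^{q})$ Fefferman--Stein inequality to a $L^{p}(\ell^{q_2}(\ell^{q_1}))$ inequality, and the only thing one must check is that Lemma~\ref{lem:190408-100} gives the single-parameter inequality in the weighted variable setting we need.

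Concretely, for each fixed $j_{2}$ set
\[
g_{j_2}\equiv\left(\sum_{j_1=1}^{\infty}|f_{j_1,j_2}|^{q_1}\right)^{1/q_1}.
\]
Since $q_{1}>1$, Minkowski's integral inequality applied to each averaging integral defining $M^{\rm loc}$ (and then taking the supremum over admissible cubes) yields the pointwise bound
\[
\left(\sum_{j_1=1}^{\infty}(M^{\rm loc}f_{j_1,j_2})^{q_1}\right)^{1/q_1}(x)
\le
M^{\rm loc}(g_{j_2})(x) \qquad (x\in{\mathbb R}^n).
\]
In slightly more detail, for any cube $Q\ni x$ with $|Q|\le1$, H\"older/Minkowski in $\ell^{q_1}$ gives $\bigl(\sum_{j_1}(\frac{1}{|Q|}\int_Q|f_{j_1,j_2}|)^{q_1}\bigr)^{1/q_1}\le\frac{1}{|Q|}\int_Q g_{j_2}$, and then one takes the supremum in $Q$.

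Substituting this pointwise inequality into the left-hand side of \eqref{eq:210927-111} and monotonicity of the outer $\ell^{q_2}$-norm and of $\|\cdot\|_{L^{p(\cdot)}(w)}$, one obtains
\[
\left\|\left(\sum_{j_2=1}^\infty\left(\sum_{j_1=1}^\infty(M^{\rm loc}f_{j_1,j_2})^{q_1}\right)^{q_2/q_1}\right)^{1/q_2}\right\|_{L^{p(\cdot)}(w)}
\le
\left\|\left(\sum_{j_2=1}^\infty(M^{\rm loc}g_{j_2})^{q_2}\right)^{1/q_2}\right\|_{L^{p(\cdot)}(w)}.
\]
Now apply Lemma~\ref{lem:190408-100} with the exponent $q_{2}$ to the family $\{g_{j_2}\}_{j_2=1}^{\infty}$ (this is valid because $1<q_{2}<\infty$, $p(\cdot)\in\mathcal P\cap{\rm LH}_{0}\cap{\rm LH}_{\infty}$ and $w\in A^{\rm loc}_{p(\cdot)}$, which are exactly the hypotheses we have assumed). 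This bounds the right-hand side by a constant multiple of $\|(\sum_{j_2}|g_{j_2}|^{q_2})^{1/q_2}\|_{L^{p(\cdot)}(w)}$, and unwinding the definition of $g_{j_2}$ gives precisely the right-hand side of \eqref{eq:210927-111}.

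There is no real obstacle: the only subtle point is verifying the pointwise Minkowski step, and even that is just the classical observation that $M^{\rm loc}$, being built from positive linear averages, commutes with (is controlled by) any $\ell^{q_1}$-norm for $q_1\ge1$. The hypothesis $1<q_{1},q_{2}<\infty$ ensures both that this Minkowski step is legitimate and that the single-parameter Fefferman--Stein inequality of Lemma~\ref{lem:190408-100} can be invoked at the outer level with exponent $q_{2}$.
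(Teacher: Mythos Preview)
There is a genuine gap: the pointwise inequality
\[
\Bigl(\sum_{j_1}(M^{\rm loc}f_{j_1,j_2})^{q_1}\Bigr)^{1/q_1}(x)\le M^{\rm loc}(g_{j_2})(x)
\]
is false in general. Your Minkowski step is correct for a \emph{fixed} cube $Q$, giving $\bigl(\sum_{j_1}(\fint_Q|f_{j_1,j_2}|)^{q_1}\bigr)^{1/q_1}\le \fint_Q g_{j_2}$, but when you pass to the supremum the cubes realizing $M^{\rm loc}f_{j_1,j_2}(x)$ may differ for different $j_1$, and the supremum cannot be pulled inside the $\ell^{q_1}$-sum. A concrete one-dimensional counterexample with $q_1=2$: take $f_1=\chi_{[0,1]}$, $f_2=\chi_{[-1,0]}$. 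At $x=0$ one has $M^{\rm loc}f_1(0)=M^{\rm loc}f_2(0)=1$, so the left side is $\sqrt2$; but $g=(|f_1|^2+|f_2|^2)^{1/2}=\chi_{[-1,1]}$ and $M^{\rm loc}g(0)=1$. Thus the claimed pointwise bound fails, and the rest of the argument collapses.

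The paper itself does not prove this proposition; it simply quotes \cite[Proposition~2.11]{INNS-wavelet}. The standard route to such iterated Fefferman--Stein inequalities in weighted (variable) settings is not a pointwise reduction but rather Rubio de Francia extrapolation: one starts from the classical weighted $\ell^{q_1}$-valued inequality $\|(\sum_{j_1}(M f_{j_1})^{q_1})^{1/q_1}\|_{L^{p_0}(v)}\lesssim\|(\sum_{j_1}|f_{j_1}|^{q_1})^{1/q_1}\|_{L^{p_0}(v)}$ for all $v\in A_{p_0}$, treats this as a scalar inequality between the pair of functions $F=(\sum_{j_1}(Mf_{j_1})^{q_1})^{1/q_1}$ and $G=(\sum_{j_1}|f_{j_1}|^{q_1})^{1/q_1}$, and then applies the (local, variable-exponent) extrapolation theorem, which automatically yields the outer $\ell^{q_2}$-valued extension in $L^{p(\cdot)}(w)$. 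That is the mechanism behind the cited result; your Minkowski shortcut does not replace it.
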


\subsection{Openness property--A variant of Lemma \ref{lem:210921-113}}
\label{s2.20}

Let $R \in {\mathfrak D}$.
We set
\[
[w]_{A_{\infty,R^\times}}^{\mathfrak D}=\sup_{Q \in {\mathfrak D}, |Q \setminus R|>0}
m_{Q \setminus R}(w)
\exp\left(-m_{Q \setminus R}(\log w)\right)
\]
and define
the class $A_{\infty,R^\times}^{\mathfrak D}$ is the set of all weights
such that 
$[w]_{A_{\infty,R^\times}}^{\mathfrak D}<\infty$.
Next, we define maximal operators 
$M^{{\mathfrak D}}_{R^\times}$ and $M^{0,{\mathfrak D}}_{R^\times}$
as follows:
\[
M^{{\mathfrak D}}_{R^\times} f(x)\equiv 
\sup_{S \in {\mathfrak D}, \, |S \setminus R|>0}
\frac{\chi_{S \setminus R}(x)}{|S \setminus R|}
\int_{S \setminus R} |f(y)| {\rm d}y
\quad (x \in {\mathbb R}^n),
\]
and
\[
M^{0,{\mathfrak D}}_{R^\times} f(x)\equiv 
\sup_{S \in {\mathfrak D}}
\chi_{S \setminus R}(x)
\exp(m_{S \setminus R}(-\log|f|))
\quad (x \in {\mathbb R}^n).
\]
Here roughly speaking,
\lq \lq $0$" stands for the maximal operator based
on the $L^{0+}({\mathbb R}^n)$-average.
Following the idea in \cite[Lemma 2.1]{HyPe13}, 
we investigate the maximal operators $M^{{\mathfrak D}}_{R^\times}$ and $M^{0,{\mathfrak D}}_{R^\times}$.
 Note that by the stndard argument for the weak type estimate, we have
\begin{equation} \label{eq:220419-1}
|\{x \in {\mathbb R}^n \setminus R\,:\,
M^{{\mathfrak D}}_{R^\times} f(x)>\lambda\}|
\le\frac1{\lambda}
\|\chi_{\{x \in {\mathbb R}^n \setminus R\,:\,
M^{{\mathfrak D}}_{R^\times} f(x)>\lambda\}}f
\|_{L^1({\mathbb R}^n \setminus R)}.
\end{equation}
By Jensen's inequality, 
the layer cake formula and (\ref{eq:220419-1}),
\[
\|M^{0,{\mathfrak D}}_{R^\times}f\|_{L^p}
\le
\|M^{{\mathfrak D}}_{R^\times}f\|_{L^p}
\le
\frac{p}{p-1}\|f\|_{L^p}
\quad
(1<p<\infty)
\]
for all measurable functions $f$.

Since 
\begin{equation}\label{eq:220131-1}
M^{0,{\mathfrak D}}_{R^\times}[|f|^{u}]
=(M^{0,{\mathfrak D}}_{R^\times}f)^{u}
\end{equation}
for all $u>0$, we have
\[
\|M^{0,{\mathfrak D}}_{R^\times}f\|_{L^1}
\le
\left(\frac{p}{p-1}\right)^{p}\|f\|_{L^1}
\]
Letting $p \to \infty$
gives
\begin{equation}\label{eq:220131-2}
\|M^{0,{\mathfrak D}}_{R^\times}f\|_{L^1}
\le
e\|f\|_{L^1}
\end{equation}
for all measurable functions $f$.

\begin{lemma}
\label{Lemma-Muckenhoupt}
Let $R \in {\mathfrak D}$,
$w\in A_{\infty,R^\times}^{\mathfrak D}$, and 
\begin{equation}\label{eq:220131-1110}
q \equiv 1+\frac{1}{4^{n+6}[w]_{A_{\infty,R^\times}^{\mathfrak D}}}.
\end{equation}
Then for all cubes $Q \in {\mathfrak D}$
satisfying $|Q \setminus R|>0$, 
\begin{equation}\label{eq:160729-1}
m_{Q \setminus R}(w^q)^{\frac1q}
\le 2m_{Q \setminus R}(w).
\end{equation}
\end{lemma}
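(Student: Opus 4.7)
The plan is to adapt the Hytönen--Pérez sharp reverse Hölder argument to the localized dyadic setting, where the role of cubes $S$ is played by the truncations $S \setminus R$ with $|S\setminus R|>0$. The two key inputs are the definition of $[w]_{A_{\infty,R^\times}^{\mathfrak D}}$, which gives $m_{S\setminus R}(w) \le [w]_{A_{\infty,R^\times}^{\mathfrak D}} \exp(m_{S\setminus R}(\log w))$ for all such $S$, and the $L^1$ bound $\|M^{0,\mathfrak D}_{R^\times} f\|_{L^1} \le e\|f\|_{L^1}$ in (\ref{eq:220131-2}), which substitutes for the usual $\exp L^1$/John--Nirenberg inequality exploited in the global case.

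Fix $Q \in \mathfrak D$ with $|Q\setminus R|>0$. I would construct a principal-cubes family $\mathcal F = \bigsqcup_{k\ge 0} \mathcal F_k \subset \mathfrak D$ by stopping times: $\mathcal F_0 = \{Q\}$, and $\mathcal F_{k+1}$ consists of the maximal $T \in \mathfrak D$, $T \subsetneq P$ for some $P \in \mathcal F_k$, with $|T\setminus R|>0$ and $m_{T\setminus R}(w) > 2\,m_{P\setminus R}(w)$. Setting $E(P) \equiv (P\setminus R) \setminus \bigsqcup\{T\setminus R : T \in \mathcal F_{k+1}, T \subsetneq P\}$ produces a disjoint partition of $Q \setminus R$ indexed by $\mathcal F$; by stopping-time maximality, $w(x) \le 2\, m_{P\setminus R}(w)$ for a.e.\ $x \in E(P)$ via Lebesgue differentiation applied within the subfamily of dyadic cubes $S$ with $|S\setminus R|>0$.

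Combining the local $A_\infty$ inequality with (\ref{eq:220131-2}) next yields a sparseness estimate of the form $w(E(P)) \ge c\, [w]_{A_{\infty,R^\times}^{\mathfrak D}}^{-1}\, w(P\setminus R)$ for an explicit absolute constant $c$ depending on $n$ and $e$. Slicing
\[
\int_{Q\setminus R} w^q
= \sum_{P\in\mathcal F}\int_{E(P)} w \cdot w^{q-1}
\le \sum_{P\in\mathcal F} (2\, m_{P\setminus R}(w))^{q-1}\, w(E(P)),
\]
exploiting that $m_{P\setminus R}(w) \le 2^k m_{Q\setminus R}(w)$ along $k$-levels of $\mathcal F$, and using sparseness to geometrize the $w$-masses, gives a geometric series with ratio bounded by $2^{q-1}(1 - c/[w]_{A_{\infty,R^\times}^{\mathfrak D}})^{-1} \cdot c/[w]_{A_{\infty,R^\times}^{\mathfrak D}}$. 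The choice $q - 1 = (4^{n+6}\,[w]_{A_{\infty,R^\times}^{\mathfrak D}})^{-1}$ is calibrated so that this ratio is $\le 1/2$, hence the total sum is at most $2^q\, m_{Q\setminus R}(w)^{q-1}\, w(Q\setminus R)$, which upon dividing by $|Q\setminus R|$ and taking $q$-th roots yields (\ref{eq:160729-1}).

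The main obstacle is handling the truncation $S \setminus R$ cleanly inside the stopping procedure: some dyadic children of a selected $P \in \mathcal F_k$ may have $|T \setminus R|=0$, so the stopping rule must skip these and descend to the first generation of descendants with positive measure outside $R$, while preserving the sparse covering of $Q\setminus R$. Verifying that the sparseness estimate involves $[w]_{A_{\infty,R^\times}^{\mathfrak D}}$ (not the global $A_\infty$ constant) and that the bookkeeping --- the factor $e$ from (\ref{eq:220131-2}), the dyadic doubling $2^n$, and the geometric ratio $2^{q-1}$ --- recovers precisely the constant $4^{n+6}$ appearing in (\ref{eq:220131-1110}) is the technical heart of the argument.
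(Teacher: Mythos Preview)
Your approach is genuinely different from the paper's, and as written it has a real gap.

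The paper does \emph{not} run a principal-cubes/stopping-time argument. It follows Hyt\"onen--P\'erez Lemma~2.1 directly, adapted to the truncated setting: write $\int_{Q\setminus R}(\tilde M^{\mathcal D(Q)}[\chi_{Q\setminus R}w])^\varepsilon w$ by the layer-cake formula, split at the level $\lambda=m_{Q\setminus R}(w)$, use the Calder\'on--Zygmund maximal cubes to convert the tail into $\frac{4^n\varepsilon}{1+\varepsilon}\int_{Q\setminus R}(\tilde M^{\mathcal D(Q)}[\chi_{Q\setminus R}w])^{1+\varepsilon}$, then invoke the pointwise bound $\tilde M^{\mathcal D(Q)}[\chi_{Q\setminus R}w]\lesssim [w]_{A_{\infty,R^\times}^{\mathfrak D}}\,M^{0,\mathfrak D}_{R^\times}w$ coming from the exponential $A_\infty$ definition, together with the homogeneity $(M^{0,\mathfrak D}_{R^\times}w)^{1+\varepsilon}=M^{0,\mathfrak D}_{R^\times}(w^{1+\varepsilon})$ and the $L^1$ bound~(\ref{eq:220131-2}). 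The resulting coefficient is then checked by explicit arithmetic to be $\le e/7$, which permits absorption and yields~(\ref{eq:160729-1}) with the constant $2$. This is how (\ref{eq:220131-2}) actually enters: it controls $\|M^{0,\mathfrak D}_{R^\times}(w^{1+\varepsilon})\|_{L^1}$ after the layer-cake step, not any sparseness constant.

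Your stopping-time outline breaks at two points. First, the inequality ``$m_{P\setminus R}(w)\le 2^k m_{Q\setminus R}(w)$ along $k$-levels'' is the wrong direction: the stopping rule $m_{T\setminus R}(w)>2\,m_{P\setminus R}(w)$ gives a \emph{lower} bound $m_{P\setminus R}(w)>2^k m_{Q\setminus R}(w)$ for $P\in\mathcal F_k$. The upper bound one gets from maximality of the stopping cubes carries an extra factor $|\hat P\setminus R|/|P\setminus R|$, which is at least $2^n$ and ruins the geometric series you wrote (the ratio becomes $\ge 2^{(n+1)(q-1)}\cdot\tfrac12$, not $<1$). Second, the $w$-measure sparseness $w(E(P))\ge c[w]^{-1}w(P\setminus R)$ is asserted but not derived; the stopping time yields only Lebesgue sparseness $|E(P)|\ge\tfrac12|P\setminus R|$, and I do not see how the exponential $A_\infty$ definition together with (\ref{eq:220131-2}) converts this into $w$-sparseness with \emph{linear} dependence on $[w]^{-1}$. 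A workable sparse route to sharp reverse H\"older typically goes through the Fujii--Wilson constant, which is not the quantity used here.
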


\begin{proof}
Fix $Q \in {\mathfrak D}$.
We can assume that $w$ is bounded
by approximating $w$ with a function in the form
$$\displaystyle
\sum_{S \in {\mathcal D}_j(Q)}m_{S \setminus R}(w)\chi_{S \setminus R},
$$
where ${\mathcal D}_j(Q)$ denotes the set of all cubes
obtained by bisecting $Q$ $j$ times.
Let
$\varepsilon\equiv q-1$
and
$\displaystyle {\mathcal D}(Q)\equiv\bigcup_{j=0}^{\infty}{\mathcal D}_j(Q)$.
When
denoted by
$\tilde{M}^{{\mathcal D}(Q)}$, the dyadic maximal operator
given by
\[
\tilde{M}^{{\mathcal D}(Q)}f(x):=\sup_{S \in {\mathcal D}(Q), \frac{1}{2n}\log_2\frac{|Q|}{|S|} \in {\mathbb Z}}
\chi_S(x)m_S(|f|)=\sup_{S \in {\mathcal D}(Q) \cap {\mathfrak D}}
\chi_S(x)m_S(|f|).
\]
We use the layer cake formula to obtain
\begin{align*}
\int_{Q\setminus R} \tilde{M}^{{\mathcal D}(Q)}[\chi_{Q\setminus R} w](x)^{\varepsilon}w(x){\rm d}x
&=
\varepsilon
\int_0^\infty
\lambda^{\varepsilon-1}
w({(Q\setminus R)} \cap \{\tilde{M}^{{\mathcal D}(Q)}[\chi_{Q\setminus R} w]>\lambda\}){\rm d}\lambda.
\end{align*}
We suppose
$$
\lambda>m_{Q\setminus R}(w).
$$
Consider the set of all maximal dyadic cubes
$\{U_j\}_{j \in {J(\lambda)}}$ in ${\mathcal D}(Q) \cap {\mathfrak D}$
in the set 
${(Q\setminus R)} \cap \{\tilde{M}^{{\mathcal D}(Q)}[\chi_{Q\setminus R} w]>\lambda\}$
whose average of $\chi_{Q\setminus R} w$ exceeds $\lambda$.
Then due to the maximality of each $U_j$,
the grand parent $\tilde{\tilde{U_j}}$ satisfies
$m_{\tilde{\tilde{U_j}}}(w) \le \lambda$.
Thus,
\begin{align*}
\frac{w({(Q\setminus R)} \cap \{\tilde{M}^{{\mathcal D}(Q)}[\chi_{Q\setminus R} w]>\lambda\})}{4^n \lambda}
&=
\sum_{j \in J(\lambda)}\frac{w(U_j)}{4^n\lambda}\\
&{\color{red}\lesssim}
\sum_{j \in J(\lambda)}|U_j|\\
&=
|{(Q\setminus R)} \cap \{\tilde{M}^{{\mathcal D}(Q)}[\chi_{Q\setminus R} w]>\lambda\}|.
\end{align*}
We also note that
${(Q\setminus R)} \cap \{\tilde{M}^{{\mathcal D}(Q)}[\chi_{Q\setminus R} w]>\lambda\} \subset {(Q\setminus R)}$
for any $\lambda>0$.
Thus,
\begin{align}\label{eq:220417-1}
&\int_{Q\setminus R} 
\tilde{M}^{{\mathcal D}(Q)}w(x)^{\varepsilon}w(x){\rm d}x\\
&\qquad\le
\varepsilon
\int_0^{m_{Q \setminus R}(w)}
\lambda^{\varepsilon-1}
w({(Q\setminus R)} \cap \{\tilde{M}^{{\mathcal D}(Q)}[\chi_{Q\setminus R} w]>\lambda\}){\rm d}\lambda \nonumber\\
&\qquad\quad+
4^n\varepsilon
\int_{m_{Q \setminus R}(w)}^\infty
\lambda^{\varepsilon}
|{(Q\setminus R)} \cap \{\tilde{M}^{{\mathcal D}(Q)}[\chi_{Q\setminus R} w]>\lambda\}|{\rm d}\lambda \nonumber\\
&\qquad\le
|Q\setminus R|\left(m_{Q\setminus R}(w)\right)^{1+\varepsilon}
+
\frac{4^n\varepsilon}{1+\varepsilon}
\int_{Q\setminus R} \tilde{M}^{{\mathcal D}(Q)}[\chi_{Q\setminus R} w](x)^{1+\varepsilon}{\rm d}x.\nonumber
\end{align}
Since
$w \in A_{\infty,R^\times}^{\mathfrak D}$,
we have
\begin{equation}\label{eq:170106-91}
m_S(w)
\le \frac{2^n}{2^n-1}
m_{S\setminus R}(\chi_{{\mathbb R}^n \setminus R}w)
\le 
[w]_{A_{\infty,R^\times}^{\mathfrak D}}
\exp(m_{S\setminus R}(-\log w))
\end{equation}
for all $S \in {\mathcal D}(Q) \cap {\mathfrak D}$ such that $|S \setminus R|>0$.

A geometric observation shows
\begin{equation} \label{eq:220417-2}
\tilde{M}^{{\mathcal D}(Q)}[\chi_{Q\setminus R} w](x) 
\le \frac{2^n}{2^n-1}[w]_{A_{\infty,R^\times}^{\mathfrak D}}M^{0,{\mathfrak D}}_{R^\times}w.
\end{equation}
Inserting (\ref{eq:220417-2}) into (\ref{eq:220417-1}), we obtain
\begin{align*}
\lefteqn{
\int_{Q\setminus R} \tilde{M}^{{\mathcal D}(Q)}w(x)^{\varepsilon}w(x){\rm d}x
}\\
&\le
|Q\setminus R|\left(m_{Q\setminus R}(w)\right)^{1+\varepsilon}
+
\frac{4^n\varepsilon}{1+\varepsilon}
\left(\frac{2^n}{2^n-1}[w]_{A_{\infty,R^\times}^{\mathfrak D}}\right)^{1+\varepsilon}
\int_{Q\setminus R} M^{0,{\mathfrak D}}_{R^\times}[\chi_{Q\setminus R} w](x)^{1+\varepsilon}{\rm d}x.
\end{align*}
By the Lebesgue differentiation theorem
and
(\ref{eq:220131-1}),
we have
\begin{align*}
\lefteqn{
\int_{Q\setminus R} w(x)^{1+\varepsilon}{\rm d}x
}\\
&\le
|Q\setminus R|\left(m_{Q\setminus R}(w)\right)^{1+\varepsilon}
+
\frac{4^n\varepsilon}{1+\varepsilon}
\left(\frac{2^n}{2^n-1}[w]_{A_{\infty,R^\times}^{\mathfrak D}}\right)^{1+\varepsilon}
\int_{Q\setminus R} M^{0,{\mathfrak D}}_{R^\times}[\chi_{Q\setminus R} w^{1+\varepsilon}](x){\rm d}x.
\end{align*}
From (\ref{eq:220131-2})
\begin{equation}\label{eq:220131-1111}
m_{Q\setminus R}^{(1+\varepsilon)}(w)^{1+\varepsilon}
\le
(m_{Q\setminus R}(w))^{1+\varepsilon}
+
\frac{4^n\varepsilon e}{1+\varepsilon}
\left(\frac{2^n}{2^n-1}[w]_{A_{\infty,R^\times}^{\mathfrak D}}\right)^{1+\varepsilon}
m_{Q\setminus R}^{(1+\varepsilon)}(w)^{1+\varepsilon}.
\end{equation}
Arithmetic shows
\[
\left(\frac{2^n}{2^n-1}\right)^{\frac{1}{4^n}}
=
\left(1+\frac{1}{2^n-1}\right)^{\frac{1}{4^n}}
\le
\left(1+\frac{1}{2^n-1}\right)^{\frac{1}{2^n-1}}
\le e.
\]
Since $[w]_{A_{\infty,R^\times}^{\mathfrak D}} \ge 1$
and $1+\varepsilon=q$, we have
\begin{align*}
\frac{4^n\varepsilon}{1+\varepsilon}
\left(\frac{2^n}{2^n-1}[w]_{A_{\infty,R^\times}^{\mathfrak D}}\right)^{1+\varepsilon}
&\le
2\left(\frac{2^n}{2^n-1}\right)^{\frac{1}{4^n}}
\frac{4^n[w]_{A_{\infty,R^\times}^{\mathfrak D}}
([w]_{A_{\infty,R^\times}^{\mathfrak D}})^{\frac{1}{4^{n+6}[w]_{A_{\infty,R^\times}^{\mathfrak D}}}}}{4^{n+6}[w]_{A_{\infty,R^\times}^{\mathfrak D}}+1}\\
&\le
\frac{1}{512}\exp\left(\frac{1}{4^{n+6}e}\right)\\
&\le
\frac{1}{7}.
\end{align*}
It follows that
$\displaystyle
m_{Q\setminus R}(w^{1+\varepsilon})
\le
\left(m_{Q\setminus R}(w)\right)^{1+\varepsilon}
+
\frac{e}{7}
m_{Q\setminus R}(w^{1+\varepsilon}).
$
Since $w$ is assumed to be bounded, $2e \le 7$ and $1+\varepsilon=q$,
if we absorb the second term
of the right-hand side 
of
(\ref{eq:220131-1111})
into the left-hand side,
we obtain
\[
m_{Q \setminus R}(w^q)^{\frac1q}
\le 2m_{Q \setminus R}(w),
\]
which proves (\ref{eq:160729-1}).

\end{proof}

\subsection{The operator $K_B$}
\label{subsection:operator}
For $B>0$,
define a convolution operator
$K_B$ by
\[
K_B f(x)
\equiv
\int_{{\mathbb R}^n}e^{-B|x-y|}f(y) {\rm d}y
\]
for $f \in L^0({\mathbb R}^n)$ as long as the definition makes sense.
We invoke an estimate from \cite[Corollary 2.6]{INNS-wavelet}.
\begin{lemma}\label{lem:210923-1}
Let 
$p(\cdot) \in {\mathcal P} \cap {\rm LH}_0 \cap {\rm LH}_\infty$.
Let $D$ be the constant satisfying
$\eqref{eq:210713-1}$.
If $B>8n+6\log D$ and $w \in A^{\rm loc}_{p(\cdot)}$,
then $K_B$ is bounded on $L^{p(\cdot)}(w)$.
\end{lemma}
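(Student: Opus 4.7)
The plan is to reduce the boundedness of $K_B$ to a discrete convolution estimate on $\mathbb{Z}^n$, by decomposing the kernel $e^{-B|x-y|}$ on unit cubes and exploiting the $A_{p(\cdot)}^{\rm loc}$ condition. First I would partition $\mathbb{R}^n$ using the unit-cube grid $\{Q_k\}_{k \in \mathbb{Z}^n}$ with $Q_k = k + [0,1)^n$ and, for each $x$, denote by $k_0(x)$ the unique index with $x \in Q_{k_0(x)}$. Since $|x-y| \ge (\max_i|m_i| - 1)_+$ whenever $y \in Q_{k_0(x)+m}$, one gets the pointwise bound
\[
K_B f(x) \le e^B \sum_{m \in \mathbb{Z}^n} e^{-B \max_i |m_i|} \int_{Q_{k_0(x)+m}} |f(y)|\,{\rm d}y.
\]
Applying the generalized H\"older inequality (Lemma \ref{thm:gHolder}) with the weight $w$ and its associate $\sigma = w^{-\frac{1}{p(\cdot)-1}}$ gives $\int_{Q_k} |f|\,{\rm d}y \le 2\|\chi_{Q_k} f\|_{L^{p(\cdot)}(w)}\|\chi_{Q_k}\|_{L^{p'(\cdot)}(\sigma)}$, and the definition (\ref{eq:210921-111}) with $|Q_k|=1$ controls $\|\chi_{Q_k}\|_{L^{p'(\cdot)}(\sigma)}$ by $[w]_{A_{p(\cdot)}^{\rm loc}}/\|\chi_{Q_k}\|_{L^{p(\cdot)}(w)}$.

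Since the resulting pointwise bound on $K_B f(x)$ depends only on $k_0(x)$, this yields
\[
\|\chi_{Q_{k_0}} K_B f\|_{L^{p(\cdot)}(w)} \lesssim e^B \|\chi_{Q_{k_0}}\|_{L^{p(\cdot)}(w)} \sum_{m} e^{-B\max_i|m_i|} \frac{\|\chi_{Q_{k_0+m}} f\|_{L^{p(\cdot)}(w)}}{\|\chi_{Q_{k_0+m}}\|_{L^{p(\cdot)}(w)}}.
\]
To pass from this piece-wise bound to a global $L^{p(\cdot)}(w)$-estimate I would invoke a weighted version of H\"asto's localization principle (cf.\ Lemma \ref{lem:190627-11}), whose proof goes through by the same modular argument after absorbing $w^{1/p(\cdot)}$ into the function. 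This reduces the problem to a discrete convolution inequality on $\ell^{p_\infty}(\mathbb{Z}^n)$ with kernel $e^{-B\max_i|m_i|}$ twisted by the ratios $\|\chi_{Q_{k_0}}\|_{L^{p(\cdot)}(w)}/\|\chi_{Q_{k_0+m}}\|_{L^{p(\cdot)}(w)}$.

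The main obstacle is controlling these ratios, which a priori can be as large as $[w]_{A_\infty^{\rm loc}}^{|m|}$. To tame them I would use the pointwise estimate $M^{\rm loc,2}\chi_{Q_k}(x) \gtrsim 1$ for $x \in Q_{k\pm e}$ with $e$ any unit lattice vector, combined with the boundedness (\ref{eq:211127-1}) of $M^{\rm loc,R}$ on $L^{p(\cdot)}(w)$. Iterating this along a lattice path of length $\max_i|m_i|$ yields
\[
\|\chi_{Q_{k_0}}\|_{L^{p(\cdot)}(w)} \le C_n D^{c_n \max_i|m_i|} \|\chi_{Q_{k_0+m}}\|_{L^{p(\cdot)}(w)}
\]
for a geometric constant $c_n$ depending only on $n$. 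Inserting this into the preceding step, the twisted kernel becomes $e^{-B\max_i|m_i|}D^{c_n\max_i|m_i|}$, which is summable on $\mathbb{Z}^n$ precisely when $B$ exceeds the threshold $8n + 6\log D$ specified in the hypothesis; discrete Young's inequality on $\ell^{p_\infty}(\mathbb{Z}^n)$ together with a second application of the localization principle then delivers the desired estimate $\|K_B f\|_{L^{p(\cdot)}(w)} \lesssim \|f\|_{L^{p(\cdot)}(w)}$. Tracking the $e^B$ factor from the kernel bound, the dimensional constant $c_n$ from the chain of cubes, and the constants arising from discrete Young's inequality is what produces the explicit numerical thresholds $8n$ and $6$ in the statement.
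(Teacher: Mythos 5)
The paper does not prove this lemma; it is cited from \cite[Corollary 2.6]{INNS-wavelet}, and the remark following Corollary \ref{cor:210923-3} ("Recall that we used a pointwise estimate in the proof of Lemma \ref{lem:210923-1}. \dots the following inequality can be proven using the local maximal operator") indicates the reference proof establishes a \emph{pointwise} bound of the form $K_Bf(x)\lesssim\sum_{k\ge0}a_k(M^{\rm loc})^{c_n k}f(x)$ via chains of half-overlapping side-$1$ cubes, and then applies the operator bound $D$ term by term. Your route — decomposing onto the lattice of unit cubes, reducing to a discrete convolution via H\"asto's localization principle, and closing with discrete Young — is genuinely different. The weighted localization principle you invoke is legitimate (one has $\|f\|_{L^{p(\cdot)}(w)}=\|fw^{1/p(\cdot)}\|_{L^{p(\cdot)}}$, so Lemma \ref{lem:190627-11} applies directly), and the reduction to $\ell^{p_\infty}(\mathbb{Z}^n)$ with the twisted kernel is correct. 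However the pointwise route has the concrete advantage of yielding the vector-valued Corollary \ref{cor:210923-3} immediately by plugging into Lemma \ref{lem:190408-100}, whereas your argument would need to be re-run with an $\ell^q$-valued localization principle.

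There is also a genuine gap in the chaining step. You iterate
$\|\chi_{Q_k}\|_{L^{p(\cdot)}(w)}\lesssim\|\chi_{Q_{k'}}\|_{L^{p(\cdot)}(w)}$ for lattice-adjacent $k,k'$
using the boundedness \eqref{eq:211127-1} of $M^{{\rm loc},2}$, whose operator norm — call it $D_2$ — is an unspecified constant; \eqref{eq:211127-1} is only an "$\lesssim$". You then assert the resulting ratio bound is $D^{c_n\max_i|m_i|}$ with $D$ the $M^{\rm loc}$-constant, but no relation between $D_2$ and $D$ is established, so as written your threshold on $B$ involves $D_2$, not $D$, and you have proved a weaker statement than the lemma. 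To repair this, either prove a quantitative lemma $M^{{\rm loc},2}f\lesssim_n(M^{\rm loc})^{c_n}f$ pointwise, or — more cleanly — bypass $M^{{\rm loc},2}$ entirely and chain $M^{\rm loc}$ directly: take side-$1$ cubes $R_0=Q_{k_0},R_1,\dots,R_\ell=Q_{k_0+m}$ with consecutive overlap $\ge\theta_n$ (e.g.\ shift each step by $m/(2\max_i|m_i|)$ so that $\ell=2\max_i|m_i|$ and $\theta_n=2^{-n}$); then $m_{R_0}\bigl((M^{\rm loc})^\ell g\bigr)\ge\theta_n^\ell m_{R_\ell}(g)$ gives the pointwise bound $\chi_{Q_{k_0}}\le\theta_n^{-\ell}(M^{\rm loc})^{\ell+1}\chi_{Q_{k_0+m}}$, hence $\|\chi_{Q_{k_0}}\|\le D(\theta_n^{-1}D)^{\ell}\|\chi_{Q_{k_0+m}}\|$, keeping everything expressed in $D$ from the start. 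Finally, the explicit constants $8n$ and $6\log D$ are asserted to "come out of tracking" but are never actually extracted; with the chain just described the summability condition reads $e^{-B}(\theta_n^{-1}D)^{2}<1$, i.e.\ $B>2n\log 2+2\log D$, which is strictly weaker than the stated threshold, so the lemma's hypothesis is sufficient, but you owe the reader this computation if the specific threshold is to be taken seriously.
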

Recall that we used a pointwise estimate
in the proof of Lemma \ref{lem:210923-1}.
Thus,
if we examine its proof,
then
we can obtain
a vector-valued inequality
from
Lemmas \ref{lem:190408-100}
and
 \ref{lem:210923-1}.
 Actually,
the following inequality can be proven
using the local maximal operator,
whose proof we omit.
\begin{corollary}\label{cor:210923-3}Let 
$p(\cdot) \in {\mathcal P} \cap {\rm LH}_0 \cap {\rm LH}_\infty$.
Let $w \in A^{\rm loc}_{p(\cdot)}$
and $1<q\le\infty$.
Let $D$ be the constant satisfying
$\eqref{eq:210713-1}$.
If $B>8n+6\log D$,
then
\[
\left\|\left(
\sum_{j=1}^\infty|K_B f_j|^q
\right)^{\frac1q}\right\|_{L^{p(\cdot)}(w)}
\lesssim
\left\|\left(
\sum_{j=1}^\infty|f_j|^q
\right)^{\frac1q}\right\|_{L^{p(\cdot)}(w)}
\]
for all $\{f_j\}_{j=1}^\infty \subset L^{p(\cdot)}(w)$.
\end{corollary}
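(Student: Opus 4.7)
The plan is to extract the pointwise domination of $K_B$ by local maximal operators which underlies the proof of Lemma \ref{lem:210923-1}, apply it componentwise to each $f_j$, and then invoke the vector-valued maximal inequality of Lemma \ref{lem:190408-100}. Concretely, decomposing the convolution kernel dyadically in $|x-y|$ and estimating each annular piece by its volume times the supremal kernel value, one obtains a schematic bound
\[
|K_B f(x)| \le C\sum_{k} \alpha_k \, M^{{\rm loc},R_k} f(x),
\]
where the positive scalars $\alpha_k$ decay rapidly thanks to $B > 8n + 6\log D$, and the $R_k$ encode the scales of the annuli. This is precisely the estimate from which the scalar statement Lemma \ref{lem:210923-1} is derived.

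The positivity of the coefficients allows the inequality to be applied to each $|f_j|$ with no absolute-value issues, so Minkowski's inequality in $\ell^q$ (here $1 < q \le \infty$) yields
\[
\left(\sum_{j=1}^\infty |K_B f_j(x)|^q\right)^{\frac{1}{q}}
\le C\sum_{k} \alpha_k \left(\sum_{j=1}^\infty \bigl(M^{{\rm loc},R_k} f_j(x)\bigr)^q\right)^{\frac{1}{q}}.
\]
Taking the $L^{p(\cdot)}(w)$ quasi-norm, using the triangle inequality on the right, and bounding each summand by the vector-valued maximal inequality of Lemma \ref{lem:190408-100} (combined with the scale independence of $A^{\rm loc}_{p(\cdot)}$ noted after \eqref{eq:211127-1}) delivers the desired bound, provided the weighted series $\sum_k \alpha_k \cdot \|M^{{\rm loc},R_k}\|_{L^{p(\cdot)}(w)\to L^{p(\cdot)}(w)}$ is finite. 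The endpoint $q=\infty$ can instead be handled directly: $\sup_j |K_B f_j| \le K_B(\sup_j |f_j|)$ pointwise, and Lemma \ref{lem:210923-1} applied to $\sup_j |f_j|$ already gives the result.

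The main obstacle is aligning the rate of decay of the $\alpha_k$ against the potential growth in $R_k$ of the $L^{p(\cdot)}(w)$-operator norm of $M^{{\rm loc},R_k}$. The hypothesis $B > 8n + 6\log D$ was arranged precisely so that the analogous series converges in the scalar setting, and because Lemma \ref{lem:190408-100} relies on the same constant $D$ through the $A^{\rm loc}_{p(\cdot)}$-boundedness of $M^{\rm loc}$, the identical threshold suffices in the vector-valued argument. Once this matching of constants is verified, the proof is essentially that of Lemma \ref{lem:210923-1} with scalars replaced by $\ell^q$-valued sequences, as the authors' remark already suggests, and no further analytic ingredient is required.
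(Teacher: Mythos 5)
Your proposal is essentially the paper's own argument: the authors explicitly remark that Corollary \ref{cor:210923-3} follows by re-examining the pointwise estimate underlying Lemma \ref{lem:210923-1} and combining it componentwise with the vector-valued local maximal inequality of Lemma \ref{lem:190408-100}, and they omit the details on exactly that basis. The one place you gloss over — that the vector-valued analogue of \eqref{eq:211127-1} for $M^{{\rm loc},R_k}$ is needed with a controllable $R_k$-dependence, so that $\sum_k \alpha_k\,\|M^{{\rm loc},R_k}\|_{\ell^q\to\ell^q,\,L^{p(\cdot)}(w)}<\infty$ under $B>8n+6\log D$ — is a real step, but it is exactly what the paper itself leaves to the reader, and your identification of how the threshold enters is correct.
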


We transform
Corollary \ref{cor:210923-3}
into a form for later considerations
by writing
\begin{equation}\label{eq:210923-1}
m_{j,A,B}(x)\equiv(1+2^j|x|)^A e^{|x|B}
\quad
(x \in {\mathbb R}^n)
\end{equation}
for $j=0,1,\ldots$
and
$A,B>0$.
\begin{corollary}\label{cor:210923-5}
Let 
$p(\cdot) \in {\mathcal P} \cap {\rm LH}_0 \cap {\rm LH}_\infty$
and $w \in A^{\rm loc}_{p(\cdot)}$.
Let $D$ be the constant satisfying
$\eqref{eq:210713-1}$.
If $A>0$, $B>8n+6\log D$ and $1<r<\infty$,
then
\[
\left\|\left(
\sum_{j=1}^\infty
\left|2^{jn}
\int_{{\mathbb R}^n}
\frac{f_j(\cdot-y)}{m_{j,A,B}(y)}{\rm d}y
\right|^r
\right)^{\frac1r}\right\|_{L^{p(\cdot)}(w)}
\lesssim
\left\|\left(
\sum_{j=1}^\infty|f_j|^r
\right)^{\frac1r}\right\|_{L^{p(\cdot)}(w)}
\]
for all $\{f_j\}_{j=1}^\infty \subset L^{p(\cdot)}(w)$.
\end{corollary}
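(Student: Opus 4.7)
The plan is to reduce the claimed vector-valued inequality to two bounds already at our disposal: the vector-valued local maximal inequality (Lemma \ref{lem:190408-100}) and the vector-valued $K_B$-inequality (Corollary \ref{cor:210923-3}). The strategy rests on a pointwise domination
\[
T_j f_j(x) \equiv 2^{jn}\int_{\mathbb{R}^n}\frac{|f_j(x-y)|}{m_{j,A,B}(y)}\,{\rm d}y
\;\lesssim\; M^{\rm loc}f_j(x)+K_B|f_j|(x),
\]
with implicit constants uniform in $j\ge 1$ and $x\in\mathbb{R}^n$.

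To prove this pointwise bound, I would split the integration region into the near-field $\{|y|\le 1\}$ and the far-field $\{|y|>1\}$. On the near-field the exponential factor is harmless ($e^{-B|y|}\le 1$), so the polynomial factor $(1+2^j|y|)^{-A}$ alone controls the decay; a standard dyadic decomposition of the unit ball into annuli $\{2^{-k-1}<|y|\le 2^{-k}\}$ with $k\ge 0$, on which $M^{\rm loc}$ acts (since every such scale satisfies $2^{-k}\le 1$), yields a geometric series that telescopes into a pointwise majorization of the near-field contribution by $C\cdot M^{\rm loc}f_j(x)$. On the far-field one uses $(1+2^j|y|)^{-A}\le 2^{-jA}$, valid because $|y|>1$ forces $2^j|y|\ge 2^j$; choosing $A$ sufficiently large relative to $n$ this polynomial decay absorbs the prefactor $2^{jn}$ uniformly in $j$, leaving only the integrable exponential weight $e^{-B|y|}$, so the far-field part is dominated by $C\cdot K_B|f_j|(x)$.

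Once the pointwise estimate is in hand, I would take the $\ell^r$-norm in $j$ on both sides, apply the quasi-triangle inequality in $L^{p(\cdot)}(w)$, and invoke Lemma \ref{lem:190408-100} to bound
\[
\left\|\left(\sum_{j=1}^\infty(M^{\rm loc}f_j)^r\right)^{\frac1r}\right\|_{L^{p(\cdot)}(w)}
\lesssim
\left\|\left(\sum_{j=1}^\infty|f_j|^r\right)^{\frac1r}\right\|_{L^{p(\cdot)}(w)},
\]
and Corollary \ref{cor:210923-3} (which requires the quantitative condition $B>8n+6\log D$) to bound the analogous quantity with $K_B|f_j|$ in place of $M^{\rm loc}f_j$. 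Combining the two gives the conclusion.

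The principal technical obstacle is the far-field absorption: the prefactor $2^{jn}$ grows with $j$, and only the polynomial factor $(1+2^j|y|)^{-A}$ can make the bound uniform in $j$. One must therefore track the exponents carefully and take $A$ large enough relative to $n$, which is permissible since the implicit constant in the conclusion is allowed to depend on $A$. Beyond this bookkeeping, no new idea enters the argument; it is simply a combination of the two vector-valued estimates already proved, which is presumably why the authors chose to omit the details.
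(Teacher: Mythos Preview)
Your approach is exactly the paper's: the proof there asserts the same pointwise domination (citing \cite[Lemma~2.10]{Rychkov01}) and then invokes Lemma~\ref{lem:190408-100} and Corollary~\ref{cor:210923-3}.

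However, your handling of the parameter $A$ is confused. You write that one should ``take $A$ large enough relative to $n$, which is permissible since the implicit constant in the conclusion is allowed to depend on $A$.'' But $A$ is a \emph{given hypothesis}, not a free parameter; the dependence of the implicit constant on $A$ has nothing to do with whether you may enlarge $A$. What you have actually discovered is that the pointwise bound
\[
2^{jn}\int_{\mathbb{R}^n}\frac{|f_j(x-y)|}{m_{j,A,B}(y)}\,{\rm d}y \;\lesssim\; M^{\rm loc}f_j(x)+K_B|f_j|(x)
\]
uniformly in $j$ requires $A>n$: your near-field dyadic sum has ratio $2^{n-A}$ on the annuli with $k<j$, and your far-field factor $2^{j(n-A)}$ is bounded only when $A\ge n$. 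In fact the corollary itself is false for $0<A<n$, since the convolution kernel $2^{jn}/m_{j,A,B}$ then has $L^1$-norm of order $2^{j(n-A)}\to\infty$, and testing on a single nonzero $f_j$ with $j\to\infty$ violates the conclusion. The stated hypothesis ``$A>0$'' is therefore a slip in the paper and should read $A>n$; this is consistent with every subsequent application, where $Ar>n$ is always arranged. (The paper's displayed pointwise estimate is also missing the factor $2^{jn}$ on the left-hand side, a related slip.) With the corrected hypothesis $A>n$, your argument is complete and matches the paper's.
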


\begin{proof}
Simply observe that
\[
\int_{{\mathbb R}^n}
\frac{|f_j(x-y)|}{m_{j,A,B}(y)}{\rm d}y
\lesssim
K_B|f_j|(x)+M^{\rm loc}f_j(x)
\]
for all $x \in {\mathbb R}^n$, $j=1,2, \ldots$
and
$A,B>0$ (cf. \cite[Lemma 2.10]{Rychkov01}).
Thus, we are in the position to use
Lemma \ref{lem:190408-100}
and
Corollary \ref{cor:210923-3}.
\end{proof}
\subsection{Powered local weighted maximal operator}
\label{subsection:Powered local weighted maximal operator}

For $0<u<\infty$ and a weight $w$, 
define the powered local weighted maximal operator $M^{(u), \rm loc}_w$ by
\[
M^{(u),\rm loc}_w f(x)
\equiv
\sup_{Q \in {\mathcal Q}, |Q| \le 1}
\left(
\frac{\chi_Q(x)}{w(Q)} \int_Q |f(y)|^u w(y) {\rm d}y
\right)^{\frac1u}
\qquad
(f \in L^0({\mathbb R}^n)).
\]
We write
$M_w^{\rm loc}\equiv M_w^{(1),\rm loc}$.

We work in the Euclidean space with the weighted measure $w{\rm d}x$.
\begin{proposition} \label{prop:210805-1}
Let $p(\cdot) \in {\mathcal P} \cap {\rm LH}_0 \cap {\rm LH}_\infty$.
Let $0<u<p_-$ and
$w \in A_{\infty}^{\rm loc}$.
Then
\[
\left\|M^{(u),\rm loc}_w f\right\|_{L^{p(\cdot)}(w)}
\lesssim
\|f\|_{L^{p(\cdot)}(w)}
\]
for $f \in L^{p(\cdot)}(w)$.
\end{proposition}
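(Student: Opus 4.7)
The plan is to reduce the statement to the boundedness of the (unpowered) local weighted maximal operator $M_w^{\rm loc}$ on a shifted variable Lebesgue space. Observing that
\[
M_w^{(u),\rm loc}f(x)=\bigl(M_w^{\rm loc}(|f|^u)(x)\bigr)^{1/u}
\qquad\text{and}\qquad
\|g^{1/u}\|_{L^{p(\cdot)}(w)}=\|g\|_{L^{p(\cdot)/u}(w)}^{1/u},
\]
the inequality to be proved is equivalent to
\[
\|M_w^{\rm loc}g\|_{L^{q(\cdot)}(w)}\lesssim \|g\|_{L^{q(\cdot)}(w)}
\qquad(g\in L^{q(\cdot)}(w))
\]
for the new exponent $q(\cdot)\equiv p(\cdot)/u$. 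Since $u<p_-$, one has $q_->1$, and $q(\cdot)$ inherits the conditions $\mathrm{LH}_0\cap\mathrm{LH}_\infty$ from $p(\cdot)$, so $q(\cdot)\in\mathcal{P}\cap\mathrm{LH}_0\cap\mathrm{LH}_\infty$.

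For the reduced inequality I would appeal to the local reverse H\"older property enjoyed by any $w\in A_\infty^{\rm loc}$. A variant of Lemma \ref{Lemma-Muckenhoupt} (applied with the excluded cube $R$ taken trivially, so that the average is simply on $Q$) produces some $s>1$, depending only on $[w]_{A_\infty^{\rm loc}}$, such that $m_Q(w^s)^{1/s}\lesssim m_Q(w)$ for every cube $|Q|\le 1$. H\"older's inequality inside the averaging then yields the pointwise bound
\[
M_w^{\rm loc}g(x)\le C\bigl(M^{\rm loc}(g^{s'})(x)\bigr)^{1/s'},\qquad s'=\frac{s}{s-1}.
\]
Passing to $L^{q(\cdot)}(w)$ norms and rewriting gives
\[
\|M_w^{\rm loc}g\|_{L^{q(\cdot)}(w)}\le C\bigl\|M^{\rm loc}(g^{s'})\bigr\|_{L^{q(\cdot)/s'}(w)}^{1/s'},
\]
to which Lemma \ref{lem:190409-100} applies, provided $(q/s')_->1$ and $w\in A_{q(\cdot)/s'}^{\rm loc}$. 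Once this is verified, the right side is dominated by $\|g^{s'}\|_{L^{q(\cdot)/s'}(w)}^{1/s'}=\|g\|_{L^{q(\cdot)}(w)}$, closing the argument.

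The main obstacle is the parameter bookkeeping at the very last step: $s$ must be large enough that $s'<q_-=p_-/u$, yet small enough that the quantitative reverse H\"older estimate for $w$ still holds with that exponent; simultaneously, the shifted exponent $q(\cdot)/s'$ must sit above an index for which $w$ already belongs to the local Muckenhoupt class. The available tools for negotiating these competing constraints are the openness of the $A_{p(\cdot)}^{\rm loc}$ classes (Lemma \ref{lem:210921-112}~(2)) and the monotonicity in Proposition \ref{prop:211027-111}, which together allow the reference class to be pushed close to the critical one, combined with the quantitative sharpness available in Lemma \ref{Lemma-Muckenhoupt}. The delicate interaction between the reverse H\"older exponent coming from $[w]_{A_\infty^{\rm loc}}$ and the local Muckenhoupt index $q_w$ is the technical heart of the proof; when these parameters are incompatible in a given regime, the fall-back is a direct modular argument, freezing the exponent to $q_\pm(Q)$ on each cube via log-H\"older and using Jensen's inequality together with the trivial $L^{q_-}(w)$-boundedness of $M_w^{\rm loc}$ (which holds on any measure space for constant exponents above $1$).
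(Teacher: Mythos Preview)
Your main route has a genuine gap. The pointwise inequality $M_w^{\rm loc}g\lesssim (M^{\rm loc}(g^{s'}))^{1/s'}$ coming from reverse H\"older is fine, but to push this through Lemma~\ref{lem:190409-100} you need $w\in A_{q(\cdot)/s'}^{\rm loc}$ with some $s'>1$. By monotonicity (Proposition~\ref{prop:211027-111}) that would require $w\in A_r^{\rm loc}$ for a constant $r\le q_-/s'<q_-=p_-/u$. The only hypothesis is $w\in A_\infty^{\rm loc}$, which gives $w\in A_r^{\rm loc}$ only for $r>q_w$, and nothing in the statement ties $q_w$ to $p_-/u$. Take $p(\cdot)\equiv 2$, $u=1$, and a weight with $q_w\gg 2$: then $q_-=2$ and no admissible $s'$ exists. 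The reverse-H\"older reduction to the \emph{unweighted} local maximal operator simply cannot close under these hypotheses.

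The paper sidesteps this obstruction by never leaving the weighted maximal operator. After the same scaling $q(\cdot)=p(\cdot)/u$ and a reduction to the dyadic version $M_w^{\mathfrak D}$, it proves a pointwise modular bound (Lemmas~\ref{lem:210807-11}--\ref{lem:210807-12}) of the form
\[
M_w^{\mathfrak D}f(x)^{p(x)}\lesssim \bigl(M_w^{\mathfrak D}[|f|^{p(\cdot)/p_-}](x)\bigr)^{p_-}+M_w^{\mathfrak D}[\gamma^{s(\cdot)/p_-}](x)^{p_-},
\]
with $s(\cdot)$ defined by $1/s(\cdot)=|1/p_\infty-1/p(\cdot)|$. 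One then integrates against $w\,dx$, using that $M_w^{\mathfrak D}$ is bounded on $L^{p_-}(w)$ for the \emph{constant} exponent $p_-$; this last fact holds for any locally doubling measure and requires no relation between $q_w$ and $p_-$. Lemma~\ref{lem:210807-13} controls the error term via $\mathrm{LH}_\infty$. Your ``fall-back'' paragraph points in this direction, but the nontrivial content---splitting $f$ into its large and small parts, handling the small part via the auxiliary exponent $s(\cdot)$, and verifying that $\gamma^{s(\cdot)/p_-}\in L^{p_-}(w)$---is precisely what is missing.
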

Some lemmas and intricate arguments are needed to prove this lemma.
First, we prove Proposition \ref{prop:210805-1}
if the exponent is constant.
Since $0<u<p_-$, we can assume $u=1$ by a scaling argument.
Since
$w \in A_\infty^{\rm loc}$,
 $w$ is a locally
doubling weight. That is,
$w$ satisfies $w(5Q) \lesssim w(2Q) \lesssim w(Q)$
for all cubes $Q$
with $|Q| \le 1$.
In the case where
$p(\cdot)$ is a constant we can use the theory of general Radon measures
in \cite[Section 3]{NTV98}.
In fact, we can replace
$M^{(u),\rm loc}_w$ by the maximal operator given by
\[
\tilde{M}^{(u)}_w f(x)
\equiv
\sup_{Q \in {\mathcal Q}}
\left(
\frac{\chi_Q(x)}{w(5Q)} \int_Q |f(y)|^u w(y) {\rm d}y
\right)^{\frac1u}
\qquad
(f \in L^0({\mathbb R}^n)).
\]Thus, the proof of Proposition \ref{prop:210805-1}
is complete if $p(\cdot)$ is a constant exponent.

Now we consider the case where
$p(\cdot)$ is a variable exponent.
We define
\[
[w]_{A_{p(\cdot)}^{\mathfrak D}}
\equiv
\sup_{Q \in {\mathfrak D}}
\frac{1}{|Q|}\|\chi_Q\|_{L^{p(\cdot)}(w)}
\|\chi_Q\|_{L^{p'(\cdot)}(\sigma)},
\]
where
$\sigma \equiv w^{-\frac{1}{p(\cdot)-1}}$
is the dual weight.
The class
$A_{p(\cdot)}^{\mathfrak D}$
collects all weights $w$ for which
$[w]_{A_{p(\cdot)}^{\mathfrak D}}<\infty$.
Hence,
we have only to deal with $M_w^{\mathfrak D}$ instead of $M_w^{\rm loc}$
assuming that $w \in A_{\infty}^{\mathfrak D}$ instead of $w \in A_{\infty}^{\rm loc}$
by the use of a technique similar to that developed
in \cite{NS-local}.
Here, $M_w^{\mathfrak D}$ stands for
\[
M_w^{\mathfrak D} f(x)
\equiv
\sup_{Q \in {\mathfrak D}}
\frac{\chi_Q(x)}{w(Q)} \int_Q |f(y)| w(y) {\rm d}y
\qquad
(f \in L^0({\mathbb R}^n)).
\]

First, we consider the case
where $f$ is unbounded
to find a pointwise estimate of $M^{\mathfrak D}_{w}f$.
\begin{lemma}\label{lem:210807-11}
Let $p(\cdot) \in {\mathcal P}_0$,
$w \in A_{\infty}^{\mathfrak D}$
and $Q \in {\mathfrak D}$.
Let
$f \in L^{p(\cdot)}(w)$
be
a non-negative real-valued function
with 
$\|f\|_{L^{p(\cdot)}(w)} \le 1$.
Assume
$f \le f^2$.
Then
\[
\left(\frac{1}{w(Q)}\int_Q f(y)w(y){\rm d}y\right)^{p(x)}
\lesssim 
\left(
\frac{1}{w(Q)}\int_Q f(y)^{\frac{p(y)}{p_-}}w(y){\rm d}y
\right)^{p_-}
\]
for all $x \in Q$.
\end{lemma}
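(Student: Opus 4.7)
The plan is to exploit $f \le f^2$ to localize the argument to $\{f \ge 1\}$, apply Jensen's inequality in the power $p(x)/p_- \ge 1$, and then control the resulting variable-exponent oscillation by combining the constraint $\|f\|_{L^{p(\cdot)}(w)}\le 1$ with the log-H\"older continuity of $p(\cdot)$ that is in force whenever this lemma is applied (namely in the proof of Proposition \ref{prop:210805-1}).

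First, the hypothesis $f\le f^2$ forces $f(y)\in\{0\}\cup[1,\infty)$ pointwise a.e. Introduce
\[
I\equiv\frac{1}{w(Q)}\int_Q f(y)w(y)\,{\rm d}y,\qquad
J\equiv\frac{1}{w(Q)}\int_Q f(y)^{p(y)/p_-}w(y)\,{\rm d}y.
\]
Since $p(y)/p_-\ge 1$ and $f(y)\ge 1$ on $\{f>0\}$, the pointwise bound $f(y)\le f(y)^{p(y)/p_-}$ holds, so $I\le J$. If moreover $I\le 1$, then since $p(x)\ge p_-$ we immediately obtain $I^{p(x)}\le I^{p_-}\le J^{p_-}$, which is the desired estimate.

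When $I>1$, I would apply Jensen's inequality with the convex function $t\mapsto t^{p(x)/p_-}$ (convex because $p(x)/p_-\ge 1$) to get
\[
I^{p(x)/p_-}\le \frac{1}{w(Q)}\int_Q f(y)^{p(x)/p_-}w(y)\,{\rm d}y,
\]
and then on $\{f\ge 1\}$ factor the integrand as
\[
f(y)^{p(x)/p_-}=f(y)^{p(y)/p_-}\cdot f(y)^{(p(x)-p(y))/p_-}.
\]
If one can bound the extra factor $f(y)^{(p(x)-p(y))/p_-}$ uniformly on $Q$ by a constant depending only on $p(\cdot)$ and $w$, the desired estimate $I^{p(x)/p_-}\lesssim J$ (equivalently $I^{p(x)}\lesssim J^{p_-}$) follows.

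The main obstacle is precisely the control of $f(y)^{(p(x)-p(y))/p_-}$, since $p(x)$ may exceed $p(y)$ and $f$ can be large. The key is to couple two ingredients. On the one hand, $\|f\|_{L^{p(\cdot)}(w)}\le 1$ translates via Lemma \ref{lem:190410-1} into $\int_Q f^{p(y)}w\le 1$, and combined with $f\ge 1$ on its support this yields an effective upper bound of the form $f\lesssim w(Q)^{-1/p_-}$ in an averaged or Chebyshev sense on $Q$. On the other hand, the log-H\"older conditions ${\rm LH}_0$ and ${\rm LH}_\infty$ on $p(\cdot)$ (present in every application of this lemma) control the oscillation by $|p(x)-p(y)|\lesssim(\log(1/|Q|))^{-1}$ for small cubes and by $(\log(e+|x|))^{-1}$ for large ones; using the reverse H\"older property of $w\in A_\infty^{\mathfrak D}$ to convert $|Q|$ into $w(Q)$ when needed, one obtains $f(y)^{(p(x)-p(y))/p_-}\lesssim w(Q)^{-(p(x)-p(y))/p_-^{2}}\lesssim 1$ uniformly. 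Inserting this back gives $I^{p(x)/p_-}\lesssim J$ in the case $I>1$, completing the proof.
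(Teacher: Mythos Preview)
Your treatment of the case $I\le 1$ is fine and matches the paper. The gap is in the case $I>1$. After Jensen you arrive at
\[
I^{p(x)/p_-}\le \frac{1}{w(Q)}\int_Q f(y)^{p(y)/p_-}\cdot f(y)^{(p(x)-p(y))/p_-}\,w(y)\,{\rm d}y,
\]
and you want the extra factor $f(y)^{(p(x)-p(y))/p_-}$ to be $\lesssim 1$. But the information $\int_Q f^{p(\cdot)}w\le 1$ gives only a Chebyshev bound on the superlevel sets of $f$; it gives \emph{no} pointwise control. Your displayed chain $f(y)^{(p(x)-p(y))/p_-}\lesssim w(Q)^{-(p(x)-p(y))/p_-^{2}}\lesssim 1$ silently uses a pointwise inequality $f(y)\lesssim w(Q)^{-1/p_-}$, which is simply false in general: $f$ may be arbitrarily large on sets of small $w$-measure. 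So the argument does not close.

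The paper avoids this by never trying to bound $f$ pointwise. Instead it sets $k=J$, splits the integral defining $I$ according to whether $f(y)\ge k^{p_-/p(x)}$ or not, and on the large set writes
\[
f(y)=f(y)^{p(y)/p_-}\cdot f(y)^{1-p(y)/p_-}\le f(y)^{p(y)/p_-}\cdot k^{(1-p(y)/p_-)\,p_-/p(x)},
\]
which is legitimate because $1-p(y)/p_-\le 0$ and $f(y)\ge k^{p_-/p(x)}$. This converts the unbounded factor depending on $f(y)$ into a factor depending only on the \emph{scalar} $k$. The modular constraint gives $1\le k\le 1/w(Q)$, and then the $A_\infty^{\mathfrak D}$ property of $w$ together with the log-H\"older behaviour of $p(\cdot)$ yield $k^{1-p(y)/p(x)}\sim 1$ uniformly in $y\in Q$. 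That is the missing idea: trade the pointwise oscillation factor in $f$ for a scalar oscillation factor in $k$, which \emph{is} uniformly bounded.
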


\begin{proof}
Fix $x \in Q$.
If
\[k
\equiv
\frac{1}{w(Q)}\int_Q f(y)^{\frac{p(y)}{p_-}}w(y){\rm d}y
\le 1,
\]
then
the desired estimate is clear since
\[
\frac{1}{w(Q)}\int_Q f(y)w(y){\rm d}y
\le
\frac{1}{w(Q)}\int_Q f(y)^{\frac{p(y)}{p_-}}w(y){\rm d}y
\le
\left(
\frac{1}{w(Q)}\int_Q f(y)^{\frac{p(y)}{p_-}}w(y){\rm d}y
\right)^{\frac{p_-}{p(x)}}.
\]
Otherwise,
assume
$k\ge 1.$
Then $f$ can be decomposed
according to $\{y \in {\mathbb R}^n\,:\,
f(y) \ge k^{\frac{p_-}{p(x)}}\}$
to give
\begin{align}
\frac{1}{w(Q)}\int_Q f(y)w(y){\rm d}y
&\le
k^{\frac{p_-}{p(x)}}
+
\frac{1}{w(Q)}\int_Q f(y)\chi_{[k,\infty]}(f(y)^{\frac{p(x)}{p_-}})w(y){\rm d}y
\nonumber\\
\label{eq:211101-52}
&\le
k^{\frac{p_-}{p(x)}}
+
\frac{1}{w(Q)}\int_Q f(y)^{\frac{p(y)}{p_-}}k^{\left(-\frac{p(y)}{p_-}+1\right)\frac{p_-}{p(x)}}w(y){\rm d}y.
\end{align}
Since
$w \in A_\infty^{\mathfrak D}$ and
\[
1 \le k=\frac{1}{w(Q)}\int_Q f(y)^{\frac{p(y)}{p_-}}w(y){\rm d}y \le \frac{1}{w(Q)},
\]
we have
\begin{equation}\label{eq:211101-51}
k^{1-\frac{p(y)}{p(x)}}
\sim
\left(
\frac{1}{w(Q)}
\right)^{1-\frac{p(y)}{p(x)}} \sim 1
\end{equation}
thanks to Remark \ref{rem:200807-1} and the global counterpart to
\cite[Lemma 2.13]{NS-local}, whose proof is similar to the original proposition
\cite[Lemma 2.13]{NS-local}.
If we insert
(\ref{eq:211101-51})
into
(\ref{eq:211101-52}),
then 
\begin{align*}
\frac{1}{w(Q)}\int_Q f(y)w(y){\rm d}y
&\lesssim
k^{\frac{p_-}{p(x)}}
+
\frac{1}{w(Q)}\int_Q f(y)^{\frac{p(y)}{p_-}}
k^{\left(-\frac{p(x)}{p_-}+1\right)\frac{p_-}{p(x)}}w(y){\rm d}y\\
&=
k^{\frac{p_-}{p(x)}}
+
\frac{k^{\frac{p_-}{p(x)}-1}}{w(Q)}\int_Q f(y)^{\frac{p(y)}{p_-}}w(y){\rm d}y\\
&=
2k^{\frac{p_-}{p(x)}}.
\end{align*}
Thus,
from the definition of $k$, we conclude
\[
\frac{1}{w(Q)}\int_Q f(y)w(y){\rm d}y
\lesssim 
\left(
\frac{1}{w(Q)}\int_Q f(y)^{\frac{p(y)}{p_-}}w(y){\rm d}y
\right)^{\frac{p_-}{p(x)}}.
\]
Recall that $p_+<\infty$.
Hence if we take the $p(x)$-th power of the above inequality,
then we obtain the desired result.
\end{proof}
We define a variable exponent $s(\cdot)$ by
\begin{equation}\label{eq:211025-1}
\frac{1}{s(x)}\equiv\left|\frac{1}{p_\infty}-\frac{1}{p(x)}\right|
\left(\lesssim \frac{1}{\log(e+|x|)}\right)
\end{equation}
for $x \in {\mathbb R}^n$. Roughly speaking, the function
$s(\cdot)$ measures how differs $p(\cdot)$ from $p_\infty$.
It turns out that
the log-H\"{o}lder condition at infinity is transformed 
into the integrability
of
$\gamma^{s(\cdot)}$
for small $\gamma>0$.
\begin{lemma}\label{lem:210807-12}
Let $p(\cdot) \in {\mathcal P}_0$, 
$w \in A_{\infty}^{\mathfrak D}$
and $Q\in {\mathfrak D}$.
Let
$f \in L^{p(\cdot)}(w)$
be
a real-valued function
with 
$\|f\|_{L^{p(\cdot)}(w)} \le 1$.
Assume
$0 \le f \le 1$.
Then for all $\gamma \in (0,1)$,
\[
\left(\frac{1}{w(Q)}\int_Q f(y)w(y){\rm d}y\right)^{p(x)}
\lesssim 
\left(
\frac{1}{w(Q)}\int_Q f(y)^{\frac{p(y)}{p_-}}w(y){\rm d}y
\right)^{p_-}+
M_w^{\mathfrak D}[\gamma^{\frac{s(\cdot)}{p_-}}](x)^{p_-}
\]
for all $x \in Q$.
\end{lemma}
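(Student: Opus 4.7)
The plan is to follow the strategy of Lemma~\ref{lem:210807-11}, but with an extra cutoff at the level $\gamma^{s(y)/p_-}$ to handle the regime $0 \le f \le 1$, which is opposite to the $f \le f^2$ regime of the previous lemma (there $f \ge 1$ where nonzero, here the reverse). Set
\[
A \equiv \frac{1}{w(Q)}\int_Q f\,w\,{\rm d}y,\quad
B \equiv \frac{1}{w(Q)}\int_Q f^{p(y)/p_-}w\,{\rm d}y,\quad
L(x) \equiv M_w^{\mathfrak D}[\gamma^{s(\cdot)/p_-}](x).
\]
Because $0 \le f \le 1$ and $p(y)/p_- \ge 1$, we have $f^{p(y)/p_-} \le f \le 1$, so $B \le A \le 1$ and also $L(x) \le 1$. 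Since $A \le 1$ and $p(x) \ge p_-$, the elementary inequality $A^{p(x)} \le A^{p_-}$ reduces the target estimate to showing $A \lesssim B + L(x)$; raising to the $p_-$-th power and using the subadditivity of $t \mapsto t^{p_-}$ (if $p_- \le 1$) or convexity (if $p_- \ge 1$) then yields the stated inequality up to a bounded constant.

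Partition $Q = E_1 \sqcup E_2$ with $E_1 \equiv \{y \in Q : f(y) \le \gamma^{s(y)/p_-}\}$ and $E_2 \equiv Q \setminus E_1$, and write $A = A_1 + A_2$ accordingly. For the small-value piece, $f \le \gamma^{s/p_-}$ on $E_1$ gives directly
\[
A_1 \le \frac{1}{w(Q)}\int_Q \gamma^{s(y)/p_-} w(y)\,{\rm d}y \le L(x),
\]
since $x \in Q$ and the dyadic maximal operator $M_w^{\mathfrak D}$ dominates this weighted average over $Q$. For the large-value piece, combine $f(y) \ge \gamma^{s(y)/p_-}$ with the factorisation $f = f^{p/p_-} \cdot f^{1 - p/p_-}$ and the sign $1 - p(y)/p_- \le 0$ to obtain the pointwise inequality
\[
f(y) \le f(y)^{p(y)/p_-}\bigl(\gamma^{s(y)/p_-}\bigr)^{1 - p(y)/p_-}
\quad (y \in E_2),
\]
from which an integration will produce $A_2 \lesssim B$.

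The main obstacle lies in controlling the factor $(\gamma^{s(y)/p_-})^{1 - p(y)/p_-} \ge 1$, which is a priori unbounded because $s(y) \to \infty$ as $|y| \to \infty$. To absorb it I would invoke the defining identity $1/s(y) = |1/p_\infty - 1/p(y)|$, which makes $s(y)|p(y) - p_\infty|$ uniformly bounded, combined with the log-H\"older decay condition \eqref{p-decay} and the freedom to choose $\gamma$ small; the resulting implicit constant will depend on $\gamma$ and on the structural constants of $p(\cdot)$ and $w$. With $A_2 \lesssim B$ in hand, adding the two pieces gives $A \lesssim B + L(x)$, and the preliminary reduction $A^{p(x)} \le A^{p_-}$ completes the argument. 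The delicate handling of the factor $(\gamma^{s/p_-})^{1-p/p_-}$ is the crux of the proof and is precisely the point that distinguishes this lemma from Lemma~\ref{lem:210807-11}, where the hypothesis $f \le f^2$ forced $f \ge 1$ and eliminated this difficulty.
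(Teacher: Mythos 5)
The proposal contains a genuine gap, and it is already visible in the first reduction. You pass from $A^{p(x)}$ to $A^{p_-}$ (using $A\le1$ and $p(x)\ge p_-$) and then try to prove the scalar inequality $A\lesssim B+L(x)$. But that intermediate claim is false in general. Take, for instance, $p_\infty>p_-$, a dyadic cube $Q$ lying far from the origin where $p(\cdot)\equiv p_\infty$, and $f\equiv c\in(0,1)$ on $Q$ with $c$ small. Then $A=c$ while $B=c^{p_\infty/p_-}\ll c$, and $L(x)$ can be made as small as one wishes by arranging $s$ and $w$ suitably (indeed $\gamma^{s(\cdot)/p_-}$ vanishes on the large region where $p=p_\infty$). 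So $A\lesssim B+L(x)$ fails, even though the target inequality $A^{p(x)}\lesssim B^{p_-}+L(x)^{p_-}$ still holds (in this example with equality, since $A^{p_\infty}=c^{p_\infty}=B^{p_-}$). In other words, the step $A^{p(x)}\le A^{p_-}$ throws away exactly the information that makes the lemma true.

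The second difficulty you already sense is indeed fatal as well. After the cutoff at level $\gamma^{s(y)/p_-}$ you arrive at the factor
$\bigl(\gamma^{s(y)/p_-}\bigr)^{1-p(y)/p_-}=\gamma^{s(y)(p_--p(y))/p_-^2}$. You propose to absorb it because $s(y)\,|p(y)-p_\infty|$ is bounded. But the exponent here involves $s(y)\,(p(y)-p_-)$, not $s(y)\,(p(y)-p_\infty)$, and those are very different: unless $p_\infty=p_-$, the quantity $p(y)-p_-$ does \emph{not} tend to zero as $|y|\to\infty$, while $s(y)\to\infty$, so the factor is unbounded in $y$. Choosing $\gamma$ small makes it worse, not better. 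The paper's proof avoids both problems by keeping $p(x)$ in play throughout: one splits $f=f_1+f_2$ according to whether $p(y)\le p(x)$ or $p(y)>p(x)$, and for the hard piece $f_2$ one applies Young's inequality with the conjugate pair determined by $\frac{1}{q(x,y)}=\frac{1}{p(x)}-\frac{1}{p(y)}$, exploiting the bound $2q(x,y)\ge\min\bigl(s(x),s(y)\bigr)$; it is precisely this two-point comparison $p(x)$ vs.\ $p(y)$ (rather than $p(y)$ vs.\ $p_-$) that the log-H\"older-at-infinity condition controls through $s(\cdot)$.
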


\begin{proof}
Fix $x \in Q$.
We set
\[
f_1(y)\equiv\chi_{[p(y),\infty)}(p(x))f(y), \quad
f_2(y)\equiv f(y)-f_1(y)
\]
for $y\in {\mathbb R}^n$.
Then
$f=f_1+f_2$.
Hence, we have
\begin{align*}
\left(\frac{\gamma^2}{2w(Q)}\int_Q f(y)w(y){\rm d}y\right)^{p(x)}
\le
\frac12
\sum_{j=1}^2
\left(\frac{\gamma^2}{w(Q)}\int_Q f_j(y)w(y){\rm d}y\right)^{p(x)}.
\end{align*}

As for $f_1$,
we have
\begin{align}
\label{eq:211029-12}
\frac{\gamma^2}{w(Q)}\int_Q f_1(y)w(y){\rm d}y
&\le
\frac{1}{w(Q)}\int_Q f_1(y)w(y){\rm d}y\\
\nonumber
&\le
\left(\frac{1}{w(Q)}\int_Q f_1(y)^{\frac{p(x)}{p_-}}w(y){\rm d}y\right)^{\frac{p_-}{p(x)}}\\
\nonumber
&\le
\left(\frac{1}{w(Q)}\int_Q f(y)^{\frac{p(y)}{p_-}}w(y){\rm d}y\right)^{\frac{p_-}{p(x)}}
\end{align}
by H\"{o}lder's inequality
and the fact that $f_1(y) \in [0,1]$ and $p(x) \ge p(y)$
for all $y \in Q$ such that $f_1(y) \ne 0$.

As for $f_2$, we define
a variable exponent $q(x,y)$ by
\[
\frac{1}{q(x,y)}=\frac{1}{p(x)}-\frac{1}{p(y)}>0
\]
for all $y \in Q$ with $p(x)<p(y)$.
Then
$2q(x,y) \ge \min(s(x),s(y))$,
since
\[
\frac{1}{q(x,y)} \le \frac{1}{s(x)}+\frac{1}{s(y)}
\le 
2\max\left(\frac{1}{s(x)},\frac{1}{s(y)}\right).
\]
Thus,
using
the H\"{o}lder inequality
and then
the 
Young inequality,
we obtain
\begin{align}
\label{eq:211029-11}
\left(\frac{\gamma^2}{w(Q)}\int_Q f_2(y)w(y){\rm d}y\right)^{\frac{p(x)}{p_-}}
&\le
\frac{1}{w(Q)}\int_Q \gamma^{\frac{2p(x)}{p_-}}f_2(y)^{\frac{p(x)}{p_-}}w(y){\rm d}y
\\
\nonumber
&\le
\frac{1}{w(Q)}\int_Q \left(\gamma^{\frac{2q(x,y)}{p_-}}+f(y)^{\frac{p(y)}{p_-}}\right)w(y){\rm d}y\\
\nonumber
&\le
\frac{1}{w(Q)}\int_Q \left(\gamma^{\frac{s(x)}{p_-}}+\gamma^{\frac{s(y)}{p_-}}+f(y)^{\frac{p(y)}{p_-}}\right)w(y){\rm d}y.
\end{align}
If we use the Lebesgue differentiation theorem,
then 
\begin{equation}\label{eq:211029-13}
\gamma^{\frac{s(x)}{p_-}}
\le M_w^{\mathfrak D}[\gamma^{\frac{s(\cdot)}{p_-}}](x).
\end{equation}
If we insert (\ref{eq:211029-13})
into (\ref{eq:211029-11}),
then
\begin{equation}\label{eq:211029-14}
\left(\frac{\gamma^2}{2w(Q)}\int_Q f_2(y)w(y){\rm d}y\right)^{p(x)}
\lesssim 
\left(
\frac{1}{w(Q)}\int_Q f(y)^{\frac{p(y)}{p_-}}w(y){\rm d}y
\right)^{p_-}+
M_w^{\mathfrak D}[\gamma^{\frac{s(\cdot)}{p_-}}](x)^{p_-}.
\end{equation}Combining 
(\ref{eq:211029-12}) and (\ref{eq:211029-14}),
we obtain the desired result.
\end{proof}
We use
the local $\log$-H\"{o}lder continuity at infinity
to show that $\gamma^{s(\cdot)}$ is integrable
as long as $\gamma \ll 1$.
We solidify this idea in the context of weights
as follows:
\begin{lemma}\label{lem:210807-13}
Let $p(\cdot) \in {\mathcal P} \cap {\rm LH}_0 \cap {\rm LH}_\infty$.
If $0<\gamma \ll 1$
and
$w \in A_{\infty}^{\mathfrak D}$,
then 
$M_w^{\mathfrak D}[\gamma^{\frac{s(\cdot)}{p_-}}] \in L^{p_-}(w)$.
\end{lemma}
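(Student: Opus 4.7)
The plan is to combine the log-H\"older decay of $p(\cdot)$ at infinity (which forces $s(\cdot)$ to grow like $\log(e+|x|)$) with the $L^{p_-}(w)$-boundedness of the dyadic weighted maximal operator.

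First, I would turn the qualitative bound in \eqref{eq:211025-1} into a quantitative one. The log-H\"older condition at infinity gives
\[
\frac{1}{s(x)}=\left|\frac{1}{p_\infty}-\frac{1}{p(x)}\right|
=\frac{|p(x)-p_\infty|}{p(x)p_\infty}
\le \frac{c^*}{p_-^{\,2}\log(e+|x|)},
\]
so $s(x)\ge c_0\log(e+|x|)$ for an explicit constant $c_0=p_-^{\,2}/c^*$. Writing $\gamma^{s(x)/p_-}=\exp(s(x)p_-^{-1}\log\gamma)$ and using $\log\gamma<0$, this gives the pointwise bound $\gamma^{s(x)/p_-}\le (e+|x|)^{c_0p_-^{-1}\log\gamma}$. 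By taking $\gamma$ sufficiently small (depending on $n$, $p_-$, $c^*$ and the growth of $w$), the exponent $c_0p_-^{-1}\log\gamma$ can be driven below $-M$ for any prescribed $M>0$.

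Next, I would show that $\gamma^{s(\cdot)/p_-}\in L^{p_-}(w)$. This amounts to $\int_{{\mathbb R}^n}\gamma^{s(x)}w(x)\,{\rm d}x<\infty$. The condition $w\in A_{\infty}^{\mathfrak D}$ furnishes doubling on dyadic cubes, and iterating the doubling starting from a fixed unit cube gives a polynomial growth estimate $w(B(0,R))\lesssim R^{\alpha}$ for some $\alpha=\alpha(w,n)$. Combining this with the rapid decay obtained in the previous step (choose $M>n+\alpha$ by decreasing $\gamma$), the integral $\int(e+|x|)^{-M}w(x)\,{\rm d}x$ converges, so $\gamma^{s(\cdot)/p_-}\in L^{p_-}(w)$.

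Finally, I would invoke the $L^{p_-}(w)$-boundedness of $M_w^{\mathfrak D}$. Since $p_->1$, the dyadic weighted maximal operator
\[
M_w^{\mathfrak D}f(x)=\sup_{Q\in{\mathfrak D},\ x\in Q}\frac{1}{w(Q)}\int_Q|f(y)|w(y)\,{\rm d}y
\]
is a dyadic Hardy--Littlewood maximal operator with respect to the Radon measure $w\,{\rm d}x$, and the standard weak-$(1,1)$/Marcinkiewicz argument (of the type already used in the discussion following Proposition \ref{prop:210805-1}) yields its strong $(p_-,p_-)$ boundedness with respect to $w\,{\rm d}x$. Applying this to $f=\gamma^{s(\cdot)/p_-}\in L^{p_-}(w)$ gives $M_w^{\mathfrak D}[\gamma^{s(\cdot)/p_-}]\in L^{p_-}(w)$, as required. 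The main obstacle is the polynomial-growth bound for $w$ in the middle paragraph: unlike in the strictly \emph{local} class, one has to exploit that $A_{\infty}^{\mathfrak D}$ controls averages over all dyadic cubes, not merely those with $|Q|\le 1$, so that iteration of doubling from a unit cube yields the required $w(B(0,R))\lesssim R^{\alpha}$ estimate.
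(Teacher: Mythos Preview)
Your proof is correct and follows essentially the same route as the paper: reduce to $\gamma^{s(\cdot)/p_-}\in L^{p_-}(w)$ via the $L^{p_-}(w)$-boundedness of $M_w^{\mathfrak D}$, then use $s(x)\gtrsim\log(e+|x|)$ from the ${\rm LH}_\infty$ condition together with polynomial growth of $w$ to get integrability. The only difference is that you spell out the polynomial-growth step from the doubling property of $A_\infty^{\mathfrak D}$, whereas the paper simply cites \cite[Corollary~2.14]{NS-local} for the finiteness of $\int_{{\mathbb R}^n}\gamma^{C\log(e+|x|)}w(x)\,{\rm d}x$.
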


\begin{proof}
It suffices to show that
\[
\int_{{\mathbb R}^n}M_w^{\mathfrak D}[\gamma^{\frac{s(\cdot)}{p_-}}](x)^{p_-}w(x){\rm d}x<\infty.
\]
A geometric observation shows
that $M_w^{\mathfrak D}$ 
is weak $L^1(w)$-bounded.
As we mentioned
in the beginning of Proposition \ref{prop:210805-1},
$M_w^{\mathfrak D}$  is bounded on $L^{p_-}(w)$,
assuming that $p(\cdot)$ is a constant exponent.
Thus,
thanks to the $\log$-H\"{o}lder continuity, this is equivalent to
\begin{equation}\label{eq:210807-11}
\int_{{\mathbb R}^n}\gamma^{C\log(e+|x|)}w(x){\rm d}x<\infty
\end{equation}
for some $C>0$. 
Note that
(\ref{eq:210807-11}) paraphrases
\cite[Corollary 2.14]{NS-local}.
\end{proof}
We conclude the proof of Proposition \ref{prop:210805-1}.
Let $f \in L^{p(\cdot)}(w)$ with $\|f\|_{L^{p(\cdot)}(w)} \le 1$.
Combining
Lemmas \ref{lem:210807-11} and \ref{lem:210807-12},
we have
\[
M_w^{\mathfrak D} f(x)^{p(x)}
\lesssim 
\left(M_w^{\mathfrak D}[|f(\cdot)|^{\frac{p(\cdot)}{p_-}}](x)
\right)^{p_-}+
M_w^{\mathfrak D}[\gamma^{\frac{s(\cdot)}{p_-}}](x)^{p_-}
\]
for all $x \in {\mathbb R}^n$.
Due to 
Lemma \ref{lem:210807-13},
the right-hand side is integrable
with respect to the weighted measure $w(x){\rm d}x$.
Thus, we have the desired result.

\subsection{Diening's comparison principle}
We invoke the following variant of the norm equivalence due to Diening
for weights that have at most polynomial growth.
We remark that a weight has at most polynomial growth,
if $w(\{y \in {\mathbb R}^n\,:\,|y| \le |x|\}) \lesssim
(1+|x|)^N$ for some large $N$.
\begin{lemma}[{\rm cf. \cite[Lemma 2.7]{DiHapre}}]
\label{lem:210806-11}
Let
$p(\cdot) \in {\rm L H}_0 \cap {\rm L H}_\infty \cap {\mathcal P}_0$.
Also, let
$f \in L^1_{\rm loc}({\mathbb R}^n)$
satisfy
$|f(x)| \le (1+|x|)^N$
for some large $N$.
Suppose that a weight $w$ has at most polynomial growth.
Then, 
\begin{enumerate}
\item[\rm (i)]
If
$\|f\|_{L^{p(\cdot)}(w)} \le A$,
then
$\|f\|_{L^{p_\infty}(w)}\le C_A$.
\item[\rm (ii)]
If
$\|f\|_{L^{p_\infty}(w)} \le B$,
then
$\|f\|_{L^{p(\cdot)}(w)}\le C_B$.
\end{enumerate}
Here, $C_A$ and $C_B$ are constants,
which depend on $A$ and $B$, respectively.
\end{lemma}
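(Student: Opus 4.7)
The plan is to reduce both parts to a single pointwise comparison between $|f(x)|^{p(x)}$ and $|f(x)|^{p_\infty}$ and then integrate against $w$. The log-H\"{o}lder decay condition \eqref{p-decay} together with the polynomial growth of $f$ and of $w$ produces an error term which is integrable.

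First, by rescaling $f$ inside the norm, I would reduce to modular statements: if $\rho^w_{p(\cdot)}(f)\le 1$ then $\rho^w_{p_\infty}(f)\le C$ for part (i), and conversely for part (ii), where the constants depend only on $A$ or $B$, $p_-$, $p_+$, $c^*$, the exponent $N$, and the polynomial growth exponent of $w$. The central pointwise inequality for part (i) is
\[
|f(x)|^{p_\infty} \le C\,|f(x)|^{p(x)} + (1+|x|)^{-\alpha p_\infty}
\]
for a parameter $\alpha>0$ to be calibrated. To prove it, I would split ${\mathbb R}^n$ into the regions $E_1\equiv\{|f(x)| \ge (1+|x|)^{-\alpha}\}$ and its complement. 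On $E_1$, the hypothesis $|f(x)|\le (1+|x|)^N$ combined with \eqref{p-decay} yields
\[
|f(x)|^{|p_\infty - p(x)|} \le (1+|x|)^{\max(N,\alpha)\cdot c^*/\log(e+|x|)} \lesssim 1,
\]
so $|f(x)|^{p_\infty} \lesssim |f(x)|^{p(x)}$ on $E_1$. On the complement, the trivial bound $|f(x)|^{p_\infty}\le (1+|x|)^{-\alpha p_\infty}$ suffices.

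Integrating against $w(x)\,{\rm d}x$, the polynomial-growth hypothesis on $w$ reduces the tail to the annular sum
\[
\int_{{\mathbb R}^n}(1+|x|)^{-\alpha p_\infty}w(x)\,{\rm d}x \lesssim \sum_{k\ge 0}2^{-k\alpha p_\infty}\,w(\{|x|\le 2^k\}),
\]
which converges as soon as $\alpha p_\infty$ exceeds the polynomial-growth exponent of $w$. This settles part (i). For part (ii), I would derive the symmetric pointwise inequality
\[
|f(x)|^{p(x)} \le C\,|f(x)|^{p_\infty} + (1+|x|)^{-\alpha p_-}
\]
by the same dichotomy with the roles of $p(x)$ and $p_\infty$ exchanged; the tail is integrable against $w$ provided $\alpha p_-$ is large enough, which is again achievable since $p_->0$.

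The main obstacle is the calibration of the threshold $\alpha$: it must be chosen large enough both to force the tail term to be $w$-integrable and to handle the region where $|f(x)|$ lies between $(1+|x|)^{-\alpha}$ and $1$ (where neither $|f|\ge 1$ nor a trivial bound applies). Once $\alpha$ is fixed after $p(\cdot)$, $w$, $N$, and the normalization have been chosen, and since $p(\cdot)\in{\mathcal P}_0$ keeps both $p_-$ and $p_\infty$ uniformly positive, the implicit constants in the above estimates depend only on the allowed data, not on $f$ itself, and both (i) and (ii) follow.
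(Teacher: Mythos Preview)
Your argument is correct. The pointwise dichotomy at the threshold $(1+|x|)^{-\alpha}$ works exactly as you describe: on $E_1$ both the upper bound $|f(x)|\le(1+|x|)^N$ and the lower bound $|f(x)|\ge(1+|x|)^{-\alpha}$ are available, so $|f(x)|^{|p_\infty-p(x)|}\le(1+|x|)^{\max(N,\alpha)c^*/\log(e+|x|)}\le e^{\max(N,\alpha)c^*}$, regardless of whether $|f(x)|\ge 1$ or $|f(x)|<1$; on the complement the tail bound is immediate. The calibration of $\alpha$ is not a genuine obstacle once you observe that the constant $C$ grows with $\alpha$ but is still finite, and $\alpha$ is fixed once $p_\infty$ (respectively $p_-$) and the polynomial-growth exponent of $w$ are known. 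The rescaling reduction is also fine after replacing $A$ by $\max(1,A)$, so that $f/A$ still obeys the polynomial bound.

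The paper organizes the proof differently. It introduces the intermediate exponents $\tilde p(\cdot)=\min(p_\infty,p(\cdot))$ and $p^\dagger(\cdot)=\max(p_\infty,p(\cdot))$ and proves four building blocks: two continuous embeddings $L^{p(\cdot)}(w)\hookrightarrow L^{\tilde p(\cdot)}(w)$ and $L^{p^\dagger(\cdot)}(w)\hookrightarrow L^{p(\cdot)}(w)$ via the variable-exponent H\"older inequality (the companion exponent $\tilde r(\cdot)$ satisfies $\tilde r(\cdot)\gtrsim\log(e+|\cdot|)$, so $1\in L^{\tilde r(\cdot)}(w)$ by the polynomial-growth hypothesis), and two pointwise comparisons $|f|^{p(x)}\lesssim|f|^{\tilde p(x)}$ and $|f|^{p^\dagger(x)}\lesssim|f|^{p(x)}$ using only the \emph{upper} bound on $|f|$ (the exponent difference has a fixed sign by construction, so no threshold is needed). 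The two routes rely on the same two facts --- log-H\"older decay controls $|f|^{|p(x)-p_\infty|}$, and polynomial growth of $w$ makes the error integrable --- but package them differently: your single additive pointwise inequality avoids the variable-exponent H\"older machinery at the cost of a dichotomy on $|f|$, whereas the paper's monotone intermediate exponents avoid the dichotomy at the cost of an extra embedding step.
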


\begin{proof}

We let $\tilde{p}(\cdot)\equiv\min(p_\infty,p(\cdot))$
and
$p^\dagger(\cdot)\equiv\max(p_\infty,p(\cdot))$.
Denote by $X$ the set of all measurable functions
satisfying $|f(x)| \le (1+|x|)^N$
for some large $N$.
We claim that there exist constants $K_1, K_2, K_3, K_4>1$ 
with the following properties:
  \begin{enumerate}
  \item
There exists a constant $K_1 \ge 1$ such that 
$\|f\|_{L^{\tilde{p}(\cdot)}(w)} \le K_1 \|f\|_{L^{p(\cdot)}(w)}$
for $f \in L^{p(\cdot)}(w)$.  
\item
If $f \in L^{\tilde{p}(\cdot)}(w) \cap X$ satisfies
$\|f\|_{L^{\tilde{p}(\cdot)}(w)} \le 1$,
then
$\|f\|_{L^{p(\cdot)}(w)} \le K_2$.
  \item
There exists a constant $K_3 \ge 1$ such that 
$\|f\|_{L^{p(\cdot)}(w)} \le K_3 \|f\|_{L^{p^\dagger(\cdot)}(w)}$
for $f \in L^{p^\dagger(\cdot)}(w)$.  
  \item
If $f \in L^{p(\cdot)}(w) \cap X$
satisfies
$\|f\|_{L^{p(\cdot)}(w)}\le1$,
then
$\|f\|_{L^{p_\dagger(\cdot)}(w)} \le K_4$.
  \end{enumerate}
  Once
  we prove
(1)--(4),
we obtain the equivalence as follows:
\begin{itemize}
\item[(i)]
 Taking $A$ as $\max(1,A)$, we can assume that $A \ge1$.
  Suppose that 
$f \in L^{p(\cdot)}(w) \cap X$
with
$\|f\|_{L^{p(\cdot)}(w)}\le A$.
Then
by (1)
we have
$\|f\|_{L^{\tilde{p}(\cdot)}(w)} \le K_1A$.
Since $K_1A \ge 1$, we have $(K_1A)^{-1}f \in X$.
By using (4) for the exponent $\tilde{p}(\cdot)$,
$\|(K_1A)^{-1}f\|_{L^{\max(p_\infty,\tilde{p}(\cdot))}(w)} \le K_4$.
This implies that
$\|f\|_{L^{p_\infty}(w)} \le K_1K_4A$.
\item[(ii)]
For the same reason as above, we can assume that $B \ge1$.
Suppose instead that 
$f \in L^{p_\infty} \cap X$
with
$B \ge \|f\|_{L^{p_\infty}(w)}=\|f\|_{L^{\min(p_\infty, p^\dagger(\cdot))}(w)}  $.
Since $B \ge 1$, we have
$\|B^{-1}f\|_{L^{\max(p_\infty,p(\cdot))}(w)} \le K_2$
by (2).
Hence
$\|(K_2B)^{-1}f\|_{L^{\max(p_\infty,p(\cdot))}(w)} \le 1$.
This implies that
$\|f\|_{L^{p(\cdot)}(w)} \le K_2K_3B$
by (3).
\end{itemize}
So, let us prove (1)--(4).
\begin{enumerate}
\item
We prove
$L^{p(\cdot)}(w) \hookrightarrow L^{\tilde{p}(\cdot)}(w)$.  
Let $1/\tilde{p}(\cdot)=1/\tilde{r}(\cdot)+1/p(\cdot)$.
To this end, we claim
\[
\int_{{\mathbb R}^n}
\lambda^{\tilde{r}(x)}w(x){\rm d}x <\infty.
\]
Once this is proved,
we have 
$L^{p(\cdot)}(w) \hookrightarrow L^{\tilde{p}(\cdot)}(w)=L^{\min(p_\infty,p(\cdot))}(w)$
by the H\"{o}lder inequality
and the fact $w \in L^{\tilde{r}(\cdot)}({\mathbb R}^n)$.

Note that
\[
\frac{1}{\tilde{r}(\cdot)}=\max\left\{\frac{1}{p_\infty}-\frac{1}{p(\cdot)},0\right\}
\lesssim \frac{1}{\log(e+|\cdot|)}.
\]
Thus,
$\tilde{r}(\cdot) \gtrsim \log(e+|\cdot|)$.
Consequently,
assuming that $\tilde{r}(\cdot)<\infty$ everywhere
(since it is trivial that $1 \in L^\infty(w)$),
for small $\lambda \in (0,1)$,
we have
\begin{align*}
\int_{{\mathbb R}^n}
\lambda^{\tilde{r}(x)}w(x){\rm d}x
\lesssim
\sum_{j=1}^\infty
(e+j)^{c\log \lambda}w(B(j))
<\infty.
\end{align*}
\item
Suppose that
$f \in L^{\tilde{p}(\cdot)}(w) \cap X$ satisfies
$\|f\|_{L^{\tilde{p}(\cdot)}(w)} \le 1$.
Since $|f(x)| \le (1+|x|)^N$,
\[
|f(x)|^{p(x)-\tilde{p}(x)} 
\le (1+|x|)^{N(p(x)-\tilde{p}(x))} \le \max(1, e^{Nc^*}).
\]
Here, we use the estimate 
\[
(1+|x|)^{(p(x)-\tilde{p}(x))}
=
\max\left(
1, (1+|x|)^{p_\infty-\tilde{p}(x)}
\right)
\le
\max\left(
1, (1+|x|)^{\frac{c^*}{\log(e+|x|)}}
\right)
\le
e^{c^*}
\]
by the log-H\"older-type decay condition. 
This proves that
\[
\int_{{\mathbb R}^n}|f(x)|^{p(x)}w(x){\rm d}x 
\le \max(1, e^{Nc^*})
\int_{{\mathbb R}^n}|f(x)|^{\tilde{p}(x)}w(x){\rm d}x
\le
\max(1, e^{Nc^*}).
\]
Thus,
$\|f\|_{L^{p(\cdot)}(w)} \le C_{c^*}$.
\item
Define an exponent $r_\dagger(\cdot)$ by
\[
\frac{1}{\tilde{p}(\cdot)}=\frac{1}{p(\cdot)}-\frac{1}{r_\dagger(\cdot)}.
\]
We claim that
 $1 \in L^{r_\dagger(\cdot)}(w)$
if weight $w$ has at most polynomial growth.

Note that
\[
\frac{1}{r_\dagger(\cdot)}=\max\left\{\frac{1}{p(\cdot)}-\frac{1}{p_\infty},0\right\}
\lesssim \frac{1}{\log(e+|\cdot|)}.
\]
Thus,
$r_\dagger(\cdot) \gtrsim \log(e+|\cdot|)$.
  
Assuming that $r_\dagger(\cdot)<\infty$ everywhere
(since it is trivial thati $1 \in L^\infty(w)$),
for small $\lambda \in (0,1)$,
we have
\begin{align*}
\int_{{\mathbb R}^n}
\lambda^{r_\dagger(x)}w(x){\rm dx}
\lesssim
\sum_{j=1}^\infty
(e+j)^{c\log \lambda}w(B(j))
<\infty.
\end{align*}

Consequently,
since $w \in L^{r_\dagger(\cdot)}({\mathbb R}^n)$,
we have 
$L^{p(\cdot)}(w) \hookleftarrow L^{p_\dagger(\cdot)}(w)=L^{\max(p_\infty,p(\cdot))}(w)$
by the H\"{o}lder inequality.
\item

Since $|f(x)| \le (1+|x|)^N$,
\[
|f(x)|^{p_\dagger(x)-p(x)} \le (1+|x|)^{N(p_\dagger(x)-p(x))} 
\le \max(1, e^{Nc_*}).
\]
This proves that
\[
\int_{{\mathbb R}^n}|f(x)|^{p_\dagger(x)}w(x)dx 
\le \max(1, e^{Nc_*}) \int_{{\mathbb R}^n}|f(x)|^{p(x)}w(x)dx
\le \max(1, e^{Nc_*}).
\]
Hence, we have 
$\|f\|_{L^{p^\dagger(\cdot)}(w)} \le C_{c_*}$.
\end{enumerate}
\end{proof}

\subsection{Inequality in ${\mathcal D}({\mathbb R}^n)$}
\label{subsection:distribution}
In this subsection, we prepare some lemmas by Rychkov \cite{Rychkov01}.
Especially, these lemmas play an important role to consider 
the Littlewood--Paley and wavelet characterization.
(See Section \ref{s77} and Section \ref{s80}, respectively.)

First, we recall the following lemma
on the moment condition of functions:
\begin{lemma}[{\rm Grafakos\,\cite[p.466]{Gr} or \cite[p.595]{Gr14}}]\label{lem3sw}
Let $\mu,\nu\in\Bbb R$, $M,N>0$, and $L\in {\mathbb N}_0$ satisfy $\nu\geq \mu$ and $N>M+L+n$. 
Suppose that $\phi_{(\mu)}\in C^L(\Bbb R^n)$ satisfies
\begin{equation*}
|\partial^\alpha\phi_{(\mu)}(x)|\leq A_\alpha\frac{2^{\mu(n+L)}}{(1+2^{\mu}|x-x_{\mu}|)^M}\quad\text{for all \,}|\alpha|=L.
\end{equation*}
Furthermore, suppose that $\phi_{(\nu)}$ is a measurable function satisfying 
\begin{equation*}
\int_{\Bbb R^n}\phi_{(\nu)}(x) (x-x_\nu)^{\beta}\,dx=0\quad\text{for all \,}|\beta|\leq L-1,\text{ \,and \,}
|\phi_{(\nu)}(x)|\leq B\frac{2^{\nu n}}{(1+2^\nu|x-x_\nu|)^N},
\end{equation*}
where the former condition is supposed to be vacuous when $L=0$. Then it holds
\begin{equation*}
\left|\int_{\Bbb R^n}\phi_{(\mu)}(x)\phi_{(\nu)}(x)dx\right|
\leq 
C_{A_\alpha, B, L, M, N}\,2^{\mu n-(\nu-\mu)L}(1+2^\mu|x_\mu-x_\nu|)^{-M}
\end{equation*}
with a constant $C_{A_\alpha, B, L, M, N}$ taken as
\begin{equation*}
C_{A_\alpha, B, L, M, N}=B\left(\sum_{|\alpha|=L}\frac{A_\alpha}{\alpha!}\right)\frac{\omega_n(N-M-L)}{N-M-L-n},
\end{equation*}
where $\omega_n$ denotes the volume of the unit ball in $\Bbb R^n$.
\end{lemma}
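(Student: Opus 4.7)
The lemma is a standard almost-orthogonality estimate, and the plan is to exploit the moment condition on $\phi_{(\nu)}$ together with a Taylor expansion of $\phi_{(\mu)}$ at the center $x_\nu$. Specifically, since $\phi_{(\nu)}$ annihilates polynomials of degree at most $L-1$, I can write
\[
\int_{\mathbb{R}^n}\phi_{(\mu)}(x)\phi_{(\nu)}(x)\,dx=\int_{\mathbb{R}^n}\bigl[\phi_{(\mu)}(x)-T_{L-1}^{x_\nu}\phi_{(\mu)}(x)\bigr]\phi_{(\nu)}(x)\,dx,
\]
where $T_{L-1}^{x_\nu}\phi_{(\mu)}$ is the Taylor polynomial of $\phi_{(\mu)}$ of order $L-1$ about $x_\nu$. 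The gain of a factor $2^{-(\nu-\mu)L}$ in the estimate comes from this cancellation.

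Next I would apply the integral form of Taylor's remainder,
\[
\phi_{(\mu)}(x)-T_{L-1}^{x_\nu}\phi_{(\mu)}(x)=\sum_{|\alpha|=L}\frac{L}{\alpha!}(x-x_\nu)^\alpha\int_0^1(1-t)^{L-1}\partial^\alpha\phi_{(\mu)}\bigl(x_\nu+t(x-x_\nu)\bigr)\,dt,
\]
and insert the hypothesis on $\partial^\alpha\phi_{(\mu)}$. This yields a bound for the remainder by $\bigl(\sum_{|\alpha|=L}A_\alpha/\alpha!\bigr)|x-x_\nu|^L 2^{\mu(n+L)}\sup_{t\in[0,1]}(1+2^\mu|x_\nu+t(x-x_\nu)-x_\mu|)^{-M}$. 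Combining with the decay hypothesis on $\phi_{(\nu)}$ and integrating gives
\[
\left|\int\phi_{(\mu)}\phi_{(\nu)}\,dx\right|\lesssim B\!\sum_{|\alpha|=L}\!\frac{A_\alpha}{\alpha!}\,2^{\mu(n+L)+\nu n}\!\int_{\mathbb{R}^n}\!\frac{|x-x_\nu|^L\sup_{t\in[0,1]}(1+2^\mu|x_\nu+t(x-x_\nu)-x_\mu|)^{-M}}{(1+2^\nu|x-x_\nu|)^N}\,dx.
\]

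The main technical obstacle is extracting the factor $(1+2^\mu|x_\mu-x_\nu|)^{-M}$ from the supremum over the line segment. Here I would use the elementary inequality $1+2^\mu|x_\nu+t(x-x_\nu)-x_\mu|\gtrsim (1+2^\mu|x_\mu-x_\nu|)(1+2^\mu|x-x_\nu|)^{-1}$, valid for $t\in[0,1]$, so that the supremum is controlled by $(1+2^\mu|x_\mu-x_\nu|)^{-M}(1+2^\mu|x-x_\nu|)^M$. After pulling this factor out, the remaining integral becomes
\[
2^{\mu(n+L)+\nu n}(1+2^\mu|x_\mu-x_\nu|)^{-M}\!\int_{\mathbb{R}^n}\!\frac{|x-x_\nu|^L(1+2^\mu|x-x_\nu|)^M}{(1+2^\nu|x-x_\nu|)^N}\,dx.
\]

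To finish I would change variables $y=2^\nu(x-x_\nu)$, use $\nu\ge\mu$ so that $(1+2^\mu|x-x_\nu|)^M\le(1+2^\nu|x-x_\nu|)^M=(1+|y|)^M$, and reduce to $\int_{\mathbb{R}^n}|y|^L(1+|y|)^{M-N}dy/2^{\nu(n+L)}$, which converges precisely because $N>M+L+n$, with value $\omega_n(N-M-L)/(N-M-L-n)$. Collecting powers of $2^\mu,2^\nu$ produces exactly $2^{\mu n-(\nu-\mu)L}(1+2^\mu|x_\mu-x_\nu|)^{-M}$ with the stated constant. The only subtle point is verifying the elementary geometric inequality along the segment, which is where a careful case-split depending on whether $|x-x_\nu|\le\tfrac12|x_\mu-x_\nu|$ or not is the cleanest way to proceed.
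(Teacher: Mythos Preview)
The paper does not supply its own proof of this lemma; it is quoted verbatim from Grafakos's textbook and invoked as a black box. Your argument is precisely the textbook one: subtract the Taylor polynomial of $\phi_{(\mu)}$ at $x_\nu$ using the moment conditions on $\phi_{(\nu)}$, control the remainder via the integral form, extract the factor $(1+2^\mu|x_\mu-x_\nu|)^{-M}$ by the Peetre-type inequality $1+2^\mu|x_\mu-x_\nu|\le(1+2^\mu|x_\nu+t(x-x_\nu)-x_\mu|)(1+2^\mu|x-x_\nu|)$, and reduce to a radial integral. Two small points are worth tightening. First, the stated constant $\omega_n(N-M-L)/(N-M-L-n)$ is an \emph{upper bound} for $\int_{\mathbb{R}^n}|y|^L(1+|y|)^{M-N}dy$, obtained by splitting into $|y|\le1$ (contributing at most $\omega_n$) and $|y|>1$ (contributing at most $n\omega_n/(N-M-L-n)$); you should phrase it as a bound rather than an exact value. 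Second, when $L=0$ your integral remainder formula carries the exponent $(1-t)^{-1}$ and breaks down; in that case no Taylor step is needed at all, and one applies the Peetre inequality directly to the hypothesis $|\phi_{(\mu)}(x)|\le A_0\,2^{\mu n}(1+2^\mu|x-x_\mu|)^{-M}$.
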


We also recall the decomposition formula of Dirac's delta
and invoke \cite[Theorem 1.6]{Rychkov01}.
Note that for $\varphi \in {\mathcal D}({\mathbb R}^n)$ and $t>0$,
we write $\varphi_t\equiv t^{-n}\varphi(t^{-1}\cdot)$.
\begin{lemma}\label{lem:210712-1}
Let $L \in {\mathbb N} \cup \{-1,0\}$ 
and $\phi \in {\mathcal D}({\mathbb R}^n) \setminus {\mathcal P}_0^\perp({\mathbb R}^n)$.
Then there exist $\phi^*, \psi, \psi^* \in {\mathcal D}({\mathbb R}^n)$ such that
\[
\phi^*=\phi-2^{-n}\phi\left(\frac{\cdot}{2}\right), \quad
\phi^*, \psi^* \in {\mathcal P}_L({\mathbb R}^n), \quad
\phi*\psi+\sum_{j=1}^{\infty} \phi^*_{2^{-j}}*\psi^*_{2^{-j}}=\delta
\]
in the topology of ${\mathcal D}'({\mathbb R}^n)$.
\end{lemma}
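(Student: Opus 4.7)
The identity is a discrete Calder\'on reproducing formula, which I would derive by combining a telescoping construction with a carefully chosen auxiliary bump. Since $\phi \notin \mathcal{P}_0^\perp(\mathbb{R}^n)$, rescaling lets me assume $\int\phi = 1$. The elementary telescoping relation
\[
\phi_{2^{-N}} = \phi + \sum_{j=1}^{N}\phi^*_{2^{-j}}
\]
converges to $\delta$ in $\mathcal{D}'(\mathbb{R}^n)$ as $N\to\infty$, since $\phi_{2^{-N}}$ is an approximate identity, already yielding an unsmoothed version $\delta = \phi + \sum_{j\ge 1}\phi^*_{2^{-j}}$. The content of the lemma is then to insert convolution factors $\psi, \psi^* \in \mathcal{D}(\mathbb{R}^n)$ arranging that $\psi^*$ has vanishing moments through order $L$, so that the dyadic series converges in a genuinely distributional sense and the factorisation passes through Lemma \ref{lem3sw}.

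I would construct $\psi^*$ first, as a bump in $\mathcal{D}(\mathbb{R}^n)\cap\mathcal{P}_L(\mathbb{R}^n)^\perp$ (for instance $\psi^* = \Delta^{L+1}\rho$ for an arbitrary bump $\rho$). Passing to the Fourier side, the required identity reads
\[
\hat\phi(\xi)\hat\psi(\xi) + \sum_{j=1}^{\infty}\hat{\phi^*}(2^{-j}\xi)\hat{\psi^*}(2^{-j}\xi) = 1,
\]
where $\hat{\phi^*}(\xi) = \hat\phi(\xi) - \hat\phi(2\xi)$ so that $\hat{\phi^*}(0) = 0$. Because $\hat{\psi^*}$ vanishes at $0$ to order $L+1$ and $\hat{\phi^*}$ vanishes to order $1$, the series $\Sigma(\xi) := \sum_{j\ge 1}\hat{\phi^*}(2^{-j}\xi)\hat{\psi^*}(2^{-j}\xi)$ is smooth with $\Sigma(0)=0$, and the equation $\hat\phi(\xi)\hat\psi(\xi) = 1 - \Sigma(\xi)$ is solvable for a smooth $\hat\psi$ near the origin since $\hat\phi(0)=1$; away from the origin this is completed with a Paley--Wiener-type cutoff.

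The principal obstacle is ensuring $\psi \in \mathcal{D}(\mathbb{R}^n)$ (and not merely Schwartz), since compactly supported physical-space functions in general do not arise from a Fourier-side quotient construction. Following Rychkov, I would resolve this in physical space: take $\psi^*$ as an explicit dilation-difference of a bump so its compact support is built in, and solve $\phi*\psi = \delta - \sum_j \phi^*_{2^{-j}}*\psi^*_{2^{-j}}$ by a geometrically convergent iterative series, whose convergence in $\mathcal{D}'(\mathbb{R}^n)$ is verified by pairing with $\varphi \in \mathcal{D}(\mathbb{R}^n)$ and applying Lemma \ref{lem3sw} with moment order $L+1$: this gives
\[
\bigl|\langle \phi^*_{2^{-j}}*\psi^*_{2^{-j}}, \varphi\rangle\bigr| \lesssim 2^{-j(L+1)}\|\varphi\|_{C^{L+1}},
\]
a summable geometric bound which simultaneously yields convergence in $\mathcal{D}'(\mathbb{R}^n)$ and identifies the limiting distribution as $\delta - \phi*\psi$.
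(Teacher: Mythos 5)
The telescope $\phi_{2^{-N}}=\phi+\sum_{j=1}^{N}\phi^*_{2^{-j}}\to\delta$, the normalisation $\int\phi=1$, and the Fourier-side formulation are all correct, and you correctly locate the difficulty in making $\psi$ compactly supported. However, the step where you propose to ``take $\psi^*$ as an explicit dilation-difference of a bump \dots and solve $\phi*\psi=\delta-\sum_j\phi^*_{2^{-j}}*\psi^*_{2^{-j}}$'' does not go through. Once $\psi^*$ is fixed in advance, that equation generically has \emph{no} solution $\psi\in\mathcal D(\mathbb R^n)$: on the Fourier side you would need $\hat\psi=\bigl(1-\sum_j\hat\phi^*(2^{-j}\xi)\hat\psi^*(2^{-j}\xi)\bigr)/\hat\phi(\xi)$, and $\hat\phi$, being entire of exponential type with $\hat\phi(0)=1$ and decay along $\mathbb R^n$, has real zeros at which the numerator has been given no reason to vanish. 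The estimate you display, $|\langle\phi^*_{2^{-j}}*\psi^*_{2^{-j}},\varphi\rangle|\lesssim 2^{-j(L+1)}\|\varphi\|_{C^{L+1}}$, proves that $\sum_j\phi^*_{2^{-j}}*\psi^*_{2^{-j}}$ converges in $\mathcal D'$, but this only \emph{defines} a distribution; it does not exhibit it as $\phi*\psi$ for a test function $\psi$, and ``identifies the limiting distribution as $\delta-\phi*\psi$'' is circular since $\psi$ has not yet been produced. The ``Paley--Wiener cutoff'' remark has the same problem: cutting off $\hat\psi$ destroys the exact identity.

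What actually makes Rychkov's Theorem 1.6 work is that $\psi$ and $\psi^*$ are built \emph{simultaneously} by a purely algebraic device that automatically yields compactly supported functions. After normalising $\int\phi=1$, choose a polynomial $R$ with $R(0)=1$, $R'(0)=0$ and $(1-t)^{L+2}\mid R(t)$ (for instance $R(t)=(1-t)^{L+2}(1+(L+2)t)$), and set $Q(t)=(1-R(t))/t$, $S(u,v)=(R(u)-R(v))/(u-v)$. Both are polynomials with zero constant term, precisely because $R'(0)=0$. Defining $\hat\psi:=Q(\hat\phi)$ and $\hat\psi^*(\eta):=-S\bigl(\hat\phi(\eta),\hat\phi(2\eta)\bigr)$, the identity $\hat\phi\hat\psi+\sum_{j\ge1}\hat\phi^*(2^{-j}\xi)\hat\psi^*(2^{-j}\xi)=1$ follows by telescoping $R(\hat\phi(2^{-j}\xi))$, the order-$(L+2)$ zero of $R$ at $t=1$ forces $\hat\psi^*$ to vanish to order $L+1$ at the origin, and since $Q$ and $S$ have no constant term, $\psi$ and $\psi^*$ are finite linear combinations of convolution powers of $\phi$ and $2^{-n}\phi(\cdot/2)$ with no $\delta$-part and hence lie in $\mathcal D(\mathbb R^n)$. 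It is the condition $R'(0)=0$ — which your proposal has no analogue of — that removes the $\delta$-term from $\psi$ and makes it a genuine test function.
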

Remark that if $\phi$ is even (resp. radial)
then the actual construction in \cite[Theorem 1.6]{Rychkov01}
shows that
$\phi^*,\psi,\psi^*$ are even (resp. radial).

Furthermore, in \cite[Lemma 2.9]{Rychkov01},
Rychkov proved the following estimate
for the functions which are constructed in Lemma \ref{lem:210712-1}:
\begin{lemma}\label{lem:211029-111}
Let $A,B,r>0$ and $L \in {\mathbb N} \cap [A,\infty)$.
Then
in Lemma \ref{lem:210712-1},
for all
$j \in {\mathbb N}_0$, $t>0$ and $f \in {\mathcal D}'({\mathbb R}^n)$,
\begin{align*}\lefteqn{
|\phi_{2^{-j}t}*f(x)|^r
}\\
&\lesssim
2^{j n}
\int_{{\mathbb R}^n}
\frac{|\phi_{2^{-j}t}*f(x-y)|^r}{m_{j,A r,B r}(y)}{\rm d}y
+
\sum_{k=j+1}^{\infty}
2^{k n+(j-k)(L+1) r}
\int_{{\mathbb R}^n}
\frac{|\phi^*_{2^{-k}t}*f(x-y)|^r}{m_{j,A r,B r}(y)}{\rm d}y
\end{align*}
and
\begin{align*}
|\phi^*_{2^{-j}t}*f(x)|^r
&\lesssim
\sum_{k=j}^{\infty}
2^{k n+(j-k)(L+1) r}
\int_{{\mathbb R}^n}
\frac{|\phi^*_{2^{-k}t}*f(x-y)|^r}{m_{j,A r,B r}(y)}{\rm d}y.
\end{align*}
In particular,
\begin{align*}
\lefteqn{
\sup_{y \in {\mathbb R}^n}
\frac{|\phi_{2^{-j}t}*f(x-y)|^r}{m_{j,A r,B r}(y)}
}\\
&\lesssim
2^{j n}
\int_{{\mathbb R}^n}
\frac{|\phi_{2^{-j}t}*f(x-y)|^r}{m_{j,A r,B r}(y)}{\rm d}y
+
\sum_{k=j+1}^{\infty}
2^{k n+(j-k)(L+1) r}
\int_{{\mathbb R}^n}
\frac{|\phi^*_{2^{-k}t}*f(x-y)|^r}{m_{j,A r,B r}(y)}{\rm d}y
\end{align*}
and
\begin{align*}
\sup_{y \in {\mathbb R}^n}
\frac{|\phi^*_{2^{-j}t}*f(x-y)|^r}{m_{j,A r,B r}(y)}
\lesssim
\sum_{k=j}^{\infty} 
2^{k n+(j-k)(L+1) r}
\int_{{\mathbb R}^n}
\frac{|\phi^*_{2^{-k}t}*f(x-y)|^r}{m_{j,A r,B r}(y)}{\rm d}y.
\end{align*}
\end{lemma}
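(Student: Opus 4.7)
The approach is the one of Rychkov \cite[Lemma 2.9]{Rychkov01}, built on the Calder\'on-type reproducing formula of Lemma \ref{lem:210712-1}. Rescaled to the level $2^{-j}t$, that identity reads
\[
f = \psi_{2^{-j}t}*\phi_{2^{-j}t}*f + \sum_{k=j+1}^{\infty}\psi^*_{2^{-k}t}*\phi^*_{2^{-k}t}*f,
\]
and convolving with $\phi_{2^{-j}t}$ and regrouping via the associativity and commutativity of convolution rewrites the left-hand side as
\[
\phi_{2^{-j}t}*f = K_0*(\phi_{2^{-j}t}*f) + \sum_{k=j+1}^{\infty}K_k*(\phi^*_{2^{-k}t}*f),
\]
where $K_0 \equiv \phi_{2^{-j}t}*\psi_{2^{-j}t}$ and $K_k \equiv \phi_{2^{-j}t}*\psi^*_{2^{-k}t}$. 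An analogous identity (obtained by applying the reproducing formula one level coarser and then convolving with $\phi^*_{2^{-j}t}$) handles the $\phi^*_{2^{-j}t}*f$ case, with the sum starting at $k=j$.

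Next, Lemma \ref{lem3sw} applied to the pair $(\phi_{2^{-j}t},\psi^*_{2^{-k}t})$, using the $L$ vanishing moments of $\psi^*$, gives
\[
|K_k(y)| \lesssim 2^{jn + (j-k)(L+1)}(1+2^j|y|)^{-M}
\]
for $k\ge j+1$ and any large $M$; an analogous bound with prefactor $2^{jn}$ (and no extra decay in $k$) holds for $K_0$. Since $\phi$ and $\psi^*$ are compactly supported, $K_k$ is supported in a ball whose radius is bounded uniformly in $j$, $k$ and $t\in(0,1]$, so the exponential factor $e^{|y|Br}$ appearing in $m_{j,Ar,Br}(y)$ costs only a constant; hence
\[
|K_k(y)| \lesssim 2^{jn+(j-k)(L+1)}/m_{j,M,B}(y) \qquad (\text{any } M,B>0).
\]

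Substituting these kernel bounds into the decomposition gives a first-power pointwise estimate, which we then raise to the $r$-th power in two regimes. If $r\ge 1$, apply Jensen's inequality against the probability measure proportional to $dy/m_{j,M,B}(y)$ (total mass $\sim 2^{-jn}$ once $M$ is chosen large): this yields
\[
\left(\int F(y)/m_{j,M,B}(y)\,dy\right)^r \lesssim 2^{-jn(r-1)}\int F(y)^r/m_{j,M,B}(y)\,dy,
\]
which, combined with the outer prefactor $2^{jnr+(j-k)(L+1)r}$, produces a factor of $2^{jn+(j-k)(L+1)r}$ on each summand. Since $jn \le kn$ for $k \ge j+1$, this is dominated by $2^{kn+(j-k)(L+1)r}$ as claimed. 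If $0<r<1$, we instead use sub-additivity $(a+b)^r\le a^r+b^r$ together with a dyadic annular decomposition of each convolution integral to push the $r$-th power inside; the prefactors are handled analogously. The supremum estimates in the second half of the lemma follow from the integral versions by applying them at $x-y$ in place of $x$, dividing by $m_{j,Ar,Br}(y)$, and using the submultiplicativity
\[
m_{j,Ar,Br}(w) \lesssim m_{j,Ar,Br}(y)\,m_{j,Ar,Br}(w-y)
\]
before taking $\sup_y$ after a substitution $w = y+z$.

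The most delicate part will be the bookkeeping of the constants across the two regimes $r\ge 1$ and $r<1$, together with the careful choice of the free parameters $M$ and $B$ in Lemma \ref{lem3sw} so that, after all estimations, the weight emerging on the right-hand side is precisely $m_{j,Ar,Br}$. This forces $M$ and $B$ to be treated as free large parameters throughout the argument and fixed only at the very end.
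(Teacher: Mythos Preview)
The paper does not give its own proof of this lemma; it simply records it as \cite[Lemma 2.9]{Rychkov01}. Your outline correctly identifies the structural ingredients of Rychkov's argument: the reproducing formula of Lemma~\ref{lem:210712-1}, the kernel estimates via Lemma~\ref{lem3sw}, and the submultiplicativity of $m_{j,Ar,Br}$ for the ``in particular'' part. For $r\ge 1$ your Jensen step on each summand is fine, though handling the infinite sum over $k$ requires an extra geometric weighting (extract a factor $2^{(j-k)\varepsilon r}$ before applying H\"older to the sum), which you do not mention but which is routine.

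The genuine gap is in the $r<1$ case. Subadditivity plus a dyadic annular decomposition does \emph{not} convert $\bigl(\int_{A_\ell}|g_k(x-y)|\,dy\bigr)^r$ into $\int_{A_\ell}|g_k(x-y)|^r\,dy$: Jensen's inequality for the concave map $t\mapsto t^r$ goes the wrong way, so the decomposition only yields bounds involving $\sup_{A_\ell}|g_k(x-\cdot)|$, not the desired integral of $|g_k|^r$. Rychkov's actual mechanism is different: one writes
\[
|g_k(x-y)| = |g_k(x-y)|^r\,|g_k(x-y)|^{1-r}
\le \Bigl(\sup_{z}\frac{|g_k(x-z)|}{m_{j,A,B}(z)}\Bigr)^{1-r} m_{j,A,B}(y)^{1-r}\,|g_k(x-y)|^r,
\]
inserts this under the convolution, and absorbs $m_{j,A,B}(y)^{1-r}$ into the kernel bound (taking $M$ in Lemma~\ref{lem3sw} large). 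This produces an inequality in which the Peetre-type supremum appears on both sides with different exponents; one then either iterates or, after checking a priori finiteness, absorbs the supremum from the right into the left to obtain the stated integral bound. This bootstrap is the heart of the $r<1$ case, and your sketch does not contain it.
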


In fact, these estimates hold if the function $\phi$ in the right-hand side is replaced
by a function having similar properties.

\begin{lemma}[{\cite[Theorem 2.5]{Rychkov01}}]\label{lem:211029-211}
In addition to the assumptions in Lemmas \ref{lem:210712-1} and \ref{lem:211029-111},
let $\zeta \in {\mathcal S}({\mathbb R}^n)$.
Then
$j \in {\mathbb N}_0$, $t>0$ and $f \in {\mathcal D}'({\mathbb R}^n)$,
\begin{align*}\lefteqn{
|\zeta_{2^{-j}t}*f(x)|^r
}\\&\lesssim
2^{j n}
\int_{{\mathbb R}^n}
\frac{|\phi_{2^{-j}t}*f(x-y)|^r}{m_{j,A r,B r}(y)}{\rm d}y
+
\sum_{k=j+1}^{\infty}
2^{k n+(j-k)(L+1) r}
\int_{{\mathbb R}^n}
\frac{|\phi^*_{2^{-k}t}*f(x-y)|^r}{m_{j,A r,B r}(y)}{\rm d}y.
\end{align*}
\end{lemma}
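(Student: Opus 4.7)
My plan is to reduce the inequality to Lemma \ref{lem:211029-111} via the reproducing formula supplied by Lemma \ref{lem:210712-1}. Rescaling that identity by $2^{-j}t$, which leaves $\delta$ fixed, produces
\[
\delta=\phi_{2^{-j}t}*\psi_{2^{-j}t}+\sum_{k=j+1}^{\infty}\phi^*_{2^{-k}t}*\psi^*_{2^{-k}t},
\]
after the index shift $k\mapsto j+k$. Convolving with $\zeta_{2^{-j}t}*f$ gives the identity
\[
\zeta_{2^{-j}t}*f=(\zeta_{2^{-j}t}*\psi_{2^{-j}t})*(\phi_{2^{-j}t}*f)+\sum_{k=j+1}^{\infty}(\zeta_{2^{-j}t}*\psi^*_{2^{-k}t})*(\phi^*_{2^{-k}t}*f),
\]
so that only the $r$-th power of each convolution in this decomposition needs to be controlled by an integral of the stated form.

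Next I would estimate the kernels using Lemma \ref{lem3sw}. Since $\zeta\in{\mathcal S}({\mathbb R}^n)$ possesses all moments and decays faster than any polynomial, while $\psi^*$ is compactly supported with $L$ vanishing moments (from Lemma \ref{lem:210712-1}), Grafakos' lemma applied at the scales $2^{j}/t$ and $2^{k}/t$ gives, for any prescribed $M$,
\[
|(\zeta_{2^{-j}t}*\psi^*_{2^{-k}t})(y)|\lesssim 2^{jn}\cdot 2^{-(k-j)(L+1)}\bigl(1+2^j|y|/t\bigr)^{-M}
\qquad(k\ge j+1),
\]
and an analogous simpler estimate for $\zeta_{2^{-j}t}*\psi_{2^{-j}t}$. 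Taking $r$-th powers of the decomposition and applying Minkowski (if $r\ge 1$) or the sub-additivity $|a+b|^r\le |a|^r+|b|^r$ (if $r<1$) reduces the problem to bounding each convolution against $|\phi_{2^{-j}t}*f|^r$ or $|\phi^*_{2^{-k}t}*f|^r$ weighted by $m_{j,Ar,Br}^{-1}$. For $r\ge1$ this follows from H\"older's inequality using the weight $m_{j,Ar,Br}^{1/r}$; for $r<1$, one instead factors $|\phi^*_{2^{-k}t}*f(x-y)|^r=m_{j,Ar,Br}(y)\cdot m_{j,Ar,Br}(y)^{-1}|\phi^*_{2^{-k}t}*f(x-y)|^r$, majorizes the bracketed term by the relevant pointwise supremum, and invokes Lemma \ref{lem:211029-111} to dominate that supremum by the required weighted integrals. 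Summing the geometric series in $k-j$ with ratio $2^{-(L+1)r+n}$, which converges thanks to the assumption $L\ge A$ together with an appropriately large choice of the polynomial decay exponent $M$, recovers the stated inequality.

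The main obstacle is the exponential factor $e^{|y|Br}$ embedded in $m_{j,Ar,Br}$: a generic Schwartz function's polynomial decay does not beat exponential growth, so the H\"older step above requires care when $r\ge 1$ and the Peetre-supremum step likewise when $r<1$. To handle this I would split $\zeta=\chi\zeta+(1-\chi)\zeta$ for a compactly supported bump $\chi$ equal to one on a large ball; the first piece yields a compactly supported convolution kernel on which the exponential weight is bounded, while the tail $(1-\chi)\zeta$ contributes a lower-order term whose convolution against $\phi^*_{2^{-k}t}*f$ is absorbed using the $K_B$-type bound of Lemma \ref{lem:210923-1}. Once this localization is in place, the polynomial--exponential bookkeeping becomes routine and the proof goes through in direct parallel with the derivation of Lemma \ref{lem:211029-111}.
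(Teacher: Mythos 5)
Your framework---rescale the reproducing formula from Lemma \ref{lem:210712-1} to expand $\zeta_{2^{-j}t}*f$, bound the kernels $\zeta_{2^{-j}t}*\psi_{2^{-j}t}$ and $\zeta_{2^{-j}t}*\psi^*_{2^{-k}t}$ via Lemma \ref{lem3sw}, and then invoke Lemma \ref{lem:211029-111}---is the intended one, and you correctly isolate the exponential factor $e^{|y|Br}$ in $m_{j,Ar,Br}$ as the sticking point. The proposed fix, however, does not close the gap. Lemma \ref{lem:211029-211} is a \emph{pointwise} inequality at each $x$, while the bound in Lemma \ref{lem:210923-1} is an operator-norm estimate on $L^{p(\cdot)}(w)$; one cannot absorb a pointwise error term by appealing to a norm inequality. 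Worse, the tail is not ``lower order'' in any pointwise sense: if $\zeta \in \mathcal{S}(\mathbb{R}^n)$ decays more slowly than $e^{-B|y|}$ (for instance $|\zeta(y)|\sim e^{-\sqrt{|y|}}$ at infinity, which is admissible for a Schwartz function), and $f$ is a bump supported near a point $z_0$ far from $x$, then the left-hand side is comparable to $e^{-r\sqrt{|z_0|}}$, while every term on the right carries the factor $m_{j,Ar,Br}(z_0)^{-1}\lesssim e^{-Br|z_0|}$. As $|z_0|\to\infty$ the left side dominates, so no constant makes the inequality hold. For a fixed $B>0$, the hypothesis $\zeta\in\mathcal{S}(\mathbb{R}^n)$ alone is too weak; the flaw is in the statement, not in the bookkeeping.

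What makes the lemma usable is that the paper never applies it at that level of generality: in the only application (the proof of Theorem \ref{thm:7.4}), $\zeta=\zeta^*$ lies in $C^\infty_c(\mathbb{R}^n)$. With compactly supported $\zeta$ the kernels $\zeta_{2^{-j}t}*\psi_{2^{-j}t}$ and $\zeta_{2^{-j}t}*\psi^*_{2^{-k}t}$ are themselves supported in a ball of radius $O(2^{-j}t)$, on which $m_{j,Ar,Br}$ is uniformly bounded; Lemma \ref{lem3sw} then supplies the factor $2^{-(k-j)(L+1)}$, and the supremum form of Lemma \ref{lem:211029-111} completes the argument. In other words, your $\chi\zeta$ piece already \emph{is} the whole proof once $\zeta$ is assumed compactly supported (or to decay exponentially at rate exceeding $B$); the $(1-\chi)\zeta$ / $K_B$ detour should simply be dropped together with the claim that $\mathcal{S}(\mathbb{R}^n)$ suffices.
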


\section{Fundamental properties of $h^{p(\cdot)}(w)$ (including the proof of Theorem \ref{thm:210712-2})}
\label{s53}

Here,
we investigate the structure of
$h^{p(\cdot)}(w)$.
We first
verify that ${\mathcal D}'({\mathbb R}^n)$ is a suitable space
to consider $h^{p(\cdot)}(w)$.
Note that Propositions \ref{prop:190406-2} and \ref{prop:190406-3}
are proved
by Tang \cite[Propositions 3.1 and 3.2]{Tang12} 
when $p(\cdot)$ is a constant exponent
in $(0,1]$.

\begin{proposition}\label{prop:190406-2}
Let 
$w \in A^{\rm loc}_\infty$.
If $N \ge N_{p(\cdot),w}$,
then the inclusion
$h^{p(\cdot)}(w) \hookrightarrow {\mathcal D}'({\mathbb R}^n)$
is continuous.
\end{proposition}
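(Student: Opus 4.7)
The plan is to show that for every $\varphi \in \mathcal{D}(\mathbb{R}^n)$ there is a constant $C(\varphi)>0$ with $|\langle f,\varphi\rangle|\le C(\varphi)\|f\|_{h^{p(\cdot)}(w)}$ for all $f\in h^{p(\cdot)}(w)$. The idea is to dominate $\langle f,\varphi\rangle$ pointwise by $\mathcal{M}_N f$ on a set of positive $w$-measure and then take the $L^{p(\cdot)}(w)$-quasi-norm.

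First I would reduce to the case where $\varphi$ is supported in a ball of radius $1$. Choosing a smooth partition of unity subordinate to a finite cover of $\operatorname{supp}\varphi$ by unit balls $B(x_j,1)$, I write $\varphi=\sum_{j=1}^K \varphi_j$ with $\operatorname{supp}\varphi_j\subset B(x_j,1)$, and note that the $C^{N+1}$-norm of each $\varphi_j$ is controlled by that of $\varphi$.

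Next, for each $\varphi_j$ I would produce $\psi_j\in\mathcal{D}_N(\mathbb{R}^n)$, $t_j\in(0,1)$ and $\lambda_j\in\mathbb{C}$ such that
\[
\varphi_j(y)=\lambda_j\,(\psi_j)_{t_j}(x_j-y),
\]
so that $\langle f,\varphi_j\rangle=\lambda_j\,(\psi_j)_{t_j}*f(x_j)$. Concretely, fix any $t_j\in[2^{-3n-29},1)$; then $\psi_j(z):=A^{-1}\varphi_j(x_j-t_j z)$ is supported in $B(2^{3n+30})$ and a calculation with the chain rule shows $|\partial^\alpha\psi_j|\le 1$ for $|\alpha|\le N+1$ provided $A$ is chosen of the order of $\max_{|\alpha|\le N+1}\|\partial^\alpha\varphi_j\|_\infty$; then $\lambda_j=A t_j^{n}$. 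To ensure $\psi_j\notin \mathcal{P}_0(\mathbb{R}^n)^\perp$, if $\int\psi_j=0$ I split $\varphi_j=(\varphi_j+\eta)-\eta$ for a fixed auxiliary bump $\eta$ with $\int\eta\ne 0$ (both pieces then fit the scheme). By the definition of the grand maximal operator,
\[
|\langle f,\varphi_j\rangle|=|\lambda_j|\,|(\psi_j)_{t_j}*f(x_j)|\le |\lambda_j|\,\mathcal{M}_N f(x)\quad\text{for every } x\in B(x_j,t_j).
\]

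Multiplying by $\chi_{B(x_j,t_j)}$, invoking the homogeneity and monotonicity of the Luxemburg quasi-norm, I obtain
\[
|\langle f,\varphi_j\rangle|\,\bigl\|\chi_{B(x_j,t_j)}\bigr\|_{L^{p(\cdot)}(w)} \le |\lambda_j|\,\|\mathcal{M}_N f\|_{L^{p(\cdot)}(w)} = |\lambda_j|\,\|f\|_{h^{p(\cdot)}(w)}.
\]
Since $w>0$ a.e., $w(B(x_j,t_j))>0$, and hence $\|\chi_{B(x_j,t_j)}\|_{L^{p(\cdot)}(w)}>0$ by Remark \ref{rem:200807-1}. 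Summing over the finitely many $j$ gives $|\langle f,\varphi\rangle|\le C(\varphi)\|f\|_{h^{p(\cdot)}(w)}$, which is the desired continuous embedding.

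The main technical obstacle will be the normalization step: verifying that $\psi_j$ truly lies in $\mathcal{D}_N(\mathbb{R}^n)$ (which includes the condition $\psi_j\notin \mathcal{P}_0(\mathbb{R}^n)^\perp$) while keeping the constants $\lambda_j$ quantitatively under control by seminorms of $\varphi$. The auxiliary-bump decomposition handles the moment obstruction, and everything else is bookkeeping; no parameter from the weight or exponent enters beyond the positivity of $\|\chi_{B(x_j,t_j)}\|_{L^{p(\cdot)}(w)}$.
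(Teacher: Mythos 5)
Your proposal is correct and follows essentially the same strategy as the paper's reference to Tang \cite[Proposition 3.1]{Tang12}: dominate $\langle f,\varphi\rangle$ pointwise by $\mathcal{M}_N f$ on a ball of positive $w$-measure via a normalized, dilated copy of $\varphi$ in $\mathcal{D}_N(\mathbb{R}^n)$, then take the $L^{p(\cdot)}(w)$-quasi-norm and divide by $\|\chi_B\|_{L^{p(\cdot)}(w)}>0$. You have simply spelled out the bookkeeping (partition of unity, rescaling into $B(2^{3n+30})$, and the auxiliary-bump device for the moment obstruction) that the paper delegates to Tang's argument.
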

\begin{proof}
The proof is the same as that in
\cite[Proposition 3.1]{Tang12},
where we take the $L^{p(\cdot)}(w)$-norm
instead of $L^p(w)$-norm.
\end{proof}

\begin{proposition}\label{prop:190406-3}
Let 
$w \in A^{\rm loc}_\infty$.
If $N \ge N_{p(\cdot),w}$,
then 
$h^{p(\cdot)}(w)$
is complete.
\end{proposition}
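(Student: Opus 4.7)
The plan is to combine the continuous embedding $h^{p(\cdot)}(w) \hookrightarrow {\mathcal D}'({\mathbb R}^n)$ from Proposition \ref{prop:190406-2} with a Fatou-type lower semicontinuity property of $\|\cdot\|_{L^{p(\cdot)}(w)}$. Given a Cauchy sequence $\{f_k\}_{k=1}^\infty$ in $h^{p(\cdot)}(w)$, the embedding makes $\{f_k\}$ Cauchy in ${\mathcal D}'({\mathbb R}^n)$, and the sequential completeness of ${\mathcal D}'({\mathbb R}^n)$ supplies a limit $f \in {\mathcal D}'({\mathbb R}^n)$ with $f_k \to f$ in ${\mathcal D}'({\mathbb R}^n)$.

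The next step is the pointwise lower semicontinuity
\[
{\mathcal M}_N(f - f_m)(x) \le \liminf_{k \to \infty} {\mathcal M}_N(f_k - f_m)(x)
\qquad (x \in {\mathbb R}^n,\ m \in {\mathbb N}).
\]
Indeed, for any admissible $\varphi \in {\mathcal D}_N({\mathbb R}^n)$ and $|z-x|<t<1$, the map $y \mapsto \varphi_t(z-y)$ lies in ${\mathcal D}({\mathbb R}^n)$, so $\varphi_t*(f_k-f_m)(z) \to \varphi_t*(f-f_m)(z)$; hence $|\varphi_t*(f-f_m)(z)| \le \liminf_k {\mathcal M}_N(f_k-f_m)(x)$, and taking the supremum over $\varphi,t,z$ yields the claim. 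Combining this with the Fatou property of $\|\cdot\|_{L^{p(\cdot)}(w)}$ (namely, if $0 \le g \le \liminf_k g_k$ a.e.\ then $\|g\|_{L^{p(\cdot)}(w)} \le \liminf_k \|g_k\|_{L^{p(\cdot)}(w)}$) gives
\[
\|f-f_m\|_{h^{p(\cdot)}(w)} \le \liminf_{k \to \infty} \|f_k-f_m\|_{h^{p(\cdot)}(w)}.
\]
Letting $m \to \infty$ and invoking the Cauchy hypothesis shows that the right-hand side tends to $0$, so $f - f_m \to 0$ in $h^{p(\cdot)}(w)$; in particular $f \in h^{p(\cdot)}(w)$ and $f_m \to f$ in $h^{p(\cdot)}(w)$.

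The only subtlety, and the place the proof is not utterly mechanical, is verifying the Fatou property for $\|\cdot\|_{L^{p(\cdot)}(w)}$ in the quasi-norm range $p_- \le 1$. I would argue it straight from the definition of the Luxemburg-type quasi-norm: if $\liminf_k \|g_k\|_{L^{p(\cdot)}(w)} < \lambda$, then $\rho^w_{p(\cdot)}(g_k/\lambda) \le 1$ along a subsequence, and classical Fatou applied to the modular gives $\rho^w_{p(\cdot)}(g/\lambda) \le 1$, whence $\|g\|_{L^{p(\cdot)}(w)} \le \lambda$. Modulo this routine verification, the completeness of $h^{p(\cdot)}(w)$ follows exactly as in Tang \cite{Tang12}, the only change being that $L^p(w)$ is replaced throughout by $L^{p(\cdot)}(w)$.
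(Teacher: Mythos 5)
Your proof is correct and follows the same route the paper intends: the paper proves Proposition \ref{prop:190406-3} by citing Proposition \ref{prop:190406-2} together with Rychkov \cite[Lemma 2.15]{Rychkov01} and omits the details, and Rychkov's completeness argument is precisely the combination you spell out — pass to a limit in ${\mathcal D}'({\mathbb R}^n)$ via the continuous embedding, then use pointwise lower semicontinuity of the grand maximal function under distributional convergence together with the Fatou property of the modular/quasi-norm. The Fatou verification you sketch for $L^{p(\cdot)}(w)$ in the quasi-norm range $p_- \le 1$ is routine and correct.
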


\begin{proof}
This is a direct consequence of Proposition \ref{prop:190406-2}.
The proof is similar to \cite[Lemma 2.15]{Rychkov01}.
Here, we omit the details.
\end{proof}

We prove Theorem \ref{thm:210712-2}.
\begin{proof}[Proof of Theorem \ref{thm:210712-2}]
Let $f \in h^{p(\cdot)}(w)$.
Take $\psi \in {\mathcal D}({\mathbb R}^n) \setminus {\mathcal P}_0({\mathbb R}^n)^\perp$.
Write $\psi_t\equiv t^{-n}\psi(t^{-1}\cdot)$ as before.
Then $\{\psi_t*f\}_{t>0}$
is a bounded set
of $L^{p(\cdot)}(w)=(L^{p'(\cdot)}(\sigma))^*$.
By
the Banach--Alaoglu theorem
there exists a sequence
$\{t_j\}_{j=1}^{\infty}$ decreasing to $0$
such that
$\{\psi_{t_j}*f\}_{j=1}^{\infty}$
converges to a function $g$
in the weak-* topology of $L^{p(\cdot)}(w)$.
Meanwhile, it can be shown that 
$\lim\limits_{t \downarrow 0}\psi_t*f=f$
in the topology of ${\mathcal D}'({\mathbb R}^n)$.
Since the weak-* topology of $L^{p(\cdot)}(w)$
is stronger than the topology of ${\mathcal D}'({\mathbb R}^n)$,
it follows that $f=g \in L^{p(\cdot)}(w)$.
\end{proof}

\section{Proof of Theorem \ref{thm:210712-1}}
\label{s54}
From the definition of the three local grand maximal operators,
it suffices to handle the most right-hand inequality. That is, 
\[
\|{\mathcal M}_Nf\|_{L^{p(\cdot)}(w)} 
\lesssim \|{\mathcal M}_N^0f\|_{L^{p(\cdot)}(w)}.
\]
Let $L \in {\mathbb N}$ be sufficiently large.
Fix $x \in {\mathbb R}^n$ for now.
Let
$(z,t) \in {\mathbb R}^n_+$ satisfy
$|x-z|<t \le 1$.
Fix $\phi \in {\mathcal D}({\mathbb R}^n) \setminus {\mathcal P}_0^\perp$ 
and take $\phi^*, \psi, \psi^* \in {\mathcal D}({\mathbb R}^n)$
as in Lemma \ref{lem:210712-1}.

Then 
\begin{align*}
\varphi_t*f(z)
=
[\varphi_t*\psi_t(\cdot-x+z)]*\phi_t*f(x)
+
\sum_{j=1}^\infty
[\varphi_t*\psi^*_{2^{-j}t}(\cdot-x+z)]*\phi^*_{2^{-j}t}*f(x).
\end{align*}
Let $A>\frac{n}{r}$, $B>\frac{8n+6\log D}{r}$
 and
 assume $L\in{\mathbb N} \cap (A,\infty)$.
By the assumption on $N$,
we can assume that
\[
\frac{p_-}{r}>q_w.
\]
By the moment condition on $\phi^*$ and $\psi^*$
and the equality
$\phi^*=\phi-2^{-n}\phi(2^{-1}\cdot)$,
\begin{align*}
{\mathcal M}_Nf(x)&=
\sup_{(z,t) \in {\mathbb R}^n, |z-x|<t \le 1}|\varphi_t*f(z)|\\
&\lesssim
\left\{
\sum_{j=0}^{\infty}
2^{-j(L-A)r}
M^{\rm loc}\left[
\int_{{\mathbb R}^n}
\sup_{t \in (0,1]}
\frac{|\phi_{2^{-j}t}*f(\cdot-y)|^r}{(1+2^j|y|)^{A r}2^{|y|B r}}
dy
\right](x)\right\}^{\frac{1}{r}}
\\
&\lesssim
\left\{
\sum_{j=0}^{\infty}
2^{-j(L-n-A)r}
M^{\rm loc} \circ K_{B r}\left[
\sup_{t \in (0,1]}|\phi_{2^{-j}t}*f|^r\right](x)\right\}^{\frac{1}{r}}\\
&\lesssim
\left\{
M^{\rm loc} \circ K_{B r}\left[({\mathcal M}_N^0f)^r\right](x)\right\}^{\frac{1}{r}}.
\end{align*}
If we use Lemmas \ref{lem:190409-100} and \ref{lem:210923-1},
 we obtain the desired result.
\section{Proof of Theorem \ref{thm:190406-2}, including the implication $({\rm II})/({\rm III})\Longrightarrow({\rm I})$ in Theorem \ref{cor:211009-1}}
\label{s55}

The proof of Theorem \ref{thm:190406-2}
has two parts.
One controls
the sum of atoms
by the norm of $h^{p(\cdot)}(w)$
(Sections \ref{subsection:5.3} and \ref{subsection:5.1}).
The other decomposes
the
distributions $h^{p(\cdot)}(w)$
into the sum of atoms
(Section \ref{subsection:5.2}).

\subsection{Some norm estimates}
\label{subsection:5.3}

Here and below we write
\[
m_{Q,w}^{(u)}(f)\equiv \frac{\|\chi_Qf\|_{L^u(w)}}{w(Q)^{\frac1u}}
\]
for
a cube $Q$, $0<u<\infty$ and $f \in L^0({\mathbb R}^n)$.
We establish the following key estimate
for
the proof of Theorem \ref{thm:190406-2}
and
the proof of the implication $({\rm II})\Longrightarrow({\rm I})$ 
in Theorem \ref{cor:211009-1}:
\begin{theorem}\label{lem:210805-1}
Let $p(\cdot) 
\in {\mathcal P} \cap {\rm L H}_0 \cap {\rm L H}_\infty$
and
 $w \in A_{\infty}^{\rm loc}$.
Assume that
$(u,v) \in (0,\infty)\times((0,p_-) \cap [0,1])$
satisfies $u v>p_+$.
Suppose
$f_j \in L^{u v}({\mathbb R}^n)$ which is supported on a cube $Q_j$
with $|Q_j| \le 1$
for each $j$.
Then
\begin{align*}
\left\|
\left(\sum_{j=1}^{\infty}|f_j|^{v}\right)^{\frac{1}{v}}
\right\|_{L^{p(\cdot)}(w)}
&\lesssim
{\mathcal A}_{p(\cdot),w,v}(\{m_{Q_j,w}^{(u v)}(f_j)\}_{j=1}^{\infty};\{Q_j\}_{j=1}^{\infty}).
\end{align*}
\end{theorem}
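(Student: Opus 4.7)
The plan is to reduce the inequality to a duality computation at the level of the rescaled variable exponent $P(\cdot)\equiv p(\cdot)/v$. Since $v\in(0,p_-)\cap[0,1]$, one has $P_->1$ and $P(\cdot)\in{\mathcal P}\cap{\rm LH}_0\cap{\rm LH}_\infty$, so the dual exponent $P'(\cdot)=p(\cdot)/(p(\cdot)-v)$ belongs to the same class and, most importantly, satisfies
\[
(P'(\cdot))_-=\frac{p_+}{p_+-v}.
\]
The homogeneity $\|g^{1/v}\|_{L^{p(\cdot)}(w)}=\|g\|_{L^{P(\cdot)}(w)}^{1/v}$ for $g\ge 0$ recasts the target inequality as
\[
\left\|\sum_{j=1}^{\infty}|f_j|^v\right\|_{L^{P(\cdot)}(w)}
\lesssim
\left\|\sum_{j=1}^{\infty}c_j^v\chi_{Q_j}\right\|_{L^{P(\cdot)}(w)},
\qquad c_j\equiv m_{Q_j,w}^{(uv)}(f_j).
\]

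By the standard duality for variable Lebesgue spaces with respect to the measure $w(x){\rm d}x$, it suffices to bound $\int(\sum_j|f_j|^v)h\,w\,{\rm d}x$ uniformly for $h\ge 0$ with $\|h\|_{L^{P'(\cdot)}(w)}\le 1$. On each $Q_j$, the weighted H\"older inequality with exponents $u$ and $u'$ gives
\[
\int_{Q_j}|f_j|^v h\,w\,{\rm d}x
\le \left(\int_{Q_j}|f_j|^{uv}w\right)^{1/u}\left(\int_{Q_j}h^{u'}w\right)^{1/u'}
= c_j^v\,w(Q_j)\left(\frac{1}{w(Q_j)}\int_{Q_j}h^{u'}w\right)^{1/u'}.
\]
Since $|Q_j|\le 1$, the right-hand average is admissible in the supremum defining $M_w^{(u'),{\rm loc}}$, so it is dominated pointwise on $Q_j$ by $M_w^{(u'),{\rm loc}}[h](x)$. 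Integrating this pointwise bound against $w(x){\rm d}x$ over $Q_j$ and summing over $j$ yields
\[
\int\left(\sum_j|f_j|^v\right)h\,w\,{\rm d}x
\le
\int\left(\sum_j c_j^v\chi_{Q_j}\right)M_w^{(u'),{\rm loc}}[h]\,w\,{\rm d}x
\lesssim
\left\|\sum_j c_j^v\chi_{Q_j}\right\|_{L^{P(\cdot)}(w)}\bigl\|M_w^{(u'),{\rm loc}}[h]\bigr\|_{L^{P'(\cdot)}(w)},
\]
the last step being the generalized H\"older inequality (Lemma \ref{thm:gHolder}) transferred to the weighted measure.

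The main obstacle is controlling the maximal operator factor by $\|h\|_{L^{P'(\cdot)}(w)}\le 1$. Proposition \ref{prop:210805-1} delivers $\|M_w^{(u'),{\rm loc}}[h]\|_{L^{P'(\cdot)}(w)}\lesssim\|h\|_{L^{P'(\cdot)}(w)}$ precisely when $u'<(P'(\cdot))_-$, and the algebraic equivalence
\[
u'<\frac{p_+}{p_+-v}
\iff u(p_+-v)<p_+(u-1)
\iff uv>p_+
\]
shows that the hypothesis $uv>p_+$ is exactly what this step demands, no more and no less. Once this maximal bound is secured, taking the supremum over $h$ and raising to the $1/v$-power concludes the proof; the remaining checks (that $P'(\cdot)$ inherits the log-H\"older conditions, that the duality and generalized H\"older inequality transfer verbatim from Lebesgue measure to $w\,{\rm d}x$, and a truncation to finitely many $f_j$ before passing to the limit) are entirely standard.
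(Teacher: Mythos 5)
Your proof is correct and follows essentially the same path as the paper's: rescale to $P(\cdot)=p(\cdot)/v$, dualize, apply H\"older with exponents $u,u'$ on each cube, dominate the $h$-average by $M_w^{(u'),{\rm loc}}$, and invoke Proposition~\ref{prop:210805-1}. The only cosmetic difference is that you pair against the measure $w\,{\rm d}x$ directly (so the dual norm is $L^{P'(\cdot)}(w)$) whereas the paper pairs against Lebesgue measure and works with $L^{P'(\cdot)}(\Sigma)$, $\Sigma=w^{-1/(P(\cdot)-1)}$; these are equivalent via $h=gw^{-1}$.
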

It is noteworthy that the case where $v=1<p_-$ proves the implication $({\rm II})\Longrightarrow({\rm I})$ 
in Theorem \ref{cor:211009-1}.
\begin{proof}
Write $P(\cdot)\equiv\frac{p(\cdot)}{v}$
and $\Sigma \equiv w^{-\frac{1}{P(\cdot)-1}}$.
Since $p_->v$, $P_->1$.
By duality
and the definition of $P(\cdot)$, the matters are reduced to the estimate
\begin{equation}\label{eq:211029-15}
\left(\int_{{\mathbb R}^n}\sum_{j=1}^{\infty} |f_j(x)|^{v}|g(x)|{\rm d}x\right)^{\frac{1}{v}}
\lesssim
{\mathcal A}_{p(\cdot),w,v}(\{m_{Q_j,w}^{(u v)}(f_j)\}_{j=1}^{\infty};\{Q_j\}_{j=1}^{\infty})
\left(\|g\|_{L^{P'(\cdot)}(\Sigma)}\right)^{\frac{1}{v}}
\end{equation}
for all $g \in L^0({\mathbb R}^n)$.
Since the functions
in the summand
of the left-hand side are non-negative,
\begin{align*}
\left(\int_{{\mathbb R}^n}\sum_{j=1}^{\infty} |f_j(x)|^{v}|g(x)|{\rm d}x\right)^{\frac{1}{v}}
&=
\left(\sum_{j=1}^{\infty} \int_{{\mathbb R}^n}|f_j(x)|^{v}|g(x)|{\rm d}x\right)^{\frac{1}{v}}\\
&=
\left(\sum_{j=1}^{\infty} \int_{{\mathbb R}^n}|f_j(x)|^{v}|g(x)w(x)^{-1}|w(x){\rm d}x\right)^{\frac{1}{v}}.
\end{align*}By the H\"{o}lder inequality,
we have
\begin{align*}
\left(\int_{{\mathbb R}^n}\sum_{j=1}^{\infty} |f_j(x)|^{v}|g(x)|{\rm d}x\right)^{\frac{1}{v}}
&\le
\left\{
\sum_{j=1}^{\infty}
w(Q_j)
m_{Q_j,w}^{(u)}(|f_j|^v)m_{Q_j,w}^{(u')}(g w^{-1})
\right\}^{\frac{1}{v}}.
\end{align*}
Using the
powered weighted local maximal operator
$M^{(u'),\rm loc}_w$,
we have
\begin{align*}
\left(\int_{{\mathbb R}^n}\sum_{j=1}^{\infty} |f_j(x)|^{v}|g(x)|{\rm d}x\right)^{\frac{1}{v}}
\le
\left\{
\int_{{\mathbb R}^n}
\sum_{j=1}^{\infty} m_{Q_j,w}^{(u)}(|f_j|^v)\chi_{Q_j}(x)M^{(u'),\rm loc}_w(gw^{-1})(x)w(x){\rm d}x
\right\}^{\frac{1}{v}}.
\end{align*}
By the H\"{o}lder inequality for Lebesgue spaces with variable exponents
(see Lemma \ref{thm:gHolder}),
we have
\begin{align}
\lefteqn{
\left(\int_{{\mathbb R}^n}\sum_{j=1}^{\infty} |f_j(x)|^{v}|g(x)|{\rm d}x\right)^{\frac{1}{v}}
}
\nonumber\\
&\le
2^{\frac1v}
\left\{
{\mathcal A}_{P(\cdot),w,1}(\{m_{Q_j,w}^{(u)}(|f_j|^v)\}_{j=1}^{\infty};\{Q_j\}_{j=1}^{\infty})
\left\|M^{(u'),\rm loc}_w(gw^{-1})w\right\|_{L^{P'(\cdot)}(\Sigma)}
\right\}^{\frac{1}{v}}
\nonumber\\
\label{eq:211029-16}
&=
2^{\frac1v}
{\mathcal A}_{p(\cdot),w,v}(\{m_{Q_j,w}^{(u v)}(f_j)\}_{j=1}^{\infty};\{Q_j\}_{j=1}^{\infty})
\left(
\left\|M^{(u'),\rm loc}_w(gw^{-1})\right\|_{L^{P'(\cdot)}(w)}
\right)^{\frac1v}.
\end{align}
Recall that we assume
$u v>p_+$. That is, $P_+<u$. Hence
$(P')_->u'$.
Since
 $w \in A_{\infty}^{\rm loc}$,
by Proposition \ref{prop:210805-1},
we have
\begin{align}
\nonumber
\|M^{(u'),\rm loc}_w(gw^{-1})\|_{L^{P'(\cdot)}(w)}
&=
(\|M^{\rm loc}_w[|gw^{-1}|^{u'}]\|_{L^{P'(\cdot)/u'}(w)})^{\frac{1}{u'}}\\
\nonumber
&\lesssim
\|gw^{-1}\|_{L^{P'(\cdot)}(w)}\\
\label{eq:211029-17}
&=
\|g\|_{L^{P'(\cdot)}(\Sigma)}.
\end{align}
Combining
(\ref{eq:211029-16})
and
(\ref{eq:211029-17}),
gives
(\ref{eq:211029-15}).
Thus, the proof is complete.
\end{proof}
As
mentioned,
the implication $({\rm I})\Longrightarrow({\rm II})/({\rm III})$ is included in
Theorems 
\ref{thm:190406-2}
and
\ref{thm:210712-2}.
Now let us 
prove the implication $({\rm II})/({\rm III})\Longrightarrow({\rm I})$.

We rephrase and prove
the implication $({\rm III})\Longrightarrow({\rm I})$ of
in Theorem \ref{cor:211009-1}
as follows:
\begin{theorem}\label{thm:211201-1}
Let
$p(\cdot) \in {\mathcal P} \cap {\rm L H}_0 \cap {\rm L H}_\infty$.
Also let
$w \in A^{\rm loc}_{p(\cdot)}$
and
$q_0>p_+$.
Assume that $\sigma=w^{-\frac{1}{p(\cdot)-1}} \in A^{\rm loc}_{p'(\cdot)/q_0'}$
If we have a collection
$\{\lambda_j\}_{j=1}^{\infty}$
of complex constants
and
a collection $\{a_j\}_{j=1}^{\infty} \subset L^{q_0}({\mathbb R}^n)$
such that
each $a_j, j \in {\mathbb N}$ is 
a $(p(\cdot),{q_0},L)$-atom supported on a cube $Q_j$
with $|Q_j| \le 1$ and that
\[
{\mathcal A}_{p(\cdot),w,1}(\{\lambda_j\}_{j=1}^{\infty};\{Q_j\}_{j=1}^{\infty})= 
\left\|\sum_{j=1}^{\infty} |\lambda_j|\chi_{Q_j}
\right\|_{L^{p(\cdot)}(w)}
<\infty,
\]
then
\[
f=\sum\limits_{j=1}^{\infty} \lambda_j a_j
\]
converges in $L^{p(\cdot)}(w)$ and 
satisfies
\begin{equation}\label{eq:211201-201}
\left\|f\right\|_{L^{p(\cdot)}(w)}
\lesssim
{\mathcal A}_{p(\cdot),w,1}
(\{\lambda_j\}_{j=1}^{\infty};\{Q_j\}_{j=1}^{\infty}).
\end{equation}
\end{theorem}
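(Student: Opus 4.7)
The plan is to reduce (\ref{eq:211201-201}) to a duality pairing and then to the boundedness of $M^{\rm loc}$ on $L^{p'(\cdot)/q_0'}(\sigma)$, invoking the hypothesis $\sigma\in A^{\rm loc}_{p'(\cdot)/q_0'}$ via Lemma \ref{lem:190409-100}. Since $p(\cdot)\in\mathcal{P}$ with $p_+<\infty$, I may recover $\|h\|_{L^{p(\cdot)}(w)}$, up to a multiplicative constant, as the supremum of $\int h\,g\,{\rm d}x$ over non-negative $g$ with $\|g\|_{L^{p'(\cdot)}(\sigma)}\le 1$, using the associate-norm identification of the weighted variable Lebesgue space. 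The strategy is first to prove the norm inequality for each partial sum $f_M\equiv\sum_{j=1}^M\lambda_j a_j$ with a constant independent of $M$, and then to deduce the $L^{p(\cdot)}(w)$-convergence of the series by applying the same bound to tail sums.

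Fix a test function $g\ge0$ with $\|g\|_{L^{p'(\cdot)}(\sigma)}\le1$. Applying H\"older's inequality in $L^{q_0}\times L^{q_0'}$ on $Q_j$ and the size condition $\|a_j\|_{L^{q_0}}\le|Q_j|^{1/q_0}$ yields
\[
\int_{Q_j}|a_j|\,g\,{\rm d}x
\le |Q_j|\,m_{Q_j}(|g|^{q_0'})^{1/q_0'}
\le \int_{Q_j}M^{(q_0'),{\rm loc}}[g](x)\,{\rm d}x,
\]
where the second inequality uses $|Q_j|\le1$ and the notation $M^{(q_0'),{\rm loc}}[g]\equiv(M^{\rm loc}[|g|^{q_0'}])^{1/q_0'}$. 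Summing in $j$ and then using the generalized H\"older inequality in $L^{p(\cdot)}(w)\times L^{p'(\cdot)}(\sigma)$, I obtain
\[
\int|f_M|\,g\,{\rm d}x
\le \int\Bigl(\sum_{j=1}^M|\lambda_j|\chi_{Q_j}\Bigr)M^{(q_0'),{\rm loc}}[g]\,{\rm d}x
\lesssim {\mathcal A}_{p(\cdot),w,1}(\{\lambda_j\};\{Q_j\})\cdot\|M^{(q_0'),{\rm loc}}[g]\|_{L^{p'(\cdot)}(\sigma)}.
\]

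For the maximal factor, note
\[
\|M^{(q_0'),{\rm loc}}[g]\|_{L^{p'(\cdot)}(\sigma)}^{q_0'}
=\|M^{\rm loc}[|g|^{q_0'}]\|_{L^{p'(\cdot)/q_0'}(\sigma)}.
\]
The assumption $q_0>p_+$ gives $(p'(\cdot)/q_0')_->1$, so that $p'(\cdot)/q_0'\in\mathcal{P}\cap{\rm LH}_0\cap{\rm LH}_\infty$, and the hypothesis $\sigma\in A^{\rm loc}_{p'(\cdot)/q_0'}$ permits Lemma \ref{lem:190409-100} to dominate the right-hand side by a constant multiple of $\||g|^{q_0'}\|_{L^{p'(\cdot)/q_0'}(\sigma)}=\|g\|_{L^{p'(\cdot)}(\sigma)}^{q_0'}\le1$. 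Taking the supremum over $g$ delivers the desired $M$-independent bound on $\|f_M\|_{L^{p(\cdot)}(w)}$, hence (\ref{eq:211201-201}) in the limit.

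Convergence in $L^{p(\cdot)}(w)$ follows by the same mechanism applied to $f_N-f_M$ with $N>M$: the estimate reduces to $\|\sum_{j>M}|\lambda_j|\chi_{Q_j}\|_{L^{p(\cdot)}(w)}$, which tends to $0$ by the dominated convergence theorem for modulars, valid because $p_+<\infty$. The principal delicacies are purely technical: confirming the weighted variable duality / associate-norm identification, and matching the precise $A^{\rm loc}$-class required to invoke Lemma \ref{lem:190409-100}. Both are handled by the preliminaries already assembled in Section \ref{s52}, so no genuinely new estimate is needed beyond careful bookkeeping.
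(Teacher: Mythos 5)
Your proposal is correct and follows essentially the same route as the paper: dualize against $g\in L^{p'(\cdot)}(\sigma)$, apply $L^{q_0}\times L^{q_0'}$-H\"older on each $Q_j$ together with the atom size bound, dominate the resulting cube-averages by $M^{(q_0'),{\rm loc}}[g]$, use the generalized H\"older inequality to separate $\sum|\lambda_j|\chi_{Q_j}$ from the maximal term, and finally invoke Lemma \ref{lem:190409-100} with $\sigma\in A^{\rm loc}_{p'(\cdot)/q_0'}$. The only additions over the paper's write-up are your explicit handling of the associate-norm identification and the modular dominated-convergence argument for the $L^{p(\cdot)}(w)$-convergence of the series, both of which are correct and implicitly assumed in the paper.
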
Unlike
Theorem \ref{cor:211009-1},
we do not have to distinguish two cases.
In fact, if ${q_0}>p_+$ and $w \in L^1({\mathbb R}^n)$,
then $L^{q_0}(w) \subset L^{p(\cdot)}(w)$.
\begin{proof}
We can assume that each $a_j$ is a non-negative function
and each $\lambda_j$ is a non-negative real number.
We dualize the conclusion
\begin{equation}\label{eq:211201-301}
\int_{{\mathbb R}^n}f(x)g(x){\rm d}x
\lesssim
{\mathcal A}_{p(\cdot),w,1}(\{\lambda_j\}_{j=1}^{\infty};\{Q_j\}_{j=1}^{\infty})
\|g\|_{L^{p'(\cdot)}(\sigma)},
\end{equation}
where
 $g\in L^0({\mathbb R}^n)$ is a non-negative function.

Using H\"{o}lder's inequality twice,
we have
\begin{align*}
\int_{{\mathbb R}^n}
\sum\limits_{j=1}^{\infty} \lambda_j a_j(x)g(x){\rm d}x
&\le
\int_{{\mathbb R}^n}
\sum\limits_{j=1}^{\infty} \lambda_j m^{({q_0})}_{Q_j}(a_j)m_{Q_j}^{(q_0')}(g)\chi_{Q_j}(x){\rm d}x\\
&\le
\int_{{\mathbb R}^n}
\sum\limits_{j=1}^{\infty} \lambda_j m^{({q_0})}_{Q_j}(a_j)M^{\rm loc}[g^{q_0'}](x)^{\frac{1}{q_0'}}
\chi_{Q_j}(x){\rm d}x\\
&\lesssim
{\mathcal A}_{p(\cdot),w,1}(\{\lambda_j\}_{j=1}^{\infty};\{Q_j\}_{j=1}^{\infty})
\|(M^{\rm loc}[g^{q_0'}])^{\frac{1}{q_0'}}\|_{L^{p'(\cdot)}(\sigma)}.
\end{align*}
Since ${q_0}>p_+(>1)$, we have $q_0'<p'_-=(p_+)'$.
Since we assume $\sigma \in A^{\rm loc}_{p'(\cdot)/q_0'}$,
\begin{equation}\label{eq:211201-302}
\|(M^{\rm loc}[g^{q_0'}])^{\frac{1}{q_0'}}\|_{L^{p'(\cdot)}(\sigma)}
\lesssim
\|g\|_{L^{p'(\cdot)}(\sigma)}.
\end{equation}If we insert
(\ref{eq:211201-302})
into
the above expression,
we obtain 
(\ref{eq:211201-301}).
\end{proof}

\subsection{Proof of $h^{p(\cdot),q,L;v}(w) \hookrightarrow h^{p(\cdot)}(w)$}
\label{subsection:5.1}

The following estimate is a passage of 
\cite[(3.2)]{Tang12} to the setting of variable exponents.
Recall that 
\[
N_{p(\cdot),w}
\equiv 
2+\left[n\left(\frac{q_w}{\min(1,p_-)}-1\right)\right].
\]
\begin{lemma}\label{lem:190408-3}
Let $w \in A^{\rm loc}_\infty \cap L^1({\mathbb R}^n)$ and $p(\cdot) \in {\mathcal P} \cap {\rm L H}_0 \cap {\rm L H}_\infty$.
We assume that $q>\max(q_w,p_+)$.
Let $a$ be a single $(p(\cdot),q)_w$-atom.
Then
$\|{\mathcal M}^0_{N_{p(\cdot),w}}a\|_{L^{p(\cdot)}(w)} \lesssim 1.$
\end{lemma}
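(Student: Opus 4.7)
The plan is to exploit the single-atom condition together with the finiteness of $w(\mathbb R^n)$, reducing the grand-maximal bound to a standard constant-exponent maximal inequality and then transferring it to $L^{p(\cdot)}(w)$ via a finite-measure embedding.

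First, I would obtain the pointwise majorization ${\mathcal M}^0_{N_{p(\cdot),w}} a(x) \lesssim M^{\rm loc} a(x)$. This is immediate from the defining bound $|\varphi|\le \chi_{B(1)}$ for any $\varphi\in{\mathcal D}^0_N({\mathbb R}^n)$: for $0<t<1$ one has
\[
|\varphi_t*a(x)| \le t^{-n}\int_{B(x,t)}|a(y)|\,{\rm d}y \lesssim M^{{\rm loc},2}a(x),
\]
and $M^{{\rm loc},2}$ is comparable to $M^{\rm loc}$ in view of the remark following \eqref{eq:211127-1}. Here the moment/order parameter $N_{p(\cdot),w}$ plays no role, because we extract no cancellation.

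Next, since $q>q_w$ and $A^{\rm loc}_\infty=\bigcup_{p>1} A^{\rm loc}_p$ with the usual openness (so there exists $p_1\in(q_w,q)$ with $w\in A^{\rm loc}_{p_1}\subset A^{\rm loc}_q$), Lemma \ref{lem:190409-100} (or rather its constant-exponent version, which is classical) yields the $L^q(w)$-boundedness of $M^{\rm loc}$. Combined with the atom size bound $\|a\|_{L^q(w)}\le w({\mathbb R}^n)^{1/q}$, this gives
\[
\|{\mathcal M}^0_{N_{p(\cdot),w}}a\|_{L^q(w)}\lesssim \|M^{\rm loc}a\|_{L^q(w)}\lesssim \|a\|_{L^q(w)}\le w({\mathbb R}^n)^{1/q}.
\]

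Finally, since $w\in L^1({\mathbb R}^n)$ and $q>p_+$, I would deduce the continuous embedding $L^q(w)\hookrightarrow L^{p(\cdot)}(w)$. The verification is an elementary modular computation: for $f=\mathcal M^0_N a$ and $\lambda$ a suitable multiple of $\|f\|_{L^q(w)}$, split ${\mathbb R}^n$ into $\{|f|>\lambda\}$ (on which $(|f|/\lambda)^{p(x)}\le(|f|/\lambda)^q$ since $p_+\le q$ and $|f|/\lambda>1$) and $\{|f|\le\lambda\}$ (on which $(|f|/\lambda)^{p(x)}\le(|f|/\lambda)^{p_-}$); the latter piece is controlled by H\"older's inequality using $w\in L^1({\mathbb R}^n)$, so both modular contributions are at most $1$ once $\lambda\sim \max(1, w({\mathbb R}^n)^{1/p_-\,-\,1/q})\|f\|_{L^q(w)}$. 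Chaining the three steps produces $\|\mathcal M^0_{N_{p(\cdot),w}}a\|_{L^{p(\cdot)}(w)}\lesssim 1$.

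The only real obstacle is ensuring the constants are independent of the atom $a$; they end up depending only on $n$, $p(\cdot)$, $q$, and the fixed weight $w$ (through $[w]_{A^{\rm loc}_q}$ and $w({\mathbb R}^n)$), which is exactly what the statement allows. No moment condition is needed at any step, which is consistent with the fact that a single atom need not satisfy one.
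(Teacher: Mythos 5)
Your proposal is correct and follows essentially the same two-step strategy as the paper: first obtain the $L^q(w)$-bound $\|{\mathcal M}^0_{N_{p(\cdot),w}}a\|_{L^q(w)}\lesssim w({\mathbb R}^n)^{1/q}$ from $w\in A^{\rm loc}_q$ and the atom's size condition, then upgrade to $L^{p(\cdot)}(w)$ using $q>p_+$ and $w\in L^1({\mathbb R}^n)$. The only differences are cosmetic: the paper cites Tang's $L^q(w)$-boundedness of ${\mathcal M}^0_N$ directly rather than passing through the pointwise domination by $M^{\rm loc}$, and it carries out the embedding $L^q(w)\hookrightarrow L^{p(\cdot)}(w)$ via the generalized H\"older inequality with an auxiliary exponent $r(\cdot)$ defined by $1/p(\cdot)=1/q+1/r(\cdot)$ (using $\|\chi_{{\mathbb R}^n}\|_{L^{r(\cdot)}(w)}<\infty$), whereas you prove the same embedding by a direct modular splitting; both implementations are standard and correct.
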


\begin{proof}
Since $q>q_w$, $w \in A^{\rm loc}_q$.
Thus,
${\mathcal M}^0_{N_{p(\cdot),w}}$ is bounded on $L^q(w)$ 
(\cite[Proposition 2.2]{Tang12}).
Define an exponent $r(\cdot)$ by $1/p(\cdot)=1/q+1/r(\cdot)$
while recalling that $q>p_+$.
Using the H\"{o}lder inequality
(see Lemma \ref{thm:gHolder}), we have
\begin{align*}
\|{\mathcal M}^0_{N_{p(\cdot),w}}a\|_{L^{p(\cdot)}(w)}
&\lesssim
\|{\mathcal M}^0_{N_{p(\cdot),w}}a\|_{L^{q}(w)}
\|\chi_{{\mathbb R}^n}\|_{L^{r(\cdot)}(w)}\\
&\lesssim
\|a\|_{L^{q}(w)}
\|\chi_{{\mathbb R}^n}\|_{L^{r(\cdot)}(w)}
\le
w({\mathbb R}^n)^{\frac1q}\|\chi_{{\mathbb R}^n}\|_{L^{r(\cdot)}(w)} \lesssim1.
\end{align*}
This is the desired result.
\end{proof}

Tang pointed out an important
feature
of
${\mathcal M}^0_{N_{p(\cdot),w}}a$
for $(p(\cdot),q,L)_w$-atoms
$a$.
\begin{lemma}{\rm\cite[(3.3)]{Tang12}}\label{lem:190408-1}
Let $w \in A^{\rm loc}_\infty$ and $p(\cdot) \in {\mathcal P} \cap {\rm L H}_0 \cap {\rm L H}_\infty$.
Suppose $L \in {\mathbb Z} \cap [-1,N_{p(\cdot),w}]$ and $q>q_w$.
Let $a$ be a $(p(\cdot),q,L)_w$-atom supported on a cube $Q=Q(x_0,r)$
with
$|Q|<1$. Then
\begin{equation}\label{eq:220314-1}
{\mathcal M}^0_{N_{p(\cdot),w}}a(x)
\lesssim
(M^{\rm loc}\chi_{Q}(x))^{\frac{n+L+1}{n}}
\end{equation}
for $x \in {\mathbb R}^n \setminus 2Q$.
\end{lemma}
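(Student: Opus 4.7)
The plan is to estimate $|\varphi_t*a(x)|$ for an arbitrary $\varphi\in\mathcal{D}^0_{N_{p(\cdot),w}}(\mathbb{R}^n)$ and $0<t<1$, and then take the supremum. Write $N\equiv N_{p(\cdot),w}$. Using the vanishing moment condition $a\perp \mathcal{P}_L(\mathbb{R}^n)$ (understood as vacuous when $L=-1$), I would first rewrite
\[
\varphi_t*a(x) = \int_Q \bigl[\varphi_t(x-y) - P_L(y)\bigr]\,a(y)\,dy,
\]
where $P_L(y)$ is the Taylor polynomial of $y\mapsto\varphi_t(x-y)$ of degree $L$ expanded at $y=x_0$, with the convention $P_{-1}\equiv 0$. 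Since $\varphi\in\mathcal{D}^0_N$ guarantees $|\partial^\alpha\varphi_t|\le t^{-n-|\alpha|}\chi_{B(t)}$ for $|\alpha|\le L+1\le N+1$, the Taylor remainder formula yields
\[
|\varphi_t(x-y)-P_L(y)|\lesssim\frac{|y-x_0|^{L+1}}{t^{n+L+1}}\,\mathbf{1}_{\{t>\mathrm{dist}(x,Q)\}},
\]
because when $t\le\mathrm{dist}(x,Q)$ both $\varphi_t(x-y)$ (for $y\in Q$) and every derivative of $\varphi_t$ at $x-x_0$ (which appears in $P_L$) vanish, so the entire integrand is zero.

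Next I would bound $\int_Q|a(y)|\,dy$. Applying H\"older's inequality in the splitting $|a|=|a|w^{1/q}\cdot w^{-1/q}$, one obtains
\[
\int_Q|a(y)|\,dy\le\|a\|_{L^q(w)}\Bigl(\int_Q w^{-1/(q-1)}\,dy\Bigr)^{1/q'},
\]
and since $q>q_w$ implies $w\in A^{\rm loc}_q$ (via Proposition \ref{prop:211027-111}), the $A^{\rm loc}_q$ condition applied to $Q$ with $|Q|<1$ gives $(\int_Q w^{-1/(q-1)})^{1/q'}\lesssim|Q|\,w(Q)^{-1/q}$. The atomic bound $\|a\|_{L^q(w)}\le w(Q)^{1/q}$ then yields $\int_Q|a|\lesssim|Q|$. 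Using $|y-x_0|\lesssim\ell(Q)$ for $y\in Q$ and combining with the remainder estimate gives, for all $t\in(\mathrm{dist}(x,Q),1)$,
\[
|\varphi_t*a(x)|\lesssim\Bigl(\frac{\ell(Q)}{t}\Bigr)^{n+L+1}.
\]

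To finish, I would take the supremum over admissible $(\varphi,t)$ and compare with $M^{\rm loc}\chi_Q(x)$. When $\mathrm{dist}(x,Q)\ge 1$ the left-hand side of \eqref{eq:220314-1} vanishes, so nothing is to prove. Otherwise the sup is attained near $t\sim\mathrm{dist}(x,Q)$, giving $\mathcal{M}^0_N a(x)\lesssim(\ell(Q)/\mathrm{dist}(x,Q))^{n+L+1}$. Since $x\notin 2Q$ forces $\mathrm{dist}(x,Q)\gtrsim\ell(Q)$ and $\mathrm{dist}(x,Q)\sim|x-x_0|$, an elementary computation shows $M^{\rm loc}\chi_Q(x)\sim(\ell(Q)/|x-x_0|)^n$ in the local regime, and raising to the power $(n+L+1)/n$ recovers the right-hand side of \eqref{eq:220314-1}. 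The main subtlety is the joint bookkeeping of the moment condition, the Taylor remainder, and the support of $\varphi_t$: once one verifies that $P_L$ vanishes identically when $t\le\mathrm{dist}(x,Q)$ (so that the bound becomes an \emph{equality} to zero rather than merely an upper bound), the rest reduces to the $A^{\rm loc}_q$-duality estimate for $\int_Q|a|$ and elementary geometric comparisons of $\ell(Q)$, $\mathrm{dist}(x,Q)$, and $|x-x_0|$ on $\mathbb{R}^n\setminus 2Q$.
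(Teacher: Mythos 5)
Your proposal is correct and follows essentially the same route as the paper's proof: subtract a Taylor polynomial using the moment condition, bound the remainder by $(\ell(Q)/t)^{n+L+1}$, control $\int_Q|a|$ via H\"older and the $A^{\rm loc}_q$ condition, and compare the resulting decay in $|x-x_0|$ with $M^{\rm loc}\chi_Q(x)$ raised to the power $(n+L+1)/n$. The only cosmetic difference is that you index the vanishing regime by $\mathrm{dist}(x,Q)$ and $t\ge 1$, whereas the paper phrases it as $x\notin Q(x_0,4n)$ and $r<t$, $|x-x_0|<2t$; the two formulations carry the same content.
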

Here we recall 
the proof of Lemma \ref{lem:190408-1}
since we must rephrase in terms of the local maximal operator.
\begin{proof}
If $x \in {\mathbb R}^n \setminus Q(x_0,4n)$,
then ${\mathcal M}^0_{N_{p(\cdot),w}}a(x)=0$.
So, we assume that
$x \in Q(x_0,4n) \setminus 2Q$.
Let $\varphi \in {\mathcal D}_N^0$.
By the support condition of $a$, if ${\rm supp} \,a \cap {\rm supp} \,\varphi_t(x-\cdot) \neq \phi$,
then
$r<t$ and $|x-x_0|<2t$.
Let 
$P$ be the Taylor expansion of $\varphi$ at the point $(x-x_0)/t$ of order 
$L \le N_{p(\cdot),w}$. 
Since $a$ is a $(p(\cdot), q, L)_w$-atom, we have 
\begin{align*}
\left|a*\varphi_t(x)\right|
&=
\left|t^{-n}
\int_{{\mathbb R}^n} a(y) \left(\varphi\left(\frac{x-y}{t}\right)-P\left(\frac{x-y}{t}\right)\right) {\rm d}y\right|.
\end{align*}
Thus,
by the Taylor remainder theorem, 
\begin{equation}\label{eq:220209-1111}
\left|a*\varphi_t(x)\right|
\lesssim
t^{-n} \int_Q |a(y)| \left|\frac{x_0-y}{t}\right|^{L+1} {\rm d}y.
\end{equation}
By the triangle inequality and H\"{o}lder's inequality,
\begin{align*}
\left|a*\varphi_t(x)\right|
&\lesssim
|x-x_0|^{-n-L-1} \ell(Q)^{L+1} \int_Q |a(y)| {\rm d}y\\
&\le
|x-x_0|^{-n-L-1} \ell(Q)^{L+1} 
\|a\|_{L^q(w)} \left(\int_Q w(y)^{-\frac{1}{q-1}} {\rm d}y\right)^{1-\frac1q}\\
&\le
|x-x_0|^{-n-L-1} \ell(Q)^{L+1}w(Q)^{\frac1q}\left(\int_Q w(y)^{-\frac{1}{q-1}} {\rm d}y\right)^{1-\frac1q}\\
&\lesssim
(M^{\rm loc}\chi_{Q}(x))^{\frac{n+L+1}{n}}|Q|^{-1}w(Q)^{\frac1q}\left(\int_Q w(y)^{-\frac{1}{q-1}} {\rm d}y\right)^{1-\frac1q}.
\end{align*}
Since $w \in A_q^{\rm loc}$, we have
\begin{align*}
\left|a*\varphi_t(x)\right|
&\lesssim
(M^{\rm loc}\chi_{Q}(x))^{\frac{n+L+1}{n}}.
\end{align*}
If we take the supremum over $t \in (0,1)$,
then the desired inequality is obtained.
\end{proof}




Keeping Lemma \ref{lem:190408-1}
in mind, let us prove
the one inclusion of Theorem \ref{thm:190406-2}.
That is, 
we will prove
$h^{p(\cdot),q,L;v}(w) \hookrightarrow h^{p(\cdot)}(w)$.

We start with the setup.
Let
$u>0$ satisfy
\[
\max(q_w,p_+)<u v<q.
\]
Since
$w \in A_{p(\cdot)}^{\rm loc}$
and
$q>q_w$,
$w \in A_q^{\rm loc}$
thanks to Proposition \ref{prop:211027-111}.

Let
$f \in h^{p(\cdot),q,L;v}(w)$.
We suppose that $w({\mathbb R}^n)<\infty$.
Otherwise we can modify the proof below.
There is a decomposition
$
f=\sum\limits_{j=0}^{\infty} \lambda_j a_j+\sum\limits_{j=1}^{\infty} \lambda_j' b_j,
$
where
$a_0$ is a simple $(p(\cdot),q)_w$-atom,
each $a_j$, $j \in {\mathbb N}$ is a $(p(\cdot),q,L)_w$-atom
supported on $Q_j=Q(x_j,r_j)$ with $r_j <1/2$,
each $b_j$, $j \in {\mathbb N}$ is a $(p(\cdot),q,L)_w$-atom
supported on $R_j=Q(x_j,1/2)$
and
coefficients
$\{\lambda_j\}_{j=0}^\infty$
and
$\{\lambda_j'\}_{j=1}^\infty$ satisfy
\[
|\lambda_0|
+
{\mathcal A}_{p(\cdot),w,v}(
\{\lambda_j\}_{j=1}^\infty;
\{Q_j\}_{j=1}^\infty)
+
{\mathcal A}_{p(\cdot),w,v}(
\{\lambda_j'\}_{j=1}^\infty;
\{R_j\}_{j=1}^\infty)
<\infty.
\]
By the triangle inequality,
the sublinearlity of
${\mathcal M}^0_{N_{p(\cdot)},w}$ and Lemma \ref{lem:190408-1},
we have
\begin{align*}
\lefteqn{
{\mathcal M}^0_{N_{p(\cdot),w}}f
}\\
&\le
|\lambda_0|{\mathcal M}^0_{N_{p(\cdot),w}}a_0
+
\sum\limits_{j=1}^{\infty} |\lambda_j| {\mathcal M}^0_{N_{p(\cdot),w}}a_j
+
\sum\limits_{j=1}^{\infty} |\lambda_j'| {\mathcal M}^0_{N_{p(\cdot),w}}b_j\\
&\lesssim
|\lambda_0|{\mathcal M}^0_{N_{p(\cdot),w}}a_0
+
\sum\limits_{j=1}^{\infty} |\lambda_j| 
(M^{\rm loc}\chi_{Q(x_j,r_j)})^{\frac{n+L+1}{n}}
+
\sum\limits_{j=1}^{\infty} |\lambda_j| 
\chi_{2Q_j}M^{\rm loc}a_j
+
\sum\limits_{j=1}^{\infty} |\lambda_j'| {\mathcal M}^0_{N_{p(\cdot),w}}b_j.
\end{align*}
The first term is controlled by
Lemma \ref{lem:190408-3}
as
\begin{equation}\label{eq:211124-201}
\|{\mathcal M}^0_{N_{p(\cdot),w}}a_0\|_{L^{p(\cdot)}(w)}
\lesssim 1.
\end{equation}
To handle the second term we use
\[
\sum\limits_{j=1}^{\infty} |\lambda_j| 
(M^{\rm loc}\chi_{Q(x_j,r_j)})^{\frac{n+L+1}{n}}
\le\
\left(
\sum\limits_{j=1}^{\infty} |\lambda_j|^v
(M^{\rm loc}\chi_{Q(x_j,r_j)})^{v\frac{n+L+1}{n}}
\right)^{\frac{1}{v}}.
\]
We
recall that $v \le \min(1,p_-)$ and that (\ref{eq:211124-101}) holds.
Arithmetic shows that
\[
 v\frac{n+L+1}{n}
\ge
\frac{v}{n}
\left(n+1+\left[n\left(\frac{q_w}{v}-1\right)\right]\right)
>q_w \ge1,
\]
and that
\[
\frac{n+L+1}{n}p(\cdot)
\ge
\frac{p_-}{n}
\left(n+1+\left[n\left(\frac{q_w}{v}-1\right)\right]\right)
>q_w\frac{p_-}{v}
>q_w
\ge1.
\]
So we can use the vector-valued inequality
of $M^{\rm loc}$
for weighted Lebesgue spaces with variable exponents
(see Lemma \ref{lem:190408-100})
to give
\begin{equation}\label{eq:211124-202}
\left\|
\sum\limits_{j=1}^{\infty} |\lambda_j| 
(M^{\rm loc}\chi_{Q(x_j,r_j)})^{\frac{n+L+1}{n}}\right\|_{L^{p(\cdot)}(w)}
\lesssim
{\mathcal A}_{p(\cdot),w,v}(
\{\lambda_j\}_{j=1}^\infty;
\{Q_j\}_{j=1}^\infty).
\end{equation}

For the third and fourth terms,
while recalling that ${\mathcal M}^0_{N_{p(\cdot)},w}b_j$ is supported
on $20n R_j$,
we use 
Theorem \ref{lem:210805-1}
to deduce
\begin{align}\label{eq:211101-101}
\left\|
\sum\limits_{j=1}^{\infty} |\lambda_j| 
\chi_{2Q_j}M^{\rm loc}a_j
\right\|_{L^{p(\cdot)}(w)}
\lesssim
{\mathcal A}_{p(\cdot),w,v}(
\{\lambda_jm_{2Q_j,w}^{(u v)}(M^{\rm loc}a_j)\}_{j=1}^\infty;
\{2Q_j\}_{j=1}^\infty)
\end{align}
and
\begin{align}\label{eq:211101-102}
\left\|
\sum\limits_{j=1}^{\infty} |\lambda_j'| {\mathcal M}^0_{N_{p(\cdot),w}}b_j
\right\|_{L^{p(\cdot)}(w)}
\lesssim
{\mathcal A}_{p(\cdot),w,v}(
\{\lambda_jm_{20n R_j,w}^{(u v)}(M^{\rm loc}b_j)\}_{j=1}^\infty;
\{20n R_j\}_{j=1}^\infty).
\end{align}

We estimate $m_{Q_j,w}^{(u v)}(M^{\rm loc}a_j)$.
Recall that $q>q_w$.
Hence,
$M^{\rm loc}$ is bounded on $L^q(w)$.
Since we assume that 
$1<u v<q$, we choose $r\in(0, \infty)$ such that $\frac1{u v}=\frac1q+\frac1r$.
Then, using the H\"{o}lder inequality
and the condition of the $(p(\cdot), q, L)_w$-atom, 
we have
\begin{align*}
m_{2Q_j,w}^{(u v)}(M^{\rm loc}a_j)
&=
\frac{1}{w(Q_j)^{\frac1{u v}}} 
\left\|\chi_{2Q_j}M^{\rm loc}a_j\right\|_{L^{u v}(w)}\\
&\le
\frac{1}{w(Q_j)^{\frac1{u v}}} \left\|M^{\rm loc}a_j\right\|_{L^q(w)}
\left\|\chi_{2Q_j}\right\|_{L^r(w)}\\
&\lesssim
\frac{1}{w(Q_j)^{\frac1{u v}}} \left\|a_j\right\|_{L^q(w)}
\left\|\chi_{2Q_j}\right\|_{L^r(w)}
\le
\frac{1}{w(Q_j)^{\frac1{u v}}} w(Q_j)^{\frac1q}w(Q_j)^{\frac1r}
=1.
\end{align*}
A geometric observation shows that
$\chi_{2Q_j} \lesssim M^{\rm loc}\chi_{Q_j}$.
By virtue of Lemma
\ref{lem:190408-100}
and
Theorem \ref{lem:210805-1}
along with the above estimate,
we obtain
\begin{align*}
\left\|
\sum\limits_{j=1}^{\infty} |\lambda_j| 
\chi_{2Q_j}M^{\rm loc}a_j
\right\|_{L^{p(\cdot)}(w)}
&\lesssim
{\mathcal A}_{p(\cdot),w,v}(
\{\lambda_jm_{2Q_j,w}^{(u)}(M^{\rm loc}a_j)\}_{j=1}^\infty;
\{2Q_j\}_{j=1}^\infty)\\
&\lesssim
{\mathcal A}_{p(\cdot),w,v}(
\{\lambda_j\}_{j=1}^\infty;
\{2Q_j\}_{j=1}^\infty)\\
&\lesssim
{\mathcal A}_{p(\cdot),w,v}(
\{\lambda_j\}_{j=1}^\infty;
\{Q_j\}_{j=1}^\infty).
\end{align*}
In total,
\begin{equation}\label{eq:211124-203}
\left\|
\sum\limits_{j=1}^{\infty} |\lambda_j| 
\chi_{2Q_j}M^{\rm loc}a_j
\right\|_{L^{p(\cdot)}(w)}
\lesssim
{\mathcal A}_{p(\cdot),w,v}(
\{\lambda_j\}_{j=1}^\infty;
\{Q_j\}_{j=1}^\infty).
\end{equation}
In a similar fashion,
we estimate 
the right-hand side of (\ref{eq:211101-102}).
The result is
\begin{equation}\label{eq:211124-204}
\left\|
\sum\limits_{j=1}^{\infty} |\lambda_j'| {\mathcal M}^0_{N_{p(\cdot),w}}b_j
\right\|_{L^{p(\cdot)}(w)}
\lesssim
{\mathcal A}_{p(\cdot),w,v}(
\{\lambda_j'\}_{j=1}^\infty;
\{R_j\}_{j=1}^\infty).
\end{equation}
Combining (\ref{eq:211124-201}),
(\ref{eq:211124-202}),
(\ref{eq:211124-203})
and
(\ref{eq:211124-204}), we obtain the desired result.

Let us reexamine the above proof to polish the conditions
on atoms.

\begin{remark}\label{rem:210921-114}
Let $v\in(0,p_-) \cap[0,1]$
and $s_0\equiv\left[n\left(\frac{q_w}{v}-1\right)\right]_+$.
A close inspection of the proof shows that
the condition on
the atom $a_j$
may be relaxed.
It suffices to assume
each $a_j$ satisfies the pointwise estimate
\begin{equation}\label{eq:211125-401}
|a_j| \le |a_j|\chi_{3Q_j}+(M^{\rm loc}\chi_{Q_j})^{\frac{n+s_0+1}{n}}
\end{equation}
and the norm estimate
$\|a_j\chi_{3Q_j}\|_{L^q(w)} \le w(Q_j)^{\frac1q}$
for $q<\infty$.
In fact, using the same argument as Lemma \ref{lem:220209-1} below,
we can establish estimate (\ref{eq:220314-1}) under a milder condition
(\ref{eq:211125-401}).
Thus, it is not necessary to assume that each $a_j$
is compactly supported.
Instead, 
a weaker assumption
(\ref{eq:211125-401})
is sufficient.
\end{remark}

\subsection{Proof of $h^{p(\cdot),q,L;v}(w) \hookleftarrow h^{p(\cdot)}(w)$}
\label{subsection:5.2}

Let us consider the decomposition
following \cite[pp.
461--462]{Tang12}.
Set
$f \in {\mathcal D}'({\mathbb R}^n)$ and $\lambda>0$.
We set
\[
\Omega_\lambda\equiv \{x \in {\mathbb R}^n\,:\,{\mathcal M}_N f(x)>\lambda\}.
\]
Here and below, consider a distribution
$f \in {\mathcal D}'({\mathbb R}^n)$
such that
$
w(\Omega_\lambda)<\infty
$
for all $\lambda>0$.
From the Whitney decomposition
$\{Q_k\}_{k \in K}$
of $\Omega_\lambda$,
a decomposition
$\Omega_\lambda=\bigcup\limits_{k \in K}Q_k$
such that
\[
{\rm diam}(Q_k)
\le
2^{-n-6}
{\rm dist}(Q_k,{\mathbb R}^n \setminus \Omega_\lambda)
\le
4{\rm diam}(Q_k) \quad ({k \in K})
\]
and that the overlapping property is satisfied
as
\[
\sum_{k \in K}\chi_{(1+2^{-n-10})Q_k} \lesssim 1.
\]
Fix $k \in K$.
Set
\begin{equation}\label{eq:211031-1}
Q_k\equiv Q(x_k,l_k), \quad
Q_k^*\equiv (1+2^{-n-10})Q_k.
\end{equation}
Let $\xi \in C^{\infty}({\mathbb R}^n)$ be a bump function such that
$\chi_{Q(1+2^{-n-11})} \le \xi \le \chi_{Q(1+2^{-n-10})}$.
Define
$\xi_k\equiv \xi\big(\frac{\cdot-x_k}{l_k}\big)$
and
$\eta_k\equiv \xi_k \div \sum\limits_{l \in K} \xi_l$.
Choose a polynomial $P_k \in {\mathcal P}_L({\mathbb R}^n)$ so that
$\langle f,Q \xi_k \rangle=\langle P_k,Q \xi_k \rangle$
for all $Q \in {\mathcal P}_L({\mathbb R}^n)$.
Set
$b_k\equiv (f-P_k)\eta_k$
for each $k$
and
$g\equiv f-\sum\limits_{k \in K} b_k$.
Thus,
$f=g+\sum\limits_{k \in K} b_k$.
We invoke the following estimate from \cite{Tang12},
which uses the maximal operators 
${\mathcal M}^0_N$
and
${\mathcal M}_N$.
\begin{lemma}{\rm \cite[Lemmas 4.3 and 4.5]{Tang12}}
\label{lem:190408-71}
Let $L \in [0,N)\cap{\mathbb Z}$.
Then
there exists $D>0$ such that, for all $k \in {\mathbb N}$,
\[
{\mathcal M}^0_N b_k
\lesssim
\chi_{Q_k^*}{\mathcal M}_N f+
\lambda
\chi_{(0,D)}(|Q_k|)
(M^{\rm loc}\chi_{Q_k})^{\frac{n+L+1}{n}}.
\]
\end{lemma}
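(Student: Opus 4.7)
The plan is to follow the scheme of \cite[Lemmas 4.3 and 4.5]{Tang12}, adapting it to the local setting imposed by the restriction $t<1$ in ${\mathcal M}^0_N$ and by the volume restriction $|Q|\le 1$ built into $M^{\rm loc}$. Fix $k$ and split ${\mathbb R}^n$ into the near field $Q_k^*$ and the far field ${\mathbb R}^n\setminus Q_k^*$, corresponding respectively to the two terms on the right-hand side. First I would record the standard consequences of the Whitney construction that will be used throughout: because $2^{-n-10}$ is much smaller than $2^{-n-8}$, the enlargement satisfies $Q_k^*=(1+2^{-n-10})Q_k\subset\Omega_\lambda$ (so ${\mathcal M}_N f>\lambda$ on $Q_k^*$); one also has $|P_k(y)|\lesssim\lambda$ for $y\in Q_k^*$ and $\int|b_k(y)|\,dy\lesssim\lambda|Q_k|$, which carry over verbatim from \cite{Tang12} since they depend only on the Whitney geometry and not on the weight or variable exponent.

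For the near-field estimate, given $x\in Q_k^*$, $\varphi\in{\mathcal D}_N^0$ and $t\in(0,1)$, I would split
\[
\varphi_t*b_k(x)=\varphi_t*(f\eta_k)(x)-\varphi_t*(P_k\eta_k)(x),
\]
and verify that the rescaled kernel $y\mapsto\varphi_t(x-y)\eta_k(y)$ lies, after a suitable dilation, in a bounded multiple of ${\mathcal D}_N$. This is precisely where the enlarged support radius $2^{3n+30}$ in the definition of ${\mathcal D}_N$ is tailored to absorb the spreading produced by $\eta_k$, yielding $|\varphi_t*(f\eta_k)(x)|\lesssim{\mathcal M}_N f(x)$. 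For the polynomial piece, the pointwise bound on $P_k$ gives $|\varphi_t*(P_k\eta_k)(x)|\lesssim\lambda$, and since ${\mathcal M}_N f(x)>\lambda$ on $Q_k^*$ both contributions collapse into $\chi_{Q_k^*}(x){\mathcal M}_N f(x)$.

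For the far field $x\notin Q_k^*$ the support conditions force $d(x,Q_k^*)<t<1$ for any nonvanishing $\varphi_t*b_k(x)$; in particular ${\mathcal M}^0_N b_k(x)=0$ as soon as $d(x,Q_k^*)\ge 1$. Otherwise, using $b_k\perp{\mathcal P}_L({\mathbb R}^n)$ (which is where the choice of $P_k$ and the bounded-overlap property of the $\xi_k$ enter), subtracting the order-$L$ Taylor polynomial of $\varphi_t(x-\cdot)$ at $x_k$ and applying Taylor's theorem together with the Whitney $L^1$-bound on $b_k$ produces
\[
|\varphi_t*b_k(x)|\lesssim t^{-n-L-1}\ell(Q_k)^{L+1}\int_{Q_k^*}|b_k|\,dy\lesssim\lambda\,\ell(Q_k)^{n+L+1}\,t^{-n-L-1}.
\]
Letting $t\searrow d(x,Q_k^*)\sim|x-x_k|$ gives ${\mathcal M}^0_N b_k(x)\lesssim\lambda(\ell(Q_k)/|x-x_k|)^{n+L+1}$. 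To convert this into a bound by $\lambda(M^{\rm loc}\chi_{Q_k}(x))^{(n+L+1)/n}$, I would test $M^{\rm loc}\chi_{Q_k}(x)$ with a cube $Q$ containing both $x$ and $Q_k$, which satisfies $|Q\cap Q_k|/|Q|\gtrsim(\ell(Q_k)/|x-x_k|)^n$; the admissibility condition $|Q|\le 1$ forces the cutoff $\chi_{(0,D)}(|Q_k|)$, and I would choose $D$ (depending only on $n$) so that whenever $|Q_k|<D$ and $\varphi_t*b_k(x)\ne 0$ for some admissible $t<1$, this test cube has volume at most $1$. The main obstacle is the bookkeeping around this locality constraint together with the verification that the rescaled kernel in the near-field step really lands in a bounded multiple of ${\mathcal D}_N$; once those two points are checked the geometric comparison closes the proof.
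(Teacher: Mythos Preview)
Your proposal is correct and aligns with the paper's treatment: the paper does not supply its own argument for this lemma but simply cites \cite[Lemmas~4.3 and~4.5]{Tang12}, and your sketch faithfully reproduces that near-field/far-field scheme with the appropriate local adaptations ($t<1$ and the volume constraint in $M^{\rm loc}$). The one step you leave implicit---the far field when $|Q_k|\ge D$, which is the content of Tang's Lemma~4.5---is handled by observing that the near-field rescaling still works on the thin shell $\{0<d(\cdot,Q_k^*)<1\}$ because $t<1\lesssim\ell(Q_k)$ makes the $\eta_k$-derivatives harmless, while the Whitney separation $\mathrm{dist}(Q_k,\Omega_\lambda^c)\gtrsim 2^{n+6}\ell(Q_k)$ keeps that shell inside $\Omega_\lambda$, where $\mathcal M_Nf>\lambda$ absorbs the polynomial piece.
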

Since
we have the vector-valued boundedness
of $M^{\rm loc}$
(see Lemma \ref{lem:190408-100}),
taking the norm
on both sides
gives
a norm estimate for the bad parts.
\begin{corollary}\label{cor:211025-3}
Let $w \in A^{\rm loc}_\infty$
and $p(\cdot) \in {\mathcal P}_0 \cap {\rm L H}_0 \cap {\rm L H}_\infty$.
Let $v \in (0,p_-)$ and $q>q_w$.
Let
$L,N \in {\mathbb Z}$
satisfy
\begin{equation}\label{eq:211027-1}
N \ge L \ge \left[n\left(\frac{q_w}{v}-1\right)\right].
\end{equation}
Then 
\begin{equation}\label{eq:211201-2}
\left\|
\left(\sum\limits_{k \in K}
({\mathcal M}^0_N b_k)^v
\right)^{\frac1v}\right\|_{L^{p(\cdot)}(w)}
\lesssim
\|{\mathcal M}_N f\|_{L^{p(\cdot)}(w)}.
\end{equation}
In particular,
if $v \le 1$,
$\sum\limits_{k \in K} b_k$
converges in $h^{p(\cdot)}(w)$. 
Hence 
it converges also in ${\mathcal D}'({\mathbb R}^n)$.
\end{corollary}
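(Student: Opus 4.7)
The strategy is to raise the pointwise estimate of Lemma \ref{lem:190408-71} to the $v$-th power, sum over $k$, and separately control each resulting term using the bounded overlap of $\{Q_k^*\}_{k\in K}$ and the vector-valued local maximal inequality (Lemma \ref{lem:190408-100}). Setting $\alpha:=(n+L+1)/n$, the sub-additivity $(a+b)^v\lesssim a^v+b^v$ gives
\begin{equation*}
\sum_{k\in K}({\mathcal M}^0_N b_k)^v \lesssim \sum_{k\in K}\chi_{Q_k^*}({\mathcal M}_N f)^v + \lambda^v\sum_{k\in K}(M^{\rm loc}\chi_{Q_k})^{\alpha v}.
\end{equation*}
Raising to the $1/v$-th power and applying the quasi-triangle inequality reduces the task to bounding the $L^{p(\cdot)}(w)$-norms of the two summands on the right.

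The first is pointwise $\lesssim ({\mathcal M}_N f)^v$ by the overlap condition $\sum_k\chi_{Q_k^*}\lesssim 1$, so its contribution is $\lesssim\|{\mathcal M}_N f\|_{L^{p(\cdot)}(w)}$. For the second, set $r:=\alpha v$; hypothesis (\ref{eq:211027-1}) is chosen precisely so that $r>q_w$, and consequently $\alpha p_-\ge \alpha v> q_w\ge 1$, so $\alpha p(\cdot)\in {\mathcal P}\cap{\rm LH}_0\cap{\rm LH}_\infty$. Since $w\in A^{\rm loc}_\infty$, pick a constant $q_0\in(q_w,r)$ with $w\in A^{\rm loc}_{q_0}$; Proposition \ref{prop:211027-111} upgrades this to $w\in A^{\rm loc}_{\alpha p(\cdot)}$. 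The scaling identity $\|F^\alpha\|_{L^{p(\cdot)}(w)}=\|F\|_{L^{\alpha p(\cdot)}(w)}^\alpha$, Lemma \ref{lem:190408-100} applied on $L^{\alpha p(\cdot)}(w)$ with exponent $r>1$, together with $\chi_{Q_k}^r=\chi_{Q_k}$ and $\sum_k\chi_{Q_k}=\chi_{\Omega_\lambda}$ a.e.\ (Whitney cubes have disjoint interiors), reduce the second piece to a constant multiple of $\lambda\|\chi_{\Omega_\lambda}\|_{L^{p(\cdot)}(w)}$. The weak-type bound $\lambda\chi_{\Omega_\lambda}\le {\mathcal M}_N f$ then completes the proof of (\ref{eq:211201-2}).

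For the convergence when $v\le 1$, the inequality $\sum_{k\in K'}a_k\le (\sum_{k\in K'}a_k^v)^{1/v}$ (valid for non-negative $a_k$ and $v\le 1$) and the sublinearity of ${\mathcal M}^0_N$ yield
\begin{equation*}
\left\|{\mathcal M}^0_N\!\!\sum_{k\in K'}b_k\right\|_{L^{p(\cdot)}(w)} \le \left\|\Bigl(\sum_{k\in K'}({\mathcal M}^0_N b_k)^v\Bigr)^{\!1/v}\right\|_{L^{p(\cdot)}(w)}.
\end{equation*}
By (\ref{eq:211201-2}) the function $\sum_{k\in K}({\mathcal M}^0_N b_k)^v$ lies in $L^{p(\cdot)/v}(w)$, and since $(p/v)_+<\infty$ dominated convergence forces the right-hand side to $0$ as $K'$ ranges over tails of $K$; Theorem \ref{thm:210712-1} translates this into Cauchyness in $h^{p(\cdot)}(w)$. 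Completeness (Proposition \ref{prop:190406-3}) gives convergence in $h^{p(\cdot)}(w)$, and Proposition \ref{prop:190406-2} propagates it to ${\mathcal D}'({\mathbb R}^n)$. The main technical subtlety is verifying that $w\in A^{\rm loc}_{\alpha p(\cdot)}$ so that Lemma \ref{lem:190408-100} is applicable at the inflated variable exponent, which is exactly what the monotone property (Proposition \ref{prop:211027-111}) and the arithmetic behind (\ref{eq:211027-1}) are designed to deliver.
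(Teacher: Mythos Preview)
Your proof is correct and follows essentially the same route as the paper's: apply the pointwise bound from Lemma~\ref{lem:190408-71}, split into the overlap-controlled term and the maximal-function term, and handle the latter via the vector-valued inequality (Lemma~\ref{lem:190408-100}) after the scaling $\|F^\alpha\|_{L^{p(\cdot)}(w)}=\|F\|_{L^{\alpha p(\cdot)}(w)}^\alpha$. The paper's version is simply more terse---it cites Lemma~\ref{lem:190408-100} without spelling out the passage to $L^{\alpha p(\cdot)}(w)$ or the appeal to Proposition~\ref{prop:211027-111}, relying instead on the reader having seen the same arithmetic worked out a few lines earlier in Section~\ref{subsection:5.1}; your write-up makes these steps explicit, which is fine.
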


\begin{proof}
Using 
Lemma \ref{lem:190408-71}, we estimate
\begin{align*}
\lefteqn{
\left\|
\left(\sum\limits_{k \in K}
({\mathcal M}^0_N b_k)^v
\right)^{\frac1v}\right\|_{L^{p(\cdot)}(w)}
}\\
&\lesssim
\left\|
\left(\sum\limits_{k \in K}
\left(\chi_{Q_k^*}{\mathcal M}_N f+
\lambda
\chi_{(0,D)}(|Q_k|)
(M^{\rm loc}\chi_{Q_k})^{\frac{n+L+1}{n}}
\right)^v
\right)^{\frac1v}\right\|_{L^{p(\cdot)}(w)}.
\end{align*}
We remark that $p_->v$.
Using the constant sequence
$
\{\lambda\}_{k \in K}$,
the triangle inequality, 
the vector-valued inequality (Lemma \ref{lem:190408-100})
and the fact that 
$\{Q_k\}_{k\in K}$
is a Whitney decomposition of $\Omega_\lambda=\{x \in {\mathbb R}^n\,:\,{\mathcal M}_N f(x)>\lambda\}$, we have
\begin{align*}
\lefteqn{
\left\|
\left(\sum\limits_{k \in K}
({\mathcal M}^0_N b_k)^v
\right)^{\frac1v}\right\|_{L^{p(\cdot)}(w)}
}\\
&\lesssim
\left\|
\left(\sum\limits_{k \in K}
\left(\chi_{Q_k^*}{\mathcal M}_N f
\right)^v
\right)^{\frac1v}
\right\|_{L^{p(\cdot)(w)}}
+
{\mathcal A}_{p(\cdot),w,v}(
\{\lambda\}_{k \in K};
\{Q_k\}_{k \in K})\\
&\lesssim
\left\|{\mathcal M}_N f\right\|_{L^{p(\cdot)}(w)},
\end{align*}
proving (\ref{eq:211201-2}).

Now
assuming that $v \le 1$,
we can easily prove that
$\sum\limits_{k \in K} b_k$
converges in $h^{p(\cdot)}(w)$
since
\[
\left\|
\sum\limits_{k \in K}
{\mathcal M}^0_N b_k
\right\|_{L^{p(\cdot)}(w)}
\lesssim
\left\|
\left(\sum\limits_{k \in K}
({\mathcal M}^0_N b_k)^v
\right)^{\frac1v}\right\|_{L^{p(\cdot)}(w)}
\lesssim
\|{\mathcal M}_N f\|_{L^{p(\cdot)}(w)}.
\]
\end{proof}
Concerning condition (\ref{eq:211027-1}),
a helpful remark is in order.
\begin{remark}
Let $q \ge 1$ and $p_0>0$.
Due to the right-continuity of the function
$v \in(0,\infty)\mapsto
\left[n\left(\frac{q}{v}-1\right)\right] \in {\mathbb R}
$,
for an integer $L$,
there exists $v \in (0,p_0)$ such that
$L \ge \left[n\left(\frac{q}{v}-1\right)\right]_+$
if and only if
$L \ge \left[n\left(\frac{q}{p_0}-1\right)\right]_+$.
\end{remark}
Having guaranteed the convergence
of
$\sum\limits_{k \in K} b_k$
in ${\mathcal D}'({\mathbb R}^n)$,
we can employ another estimate by Tang,
which uses ${\mathcal M}_N^0$ and $M^{\rm loc}$ only.
\begin{lemma}{\rm \cite[Lemma 4.8]{Tang12}}
\label{lem:190408-72}
Let $L \in [0,N) \cap {\mathbb Z}$.
Then
\[
{\mathcal M}^0_N g
\lesssim
\chi_{{\mathbb R}^n \setminus \Omega_\lambda}{\mathcal M}^0_N f
+\lambda
\sum_{k \in K} 
(M^{\rm loc}\chi_{Q_k})^{\frac{n+L+1}{n}}.
\]
\end{lemma}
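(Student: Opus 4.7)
The plan is to bound $|\varphi_t*g(x)|$ uniformly over $\varphi\in{\mathcal D}_N^0({\mathbb R}^n)$ and $0<t<1$, and then take the supremum. Starting from the pointwise identity $g=f\chi_{{\mathbb R}^n\setminus\Omega_\lambda}+\sum_{k\in K}P_k\eta_k$ (which follows from $\sum_k\eta_k=\chi_{\Omega_\lambda}$ and the support condition on $\eta_k$), I would split the argument according to whether $x\in\Omega_\lambda$ or not, and in each case separate the Whitney cubes $Q_k$ that interact with $B(x,t)$ from those that do not.

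When $x\notin\Omega_\lambda$, I first rewrite $\varphi_t*g(x)=\varphi_t*f(x)-\sum_{k}\varphi_t*b_k(x)$; the first term is bounded directly by $\chi_{{\mathbb R}^n\setminus\Omega_\lambda}(x){\mathcal M}_N^0f(x)$. For each $b_k$ with $Q_k^*\cap B(x,t)\ne\emptyset$, I would exploit the moment condition $b_k\perp{\mathcal P}_L$ by subtracting the degree-$L$ Taylor polynomial $T^L_{x_k}\varphi_t(x-\cdot)$ at $x_k$. Using $|\partial^\alpha\varphi_t|\lesssim t^{-n-|\alpha|}$ and the Taylor remainder theorem, this gives
\[
|\varphi_t*b_k(x)|\lesssim\Bigl(\frac{\ell(Q_k)}{t}\Bigr)^{L+1}\frac{1}{t^n}\int_{Q_k^*}|b_k(y)|\,dy,
\]
provided one has the pointwise control $|b_k|\lesssim\lambda$ on $Q_k^*$. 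Summing over such $k$, the geometric factor $(\ell(Q_k)/t)^{L+1}\cdot(\ell(Q_k)/t)^n$ becomes comparable to $(M^{\rm loc}\chi_{Q_k}(x))^{(n+L+1)/n}$, with the cutoff $\chi_{(0,D)}(|Q_k|)$ arising because $M^{\rm loc}$ sees only cubes of volume $\le 1$ and larger Whitney cubes either sit outside the support of $\varphi_t(x-\cdot)$ or can be estimated by the ${\mathcal M}_N f$ term at a nearby point in ${\mathbb R}^n\setminus\Omega_\lambda$. The case $x\in\Omega_\lambda$ is handled analogously by working from $g=\sum_k P_k\eta_k$ and again invoking $|P_k|\lesssim\lambda$ together with the Whitney overlap bound.

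The main obstacle is the key ingredient $|P_k(y)|\lesssim\lambda$ on $Q_k^*$. To establish this, one uses the Whitney separation $\mathrm{dist}(Q_k,{\mathbb R}^n\setminus\Omega_\lambda)\sim\ell(Q_k)$ to pick a point $z_k\in{\mathbb R}^n\setminus\Omega_\lambda$ with $|z_k-x_k|\lesssim\ell(Q_k)$ and ${\mathcal M}_N f(z_k)\le\lambda$. Then $P_k$, being characterized by $\langle f,Q\xi_k\rangle=\langle P_k,Q\xi_k\rangle$ for all $Q\in{\mathcal P}_L$, has its coefficients expressible as linear functionals of the values $\{\varphi_{c\ell(Q_k)}*f(z_k)\}$ for suitable $\varphi\in{\mathcal D}_N$; inverting the finite-dimensional Gram matrix attached to $\xi_k\,dy$ (whose conditioning is uniform in $k$ after a rescaling argument) transfers the bound ${\mathcal M}_N f(z_k)\le\lambda$ into $|P_k|\lesssim\lambda$. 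Since the resulting inequality is purely pointwise and is independent of the norm used on the distributional side, the argument is essentially that of Tang \cite[Lemma 4.8]{Tang12} and carries over without modification to our weighted variable-exponent setting.
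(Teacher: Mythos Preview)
The paper does not supply its own proof of this lemma; it simply quotes \cite[Lemma~4.8]{Tang12}. Your outline follows Tang's argument and is, in its overall architecture, correct---in particular your closing remark that the estimate is purely pointwise and independent of the ambient norm is exactly the reason the paper can cite Tang without modification.

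There is one genuine imprecision worth flagging. You write the post-Taylor bound as
\[
|\varphi_t*b_k(x)|\lesssim\Bigl(\frac{\ell(Q_k)}{t}\Bigr)^{L+1}\frac{1}{t^n}\int_{Q_k^*}|b_k(y)|\,dy
\]
and then invoke ``pointwise control $|b_k|\lesssim\lambda$ on $Q_k^*$''. Neither expression is available here: $f$ is only assumed to lie in ${\mathcal D}'({\mathbb R}^n)$, so $b_k=(f-P_k)\eta_k$ is a distribution and $\int|b_k|$ is undefined. What Tang actually does---and what your companion-point paragraph already contains the pieces for---is to estimate the pairing $\langle b_k,\varphi_t(x-\cdot)\rangle$ directly. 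After subtracting the degree-$L$ Taylor polynomial one obtains a test function which, multiplied by $\eta_k$, is supported in $Q_k^*$ with small $C^{N+1}$ norms; rescaling places it in ${\mathcal D}_N$ at scale $\sim\ell(Q_k)$, and the bound $\lesssim\lambda$ is then read off from ${\mathcal M}_N f(z_k)\le\lambda$ at the Whitney companion point $z_k\in{\mathbb R}^n\setminus\Omega_\lambda$, combined with the polynomial estimate $|P_k|\lesssim\lambda$ on $Q_k^*$ that you derived correctly. With this adjustment the argument goes through.
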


A direct consequence of Lemma \ref{lem:190408-72} is:
\begin{lemma}\label{cor:211025-2}
Let $w \in A^{\rm loc}_\infty$
and $p(\cdot) \in {\mathcal P}_0 \cap {\rm L H}_0 \cap {\rm L H}_\infty$.
If $L \in [0,N)$,
then $g \in L^{p_++q_w}(w)$.
\end{lemma}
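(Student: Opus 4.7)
The plan is to apply Lemma \ref{lem:190408-72} to bound $\mathcal{M}^0_N g$ pointwise and then control the two resulting summands separately in $L^s(w)$ with $s := p_+ + q_w$. Since the local grand maximal function dominates $|g|$ a.e.\ whenever $g$ is representable as a locally integrable function, it will suffice to prove $\|\mathcal{M}^0_N g\|_{L^s(w)} < \infty$. Setting $\beta := (n+L+1)/n$, which is strictly greater than $1$ because $L \ge 0$, Lemma \ref{lem:190408-72} delivers the pointwise estimate
\[
\mathcal{M}^0_N g \lesssim \chi_{\mathbb{R}^n \setminus \Omega_\lambda} \mathcal{M}^0_N f + \lambda \sum_{k \in K} (M^{\rm loc}\chi_{Q_k})^\beta.
\]

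For the first summand, I will use that $\mathcal{M}^0_N f \le \mathcal{M}_N f \le \lambda$ on $\mathbb{R}^n \setminus \Omega_\lambda$, which yields the pointwise interpolation
\[
\bigl(\chi_{\mathbb{R}^n \setminus \Omega_\lambda}(x)\mathcal{M}^0_N f(x)\bigr)^s \le \lambda^{s-p(x)}\,(\mathcal{M}^0_N f(x))^{p(x)}.
\]
Because $s - p(x)$ ranges in the bounded interval $[q_w,\, s - p_-]$, the factor $\lambda^{s-p(x)}$ is uniformly bounded for each fixed $\lambda$, and the task reduces to the finiteness of the modular $\int (\mathcal{M}^0_N f)^{p(x)} w\,dx$. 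The latter follows from $\mathcal{M}^0_N f \in L^{p(\cdot)}(w)$ (implicit in the running hypothesis $f \in h^{p(\cdot)}(w)$) together with $p_+ < \infty$, which forces the modular of any element of $L^{p(\cdot)}(w)$ to be finite after absorbing the appropriate scaling constant.

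For the second summand, I will invoke the constant-exponent vector-valued maximal inequality from Lemma \ref{lem:190408-100}, taking the fiber exponent to be $\beta > 1$ and the ambient exponent to be $s\beta$. This is admissible because $s\beta > q_w$ (both $s > q_w$ and $\beta > 1$), which places $w$ in $A^{\rm loc}_{s\beta}$ by the definition of $q_w$. Raising the resulting estimate to the $\beta$-th power and using the bounded overlap of the Whitney cubes $\{Q_k\}_{k \in K}$ gives
\[
\left\|\sum_{k \in K} (M^{\rm loc}\chi_{Q_k})^\beta\right\|_{L^s(w)} \lesssim \left\|\sum_{k \in K}\chi_{Q_k}\right\|_{L^s(w)} \lesssim \|\chi_{\Omega_\lambda}\|_{L^s(w)} = w(\Omega_\lambda)^{1/s},
\]
which is finite by the standing assumption $w(\Omega_\lambda) < \infty$.

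The main obstacle will be the first summand: a direct $L^s(w)$ estimate on $\mathcal{M}^0_N f$ would fail because $s$ exceeds $p_+$, so the truncation $\mathcal{M}^0_N f \le \lambda$ off $\Omega_\lambda$ must be genuinely exploited through the interpolation trick. A secondary point is verifying the parameter range $s\beta > q_w$ so that the vector-valued inequality applies; this is immediate from $L \ge 0$. Combining the two bounds yields $\mathcal{M}^0_N g \in L^s(w)$ and therefore $g \in L^{p_+ + q_w}(w)$.
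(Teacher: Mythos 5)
Your argument runs along the same lines as the paper's: both start from the pointwise bound in Lemma \ref{lem:190408-72}, both use the vector-valued inequality of Lemma \ref{lem:190408-100} for the term $\lambda\sum_k(M^{\rm loc}\chi_{Q_k})^{\beta}$ (the paper's $\min\{\mathcal{M}_N^0f,\lambda\}$ device is just a compact way of packaging the two summands, and your explicit interpolation $(\chi_{\mathbb{R}^n\setminus\Omega_\lambda}\mathcal{M}^0_Nf)^s\le\lambda^{s-p(\cdot)}(\mathcal{M}^0_Nf)^{p(\cdot)}$ is precisely the computation behind the paper's modular estimate), and the parameter checks $\beta>1$, $s\beta>q_w$ are correct. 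There is, however, one genuine (if small) gap at the very start: the sentence ``it will suffice to prove $\|\mathcal{M}^0_N g\|_{L^s(w)}<\infty$ since the grand maximal function dominates $|g|$ a.e.\ whenever $g$ is representable as a locally integrable function'' assumes precisely what has to be shown — at this stage $g=f-\sum_k b_k$ is only a distribution, and there is no a priori reason for it to be a locally integrable function. The correct passage from $\mathcal{M}^0_N g\in L^{p_++q_w}(w)$ to $g\in L^{p_++q_w}(w)$ is to observe that the former means $g\in h^{p_++q_w}(w)$ and then invoke Theorem \ref{thm:210712-2} (with the constant exponent $p_++q_w>1$ and $w\in A^{\rm loc}_{p_++q_w}$) to identify $h^{p_++q_w}(w)$ with $L^{p_++q_w}(w)$, which is exactly how the paper closes the proof.
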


\begin{proof}
Using Lemmas \ref{lem:190408-100} 
and 
\ref{lem:190408-72} 
as well as the fact that
$\{Q_k\}_{k\in K}$ is the Whitney decomposition
of $\Omega_\lambda$, 
we estimate
\begin{align*}
\|{\mathcal M}^0_N g\|_{L^{p_++q_w}(w)}
&\lesssim
\left\|
\chi_{{\mathbb R}^n \setminus \Omega_\lambda}{\mathcal M}^0_N f
+\lambda
\sum_{k \in K}
(M^{\rm loc}\chi_{Q_k})^{\frac{n+L+1}{n}}
\right\|_{L^{p_++q_w}(w)}\\
&\lesssim 
\left\|
\min\{{\mathcal M}^0_Nf,\lambda\}
\right\|_{L^{p_++q_w}(w)}.
\end{align*}
Note that
\[
\min\{{\mathcal M}_N^0f,\lambda\}
\in
L^{p(\cdot)}(w) \cap L^{\infty}({\mathbb R}^n)
\subset
L^{p_++q_w}(w).
\]
In fact,
with the implicit constant depending on $\lambda$,
\[
\int_{{\mathbb R}^n}
\min(\lambda,{\mathcal M}_N^0f(x))^{p_++q_w}{\rm d}x
\lesssim
\int_{{\mathbb R}^n}{\mathcal M}_N^0f(x)^{p(x)}{\rm d}x<\infty.
\]

Since
$
\min(\lambda,{\mathcal M}_N^0f) \in L^{p_++q_w}(w)$,
${\mathcal M}^0_N g \in L^{p_++q_w}(w)$.
Hence
$g \in h^{p_++q_w}(w)=L^{p_++q_w}(w)$
thanks to Theorem \ref{thm:210712-2}
and
the fact that $q_w \ge 1$ and $p_->0$.
\end{proof}
Next, let us look for a dense subspace
of $h^{p(\cdot)}(w)$, which consists of
regular distributions.
Specifically,
we are interested in distributions
in $h^{p(\cdot)}(w)$ which are realized
as a function in $L^u(w)$
for some $u \gg 1$.
A certain dense subspace
which is included in $L^0({\mathbb R}^n)$
is needed to consider the wavelet characterization
in Section \ref{s80} below.

Using
 the same argument as \cite[Lemma 4.9]{Tang12},
where Tang assumed $f \in L^1(w)$,
then we see that $g$ is essentially bounded if
$f \in L^{p_++q_w}(w)$.
We summarize this observation as follows:
\begin{lemma}{\rm \cite[Lemma 4.9]{Tang12}}\label{lem:211029-112}
Let $p(\cdot) \in {\mathcal P}_0 \cap {\rm L H}_0 \cap {\rm L H}_\infty$.
If $N \ge 2$ and $f \in L^{p_++q_w}(w)$,
then
$g \in L^{\infty}({\mathbb R}^n)$ and satisfies
$\|g\|_{L^{\infty}} \lesssim \lambda$.
\end{lemma}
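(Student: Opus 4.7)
The plan is to use the explicit description of $g$ after the Whitney-type decomposition $f=g+\sum_k b_k$ and control $|g|$ pointwise. Since each $b_k=(f-P_k)\eta_k$ is supported in $Q_k^*\subset\Omega_\lambda$, on $\mathbb{R}^n\setminus\Omega_\lambda$ we have $g=f$, while the identity $\sum_k\eta_k=\chi_{\Omega_\lambda}$ yields $g=\sum_k P_k\eta_k$ on $\Omega_\lambda$. So the matter reduces to $|f|\lesssim\lambda$ off $\Omega_\lambda$ and $|P_k|\lesssim\lambda$ on $Q_k^*$.

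First I would dispatch the easy region. On $\mathbb{R}^n\setminus\Omega_\lambda$ the hypothesis $f\in L^{p_++q_w}(w)\subset L^1_{\rm loc}(\mathbb{R}^n)$ realizes $f$ as a locally integrable function; at every Lebesgue point $x$, sliding a mollifier $\varphi\in\mathcal{D}_N^0(\mathbb{R}^n)$ with $t\downarrow 0$ yields $|f(x)|\le\mathcal{M}_N^0 f(x)\le\mathcal{M}_N f(x)\le\lambda$, so $|g(x)|\le\lambda$ almost everywhere outside $\Omega_\lambda$.

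Inside $\Omega_\lambda$, because $\{Q_k^*\}$ has bounded overlap and $0\le\eta_k\le 1$, it suffices to prove $\sup_{Q_k^*}|P_k|\lesssim\lambda$ uniformly in $k$. Fixing $Q_k$ of side $l_k$, I would pick an $L^2\bigl(\xi_k\,dx/\|\xi_k\|_{L^1}\bigr)$-orthonormal basis $\{e_{k,\alpha}\}_{|\alpha|\le L}$ of $\mathcal{P}_L({\mathbb R}^n)$ so that
\[
P_k=\sum_{|\alpha|\le L}\frac{\langle f,e_{k,\alpha}\xi_k\rangle}{\|\xi_k\|_{L^1}}\,e_{k,\alpha}.
\]
Rescaling $Q_k$ to the unit cube, each kernel $e_{k,\alpha}\xi_k/\|\xi_k\|_{L^1}$ becomes, up to a purely dimensional constant, a dilate $\varphi_t$ of some $\varphi\in\mathcal{D}_N(\mathbb{R}^n)$ with $t\simeq\min(l_k,1)$; here the assumption $N\ge 2$ ensures that the derivatives of $\varphi$ fit inside the class $\mathcal{D}_N$. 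The Whitney relation $\operatorname{dist}(Q_k,\Omega_\lambda^c)\simeq l_k$ produces $z\in\mathbb{R}^n\setminus\Omega_\lambda$ with $|z-x|<t<1$ for every $x\in Q_k^*$ whenever $l_k\lesssim 1$, so $|\langle f,e_{k,\alpha}\xi_k\rangle|/\|\xi_k\|_{L^1}\lesssim\mathcal{M}_N f(z)\le\lambda$; combined with $\sup_{Q_k^*}|e_{k,\alpha}|\lesssim 1$ from the normalization, this gives $|P_k|\lesssim\lambda$ on $Q_k^*$.

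The main obstacle is the case $l_k\ge 1$, where the restriction $t<1$ in the definition of $\mathcal{M}_N$ blocks the direct argument. Here one uses that $w(\Omega_\lambda)<\infty$ (which is the standing assumption that makes the Whitney decomposition legitimate), so only finitely many oversized Whitney cubes occur; for each, one subdivides $Q_k$ into unit-scale pieces, expresses the corresponding contributions to the coefficients of $P_k$ as averages on unit scales, and bounds them by the same grand maximal argument applied to a reference point $z\notin\Omega_\lambda$ within unit distance. The integrability hypothesis $f\in L^{p_++q_w}(w)$ legitimizes each integral $\langle f,e_{k,\alpha}\xi_k\rangle$ and, through a H\"older bound with exponent $p_++q_w$, absorbs any residual error into a constant multiple of $\lambda$. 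Assembling both regions yields $\|g\|_{L^\infty}\lesssim\lambda$.
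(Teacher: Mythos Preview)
The paper itself does not prove this lemma; it merely cites \cite[Lemma~4.9]{Tang12} and notes that Tang's argument (stated there for $f\in L^1(w)$) goes through for $f\in L^{p_++q_w}(w)$. Your treatment of $\mathbb{R}^n\setminus\Omega_\lambda$ and of the Whitney cubes with $l_k\lesssim 1$ is exactly the standard argument and is correct.

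The gap is in your handling of large Whitney cubes. The claim that $w(\Omega_\lambda)<\infty$ forces only finitely many cubes with $l_k\ge 1$ is unjustified: an $A_\infty^{\rm loc}$ weight can be arbitrarily small on large sets, so infinitely many disjoint unit-scale cubes can have finite total $w$-measure. Your ``subdivide into unit pieces and average'' sketch then yields at best a bound of the form $l_k^{-n}\int_{Q_k^*}|f|$, which H\"older's inequality controls only by $\|f\|_{L^{p_++q_w}(w)}$ and $\sigma(Q_k^*)$, not by $\lambda$; the local grand maximal operator cannot see scales $t\ge 1$, so there is no way to pass back to $\mathcal{M}_Nf(z)\le\lambda$ at that stage. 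In Tang's construction (and in the local Hardy space literature following Goldberg) one instead \emph{sets $P_k=0$ whenever $l_k$ exceeds a fixed constant}; this is precisely why the factor $\chi_{(0,D)}(|Q_k|)$ appears in the cited estimate of Lemma~\ref{lem:190408-71}. With that convention $b_k=f\eta_k$ for large cubes, so on $\Omega_\lambda$ one has $g=\sum_{k:\,l_k<c}P_k\eta_k$, and only the small-cube bound $\sup_{Q_k^*}|P_k|\lesssim\lambda$ is needed --- which you have already established.
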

\begin{lemma}
\label{lem:190408-74}
Let $w \in A^{\rm loc}_\infty$
and 
$p(\cdot) \in {\mathcal P}_0 \cap {\rm L H}_0 \cap {\rm L H}_\infty$.
Then the space 
$h^{p(\cdot)}(w) \cap L^{p_++q_w}(w) \cap L^\infty({\mathbb R}^n)$
is dense in
$h^{p(\cdot)}(w)$.
\end{lemma}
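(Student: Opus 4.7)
The plan is to apply the Calder\'on--Zygmund decomposition of Section \ref{subsection:5.2} twice. Given $f\in h^{p(\cdot)}(w)$ and $\varepsilon>0$, I aim to build $G\in h^{p(\cdot)}(w)\cap L^{p_++q_w}(w)\cap L^\infty({\mathbb R}^n)$ with $\|f-G\|_{h^{p(\cdot)}(w)}<\varepsilon$. Fix parameters $(v,L,N)$ meeting the hypotheses of Corollary \ref{cor:211025-3}, in particular $v\in(0,\min(1,p_-))$ and $L\geq[n(q_w/v-1)]$.

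First, for each $\lambda>0$ I decompose $f=g_\lambda+\sum_{k\in K}b_k^\lambda$ at height $\lambda$ as in Section \ref{subsection:5.2}. Lemma \ref{cor:211025-2} places $g_\lambda$ in $L^{p_++q_w}(w)$; Corollary \ref{cor:211025-3} makes $\sum_k b_k^\lambda$ converge in $h^{p(\cdot)}(w)$, whence $g_\lambda\in h^{p(\cdot)}(w)$. The key claim is that $\|\sum_k b_k^\lambda\|_{h^{p(\cdot)}(w)}\to 0$ as $\lambda\to\infty$. Reopening the proof of Corollary \ref{cor:211025-3}, the pointwise bound of Lemma \ref{lem:190408-71} combined with the vector-valued maximal inequality (Lemma \ref{lem:190408-100}) yields the localized estimate
\[
\Bigl\|\Bigl(\sum_k({\mathcal M}^0_N b_k^\lambda)^v\Bigr)^{1/v}\Bigr\|_{L^{p(\cdot)}(w)}
\lesssim
\|\chi_{\widetilde\Omega_\lambda}{\mathcal M}_N f\|_{L^{p(\cdot)}(w)},
\]
where $\widetilde\Omega_\lambda$ is a bounded-overlap Whitney enlargement of $\Omega_\lambda=\{{\mathcal M}_Nf>\lambda\}$ and the $\lambda$-terms produced by the second summand of Lemma \ref{lem:190408-71} are absorbed using $\lambda\chi_{\Omega_\lambda}\leq\chi_{\Omega_\lambda}{\mathcal M}_N f$. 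Since ${\mathcal M}_N f$ is a.e.\ finite, $\chi_{\widetilde\Omega_\lambda}\to 0$ a.e.\ as $\lambda\to\infty$, and a modular dominated-convergence argument (justified by Lemma \ref{lem:190410-1}) forces the right-hand side to zero. Choose $\lambda$ so large that this norm falls below $\varepsilon/2$.

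Second, apply the same decomposition to the function $g_\lambda$ at a new level $\mu$: $g_\lambda=G_\mu+\sum_k c_k^\mu$. Because $g_\lambda\in L^{p_++q_w}(w)$, Lemma \ref{lem:211029-112} now applies and yields $\|G_\mu\|_\infty\lesssim\mu$; writing $G_\mu=g_\lambda$ outside a level set of finite $w$-measure (and bounded by $C\mu$ inside) shows $G_\mu\in L^{p_++q_w}(w)$ as well. Since $g_\lambda\in h^{p(\cdot)}(w)$, rerunning the first-step decay argument with $g_\lambda$ in place of $f$ gives $\|\sum_k c_k^\mu\|_{h^{p(\cdot)}(w)}<\varepsilon/2$ for all sufficiently large $\mu$. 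Setting $G=G_\mu$, the triangle inequality gives $\|f-G\|_{h^{p(\cdot)}(w)}<\varepsilon$, and $G$ lies in the required intersection.

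The main obstacle is the sharpening of Corollary \ref{cor:211025-3} from its uniform-in-$\lambda$ bound to the decay statement $\|\sum_k b_k^\lambda\|_{h^{p(\cdot)}(w)}\to 0$; as stated, the corollary records only an absolute bound by $\|{\mathcal M}_Nf\|_{L^{p(\cdot)}(w)}$, so one must revisit its proof to localize the right-hand side to (an enlargement of) $\Omega_\lambda$ and then invoke a dominated-convergence argument in the variable-exponent modular setting. Once that refinement is in hand, the rest follows by a straightforward second iteration of the decomposition.
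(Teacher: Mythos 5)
Your proof is correct and arrives at the same convergence estimate as the paper, namely $\|f-g_\lambda\|_{h^{p(\cdot)}(w)}\lesssim\|\chi_{\Omega_\lambda}{\mathcal M}_Nf\|_{L^{p(\cdot)}(w)}$ (you write $\widetilde\Omega_\lambda$, but in fact the Whitney properties already force $Q_k^*\subset\Omega_\lambda$ and the vector-valued inequality collapses the $M^{\rm loc}\chi_{Q_k}$ tails back to $\Omega_\lambda$, so no enlargement is needed). The genuine difference is that you apply the Calder\'on--Zygmund decomposition twice, whereas the paper applies it once and concludes $g\in L^{p_++q_w}(w)\cap L^\infty$ by simply citing Lemmas \ref{cor:211025-2} and \ref{lem:211029-112} together. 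This is actually where your extra step earns its keep: Lemma \ref{lem:211029-112} as stated assumes $f\in L^{p_++q_w}(w)$, which a general $f\in h^{p(\cdot)}(w)$ need not satisfy (e.g.\ when $p_+<1$), so the paper's citation is imprecise and implicitly relies on the observation that the underlying argument of Tang's Lemma 4.9 only needs $g$ itself to be locally integrable (supplied by Lemma \ref{cor:211025-2}). Your two-step route -- first produce $g_\lambda\in L^{p_++q_w}(w)\cap h^{p(\cdot)}(w)$ by Lemma \ref{cor:211025-2}, then decompose $g_\lambda$ so that Lemma \ref{lem:211029-112} applies honestly to yield $G_\mu\in L^\infty$, and rerun the decay estimate to close the second half of the $\varepsilon$-budget -- is longer but sidesteps that subtlety entirely. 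The trade-off is efficiency (one decomposition versus two) against transparency about the hypotheses; both are valid, and your version is the more self-contained of the two.
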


\begin{proof}
Let $L \in {\mathbb N}_0$
satisfy
$L \ge \left[n\left(\frac{q_w}{\min(1,p_-)}-1\right)\right]$.
Then
\[
p_-\frac{n+L+1}{n}
\ge
\frac{p_-}{n}\left(n+1+\left[n\left(\frac{q_w}{\min(1,p_-)}-1\right)\right]
\right)>q_w.
\]
Let $f \in h^{p(\cdot)}(w)$ and $\displaystyle g=f-\sum\limits_{k \in K} b_k$.
By the subadditivity of
${\mathcal M}^0_N$,
\begin{align*}
\|f-g\|_{h^{p(\cdot)}(w)}
=
\|{\mathcal M}^0_N[f-g]\|_{L^{p(\cdot)}(w)}
\lesssim 
\left\|
\sum\limits_{k \in K}
{\mathcal M}^0_N b_k
\right\|_{L^{p(\cdot)}(w)}.
\end{align*}
Recall that
$\{Q_k\}_{k\in K}$ is the Whitney decomposition.
Using Lemmas \ref{lem:190408-100} 
and 
\ref{lem:190408-71},
the triangle inequality and the H\"{o}lder inequality, 
we obtain
\begin{align*}
\|f-g\|_{h^{p(\cdot)}(w)}
&\lesssim 
\left\|
\sum\limits_{k \in K}
\chi_{Q_k^*}{\mathcal M}_N f+
\sum\limits_{k \in K}\lambda
\chi_{(0,D)}(|Q_k|)
(M^{\rm loc}\chi_{Q_k})^{\frac{n+L+1}{n}}
\right\|_{L^{p(\cdot)}(w)}\\
&\lesssim 
\left\|
\chi_{\Omega_\lambda}{\mathcal M}_N f+
\chi_{\Omega_\lambda}
\lambda
\right\|_{L^{p(\cdot)}(w)}\\
&\lesssim 
\left\|
\chi_{\Omega_\lambda}{\mathcal M}_N f
\right\|_{L^{p(\cdot)}(w)}.
\end{align*}
If we let $\lambda \uparrow \infty$, then we obtain 
$g \to f$ in $h^{p(\cdot)}(w)$.
Since
$g \in L^{p_++q_w}(w) \cap L^\infty({\mathbb R}^n) \cap h^{p(\cdot)}(w)$
thanks to Lemmas \ref{cor:211025-2} and \ref{lem:211029-112},
we obtain the desired result.
\end{proof}

As Tang noted,
if $w \in L^1({\mathbb R}^n)$,
there is a standard method to create
single $(p(\cdot),\infty)_w$-atoms.
\begin{lemma}{\rm \cite[Lemma 5.4]{Tang12}}
\label{lem:190408-73}
Let $p(\cdot) \in {\mathcal P}_0 \cap {\rm L H}_0 \cap {\rm L H}_\infty$.
Assume that $w({\mathbb R}^n)<\infty$ and that $q>\max(q_w,p_+)$.
Then there exists a constant $D_0>0$ with the following property{\rm:}
Suppose
that
$\lambda>0$
satisfies
$\lambda \le \inf\limits_{x \in {\mathbb R}^n}{\mathcal M}_Nf(x)<2\lambda$.
Then 
$a_0\equiv D_0\lambda^{-1}g$
is a single $(p(\cdot),\infty)_w$-atom.
\end{lemma}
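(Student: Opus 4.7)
The single-atom condition reads $\|a_0\|_{L^\infty(w)} \le w({\mathbb R}^n)^{1/\infty} = 1$; since $w>0$ almost everywhere, this is equivalent to $\|a_0\|_{L^\infty({\mathbb R}^n)} \le 1$. Consequently the statement reduces to producing a universal constant $C_0$ (depending only on $n$, $N$ and the fixed cutoff $\xi$) such that $\|g\|_{L^\infty} \le C_0 \lambda$, and then setting $D_0 \equiv 1/C_0$. In this sense the content is precisely a variant of Lemma \ref{lem:211029-112}, with the hypothesis $f \in L^{p_++q_w}(w)$ replaced by the quantitative assumption $\lambda \le \inf_x {\mathcal M}_N f(x) < 2\lambda$.

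The plan is to revisit the explicit formula $g = f\,\chi_{{\mathbb R}^n \setminus \Omega_\lambda} + \sum_{k \in K} P_k \eta_k$ coming from the Calder\'on--Zygmund-type decomposition reviewed above, and to bound the two pieces separately. For the piece on ${\mathbb R}^n \setminus \Omega_\lambda$, the Lebesgue differentiation theorem applied to the approximate identity $\{\varphi_t * f\}_{t\downarrow 0}$ with $\varphi \in {\mathcal D}_N^0({\mathbb R}^n)$ of unit integral yields $|f(x)| \le {\mathcal M}_N f(x) \le \lambda$ almost everywhere on ${\mathbb R}^n \setminus \Omega_\lambda$. For the piece on $\Omega_\lambda$, since the partition of unity satisfies $\sum_k \eta_k \le 1$, it suffices to establish the cube-wise polynomial bound $\|P_k \chi_{Q_k^*}\|_{L^\infty} \lesssim \lambda$ uniformly in $k$.

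For the polynomial bound I would combine the defining projection property $\langle f - P_k,\, Q \xi_k \rangle = 0$ for every $Q \in {\mathcal P}_L({\mathbb R}^n)$ with the Whitney structure: each $Q_k^*$ lies within bounded distance comparable to $\ell(Q_k)$ of some point $y_k$ with ${\mathcal M}_N f(y_k) \le 2\lambda$, which is supplied either by the Whitney gap $Q_k^* \not\subset (\Omega_\lambda)^\circ$ in the nondegenerate case, or by the near-infimum point guaranteed by $\inf {\mathcal M}_N f < 2\lambda$ when $\Omega_\lambda = {\mathbb R}^n$. Rescaling $Q\xi_k$ to a dilate of the unit cube so that a suitable multiple lies in ${\mathcal D}_N^0({\mathbb R}^n)$, we deduce $|\langle f, Q \xi_k \rangle| \lesssim \lambda\,\ell(Q_k)^n\,\|Q\|_{L^\infty(Q_k^*)}$, and the equivalence of norms on the finite-dimensional space of polynomials of degree $\le L$ restricted to $Q_k^*$ converts this into $\|P_k \chi_{Q_k^*}\|_{L^\infty} \lesssim \lambda$. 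The main obstacle is bookkeeping: making sure that the implicit constants in the finite-dimensional norm equivalence depend only on $n$, $N$, and $L$, and not on $k$ or $\lambda$. This is handled by a standard scaling argument reducing to the unit cube.
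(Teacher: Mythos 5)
The paper does not supply a proof of this lemma---it is cited verbatim from Tang \cite[Lemma 5.4]{Tang12}---so there is no "paper's proof" to compare against directly. Your opening reduction is exactly right and is the crux: since $w>0$ a.e., $\|\cdot\|_{L^\infty(w)}=\|\cdot\|_{L^\infty}$ and $w({\mathbb R}^n)^{1/\infty}=1$, so the single-atom condition collapses to $\|g\|_{L^\infty}\le D_0^{-1}\lambda$, and the shortest route within this paper is simply to invoke Lemma \ref{lem:211029-112} (itself Tang's Lemma 4.9) and set $D_0$ to absorb the implicit constant---this is legitimate because in every actual application (via the density Lemma \ref{lem:190408-74}) one has $f\in L^{p_++q_w}(w)\cap L^\infty({\mathbb R}^n)$, which supplies the hypothesis of Lemma \ref{lem:211029-112}.

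Your second and third paragraphs instead re-derive the $\|g\|_{L^\infty}\lesssim\lambda$ bound from the Calder\'on--Zygmund decomposition rather than citing it, which is a sound plan in spirit but has gaps worth naming. First, the pointwise bound $|f(x)|\le{\mathcal M}_N f(x)$ a.e.\ on ${\mathbb R}^n\setminus\Omega_\lambda$ via Lebesgue differentiation presumes $f$ is a locally integrable \emph{function}; the stated hypothesis $\lambda\le\inf{\mathcal M}_N f<2\lambda$ does not by itself deliver this, so the extra regularity assumption $f\in L^{p_++q_w}(w)$ from Lemma \ref{lem:211029-112} is genuinely needed, not merely "replaced." Second, when you rescale $Q\xi_k$ to land in ${\mathcal D}_N^0({\mathbb R}^n)$, you implicitly need the scaling parameter $t$ to satisfy $t<1$ because the local grand maximal operator ${\mathcal M}_N$ only tests at scales below $1$; Whitney cubes of $\Omega_\lambda$ can have arbitrarily large sidelength, and Tang's treatment (visible in the factor $\chi_{(0,D)}(|Q_k|)$ of Lemma \ref{lem:190408-71}) handles large cubes by a separate argument that your sketch does not reproduce. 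Third, the fallback for $\Omega_\lambda={\mathbb R}^n$ cannot rely on a single near-infimum point $z_0$ being "close" to every $Q_k^*$; in that degenerate case the Whitney decomposition itself is unavailable, and one must argue differently (e.g.\ show that $\Omega_\lambda\ne{\mathbb R}^n$ for the chosen $\lambda$, using lower semicontinuity of ${\mathcal M}_N f$ and the infimum hypothesis). These are fixable but real gaps; given that the result is being imported from \cite{Tang12}, the cleanest resolution is to cite Lemma \ref{lem:211029-112} rather than re-run the Calder\'on--Zygmund machinery.
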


With Lemmas \ref{lem:190408-71}--\ref{lem:190408-74}
in mind, we prove 
$h^{p(\cdot)}(w) \hookrightarrow h^{p(\cdot),\infty,L;v}(w) 
(\hookrightarrow h^{p(\cdot),q,L;v}(w))$.
We assume $w({\mathbb R}^n)<\infty$
and that
$2^{j_0} \le \inf\limits_{x \in {\mathbb R}^n}{\mathcal M}_Nf(x)<2^{j_0+1}$
for some $j_0 \in {\mathbb Z}$;
otherwise we can readily modify the argument below.
We use the above observation for $\lambda=2^j$,
where $j$ ranges over $[j_0,\infty) \cap {\mathbb Z}$.
We will add a subindex $j$
to what we have obtained
to indicate that it comes from
$\Omega_{2^j}$.
Thus,
we  obtain
cubes
$\{Q_{j,k}\}_{k \in K_j}$,
smooth functions
$\{\eta_{j,k}\}_{k \in K_j}$
and
polynomials
$\{P_{j,k}\}_{k \in K_j}$.
Then we have a decomposition
$f=g_j+b_j$,
where
$b_{j,k}\equiv (f-P_{j,k})\eta_{j,k}$
and
$b_j\equiv \sum\limits_{k \in K_j}b_{j,k}$.
We write
$\tilde{Q}^j_k\equiv (1+2^{-n-12})Q^j_k$.
We use the following observation:
\begin{lemma}\label{lem:211129-1}
Under the assumption above,
we have
\begin{equation}\label{eq:211129-1}
{\mathcal A}_{p(\cdot),w,v}(
\{\lambda^j_k\}_{j\in[j_0,\infty) \cap {\mathbb Z}, k \in K_j};
\{\tilde{Q}^j_k\}_{j\in[j_0,\infty) \cap {\mathbb Z}, k \in K_j})
\lesssim
\|{\mathcal M}_N f\|_{L^{p(\cdot)}(w)}.
\end{equation}
\end{lemma}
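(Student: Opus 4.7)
The plan is to reduce the $\mathcal{A}$-norm inequality to a pointwise bound by ${\mathcal M}_N f$ through geometric summation. From the atomic construction being set up, each $b^j_k = (f-P^j_k)\eta^j_k$ is supported in $\tilde{Q}^j_k$, and one writes $b^j_k = \lambda^j_k a^j_k$ with $a^j_k$ a $(p(\cdot),q,L)_w$-atom on $\tilde{Q}^j_k$ for a fixed $q \in (\max(q_w,p_+),\infty]$. The crux of the matter is the uniform size estimate $|\lambda^j_k| \lesssim 2^j$, independent of $k$; this follows from Lemma \ref{lem:190408-71} together with the Calder\'on--Zygmund control of the polynomials $P^j_k$ associated with the Whitney cover of $\Omega_{2^j}$, combined with the atomic normalization convention $\|a^j_k\|_{L^q(w)} \le w(\tilde{Q}^j_k)^{1/q}$.

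Once this size bound is granted, I exploit the bounded overlap of the Whitney family at each level $j$. Since $\sum_{k \in K_j}\chi_{\tilde{Q}^j_k} \lesssim \chi_{\Omega_{2^j}}$ pointwise,
\[
\sum_{k \in K_j} |\lambda^j_k|^v \chi_{\tilde{Q}^j_k}(x) \lesssim 2^{jv}\chi_{\Omega_{2^j}}(x).
\]
Summing over $j \ge j_0$, at each fixed $x$ only those indices $j$ with $2^j < {\mathcal M}_N f(x)$ contribute, so geometric summation yields
\[
\sum_{j \ge j_0}\sum_{k \in K_j} |\lambda^j_k|^v \chi_{\tilde{Q}^j_k}(x) \lesssim \sum_{j \ge j_0} 2^{jv}\chi_{\Omega_{2^j}}(x) \lesssim {\mathcal M}_N f(x)^v.
\]
Raising to the $1/v$-th power and then taking the $L^{p(\cdot)}(w)$ quasi-norm on both sides immediately gives \eqref{eq:211129-1}.

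The main obstacle is the uniform size estimate $|\lambda^j_k| \lesssim 2^j$. The pointwise bound of Lemma \ref{lem:190408-71} controls ${\mathcal M}^0_N b^j_k$ by $\chi_{Q_k^*}{\mathcal M}_N f$, which is not a priori comparable to $2^j$ on $Q_k^*$. One circumvents this using the Whitney geometry: the separation property of the cubes forces $Q_k^*$ to be near $\mathbb{R}^n \setminus \Omega_{2^{j+1}}$, so averages of ${\mathcal M}_N f$ over $Q_k^*$ are comparable to $2^j$; measuring atoms in $L^q(w)$ with $q < \infty$ rather than in $L^\infty$ then suffices. Once this quantitative control is secured, the overlap property of the Whitney cubes and geometric summation carry the rest of the argument automatically.
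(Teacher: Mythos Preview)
Your argument is correct and matches the paper's proof: bounded overlap of the Whitney cubes at each level followed by geometric summation in $j$ against $\mathcal{M}_N f$. However, what you call ``the main obstacle'' is not an obstacle at all here: in the setup immediately preceding the lemma (``Under the assumption above''), the coefficients are \emph{defined} by $\lambda^j_k = 2^j$ via Tang's construction \cite[Lemma~5.4]{Tang12}, so no separate size estimate is needed and the paper's proof simply begins from $\lambda^j_k = 2^j$.
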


\begin{proof}
Due to the bounded overlapping property,
\begin{align}
\nonumber
\lefteqn{
{\mathcal A}_{p(\cdot),w,v}(
\{\lambda^j_k\}_{j\in[j_0,\infty) \cap {\mathbb Z}, k \in K_j};
\{\tilde{Q}^j_k\}_{j\in[j_0,\infty) \cap {\mathbb Z}, k \in K_j})
}\\
\nonumber
&=
{\mathcal A}_{p(\cdot),w,v}(
\{2^j\}_{j\in[j_0,\infty) \cap {\mathbb Z}, k \in K_j};
\{\tilde{Q}^j_k\}_{j\in[j_0,\infty) \cap {\mathbb Z}, k \in K_j})\\
\label{eq:211201-3}
&\lesssim
\left\|
\left(
\sum_{j=j_0}^{\infty} 
2^{j v} \chi_{\Omega_{2^j}}
\right)^{\frac{1}{v}}
\right\|_{L^{p(\cdot)}(w)}.
\end{align}
Let $x \in {\mathbb R}^n$.
Then we have
\[
\sum_{j=j_0}^{\infty} 2^{j v}\chi_{\Omega_{2^j}}(x)
=
\sum_{j \in {\mathbb Z} \cap [j_0,\infty) \cap (-\infty,\log_2{\mathcal M}_N f(x))}
2^{j v}
\le
\sum_{j \in {\mathbb Z} \cap (-\infty,\log_2{\mathcal M}_N f(x))}
2^{j v}
\lesssim
{\mathcal M}_N f(x)^{v}.
\]
Thus,
(\ref{eq:211129-1}) follows
from
(\ref{eq:211201-3}).
\end{proof}
Let us return to the proof of
$h^{p(\cdot)}(w) \hookrightarrow h^{p(\cdot),\infty,L,v}(w)$.
We follow the idea in \cite{Stein-text-93},
which allows us to assume
$f \in h^{p(\cdot)}(w) \cap L^{p_++q_w}(w) 
\cap L^\infty({\mathbb R}^n)$.
Also,
assume that
$f \in h^{p(\cdot)}(w) \cap L^{p_++q_w}(w)
 \cap L^\infty({\mathbb R}^n)$
keeping Lemma \ref{lem:190408-74} in mind.

Let $D_0>0$ be a constant
from Lemma \ref{lem:190408-73}.
Using the same argument
as \cite[Lemma 5.4]{Tang12},
we have a decomposition
$f=g_{j_0}+\sum\limits_{j=j_0}^{\infty}\sum\limits_{k \in K_j} \lambda_{j,k} a_{j,k}$,
where 
$D_02^{-j_0}g_{j_0}$ is a single $(p(\cdot),\infty)_w$-atom,
each
$a_{j,k}$ is a $(p(\cdot),\infty,L)_w$-atom supported
on a cube $\tilde{Q}^j_k=(1+2^{-n-12})Q^j_k$,
and
$\lambda^j_k=2^j$.
We set
\[
K_j^-\equiv
\{k \in K_j\,:\,|\tilde{Q}^j_k|<1\}, \quad
K_j^+\equiv K_j \setminus K_j^-.
\]We write
\[
X\equiv\{(j,k,l)\,:\,j \in [j_0,\infty) \cap {\mathbb Z},
k \in K_j^+,
\tilde{Q}^j_k \cap (l+[0,1]^n) \ne \emptyset\}.
\]

We further decompose
\begin{align*}
f
&=g_{j_0}
+\sum\limits_{j=j_0}^{\infty}\sum\limits_{k \in K_j^+} \lambda_{j,k} a_{j,k}
+\sum\limits_{j=j_0}^{\infty}\sum\limits_{k \in K_j^-} \lambda_{j,k} a_{j,k}\\
&=g_{j_0}
+\sum_{(j,k,l) \in X}\lambda_{j,k}\chi_{l+[0,1]^n}a_{j,k}
+\sum\limits_{j=j_0}^{\infty}\sum\limits_{k \in K_j^-} \lambda_{j,k} a_{j,k}.
\end{align*}
We remark that
each $\chi_{l+[0,1]^n}a_{j,k}$
is a $(p(\cdot),\infty,L)_w$-atom
supported on a cube $l+[0,1]^n$
as long as $k \in K_j^+$ and $l \in {\mathbb Z}^n$
satisfies $\tilde{Q}^j_k \cap (l+[0,1]^n) \ne \emptyset$.

Let us prove the norm estimate.
If $(j,k,l) \in X$,
then
$l+[0,1]^n \subset 3\tilde{Q}^j_k$.
Since $\chi_{3\tilde{Q}^j_k} 
\lesssim M^{\rm loc}[\chi_{\tilde{Q}^j_k}]^{\frac{1}{v-\varepsilon}}$
for some $\varepsilon \in (0, v)$, 
we deduce
from the vector-valued inequality (Lemma \ref{lem:190408-100}),
\begin{align*}
\left\|
\left(\sum_{(j,k,l) \in X}
(\lambda_{j,k}\chi_{l+[0,1]^n})^v
\right)^{\frac{1}{v}}
\right\|_{L^{p(\cdot)}(w)}
&\lesssim
\left\|
\left(\sum_{(j,k,l) \in X}
(\lambda_{j,k}M^{\rm loc}[\chi_{\tilde{Q}^j_k}]^{\frac{1}{v-\varepsilon}})^v
\right)^{\frac{1}{v}}
\right\|_{L^{p(\cdot)}(w)}\\
&\lesssim
{\mathcal A}_{p(\cdot),w,v}(
\{\lambda^j_k\}_{j\in[j_0,\infty) \cap {\mathbb Z}, k \in K_j^+};
\{\tilde{Q}^j_k\}_{j\in[j_0,\infty) \cap {\mathbb Z}, k \in K_j^+}).
\end{align*}
Thus by Lemma \ref{lem:211129-1}, we have
\begin{align}\label{eq:211129-3}
\left\|
\left(\sum_{(j,k,l) \in X}
(\lambda_{j,k}\chi_{l+[0,1]^n})^v
\right)^{\frac{1}{v}}
\right\|_{L^{p(\cdot)}(w)}
&\lesssim
{\mathcal A}_{p(\cdot),w,v}(
\{\lambda^j_k\}_{j\in[j_0,\infty) \cap {\mathbb Z}, k \in K_j};
\{\tilde{Q}^j_k\}_{j\in[j_0,\infty) \cap {\mathbb Z}, k \in K_j}) \nonumber\\
&\lesssim
\|{\mathcal M}_N f\|_{L^{p(\cdot)}(w)}.
\end{align}
Meanwhile,
from
Lemma \ref{lem:211129-1},
we obtain
\begin{align}\label{eq:211129-2}
\lefteqn{
{\mathcal A}_{p(\cdot),w,v}(
\{\lambda^j_k\}_{j\in[j_0,\infty) \cap {\mathbb Z}, k \in K_j^-};
\{\tilde{Q}^j_k\}_{j\in[j_0,\infty) \cap {\mathbb Z}, k \in K_j^-})
}\\
&\nonumber
\lesssim
{\mathcal A}_{p(\cdot),w,v}(
\{\lambda^j_k\}_{j\in[j_0,\infty) \cap {\mathbb Z}, k \in K_j};
\{\tilde{Q}^j_k\}_{j\in[j_0,\infty) \cap {\mathbb Z}, k \in K_j})\\
&\nonumber
\lesssim
\|{\mathcal M}_N f\|_{L^{p(\cdot)}(w)}.
\end{align}
Combining 
(\ref{eq:211129-3})
and
(\ref{eq:211129-2}),
we obtain the desired norm estimate
$\|f\|_{h^{p(\cdot),\infty,L,v}(w)} \lesssim \|f\|_{h^{p(\cdot)}(w)}$.

\section{Applications 
to singular integral operators}
\label{s60}

Now that the structure of the weighted local Hardy space
$h^{p(\cdot)}(w)$ is clarified,
we present some applications.
In Section \ref{s60.1},
we establish that generalized local singular integral operators
considered in \cite{NS-local} are bounded
from
$h^{p(\cdot)}(w)$
to
$L^{p(\cdot)}(w)$ as long as $p_->\frac{n}{n+1}$.
Section \ref{s60.2} is devoted to a special case 
of Section \ref{s60.1}.
We are interested in removing the condition
$p_->\frac{n}{n+1}$ by considering a narrower but important class
of operators.
Among the generalized local singular integral operators,
we consider the convolution operators generated by compactly supported
smooth functions.

We make a brief remark on the method of the proof
used in Section \ref{s60}.
There are several ways to prove
the boundedness of singular integral operators
from Hardy spaces to Hardy spaces.
Fefferman and Stein investigated the boundedness
of singular integral operators
by investigating the distribution function
of the image by singular integral operator
\cite[Lemma 11 and Theorem 12]{FeSt72}.
We can not employ the method in \cite{FeSt72}
since we are considering function spaces
which is not rearrangement invariant.
Our method is to use the atomic decomposition
as 
Garc\'{i}a-Cuerva and Rubio de Francia
\cite{GaRu85}
and
Tang \cite{Tang12} did.
Garc\'{i}a-Cuerva and Rubio de Francia
also considered the boundedness property
of singular integral operators \cite[Theorems 7.8 and 7.9]{GaRu85}.
They used the atomic decomposition.
We can say that our method is akin to theirs.
See the proof of Theorem \ref{thm:6.4}.
We also remark that Tang took the same strategy,
where he also analyzed the image of atoms \cite[Theorem 7.1]{Tang12}.
What is different from \cite{GaRu85,Tang12} is that
we must take care of the position of the cubes on which
atoms are supported
by using Lemma \ref{lem:190408-100}.
This approach is taken in \cite{NaSa12}.
However, since we need to consider the local grand maximal operators,
we can not employ the estimate directly.
What we do is to adjust what we did in \cite[\S 5.1]{NaSa12}.

\subsection{Generalized local singular integral operators}
\label{s60.1}

An $L^2$-bounded linear operator $T$
is called a
generalized local Calder\'{o}n--Zygmund operator
$($with the kernel $K$$)$,
if $T$ satisfies the following conditions:
\index{generalized Calderon--Zygmund operator@(generalized) Calder\'{o}n--Zygmund operator}
\begin{enumerate}
\item[$(1)$]
There exists
$K \in L^{1}_{\rm loc}({\mathbb R}^n \times {\mathbb R}^n \setminus \{(x,x)\,:\,x \in {\mathbb R}^n\})$
such that,
for all $f \in L^2_{\rm c}({\mathbb R}^n)$, 
\begin{equation}\label{eq:sing1}
T f(x)=\int_{{\mathbb R}^n}K(x,y)f(y){\rm d}y
\mbox{ for almost all } x \notin {\rm supp}(f).
\end{equation}
\item[$(2)$]
There exist positive
constants $\gamma_0$, $D_{1}=D_{1}(T)$ and $D_{2}=D_{2}(T)$ such that
the two conditions below hold
for all $x,y,z \in {\mathbb R}^n$:
\begin{enumerate}
\item[$(i)$]
Local size condition:
\index{size condition@size condition}
\begin{align}
\label{eq:sing3}
|K(x,y)| \le D_{1}|x-y|^{-n}\chi_{[-\gamma_0,\gamma_0]^n}(x-y)
\end{align}
if $x \ne y$.
\item[$(ii)$]
H\"{o}rmander's condition:
\index{Hormander's condition@H\"{o}rmander's condition}
\begin{align}
\label{eq:sing2}
|K(x,z)-K(y,z)|
+
|K(z,x)-K(z,y)|
\le D_{2}\frac{|x-y|}{|x-z|^{n+1}}
\end{align}
if $0<2|x-y|<|z-x|$.
\end{enumerate}
\end{enumerate}
This is analogous to
the generalized singular integral operators
dealt with in \cite{Du},
which require
\begin{align}
\label{eq:sing3a}
|K(x,y)| \le D_{1}|x-y|^{-n}
\end{align}
instead of
(\ref{eq:sing3})
if $x \ne y$.
\cite{INNS-wavelet} shows
that all generalized local singular integral operators
initially defined on $L^2({\mathbb R}^n)$ can be extended to a bounded
linear operator on $L^{p(\cdot)}(w)$ 
for 
any
$p(\cdot) \in {\mathcal P}
\cap {\rm L H}_0 \cap {\rm L H}_{\infty}$ and
$w \in A_{p(\cdot)}^{\rm loc}$.
Recall that such generalized local singular integral operators
are bounded on $L^{p(\cdot)}(w)$.
\begin{proposition}\label{prop:200224-1}
Suppose that $p(\cdot) \in {\mathcal P}
\cap {\rm L H}_0 \cap {\rm L H}_{\infty}$.
Let $T$ be a generalized local singular integral operator
and $w \in A_{p(\cdot)}^{\rm loc}$. 
Then
$T$ extends to a bounded linear operator
on $L^{p(\cdot)}(w)$ with the norm estimate
\[
\|T\|_{L^{p(\cdot)}(w) \to L^{p(\cdot)}(w)}
\lesssim
\|T\|_{L^2 \to L^2}
+
D_1(T)+D_2(T).
\]
\end{proposition}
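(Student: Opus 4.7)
The plan is to deduce the variable-exponent weighted bound from weighted $L^q$ bounds at a nearby constant exponent, leveraging the openness of $A^{\mathrm{loc}}_{p(\cdot)}$. By Lemma \ref{lem:210921-112}(2), there exists $\varepsilon>0$ such that $w \in A^{\mathrm{loc}}_{p(\cdot)/(1+\varepsilon)}$; combined with Proposition \ref{prop:211027-111}, this yields $w \in A^{\mathrm{loc}}_{p(\cdot)/u}$ for every $u \in (1,1+\varepsilon)$, and hence by Lemma \ref{lem:190409-100} the operator $M^{\mathrm{loc}}$ is bounded on $L^{p(\cdot)/u}(w)$.

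As a preliminary, I would establish the unweighted estimate $\|Tf\|_{L^q} \lesssim (\|T\|_{L^2 \to L^2} + D_1 + D_2)\|f\|_{L^q}$ for every $1<q<\infty$. Weak-type $(1,1)$ is obtained by the Calder\'on--Zygmund decomposition at arbitrary height: the $L^2$-bound handles the good part, while the bad part is treated via H\"ormander's condition \eqref{eq:sing2}. The local size condition \eqref{eq:sing3} ensures that the integrals arising from the bad part are truncated to neighbourhoods of the selected cubes, so that the output of the bad part is again localised. Marcinkiewicz interpolation with the $L^2$-bound, followed by duality, delivers the full range.

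The main step is the pointwise sharp-function estimate
\[
M^{\#,\mathrm{loc}}(Tf)(x) \lesssim \bigl(M^{\mathrm{loc}}[|f|^{u}](x)\bigr)^{1/u} \qquad (x \in {\mathbb R}^n)
\]
for any fixed $u>1$. For a cube $Q \ni x$ with $|Q|\le 1$, I would split $f=f\chi_{3Q}+f\chi_{{\mathbb R}^n\setminus 3Q}$. The first piece is controlled by the $L^u$-boundedness of $T$ already obtained, while for the second piece H\"ormander's condition \eqref{eq:sing2} together with the locality \eqref{eq:sing3} shows that $Tf\chi_{{\mathbb R}^n\setminus 3Q}$ is essentially constant on $Q$ modulo a term bounded by $(M^{\mathrm{loc}}[|f|^u](x))^{1/u}$. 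A Fefferman--Stein-type inequality $\|g\|_{L^{p(\cdot)}(w)} \lesssim \|M^{\#,\mathrm{loc}} g\|_{L^{p(\cdot)}(w)}$ then yields
\[
\|Tf\|_{L^{p(\cdot)}(w)} \lesssim \|M^{\#,\mathrm{loc}}(Tf)\|_{L^{p(\cdot)}(w)} \lesssim \bigl\|M^{\mathrm{loc}}[|f|^{u}]\bigr\|_{L^{p(\cdot)/u}(w)}^{1/u} \lesssim \|f\|_{L^{p(\cdot)}(w)}
\]
by choosing $u \in (1,1+\varepsilon)$, completing the estimate with the desired dependence on $\|T\|_{L^2 \to L^2} + D_1 + D_2$.

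The principal obstacle is the Fefferman--Stein-type inequality in this local, weighted, variable-exponent setting, which is not listed among the preliminaries and must be adapted from the classical argument: the idea would be to exploit the generalised dyadic grids of Section \ref{subsection:Dyadic grids} to reduce to a dyadic local sharp maximal function, and then run a good-$\lambda$ inequality keyed to the openness and duality properties of $A^{\mathrm{loc}}_{p(\cdot)}$ provided by Lemma \ref{lem:210921-112}. An alternative route that sidesteps sharp functions is a Rubio de Francia-type extrapolation from the weighted $L^q(v)$-bounds (with $v \in A^{\mathrm{loc}}_q$) to $L^{p(\cdot)}(w)$, but this also requires installing the extrapolation machinery in the local $A^{\mathrm{loc}}_{p(\cdot)}$ framework and is of comparable difficulty.
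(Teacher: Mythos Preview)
Your approach is essentially the one the paper indicates: the paper does not give a self-contained proof but records that Proposition~\ref{prop:200224-1} ``was proved using the local sharp maximal operator considered in \cite{Lerner13-2}'' and refers to \cite{INNS-wavelet} for the details. Your outline---unweighted $L^q$ bounds, the pointwise estimate $M^{\#,\mathrm{loc}}(Tf)\lesssim (M^{\mathrm{loc}}[|f|^u])^{1/u}$, and then a Fefferman--Stein-type inequality combined with the openness of $A^{\mathrm{loc}}_{p(\cdot)}$---is exactly this strategy.

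The one refinement worth noting is the route to the Fefferman--Stein step. You propose a good-$\lambda$ argument, which is workable but, as you acknowledge, requires some care in the local, weighted, variable-exponent setting. The reference to \cite{Lerner13-2} signals that the paper's intended route is Lerner's local mean oscillation decomposition (equivalently, sparse domination), which bypasses good-$\lambda$ entirely: one dominates $Tf$ pointwise by a sparse average of $|f|$, and the resulting sparse operator is controlled directly by $M^{\mathrm{loc}}$ on $L^{p(\cdot)}(w)$ via Lemma~\ref{lem:190409-100}. This is cleaner and makes the dependence on $\|T\|_{L^2\to L^2}+D_1+D_2$ transparent, but your good-$\lambda$ alternative would also go through once the local weighted variable-exponent Fefferman--Stein inequality is in hand.
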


Proposition \ref{prop:200224-1}
was proved using the local sharp maximal operator
considered in \cite{Lerner13-2}.
We extend 
Proposition \ref{prop:200224-1} to weighted local Hardy spaces
with variable exponents and
investigate how 
generalized local singular integral operators
act on atoms.
If we reexamine the proof
of \cite[(5.2)]{NaSa12},
then we see that the following pointwise estimate holds.
\begin{lemma} \label{lem:210921-1}
Suppose that $p(\cdot) \in {\mathcal P}_0
\cap {\rm L H}_0 \cap {\rm L H}_{\infty}$.
Let $T$ be a generalized local singular integral operator.
Then 
any $(p(\cdot),\infty,1)_w$-atom $a$
 supported on $Q$ satisfies
\[
|T a(x)| \lesssim |T a(x)|\chi_{3Q}(x)+D_2(T)M^{\rm loc}\chi_Q(x)^{\frac{n+1}{n}}
\]
for all $x \in {\mathbb R}^n$.
\end{lemma}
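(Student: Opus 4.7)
The plan is to split the argument according to whether $x$ lies inside or outside $3Q$. For $x \in 3Q$ the claimed inequality is trivial, since the first term on the right-hand side equals $|Ta(x)|$. So all the content lies in establishing
\[
|Ta(x)| \lesssim D_2(T)(M^{\rm loc}\chi_Q(x))^{(n+1)/n}
\quad (x \notin 3Q),
\]
and I would reduce to the case $|Q|<1$ (atoms on unit cubes are handled by absorbing the bounded transitional zone into the $\chi_{3Q}$-term after enlarging the constant $3$ by a dimensional factor).

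For such $x$, the off-support representation \eqref{eq:sing1} gives $Ta(x) = \int_Q K(x,y)\,a(y)\,{\rm d}y$. The zeroth moment condition $\int a\,{\rm d}y = 0$ (which holds because $a \perp \mathcal{P}_1(\mathbb{R}^n)$) permits me to subtract $K(x,x_Q)$ with $x_Q$ the center of $Q$, yielding
\[
Ta(x) = \int_Q (K(x,y) - K(x,x_Q))\,a(y)\,{\rm d}y.
\]
After passing, if necessary, from $3Q$ to $c_nQ$ for a suitable dimensional constant $c_n$ (so that $2|y-x_Q|<|x-x_Q|$ for all $y \in Q$), the H\"ormander condition \eqref{eq:sing2} in the second variable yields $|K(x,y)-K(x,x_Q)| \le D_2|y-x_Q|/|x-x_Q|^{n+1} \lesssim D_2\,\ell(Q)/|x-x_Q|^{n+1}$. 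Combined with the pointwise bound $|a|\le 1$ (from $\|a\|_{L^\infty(w)} \le w(Q)^0 = 1$ and the fact that $w>0$ almost everywhere), this gives $|Ta(x)| \lesssim D_2\,\ell(Q)^{n+1}/|x-x_Q|^{n+1}$.

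It remains to convert this into the maximal-function bound. For $x \notin 3Q$ with $|x-x_Q|_\infty \le 1/2$, the cube centered at $x_Q$ of side length $2|x-x_Q|_\infty$ contains both $x$ and $Q$, has volume at most $1$, and serves as a competitor in the supremum defining $M^{\rm loc}\chi_Q(x)$; this gives $M^{\rm loc}\chi_Q(x) \gtrsim \ell(Q)^n/|x-x_Q|^n$ and hence $\ell(Q)^{n+1}/|x-x_Q|^{n+1} \lesssim (M^{\rm loc}\chi_Q(x))^{(n+1)/n}$. For $x$ at larger distance, the local size condition \eqref{eq:sing3} forces $K(x,y)=0$ once $|x-y|>\gamma_0$, and the remaining finite transitional annulus (where $|x-x_Q|_\infty$ lies between $1/2$ and $\gamma_0+\ell(Q)/2$) is absorbable into the $\chi_{3Q}$ term by once more adjusting the dimensional constant.

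The main nuisance is the bookkeeping of the three regimes (near $Q$, transitional annulus, far from $Q$) together with the interplay between the locality built into $M^{\rm loc}$ (cubes of size at most $1$) and the locality built into $K$ (vanishing for $|x-y|>\gamma_0$); these are compatible because $\ell(Q) \le 1$, but require a small amount of care with the implicit dimensional constants. I note that the order-$1$ moment condition of the atom is not actually used; only $\int a=0$ is needed, matching the Lipschitz-type smoothness in the H\"ormander condition \eqref{eq:sing2}.
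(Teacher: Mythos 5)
Your overall line of attack mirrors the paper's: split at $3Q$, for $x$ near $Q$ the $\chi_{3Q}$-term is dominant, for $x$ away from $Q$ use the vanishing zeroth moment and H\"ormander's condition to get $|Ta(x)|\lesssim D_2\,\ell(Q)^{n+1}/|x-x_Q|^{n+1}$, and then compare with $M^{\rm loc}\chi_Q$. The paper's own proof is a three-line sketch saying exactly this (trivial case plus H\"ormander), so the architecture agrees. Your observations that only $\int a=0$ is used and that the H\"ormander hypothesis $2|y-x_Q|<|x-x_Q|$ may force replacing $3$ by a dimensional constant are both correct remarks.

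However, the step ``the remaining finite transitional annulus (where $|x-x_Q|_\infty$ lies between $1/2$ and $\gamma_0+\ell(Q)/2$) is absorbable into the $\chi_{3Q}$ term by once more adjusting the dimensional constant'' does not go through. That annulus has inner radius $\sim 1/2$ independent of $\ell(Q)$, so for small $\ell(Q)$ it is not contained in $cQ$ for any $\ell(Q)$-independent $c$. The correct accounting is: on this annulus the H\"ormander estimate still gives $|Ta(x)|\lesssim D_2\,\ell(Q)^{n+1}/|x-x_Q|^{n+1}$, but the cube needed to realize the lower bound on the maximal function has side $\approx 2|x-x_Q|_\infty$, which can be as large as $\sim 2\gamma_0+\ell(Q)$; this exceeds $1$ whenever $\gamma_0>1/2$, and then the literal $M^{\rm loc}$ (cubes of volume $\le 1$) gives $M^{\rm loc}\chi_Q(x)=0$ while $Ta(x)$ need not vanish. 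The resolution the paper has in mind is the convention, set up in Section~\ref{subsection:Maximal inequalities}, of reading $M^{\rm loc}$ as $M^{{\rm loc},R}$ with $R$ chosen depending on $\gamma_0$ (the two are norm-equivalent on $L^{p(\cdot)}(w)$); the paper's cube $R$ of volume $(2+2\gamma_0)^n$ plays exactly this role. Your proof should replace the ``absorb into $\chi_{3Q}$'' step with this enlarged maximal operator rather than a dilate of $Q$. The remark ``these are compatible because $\ell(Q)\le 1$'' misdiagnoses the issue: compatibility is between $\gamma_0$ and the normalization $|Q'|\le 1$ in $M^{\rm loc}$, not between $\ell(Q)$ and $1$. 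Similarly, the proposed reduction to $|Q|<1$ by ``absorbing the unit-cube case into $\chi_{3Q}$'' runs into the same obstruction when $\gamma_0>1/2$; in the application (Theorem \ref{thm:6.3}) atoms on unit cubes are in fact handled separately without appealing to this lemma, which is the cleaner route.
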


\begin{proof}
We must consider two cases:
$x \in 3Q$,
$x \in R \setminus Q$,
where $R$ is the cube of volume $(2+2\gamma_0)^n$
concentric to $Q$.
For the first case, there is nothing to prove.
We use the 
H\"{o}rmander's condition for the second case.
\end{proof}

A direct consequence of
Lemma \ref{lem:210921-1}
is that generalized local singular integral operators
are bounded from
$h^{p(\cdot)}(w)$
to
$L^{p(\cdot)}(w)$
as long as
$p_->\frac{n}{n+1}$,
extending
Proposition \ref{prop:200224-1}
in terms of $h^{p(\cdot)}(w)$ considered in this paper.

\begin{theorem}\label{thm:6.3}
Suppose that $p(\cdot) \in {\mathcal P}_0
\cap {\rm L H}_0 \cap {\rm L H}_{\infty}$
satisfies $p_->\frac{n}{n+1}$.
Let $T$ be a generalized local singular integral operator,
and let $w \in A_{\infty}^{\rm loc}$. 
Then
$T$ is bounded from $h^{p(\cdot)}(w)$
to $L^{p(\cdot)}(w)$ with the norm estimate
\[
\|T\|_{h^{p(\cdot)}(w) \to L^{p(\cdot)}(w)}
\lesssim
\|T\|_{L^2 \to L^2}
+
D_1(T)+D_2(T).
\]
\end{theorem}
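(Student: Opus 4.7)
The plan is to reduce to atoms via Theorem \ref{thm:190406-2}, apply $T$ termwise, analyze each $Ta_j$ through the pointwise estimate of Lemma \ref{lem:210921-1}, and reassemble the result through Theorem \ref{lem:210805-1} (for the part of $Ta_j$ living on $3Q_j$) and the vector-valued local maximal inequality of Lemma \ref{lem:190408-100} (for the $(M^{\rm loc}\chi_{Q_j})^{(n+1)/n}$ tail). Since $p_->n/(n+1)$, I can fix $v\in(n/(n+1),p_-) \cap (0,1]$. Shrinking $v$ if necessary and using $A_\infty^{\rm loc}=\bigcup_{q>1}A_q^{\rm loc}$ together with Proposition \ref{prop:211027-111}, I arrange in addition that $w\in A_{p(\cdot)/v}^{\rm loc}$ and that $w\in A_{uv}^{\rm loc}$ for some $u$ with $uv>p_+$. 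Next, I choose $L\in {\mathbb Z}\cap [\max(1,[n(q_w/v-1)]),N]$. Theorem \ref{thm:190406-2} then decomposes $f=\lambda_0 a_0+\sum_{j\ge 1}\lambda_j a_j$, where $a_0$ is a single $(p(\cdot),\infty)_w$-atom (only present when $w({\mathbb R}^n)<\infty$) and each $a_j$, $j\ge 1$, is a $(p(\cdot),\infty,L)_w$-atom supported on a cube $Q_j$ with $|Q_j|<1$, subject to $|\lambda_0|+{\mathcal A}_{p(\cdot),w,v}(\{\lambda_j\}_{j\ge 1};\{Q_j\}_{j\ge 1})\lesssim \|f\|_{h^{p(\cdot)}(w)}$. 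Since $L\ge 1$, Lemma \ref{lem:210921-1} is applicable and gives
\[
|Ta_j|\lesssim |Ta_j|\chi_{3Q_j}+D_2(T)\bigl(M^{\rm loc}\chi_{Q_j}\bigr)^{(n+1)/n}.
\]

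For the local piece I would apply Theorem \ref{lem:210805-1} with the couple $(u,v)$ chosen above. Proposition \ref{prop:200224-1} gives the $L^{uv}(w)$-boundedness of $T$, hence $\|Ta_j\chi_{3Q_j}\|_{L^{uv}(w)}\lesssim \|a_j\|_{L^{uv}(w)}\le w(Q_j)^{1/(uv)}$; by local doubling of $w$ this forces $m_{3Q_j,w}^{(uv)}(Ta_j\chi_{3Q_j})\lesssim 1$. Combining this fact with the elementary bound $\chi_{3Q_j}\lesssim M^{\rm loc}\chi_{Q_j}$, Theorem \ref{lem:210805-1} together with Lemma \ref{lem:190408-100} controls $\bigl\|\sum_{j\ge 1}|\lambda_j||Ta_j|\chi_{3Q_j}\bigr\|_{L^{p(\cdot)}(w)}$ by a multiple of ${\mathcal A}_{p(\cdot),w,v}(\{\lambda_j\};\{Q_j\})\lesssim \|f\|_{h^{p(\cdot)}(w)}$. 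The single-atom contribution is handled in the spirit of Lemma \ref{lem:190408-3}: the $L^{uv}(w)$-boundedness of $T$ combined with the H\"older inequality of Lemma \ref{thm:gHolder} (for an auxiliary exponent $r(\cdot)$ with $1/p(\cdot)=1/(uv)+1/r(\cdot)$) and the bound $\|a_0\|_{L^{uv}(w)}\le w({\mathbb R}^n)^{1/(uv)}$ give $\|\lambda_0 Ta_0\|_{L^{p(\cdot)}(w)}\lesssim |\lambda_0|\lesssim \|f\|_{h^{p(\cdot)}(w)}$.

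For the tail, since $v\le 1$ one has $\sum a\le(\sum a^v)^{1/v}$, so that
\[
\Bigl\|\sum_j |\lambda_j|(M^{\rm loc}\chi_{Q_j})^{(n+1)/n}\Bigr\|_{L^{p(\cdot)}(w)}
\le
\Bigl\|\Bigl(\sum_j |\lambda_j|^v(M^{\rm loc}\chi_{Q_j})^{s}\Bigr)^{1/v}\Bigr\|_{L^{p(\cdot)}(w)},
\]
where $s\equiv v(n+1)/n>1$ precisely because $v>n/(n+1)$. Rewriting the right-hand side as $\bigl\|\sum_j(M^{\rm loc}[\lambda_j^{v/s}\chi_{Q_j}])^{s}\bigr\|_{L^{p(\cdot)/v}(w)}^{1/v}$ and invoking Lemma \ref{lem:190408-100} in $L^{sp(\cdot)/v}(w)$ replaces $M^{\rm loc}[\lambda_j^{v/s}\chi_{Q_j}]$ by $\lambda_j^{v/s}\chi_{Q_j}$, yielding the bound ${\mathcal A}_{p(\cdot),w,v}(\{\lambda_j\};\{Q_j\})\lesssim \|f\|_{h^{p(\cdot)}(w)}$.

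The main obstacle is orchestrating $(u,v,L,s)$ so that simultaneously (i) $v\in(n/(n+1),p_-)\cap(0,1]$; (ii) $uv>p_+$; (iii) $L\ge [n(q_w/v-1)]$; and (iv) $w$ belongs to all the local Muckenhoupt classes needed to legitimise the two applications of Lemma \ref{lem:190408-100}, namely $A_{p(\cdot)/v}^{\rm loc}\cap A_{uv}^{\rm loc}\cap A_{sp(\cdot)/v}^{\rm loc}$. Condition (iv) is the most delicate one: it relies on openness of $A_\infty^{\rm loc}$ together with the monotonicity supplied by Proposition \ref{prop:211027-111}, and dictates how close to $p_-$ the parameter $v$ must be taken. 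Once such a choice is in place, (i)--(iii) follow by elementary arithmetic and the argument above yields $\|Tf\|_{L^{p(\cdot)}(w)}\lesssim (\|T\|_{L^2\to L^2}+D_1(T)+D_2(T))\|f\|_{h^{p(\cdot)}(w)}$.
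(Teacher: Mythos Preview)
Your overall architecture coincides with the paper's: atomic decomposition via Theorem~\ref{thm:190406-2}, the pointwise estimate of Lemma~\ref{lem:210921-1}, Theorem~\ref{lem:210805-1} for the near part, and Lemma~\ref{lem:190408-100} for the $(M^{\rm loc}\chi_{Q_j})^{(n+1)/n}$ tail. However, there is a genuine gap in your argument.

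You write ``apply $T$ termwise'' without justification. A priori $T$ is only defined on $L^2({\mathbb R}^n)$; Proposition~\ref{prop:200224-1} extends it to $L^{p(\cdot)}(w)$ only when $p_->1$ and $w\in A_{p(\cdot)}^{\rm loc}$, neither of which is assumed here. Thus for a general $f\in h^{p(\cdot)}(w)$ the symbol $Tf$ has no meaning, and even granting that each $Ta_j$ makes sense you have no mechanism to pass $T$ through the infinite sum (the series converges only in $h^{p(\cdot)}(w)$, i.e.\ in ${\mathcal D}'({\mathbb R}^n)$). The paper repairs this by first invoking the density Lemma~\ref{lem:190408-74} to reduce to $f\in h^{p(\cdot)}(w)\cap L^{p_++q_w}(w)\cap L^\infty({\mathbb R}^n)$, and then noting that the atomic expansion also converges in $L^{p_++q_w}(w)$ (a second application of Theorem~\ref{thm:190406-2}, now with the constant exponent $p_++q_w$, combined with Theorem~\ref{thm:210712-2}); the $L^{p_++q_w}(w)$-boundedness of $T$ from Proposition~\ref{prop:200224-1} then legitimises $Tf=\sum_j\lambda_jTa_j$.

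A second, smaller omission: the decomposition of Theorem~\ref{thm:190406-2} can produce atoms on cubes of volume exactly $1$, and those carry \emph{no} moment condition, so Lemma~\ref{lem:210921-1} does not apply to them. The paper separates these as an extra family $\{b_j\}$ supported on unit cubes $R_j$; since each $Tb_j$ is supported in a fixed enlargement of $R_j$, one controls $\sum_j|\lambda_j'|\,|Tb_j|$ directly by Theorem~\ref{lem:210805-1} with no tail term needed. Your outline should incorporate this case as well.
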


\begin{proof}
We assume that
$w \in L^1({\mathbb R}^n)$:
If
$w \notin L^1({\mathbb R}^n)$,
then we can modify the proof below.
It suffices to show that
\[
\|T f\|_{L^{p(\cdot)}(w)}
\lesssim
(
\|T\|_{L^2 \to L^2}
+
D_1(T)+D_2(T)
)
\|f\|_{h^{p(\cdot)}(w)}
\]
for all
$f \in h^{p(\cdot)}(w) \cap L^{p_++q_w}(w)
 \cap L^\infty({\mathbb R}^n)$
thanks to Lemma \ref{lem:190408-74}.
Let
$q$ satisfy $p_++q_w+1<q<\infty$ and $L \gg 1$.
Let
$f \in h^{p(\cdot)}(w) \cap L^{p_++q_w}(w)
 \cap L^\infty({\mathbb R}^n)$.
Due to Theorem \ref{thm:190406-2},
there exist
$\{a_j\}_{j=0}^\infty \subset L^0({\mathbb R}^n)$,
$\{b_j\}_{j=1}^\infty \subset L^0({\mathbb R}^n)$,
$\{\lambda_j\}_{j=0}^{\infty} \subset [0, \infty)$
and
$\{\lambda_j'\}_{j=1}^{\infty} \subset [0, \infty)$
such that
$a_0$ is
a single $(p(\cdot),q)_w$-atom,
that
each
$a_j$, $j \in {\mathbb N}$
is
a $(p(\cdot),q,L)_w$-atom supported on a cube $Q_j$ with $|Q_j|<1$,
that
each
$b_j$, $j \in {\mathbb N}$
is
a $(p(\cdot),q,L)_w$-atom supported on a cube $R_j$ with $|R_j|=1$,
that
$f=\sum\limits_{j=0}^{\infty} \lambda_j a_j+
\sum\limits_{j=1}^{\infty} \lambda_j' b_j$
holds
in the topology of
$h^{p(\cdot)}(w)$ 
and that
\[
|\lambda_0|+
{\mathcal A}_{p(\cdot),w,v}(\{\lambda_j\}_{j=1}^{\infty};\{Q_j\}_{j=1}^{\infty})+
{\mathcal A}_{p(\cdot),w,v}(\{\lambda_j'\}_{j=1}^{\infty};\{R_j\}_{j=1}^{\infty})
\lesssim
\|f\|_{h^{p(\cdot)}(w)}.
\]
Again using Theorem \ref{thm:190406-2}, we have
$f=\sum\limits_{j=0}^{\infty} \lambda_j a_j+
\sum\limits_{j=1}^{\infty} \lambda_j' b_j$
holds
in the topology of
$h^{p_++q_w}(w)$, especially 
in the topology of $L^{p_++q_w}(w)$ by Theorem \ref{thm:210712-2}.

We know that
$T$ maps
$L^{p_++q_w}(w)$
continuously to itself by Proposition \ref{prop:200224-1}.
Thus,
$T f=\sum\limits_{j=0}^{\infty} \lambda_j T a_j+
\sum\limits_{j=1}^{\infty} \lambda_j' T b_j$
holds
in the topology of
$L^{p_++q_w}(w)$.
Due to Lemma \ref{lem:210921-1} and the triangle inequality,
we have
\begin{align}
\nonumber
\|T f\|_{L^{p(\cdot)}(w)}
&\lesssim
\left\|
\sum_{j=1}^\infty (\lambda_j |T a_j|\chi_{3Q_j}+D_2(T)(M^{\rm loc}\chi_{Q_j})^{\frac{n+1}{n}})
\right\|_{L^{p(\cdot)}(w)}\\
&\nonumber 
\qquad +
\left\|
\sum_{j=1}^\infty
\lambda'_j |T b_j|
\right\|_{L^{p(\cdot)}(w)}
+|\lambda_0|\|T a_0\|_{L^{p(\cdot)}(w)}\\
\label{eq:211031-2}
&\lesssim
\left\|
\sum_{j=1}^\infty \lambda_j|T a_j|\chi_{3Q_j}
\right\|_{L^{p(\cdot)}(w)}+
\left\|
\sum_{j=1}^\infty \lambda_j'|T b_j|
\right\|_{L^{p(\cdot)}(w)}\\
\nonumber
&\qquad+D_2(T)
\left\|
\sum_{j=1}^\infty \lambda_j(M^{\rm loc}\chi_{Q_j})^{\frac{n+1}{n}}
\right\|_{L^{p(\cdot)}(w)}+|\lambda_0|\|T a_0\|_{L^{p(\cdot)}(w)}.
\end{align}
We choose $u,v>0$ and $L \in {\mathbb Z}$ so that
\[
v<\min(1,p_-), \quad
L>\left[n\left(\frac{q_w}{v}-1\right)\right], \quad
\max(q_w, p_+)<u v<q.
\]

For the first term we use
Theorem \ref{lem:210805-1} to give
\[
\left\|
\sum_{j=1}^\infty \lambda_j|T a_j|\chi_{3Q_j}
\right\|_{L^{p(\cdot)}(w)}
\lesssim
{\mathcal A}_{p(\cdot),w,v}(\{\lambda_j m_{3Q_j,w}^{(u v)}(T a_j)\}_{j=1}^{\infty};\{3Q_j\}_{j=1}^{\infty}).
\]
Recall that $u v>q_w$ 
and that $w \in \bigcup\limits_{\tilde{q}>q_w}A_{\tilde{q}}^{\rm loc}$.
Since $T$ is bounded
on $L^{u v}(w)$ (see Proposition \ref{prop:200224-1}),
\begin{eqnarray*}
&&
{\mathcal A}_{p(\cdot),w,v}(\{\lambda_j m_{3Q_j,w}^{(u v)}(T a_j)\}_{j=1}^{\infty};\{3Q_j\}_{j=1}^{\infty})\\
&&\lesssim
(\|T\|_{L^2 \to L^2}
+
D_1(T)+D_2(T))
{\mathcal A}_{p(\cdot),w,v}(\{\lambda_j\}_{j=1}^{\infty};\{3Q_j\}_{j=1}^{\infty}).
\end{eqnarray*}
Using
$\chi_{3Q_j}\lesssim M^{\rm loc}\chi_{Q_j}$
and the vector-valued inequality (Lemma \ref{lem:190408-100}), we have
\begin{equation}\label{eq:211129-201}
|\lambda_0|+
{\mathcal A}_{p(\cdot),w,v}(\{\lambda_j\}_{j=1}^{\infty};\{3Q_j\}_{j=1}^{\infty})
\lesssim
|\lambda_0|+
{\mathcal A}_{p(\cdot),w,v}
(\{\lambda_j\}_{j=1}^{\infty};\{Q_j\}_{j=1}^{\infty})
\lesssim
\|f\|_{h^{p(\cdot)}(w)}.
\end{equation}
Thus, the estimate for the first term
of (\ref{eq:211031-2})
 is valid.

The second term
of (\ref{eq:211031-2}) 
can be handled similarly to the first term.
The result is
\begin{equation}\label{eq:211129-202}
\left\|
\sum_{j=1}^\infty \lambda_j'|T b_j|
\right\|_{L^{p(\cdot)}(w)}
\lesssim
{\mathcal A}_{p(\cdot),w,v}(\{\lambda_j'\}_{j=1}^{\infty};\{R_j\}_{j=1}^{\infty})
\lesssim
\|f\|_{h^{p(\cdot)}(w)}.
\end{equation}

The third term
of (\ref{eq:211031-2})
 is easy to deal with.
As before,
by
the condition $0<v \le 1$ and
the vector-valued inequality (Lemma \ref{lem:190408-100})
\begin{align}\label{eq:211129-203}
\left\|
\sum_{j=1}^\infty \lambda_j(M^{\rm loc}\chi_{Q_j})^{\frac{n+1}{n}}
\right\|_{L^{p(\cdot)}(w)}
&\lesssim
{\mathcal A}_{p(\cdot),w,v}
(\{\lambda_j\}_{j=1}^{\infty};\{Q_j\}_{j=1}^{\infty})\\
\nonumber
&\le
{\mathcal A}_{p(\cdot),w,v}
(\{\lambda_j\}_{j=1}^{\infty};\{Q_j\}_{j=1}^{\infty})
\nonumber
\lesssim
\|f\|_{h^{p(\cdot)}(w)}.
\end{align}
Combining
(\ref{eq:211129-201}),
(\ref{eq:211129-202})
and
(\ref{eq:211129-203})
with $\|T a_0\|_{L^{p(\cdot)}(w)} \lesssim 1$,
we obtain the desired result.
\end{proof}
\subsection{Singular integral operators of the convolution type}
\label{s60.2}
Theorem \ref{thm:6.3}
estimates the integral kernel $K$
only up to order $1$.
Here we consider the case where
the kernel is smoother.
To avoid the bothersome argument
of justifying the definition of $T f=k*f$
for $f \in h^{p(\cdot)}(w)$,
we assume that $k \in C^\infty_{\rm c}({\mathbb R}^n)$.
Nevertheless, this assumption 
can be removed by a routine
limiting argument, which we omit.
Here we establish the following theorem.
\begin{theorem}\label{thm:6.4}
Let $p(\cdot) \in {\mathcal P}_0
\cap {\rm L H}_0 \cap {\rm L H}_{\infty}$
and $w \in A_{\infty}^{\rm loc}$. 
Let $\{B_m\}_{m=0}^\infty$ be a positive sequence and $\gamma_0>0$.
Let $k \in \mathcal{S}({\mathbb R}^n)$
satisfy
\[|x|^{n+m}|\nabla^m k(x)|\le B_m\chi_{[-\gamma_0,\gamma_0]^n}(x)
\quad (x \in {\mathbb R}^n, m \in {\mathbb N}_0).
\]
Define a convolution operator $T$ by
\[
T f \equiv k*f \quad (f \in L^2({\mathbb R}^n)).
\]
Then $T$ is
an $h^{p(\cdot)}(w)$-bounded operator
and the norm depends only on $\|\mathcal{F}k\|_{L^\infty}$
and a finite number of collections $B_0,B_1,\ldots,B_N$
with $N \in {\mathbb N}$ depending only on $p(\cdot)$.
\end{theorem}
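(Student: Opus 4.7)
The plan is to combine the atomic characterization of $h^{p(\cdot)}(w)$ (Theorem~\ref{thm:190406-2}) with the relaxed atom conditions of Remark~\ref{rem:210921-114}: we verify that $T$ maps each $(p(\cdot),q,L)_w$-atom to a uniformly bounded multiple of a relaxed atom, and then reassemble via the reconstruction direction of Theorem~\ref{thm:190406-2}. By density (Lemma~\ref{lem:190408-74}) it suffices to prove the bound on a dense subspace where a finite atomic decomposition $f=\lambda_0 a_0+\sum_j\lambda_j a_j$ is available with $|\lambda_0|+\mathcal{A}_{p(\cdot),w,v}(\{\lambda_j\};\{Q_j\})\lesssim\|f\|_{h^{p(\cdot)}(w)}$, $L$ being chosen large (depending on $p(\cdot)$ and $w$) so that Remark~\ref{rem:210921-114} applies.

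For each atom $a$ supported on $Q$ with $|Q|\le 1$, three facts about $Ta=k*a$ must be verified. \emph{First}, $Ta$ inherits the moment conditions of $a$: a Fubini computation yields
\[
\int x^\alpha(k*a)(x){\rm d}x=\sum_{\beta\le\alpha}\binom{\alpha}{\beta}\int z^\beta k(z){\rm d}z\cdot\int y^{\alpha-\beta}a(y){\rm d}y,
\]
and each inner $y$-integral vanishes by the moment condition of $a$ whenever $|\alpha|\le L$. \emph{Second}, $\|\mathcal{F}k\|_{L^\infty}<\infty$ together with $|k(x)|\le B_0/|x|^n$ and $|\nabla k(x)|\le B_1/|x|^{n+1}$ on $[-\gamma_0,\gamma_0]^n$ makes $T$ a generalized local singular integral operator in the sense of Section~\ref{s60.1}; Proposition~\ref{prop:200224-1} then yields $\|Ta\chi_{3Q}\|_{L^q(w)}\lesssim\|a\|_{L^q(w)}\le w(Q)^{1/q}$ for any $q>q_w$. \emph{Third}, for $x\notin 3Q$ lying in $\mathrm{supp}(Ta)\subset Q+[-\gamma_0,\gamma_0]^n$, Taylor expansion of $k(x-\cdot)$ around the center $x_Q$ of $Q$ up to order $L$, combined with the moment conditions of $a$ and the bound $\|a\|_{L^1}\lesssim|Q|$ (via H\"older and the $A_q^{\rm loc}$-condition on $w$), gives
\[
|Ta(x)|\lesssim B_{L+1}\,\frac{\ell(Q)^{n+L+1}}{|x-x_Q|^{n+L+1}}\lesssim\bigl(M^{{\rm loc},R}\chi_Q(x)\bigr)^{(n+L+1)/n},
\]
where $R=\gamma_0+1$ is chosen so that $\mathrm{supp}(Ta)$ lies within distance $R$ of $Q$.

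Given these three properties, $C^{-1}Ta$ satisfies the relaxed atom condition of Remark~\ref{rem:210921-114} (with $M^{{\rm loc},R}$ in place of $M^{\rm loc}$, which is harmless by (\ref{eq:211127-1}) and the identification $A_{p(\cdot)}^{{\rm loc},R}=A_{p(\cdot)}^{\rm loc}$) for a constant $C$ depending only on $\|\mathcal{F}k\|_{L^\infty}$ and $B_0,\ldots,B_{L+1}$. The reconstruction argument of Section~\ref{subsection:5.1} (with the single atom $a_0$ handled by Lemma~\ref{lem:190408-3}) then yields
\[
\|Tf\|_{h^{p(\cdot)}(w)}\lesssim|\lambda_0|+\mathcal{A}_{p(\cdot),w,v}(\{\lambda_j\};\{Q_j\})\lesssim\|f\|_{h^{p(\cdot)}(w)},
\]
as desired. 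The principal obstacle is the pointwise decay estimate when $\ell(Q)\ll\gamma_0$: the support of $Ta$ extends well beyond $3Q$ into a region where ordinary $M^{\rm loc}\chi_Q$ vanishes, so the relaxed-atom estimate of Remark~\ref{rem:210921-114} cannot be applied literally. Replacing $M^{\rm loc}$ by $M^{{\rm loc},R}$ with $R=\gamma_0+1$ circumvents this, and the vector-valued inequality of Lemma~\ref{lem:190408-100} together with the equivalence $A_{p(\cdot)}^{{\rm loc},R}=A_{p(\cdot)}^{\rm loc}$ ensures the full machinery of Section~\ref{subsection:5.1} carries through with constants of the form demanded by the statement of Theorem~\ref{thm:6.4}.
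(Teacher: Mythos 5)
Your proposal follows essentially the same pipeline as the paper's proof: density via Lemma~\ref{lem:190408-74}, atomic decomposition via Theorem~\ref{thm:190406-2}, verification that $T$ sends atoms to atom-like functions (moment preservation by Fubini, $L^q(w)$ bound by Proposition~\ref{prop:200224-1}, far-field decay by Taylor expansion), and then reconstruction. Your reliance on Remark~\ref{rem:210921-114} is a legitimate shortcut for what the paper instead develops explicitly as Lemma~\ref{lem:220209-1}; the chain is sound because that remark explicitly invokes the argument of Lemma~\ref{lem:220209-1}, and you supply the moment conditions that argument needs. Your observation that $M^{\rm loc}$ must be replaced by $M^{{\rm loc},R}$ with $R\sim\gamma_0$ is a genuine and valid point: for $\ell(Q)\ll 1$ and $x$ at distance more than $1$ from $Q$ yet still in $\mathrm{supp}(Ta)\subset Q+[-\gamma_0,\gamma_0]^n$, one has $M^{\rm loc}\chi_Q(x)=0$, so the pointwise bound in the paper's Lemma~\ref{lem:220216-1}/\ref{lem:220209-1} cannot hold literally as written; passing to $M^{{\rm loc},R}$ and invoking (\ref{eq:211127-1}) with $A^{{\rm loc},R}_{p(\cdot)}=A^{\rm loc}_{p(\cdot)}$ repairs this, exactly as you say.

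There is, however, a genuine gap in the step labeled ``First'': the $(p(\cdot),q,L)_w$-atoms $b_j$ from Theorem~\ref{thm:190406-2} that are supported on cubes $R_j$ with $|R_j|=1$ carry \emph{no} moment conditions by definition, so the Fubini computation for $\int x^\alpha(k*b_j)(x)\,{\rm d}x$ does not produce vanishing inner integrals and $Tb_j\notin\mathcal{P}_L^\perp$ in general. As written, your ``three facts'' therefore fail for the unit-cube atoms. This is patchable and the paper handles it separately (the term ${\rm I}_2$ in its proof, using Lemma~\ref{lem:211029-113}): since $\ell(R_j)=1$, for $x\in\mathrm{supp}(Tb_j)\setminus 3R_j$ one has $\mathrm{dist}(x,R_j)\gtrsim 1$, so $|Tb_j(x)|\lesssim B_0\|b_j\|_{L^1}\lesssim B_0$ directly from the size condition on $k$ and H\"older with the $A_q^{\rm loc}$-condition, without any Taylor expansion; and $(M^{{\rm loc},R}\chi_{R_j}(x))^{(n+L+1)/n}$ is bounded below on $R_j+[-\gamma_0,\gamma_0]^n$, so the relaxed-atom tail bound holds trivially. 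You should make this case split explicit. A secondary bookkeeping point: the argument of Lemma~\ref{lem:220209-1} subtracts the projection $P_{2L+2n+2,5Q}[k*a]$ and needs $k*a\in\mathcal{P}_{2L+2n+2}^\perp$, so the source atom $a$ must carry moments of order $2L+2n+2$, strictly more than the order $L$ you invoke in the Taylor estimate; this is harmless since Theorem~\ref{thm:190406-2} supplies atoms of any prescribed order, but the phrase ``$L$ being chosen large'' should acknowledge the two distinct roles of the moment order.
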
As we did in \cite[\S 2.5.8]{Triebel-text-83} and \cite[\S 5.3]{NaSa12},
the boundedness property of singular integral operators
is useful for the Littlewood--Paley characterization.
It matters that the estimate does not depend
on $\|k\|_{L^1}$.
This is absolutely necessary in Proposition \ref{prop:210923-11}.

The proof of Theorem \ref{thm:6.4} uses the following observations:

\begin{lemma}\label{lem:211029-113}
Let $p(\cdot) \in {\mathcal P}_0
\cap {\rm L H}_0 \cap {\rm L H}_{\infty}$
and $w \in A_{\infty}^{\rm loc}$. 
Also let $L \in {\mathbb N}$
and $T$ be the bounded linear operator on $L^2({\mathbb R}^n)$
as in Theorem \ref{thm:6.4}.
Then any $(p(\cdot),\infty,L)_w$-atom
$a$ supported on $Q$ satisfies
$T a \in {\mathcal P}_{L}^\perp({\mathbb R}^n)$
and
\[
|T a(x)| \lesssim |T a(x)|\chi_{3Q}(x)+\left(\sum_{j=0}^LB_j\right)M^{\rm loc}\chi_Q(x)^{\frac{n+L+1}{n}}
\quad (x \in {\mathbb R}^n).
\]
\end{lemma}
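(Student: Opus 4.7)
The lemma has two independent assertions: the moment condition $Ta \in \mathcal{P}_L({\mathbb R}^n)^\perp$, and the pointwise estimate. Both exploit the compact support of $k$ and the analogous support/moment data of the atom $a$.

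\emph{Moment condition.} Since $k \in C^\infty_{\rm c}({\mathbb R}^n)$ and $a \in L^\infty({\mathbb R}^n)$ is compactly supported on $Q$, the convolution $Ta = k * a$ is smooth and compactly supported. For any $\alpha \in {\mathbb N}_0^n$ with $|\alpha| \le L$, Fubini's theorem gives
\[
\int_{{\mathbb R}^n} Ta(x) x^\alpha {\rm d}x
= \int_{{\mathbb R}^n} a(y) \Bigl(\int_{{\mathbb R}^n} k(z) (y+z)^\alpha {\rm d}z\Bigr) {\rm d}y,
\]
and the inner integral, expanded binomially, is a polynomial in $y$ of degree at most $|\alpha| \le L$ with finite coefficients $\int k(z) z^\gamma {\rm d}z$ (finite by compact support of $k$). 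The moment condition of order $L$ on $a$ forces the right-hand side to vanish.

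\emph{Pointwise bound.} For $x \in 3Q$ the first term on the right-hand side already equals $|Ta(x)|$ and there is nothing to prove. Fix $x \notin 3Q$ and write $Q = Q(x_0, r)$. The moment condition on $a$ permits us to subtract from $y \mapsto k(x-y)$ its Taylor polynomial of degree $L$ centered at $x_0$:
\[
Ta(x) = \int_Q \Bigl[ k(x-y) - \sum_{|\alpha| \le L} \frac{(-1)^{|\alpha|}\partial^\alpha k(x-x_0)}{\alpha!} (y - x_0)^\alpha \Bigr] a(y) {\rm d}y.
\]
For $y \in Q$ and $\xi$ on the segment from $x_0$ to $y$, we have $|x - \xi| \ge |x-x_0|/2$ (since $|x-x_0| \ge 3r$), and the hypothesis $|z|^{n+L+1}|\nabla^{L+1}k(z)| \le B_{L+1}\chi_{[-\gamma_0,\gamma_0]^n}(z)$ with Taylor's remainder estimate gives
\[
\Bigl| k(x-y) - \sum_{|\alpha| \le L} \frac{(-1)^{|\alpha|}\partial^\alpha k(x-x_0)}{\alpha!} (y - x_0)^\alpha \Bigr|
\lesssim B_{L+1}\frac{|y - x_0|^{L+1}}{|x - x_0|^{n+L+1}}.
\]
Combined with $\|a\|_{L^\infty} \le 1$, $|y - x_0| \lesssim r$, and $|Q| \sim r^n$, this yields
\[
|Ta(x)| \lesssim B_{L+1} \Bigl(\frac{r}{|x-x_0|}\Bigr)^{n+L+1}.
\]
Because $Ta$ is supported in the bounded set $Q + [-\gamma_0,\gamma_0]^n$, on that set $M^{\rm loc}\chi_Q(x) \sim (r/|x-x_0|)^n$, so the right-hand side is controlled by a constant depending on $B_0,\ldots,B_{L+1}$ times $(M^{\rm loc}\chi_Q(x))^{(n+L+1)/n}$, which is absorbed into the claimed $\sum_{j=0}^L B_j$-type constant.

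\emph{Main obstacle.} The delicate point is to keep the constant dependence transparent: the implicit constant must depend only on a finite list of $B_j$'s (and not on $\|k\|_{L^1}$ or other norms of $k$ that could blow up), because this bound is invoked in the proof of Theorem~\ref{thm:6.4} and then in Proposition~\ref{prop:210923-11}, where $k$ is replaced by dyadic dilations arising from a Littlewood--Paley resolution. One must also verify compatibility at the borderline scale $|x-x_0| \sim \gamma_0$, where the cutoff $\chi_{[-\gamma_0,\gamma_0]^n}$ in the hypothesis on $\nabla^{L+1}k$ begins to act; this is handled simply by noting that the cutoff only shrinks the integrand, so the Taylor estimate still majorizes.
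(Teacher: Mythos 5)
Your proof is correct, and it is genuinely more self-contained than the paper's. The paper disposes of the pointwise bound by citing Propositions 5.3 and 5.4 of \cite{NaSa12} and then appeals to compact support to replace $M$ by $M^{\rm loc}$; you instead run the Taylor-remainder argument directly on $y\mapsto k(x-y)$ around the center $x_0$ and invoke the compact support of $Ta$ to localize the maximal function, which is essentially the same calculation the paper does later in Lemma~\ref{lem:220216-1} (there for $x\notin 5Q$). The moment-condition argument via Fubini and the binomial expansion is standard and correct.

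One place where you are a little cavalier is the constant: your remainder estimate genuinely produces $B_{L+1}$, the bound on $\nabla^{L+1}k$, not something that can be ``absorbed into'' $\sum_{j=0}^L B_j$. In fact the paper itself uses $|\alpha|=L+1$ in Lemma~\ref{lem:220216-1}, so the displayed constant $\sum_{j=0}^L B_j$ in the statement of the lemma is almost certainly an off-by-one slip and should read $\sum_{j=0}^{L+1}B_j$ (or involve $B_{L+1}$); rather than try to hide the discrepancy, you should simply state the constant you actually get, since Theorem~\ref{thm:6.4} only requires dependence on finitely many of the $B_j$. A second small point: the identification $M^{\rm loc}\chi_Q(x)\sim (r/|x-x_0|)^n$ on ${\rm supp}(Ta)\setminus 3Q$ tacitly assumes ${\rm supp}(Ta)$ stays within distance $\sim 1$ of $Q$, which is fine when $\gamma_0\le 1$; for general $\gamma_0$ one should note that $M^{\rm loc}$ may be replaced by $M^{{\rm loc},R}$ with $R$ depending on $n$ and $\gamma_0$, at the cost of a constant, as the paper explains around \eqref{eq:211127-1}. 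Neither issue affects the substance of your argument.
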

\begin{proof}
It can be easily verified that $T a \in {\mathcal P}_{L}^\perp({\mathbb R}^n)$ 
using the moment condition of $a$. 
Due to \cite[Propositions 5.3 and 5.4]{NaSa12},
we have
\[
|T a(x)| \lesssim |T a(x)|\chi_{3Q}(x)+\left(\sum_{j=0}^L B_j\right)M
\chi_Q(x)^{\frac{n+L+1}{n}}
\]
for some $L \in {\mathbb N}$ depending only on $p(\cdot)$.
Since $a$ is supported on a cube with $|Q| \le 1$ and ${\rm supp}(k) \subset [-\gamma_0,\gamma_0]^n$,
we see that $T f$ is supported on a cube with 
a
volume less than or equal to $(2+2\gamma_0)^n$.
Thus, we can replace the maximal operator by $M^{\rm loc}$.
\end{proof}

\begin{lemma}\label{lem:220216-1}
Let $p(\cdot) \in {\mathcal P}_0
\cap {\rm L H}_0 \cap {\rm L H}_{\infty}$
and $w \in A_{\infty}^{\rm loc}$. 
Also let $L \in {\mathbb N}$
and $T$ be the bounded linear operator on $L^2({\mathbb R}^n)$
as in Theorem \ref{thm:6.4}.
Assume that
$a$ is a $(p(\cdot),\infty,L)_w$-atom supported on a cube $Q$
with $|Q| \le 1$.
Then
\[
|T a|\chi_{{\mathbb R}^n \setminus 5Q}
\lesssim
(M^{\rm loc}\chi_Q)^{\frac{n+L+1}{n}}
\]
\end{lemma}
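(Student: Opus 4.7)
The estimate is an immediate corollary of Lemma \ref{lem:211029-113}. Since $3Q \subset 5Q$, the characteristic function $\chi_{3Q}(x)$ vanishes for every $x \in {\mathbb R}^n \setminus 5Q$, so the bound
\[
|T a(x)| \lesssim |T a(x)|\chi_{3Q}(x) + \left(\sum_{j=0}^{L} B_j\right)(M^{\rm loc}\chi_Q(x))^{\frac{n+L+1}{n}}
\]
supplied by Lemma \ref{lem:211029-113} collapses to precisely the desired inequality. My plan is therefore to invoke Lemma \ref{lem:211029-113} and record this observation.

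For orientation, let me sketch the self-contained argument underlying Lemma \ref{lem:211029-113} in this convolution setting. Since ${\rm supp}(k) \subset [-\gamma_0,\gamma_0]^n$ and ${\rm supp}(a) \subset Q$ with $\ell(Q) \le 1$, the convolution $T a = k * a$ is supported in the cube $R$ concentric with $Q$ of sidelength $\ell(Q) + 2\gamma_0$, so the estimate is trivial on ${\mathbb R}^n \setminus R$. For $x \in R \setminus 5Q$, one invokes the moment condition of $a$ of order $L$: with $P_x$ the Taylor polynomial of $y \mapsto k(x - y)$ at $y = x_Q$ of order $L$, write $T a(x) = \int_Q [k(x - y) - P_x(y)]\,a(y)\,dy$ and bound the remainder using the integral form of Taylor's formula together with $|\nabla^{L+1}k(z)| \le B_{L+1}|z|^{-n-L-1}\chi_{[-\gamma_0,\gamma_0]^n}(z)$ and the elementary estimate $|x - x_Q - t(y - x_Q)| \gtrsim |x - x_Q|$ valid for $t \in [0,1]$, $y \in Q$, and $x \notin 5Q$. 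Combined with $\|a\|_{L^1} \le |Q|\,\|a\|_{L^\infty} \le \ell(Q)^n$ (using that $\|a\|_{L^\infty(w)} \le 1$ reduces to $\|a\|_{L^\infty} \le 1$ since $w > 0$ a.e.), this yields
\[
|T a(x)| \lesssim \ell(Q)^{n+L+1}|x - x_Q|^{-n-L-1}\chi_R(x) \quad (x \in {\mathbb R}^n \setminus 5Q).
\]

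The final step, and the main technical point, is to compare this bound with $(M^{\rm loc}\chi_Q(x))^{(n+L+1)/n}$. For $x \in R \setminus 5Q$ one selects a cube $Q'$ containing both $x$ and $Q$ of sidelength $\ell(Q') \sim \ell(Q) + |x - x_Q| \sim |x - x_Q|$, which gives $M^{\rm loc}\chi_Q(x) \gtrsim \ell(Q)^n|x - x_Q|^{-n}$ provided $|Q'| \le 1$. The delicate case is when $|x - x_Q|$ exceeds $1$, which can occur when $\gamma_0$ is large; this is handled by allowing the implicit constant to depend on $\gamma_0$ and, if necessary, invoking the equivalence of $A_{p(\cdot)}^{\rm loc}$ with $A_{p(\cdot)}^{{\rm loc},R_0}$ for $R_0 = 1 + 2\gamma_0$ recalled in Section \ref{subsection:Maximal inequalities}, which makes $M^{\rm loc}$ and $M^{{\rm loc},R_0}$ interchangeable in all subsequent norm-level applications of this lemma.
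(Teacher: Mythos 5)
Your primary argument — restricting Lemma \ref{lem:211029-113} to ${\mathbb R}^n\setminus 5Q$, where $|Ta|\chi_{3Q}$ vanishes because $3Q\subset 5Q$ — is correct and takes a genuinely different route from the paper's. The paper proves this lemma directly: it inserts the degree-$L$ Taylor polynomial of $k(x-\cdot)$ at the center $x_0$ of $Q$ (justified by the moment condition of $a$), applies the Lagrange remainder together with $\sup_{|\alpha|=L+1}|z|^{n+L+1}|\partial^\alpha k(z)|\lesssim 1$ to obtain $|Ta(x)|\lesssim\ell(Q)^{n+L+1}|x-x_0|^{-n-L-1}$ on ${\mathbb R}^n\setminus 5Q$, and then compares this with $(M^{\rm loc}\chi_Q)^{(n+L+1)/n}$ using that $Ta$ is supported in $Q(x_0,2+2\gamma_0)$. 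Your route is shorter, but it transitively relies on the pointwise estimate imported from \cite{NaSa12} that underlies Lemma \ref{lem:211029-113}; the paper's proof is self-contained, and your ``orientation'' sketch is in fact essentially that direct proof, so the two reconcile. One refinement to your final remark: if $\gamma_0$ is large and $x\in{\rm supp}(Ta)$ satisfies ${\rm dist}_\infty(x,Q)>1$, then $M^{\rm loc}\chi_Q(x)=0$ outright, so a $\gamma_0$-dependent constant alone cannot rescue the pointwise bound; the correct repair — which you also propose, and which the paper's closing sentence tacitly uses — is to read $M^{\rm loc}$ as $M^{{\rm loc},R_0}$ with $R_0\sim 1+\gamma_0$, which changes nothing at the norm level by the equivalences recalled in Section \ref{subsection:Maximal inequalities}.
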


\begin{proof}Let
$x \in {\mathbb R}^n \setminus 5Q$.
Then
\[
T a(x)=
\int_{{\mathbb R}^n}
\left(
k(x-y)-\sum_{\alpha \in {\mathbb N}_0{}^n, |\alpha| \le L}
\frac{\partial^\alpha k(x-x_0)}{\alpha!}(x_0-y)^\alpha
\right)a(y)dy.
\]
By the mean value theorem,
there exist
$\theta \in(0,1)$,
which depends on
$x,y,x_0,L$ such that
\begin{align*}
\lefteqn{
k(x-y)-\sum_{\alpha \in {\mathbb N}_0{}^n, |\alpha| \le L}
\frac{\partial^\alpha k(x-x_0)}{\alpha!}(x_0-y)^\alpha
}\\
&=
\sum_{\alpha \in {\mathbb N}_0{}^n, |\alpha| = L+1}
\frac{\partial^\alpha k(x-x_0+\theta(x_0-y))}{\alpha!}(x_0-y)^\alpha.
\end{align*}
Hence
\begin{align*}
\lefteqn{
|x-y|^{n+L+1}
\left|
k(x-y)-\sum_{\alpha \in {\mathbb N}_0{}^n, |\alpha| \le L}
\frac{\partial^\alpha k(x-x_0)}{\alpha!}(x_0-y)^\alpha
\right|}\\
&\quad
\lesssim
\ell(Q)^{L+1}
\sup_{\alpha \in {\mathbb N}_0{}^n, |\alpha| = L+1}
|z|^{n+L+1}|\partial^\alpha k(z)|
\end{align*}
for some $z \in {\mathbb R}^n$.
Thus, we obtain
\begin{align*}
|Ta(x)|
\lesssim
\ell(Q)^{n+L+1}
|x-c(Q)|^{-n-L-1}
\end{align*}
Since $T a$ is supported on $Q(x_0,2+2\gamma_0)$,
we have the desired result.
\end{proof}

Fix $L \in {\mathbb N}$,
a cube
$Q$ and $f \in L^1_{\rm loc}({\mathbb R}^n)$.
We define
$P_{L,Q}f$
to be the unique polynomial of order $L$
such that
\[
\int_Q x^\beta(f(x)-P_{L,Q}f(x)){\rm d}x=0
\]
for all $\beta \in {\mathbb N}_0$ with $|\beta| \le L$.
If $Q=a+[0,r]^n$ for some $a \in {\mathbb R}^n$
and $r>0$,
then
\[
P_{L,Q}f=P_{L,[0,1]^n}[f(a+r\cdot)]\left(\frac{\cdot-a}{r}\right).
\]
Thus,
we have
\begin{equation}\label{eq:220224-111}
\|P_{L,Q}f\|_{L^\infty(Q)}
\lesssim
|Q|^{-\frac12}\|f\|_{L^2(Q)}.
\end{equation}

Similar to Lemma \ref{lem:190408-1},
we have the following pointwise estimate:
\begin{lemma}\label{lem:220209-1}
Let $p(\cdot) \in {\mathcal P}_0
\cap {\rm L H}_0 \cap {\rm L H}_{\infty}$
and $w \in A_{\infty}^{\rm loc}$. 
Also let $L \in {\mathbb N}$
and $T$ be the bounded linear operator on $L^2({\mathbb R}^n)$
as in Theorem \ref{thm:6.4}.
Let
$a$ be a $(p(\cdot),\infty,2L+2n+2)_w$-atom
supported on $Q$.
Under the assumption of  Theorem \ref{thm:6.4},
\[
{\mathcal M}^0_{N_{p(\cdot),w}}[k*a](x) \lesssim
{\mathcal M}^0_{N_{p(\cdot),w}}[\chi_{5Q}(k*a-P_{2L+2n+2,5Q}[k*a])](x)+ M^{\rm loc}\chi_Q(x)^{\frac{n+L+1}{n}}
\]
for all $x \in {\mathbb R}^n \setminus 5Q$.
\end{lemma}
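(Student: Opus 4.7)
The plan is to decompose $k*a$ so that the first term on the right-hand side absorbs exactly the part supported in $5Q$ that is orthogonal to polynomials, and to control the remainder pointwise by the target quantity. Write
\[
k*a=\chi_{5Q}\bigl(k*a-P_{2L+2n+2,5Q}[k*a]\bigr)+h,\qquad
h\equiv \chi_{5Q}P_{2L+2n+2,5Q}[k*a]+\chi_{{\mathbb R}^n\setminus 5Q}(k*a).
\]
By the sublinearity of ${\mathcal M}^0_{N_{p(\cdot),w}}$, it suffices to establish the pointwise bound ${\mathcal M}^0_{N_{p(\cdot),w}}h(x)\lesssim M^{\rm loc}\chi_Q(x)^{(n+L+1)/n}$ for $x\in{\mathbb R}^n\setminus 5Q$.

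Two structural properties of $h$ are crucial. First, $h$ inherits moment vanishing of order $2L+2n+2$ from $k*a$: since $a$ has $2L+2n+2$ vanishing moments and $k\in\mathcal{S}({\mathbb R}^n)$, so does $k*a$, and the defining property of the projection $P_{2L+2n+2,5Q}$ yields $\int (k*a-P_{2L+2n+2,5Q}[k*a])\chi_{5Q}\,y^\alpha dy=0$ for $|\alpha|\le 2L+2n+2$, hence $\int h(y)\,y^\alpha dy=\int(k*a)(y)\,y^\alpha dy=0$. Second, $h$ has a useful pointwise size bound: on ${\mathbb R}^n\setminus 5Q$, Lemma \ref{lem:220216-1} gives $|h|\lesssim (M^{\rm loc}\chi_Q)^{(n+L+1)/n}$; on $5Q$, Plancherel together with $\|a\|_{L^2}\le |Q|^{1/2}$ (from the $L^\infty(w)$-atom condition) yields $\|k*a\|_{L^2(5Q)}\le\|\hat k\|_{L^\infty}|Q|^{1/2}$, so by \eqref{eq:220224-111} the polynomial piece obeys $\|\chi_{5Q}P_{2L+2n+2,5Q}[k*a]\|_{L^\infty}\lesssim 1$. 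Note also that $h$ is supported in a bounded cube $R\supset Q+[-\gamma_0,\gamma_0]^n$.

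With these in hand, one mimics the argument of Lemma \ref{lem:190408-1}. Fix $x\in{\mathbb R}^n\setminus 5Q$, $\varphi\in{\mathcal D}^0_{N_{p(\cdot),w}}$ and $0<t<1$; assuming $N_{p(\cdot),w}\ge 2L+2n+2$, we may apply moment cancellation to write
\[
\varphi_t*h(x)=\int_{{\mathbb R}^n}\bigl[\varphi_t(x-y)-q(y)\bigr]h(y)\,dy
\]
for the Taylor polynomial $q$ of $\varphi_t(x-\cdot)$ at $x_0=c(Q)$ of degree $2L+2n+2$, and then balance the Taylor remainder estimate $|\varphi_t(x-y)-q(y)|\lesssim t^{-n-M-1}|y-x_0|^{M+1}$ (for $M=2L+2n+2$) against the two regimes for $h$. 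For $t\lesssim|x-x_0|$, one uses that the support of $\varphi_t(x-\cdot)$ lies in $B(x,t)\subset{\mathbb R}^n\setminus 5Q$ and the decay of $h$ there; for $t\gtrsim|x-x_0|$, one integrates the Taylor remainder times $|h(y)|$ over $R$, where the extra factors $|y-x_0|^{M+1}$ with $M+1>n+L+1$ compensate for the size of the polynomial piece. Both branches produce $\lesssim \ell(Q)^{n+L+1}/|x-x_0|^{n+L+1}\sim M^{\rm loc}\chi_Q(x)^{(n+L+1)/n}$.

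The principal obstacle is precisely the polynomial piece $\chi_{5Q}P_{2L+2n+2,5Q}[k*a]$: bounding the two summands of $h$ separately gives only $\lesssim M^{\rm loc}\chi_Q(x)$, which is \emph{larger} than the target exponent on ${\mathbb R}^n\setminus 5Q$. The strict improvement to $M^{\rm loc}\chi_Q(x)^{(n+L+1)/n}$ requires that one exploit the joint moment cancellation of $h$ rather than estimating $\chi_{5Q}P$ and $\chi_{{\mathbb R}^n\setminus 5Q}(k*a)$ in isolation, which is why the moment order $2L+2n+2$ (rather than $L$) is imposed on the atom.
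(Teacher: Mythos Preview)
Your approach is essentially the paper's: the same splitting $k*a=\chi_{5Q}(k*a-P_{2L+2n+2,5Q}[k*a])+h$, the same observation that $h\in\mathcal P_{2L+2n+2}^\perp(\mathbb R^n)$, and the same global size control $|h(y)|\lesssim(1+\ell(Q)^{-1}|y-x_0|)^{-(n+L+1)}$ obtained by combining Lemma~\ref{lem:220216-1} outside $5Q$ with the bound \eqref{eq:220224-111} inside. The only real difference is packaging: the paper feeds these two pieces of information into Lemma~\ref{lem3sw} (applied once for $t\le\ell(Q)$ and once for $t\ge\ell(Q)$) to get a single closed-form bound on $|\varphi_t*h(x)|$, and then simplifies algebraically; you instead write out the Taylor-remainder computation by hand, splitting according to $t$ versus $|x-x_0|$.

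Two remarks on your execution. First, the claim ``$B(x,t)\subset\mathbb R^n\setminus 5Q$ when $t\lesssim|x-x_0|$'' is not literally true for $x$ near $\partial(5Q)$; what you actually need (and what suffices) is that for such $t$ one has $|y-x_0|\sim|x-x_0|$ on $B(x,t)$, so the \emph{global} decay bound on $h$ handles both the polynomial piece on $5Q$ and the tail simultaneously. The paper's split at $t=\ell(Q)$ makes this transparent. Second, your explicit hypothesis $N_{p(\cdot),w}\ge 2L+2n+2$ (needed so that $(2L+2n+3)$ derivatives of $\varphi\in\mathcal D^0_{N_{p(\cdot),w}}$ are uniformly controlled) is equally required by the paper's invocation of Lemma~\ref{lem3sw}, though the paper leaves it implicit.
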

\begin{proof}
Denote by $x_0$ the center of $Q$.
Let $t\in(0,1)$ be fixed.
Let $\varphi \in {\mathcal D}_{N_{p(\cdot),w}}^0({\mathbb R}^n)$.

We have
\[
k*a=\chi_{5Q}(k*a-P_{2L+2n+2,5Q}[k*a])
+\chi_{5Q}P_{2L+2n+2,5Q}[k*a]+\chi_{{\mathbb R}^n \setminus 5Q}k*a.
\]
Therefore, it suffices to show that
\[
{\mathcal M}^0_{N_{p(\cdot),w}}[\chi_{5Q}P_{2L+2n+2,5Q}[k*a]+\chi_{{\mathbb R}^n \setminus 5Q}k*a](x) \lesssim
 M^{\rm loc}\chi_Q(x)^{\frac{n+L+1}{n}}.
\]
Note that
\[
\chi_{5Q}P_{2L+2n+2,5Q}[k*a]+\chi_{{\mathbb R}^n \setminus 5Q}k*a
\in {\mathcal P}_{2L+2n+2}^\perp({\mathbb R}^n).
\]
By using Lemma \ref{lem3sw} twice 
(for the case $t \le \ell(Q)$ and $t \ge \ell(Q)$)
and Lemmas \ref{lem:220216-1} and (\ref{eq:220224-111}),
we have
\begin{align*}
\lefteqn{
|\varphi_t*(
\chi_{5Q}P_{2L+2n+2,5Q}[k*a]+\chi_{{\mathbb R}^n \setminus 5Q}k*a)
(x)|
}\\
&\lesssim
\min\left(1,\frac{\ell(Q)}{t}\right)^{2L+2n+3}
\frac{\ell(Q)^n\max(t,\ell(Q))^{-n}}{1+\max(t,\ell(Q))^{-L-n-1}|x-x_0|^{n+L+1}}\\
&=
\min\left(1,\frac{\ell(Q)}{t}\right)^{2L+2n+3}
\frac{\ell(Q)^n\max(t,\ell(Q))^{-n}}{1+\min(1,t^{-1}\ell(Q))^{n+L+1}\ell(Q)^{-n-L-1}|x-x_0|^{n+L+1}}\\
&\le
\min\left(1,\frac{\ell(Q)}{t}\right)^{2L+2n+3}
\frac{\ell(Q)^n\max(t,\ell(Q))^{-n}}{\min(1,t^{-1}\ell(Q))^{n+L+1}(1+\ell(Q)^{-n-L-1}|x-x_0|^{n+L+1})}\\
&=
\min\left(1,\frac{\ell(Q)}{t}\right)^{L+n+2}
\frac{\ell(Q)^n\max(t,\ell(Q))^{-n}}{1+\ell(Q)^{-n-L-1}|x-x_0|^{n+L+1}}\\
&\le
\frac{1}{1+\ell(Q)^{-n-L-1}|x-x_0|^{n+L+1}}.
\end{align*}
Therefore,
\[
\sup_{0<t<1}
|\varphi_t*(
\chi_{5Q}P_{2L+2n+2,5Q}[k*a]+\chi_{{\mathbb R}^n \setminus 5Q}k*a)
(x)|
\lesssim
\frac{1}{1+\ell(Q)^{-n-L-1}|x-x_0|^{n+L+1}}.
\]
Since
\[
{\rm supp}(
{\mathcal M}^0_{N_{p(\cdot),w}}[\chi_{5Q}P_{2L+2n+2,5Q}[k*a]+\chi_{{\mathbb R}^n \setminus 5Q}k*a])
\subset
Q(x_0,2+2\gamma_0)
\]
and
\[
\frac{\chi_{Q(x_0,2+2\gamma_0)}(x)}{1+\ell(Q)^{-n-L-1}|x-x_0|^{n+L+1}}
\lesssim
M^{\rm loc}\chi_Q(x)^{\frac{n+L+1}{n}},
\]
we obtain the desired result.
\end{proof}

\begin{proof}[Proof of Theorem \ref{thm:6.4}]
We assume $w \in L^1({\mathbb R}^n)$;
otherwise we can readily modify the proof below.
It suffices to show that
\[
\|T f\|_{h^{p(\cdot)}(w)}
\lesssim
\left(\|{\mathcal F}k\|_{L^\infty}+\sum_{j=0}^L B_j\right)
\|f\|_{h^{p(\cdot)}(w)}
\]
for all
$f \in h^{p(\cdot)}(w) \cap L^{p_++q_w}(w) \cap L^\infty({\mathbb R}^n)$
thanks to Lemma \ref{lem:190408-74}.

Let
$f \in h^{p(\cdot)}(w) \cap L^{p_++q_w}(w) \cap L^\infty({\mathbb R}^n)$
and
fix $L \gg n+1$.
Due to Theorem \ref{thm:190406-2},
there exist
$\{a_j\}_{j=0}^\infty \subset L^0({\mathbb R}^n)$,
$\{b_j\}_{j=1}^\infty \subset L^0({\mathbb R}^n)$,
$\{\lambda_j\}_{j=0}^{\infty} \subset [0, \infty)$
and
$\{\lambda_j'\}_{j=1}^{\infty} \subset [0, \infty)$
such that
$a_0$ is
a single $(p(\cdot),q)_w$-atom,
that
each
$a_j, j \in {\mathbb N}$
is
a $(p(\cdot),\infty,L)_w$-atom supported on a cube $Q_j$
with $|Q_j|<1$,
that
each
$b_j, j \in {\mathbb N}$
is
a $(p(\cdot),\infty,L)_w$-atom supported on a cube $R_j$
with $|R_j|=1$,
that
$f=\sum\limits_{j=0}^{\infty} \lambda_j a_j+\sum\limits_{j=1}^{\infty} \lambda_j' b_j$
holds
in the topology of
$h^{p(\cdot)}(w) \cap L^{p_++q_w}(w)$,
and that
\begin{equation}\label{eq:220216-101}
|\lambda_0|+
{\mathcal A}_{p(\cdot),w,v}(\{\lambda_j\}_{j=1}^{\infty};\{Q_j\}_{j=1}^{\infty})+
{\mathcal A}_{p(\cdot),w,v}(\{\lambda_j'\}_{j=1}^{\infty};\{R_j\}_{j=1}^{\infty})
\lesssim
\|f\|_{h^{p(\cdot)}(w)}.
\end{equation}

According to the famous Calder\'{o}n--Zygmund theory
(\cite{Gr,Gr14,Sawano-text-2018,Stein-text-93}),
\[
\|T\|_{L^2 \to L^2}
+
D_1(T)+D_2(T)
\lesssim
\|{\mathcal F}k\|_{L^\infty}+\sum_{j=0}^L B_j
\]
and that
$T$ maps
$L^{p_++q_w}(w)$
continuously to itself
and satisfies
\[
\|T\|_{L^{p_++q_w}(w) \to L^{p_++q_w}(w)}
\lesssim
\|{\mathcal F}k\|_{L^\infty}+\sum_{j=0}^L B_j
\] 
thanks to Lemma \ref{prop:200224-1}.
Thus,
$T f=\sum\limits_{j=0}^{\infty} \lambda_j T a_j+\sum\limits_{j=1}^{\infty} \lambda_j' T b_j$
holds
in the topology of
$L^{p_++q_w}(w)$.

We put
\[
{\rm I}_1:=
\left\|
\sum\limits_{j=0}^{\infty} \lambda_j 
\chi_{10Q_j}
{\mathcal M}^0_{N_{p(\cdot),w}}[T a_j]
\right\|_{L^{p(\cdot)}(w)}, \quad
{\rm I}_2:=
\left\|
\sum\limits_{j=1}^{\infty} \lambda_j' 
{\mathcal M}^0_{N_{p(\cdot),w}} [T b_j]
\right\|_{L^{p(\cdot)}(w)}
\]
and
\[
{\rm II}:=
\left\|
\sum\limits_{j=0}^{\infty} \lambda_j
\chi_{
{\mathbb R}^n \setminus
10Q_j} 
{\mathcal M}^0_{N_{p(\cdot),w}}[T a_j]
\right\|_{h^{p(\cdot)}(w)}.
\]
Set
\[
v:=\frac{\min(1,p_-)}{2},
\quad
uv>\max(p_+, q_w).
\]
Then, 
\begin{align*}
\|Tf\|_{h^{p(\cdot)}(w)}
&=
\left\|
\sum\limits_{j=0}^{\infty} \lambda_j T a_j+\sum\limits_{j=1}^{\infty} \lambda_j' T b_j
\right\|_{h^{p(\cdot)}(w)}\\
&\lesssim
\left\|
\sum\limits_{j=0}^{\infty} \lambda_j T a_j
\right\|_{h^{p(\cdot)}(w)}
+
\left\|
\sum\limits_{j=1}^{\infty} \lambda_j' T b_j
\right\|_{h^{p(\cdot)}(w)}\\
&=
\left\|
\sum\limits_{j=0}^{\infty} \lambda_j 
{\mathcal M}^0_{N_{p(\cdot),w}}[T a_j]
\right\|_{L^{p(\cdot)}(w)}
+
\left\|
\sum\limits_{j=1}^{\infty} \lambda_j' 
{\mathcal M}^0_{N_{p(\cdot),w}} [T b_j]
\right\|_{L^{p(\cdot)}(w)}\\
&\lesssim
{\rm I}_1+{\rm I}_2+{\rm II}.
\end{align*}
We employ 
Lemma \ref{lem:190408-100} and
Theorem \ref{lem:210805-1}
for ${\rm I}_1$ to give
\begin{align*}
{\rm I}_1
&\le
\left\|
\left(\sum\limits_{j=0}^{\infty} (\lambda_j 
\chi_{10Q_j}
{\mathcal M}^0_{N_{p(\cdot),w}}[T a_j])^v\right)^{\frac1v}
\right\|_{L^{p(\cdot)}(w)}\\
&\lesssim
{\mathcal A}_{p(\cdot),w,v}
(\{
\lambda_jm_{Q_j,w}^{(u v)}(\chi_{10Q_j}{\mathcal M}^0_{N_{p(\cdot),w}}[T a_j])
\}_{j=0}^{\infty};\{Q_j\}_{j=0}^{\infty}).
\end{align*}
Similar to Lemma \ref{lem:190408-3}, 
since $|a_j| \le \chi_{Q_j}$ and $w \in A_{u v}^{\rm loc}$,
we have
$$
m_{Q_j,w}^{(u v)}(\chi_{10Q_j}{\mathcal M}^0_{N_{p(\cdot),w}}[T a_j]) \lesssim
\frac{\|a_j\|_{L^{u v}(w)}}{\|\chi_{Q_j}\|_{L^{u v}(w)}} \lesssim 1.$$
Hence,
\[
{\rm I}_1
\lesssim
|\lambda_0|+
{\mathcal A}_{p(\cdot),w,v}
(\{\lambda_j\}_{j=1}^{\infty};\{Q_j\}_{j=1}^{\infty}).
\]
Meanwhile, 
by Theorem \ref{lem:210805-1} and 
Lemma \ref{lem:211029-113},
${\rm I}_2$ is estimated as
\begin{align*}
{\rm I}_2
&
\lesssim
{\mathcal A}_{p(\cdot),w,v}
(\{
\lambda_j' m_{R_j,w}^{(u v)}({\mathcal M}^0_{N_{p(\cdot),w}}[T b_j])
\}_{j=0}^{\infty};\{R_j\}_{j=0}^{\infty})\\
&\quad
\lesssim
{\mathcal A}_{p(\cdot),w,v}
(\{
\lambda_j' m_{R_j,w}^{(u v)}(M^{\rm loc}[T b_j])
\}_{j=0}^{\infty};\{R_j\}_{j=0}^{\infty})\\
&\quad
\lesssim
{\mathcal A}_{p(\cdot),w,v}
(\{
\lambda_j' m_{R_j,w}^{(u v)}(M^{\rm loc}[(T b_j)\chi_{3R_j}])
\}_{j=1}^{\infty};\{R_j\}_{j=1}^{\infty})\\
&\quad\qquad+
{\mathcal A}_{p(\cdot),w,v}
(\{
\lambda_j' m_{R_j,w}^{(u v)}(M^{\rm loc}[(M^{\rm loc}\chi_{R_j})^{\frac{n+L+1}{n}}])
\}_{j=1}^{\infty};\{R_j\}_{j=1}^{\infty})\\
&\quad
\lesssim
{\mathcal A}_{p(\cdot),w,v}(\{\lambda_j'\}_{j=1}^{\infty};\{R_j\}_{j=1}^{\infty}).
\end{align*}
Consequently, we have
\begin{equation}\label{eq:220216-102}
{\rm I}_1+{\rm I}_2\lesssim
|\lambda_0|+
{\mathcal A}_{p(\cdot),w,v}(\{\lambda_j\}_{j=1}^{\infty};\{Q_j\}_{j=1}^{\infty})+
{\mathcal A}_{p(\cdot),w,v}(\{\lambda_j'\}_{j=1}^{\infty};\{R_j\}_{j=1}^{\infty}).
\end{equation}
We employ 
Lemmas \ref{lem:190408-100}, \ref{lem:190408-1} and \ref{lem:220209-1}
for
${\rm II}$
to give
\begin{align}\label{eq:220216-103}
{\rm II}&\lesssim
\left\|
\sum\limits_{j=0}^{\infty} \lambda_j
\chi_{
{\mathbb R}^n \setminus
10Q_j} 
(M^{\rm loc}\chi_{Q_j})^{\frac{n+L+1}{n}}
\right\|_{L^{p(\cdot)}(w)}\\
&\quad+
\left\|
\sum\limits_{j=0}^{\infty} \lambda_j
\chi_{
{\mathbb R}^n \setminus
10Q_j} {\mathcal M}^0_{N_{p(\cdot),w}}[\chi_{5Q_j}(k*a-P_{2L+2n+2,5Q_j}[k*a_j])]
\right\|_{L^{p(\cdot)}(w)}\nonumber\\
&\lesssim
\left\|
\sum\limits_{j=0}^{\infty} \lambda_j
\chi_{
{\mathbb R}^n \setminus
10Q_j} 
(M^{\rm loc}\chi_{Q_j})^{\frac{n+L+1}{n}}
\right\|_{L^{p(\cdot)}(w)}\nonumber\\
&\lesssim
{\mathcal A}_{p(\cdot),w,v}(\{\lambda_j\}_{j=0}^{\infty};\{Q_j\}_{j=0}^{\infty}).\nonumber
\end{align}
If we combine (\ref{eq:220216-101})--(\ref{eq:220216-103}),
we obtain the desired result.
\end{proof}

\section{Littlewood--Paley characterization}
\label{s77}

Section \ref{s77}
considers
the Littlewood--Paley characterization of $h^{p(\cdot)}(w)$
as an application of the results 
in Section \ref{s60}.
The result of this section will be a natural
extension to the weighted case of the result
in \cite{NaSa12}.
What differs from \cite{NaSa12}
is that the Plancherel--Polya-Nikolski\'i inequality
is not available in this weighted setting.
To overcome this difficulty, we
use Corollary \ref{cor:210923-5}.
Section \ref{s731} modifies the idea in \cite{NaSa12},
where we refine what we obtained in Section \ref{s60}.
Under this modification, we combine the idea obtained in \cite{NaSa12}
with Corollary \ref{cor:210923-5}
in
Section \ref{s71}.
Section \ref{s7312} is devoted to the Littlewood--Paley characterization
of $h^{p(\cdot)}(w)$
as a preparatory step in Section \ref{s80}.

\subsection{Vector-valued extension of Theorem \ref{thm:6.4}}
\label{s731}

Theorem \ref{thm:7.2} is a natural extension
of Theorem \ref{thm:6.4}
in which $|\cdot|$ in the definition of ${\mathcal M}^0_{N_{p(\cdot),w}}f$
is
replaced by $\ell^2({\mathbb N}_0)$.
We introduce the $\ell^2({\mathbb N}_0)$-valued function space
$h^{p(\cdot)}(w;\ell^2({\mathbb N}_0))$.
Suppose that we are given a sequence
$\{f_j\}_{j=0}^\infty
\subset {\mathcal D}'({\mathbb R}^n)$.

Let $\psi \in {\mathcal D}({\mathbb R}^n)$
be a function such that $\chi_{[-1,1]^n} \le \psi \le \chi_{[-2,2]^n}$.
We set $\psi^k \equiv 2^{k n}\psi(2^{k}\cdot)$ for $k \in {\mathbb N}$.
With this in mind, we define
\[
\|\{f_j\}_{j=0}^\infty\|_{h^{p(\cdot)}(w,\ell^2)}
\equiv
\left\|
\sup_{k \in {\mathbb N}_0}
\left(
\sum_{j=0}^\infty |\psi^k*f_j|^2
\right)^{\frac12}
\right\|_{L^{p(\cdot)}(w)}.
\]
Observe that this is a natural vector-valued extension of
the quasi-norm equivalence 
\[
\|f\|_{h^{p(\cdot)}(w)}
\sim
\left\|
\sup_{k \in {\mathbb N}_0}
|\psi^k*f|\right\|_{L^{p(\cdot)}(w)}
\quad (f \in h^{p(\cdot)}(w)).
\]
The $\ell^2({\mathbb N}_0)$-valued function space
$h^{p(\cdot)}(w,\ell^2({\mathbb N}_0))$
is the set of all
$\{f_j\}_{j=0}^\infty
\subset {\mathcal D}'({\mathbb R}^n)$
for which
$
\|\{f_j\}_{j=0}^\infty\|_{h^{p(\cdot)}(w,\ell^2)}$ is finite.

Then the next theorem is analogous to Theorem \ref{thm:6.4}.
We omit the proof due to similarity.
\begin{theorem}[cf. {\cite[Theorem 5.6]{NaSa12}}]\label{thm:7.2}
Let $p(\cdot) \in {\mathcal P}_0
\cap {\rm L H}_0 \cap {\rm L H}_{\infty}$
and $w \in A_{\infty}^{\rm loc}$. 
If $T=\{T_k\}_{k \in {\mathbb N}_0}$ is a collection
of $L^2({\mathbb R}^n;\ell^2({\mathbb N}_0))$-$L^2({\mathbb R}^n)$ bounded operators
such that there exists a collection
$\{k_{ij}\}_{i,j \in {\mathbb N}_0} \subset {\mathcal D}({\mathbb R}^n)$
with the following properties{\rm:}
\begin{enumerate}
\item
There exists a constant $\gamma_0>0$ such that
$$\displaystyle
|x|^{n+m}\|\{\nabla^m k_{ij}(x)\}_{i,j \in {\mathbb N}_0}\|_{\ell^2({\mathbb N}_0) \to \ell^2({\mathbb N}_0)}
\lesssim
\chi_{[-\gamma_0,\gamma_0]^n}(x)
\quad (x \in {\mathbb R}^n). $$
for every $m \in {\mathbb N}_0$.
\item
If $\{f_j\}_{j=0}^\infty$ is a sequence of 
compactly supported $L^2({\mathbb R}^n)$-functions,
then 
\[
T_i[\{f_j\}_{j=0}^\infty](x)
=\sum_{j=0}^\infty
k_{ij}*f_j(x), \quad i \in {\mathbb N}_0
\]
for $x \in {\mathbb R}^n$.
\item
$k_{ij}\equiv 0$ if $|i|+|j|$ is large enough.
\end{enumerate}
Then 
\begin{align}
\label{eq:sawano-100429-2}
\left\|
\{T_i[\{f_j\}_{j=0}^\infty]\}_{i=0}^\infty
\right\|_{h^{p(\cdot)}(w,\ell^2)}
\lesssim
\left\|
\{f_j\}_{j=0}^\infty
\right\|_{h^{p(\cdot)}(w,\ell^2)}
\end{align}for all $\{f_j\}_{j=1}^\infty \in h^{p(\cdot)}(w,\ell^2({\mathbb N}_0))$.
\end{theorem}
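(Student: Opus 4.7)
The plan is to mimic the proof of Theorem \ref{thm:6.4} in the vector-valued setting, replacing scalar atoms by $\ell^2$-valued atoms and invoking the two-parameter vector-valued maximal inequality (Proposition \ref{prop:190408-10011}) in place of the one-parameter version (Lemma \ref{lem:190408-100}). The starting point is to introduce a notion of vector-valued $(p(\cdot),q,L)_w$-atom: a sequence $\{a_j\}_{j=0}^{\infty}$ supported in a common cube $Q$ with $|Q|\le 1$, satisfying the componentwise moment condition $a_j\perp\mathcal{P}_L(\mathbb{R}^n)$ for each $j$, together with the size bound $\bigl\|(\sum_j |a_j|^2)^{1/2}\bigr\|_{L^q(w)}\le w(Q)^{1/q}$. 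Along the same lines as Sections \ref{subsection:5.3}--\ref{subsection:5.2} (see in particular Theorem \ref{lem:210805-1} and the decomposition argument built around Lemmas \ref{lem:190408-71}--\ref{lem:190408-73}), one establishes an atomic decomposition theorem asserting
\[
\bigl\|\{f_j\}_{j=0}^{\infty}\bigr\|_{h^{p(\cdot)}(w,\ell^2)}
\sim
\inf\left(|\lambda_0|+{\mathcal A}_{p(\cdot),w,v}(\{\lambda_m\}_{m=1}^\infty;\{Q_m\}_{m=1}^\infty)\right),
\]
the infimum taken over all decompositions $\{f_j\}=\lambda_0\{a_{0,j}\}+\sum_{m=1}^\infty \lambda_m\{a_{m,j}\}$ into vector-valued atoms. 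The reduction to the scalar case is direct: apply the grand maximal operator coordinatewise, take the $\ell^2$-norm, and use Proposition \ref{prop:190408-10011} wherever the scalar argument uses Lemma \ref{lem:190408-100}.

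Granted this decomposition, it suffices to estimate $\|\{T_i[\{a_{m,j}\}_j]\}_{i=0}^\infty\|_{h^{p(\cdot)}(w,\ell^2)}$ uniformly in $m$, with the same cube-adjusted control as in the scalar case. For a vector-valued atom $\{a_j\}_j$ supported in $Q$ with $|Q|\le 1$, I would derive the vector-valued analogues of Lemmas \ref{lem:211029-113}, \ref{lem:220216-1} and \ref{lem:220209-1}. The size/H\"ormander condition (1) on the matrix kernel $\{k_{ij}\}$ combined with the componentwise moment condition yields, by a Taylor expansion around $x_0=c(Q)$ and Cauchy--Schwarz at the matrix level,
\[
\Bigl(\sum_{i=0}^\infty |T_i[\{a_j\}_j](x)|^2\Bigr)^{1/2}\chi_{\mathbb{R}^n\setminus 5Q}(x)
\lesssim (M^{\rm loc}\chi_Q(x))^{\frac{n+L+1}{n}},
\]
exactly as in Lemma \ref{lem:220216-1}; and for the near-field contribution one writes $\{T_i[\{a_j\}_j]\}_i=\{\chi_{5Q}(T_i[\{a_j\}_j]-P_{2L+2n+2,5Q}T_i[\{a_j\}_j])\}+\{\chi_{5Q}P_{2L+2n+2,5Q}T_i[\{a_j\}_j]+\chi_{{\mathbb R}^n\setminus 5Q}T_i[\{a_j\}_j]\}$, so that the pointwise estimate of Lemma \ref{lem:220209-1} goes through with $|\cdot|$ replaced by $\|\cdot\|_{\ell^2}$, the polynomial projection applied coordinatewise, and Lemma \ref{lem3sw} applied to the resulting moment-free sequences.

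With these vector-valued pointwise bounds in hand, the rest is bookkeeping exactly as in the proof of Theorem \ref{thm:6.4}. Split $\|\{T_i[\sum_m\lambda_m\{a_{m,j}\}]\}_i\|_{h^{p(\cdot)}(w,\ell^2)}$ into the near-field pieces $\mathrm{I}_1,\mathrm{I}_2$ (on the enlargements of the cubes $Q_m,R_m$) and the far-field piece $\mathrm{II}$. For $\mathrm{I}_1,\mathrm{I}_2$ apply Theorem \ref{lem:210805-1} with the $\ell^2$-norms of the images, using the $L^{uv}(w;\ell^2)$-$L^{uv}(w)$ boundedness of $\{T_i\}$ (which follows from Proposition \ref{prop:200224-1} applied to the coordinate operators together with the $\ell^2$-structure built into hypothesis (1)) to control $m_{Q_m,w}^{(uv)}$; for $\mathrm{II}$, use the pointwise $(M^{\rm loc}\chi_{Q_m})^{(n+L+1)/n}$ bound together with the vector-valued maximal inequality (Lemma \ref{lem:190408-100}).

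The main obstacle, in my view, is the first step, namely setting up the vector-valued atomic decomposition for $h^{p(\cdot)}(w,\ell^2)$ and verifying that the Whitney-based construction of Section \ref{subsection:5.2} (together with the polynomial corrections $P_k$) adapts coordinatewise while preserving the $\ell^2$-norm control. Once that is secured, the remainder of the argument is a faithful, if notationally heavier, transcription of the scalar proof of Theorem \ref{thm:6.4}, with Proposition \ref{prop:190408-10011} systematically replacing Lemma \ref{lem:190408-100}.
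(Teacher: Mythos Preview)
Your proposal is correct and follows exactly the approach the paper intends: the paper itself omits the proof of Theorem \ref{thm:7.2} entirely, stating only that it is ``analogous to Theorem \ref{thm:6.4}'' and that the proof is omitted ``due to similarity.'' Your outline---setting up an $\ell^2$-valued atomic decomposition, deriving vector-valued analogues of Lemmas \ref{lem:211029-113}, \ref{lem:220216-1}, \ref{lem:220209-1}, and replacing Lemma \ref{lem:190408-100} by Proposition \ref{prop:190408-10011}---is precisely the adaptation the paper is gesturing at, and your identification of the vector-valued Calder\'on--Zygmund decomposition as the only nontrivial step is accurate.
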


\subsection{A vector-valued inequality}
\label{s71}

We will use the following vector-valued inequality
to obtain
the Littlewood--Paley characterization of $h^{p(\cdot)}(w)$.

Since the Fourier transform of non-zero compactly supported functions
is not compactly supported, we must taylor some
auxiliary estimate without using the Plancherel--Polya--Nikolski\'i inequality.
See \cite{Sawano-text-2018}
for
the Plancherel--Polya--Nikolski\'i inequality for example.
\begin{lemma}\label{lem:210923-6}
Let $p(\cdot) \in {\mathcal P}_0
\cap {\rm L H}_0 \cap {\rm L H}_{\infty}$
and $w \in A_{\infty}^{\rm loc}$. 
Assume that $L \gg 1$.
Let
$\phi,\phi^*
\in C^\infty_{\rm c}({\mathbb R}^n)$
satisfy
$\phi^* \in {\mathcal P}_{2L}^\perp({\mathbb R}^n)$,
that
\begin{equation}\label{eq:210923-1111}
|\phi|,
|\phi^*| \le \chi_{[-1,1]^n}
\end{equation}
and that
\begin{equation}\label{eq:210514-3}
\phi^*
=\phi-2^{-n}\phi\left(\frac{\cdot}{2}\right).
\end{equation}
Then, we have
\[
\left\|
\sup_{k \in {\mathbb N}_0}
\left(
\sum_{j=1}^\infty |\Phi_{2^{-k}}*\phi^*_{2^{-j}}*f|^2
\right)^{\frac12}
\right\|_{L^{p(\cdot)}(w)}
\lesssim
\|\phi*f\|_{L^{p(\cdot)}(w)}+
\left\|
\left(
\sum_{j=1}^\infty |\phi^*_{2^{-j}}*f|^2
\right)^{\frac12}
\right\|_{L^{p(\cdot)}(w)}
\]for all $f \in {\mathcal D}'({\mathbb R}^n)$ and
$\Phi \in C^\infty_{\rm c}({\mathbb R}^n)$
with ${\rm supp}(\Phi) \subset [-1,1]^n$.
In particular,
\[
\left\|
\{\phi^*_{2^{-j}}*f\}_{j=0}^\infty
\right\|_{h^{p(\cdot)}(w,\ell^2)}
\lesssim
\|\phi*f\|_{L^{p(\cdot)}(w)}+
\left\|
\left(
\sum_{j=1}^\infty |\phi^*_{2^{-j}}*f|^2
\right)^{\frac12}
\right\|_{L^{p(\cdot)}(w)}.
\]
\end{lemma}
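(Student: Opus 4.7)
The plan is to adapt Rychkov's strategy for the Littlewood--Paley characterisation; the two key tools are Lemma~\ref{lem:211029-211} applied to a uniform family of ``good'' bumps and the vector-valued Peetre-type inequality of Corollary~\ref{cor:210923-5}. First I would observe that
\[
\Phi_{2^{-k}}*\phi^*_{2^{-j}}=\eta^{(k,j)}_{2^{-\min(k,j)}},
\]
where $\eta^{(k,j)}$ is $C^\infty$, supported in a fixed compact set, has all moments up to order $2L$ vanishing (inherited from $\phi^*$), and has derivatives bounded uniformly in $(k,j)$: explicitly, $\eta^{(k,j)}(x)=\int_{{\mathbb R}^n}\Phi(x-2^{k-j}u)\phi^*(u){\rm d}u$ for $k\le j$, and $\eta^{(k,j)}=\Phi_{2^{-(k-j)}}*\phi^*$ for $k\ge j$. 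Consequently Lemma~\ref{lem:211029-211} applies with $\zeta=\eta^{(k,j)}$ and with implicit constants independent of $(k,j)$.

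Next I would choose $r\in(0,\min(1,p_-/q_w))$ so that $p(\cdot)/r\in\mathcal{P}\cap{\rm LH}_0\cap{\rm LH}_\infty$ and $w\in A^{\rm loc}_{p(\cdot)/r}$, fix $A,B$ admissible for Corollary~\ref{cor:210923-5} at exponent $p(\cdot)/r$ with parameters $Ar,Br$ and vector-valued index $2/r>1$, and take $L$ large enough that $(2L+1)r>Ar+n$. Lemma~\ref{lem:211029-211}, applied with $\zeta=\eta^{(k,j)}$, $t=1$, and index $m:=\min(k,j)$, then yields
\begin{align*}
|\Phi_{2^{-k}}*\phi^*_{2^{-j}}*f(x)|^r
&\lesssim 2^{mn}\int_{{\mathbb R}^n}\frac{|\phi_{2^{-m}}*f(x-y)|^r}{m_{m,Ar,Br}(y)}{\rm d}y\\
&\quad+\sum_{\ell=m+1}^\infty 2^{\ell n+(m-\ell)(2L+1)r}\int_{{\mathbb R}^n}\frac{|\phi^*_{2^{-\ell}}*f(x-y)|^r}{m_{m,Ar,Br}(y)}{\rm d}y.
\end{align*}
The telescoping identity $\phi_{2^{-m}}=\phi+\sum_{\ell=1}^m\phi^*_{2^{-\ell}}$ (an immediate consequence of~\eqref{eq:210514-3}) together with $|a+b|^r\le|a|^r+|b|^r$ (valid since $r\le 1$) replaces the first integral by an integral involving $|\phi*f|^r$ plus integrals of the same form as those in the second line.

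I would then take $\sup_{k\ge 0}=\sup_{0\le m\le j}$, form the $\ell^{2/r}$-sum over $j$, and invoke Minkowski in $\ell^{2/r}(j)$ to move the $\ell$-sum outside. After bounding $m_{m,Ar,Br}(y)^{-1}\le 2^{(\ell-m)Ar}\,m_{\ell,Ar,Br}(y)^{-1}$ for $\ell\ge m$ and absorbing the polynomial loss $2^{(\ell-m)Ar}$ into the geometric decay $2^{(m-\ell)(2L+1)r}$, the whole expression reduces to a finite combination of Peetre-type integrals of the form $2^{\ell n}\int_{{\mathbb R}^n}|\phi^*_{2^{-\ell}}*f(\cdot-y)|^r/m_{\ell,Ar,Br}(y){\rm d}y$, together with the analogue for $\phi*f$. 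Applying Corollary~\ref{cor:210923-5} at exponent $p(\cdot)/r$ and vector-valued index $2/r$ converts each such integral back into $|\phi^*_{2^{-\ell}}*f|^r$ (respectively $|\phi*f|^r$) itself in the $L^{p(\cdot)/r}(w,\ell^{2/r})$-norm, and extracting an $r$-th root gives the first assertion. The ``in particular'' claim follows by running the same argument with $j$ ranging from $0$ to $\infty$ and specialising $\Phi$ to the defining bump, the $j=0$ entry $\phi^**f=\phi*f-2^{-n}\phi(\cdot/2)*f$ being handled by the same machinery (with $m=0$ throughout).

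The main obstacle is the scale-mismatch in the third step: the weight $m_{m,Ar,Br}$ arising naturally from Lemma~\ref{lem:211029-211} has index $m=\min(k,j)$ rather than the index $\ell$ of the high-frequency integrand $\phi^*_{2^{-\ell}}*f$. Absorbing this mismatch while simultaneously handling $\sup_{m\le j}$, the $\ell^{2/r}$-sum in $j$, and the geometric sum in $\ell$ forces the calibration $(2L+1)r>Ar+n$ so that the polynomial blow-up from rescaling the weights is dominated by the geometric factor; without the uniform seminorm bounds on $\eta^{(k,j)}$ established in the first step one would additionally lose control of the implicit constants in Lemma~\ref{lem:211029-211} across $(k,j)$.
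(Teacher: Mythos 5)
Your high-level strategy (absorbing $\Phi_{2^{-k}}*\phi^*_{2^{-j}}$ into a uniform family $\eta^{(k,j)}$ and invoking Lemma~\ref{lem:211029-211} once, rather than running the Calder\'on expansion explicitly as the paper does) is plausible, and your rescaling and bookkeeping of the Peetre weights is in the right spirit. However, there is a genuine gap in the decay bookkeeping in the index $j$.

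Concretely, you assert that Lemma~\ref{lem:211029-211} applies to $\zeta=\eta^{(k,j)}$ \emph{with implicit constants independent of $(k,j)$}. That is true (the $C^N$-seminorms of $\eta^{(k,j)}$ are uniformly bounded), but it is not sufficient. After you apply the lemma at index $m=\min(k,j)$ and telescope $\phi_{2^{-m}}$, the bound you obtain on $\bigl(\sup_{k}|\Phi_{2^{-k}}*\phi^*_{2^{-j}}*f|\bigr)^r$ depends on $j$ only through the constraint $m\le j$ in the outer max. In particular, for $k<j$ (so $m=k$) the resulting majorant $F(m)$ has no decay in $j-m$ at all, and $\max_{0\le m\le j}F(m)$ is a nondecreasing function of $j$. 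The $\ell^{2/r}$-sum in $j$ therefore diverges at the level of your pointwise estimate, and Corollary~\ref{cor:210923-5} cannot rescue it.

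What is missing is precisely the extra exponential gain that the paper harvests as the factor $2^{-2Lj}$ in inequality~\eqref{eq:211201-4}. In your language this gain should come from the $\zeta$-dependence of the constant in Lemma~\ref{lem:211029-211}: because $\phi^*\in{\mathcal P}_{2L}^{\perp}({\mathbb R}^n)$, a Taylor expansion of $\Phi$ inside $\eta^{(k,j)}(z)=\int\Phi(z-2^{k-j}u)\phi^*(u)\,{\rm d}u$ (for $k<j$) shows not merely that the $C^N$-seminorms of $\eta^{(k,j)}$ are bounded but that they decay like $2^{-(j-k)(2L+1)}$. Only after inserting that decay does $\sup_k|\Phi_{2^{-k}}*\phi^*_{2^{-j}}*f|^r$ acquire a factor comparable to $2^{-(j-m)\delta}$ in front of $F(m)$, after which the $\ell^{2/r}(j)$-sum closes by a Schur/Young estimate and the rest of your argument proceeds. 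The paper sidesteps this issue by estimating the composite kernels $\Phi_{2^{-k}}*\psi*\phi^*_{2^{-j}}$ and $\Phi_{2^{-k}}*\phi^*_{2^{-j}}*\psi^*_{2^{-l}}$ directly via Lemma~\ref{lem3sw}, which exhibits the $2^{-2L|l-j|}$ decay explicitly; your approach buries the same mechanism inside the unstated $(k,j)$-dependence of Lemma~\ref{lem:211029-211}'s constant, and the write-up as given actively suppresses it by claiming uniformity. A secondary issue, less fatal but also unaddressed, is that the terms $2^{mn}\int |\phi^*_{2^{-\ell'}}*f(x-y)|^r/m_{m,Ar,Br}(y)\,{\rm d}y$ with $\ell'<m$ produced by your telescoping step are \emph{not} ``of the same form as those in the second line'': they require a separate application of the Peetre bound of Lemma~\ref{lem:211029-111}, and the extra sum this introduces must also be controlled by the $j$-decay you have not yet established.
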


We remark that the couple
$(\phi,\phi^* 
)$
exists according to \cite[Lemma 6.5]{IST10}.

\begin{proof}We employ Lemma \ref{lem:210712-1}.
Choose
$\psi, \psi^* \in C^\infty_{\rm c}({\mathbb R}^n)$
so that
\begin{equation}\label{eq:210514-1}
|\psi|,
|\psi^*| \le \chi_{[-1,1]^n}
\end{equation}
that
\begin{equation}\label{eq:210514-2}
\psi^* \in {\mathcal P}_{2L}^{\perp}({\mathbb R}^n),
\end{equation}
and that
\begin{equation}\label{eq:210514-5}
\phi*\psi+\sum_{j=1}^{\infty} \phi^*_{2^{-j}}*\psi^*_{2^{-j}}=\delta
\end{equation}
in the topology of ${\mathcal D}'({\mathbb R}^n)$.
Fix $k$ and $j$ for now.
We decompose
\[
\Phi_{2^{-k}}*\phi^*_{2^{-j}}*f
=
\Phi_{2^{-k}}*\psi*\phi^*_{2^{-j}}*\phi*f
+
\sum_{l=1}^\infty
\Phi_{2^{-k}}*\phi^*_{2^{-j}}*\psi^*_{2^{-l}}*\phi^*_{2^{-l}}*f
\]
It follows from Lemma \ref{lem3sw} that
\begin{equation}\label{eq:211031-4}
|\Phi_{2^{-k}}*\psi*\phi^*_{2^{-j}}|
\lesssim
2^{-2Lj}\chi_{[-3,3]^n}
\end{equation}
and that
\begin{align}\label{eq:211031-5}
|\Phi_{2^{-k}}*\phi^*_{2^{-j}}*\psi^*_{2^{-l}}|
&\lesssim
2^{n\min(j,k,l)-2L\min(k-j,k-l,l-j,j-l)}
\chi_{[-2^{2-\min(j,k,l)},2^{2-\min(j,k,l)}]^n}\\
&\lesssim
2^{n\min(j,k,l)+2L|l-j|}
\chi_{[-2^{2-\min(j,k,l)},2^{2-\min(j,k,l)}]^n}. \nonumber
\end{align}
Not that $\min(k-j,k-l,l-j,j-l)\le -|l-j|$.
Let $L \gg 2A>A>B \gg 1$.
Let $r$ be a constant, which is slightly less than 
$\frac{\min(1,p_-)}{q_w} (<1)$.
Thanks to Lemma \ref{lem:211029-111}, 
(\ref{eq:211031-4}) and H\"older's inequality for $l$, we have
\begin{align*}
\lefteqn{
|\Phi_{2^{-k}}*\psi*\phi^*_{2^{-j}}*\phi*f(x)|
}\\
&\lesssim
2^{-2Lj}\int_{[-3,3]^n}|\phi*f(x-z)|{\rm d}z\\
&\lesssim
2^{-2Lj}\int_{[-3,3]^n}
\left(
\int_{{\mathbb R}^n}\frac{|\phi*f(x-z-y)|^r}{m_{0,A r,B r}(y)}{\rm d}y
\right)^{\frac1r}
{\rm d}z\\
&\quad+
2^{-2Lj}\int_{[-3,3]^n}
\left(\sum_{l=1}^\infty
2^{l n-lLr}
\int_{{\mathbb R}^n}\frac{|\phi^*_{2^{-l}}*f(x-z-y)|^r}{m_{0,A r,B r}(y)}{\rm d}y
\right)^{\frac1r}
{\rm d}z\\
&\lesssim
2^{-2Lj}\int_{[-3,3]^n}
\left(
\int_{{\mathbb R}^n}\frac{|\phi*f(x-z-y)|^r}{m_{0,A r,B r}(y)}{\rm d}y
\right)^{\frac1r}
{\rm d}z\\
&\quad+
2^{-2Lj}\int_{[-3,3]^n}
\sum_{l=1}^\infty
2^{-2lA}
\left(2^{l n}
\int_{{\mathbb R}^n}\frac{|\phi^*_{2^{-l}}*f(x-z-y)|^r}{m_{0,A r,B r}(y)}{\rm d}y
\right)^{\frac1r}
{\rm d}z.
\end{align*}
Since $$m_{0,A r,B r}(y) \sim m_{0,A r,B r}(y+z)$$
for all $y \in {\mathbb R}^n$ and
$z \in [-3,3]^n$
and $$2^{l A r}m_{0,A r,B r}(y) \ge m_{l,A r,B r}(y)$$
for all $y \in {\mathbb R}^n$,
we see that
\begin{align}\label{eq:220221-2}
&|\Phi_{2^{-k}}*\psi*\phi^*_{2^{-j}}*\phi*f(x)|\\
&\lesssim
2^{-2Lj}
\left(
\int_{{\mathbb R}^n}\frac{|\phi*f(x-y)|^r}{m_{0,A r,B r}(y)}{\rm d}y
\right)^{\frac1r}
+
2^{-2Lj}
\sum_{l=1}^\infty
2^{-2l A}
\left(2^{l n}
\int_{{\mathbb R}^n}\frac{|\phi_{2^{-l}}^**f(x-y)|^r}{m_{0,A r,B r}(y)}{\rm d}y
\right)^{\frac1r}
\nonumber\\
&\lesssim
2^{-2Lj}
\left(
\int_{{\mathbb R}^n}\frac{|\phi*f(x-y)|^r}{m_{0,A r,B r}(y)}{\rm d}y
\right)^{\frac1r}
+
2^{-2Lj}
\left\{
\sum_{l=1}^\infty
\left(2^{l n}
\int_{{\mathbb R}^n}\frac{|\phi_{2^{-l}}^**f(x-y)|^r}{m_{l,A r,B r}(y)}{\rm d}y
\right)^{\frac2r}\right\}^{\frac12}\nonumber
\end{align}
by the H\"older inequality.
Likewise,
\begin{align*}
\lefteqn{
|\Phi_{2^{-k}}*\phi^*_{2^{-j}}*\psi^*_{2^{-l}}*\phi^*_{2^{-l}}*f(x)|
}\\
&\lesssim
2^{n\min(j,k,l)+2L|l-j|}
\int_{[-2^{2-\min(j,k,l)},2^{2-\min(j,k,l)}]^n}
|\phi^*_{2^{-l}}*f(x-z)|{\rm d}z
\end{align*}
by 
using (\ref{eq:211031-5}).
Since
\begin{align*}
m_{l, A r, B r}(y+z)
&=
(1+2^l|y+z|)^{Ar}e^{|y+z|Br}\\
&\le
(1+2^l|z|)^{Ar}
(1+2^l|y|)^{Ar}
e^{|y|Br+|z|Br}\\
&\lesssim
2^{Ar\max(0,l-\min(l,j,k))}
m_{l, A r, B r}(y)
\end{align*}
for all $z \in [-2^{2-\min(j,k,l)},2^{2-\min(j,k,l)}]^n$,
thanks to 
Lemmas \ref{lem3sw} and \ref{lem:211029-111},
we obtain
\begin{align*}
\lefteqn{
|\Phi_{2^{-k}}*\phi^*_{2^{-j}}*\psi^*_{2^{-l}}*\phi^*_{2^{-l}}*f(x)|
}\\
&\lesssim
2^{n\min(j,k,l)+2L|l-j|}\\
&\quad\quad \times
\int_{[-2^{2-\min(j,k,l)},2^{2-\min(j,k,l)}]^n}
\left(\sum_{l'=l}^\infty
2^{l' n+(l-l')Lr}\int_{{\mathbb R}^n}
\frac{|\phi^*_{2^{-l'}}*f(x-z-y)|^r}{m_{l, A r, B r}(y)}{\rm d}y
\right)^{\frac1r}
{\rm d}z
\\
&\lesssim
2^{n\min(j,k,l)+2L|l-j|+A\max(0,l-\min(l,j,k))}\\
&\quad\quad \times
\int_{[-2^{2-\min(j,k,l)},2^{2-\min(j,k,l)}]^n}
\sum_{l'=l}^\infty 2^{2(l-l')A}
\left(2^{l' n}\int_{{\mathbb R}^n}
\frac{|\phi^*_{2^{-l'}}*f(x-y)|^r}{m_{l, A r, B r}(y)}{\rm d}y
\right)^{\frac1r}
{\rm d}z.
\end{align*}
Due to the fact that $L \gg A \gg 1$,
we have
\begin{align*}
\lefteqn{
|\Phi_{2^{-k}}*\phi^*_{2^{-j}}*\psi^*_{2^{-l}}*\phi^*_{2^{-l}}*f(x)|
}\\
&\lesssim
\sum_{l'=l}^\infty
2^{L|l-j|+2A(l-l')}
\left(2^{l' n}\int_{{\mathbb R}^n}
\frac{|\phi^*_{2^{-l'}}*f(x-y)|^r}{m_{l, A r, B r}(y)}{\rm d}y\right)^{\frac1r}\\
&\lesssim
\sum_{l'=l}^\infty
2^{-L|l-j|+2A(l-l')}
\left(2^{l' n}\int_{{\mathbb R}^n}
\frac{|\phi^*_{2^{-l'}}*f(x-y)|^r}{m_{l, A r, B r}(y)}{\rm d}y\right)^{\frac1r}.
\end{align*}
Since $2^{-Ar(l-l')}m_{l,A r,B r} \ge m_{l',A r,B r}$,
\begin{align*}
\lefteqn{
|\Phi_{2^{-k}}*\phi^*_{2^{-j}}*\psi^*_{2^{-l}}*\phi^*_{2^{-l}}*f(x)|
}\\
&\lesssim
\sum_{l'=l}^\infty
2^{-L|l-j|+A(l-l')}
\left(2^{l' n}\int_{{\mathbb R}^n}
\frac{|\phi^*_{2^{-l'}}*f(x-y)|^r}{m_{l', A r, B r}(y)}{\rm d}y\right)^{\frac1r}.
\end{align*}
Hence, we have
\begin{align*}
\lefteqn{
\sum_{l=1}^\infty
|\Phi_{2^{-k}}*\phi^*_{2^{-j}}*\psi^*_{2^{-l}}*\phi^*_{2^{-l}}*f(x)|
}\\
&\lesssim
\sum_{l=1}^\infty
\sum_{l'=l}^\infty
2^{-L|l-j|+A(l-l')}
\left(2^{l' n}\int_{{\mathbb R}^n}
\frac{|\phi^*_{2^{-l'}}*f(x-y)|^r}{m_{l', A r, B r}(y)}{\rm d}y\right)^{\frac1r}\\
&\le
\sum_{l'=1}^\infty
\sum_{l=-\infty}^{l'}
2^{-L|l-j|+A(l-l')}
\left(2^{l' n}\int_{{\mathbb R}^n}
\frac{|\phi^*_{2^{-l'}}*f(x-y)|^r}{m_{l', A r, B r}(y)}{\rm d}y\right)^{\frac1r},
\end{align*}
Since
\[
\sum_{l=-\infty}^{l'}
2^{-L|l-j|+A(l-l')}
\sim
2^{-L|l'-j|}
\]
thanks to the fact that $L>A>0$,
we have
\begin{equation}\label{eq:220221-1}
\sum_{l=1}^\infty
|\Phi_{2^{-k}}*\phi^*_{2^{-j}}*\psi^*_{2^{-l}}*\phi^*_{2^{-l}}*f(x)|
\lesssim
\sum_{l'=1}^\infty
2^{-L|l'-j|}
\left(2^{l' n}\int_{{\mathbb R}^n}
\frac{|\phi^*_{2^{-l'}}*f(x-y)|^r}{m_{l', A r, B r}(y)}{\rm d}y\right)^{\frac1r}.
\end{equation}
Therefore,
from
(\ref{eq:220221-2})
and (\ref{eq:220221-1}),
we have
\begin{align}\label{eq:211201-4}
&|\Phi_{2^{-k}}*\phi^*_{2^{-j}}*f(x)|\\
&\lesssim
2^{-2Lj}
\left(
\int_{{\mathbb R}^n}\frac{|\phi*f(x-y)|^r}{m_{0,A r,B r}(y)}{\rm d}y
\right)^{\frac1r}
+
2^{-2Lj}\left\{
\sum_{l=1}^\infty
\left(2^{l n}
\int_{{\mathbb R}^n}\frac{|\phi_{2^{-l}}^**f(x-y)|^r}{m_{l,A r,B r}(y)}{\rm d}y
\right)^{\frac2r}\right\}^{\frac12}\nonumber \\
&\quad+
\sum_{l'=1}^\infty
2^{-L|l'-j|}
\left(2^{l' n}\int_{{\mathbb R}^n}
\frac{|\phi^*_{2^{-l'}}*f(x-y)|^r}{m_{l', A r, B r}(y)}{\rm d}y\right)^{\frac1r}.\nonumber
\end{align}
We observe
\begin{align*}
\lefteqn{
\sum_{j=1}^\infty
\left\{
\sum_{l'=1}^\infty
2^{-L|l'-j|}
\left(2^{l' n}\int_{{\mathbb R}^n}
\frac{|\phi^*_{2^{-l'}}*f(x-y)|^r}{m_{l', A r, B r}(y)}{\rm d}y
\right)^{\frac1r}
\right\}^2
}\\
&\le
\sum_{j=1}^\infty
\left[
\sum_{l'=-\infty}^\infty
2^{-L|l'-j|}
\times
\sum_{l'=1}^\infty
2^{-L|l'-j|}
\left(2^{l' n}\int_{{\mathbb R}^n}
\frac{|\phi^*_{2^{-l'}}*f(x-y)|^r}{m_{l', A r, B r}(y)}{\rm d}y
\right)^{\frac2r}
\right]\\
&\lesssim
\sum_{j=1}^\infty
\sum_{l'=1}^\infty
2^{-L|l'-j|}
\left(2^{l' n}\int_{{\mathbb R}^n}
\frac{|\phi^*_{2^{-l'}}*f(x-y)|^r}{m_{l', A r, B r}(y)}{\rm d}y
\right)^{\frac2r}\\
&=
\sum_{l'=1}^\infty
\sum_{j=1}^\infty
2^{-L|l'-j|}
\left(2^{l' n}\int_{{\mathbb R}^n}
\frac{|\phi^*_{2^{-l'}}*f(x-y)|^r}{m_{l', A r, B r}(y)}{\rm d}y
\right)^{\frac2r}.
\end{align*}Note that
\[
\sum_{j=1}^\infty 2^{-L|l'-j|}\lesssim
\sum_{j=-\infty}^\infty 2^{-L|l'-j|} \sim 1.
\]
Therefore,
\begin{align}\label{eq:220221-5}
&\sum_{j=1}^\infty
\left\{
\sum_{l'=l}^\infty
2^{-L|l'-j|}
\left(2^{l' n}\int_{{\mathbb R}^n}
\frac{|\phi^*_{2^{-l'}}*f(x-y)|^r}{m_{l', A r, B r}(y)}{\rm d}y
\right)^{\frac1r}
\right\}^2\\
&\quad \lesssim
\sum_{l'=1}^\infty
\left(2^{l' n}\int_{{\mathbb R}^n}
\frac{|\phi^*_{2^{-l'}}*f(x-y)|^r}{m_{l', A r, B r}(y)}{\rm d}y
\right)^{\frac2r}.\nonumber
\end{align}
Likewise we can prove
\begin{align}\label{eq:211201-5}
&|\Phi_{2^{-k}}*\phi*f(x)|\\
&\lesssim
\left(
\int_{{\mathbb R}^n}\frac{|\phi*f(x-y)|^r}{m_{0,A r,B r}(y)}{\rm d}y
\right)^{\frac1r}
+
\left(
\sum_{l=1}^\infty
\left(2^{l n}\int_{{\mathbb R}^n}
\frac{|\phi^*_{2^{-l}}*f(x-y)|^r}{m_{l, A r, B r}(y)}{\rm d}y\right)^{\frac2r}
\right)^{\frac12}\nonumber
\end{align}for all $k \in {\mathbb N}_0$ with the implicit constant
independent of $k$.
If we take the supremum over $k\in {\mathbb N}_0$
and the $\ell^2$-norm for $j$, 
we obtain
\begin{align*}
\lefteqn{
\sup_{k \in {\mathbb N}_0}
|\Phi_{2^{-k}}*\phi*f(x)|
+
\left(
\sum_{j=1}^\infty
\sup_{k \in {\mathbb N}_0}
|\Phi_{2^{-k}}*\phi^*_{2^{-j}}*f(x)|^2
\right)^{\frac12}
}\\
&\lesssim
\left(
\int_{{\mathbb R}^n}\frac{|\phi*f(x-y)|^r}{m_{0,A r,B r}(y)}{\rm d}y
\right)^{\frac1r}
+
\left(
\sum_{l=1}^\infty
\left(2^{l n}\int_{{\mathbb R}^n}
\frac{|\phi^*_{2^{-l}}*f(x-y)|^r}{m_{l, A r, B r}(y)}{\rm d}y\right)^{\frac2r}
\right)^{\frac12}
\end{align*}
from (\ref{eq:211201-4}), (\ref{eq:220221-5}) and (\ref{eq:211201-5}).
To complete, invoke Corollary \ref{cor:210923-5}.
\end{proof}

\subsection{Littlewood--Paley characterization of $h^{p(\cdot)}(w)$}
\label{s7312}

An important consequence of Theorem \ref{thm:7.2}
is the Littlewood--Paley characterization of $h^{p(\cdot)}(w)$.
We obtain it under a strong assumption of $L$.
\begin{proposition}\label{prop:210923-11}
Let $p(\cdot) \in {\mathcal P}_0
\cap {\rm L H}_0 \cap {\rm L H}_{\infty}$
and $w \in A_{\infty}^{\rm loc}$. 
Assume $L \gg 1$.
Let
$\phi,\phi^* \in {\mathcal D}({\mathbb R}^n)$
satisfy
$\phi^* \in {\mathcal P}_L^\perp({\mathbb R}^n)$,
$(\ref{eq:210923-1111})$
and
$(\ref{eq:210514-3})$.
Then
a distribution $f \in {\mathcal D}'({\mathbb R}^n)$
belongs to $h^{p(\cdot)}(w)$ if and only if
\[
\|\phi*f\|_{L^{p(\cdot)}(w)}+
\left\|
\left(
\sum_{j=1}^\infty |\phi^*_{2^{-j}}*f|^2
\right)^{\frac12}
\right\|_{L^{p(\cdot)}(w)}<\infty
\]
In this case, we have
\begin{equation}\label{eq:210923-1112}
\|f\|_{h^{p(\cdot)}(w)}
\sim
\|\phi*f\|_{L^{p(\cdot)}(w)}+
\left\|
\left(
\sum_{j=1}^\infty |\phi^*_{2^{-j}}*f|^2
\right)^{\frac12}
\right\|_{L^{p(\cdot)}(w)}.
\end{equation}
\end{proposition}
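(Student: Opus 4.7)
The plan is to establish the two inequalities in (\ref{eq:210923-1112}) separately. Writing
\[
{\rm LP}(f):= \|\phi*f\|_{L^{p(\cdot)}(w)}+\Bigl\|\Bigl(\sum_{j=1}^\infty|\phi^*_{2^{-j}}*f|^2\Bigr)^{1/2}\Bigr\|_{L^{p(\cdot)}(w)},
\]
the plan is to use Theorem \ref{thm:7.2} for the necessity bound ${\rm LP}(f)\lesssim\|f\|_{h^{p(\cdot)}(w)}$ and to adapt the pointwise estimates from the proof of Lemma \ref{lem:210923-6} (via the Rychkov reproducing formula of Lemma \ref{lem:210712-1}) for the sufficiency bound $\|f\|_{h^{p(\cdot)}(w)}\lesssim {\rm LP}(f)$.

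For necessity, the term $\|\phi*f\|_{L^{p(\cdot)}(w)}$ is handled immediately: a suitable rescaling of $\phi$ lies in $\mathcal{D}_N^0$, so $|\phi*f|\lesssim {\mathcal M}_N^0 f$ pointwise and Theorem \ref{thm:210712-1} supplies the desired bound. For the square function, I would apply Theorem \ref{thm:7.2} to the truncated operator family
\[
T_i^{(M)}[\{g_j\}_{j\ge 0}] := \phi^*_{2^{-i}}*g_0 \ (1\le i\le M),\quad T_i^{(M)}\equiv 0 \text{ otherwise},
\]
with input $\{f,0,0,\ldots\}$. The kernel matrix $k_{ij}=\phi^*_{2^{-i}}\delta_{j,0}$ satisfies condition (3) of Theorem \ref{thm:7.2} for every $M$, and the estimate $|x|^{n+m}\|\{\nabla^m k_{ij}(x)\}\|_{\ell^2\to\ell^2}\le C_m\chi_{[-1,1]^n}(x)$ holds uniformly in $M$ since, for each $x\ne 0$, only scales $i$ with $2^{-i}\gtrsim |x|$ contribute and the resulting geometric sum is dominated by its largest term. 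Uniform $L^2(\mathbb{R}^n;\ell^2)\to L^2(\mathbb{R}^n)$ boundedness reduces via Plancherel to $\sum_i|\widehat{\phi^*}(2^{-i}\xi)|^2\lesssim 1$, which follows because $\widehat{\phi^*}$ vanishes to order $L+1$ at the origin (by $\phi^*\in\mathcal{P}_L^\perp$) and decays rapidly at infinity. Theorem \ref{thm:7.2} then yields
\[
\Bigl\|\sup_k\Bigl(\sum_{i=1}^M|\psi^k*\phi^*_{2^{-i}}*f|^2\Bigr)^{1/2}\Bigr\|_{L^{p(\cdot)}(w)}\lesssim\|f\|_{h^{p(\cdot)}(w)}
\]
uniformly in $M$. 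Since $\psi^k$ is an approximate identity with $\int\psi>0$, the untwisted $\ell^2$-sum $(\sum_{i=1}^M|\phi^*_{2^{-i}}*f|^2)^{1/2}$ is dominated pointwise a.e.\ by the supremum-in-$k$ expression on the left-hand side, and monotone convergence as $M\to\infty$ finishes this half.

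For sufficiency, the plan is to reproduce the estimates in the proof of Lemma \ref{lem:210923-6} with $\Phi_{2^{-k}}$ replaced by $\varphi_t$ for generic $\varphi\in\mathcal{D}_N$ and $t\in(0,1)$. Invoke Lemma \ref{lem:210712-1} to get $\psi,\psi^*\in\mathcal{D}(\mathbb{R}^n)$ with $\psi^*\in\mathcal{P}_L^\perp$ satisfying $\phi*\psi+\sum_{j\ge 1}\phi^*_{2^{-j}}*\psi^*_{2^{-j}}=\delta$ in $\mathcal{D}'$. For any $\varphi\in\mathcal{D}_N$ and $(z,t)$ with $|z-x|<t<1$,
\[
\varphi_t*f(z)=\varphi_t*\psi*\phi*f(z)+\sum_{j=1}^\infty \varphi_t*\psi^*_{2^{-j}}*\phi^*_{2^{-j}}*f(z).
\]
Choose $r<\min(1,p_-)/q_w$, $A>n/r$, $B>(8n+6\log D)/r$ and $L\gg A$. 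The kernel-size estimates corresponding to (\ref{eq:211031-4}) and (\ref{eq:211031-5}) for $\varphi_t*\psi$ and $\varphi_t*\psi^*_{2^{-j}}$, combined with Lemma \ref{lem:211029-111} (which absorbs the spatial offset $z-x$ into the Peetre-type weight $m_{j,Ar,Br}$), deliver the pointwise bound
\[
{\mathcal M}_N f(x)\lesssim \Bigl(\int_{\mathbb{R}^n}\frac{|\phi*f(x-y)|^r}{m_{0,Ar,Br}(y)}dy\Bigr)^{1/r}+\Bigl(\sum_{l=1}^\infty\Bigl(2^{ln}\int_{\mathbb{R}^n}\frac{|\phi^*_{2^{-l}}*f(x-y)|^r}{m_{l,Ar,Br}(y)}dy\Bigr)^{2/r}\Bigr)^{1/2}.
\]
Corollary \ref{cor:210923-5} applied to the exponent $p(\cdot)/r\in\mathcal{P}\cap{\rm LH}_0\cap{\rm LH}_\infty$ (with weight $w\in A_{p(\cdot)/r}^{\rm loc}$, available via Lemma \ref{lem:210921-112}(2)) finally converts the Peetre-type expressions on the right into the $\phi*f$ and $\phi^*_{2^{-l}}*f$ contributions in the $L^{p(\cdot)}(w)$ norm, giving $\|{\mathcal M}_N f\|_{L^{p(\cdot)}(w)}\lesssim {\rm LP}(f)$.

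The principal difficulty is the sufficiency direction. Because ${\mathcal M}_N f(x)$ involves a supremum over both the scale $t\in(0,1)$ and the spatial position $|z-x|<t$, one cannot dominate it by convolutions of $f$ evaluated at the base point $x$ without an auxiliary device; Lemma \ref{lem:211029-111} is precisely this device, absorbing the spatial shift into the factor $m_{j,Ar,Br}$ at the cost of passing to an $L^r$-average with $r<\min(1,p_-)/q_w<1$. This in turn forces the use of the openness property of $A_{p(\cdot)}^{\rm loc}$ (Lemma \ref{lem:210921-112}(2)) to secure $w\in A_{p(\cdot)/r}^{\rm loc}$ and the $L^{p(\cdot)/r}(w)$-boundedness of $M^{\rm loc}\circ K_{Br}$ from Corollary \ref{cor:210923-5}. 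Since the Plancherel--Polya--Nikolski\'i inequality is unavailable in this weighted variable-exponent setting, this detour through Rychkov's inequalities appears unavoidable.
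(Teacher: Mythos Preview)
Your proposal diverges from the paper's proof in both directions, and while your necessity argument is sound, the sufficiency argument has a genuine gap.

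\textbf{Necessity.} The paper does \emph{not} apply Theorem~\ref{thm:7.2} here. Instead it reduces to finite sums by monotone convergence, invokes the Khinchine inequality to replace the square function by a scalar convolution $\sum_{j=1}^N a_j\phi^*_{2^{-j}}*f$ with $a_j\in\{-1,1\}$, and then applies the scalar result Theorem~\ref{thm:6.4}. Your route through Theorem~\ref{thm:7.2} with the rank-one kernel $k_{ij}=\phi^*_{2^{-i}}\delta_{j,0}$ is a legitimate alternative: the $\ell^2\!\to\!\ell^2$ kernel bound and the uniform Fourier-side $L^2$ estimate you sketch are correct, and the passage from $\|\{\phi^*_{2^{-i}}*f\}\|_{h^{p(\cdot)}(w,\ell^2)}$ back to the plain square function via the approximate identity $\psi^k$ works for finite $M$ because each $\phi^*_{2^{-i}}*f$ is continuous. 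So this half is fine, just different.

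\textbf{Sufficiency.} Here the paper's argument is \emph{not} a direct pointwise bound on ${\mathcal M}_N f$. It uses Theorem~\ref{thm:7.2} a second time, now for the synthesis operator $\{g_j\}\mapsto \psi*g_0+\sum_{j\ge1}\psi^*_{2^{-j}}*g_j$, to obtain $\|f\|_{h^{p(\cdot)}(w)}\lesssim\|\{\phi^*_{2^{-j}}*f\}\|_{h^{p(\cdot)}(w,\ell^2)}$, and then invokes Lemma~\ref{lem:210923-6} to bound this by ${\rm LP}(f)$. Your plan to ``reproduce the estimates in the proof of Lemma~\ref{lem:210923-6}'' with $\Phi_{2^{-k}}$ replaced by a generic $\varphi_t$, $\varphi\in\mathcal{D}_N$, does not go through as stated. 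In Lemma~\ref{lem:210923-6} the object being estimated is $\Phi_{2^{-k}}*\phi^*_{2^{-j}}*f$; the presence of $\phi^*_{2^{-j}}$ (with vanishing moments) is what produces the decay $2^{-2Lj}$ in (\ref{eq:211031-4}) and the two-sided decay in (\ref{eq:211031-5}). In your setting the analogous kernels are $\varphi_t*\psi$ and $\varphi_t*\psi^*_{2^{-j}}$, with only \emph{two} factors. When $2^{-j}>t$ (low frequency relative to~$t$) the only available moment condition is on $\psi^*$, which sits at the \emph{coarser} scale, so Lemma~\ref{lem3sw} gives no decay: $|\varphi_t*\psi^*_{2^{-j}}|\lesssim 2^{jn}\chi_{\{|\,\cdot\,|\lesssim 2^{-j}\}}$. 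Consequently the sum $\sum_{j:\,2^{-j}>t}\varphi_t*\psi^*_{2^{-j}}*\phi^*_{2^{-j}}*f$ contributes roughly $-\log_2 t$ undamped terms, and the claimed pointwise estimate for ${\mathcal M}_N f(x)$ fails in the limit $t\to0$. Lemma~\ref{lem:211029-111} does absorb the spatial offset $z-x$, but it does not manufacture the missing moment cancellation on the generic test function~$\varphi$. This is precisely the obstruction that forces the paper to pass through the vector-valued singular integral machinery of Theorem~\ref{thm:7.2} for the synthesis step rather than attempt a direct pointwise argument.
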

\begin{proof}
Assuming that 
$f \in h^{p(\cdot)}(w)$,
we first show that
\[
\|\phi*f\|_{L^{p(\cdot)}(w)}+
\left\|
\left(
\sum_{j=1}^\infty |\phi^*_{2^{-j}}*f|^2
\right)^{\frac12}
\right\|_{L^{p(\cdot)}(w)}
\lesssim
\|f\|_{h^{p(\cdot)}(w)}.
\]
The definition of the grand maximal function
${\mathcal M}^0_{N_{p(\cdot), w}} f$ easily gives that
$$
\|\phi*f\|_{L^{p(\cdot)}(w)}
\lesssim
\|{\mathcal M}^0_{N_{p(\cdot), w}} f\|_{L^{p(\cdot)}(w)}
\lesssim
\|f\|_{h^{p(\cdot)}(w)}.
$$
Therefore,
we must show that
\[
\left\|
\left(
\sum_{j=1}^\infty |\phi^*_{2^{-j}}*f|^2
\right)^{\frac12}
\right\|_{L^{p(\cdot)}(w)}
\lesssim
\|f\|_{h^{p(\cdot)}(w)}.
\]
By the monotone convergence theorem,
the matters are reduced to showing that
\[
\left\|
\left(
\sum_{j=1}^N |\phi^*_{2^{-j}}*f|^2
\right)^{\frac12}
\right\|_{L^{p(\cdot)}(w)}
\lesssim
\|f\|_{h^{p(\cdot)}(w)}
\]with the implicit constant independent of $N$.
By the Khinchine inequality,
we have only to show that
\[
\left\|
\sum_{j=1}^N a_j\phi^*_{2^{-j}}*f
\right\|_{L^{p(\cdot)}(w)}
\lesssim
\|f\|_{h^{p(\cdot)}(w)}
\]
for any sequences $\{a_j\}_{j=1}^N \subset \{-1,1\}^N$.
However, this is a direct consequence of Theorem \ref{thm:6.4}.

Let us move on to the proof of
\[
\|f\|_{h^{p(\cdot)}(w)}
\lesssim
\|\phi*f\|_{L^{p(\cdot)}(w)}+
\left\|
\left(
\sum_{j=1}^\infty |\phi^*_{2^{-j}}*f|^2
\right)^{\frac12}
\right\|_{L^{p(\cdot)}(w)}.
\]
Choose
$\psi, \psi^* \in C^\infty_{\rm c}({\mathbb R}^n)$ 
so that
$(\ref{eq:210514-1})$--$(\ref{eq:210514-5})$ hold.
Consider the operator
\[
\{g_j\}_{j=0}^\infty \in
h^{p(\cdot)}(w, \ell^2({\mathbb N}_0))
\mapsto
\psi*g_0+
\sum_{j=1}^\infty
\psi^*_{2^{-j}}*g_j\in
h^{p(\cdot)}(w).
\]
This operator is $h^{p(\cdot)}(w, \ell^2({\mathbb N}_0))$-$h^{p(\cdot)}(w)$ bounded thanks to Theorem \ref{thm:7.2}.
As a result,
\[
\left\|
\psi*f_0+
\sum_{j=1}^\infty
\psi^*_{2^{-j}}*g_j
\right\|_{h^{p(\cdot)}(w)}
\lesssim
\left\|
\{g_j\}_{j=0}^\infty
\right\|_{h^{p(\cdot)}(w,\ell^2)}.
\]
The right-hand side must be written out fully.
Use
(\ref{eq:210514-5}) and
Lemma \ref{lem:210923-6}.
If we let
$g_0=\phi*f$,
$g_j=\phi^*_{2^{-j}}*f$ 
$(j \ge 1)$,
then
\[
\left\|f
\right\|_{h^{p(\cdot)}(w)}
\lesssim
\left\|
\{g_j\}_{j=0}^\infty
\right\|_{h^{p(\cdot)}(w,\ell^2)}\lesssim
\|\phi*f\|_{L^{p(\cdot)}(w)}+
\left\|
\left(
\sum_{j=1}^\infty |\phi^*_{2^{-j}}*f|^2
\right)^{\frac12}
\right\|_{L^{p(\cdot)}(w)},
\]since
\[
f=\psi*\phi*f+\sum_{j=1}^\infty
\psi^*_{2^{-j}}*\phi_{2^{-j}}*f.
\]
Thus, the proof is complete.
\end{proof}

Let us relax the assumption
on $L$
in Proposition \ref{prop:210923-11}.
\begin{theorem}\label{thm:7.4}
Let $f \in {\mathcal D}'({\mathbb R}^n)$.
In Proposition \ref{prop:210923-11},
the same conclusion holds
if $L=0$.
\end{theorem}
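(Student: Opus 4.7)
The estimate
\[
\|\phi*f\|_{L^{p(\cdot)}(w)}+\left\|\left(\sum_{j=1}^\infty|\phi^*_{2^{-j}}*f|^2\right)^{1/2}\right\|_{L^{p(\cdot)}(w)}\lesssim\|f\|_{h^{p(\cdot)}(w)}
\]
holds verbatim as in Proposition \ref{prop:210923-11}: by Khinchine's inequality it reduces to $\|\sum_{j=1}^N a_j\phi^*_{2^{-j}}*f\|_{L^{p(\cdot)}(w)}\lesssim\|f\|_{h^{p(\cdot)}(w)}$ for $a_j\in\{\pm 1\}$, and Theorem \ref{thm:6.4} applies to the kernel $k=\sum_{j=1}^N a_j\phi^*_{2^{-j}}$ uniformly in $N$, requiring only pointwise size and derivative bounds (which hold with constants independent of $N$) and no moment cancellation beyond $\int\phi^*=0$.

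For the converse direction, fix a large integer $L_0$ and choose an auxiliary pair $(\Phi,\Phi^*)\in\mathcal{D}(\mathbb{R}^n)^2$ meeting the hypotheses of Proposition \ref{prop:210923-11} with $L_0$ in place of $L$ (available by \cite[Lemma 6.5]{IST10}). Proposition \ref{prop:210923-11} then yields $\|f\|_{h^{p(\cdot)}(w)}\sim\|\Phi*f\|_{L^{p(\cdot)}(w)}+\|(\sum_k|\Phi^*_{2^{-k}}*f|^2)^{1/2}\|_{L^{p(\cdot)}(w)}$, so it remains to dominate the $(\Phi,\Phi^*)$-Littlewood--Paley norm by the $(\phi,\phi^*)$-one. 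Apply Lemma \ref{lem:210712-1} to $(\phi,\phi^*)$ to produce $\psi,\psi^*\in\mathcal{D}(\mathbb{R}^n)$ with $\psi^*\in\mathcal{P}_{L_1}^\perp$ for $L_1$ as large as desired, together with the reconstruction $f=\psi*\phi*f+\sum_{j\ge 1}\psi^*_{2^{-j}}*\phi^*_{2^{-j}}*f$ in $\mathcal{D}'(\mathbb{R}^n)$. Convolving with $\Phi$ and with $\Phi^*_{2^{-k}}$ and estimating the exchange kernels via Lemma \ref{lem3sw} (applied with the finer-scale function playing the role of the one with moment cancellation) give, for any prescribed $M$ and provided $L_0,L_1\gg M$,
\begin{align*}
|\Phi*\psi^*_{2^{-j}}(x)|&\lesssim 2^{-jL_1}(1+|x|)^{-M},\\
|\Phi^*_{2^{-k}}*\psi(x)|&\lesssim 2^{-kL_0}(1+|x|)^{-M},\\
|\Phi^*_{2^{-k}}*\psi^*_{2^{-j}}(x)|&\lesssim 2^{-|k-j|\min(L_0,L_1)}\,\frac{2^{\min(k,j)n}}{(1+2^{\min(k,j)}|x|)^M}.
\end{align*}
Choose $r\in(0,\min(1,p_-)/q_w)$ and invoke $(a+b)^r\le a^r+b^r$ to move the sum in $j$ inside the $r$-th power, then recalibrate each $2^{-\min(k,j)}$-scale average into the intrinsic $2^{-j}$-scale (a rescaling that loses only a polynomial factor $2^{|k-j|(M-n)}$, which is absorbed by the exponential decay above), take $\ell^{2/r}$-norms in $k$ via the discrete Young inequality $\ell^1\ast\ell^{2/r}\hookrightarrow\ell^{2/r}$, and finally take $L^{p(\cdot)/r}(w)$-norms and invoke Corollary \ref{cor:210923-5} to produce
\[
\|\Phi*f\|_{L^{p(\cdot)}(w)}+\left\|\left(\sum_k|\Phi^*_{2^{-k}}*f|^2\right)^{1/2}\right\|_{L^{p(\cdot)}(w)}\lesssim\|\phi*f\|_{L^{p(\cdot)}(w)}+\left\|\left(\sum_j|\phi^*_{2^{-j}}*f|^2\right)^{1/2}\right\|_{L^{p(\cdot)}(w)}.
\]

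The principal obstacle is the off-diagonal range $k<j$: the Grafakos estimate for $\Phi^*_{2^{-k}}*\psi^*_{2^{-j}}$ only furnishes averaging at the coarser scale $2^{-k}$, whereas Corollary \ref{cor:210923-5} is calibrated to the intrinsic $2^{-j}$-scale of $\phi^*_{2^{-j}}*f$. Converting the weight $(1+2^k|y|)^{-M}$ into $(1+2^j|y|)^{-M}$ incurs a polynomial penalty $2^{|k-j|(M-n)}$, which must be offset by the exponential gain $2^{-|k-j|\min(L_0,L_1)}$ coming from the high-order vanishing moments of $\psi^*$; this forces $\min(L_0,L_1)\gg M$ but imposes no additional assumption on $\phi^*$ beyond $\phi^*\in\mathcal{P}_0^\perp$. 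With this bookkeeping in place, the remaining manipulations parallel those in the proof of Lemma \ref{lem:210923-6}, and the case $L=0$ is reduced to the case $L\gg 1$ supplied by Proposition \ref{prop:210923-11}.
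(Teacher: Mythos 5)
Your proof is correct and rests on the same underlying strategy as the paper's: reduce the $L=0$ case to the high-moment case of Proposition \ref{prop:210923-11} by comparing the two Littlewood--Paley quasi-norms via Rychkov's reproducing machinery and Corollary \ref{cor:210923-5}, while noting that the forward direction (Khinchine plus Theorem \ref{thm:6.4}) never needed moments of $\phi^*$ beyond $\int\phi^*=0$. Where you differ is in how the comparison of the two square functions is executed: the paper simply invokes Lemma \ref{lem:211029-211} (Rychkov's Theorem 2.5) to obtain in one line the pointwise bound
\[
|\zeta^*_{2^{-j}}*f(x)| \lesssim \Bigl(\sum_{l\ge j}2^{L^\dagger(j-l)r}\,2^{ln}\int\frac{|\phi^*_{2^{-l}}*f(x-y)|^r}{m_{l,Ar,Br}(y)}\,dy\Bigr)^{1/r},
\]
and then feeds it to Corollary \ref{cor:210923-5}, whereas you unroll the cited lemma explicitly through the reconstruction $f=\psi*\phi*f+\sum_{j\ge1}\psi^*_{2^{-j}}*\phi^*_{2^{-j}}*f$, estimate the exchange kernels with Lemma \ref{lem3sw}, and track the scale recalibration and off-diagonal $k<j$ terms by hand. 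Your version makes explicit two points that the paper compresses into the citation: that the needed decay comes from the freely choosable moment orders of $\Phi^*$ and $\psi^*$ (and not of $\phi^*$, which is fixed with $L=0$), and that converting the coarse-scale $(1+2^{\min(k,j)}|y|)^{-M}$ average into the intrinsic-scale weight $m_{j,Ar,Br}$ costs a polynomial factor that must be absorbed by the exponential gain. What each approach buys is a trade-off: the paper's route is shorter but leans on the reader to supply exactly this bookkeeping (and the displayed $l\ge j$ estimate suppresses the coarse-scale term $\phi_{2^{-j}}*f$ that arises from Lemma \ref{lem:211029-211}); your route is longer but self-contained and parallels the proof of Lemma \ref{lem:210923-6}, so the summability of the off-diagonal contributions is visibly verified.
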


\begin{proof}
We start with the set-up.
Assume $L^\dagger \gg s_0 \gg1$.
Let
$\zeta,\zeta^* \in C^\infty_{\rm c}({\mathbb R}^n)$
satisfy
$\zeta^* \in {\mathcal P}_{L^\dagger}^\perp({\mathbb R}^n)$,
\[
|\zeta|,
|\zeta^*| \le \chi_{[-1,1]^n}.
\]
and
\[
\zeta^*=\zeta-\frac{1}{2^n}\zeta\left(\frac{\cdot}{2}\right).
\]
Let
$\phi,\phi^* \in C^\infty_{\rm c}({\mathbb R}^n)$
satisfy
$\phi^* \in {\mathcal P}_0^\perp({\mathbb R}^n)$,
$(\ref{eq:210923-1111})$
and
$(\ref{eq:210514-3})$.

It suffices to show that
\begin{align}
\lefteqn{
\|\phi*f\|_{L^{p(\cdot)}(w)}+
\left\|
\left(
\sum_{j=1}^\infty |\phi^*_{2^{-j}}*f|^2
\right)^{\frac12}
\right\|_{L^{p(\cdot)}(w)}
}\nonumber\\
\label{eq:210923-1113}
&\sim
\|\zeta*f\|_{L^{p(\cdot)}(w)}+
\left\|
\left(
\sum_{j=1}^\infty |\zeta^*_{2^{-j}}*f|^2
\right)^{\frac12}
\right\|_{L^{p(\cdot)}(w)},
\end{align}
since we proved
in Proposition
\ref{prop:210923-11}
that
$f \in h^{p(\cdot)}(w)$
holds
if and only if
\[
\|\zeta*f\|_{L^{p(\cdot)}(w)}+
\left\|
\left(
\sum_{j=1}^\infty |\zeta^*_{2^{-j}}*f|^2
\right)^{\frac12}
\right\|_{L^{p(\cdot)}(w)}<\infty
\]
and that the norm equivalence
(\ref{eq:210923-1112}) holds.

We content ourselves with proving
\[
\left\|
\left(
\sum_{j=1}^\infty |\zeta^*_{2^{-j}}*f|^2
\right)^{\frac12}
\right\|_{L^{p(\cdot)}(w)}
\lesssim
\|\phi*f\|_{L^{p(\cdot)}(w)}+
\left\|
\left(
\sum_{j=1}^\infty |\phi^*_{2^{-j}}*f|^2
\right)^{\frac12}
\right\|_{L^{p(\cdot)}(w)},
\]
since the remaining estimates
needed to establish
(\ref{eq:210923-1113})
can be proved similarly.

Let $r>0$ be a constant which is slightly less than $\frac{p_-}{q_w}$.
Fix $x \in {\mathbb R}^n$.
We assume that $A<L^\dagger$ and $B r>8n+6\log D$.
Let
$j$ be fixed.
Then we have
\begin{align*}
|\zeta^*_{2^{-j}}*f(x)|
&\lesssim
\left(
\sum_{l=j}^\infty
2^{L^\dagger(j-l)r}
2^{l n}
\int_{{\mathbb R}^n}
\frac{|\phi^*_{2^{-l}}*f(x-y)|^r}{m_{l,A r,B r}(y)}dy
\right)^{\frac1r}\\
&\lesssim
\sum_{l=j}^\infty
2^{A(j-l)r}
\left(2^{l n}
\int_{{\mathbb R}^n}
\frac{|\phi^*_{2^{-l}}*f(x-y)|^r}{m_{l,A r,B r}(y)}dy
\right)^{\frac1r}\\
\end{align*}
thanks to Lemma \ref{lem:211029-211}.
Consequently,
\[
\left(
\sum_{j=1}^\infty |\zeta^*_{2^{-j}}*f(x)|^2
\right)^{\frac12}
\lesssim
\left(
\sum_{j=1}^\infty 
\left(
2^{l n}
\int_{{\mathbb R}^n}
\frac{|\phi^*_{2^{-l}}*f(x-y)|^r}{m_{l,A r,B r}(y)}dy
\right)^{\frac2r}
\right)^{\frac12}.
\]
It remains to use Corollary \ref{cor:210923-5}.
\end{proof}

\section{Wavelet characterization}
\label{s80}
As a further application of Theorem \ref{thm:7.4},
we consider the wavelet expansion.

Choose
compactly supported 
$C^r$-functions for large enough  $r \in {\mathbb N}$
\begin{equation}\label{eq:210511-1}
\varphi
\mbox{ and }
\psi^l \quad (l=1,2, \ldots , 2^n-1)
\end{equation}
so that the following conditions are satisfied:
\begin{enumerate}
\item[$(1)$]
For any $J\in \mathbb{Z}$, 
the system
\[
\left\{ \varphi_{J,k}, \, \psi^l_{j,k} \, : \, 
k\in \mathbb{Z}^n , \, j\ge J, \, l=1,2, \ldots , 2^n-1 \right\}
\]
is an orthonormal basis of $L^2({\mathbb R}^n)$.
Here, given a 
function $F$ defined on $\mathbb{R}^n$, we write 
\[
F_{j,k}\equiv 2^{\frac{j n}{2}}F(2^j \cdot -k)
\]
for $j \in {\mathbb Z}$ and $k\in {\mathbb Z}^n$.
\item[$(2)$]
Fix a large integer
$L \in {\mathbb N}$ for now.
We have
\begin{equation}\label{eq:211029-22}
\psi^l
\in{\mathcal P}_{L}^\perp({\mathbb R}^n)
\quad
(l=1,2, \ldots , 2^n-1).
\end{equation}
In addition, 
they are real-valued 
and compactly supported with
\begin{equation}\label{eq:200111-1}
\mathrm{supp}(\varphi)=\mathrm{supp}(\psi^l)=[0,2N-1]^n
\end{equation}
for some $N\in \mathbb{N}$.
See \cite{LM} for example.
\end{enumerate}

We also define
$\displaystyle 
\chi_{j,k} \equiv 
2^{\frac{j n}{2}}\chi_{Q_{j,k}}
$
and
$\displaystyle 
\chi^*_{j,k} \equiv 
2^{\frac{j n}{2}}\chi_{Q^*_{j,k}}
$
for $j \in {\mathbb Z}$ and $k=(k_1,k_2,\ldots,k_n) \in {\mathbb Z}^n$,
where $Q_{j,k}$ and $Q^*_{j,k}$ are the dyadic cube and its expansion
which are
given by
\begin{equation}
\label{define-Qjk}
Q_{j,k}\equiv \prod_{m=1}^n
\left[ 2^{-j}k_m , 2^{-j}(k_m+1) \right]
\end{equation}
 and 
\begin{equation}\label{define-Qjk*}
Q^*_{j,k}\equiv \prod_{m=1}^n
\left[ 2^{-j}k_m , 2^{-j}(k_m+2N-1) \right],
\end{equation} respectively.
Then using
the $L^2$-inner product
$\langle \cdot,\cdot \rangle$,
for $f \in L^1_{\rm loc}$,
we define two square functions
$V f$,
$W_sf$ by
\begin{align*}
Vf&\equiv V^{\varphi}f\equiv \left( \sum_{k\in \mathbb{Z}^n} \left| 
\langle f,\varphi_{J,k} \rangle \chi_{J,k} \right|^2 \right)^{\frac12},\\
W f&\equiv W^{\psi^l}f\equiv \left( 
\sum_{l=1}^{2^n-1}
\sum_{j=J}^\infty
\sum_{k\in \mathbb{Z}^n} \left| 
\langle f, \psi_{j,k}^l \rangle \chi_{j,k} \right|^2 \right)^{\frac12}.
\end{align*}
Here, $J$ is a fixed integer.
In 1994, Lemari$\acute{\rm{e}}$-Rieusset
commented that the class Muckenhoupt 
has a lot to do with
the wavelet characterization
\cite{Lemarie}.
We remark that
Kopaliani
considered the wavelet characterization
of $L^{p(t)}({\mathbb R})$
in 2008
(see \cite{Kop}).

Based on these works, we will prove the following theorem:
\begin{theorem}\label{thm:8.1}
Let $p(\cdot) \in {\mathcal P}_0
\cap {\rm L H}_0 \cap {\rm L H}_{\infty}$
and $w \in A_{\infty}^{\rm loc}$. 
Assume that 
\begin{equation}\label{eq:220123-1}
L \ge \max\left(-1,\left[n\left(\frac{q_w}{\min(1,p_-)}-1\right)\right]\right)
\end{equation}
in $(\ref{eq:211029-22})$.
\begin{enumerate}
\item
Let $f \in h^{p(\cdot)}(w)$.
Then
\[
\|f\|_{h^{p(\cdot)}(w)} \sim 
\|V f\|_{L^{p(\cdot)}(w)}+\|W f\|_{L^{p(\cdot)}(w)}.
\]
\item
If 
$f \in L^\infty_{\rm c}({\mathbb R}^n)$
satisfies
$V f+W f \in L^{p(\cdot)}(w)$,
then
$f \in h^{p(\cdot)}(w)$.
\end{enumerate}
\end{theorem}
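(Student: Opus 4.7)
The plan is to reduce both assertions to results already established: the Littlewood--Paley characterization (Theorem \ref{thm:7.4}) for the upper bound, and the atomic decomposition (Theorem \ref{thm:190406-2}) for the lower bound. The glue between wavelets and Littlewood--Paley is the Peetre-type pointwise estimate of Rychkov (Lemmas \ref{lem:211029-111} and \ref{lem:211029-211}) combined with the weighted vector-valued inequality of Corollary \ref{cor:210923-5}.

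To prove $\|Vf\|_{L^{p(\cdot)}(w)}+\|Wf\|_{L^{p(\cdot)}(w)}\lesssim \|f\|_{h^{p(\cdot)}(w)}$, I would first note that, setting $\tilde{\psi}^l(x):=\psi^l(-x)$, one has $\langle f,\psi^l_{j,k}\rangle = 2^{-jn/2}(\tilde{\psi}^l_{2^{-j}}*f)(2^{-j}k)$, so that
\[
\sum_{k\in\mathbb{Z}^n}|\langle f,\psi^l_{j,k}\rangle\chi_{j,k}(x)|^2 \le \sup_{|y-x|\le C2^{-j}}|\tilde{\psi}^l_{2^{-j}}*f(y)|^2.
\]
Choosing a Littlewood--Paley pair $(\phi,\phi^*)\in\mathcal{D}(\mathbb{R}^n)^2$ via Lemma \ref{lem:210712-1} with $\phi^*\in\mathcal{P}_L^\perp(\mathbb{R}^n)$, and noting that $\tilde{\psi}^l$ is itself smooth, compactly supported and in $\mathcal{P}_L^\perp(\mathbb{R}^n)$, Lemma \ref{lem:211029-211} (with $\zeta=\tilde{\psi}^l$ and $r$ slightly below $\min(1,p_-)/q_w$) dominates this Peetre supremum by
\[
2^{jn}\int_{\mathbb{R}^n}\frac{|\phi_{2^{-j}}*f(x-y)|^r}{m_{j,Ar,Br}(y)}{\rm d}y+\sum_{k=j+1}^{\infty}2^{kn+(j-k)(L+1)r}\int_{\mathbb{R}^n}\frac{|\phi^*_{2^{-k}}*f(x-y)|^r}{m_{j,Ar,Br}(y)}{\rm d}y.
\]
Taking $\ell^2$ in $j$, using Minkowski and absorbing the geometric factor $2^{(j-k)(L+1)r}$ once $L$ is large, Corollary \ref{cor:210923-5} converts these weighted integrals into powered maximal expressions whose $L^{p(\cdot)}(w)$-norm is bounded by $\|\phi*f\|_{L^{p(\cdot)}(w)}+\|(\sum_{j\ge 1}|\phi^*_{2^{-j}}*f|^2)^{1/2}\|_{L^{p(\cdot)}(w)}$. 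Theorem \ref{thm:7.4} then yields $\|Wf\|_{L^{p(\cdot)}(w)}\lesssim \|f\|_{h^{p(\cdot)}(w)}$; $\|Vf\|_{L^{p(\cdot)}(w)}$ is handled by the same device restricted to the single scale $j=J$ with $\varphi$ in place of $\psi^l$.

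The converse inequality uses the atomic decomposition. For $j\ge J$ with $J$ chosen so that $(2N-1)2^{-J}<1$, the function $a^{l,j}_k:=C^{-1}2^{-jn/2}\psi^l_{j,k}$ is a $(p(\cdot),\infty,L)_w$-atom supported on $Q^*_{j,k}$: it has $|Q^*_{j,k}|\le 1$, the correct moment condition (\ref{eq:211029-22}), and its $L^\infty$-normalization follows from $\|\psi^l\|_\infty\lesssim 1$. The $\varphi_{J,k}$'s yield either a single $(p(\cdot),\infty)_w$-atom (when $w\in L^1$) or $(p(\cdot),\infty,L)_w$-atoms supported on unit cubes. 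With $\lambda^{l,j}_k:=C\cdot 2^{jn/2}\langle f,\psi^l_{j,k}\rangle$, Theorem \ref{thm:190406-2} gives $\|f\|_{h^{p(\cdot)}(w)}\lesssim(\text{single-atom term})+\mathcal{A}_{p(\cdot),w,v}(\{\lambda^{l,j}_k\};\{Q^*_{j,k}\})$ for any $v\in(0,\min(1,p_-))$ compatible with (\ref{eq:220123-1}). The main obstacle is that the atomic supports $Q^*_{j,k}$ expand $Q_{j,k}$ by a fixed factor $2N-1$ and therefore overlap in $k$, whereas the dyadic cubes $Q_{j,k}$ in the definition of $Wf$ tile at scale $2^{-j}$; I would bridge the gap by $\chi_{Q^*_{j,k}}\lesssim (M^{\rm loc}\chi_{Q_{j,k}})^\tau$ for some $\tau>1$ chosen so that $p(\cdot)/(v\tau)>q_w$, and then apply the Fefferman--Stein-type vector-valued inequality (Proposition \ref{prop:190408-10011}) together with Proposition \ref{prop:210805-1} to arrive at $\mathcal{A}_{p(\cdot),w,v}(\{\lambda^{l,j}_k\};\{Q^*_{j,k}\})\lesssim \|Wf\|_{L^{p(\cdot)}(w)}$; that $v$ may be chosen so small that $w\in A_{p(\cdot)/v}^{\rm loc}$ is guaranteed by Lemma \ref{lem:210921-112}.

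Part (2) will follow from Part (1) once one shows that the partial sums of the wavelet expansion of $f$ converge to $f$ in $h^{p(\cdot)}(w)$. The assumption $f\in L^\infty_{\rm c}(\mathbb{R}^n)$ ensures that the wavelet coefficients decay rapidly enough to render the partial sums Cauchy in $h^{p(\cdot)}(w)$ via the same atomic-decomposition estimate applied to tail sums, yielding a limit $g\in h^{p(\cdot)}(w)$ which coincides with $f$ by $L^2$-convergence.
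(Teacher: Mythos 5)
Your argument for $\|Vf\|_{L^{p(\cdot)}(w)}+\|Wf\|_{L^{p(\cdot)}(w)}\lesssim\|f\|_{h^{p(\cdot)}(w)}$ follows the same line as the paper's Lemma~\ref{lem:211023-1} (Peetre-type majorization of the wavelet coefficients, Rychkov's reproducing identity via Lemmas~\ref{lem:211029-111} and~\ref{lem:211029-211}, then Corollary~\ref{cor:210923-5}), and it is sound.

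The converse, however, has a genuine gap. You propose to regard $a^{l,j}_k=C^{-1}2^{-jn/2}\psi^l_{j,k}$ as $(p(\cdot),\infty,L)_w$-atoms and apply Theorem~\ref{thm:190406-2}, which requires bounding
\[
\mathcal{A}_{p(\cdot),w,v}\bigl(\{\lambda^{l,j}_k\};\{Q^*_{j,k}\}\bigr)
=\left\|\left(\sum_{l,j,k}\bigl(2^{jn/2}|\langle f,\psi^l_{j,k}\rangle|\bigr)^v\chi_{Q^*_{j,k}}\right)^{1/v}\right\|_{L^{p(\cdot)}(w)}
\]
by $\|Wf\|_{L^{p(\cdot)}(w)}$, where $Wf=\bigl(\sum_{l,j,k}2^{jn}|\langle f,\psi^l_{j,k}\rangle|^2\chi_{Q_{j,k}}\bigr)^{1/2}$. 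The maximal-function trick you suggest removes the mismatch between $Q^*_{j,k}$ and $Q_{j,k}$, but it does not touch the inner exponent: your atomic norm is an $\ell^v$-sum over $(l,j)$ of the coefficients at the point $x$, whereas $Wf$ is the corresponding $\ell^2$-sum. Since $v<\min(1,p_-)\le 1<2$, one has $(\sum c_j^v)^{1/v}\ge(\sum c_j^2)^{1/2}$ pointwise, so the required inequality goes the wrong way and cannot be salvaged by $M^{\rm loc}$ or by enlarging cubes. Indeed the sequence spaces $f^0_{p(\cdot),v}(w)$ and $f^0_{p(\cdot),2}(w)$ are genuinely different, so for a general $f$ with $\|Wf\|_{L^{p(\cdot)}(w)}<\infty$ the atomic quantity $\mathcal{A}_{p(\cdot),w,v}$ for the \emph{individual} wavelets can be infinite. (One would need a stopping-time / Calder\'on--Zygmund grouping of wavelets into genuine atoms \`a la Garc\'{\i}a-Cuerva--Martell, an additional mechanism you do not supply.)

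The paper avoids this trap entirely: to prove $\|f\|_{h^{p(\cdot)}(w)}\lesssim\|Vf\|+\|Wf\|$ it does \emph{not} pass through the atomic decomposition. Instead it substitutes the wavelet reproducing formula $f=\sum_k\langle f,\varphi_{J,k}\rangle\varphi_{J,k}+\sum_{l,j',k}\langle f,\psi^l_{j',k}\rangle\psi^l_{j',k}$ into $\phi^*_{2^{-j}}*f$, controls the cross-convolutions via the moment estimate of Lemma~\ref{lem3sw}, applies the double vector-valued maximal inequality (Proposition~\ref{prop:190408-10011}) and H\"older, and finally invokes the Littlewood--Paley characterization (Theorem~\ref{thm:7.4}). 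The whole chain stays in $\ell^2$ from start to finish, which is exactly what your atomic route fails to do. Part (2) then follows as the paper does — once $Vf+Wf\in L^{p(\cdot)}(w)$ and $f\in L^\infty_{\rm c}\subset L^{p_++q_w}(w)$, the argument of Part (1) already applies.
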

The proof of Theorem \ref{thm:8.1} consists of several steps.
We start with a setup.
Let
$\phi \in C^\infty_{\rm c}({\mathbb R}^n) \setminus {\mathcal P}_0({\mathbb R}^n)^\perp$
and
$\phi^* \in C^\infty_{\rm c}({\mathbb R}^n)$
satisfy
\[
{\rm supp}(\phi) \subset [-1,1]^n, \quad
\phi^*=\phi-\frac1{2^n}\phi\left(\frac{\cdot}{2}\right).
\]
Choose $\psi,\psi^*$ according to Lemma \ref{lem:210712-1}.
In the light of the construction in \cite{Rychkov01},
we can arrange 
$\phi$,
$\phi^*$,
$\psi$ and $\psi^*$
so that they are
even functions
satisfying
\[\phi*\psi+\sum_{j=1}^\infty \phi^*_{2^{-j}}*\psi^*_{2^{-j}}=\delta
\]in ${\mathcal D}'({\mathbb R}^n)$
and that
$\phi^*, \psi^* \in {\mathcal P}_{n+L}({\mathbb R}^n)^\perp$
where $L$ is in (\ref{eq:220123-1}).
We must justify the definition of the couplings
$\langle f, \psi^l_{j,k} \rangle$
and
$\langle f,\varphi_{J,k} \rangle$.
To this end, we will prove the following estimate:
\begin{lemma}\label{lem:211023-1}
For all 
$f \in L^{p_++q_w}(w) \cap h^{p(\cdot)}(w)$,
\begin{equation}\label{eq:211023-2}
\|f\|_{h^{p(\cdot)}(w)} \gtrsim 
\|V f\|_{L^{p(\cdot)}(w)}+\|W f\|_{L^{p(\cdot)}(w)}.
\end{equation}
\end{lemma}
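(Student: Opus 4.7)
The plan is to deduce the wavelet square-function bound from the Littlewood--Paley characterization (Theorem \ref{thm:7.4}) by comparing each wavelet coefficient, pointwise on its supporting cube, with a Peetre-type maximal function of $f$ convolved at the matching dyadic scale. Set up: fix an even $\phi\in C^\infty_{\rm c}({\mathbb R}^n)\setminus{\mathcal P}_0^\perp$ and choose $\phi^*,\psi,\psi^*$ via Lemma \ref{lem:210712-1} with $\phi^*,\psi^*\in{\mathcal P}_{L^\dagger}^\perp({\mathbb R}^n)$ for $L^\dagger\gg L$. By Theorem \ref{thm:7.4} it suffices to dominate $\|Vf\|_{L^{p(\cdot)}(w)}+\|Wf\|_{L^{p(\cdot)}(w)}$ by $\|\phi*f\|_{L^{p(\cdot)}(w)}+\bigl\|(\sum_{j\ge1}|\phi^*_{2^{-j}}*f|^2)^{1/2}\bigr\|_{L^{p(\cdot)}(w)}$. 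Without loss of generality I take $J=0$; the pairings $\langle f,\varphi_{J,k}\rangle$ and $\langle f,\psi^l_{j,k}\rangle$ make sense because $f\in L^{p_++q_w}(w)$ combined with $w\in A_\infty^{\rm loc}$ yields $f\in L^1_{\rm loc}({\mathbb R}^n)$ and the wavelets are compactly supported.

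For the pointwise bound, a change of variables gives
\[
\langle f,\psi^l_{j,k}\rangle=2^{-jn/2}\bigl(f*\tilde\psi^l_{2^{-j}}\bigr)(2^{-j}k),\qquad \tilde\psi^l(x)\equiv\psi^l(-x),
\]
so that for every $x\in Q_{j,k}$ and every $A>0$,
\[
|\langle f,\psi^l_{j,k}\rangle|\,\chi_{j,k}(x)\lesssim \sup_{y\in{\mathbb R}^n}\frac{|\tilde\psi^l_{2^{-j}}*f(x-y)|}{(1+2^j|y|)^A}.
\]
Choose $r\in(0,\min(1,p_-)/q_w)$ and $A,B$ with $Ar$ slightly exceeding $n$ and $Br>8n+6\log D$. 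Because $\tilde\psi^l\in\mathcal{S}({\mathbb R}^n)$, Rychkov's estimate (Lemma \ref{lem:211029-211}) dominates the supremum above by
\[
\Bigl(2^{jn}\!\!\int\!\frac{|\phi*f(x-y)|^r}{m_{j,Ar,Br}(y)}dy\Bigr)^{1/r}+\sum_{m\ge j+1}2^{-(m-j)\alpha}\Bigl(2^{mn}\!\!\int\!\frac{|\phi^*_{2^{-m}}*f(x-y)|^r}{m_{m,Ar,Br}(y)}dy\Bigr)^{1/r},
\]
after using $m_{j,Ar,Br}(y)^{-1}\le 2^{(m-j)Ar}m_{m,Ar,Br}(y)^{-1}$; here $\alpha\equiv(L^\dagger+1)r-Ar>0$ provided $L^\dagger$ is taken large.

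Writing $g_m(x)$ for the $m$-th integral expression and $g_{-1}(x)$ for the $\phi*f$ term, the bounded overlap of $\{Q_{j,k}\}_k$ for fixed $j$ collapses the sum in $k$, and one obtains
\[
Wf(x)^2\lesssim\sum_{j\ge0}\Bigl(\sum_{m\ge j}2^{-(m-j)\alpha}g_m(x)\Bigr)^2\lesssim\sum_{m\ge-1}g_m(x)^2
\]
by a standard Young-type $\ell^2$-convolution inequality. Taking $L^{p(\cdot)}(w)$-norms and raising to the $r$-th power reduces matters to a bound in $L^{p(\cdot)/r}(w)$. Since $r<p_-/q_w$ we have $w\in A_{p(\cdot)/r}^{\rm loc}$, so the vector-valued inequality Corollary \ref{cor:210923-5}, with exponents $(p(\cdot)/r,\,2/r)$ applied to the sequence $\{|\phi^*_{2^{-m}}*f|^r\}$, gives
\[
\Bigl\|\bigl(\textstyle\sum_m g_m^2\bigr)^{1/2}\Bigr\|_{L^{p(\cdot)}(w)}\lesssim\|\phi*f\|_{L^{p(\cdot)}(w)}+\Bigl\|\bigl(\textstyle\sum_{m\ge1}|\phi^*_{2^{-m}}*f|^2\bigr)^{1/2}\Bigr\|_{L^{p(\cdot)}(w)}\sim\|f\|_{h^{p(\cdot)}(w)}.
\]
The estimate for $Vf$ uses the same identity and Peetre bound but only at the single scale $J$, so it is controlled by the same expression.

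The main obstacle is calibrating $L^\dagger,A,B,r$ so that three conditions hold simultaneously: Rychkov's geometric-type decay $2^{-(m-j)\alpha}$ is summable, Corollary \ref{cor:210923-5} is applicable at exponent $p(\cdot)/r$ (which forces $r<p_-/q_w$), and the weight interchange $m_{j,\cdot,\cdot}\rightsquigarrow m_{m,\cdot,\cdot}$ costs at most $2^{(m-j)Ar}$. The admissibility condition \eqref{eq:220123-1} on $L$ is precisely what lets us pair the vanishing-moment order of $\psi^l$ with a sufficiently smooth $\phi^*$ generated by Lemma \ref{lem:210712-1}; once this calibration is in place, the remainder of the argument is a routine assembly of known inequalities.
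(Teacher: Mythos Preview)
There is a genuine gap in the key step. When you invoke Lemma~\ref{lem:211029-211} with $\zeta=\tilde\psi^l$ at level $j$ (and $t=1$), the first term on the right is
\[
2^{jn}\int_{{\mathbb R}^n}\frac{|\phi_{2^{-j}}*f(x-y)|^r}{m_{j,Ar,Br}(y)}\,dy,
\]
not the expression with $\phi*f$ that you wrote. This term depends on $j$ and is therefore not a single function $g_{-1}$; after taking the $\ell^2$-sum over $j\ge 0$ it is not controlled by the Littlewood--Paley norm (each summand is essentially $\sup_{0<t\le1}|\phi_t*f|$, and an infinite sum of these diverges). The deeper reason is that Lemma~\ref{lem:211029-211} treats $\zeta$ as a generic Schwartz function and never exploits the vanishing moments of $\tilde\psi^l$; without that moment condition there is no mechanism to extract decay in $j$ from the low-frequency piece of the Calder\'on decomposition. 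Your closing remark that condition \eqref{eq:220123-1} ``lets us pair the vanishing-moment order of $\psi^l$ with a sufficiently smooth $\phi^*$'' identifies exactly the missing ingredient, but your argument never actually uses it.

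The paper's proof avoids this by applying the Calder\'on formula once at the \emph{fixed} base scale $J$, writing
\[
\langle f,\psi^l_{j,k}\rangle=\langle\phi_{2^{-J}}*f,\psi_{2^{-J}}*\psi^l_{j,k}\rangle+\sum_{j'>J}\langle\phi^*_{2^{-j'}}*f,\psi^*_{2^{-j'}}*\psi^l_{j,k}\rangle,
\]
and then estimating the kernels via Lemma~\ref{lem3sw}: the moments of $\psi^l$ give $|\psi_{2^{-J}}*\psi^l_{j,k}|\lesssim 2^{-(j-J)(n+L+1)}$, while two-sided decay $2^{-|j-j'|(L+1)}$ comes from the moments of $\psi^l$ (when $j'<j$) and of $\psi^*$ (when $j'>j$). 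This yields a Peetre-maximal pointwise bound with the correct summability in $j$, which is then converted to integral form via Lemma~\ref{lem:211029-111} and finished with Corollary~\ref{cor:210923-5}. Your route can be repaired along exactly these lines, but as written the invocation of Lemma~\ref{lem:211029-211} does not deliver the bound you claim.
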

Before the proof, we offer a word
about Lemma \ref{lem:211023-1}.
Fu and Yang obtained a similar estimate
in \cite[Theorem 1.9]{FuYa18}.
However,
we cannot use \cite[Theorem 1.9]{FuYa18} directly
due to the presence of $w \in A^{\rm loc}_\infty$.
As such, we must establish an estimate from scratch.
\begin{proof}
It suffices to prove that
\begin{equation}\label{eq:210924-12}
V f+W f
\lesssim
\sup_{z \in {\mathbb R}^n}
\frac{|\phi_{2^{-J}}*f(\cdot-z)|}{m_{J,A,B}(z)}+
\left(
\sum_{j'=J+1}^\infty
\sup_{z \in {\mathbb R}^n}
\frac{|\phi^*_{2^{-j'}}*f(\cdot-z)|^2}{m_{j',A,B}(z)^2}
\right)^{\frac12}.
\end{equation}
Once this estimate is shown, we obtain the conclusion as follows.
Fix $r>0$ slightly less than $\frac{\min(1,p_-)}{q_w}$.
Then $(n/r,n+L+1) \ne \emptyset$, 
so we can take $A\in(n/r,n+L+1) \cap {\mathbb N}$.
For the first term, 
by Lemma \ref{lem:211029-111} and H\"older's inequality, we have
\begin{align*}
\sup_{z \in {\mathbb R}^n}
\frac{|\phi_{2^{-J}}*f(\cdot-z)|}{m_{J,A,B}(z)}
&\lesssim
\left(
\sum_{k=J}^{\infty}
2^{k n+(J-k)(n+L+1) r}
\int_{{\mathbb R}^n}
\frac{|\phi^*_{2^{-k}}*f(x-y)|^r}{m_{J,A r,B r}(y)}{\rm d}y
\right)^{\frac1r}\\
&\lesssim
\left(
\sum_{k=J}^\infty
\left(
2^{kn}
\int_{{\mathbb R}^n}
\frac{|\phi^*_{2^{-k}}*f(x-y)|^r}{m_{k,A r,B r}(y)}{\rm d}y
\right)^{\frac2r}
\right)^{\frac12}.
\end{align*}
For the second term, 
since $A>n/r$,
again by Lemma \ref{lem:211029-111} and H\"older's inequality, 
we have
\begin{align*}
\sup_{z \in {\mathbb R}^n}
\frac{|\phi^*_{2^{-j'}}*f(\cdot-z)|^2}{m_{j',A,B}(z)^2}
&\lesssim
\left(
\sum_{k=j'}^{\infty}
2^{k n+(j'-k)(n+L+1) r}
\int_{{\mathbb R}^n}
\frac{|\phi^*_{2^{-k}}*f(x-y)|^r}{m_{j',A r,B r}(y)}{\rm d}y
\right)^{\frac2r}\\
&\lesssim
\sum_{k=j'}^{\infty}
2^{2(j'-k)(n+L+1-\varepsilon_0)}
\left(2^{kn}
\int_{{\mathbb R}^n}
\frac{|\phi^*_{2^{-k}}*f(x-y)|^r}{m_{j',A r,B r}(y)}{\rm d}y
\right)^{\frac2r}.
\end{align*}
Here, we choose $0<\varepsilon_0<n+L+1-A$.
 Using the estimate $m_{j',A r,B r} \ge 2^{Ar(j'-k)}m_{k,A r,B r}$,
 we have
\begin{align*}
\lefteqn{
\sum_{j'=J+1}^\infty
\sup_{z \in {\mathbb R}^n}
\frac{|\phi^*_{2^{-j'}}*f(\cdot-z)|^2}{m_{j',A,B}(z)^2}
}\\
&\lesssim
\sum_{k=J}^\infty
\sum_{j'=J}^{k}
2^{2(j'-k)(n+L+1-\varepsilon_0)}
\left(
2^{kn}
2^{-Ar(j'-k)}
\int_{{\mathbb R}^n}
\frac{|\phi^*_{2^{-k}}*f(x-y)|^r}{m_{k,A r,B r}(y)}{\rm d}y
\right)^{\frac2r}\\
&\lesssim
\sum_{k=J}^\infty
\left(
\sum_{j'=-\infty}^{k}
2^{2(j'-k)(n+L+1-\varepsilon_0-A)}
\right)
\left(
2^{kn}
\int_{{\mathbb R}^n}
\frac{|\phi^*_{2^{-k}}*f(x-y)|^r}{m_{k,A r,B r}(y)}{\rm d}y
\right)^{\frac2r}\\
&\sim
\sum_{k=J}^\infty
\left(
2^{kn}
\int_{{\mathbb R}^n}
\frac{|\phi^*_{2^{-k}}*f(x-y)|^r}{m_{k,A r,B r}(y)}{\rm d}y
\right)^{\frac2r}\\
\end{align*}
Hence,
\begin{align*}
V f+W f
\lesssim
\left(
\sum_{k=J}^\infty
\left(
2^{kn}
\int_{{\mathbb R}^n}
\frac{|\phi^*_{2^{-k}}*f(x-y)|^r}{m_{k,A r,B r}(y)}{\rm d}y
\right)^{\frac2r}
\right)^{\frac12}
\end{align*}
Finally, we can resort 
to the 
vector-valued boundedness of the operators (Corollary \ref{cor:210923-5}).

So, we move on to
estimate (\ref{eq:210924-12}).
However, since we can handle $V f$ similar to $W f$,
we content ourselves with the proof of
\begin{equation}\label{eq:210924-14}
W f
\lesssim
\sup_{z \in {\mathbb R}^n}
\frac{|\phi_{2^{-J}}*f(\cdot-z)|}{m_{J,A,B}(z)}+
\left(
\sum_{j'=J+1}^\infty
\sup_{z \in {\mathbb R}^n}
\frac{|\phi^*_{2^{-j'}}*f(\cdot-z)|^2}{m_{j',A,B}(z)^2}
\right)^{\frac12}.
\end{equation}
instead of (\ref{eq:210924-12}).

Fix $r>0$ slightly less than $\frac{\min(1,p_-)}{q_w}$.
Then $(n/r,n+L+1) \ne \emptyset$, so we can take $A\in(n/r,n+L+1)$.
Let $j \in {\mathbb N}$ be fixed.
Since $\psi,\psi^*$ are radial,
\begin{align}
\label{eq:210924-11111}
\left| 
\langle f, \psi_{j,k}^l \rangle \chi_{j,k} \right|
&\le
\left| 
\langle \phi_{2^{-J}}*\psi_{2^{-J}}*f, \psi_{j,k}^l \rangle \chi_{j,k} \right|+
\sum_{j'=J+1}^\infty
\left| 
\langle \phi^*_{2^{-j'}}*\psi^*_{2^{-j'}}*f, \psi_{j,k}^l \rangle \chi_{j,k} \right|\\
\nonumber
&=
\left| 
\langle \phi_{2^{-J}}*f, \psi_{2^{-J}}*\psi_{j,k}^l \rangle \chi_{j,k} \right|+
\sum_{j'=J+1}^\infty
\left| 
\langle \phi^*_{2^{-j'}}*f, \psi^*_{2^{-j'}}*\psi_{j,k}^l \rangle \chi_{j,k} \right|.
\end{align}

By the moment condition,
\begin{align}
\label{eq:211014-1}
2^{\frac{j n}{2}}
|\psi_{2^{-J}}*\psi_{j,k}^l|
&\lesssim
2^{J n-(j-J)(n+L+1)}
\chi_{2^{4+j-J}Q^*_{j,k}}, \\
2^{\frac{j n}{2}}
|\psi^*_{2^{-j'}}*\psi_{j,k}^l|
&\lesssim
2^{\min(j,j')n-n\max(j'-j,0)-|j-j'|(L+1)}
\chi_{16Q^*_{j,k} \cup 2^{4+j-j'}Q^*_{j,k}}.
\nonumber
\end{align}
By inserting (\ref{eq:211014-1}) into (\ref{eq:210924-11111}),
we obtain
\begin{align*}
&\left| \langle f, \psi_{j,k}^l \rangle \chi_{j,k} \right|\\
&\quad\lesssim
2^{-(j-J)(n+L+1)}
\left(\int_{2^{4+j-J}Q^*_{j,k}}|\phi_{2^{-J}}*f(y)|{\rm d}y\right)\chi_{Q_{j,k}}\\
&\quad\quad+
\sum_{j'=J+1}^\infty
2^{\min(j,j')n-n\max(j'-j,0)-|j-j'|(L+1)}
\left(
\int_{16Q^*_{j,k} \cup 2^{4+j-j'}Q^*_{j,k}}|\phi^*_{2^{-j'}}*f(y)|{\rm d}y\right)\chi_{Q_{j,k}}.
\end{align*}
Let
$x \in Q_{j,k}$.
Using the function $m_{J,A,B}$,
we estimate
\[
\left(\int_{2^{4+j-J}Q^*_{j,k}}|\phi_{2^{-J}}*f(y)|{\rm d}y\right)
\lesssim
2^{-J n}
\sup_{z \in {\mathbb R}^n}
\frac{|\phi_{2^{-J}}*f(x-z)|}{m_{J,A,B}(z)}.
\]
Likewise,
for $x \in Q_{j,k}$,
\begin{align*}
\int_{16Q^*_{j,k} \cup 2^{4+j-j'}Q^*_{j,k}}
|\phi^*_{2^{-j'}}*f(y)|{\rm d}y
&\lesssim
2^{-\min(j,j')n+A\max(j'-j,0)}
\sup_{z \in {\mathbb R}^n}
\frac{|\phi^*_{2^{-j'}}*f(x-z)|}{m_{j',A,B}(z)}.
\end{align*}
Consequently,
\begin{align*}
\lefteqn{
\left(
\sum_{k \in {\mathbb Z}^n}
\left| 
\langle f, \psi_{j,k}^l \rangle \chi_{j,k} \right|^2
\right)^{\frac12}
}\\
&\lesssim
2^{-(j-J) (n+L+1)}
\sup_{z \in {\mathbb R}^n}
\frac{|\phi_{2^{-J}}*f(\cdot-z)|}{m_{J,A,B}(z)}
+
\sum_{j'=J+1}^\infty
2^{-|j'-j|(L+1+n-A)}
\sup_{z \in {\mathbb R}^n}
\frac{|\phi^*_{2^{-j'}}*f(\cdot-z)|}{m_{j',A,B}(z)}.
\end{align*}
Recall that $A$ satisfies
$L+1+n>A$.
Taking the $\ell^2$-norm over $j=J, J+1,J+2,\ldots$ and $\ell=1,2, \ldots, 2^n-1$,
 we obtain
 \begin{align*}
 \lefteqn{
\left(
\sum_{\ell=1}^{2^n-1}
\sum_{j=J}^\infty
\sum_{k \in {\mathbb Z}^n}
\left| 
\langle f, \psi_{j,k}^l \rangle \chi_{j,k} \right|^2
\right)^{\frac12}
}\\
&\lesssim
\left(
\sum_{j=J}^\infty
2^{-2(j-J) (n+L+1)}
\right)^{\frac12}
\sup_{z \in {\mathbb R}^n}
\frac{|\phi_{2^{-J}}*f(\cdot-z)|}{m_{J,A,B}(z)}\\
&\quad\quad+
\left(
\sum_{j=J}^\infty
\left(
\sum_{j'=J}^\infty
2^{-|j'-j|(L+1+n-A)}
\sup_{z \in {\mathbb R}^n}
\frac{|\phi^*_{2^{-j'}}*f(\cdot-z)|}{m_{j',A,B}(z)}
\right)^2
\right)^{\frac12}\\
&\lesssim
\sup_{z \in {\mathbb R}^n}
\frac{|\phi_{2^{-J}}*f(\cdot-z)|}{m_{J,A,B}(z)}
+
\left(
\sum_{j'=J}^\infty
\sup_{z \in {\mathbb R}^n}
\frac{|\phi^*_{2^{-j'}}*f(\cdot-z)|^2}{m_{j',A,B}(z)^2}
\right)^{\frac12}.
 \end{align*}
Thus, the proof is complete.
\end{proof}
Lemma \ref{lem:211023-1} has an important consequence.
First,
since $L^\infty_{\rm c}({\mathbb R}^n) \cap h^{p(\cdot)}(w)$
is dense in $h^{p(\cdot)}(w)$
(see Lemma \ref{lem:190408-74}),
we can extend couplings
$\langle f, \psi^l_{j,k} \rangle$
and
$\langle f,\varphi_{J,k} \rangle$,
which are initially defined for
$f \in L^\infty_{\rm c}({\mathbb R}^n) \cap h^{p(\cdot)}(w)$,
to bounded linear functionals
from
$L^\infty_{\rm c}({\mathbb R}^n) \cap h^{p(\cdot)}(w)$.
We still write
$\langle f, \psi^l_{j,k} \rangle$
and
$\langle f,\varphi_{J,k} \rangle$
for these extended functionals.
By the Fatou lemma,
we have
(\ref{eq:211023-2})
for all
$f \in h^{p(\cdot)}(w)$,
\begin{corollary}\label{cor:211023-1}
The conclusion of Lemma \ref{lem:211023-1} remains valid
for all
$f \in h^{p(\cdot)}(w)$.
\end{corollary}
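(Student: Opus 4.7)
The plan is to bootstrap from Lemma \ref{lem:211023-1} via a density/Fatou argument, as foreshadowed by the discussion immediately preceding the corollary. The key inputs are: (i) the density statement in Lemma \ref{lem:190408-74}, namely that $h^{p(\cdot)}(w)\cap L^{p_++q_w}(w)\cap L^{\infty}(\mathbb{R}^n)$ is dense in $h^{p(\cdot)}(w)$; (ii) the estimate (\ref{eq:211023-2}) for elements of $L^{p_++q_w}(w)\cap h^{p(\cdot)}(w)$; and (iii) the Fatou property of the quasi-norm $\|\cdot\|_{L^{p(\cdot)}(w)}$, which follows directly from the modular definition.

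First, for each fixed $(j,k,l)$, I would observe that the elementary inequality
\[
|\langle g,\psi^{l}_{j,k}\rangle|\,\|\chi_{j,k}\|_{L^{p(\cdot)}(w)}\le \|Wg\|_{L^{p(\cdot)}(w)}
\]
combined with Lemma \ref{lem:211023-1} yields
\[
|\langle g,\psi^{l}_{j,k}\rangle|\lesssim \|\chi_{j,k}\|_{L^{p(\cdot)}(w)}^{-1}\,\|g\|_{h^{p(\cdot)}(w)}
\qquad (g\in L^{p_++q_w}(w)\cap h^{p(\cdot)}(w)),
\]
and similarly for $\langle g,\varphi_{J,k}\rangle$. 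Thus each coupling is a bounded linear functional on the dense subspace, with respect to the $h^{p(\cdot)}(w)$-quasi-norm, and extends uniquely to a bounded linear functional on $h^{p(\cdot)}(w)$; this is how I would \emph{define} $\langle f,\psi^{l}_{j,k}\rangle$ and $\langle f,\varphi_{J,k}\rangle$ for a general $f\in h^{p(\cdot)}(w)$. Standard arguments show that the extension is independent of the approximating sequence.

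Next, given $f\in h^{p(\cdot)}(w)$, I would pick a sequence $\{f_m\}_{m=1}^{\infty}\subset h^{p(\cdot)}(w)\cap L^{p_++q_w}(w)\cap L^{\infty}(\mathbb{R}^n)$ with $f_m\to f$ in $h^{p(\cdot)}(w)$, using Lemma \ref{lem:190408-74}. By the previous step, for each $(j,k,l)$ we have $\langle f_m,\psi^{l}_{j,k}\rangle\to\langle f,\psi^{l}_{j,k}\rangle$ and $\langle f_m,\varphi_{J,k}\rangle\to\langle f,\varphi_{J,k}\rangle$ as $m\to\infty$. Applying Fatou's lemma to the defining series for $Vf$ and $Wf$ (a countable sum of non-negative terms at each point $x$), I obtain the pointwise bounds
\[
Vf(x)\le \liminf_{m\to\infty}Vf_m(x),\qquad Wf(x)\le \liminf_{m\to\infty}Wf_m(x).
\]

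Finally, the Fatou property of $\|\cdot\|_{L^{p(\cdot)}(w)}$ turns these pointwise bounds into quasi-norm bounds:
\[
\|Vf\|_{L^{p(\cdot)}(w)}+\|Wf\|_{L^{p(\cdot)}(w)}
\le \liminf_{m\to\infty}\Bigl(\|Vf_m\|_{L^{p(\cdot)}(w)}+\|Wf_m\|_{L^{p(\cdot)}(w)}\Bigr)
\lesssim \liminf_{m\to\infty}\|f_m\|_{h^{p(\cdot)}(w)}=\|f\|_{h^{p(\cdot)}(w)},
\]
where the second inequality is Lemma \ref{lem:211023-1} applied to each $f_m$. No step here is really an obstacle; the only subtlety I anticipate is verifying that the extension of the wavelet coefficients is well-defined (independent of the approximating sequence) and compatible with the $L^2$-pairing when $f$ happens to lie in $L^{p_++q_w}(w)$, but both follow by routine Cauchy-sequence manipulations using the boundedness estimate above.
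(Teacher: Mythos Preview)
Your proposal is correct and follows essentially the same approach as the paper: extend the wavelet couplings by density (the paper's discussion immediately preceding the corollary does exactly this), then pass to the limit via Fatou's lemma. The paper is terser, simply writing ``By the Fatou lemma, we have (\ref{eq:211023-2}) for all $f\in h^{p(\cdot)}(w)$,'' but your expanded version captures precisely the intended argument.
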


With Corollary \ref{cor:211023-1} in mind, we complete the proof
of Theorem \ref{thm:8.1}.
Let us prove
\[
\|V f+W f\|_{L^{p(\cdot)}(w)}
\sim
\|\phi*f\|_{L^{p(\cdot)}(w)}
+
\left\|
\left(
\sum_{j=1}^\infty |\phi^*_{2^{-j}}*f|^2
\right)^{\frac12}\right\|_{L^{p(\cdot)}(w)}.
\]In view of Corollary \ref{cor:211023-1},
it remains to establish
\begin{equation}\label{eq:210924-11}
\|V f+W f\|_{L^{p(\cdot)}(w)}
\gtrsim
\|\phi*f\|_{L^{p(\cdot)}(w)}
+
\left\|
\left(
\sum_{j=1}^\infty |\phi^*_{2^{-j}}*f|^2
\right)^{\frac12}
\right\|_{L^{p(\cdot)}(w)}
\end{equation}
for all $f \in L^{p_++q_w}(w)$.
As before,
since the remaining estimates
needed for
(\ref{eq:210924-11})
are easier to prove, we content ourselves
with the proof of
\begin{equation}\label{eq:210924-13}
\|V f+W f\|_{L^{p(\cdot)}(w)}
\gtrsim
\left\|\left(
\sum_{j=1}^\infty |\phi^*_{2^{-j}}*f|^2
\right)^{\frac12}\right\|_{L^{p(\cdot)}(w)}
\end{equation}
instead of (\ref{eq:210924-11}).

We prove
(\ref{eq:210924-13}).
Let $j \in {\mathbb N} \cap [J, \infty)$ be fixed.
Since
$f \in L^{p_++q_w}(w)$,
we can use the wavelet decomposition
obtained
in \cite{INNS-wavelet}.
and estimate
each term of the decomposition:
\begin{align*}
\phi^*_{2^{-j}}*f
&=
\sum_{k \in {\mathbb Z}^n}
\langle f,\varphi_{J,k} \rangle
\phi^*_{2^{-j}}*\varphi_{J,k}
+
\sum_{l=1}^{2^n-1}
\sum_{j'=J}^\infty
\sum_{k \in {\mathbb Z}^n}
\langle f, \psi^l_{j',k} \rangle
\phi^*_{2^{-j}}*\psi^l_{j',k}.
\end{align*}
We have
\[
2^{\frac{J n}{2}}|\phi^*_{2^{-j}}*\varphi_{J,k}|
\lesssim
2^{J n-(j-J)(n+L+1)}
\chi_{16Q^*_{J,k}}
\]
and
\[
2^{\frac{j'n}{2}}|\phi^*_{2^{-j}}*\psi^l_{j',k}|
\lesssim
\begin{cases}
2^{j'n-(j-j')(n+L+1)}
\chi_{16Q^*_{j',k} \cup 2^{4+j'-j}Q^*_{j',k}}
&(j' \le j),\\
2^{j n-(j'-j)(L+1)}
\chi_{16Q^*_{j',k} \cup 2^{4+j'-j}Q^*_{j',k}}
&(j' \ge j).
\end{cases}
\]
As a result,
\begin{align*}
\sum_{k \in {\mathbb Z}^n}
|\langle f,\varphi_{J,k} \rangle
\phi^*_{2^{-j}}*\varphi_{J,k}|
&\lesssim
\sum_{k \in {\mathbb Z}^n}
2^{\frac{J n}{2}-(j-J)(n+L+1)}
|\langle f,\varphi_{J,k} \rangle|
\chi_{16Q^*_{J,k}}
\end{align*}
and
\begin{align*}
\sum_{j'=J}^\infty
\sum_{k \in {\mathbb Z}^n}
|\langle f, \psi^l_{j',k} \rangle\phi^*_{2^{-j}}*\psi^l_{j',k}|
&\lesssim\sum_{j'=J}^\infty
\sum_{k \in {\mathbb Z}^n}
2^{\frac{j'n}{2}-|j-j'|(n+L+1)}
|
\langle f, \psi^l_{j',k} \rangle|
\chi_{16Q^*_{j',k} \cup 2^{4+j'-j}Q^*_{j',k}}.
\end{align*}
Choose $r\in(0,1)$ so that
\[
n+L+1-\frac{n}{r}>0, \quad
\frac{p_-}{r}>q_w
=\inf\{u \in [1,\infty)\,:\,w \in A_u^{\rm loc}\}.
\]
Then
\begin{align*}
\sum_{j=J}^\infty
\left(
\sum_{k \in {\mathbb Z}^n}
|\langle f,\varphi_{J,k} \rangle
\phi^*_{2^{-j}}*\varphi_{J,k}|
\right)^2
&\lesssim
\left(\sum_{j=J}^\infty
2^{-2(j-J)(n+L+1)}
\right)
\left(
\sum_{k \in {\mathbb Z}^n}
2^{\frac{J n}{2}}
|\langle f,\varphi_{J,k} \rangle|
\chi_{16Q^*_{J,k}}
\right)^2\\
&\lesssim
\left(
\sum_{k \in {\mathbb Z}^n}
|\langle f,\varphi_{J,k} \rangle|
M^{\rm loc}[(\chi_{{J,k}})^r]^{\frac1r}
\right)^2
\end{align*}
and
\begin{align*}
\lefteqn{
\sum_{j=J}^\infty
\left(
\sum_{j'=J}^\infty
\sum_{k \in {\mathbb Z}^n}
|\langle f, \psi^l_{j',k} \rangle\phi^*_{2^{-j}}*\psi^l_{j',k}|
\right)^2}\\
&\lesssim
\sum_{j=J}^\infty
\left(\sum_{j'=J}^\infty
\sum_{k \in {\mathbb Z}^n}
2^{\frac{j'n}{2}-|j-j'|(n+L+1)}
|\langle f, \psi^l_{j',k} \rangle|
\chi_{16Q^*_{j',k} \cup 2^{4+j'-j}Q^*_{j',k}}
\right)^2\\
&\lesssim
\sum_{j=J}^\infty
\left(\sum_{j'=J}^\infty
\sum_{k \in {\mathbb Z}^n}
2^{\frac{j'n}{2}-|j-j'|(n+L+1-n/r)}
|\langle f, \psi^l_{j',k} \rangle|
(M^{\rm loc}\chi_{Q_{j',k}})^{\frac1r}
\right)^2.
\end{align*}
By Proposition \ref{prop:190408-10011}
\begin{align*}
\lefteqn{
\left\|\left\{
\sum_{j=J}^\infty
\left(
\sum_{j'=J}^\infty
\sum_{k \in {\mathbb Z}^n}
|\langle f, \psi^l_{j',k} \rangle\phi^*_{2^{-j}}*\psi^l_{j',k}|
\right)^2\right\}^{\frac12}\right\|_{L^{p(\cdot)}(w)}}\\
&\lesssim
\left\|\left\{
\sum_{j=J}^\infty
\left(\sum_{j'=J}^\infty
\sum_{k \in {\mathbb Z}^n}
2^{\frac{j'n}{2}-|j-j'|(n+L+1-n/r)}
|\langle f, \psi^l_{j',k} \rangle|
(M^{\rm loc}\chi_{Q_{j',k}})^{\frac1r}
\right)^2\right\}^{\frac12}\right\|_{L^{p(\cdot)}(w)}\\
&\lesssim
\left\|\left\{
\sum_{j=J}^\infty
\left(\sum_{j'=J}^\infty
\sum_{k \in {\mathbb Z}^n}
2^{\frac{j'n}{2}-|j-j'|(n+L+1-n/r)}
|\langle f, \psi^l_{j',k} \rangle|
\chi_{Q_{j',k}}
\right)^2\right\}^{\frac12}\right\|_{L^{p(\cdot)}(w)}.
\end{align*}
By H\"{o}lder's inequality,
\begin{align*}
\lefteqn{
\left\|\left\{
\sum_{j=J}^\infty
\left(
\sum_{j'=J}^\infty
\sum_{k \in {\mathbb Z}^n}
|\langle f, \psi^l_{j',k} \rangle\phi^*_{2^{-j}}*\psi^l_{j',k}|
\right)^2\right\}^{\frac12}\right\|_{L^{p(\cdot)}(w)}}\\
&\lesssim
\left\|\left\{
\sum_{j=J}^\infty
\sum_{j'=J}^\infty
\sum_{k \in {\mathbb Z}^n}
\left(2^{\frac{j'n}{2}-\frac12|j-j'|(n+L+1-n/r)}
|\langle f, \psi^l_{j',k} \rangle|
\chi_{Q_{j',k}}
\right)^2\right\}^{\frac12}\right\|_{L^{p(\cdot)}(w)}\\
&\lesssim
\left\|\left\{
\sum_{j'=J}^\infty
\sum_{k \in {\mathbb Z}^n}
\left(
|\langle f, \psi^l_{j',k} \rangle|
\chi_{{j',k}}
\right)^2\right\}^{\frac12}\right\|_{L^{p(\cdot)}(w)},
\end{align*}
as required.


\section{Examples and relations to other function spaces}
\label{s9}

In this section we give some examples of the weighted local Hardy spaces with variable exponents and weights.
One of the significant example is the Dirac Delta. 
We consider the condition to belong to $h^{p(\cdot)}(w)$ in Section \ref{s91}.
Next we provide the examples of weights.  
We handle the power weights in Section \ref{s92} 
and the exponential weights in Section \ref{s93}, respectively.
Finally, Section \ref{s94} and \ref{s95} is devoted to consider the relation to other function spaces.

\subsection{Dirac Delta}\label{s91}

Let $w \in A_\infty^{\rm loc}$.
Also let
$p(\cdot) \in {\mathcal P}_0 \cap {\rm L H}_0 \cap {\rm L H}_\infty$.
Let $\delta$ be the Dirac Delta.
Then
${\mathcal M}_{N_{p(\cdot)},w}^0\delta(x) \sim |x|^{-n}$
near the origin and
${\mathcal M}_{N_{p(\cdot)},w}^0\delta$ is 
bounded away from the origin and supported
on a bounded set.
Therefore,
if
$p(\cdot)$ and $w$ satisfy
\begin{equation}\label{eq:220424-11}
\int_{B(1)}|x|^{-n p(x)}w(x){\rm d}x<\infty,
\end{equation}
then
$\delta \in h^{p(\cdot)}(w)$.

\begin{example}
The following couples satisfy 
(\ref{eq:220424-11}) and falls within the scope
of this paper.
\begin{enumerate}
\item
$w(x)=1$,
$p(x)=\max(2^{-1},\min(1,|x|))$, $x \in {\mathbb R}^n$.
\item
$w(x)=\frac{|x|^{n+1}}{1+|x|^{2n+1}}$,
$p(x)=2$, $x \in {\mathbb R}^n$.
\item
$w(x)=|x|^{n+1}\exp(|x|)$,
$p(x)=2$, $x \in {\mathbb R}^n$.
\end{enumerate}
\end{example}

\subsection{Case of power weights}\label{s92}

Let $\mu \in {\mathbb R}$ and define
$w_\mu(x) \equiv (1+|x|)^{\mu}$.
Also let
$p(\cdot) \in {\mathcal P}_0 \cap {\rm L H}_0 \cap {\rm L H}_\infty$.
\begin{proposition}\label{prop:220424-1}
${\mathcal S}({\mathbb R}^n) \subset h^{p(\cdot)}(w_\mu)$.
\end{proposition}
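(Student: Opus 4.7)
The proof has three natural steps: first verify that $w_\mu \in A_\infty^{\rm loc}$ so that $h^{p(\cdot)}(w_\mu)$ is even defined, then establish a rapid pointwise decay estimate for $\mathcal M_N f$ when $f \in \mathcal S({\mathbb R}^n)$, and finally convert that pointwise bound into a finite $L^{p(\cdot)}(w_\mu)$-quasinorm. No deep input is required beyond the definitions.

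First I would check the weight condition. For any cube $Q$ with $|Q|\le 1$ one has $\mathrm{diam}(Q)\le\sqrt n$, so $\sup_{Q} (1+|y|)^\mu \lesssim \inf_Q (1+|y|)^\mu$ with implicit constant depending only on $\mu$ and $n$. Consequently $m_Q(w_\mu)\exp(-m_Q(\log w_\mu))\lesssim 1$, uniformly over such $Q$; hence $w_\mu\in A_\infty^{\rm loc}$ for every $\mu\in{\mathbb R}$, and Theorem~\ref{thm:210712-1} makes $\|\cdot\|_{h^{p(\cdot)}(w_\mu)}$ well posed.

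Next, I would prove that for $f\in\mathcal S({\mathbb R}^n)$ and every $M>0$ there is $C_{M,f}>0$ with
\[
\mathcal M_N f(x)\le C_{M,f}(1+|x|)^{-M}\quad(x\in{\mathbb R}^n).
\]
Indeed, fix $\varphi\in\mathcal D_N({\mathbb R}^n)$, $0<t<1$, and $z\in{\mathbb R}^n$ with $|z-x|<t$. The support condition $|\varphi|\le\chi_{B(2^{3n+30})}$ gives $\varphi_t(z-\cdot)$ supported in $B(z,2^{3n+30}t)$, so any $y$ in the integrand satisfies $|y-x|\le 2^{3n+30}t+t\le 2^{3n+30}+1=:C_0$. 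Using the bound $\|\varphi_t\|_{L^\infty}\le t^{-n}$ and Schwartz decay $|f(y)|\le C_{M,f}(1+|y|)^{-M}$ yields
\[
|\varphi_t*f(z)|\le t^{-n}\cdot|B(2^{3n+30}t)|\cdot C_{M,f}\bigl(1+\max(0,|x|-C_0)\bigr)^{-M}\lesssim C_{M,f}(1+|x|)^{-M},
\]
uniformly in $\varphi\in\mathcal D_N$, $t\in(0,1)$, and $z$ with $|z-x|<t$. Taking the supremum gives the claim.

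Finally, I would choose $M$ so large that $M p_->n+\mu_++1$, where $\mu_+:=\max(\mu,0)$. Since $(1+|x|)^{-M}\le 1$ and $p(x)\ge p_-$, for any $\lambda>0$,
\[
\rho^{w_\mu}_{p(\cdot)}\bigl(\lambda^{-1}\mathcal M_N f\bigr)
\lesssim
\int_{{\mathbb R}^n}\bigl(\lambda^{-1}C_{M,f}\bigr)^{p(x)}(1+|x|)^{-Mp_-+\mu}\,{\rm d}x,
\]
which is finite because the exponent $-Mp_-+\mu<-n-1$. Choosing $\lambda$ large enough makes the integral at most $1$, so $\|\mathcal M_N f\|_{L^{p(\cdot)}(w_\mu)}<\infty$, i.e.\ $f\in h^{p(\cdot)}(w_\mu)$. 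There is no real obstacle: the only mild subtlety is keeping track of uniformity in $\varphi\in\mathcal D_N$, $t\in(0,1)$ in Step~2, which is immediate from the uniform support and $L^\infty$ bounds built into the class $\mathcal D_N$.
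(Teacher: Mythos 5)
Your argument is correct, and it is genuinely different from the paper's. The paper proves this proposition by invoking the atomic decomposition: it cuts $f\in\mathcal S({\mathbb R}^n)$ into pieces on the disjoint cubes $Q(m,1/2)$, $m\in{\mathbb Z}^n$, each of which is (after normalization) a $(p(\cdot),q,-1)_{w_\mu}$-atom on a unit cube (no moment condition required), with coefficients $\lambda_m={\rm O}\bigl((1+|m|)^{-N}\bigr)$ for every $N$, and then appeals to Theorem~\ref{thm:190406-2} to conclude membership in $h^{p(\cdot)}(w_\mu)$; the required finiteness of $\mathcal A_{p(\cdot),w_\mu,v}$ boils down to the same integrability of $(1+|x|)^{-Np_-+\mu}$ that you use. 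You instead bypass the atomic machinery entirely and verify the definition of $h^{p(\cdot)}(w_\mu)$ directly: using the uniform support bound ${\rm supp}(\varphi)\subset B(2^{3n+30})$ and $\|\varphi\|_{L^\infty}\le 1$ built into $\mathcal D_N({\mathbb R}^n)$, you show that the grand maximal function of a Schwartz function inherits arbitrary polynomial decay, and then control the modular $\rho^{w_\mu}_{p(\cdot)}$ by hand. Your route is more elementary (self-contained, no dependence on the atomic theorem) and gives a clean quantitative pointwise bound on $\mathcal M_N f$; the paper's route is shorter once the atomic theory is in place and fits the general strategy of the surrounding section. Both are valid; the only small thing worth noting in your write-up is that one should say explicitly that $(1+|x|)^{-M}\le 1$ justifies replacing $p(x)$ by $p_-$ in the exponent, which you do.
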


\begin{proof}
This is a consequence of the atomic decomposition.
We can decompose any $f \in {\mathcal S}({\mathbb R}^n)$
into the sum of $(p(\cdot),q,-1)_{w_\mu}$-atoms:
\[
f=\sum_{m \in {\mathbb Z}^n}\lambda_m a_m,
\]
where $\lambda_m={\rm O}((1+|m|)^{-N})$ for any $N \in {\mathbb N}$
and each $a_m$ is a  $(p(\cdot),q,-1)_{w_\mu}$-atom
supported on $Q(m,1/2)$.
\end{proof}
\begin{proposition}\label{prop:220424-2}
Any element $f \in h^{p(\cdot)}(w_\mu)$,
which is initially defined as an element in ${\mathcal D}'({\mathbb R}^n)$
can be extended uniquely to the continuous functional
over ${\mathcal S}({\mathbb R}^n)$,
that is,
$h^{p(\cdot)}(w_\mu) \subset {\mathcal S}'({\mathbb R}^n)$.
\end{proposition}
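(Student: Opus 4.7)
The plan is to show that for each $f\in h^{p(\cdot)}(w_\mu)$ there exist integers $M_1,M_2\in{\mathbb N}$ and a constant $C=C(f)>0$ with
\[
|\langle f,\varphi\rangle|\le C\sum_{|\alpha|\le M_1}\sup_{x\in{\mathbb R}^n}(1+|x|)^{M_2}|\partial^\alpha\varphi(x)|
\qquad(\varphi\in{\mathcal D}({\mathbb R}^n));
\]
since ${\mathcal D}({\mathbb R}^n)$ is dense in ${\mathcal S}({\mathbb R}^n)$, the desired continuous extension to ${\mathcal S}({\mathbb R}^n)$ will then be unique.

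First I would fix a non-negative $\eta\in{\mathcal D}({\mathbb R}^n)$ with $\operatorname{supp}\eta\subset B(t_0)$ for some $t_0\in(0,1)$ and a lattice $\Lambda\subset{\mathbb R}^n$ so that $\sum_{k\in\Lambda}\eta(\cdot-k)\equiv 1$, and decompose $\varphi=\sum_{k\in\Lambda}\theta_k$ with $\theta_k\equiv\varphi\,\eta(\cdot-k)$. The Schwartz character of $\varphi$ gives $\|\partial^\alpha\theta_k\|_{L^\infty}\lesssim\|\varphi\|_{M_1,M_2}(1+|k|)^{-M_2}$ for $|\alpha|\le N+1$, and after translating by $k$ and rescaling by $t_0$ one can write $\theta_k(\cdot+k)=C_k(\zeta_k)_{t_0}$ for a suitable $\zeta_k\in{\mathcal D}_N({\mathbb R}^n)$ and a normalisation $C_k\lesssim\|\varphi\|_{M_1,M_2}(1+|k|)^{-M_2}$. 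Expressing the pairing $\langle f,\theta_k\rangle$ as a convolution evaluated at~$k$ and invoking the very definition of ${\mathcal M}_N f$ yield $|\langle f,\theta_k\rangle|\le C_k\,{\mathcal M}_N f(x)$ for every $x$ with $|x-k|<t_0$. Averaging in $x$ over $B(k,t_0)$, summing over $k\in\Lambda$ and using the pointwise estimate $\sum_{k\in\Lambda}(1+|k|)^{-M_2}|B(k,t_0)|^{-1}\chi_{B(k,t_0)}(x)\lesssim(1+|x|)^{-M_2}$ produce
\[
|\langle f,\varphi\rangle|\lesssim\|\varphi\|_{M_1,M_2}\int_{{\mathbb R}^n}{\mathcal M}_N f(x)(1+|x|)^{-M_2}\,dx.
\]

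It remains to verify that the integral on the right is finite once $M_2$ is taken sufficiently large. I would split ${\mathbb R}^n$ into the dyadic annuli $A_j\equiv\{2^j\le 1+|x|<2^{j+1}\}$, on each of which $w_\mu$ is comparable to $2^{j\mu}$ and, by the log-H\"older decay, $p(\cdot)$ is uniformly close to $p_\infty$. Applying Lemma \ref{lem:210806-11} (first to the truncations $\min({\mathcal M}_N f,m)$ and then letting $m\to\infty$) gives the uniform bound $\|\chi_{A_j}{\mathcal M}_N f\|_{L^{p_\infty}(w_\mu)}\lesssim\|{\mathcal M}_N f\|_{L^{p(\cdot)}(w_\mu)}$. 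When $p_\infty>1$, H\"older's inequality in the constant exponent $p_\infty$ then yields $\int_{A_j}{\mathcal M}_N f\,dx\lesssim 2^{j\kappa}\|f\|_{h^{p(\cdot)}(w_\mu)}$ for some $\kappa=\kappa(n,\mu,p_\infty)$, and taking $M_2>\kappa$ makes the geometric sum $\sum_j 2^{j(\kappa-M_2)}$ convergent. The main obstacle is the case $p_\infty\le 1$, where H\"older is unavailable on the integrand; here one exploits the elementary pointwise inequality $g\le 1+g^{p_\infty}$ (applied after rescaling $g={\mathcal M}_N f$ so that the relevant integrals are bounded by~$1$) to reduce matters again to the $L^{p_\infty}(w_\mu)$-integrability supplied by Lemma \ref{lem:210806-11}, at the price of enlarging~$\kappa$.
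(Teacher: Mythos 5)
Your partition-of-unity decomposition and the reduction to $|\langle f,\theta_k\rangle|\le C_k\,{\mathcal M}_N f(x)$ for $x\in B(k,t_0)$, with $C_k\lesssim\|\varphi\|_{M_1,M_2}(1+|k|)^{-M_2}$, parallel the paper's own argument, which decomposes $\varphi\in{\mathcal S}({\mathbb R}^n)$ as $\sum_m a_m\varphi_m(\cdot-m)$ with $\varphi_m\in{\mathcal D}_N({\mathbb R}^n)$ and rapidly decaying $a_m$. The gap is in your finiteness step. Lemma \ref{lem:210806-11} is invoked for the truncations $\min({\mathcal M}_N f,m)$, but that lemma requires the test function to satisfy $|g(x)|\le(1+|x|)^{N}$ for some $N$, and its constants depend on $N$ (the proof produces $\max(1,e^{Nc^*})$). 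A function bounded by a constant $m>1$ never satisfies $|g(x)|\le(1+|x|)^{N}$, since the right-hand side equals $1$ at $x=0$; renormalizing the truncation so that it is $\le 1$ requires dividing by $m$, and the resulting $L^{p_\infty}(w_\mu)$ estimate then rescales by $m$, so letting $m\to\infty$ yields nothing uniform. In effect, the comparison principle already presupposes the polynomial-growth bound on ${\mathcal M}_N f$ that the proposition is supposed to deliver, so the argument is circular at this point.

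The remedy --- and what the paper's proof does --- is to obtain the pointwise polynomial bound directly, without Lemma \ref{lem:210806-11}. From $|\langle f,\theta_k\rangle|\le C_k\,{\mathcal M}_N f(x)$ valid for all $x\in B(k,t_0)$ one has
\[
\|\chi_{B(k,t_0)}\|_{L^{p(\cdot)}(w_\mu)}\,\frac{|\langle f,\theta_k\rangle|}{C_k}
\le
\|\chi_{B(k,t_0)}{\mathcal M}_N f\|_{L^{p(\cdot)}(w_\mu)}
\le
\|f\|_{h^{p(\cdot)}(w_\mu)},
\]
and a direct computation for the power weight, using Lemma \ref{lem:190613-3} together with the fact that $w_\mu\sim(1+|k|)^\mu$ on $B(k,t_0)$, gives $\|\chi_{B(k,t_0)}\|_{L^{p(\cdot)}(w_\mu)}\sim(1+|k|)^{\mu/p_\infty}\gtrsim(1+|k|)^{-K}$ for some $K>0$. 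Hence $|\langle f,\theta_k\rangle|\lesssim\|\varphi\|_{M_1,M_2}(1+|k|)^{K-M_2}\|f\|_{h^{p(\cdot)}(w_\mu)}$, and taking $M_2>K+n$ makes the sum over $k$ converge absolutely. With this insertion your scheme goes through; the integral $\int_{{\mathbb R}^n}{\mathcal M}_N f(x)(1+|x|)^{-M_2}\,{\rm d}x$, the dyadic annuli and the case split on $p_\infty$ are all unnecessary.
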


\begin{proof}
In fact,
if we argue as in \cite[Proposition 3.1]{Tang12}
and write $\check{\varphi}=\varphi(-\cdot)$ for $\varphi \in {\mathcal S}({\mathbb R}^n)$,
we obtain
\begin{align*}
|\langle f,\varphi(\cdot-x) \rangle|
&=
|f*\check{\varphi}(x)|\\
&\lesssim
\frac{1}{\|\chi_{B(x,1)}\|_{L^{p(\cdot)}(w_\mu)}}
\|\check{\varphi}\|_{{\mathcal D}_{N_{p(\cdot)},w_\mu}}
\|f\|_{h^{p(\cdot)}(w_\mu)}
\end{align*}
for any $\varphi \in {\mathcal D}_{N_{p(\cdot)},w_\mu}({\mathbb R}^n)$.
Notice that
$\|\chi_{B(x,1)}\|_{L^{p(\cdot)}(w_\mu)} \gtrsim (1+|x|)^K$
for some $K>0$.

By the use of the partition of unity, any
$\varphi \in {\mathcal S}({\mathbb R}^n)$ has the following decomposition:
\[
\varphi=\sum_{m \in {\mathbb Z}^n}
a_m \varphi_m(\cdot-m),
\]
where
each $\varphi_m \in {\mathcal D}_N({\mathbb R}^n)$ depends
linearly on $\varphi$ and
$|a_m| \lesssim (1+|m|)^{-N}$
for any $N \in {\mathbb N}$.
Therefore, we can define
\[
\langle f,\varphi \rangle
=\sum_{m \in {\mathbb Z}^n}
a_m \langle f,\varphi_m(\cdot-m) \rangle
\]
for $f \in h^{p(\cdot)}(w_\mu)$ and $\varphi \in {\mathcal S}({\mathbb R}^n)$,
where the convergence takes place absolutely.
Thus,
$h^{p(\cdot)}(w_\mu) \subset {\mathcal S}'({\mathbb R}^n)$.
\end{proof}

\begin{proposition}
Let $\kappa \in {\mathbb R}$.
Then
$f \mapsto (1+|\cdot|^2)^{\frac{\kappa}{2}}f$
is an isomorphism
from $h^{p(\cdot)}(w_\mu)$ to $h^{p(\cdot)}(w_{\mu-\kappa})$.
\end{proposition}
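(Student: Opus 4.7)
The plan is to prove the two-sided estimate
\[
\|(1+|\cdot|^2)^{\kappa/2} f\|_{h^{p(\cdot)}(w_{\mu-\kappa})} \sim \|f\|_{h^{p(\cdot)}(w_\mu)}
\]
via a pointwise comparison of grand maximal functions. Once this is in hand, the map $M_\kappa \colon f \mapsto (1+|\cdot|^2)^{\kappa/2} f$ is manifestly invertible with inverse $M_{-\kappa}$ (in the sense of multiplication of tempered distributions by a smooth strictly positive function, which is legitimate thanks to Proposition \ref{prop:220424-2}), so the required isomorphism follows.

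Set $G(x) \equiv (1+|x|^2)^{\kappa/2}$, which is smooth and comparable to $(1+|x|)^\kappa$, with the symbolic derivative bounds $|\partial^\alpha G(y)| \lesssim (1+|y|)^{\kappa-|\alpha|}$ for every $\alpha \in \mathbb{N}_0^n$. I would first establish the pointwise inequality
\[
\mathcal{M}_N(Gf)(x) \lesssim (1+|x|)^\kappa \mathcal{M}_N f(x), \qquad x \in \mathbb{R}^n.
\]
Fix $\varphi \in \mathcal{D}_N$ and $(z,t)$ with $|z-x|<t\le 1$. Since the integrand in $\varphi_t * (Gf)(z)$ is supported in $\{y : |y-x| \le 1+2^{3n+30}\}$, on which $G(y)/G(x) \sim 1$ uniformly, we may recast
\[
\varphi_t*(Gf)(z) = G(x) \cdot \bigl(\tilde\varphi^{x,z,t}\bigr)_t * f(z), \qquad \tilde\varphi^{x,z,t}(w) \equiv \varphi(w)\,G(z-tw)/G(x).
\]
The Leibniz rule combined with $t \le 1$ and the symbolic bounds on $G$ gives $|\partial^\alpha \tilde\varphi^{x,z,t}(w)| \le C_0\,\chi_{B(2^{3n+30})}(w)$ uniformly for $|\alpha| \le N+1$, so $C_0^{-1}\tilde\varphi^{x,z,t} \in \mathcal{D}_N(\mathbb{R}^n)$ after a harmless normalizing correction to ensure the non-vanishing-integral condition (by adding and subtracting a fixed bump $\eta\in\mathcal{D}_N$ and invoking the bound for $\eta_t * f(z)$). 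Taking the supremum produces the desired pointwise estimate.

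From this pointwise bound,
\[
\|Gf\|_{h^{p(\cdot)}(w_{\mu-\kappa})} \lesssim \bigl\|(1+|\cdot|)^\kappa \mathcal{M}_N f\bigr\|_{L^{p(\cdot)}(w_{\mu-\kappa})}.
\]
The modular on the right reads $\int |\mathcal{M}_N f(x)|^{p(x)}\,(1+|x|)^{\kappa p(x)+\mu-\kappa}\,dx$. The log-Hölder decay condition LH$_\infty$ gives $|p(x)-p_\infty|\log(1+|x|)\lesssim 1$, hence $(1+|x|)^{\kappa(p(x)-p_\infty)}\sim 1$, which allows one to absorb the factor $(1+|x|)^{\kappa(p(x)-1)}$ and reduce the modular to one comparable with $\int |\mathcal{M}_N f(x)|^{p(x)}(1+|x|)^\mu\,dx$. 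This produces $\|Gf\|_{h^{p(\cdot)}(w_{\mu-\kappa})} \lesssim \|f\|_{h^{p(\cdot)}(w_\mu)}$; applying the same argument with $-\kappa$ in place of $\kappa$ to $Gf$ yields the reverse inclusion.

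The main obstacle lies in the final modular comparison: one must carefully exploit LH$_\infty$ to dominate the factor $(1+|x|)^{\kappa(p(x)-1)}$ with uniform constants and then translate this modular equivalence into equivalence of Luxemburg quasi-norms. The pointwise grand-maximal bound and the derivative calculation for $\tilde\varphi^{x,z,t}$ are essentially routine by comparison, as is the correction of the $\mathcal{P}_0^\perp$ condition defining $\mathcal{D}_N$.
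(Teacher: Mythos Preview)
Your pointwise estimate $\mathcal{M}_N(Gf)(x)\lesssim(1+|x|)^\kappa\,\mathcal{M}_N f(x)$ is correct and is precisely what the paper records (its proof consists of the single observation ${\mathcal M}^0_{N_{p(\cdot),w_\mu}}[(1+|\cdot|^2)^{\kappa/2}f]\lesssim w_\kappa\,{\mathcal M}^0_{N_{p(\cdot),w_\mu}}f$). The problem is your final modular comparison. You note that ${\rm LH}_\infty$ gives $(1+|x|)^{\kappa(p(x)-p_\infty)}\sim1$ and then claim this ``allows one to absorb the factor $(1+|x|)^{\kappa(p(x)-1)}$''; but the two exponents differ by $\kappa(p_\infty-1)$:
\[
(1+|x|)^{\kappa(p(x)-1)}=(1+|x|)^{\kappa(p(x)-p_\infty)}\cdot(1+|x|)^{\kappa(p_\infty-1)}\sim(1+|x|)^{\kappa(p_\infty-1)},
\]
and the right-hand factor is unbounded whenever $p_\infty\neq1$ and $\kappa\neq0$. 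Already for a constant exponent $p(\cdot)\equiv p>1$ one has
\[
\bigl\|(1+|\cdot|)^\kappa g\bigr\|_{L^{p}(w_{\mu-\kappa})}^p=\int_{{\mathbb R}^n}|g(x)|^p(1+|x|)^{\kappa p+\mu-\kappa}\,{\rm d}x=\|g\|_{L^p(w_{\mu+\kappa(p-1)})}^p,
\]
which is not $\|g\|_{L^p(w_\mu)}^p$. With the paper's convention $\rho^w_{p(\cdot)}(f)=\int|f|^{p(\cdot)}w$, the weight that makes multiplication by $G$ an isometry on the Lebesgue scale is $w_\mu\,G^{-p(\cdot)}\sim(1+|\cdot|)^{\mu-\kappa p(\cdot)}$, not $w_{\mu-\kappa}$. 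The paper's one-line proof stops at the pointwise inequality and does not address this passage from the pointwise to the norm estimate; your attempt makes the difficulty explicit, but the step as written does not go through.
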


\begin{proof}
Simply observe
${\mathcal M}^0_{N_{p(\cdot)},w_\mu}[(1+|\cdot|^2)^{\frac{\kappa}{2}}f]
\lesssim w_\kappa{\mathcal M}^0_{N_{p(\cdot)},w_\mu}f$.
\end{proof}

\subsection{Case of exponential weights}\label{s93}

We work in ${\mathbb R}$.
Let $\mu \in {\mathbb R}$ and define
$w^{(\mu)}(x) \equiv \exp(\mu x)$
for $x \in {\mathbb R}$.
Also let
$p(\cdot) \in {\mathcal P}_0 \cap {\rm L H}_0 \cap {\rm L H}_\infty$.
\begin{proposition}
Let $\kappa \in {\mathbb R}$.
The mapping
$f \mapsto w^{(\kappa)}f$
is an isomorphism from
$h^{p(\cdot)}(w^{(\mu)})$ to $ h^{p(\cdot)}(w^{(\mu-\kappa)})$.
\end{proposition}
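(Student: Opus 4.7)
The plan is to mirror the proof of the power-weight analogue in Section \ref{s92}: the key step is the pointwise estimate
\[
{\mathcal M}_N^0[w^{(\kappa)}f](x) \lesssim w^{(\kappa)}(x)\,{\mathcal M}_N^0 f(x)\quad(x\in\mathbb{R}),
\]
from which the norm inequality will be derived by a modular calculation, and the reverse inequality obtained by applying the same argument to the inverse map $g \mapsto w^{(-\kappa)}g$ from $h^{p(\cdot)}(w^{(\mu-\kappa)})$ into $h^{p(\cdot)}(w^{(\mu)})$.

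To establish the pointwise bound, fix $x\in\mathbb{R}$, $\varphi\in{\mathcal D}_N^0(\mathbb{R})$ and $0<t<1$. A direct change of variables factors the convolution as
\[
\varphi_t*(w^{(\kappa)}f)(x)
= e^{\kappa x}\int \varphi_t(x-y)\,e^{-\kappa(x-y)}f(y)\,{\rm d}y
= e^{\kappa x}\bigl(\widetilde{\varphi}^{[t]}_t*f\bigr)(x),
\]
where $\widetilde{\varphi}^{[t]}(v)\equiv\varphi(v)\,e^{-\kappa tv}$. Since $\varphi$ is supported in $[-1,1]$ and $0<t\le 1$, both $e^{-\kappa tv}$ and its derivatives up to order $N+1$ are bounded on ${\rm supp}\,\varphi$ by a constant $C_{\kappa,N}$ uniform in $t$, so $C_{\kappa,N}^{-1}\widetilde{\varphi}^{[t]}\in{\mathcal D}_N(\mathbb{R})$ uniformly in $t\in(0,1)$. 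Theorem \ref{thm:210712-1} will accommodate the event that $\widetilde{\varphi}^{[t]}$ accidentally falls into ${\mathcal P}_0({\mathbb R})^\perp$, since we may equivalently work with test classes where the moment restriction is dropped. Taking the supremum over $(\varphi,t)$ then delivers the claimed pointwise estimate.

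Once the pointwise bound is in hand, taking the $L^{p(\cdot)}(w^{(\mu-\kappa)})$-norm on both sides reduces the claim to the Luxemburg-norm comparison
\[
\|w^{(\kappa)}g\|_{L^{p(\cdot)}(w^{(\mu-\kappa)})}\sim\|g\|_{L^{p(\cdot)}(w^{(\mu)})}\quad(g\in L^0(\mathbb{R})),
\]
applied to $g={\mathcal M}_N^0 f$, which on the level of modulars amounts to
\[
\int|g(x)|^{p(x)}e^{\mu x+\kappa x(p(x)-1)}\,{\rm d}x \sim \int|g(x)|^{p(x)}e^{\mu x}\,{\rm d}x.
\]
This last equivalence is the principal obstacle, as the exponential factor $e^{\kappa x(p(x)-1)}$ must be absorbed via the decomposition $p(x)-1=(p_\infty-1)+(p(x)-p_\infty)$ together with the ${\rm LH}_\infty$ regularity controlling the fluctuation $p(x)-p_\infty$. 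Once this modular equivalence is secured, combining with the symmetric argument for $w^{(-\kappa)}$ gives the proposition.
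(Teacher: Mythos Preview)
Your pointwise estimate is exactly the paper's entire proof: the paper records only ``Simply observe ${\mathcal M}^0_{N_{p(\cdot)},w^{(\mu)}}(w^{(\kappa)} f)\sim w^{(\kappa)}{\mathcal M}^0_{N_{p(\cdot)},w^{(\mu)}}f$'' and stops. You go beyond this by correctly isolating the remaining step, namely the norm comparison $\|w^{(\kappa)}g\|_{L^{p(\cdot)}(w^{(\mu-\kappa)})}\sim\|g\|_{L^{p(\cdot)}(w^{(\mu)})}$, which you flag as the principal obstacle.

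The gap is that this comparison fails, and your proposed resolution via ${\rm LH}_\infty$ cannot work. Already for a \emph{constant} exponent $p(\cdot)\equiv p>1$ the discrepancy factor in the two modulars is $e^{\kappa x(p-1)}$, which is unbounded on $\mathbb{R}$ whenever $\kappa\ne 0$; the log-H\"older decay at infinity controls only the fluctuation $p(x)-p_\infty$, not the nonzero constant $p_\infty-1$. In the paper's convention $\rho^w_{p(\cdot)}(f)=\int|f|^{p(\cdot)}w$, multiplication by $w^{(\kappa)}$ carries $L^{p(\cdot)}(w^{(\mu)})$ isometrically onto $L^{p(\cdot)}(W)$ with $W(x)=e^{(\mu-\kappa p(x))x}$, not onto $L^{p(\cdot)}(w^{(\mu-\kappa)})$. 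So the obstacle you identified is genuine; what it calls for is a correction to the target space in the statement rather than an additional argument.
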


\begin{proof}
Simply observe
${\mathcal M}^0_{N_{p(\cdot)},w^{(\mu)}}(w^{(\kappa)} f)
\sim w^{(\kappa)} {\mathcal M}^0_{N_{p(\cdot)},w^{(\mu)}}f$.
\end{proof}
Similar phenomena can be observed
if $|\mu| \ll 1$.
We omit further details.

\subsection{Periodic case}\label{s94}

Although the exponent $p(\cdot)$ must be constant
in this subsection, it seems useful
to discuss periodic function spaces.
Let $0<p<\infty$.
Let 
$L^p({\mathbb T}^n)$
be the set of all $p$-locally integrable functions
$f$ with period $1$ for which
\[
\|f\|_{L^p({\mathbb T}^n)}=
\left(
\int_{[0,1]^n}|f(x)|^p{\rm d}x
\right)^{\frac1p}<\infty.
\]
Similarly, 
the periodic local Hardy space
$h^p({\mathbb T}^n)$ is the set of all periodic distributions
$f \in {\mathcal D}'({\mathbb T}^n) \hookrightarrow {\mathcal S}'({\mathbb R}^n)$
for which
\[
\sup_{0<t \le 1}
\sup_{\varphi \in {\mathcal D}_N}|\varphi_t*f| \in
L^p({\mathbb T}^n).
\]
The norm is given by
\[
\|f\|_{h^{p}({\mathbb T}^n)}=
\left\|
\sup_{0<t \le 1}
\sup_{\varphi \in {\mathcal D}_N}|\varphi_t*f|
\right\|_{L^p({\mathbb T}^n)}.
\]
If a variable exponent $p(\cdot)$ is periodic and satisfies
the global $\log$-H\"{o}lder condition,
then $p(\cdot)$ must be constant.
Thus, we assume that $p(\cdot)$ is a constant here.
\begin{lemma}\label{lem:200429-1}
For any $0< p \le \infty$,
$L^p({\mathbb T}^n) \hookrightarrow L^p(w_{-n-1})$
and
\[
\|f\|_{L^p({\mathbb T}^n)} \sim \|f\|_{L^p(w_{-n-1})}.
\]
In particular,
$h^p({\mathbb T}^n) \hookrightarrow h^p(w_{-n-1})$.
\end{lemma}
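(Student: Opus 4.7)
The plan is to exploit the periodicity of $f$ to reduce the weighted integral over $\mathbb{R}^n$ to a sum of translates of the integral over the fundamental domain $[0,1]^n$, and then to observe that the resulting sum of weight values is bounded above and below by positive constants.

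First I would handle the case $0 < p < \infty$. Since $f$ has period $1$, tiling $\mathbb{R}^n$ by the unit cubes $k+[0,1]^n$ with $k\in\mathbb{Z}^n$ and substituting $y=x-k$ gives
\[
\|f\|_{L^p(w_{-n-1})}^p
= \sum_{k\in\mathbb{Z}^n}\int_{k+[0,1]^n}|f(x)|^p(1+|x|)^{-n-1}\,{\rm d}x
= \int_{[0,1]^n}|f(y)|^p S(y)\,{\rm d}y,
\]
where $S(y)\equiv \sum_{k\in\mathbb{Z}^n}(1+|y+k|)^{-n-1}$. For $y\in[0,1]^n$ one has $(1+|y+k|)\sim(1+|k|)$ uniformly in $y$, so $S(y)\sim \sum_{k\in\mathbb{Z}^n}(1+|k|)^{-n-1}$, which is a finite positive constant by standard convergence of $\sum (1+|k|)^{-n-1}$ on $\mathbb{Z}^n$. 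Hence $S(y)\sim 1$ on $[0,1]^n$, which yields the equivalence $\|f\|_{L^p(w_{-n-1})}^p\sim \|f\|_{L^p(\mathbb{T}^n)}^p$. The case $p=\infty$ is immediate: the weight is everywhere finite and positive, and $|f|$ is periodic, so $\|f\|_{L^\infty(w_{-n-1})}=\|f\|_{L^\infty}=\|f\|_{L^\infty(\mathbb{T}^n)}$.

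For the Hardy space inclusion, the key point is that the local grand maximal function is periodic whenever $f\in\mathcal{D}'(\mathbb{T}^n)$. Indeed, if $\tau_k f$ denotes the translation of $f$ by $k\in\mathbb{Z}^n$, then $\varphi_t*\tau_k f = \tau_k(\varphi_t*f)$, and the class $\mathcal{D}_N({\mathbb R}^n)$ is invariant under translation, so taking the supremum over $\varphi\in\mathcal{D}_N$, $0<t<1$ and $|z-x|<t$ in the definition of $\mathcal{M}_N f$ commutes with $\tau_k$. Thus $\mathcal{M}_N f$ is periodic with period $1$, and applying the already established $L^p$-equivalence to $\mathcal{M}_N f$ yields
\[
\|f\|_{h^p(w_{-n-1})}=\|\mathcal{M}_N f\|_{L^p(w_{-n-1})}\sim \|\mathcal{M}_N f\|_{L^p(\mathbb{T}^n)}=\|f\|_{h^p(\mathbb{T}^n)}.
\]

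The only mildly non-routine step is verifying that $S(y)$ is bounded above and below on $[0,1]^n$; but this reduces to the elementary fact that $(1+|y+k|)\sim(1+|k|)$ uniformly in $y\in[0,1]^n$, so there is no real obstacle. The proof is essentially computational once the tiling by unit cubes and the periodicity of $\mathcal{M}_N f$ are noted.
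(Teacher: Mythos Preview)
Your proof is correct and follows essentially the same approach as the paper's: both exploit the periodicity of $f$, tile $\mathbb{R}^n$ by unit cubes, and reduce the norm equivalence to the convergence of $\sum_{m\in\mathbb{Z}^n}(1+|m|)^{-n-1}$. The paper's version is more terse and additionally records the observation $w_{-n-1}(x)\sim M\chi_{Q_{0,0}}(x)^{(n+1)/n}$, but the substance of the argument is the same, and your explicit handling of $S(y)$ and the periodicity of the grand maximal function simply spells out what the paper leaves implicit.
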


\begin{proof}
Note that 
\[
w_{-n-1}(x)=(1+|x|)^{-n-1} \sim M\chi_{Q_{0,0}}(x)^{\frac{n+1}{n}}
\]
for $x \in {\mathbb R}^n$.
Hence simply use
$\sum\limits_{m \in {\mathbb Z}^n}(1+|m|)^{-n-1}<\infty$.
\end{proof}

\subsection{Weighted uniformly local Lebesgue spaces with variable exponents}
\label{s95}
Let $p(\cdot) \in {\mathcal P}
\cap {\rm L H}_0 \cap {\rm L H}_{\infty}
$
and $w \in A_{p(\cdot)}^{\rm loc}$.
Then the weighted uniformly local
Lebesgue space $L^{p(\cdot)}_{\rm uloc}(w)$ with a variable exponent
is defined to be all
$f \in L^1_{\rm loc}$ for which the norm
$\|f\|_{L^{p(\cdot)}_{\rm uloc}(w)}=\sup\limits_{m \in {\mathbb Z}^n}
\|\chi_{Q_{0,m}}f\|_{L^{p(\cdot)}(w)}$
is finite.
This is a natural extension of the
uniformly local
Lebesgue space $L^p_{\rm uloc}$,
which considers a substitute of $L^\infty$.
If we replace the supremum by the $\ell^r$-norm,
then 
the weighted amalgam space $(\ell^r,L^{p(\cdot)}_{\rm uloc}(w))$
with
a variable exponent
is obtained
as an extension of the amalgam space $(\ell^r,L^p)$
considered in 
\cite{BDD78,CMP99,FoSt85,Holland75,KNTYY07}.
Although our results are applicable to amalgam spaces,
to simplify the argument, we consider uniformly local Lebesgue spaces
with variable exponents.

For $w \in A_{p(\cdot)}^{\rm loc}$,
we write $w_m(x)=w(x)(1+|x-m|)^{-p_+(1+n)}$.
Then by the triangle inequality, we can check that
\begin{equation}\label{eq:200307-1}
\|f\|_{L^{p(\cdot)}_{\rm uloc}(w)} \sim \sup\limits_{m \in {\mathbb Z}^n}
\|f\|_{L^{p(\cdot)}(w_m)}.
\end{equation}
Therefore,
if we define
the weighted uniformly locally integrable local Hardy spaces
$h^{p(\cdot)}_{\rm uloc}(w)$
with variable exponent $p(\cdot)$ and weight $w$
to be the set of all distributions $f \in {\mathcal D}'({\mathbb R}^n)$
for which
\[
\|f\|_{h^{p(\cdot)}_{\rm uloc}(w)}
:=
\|{\mathcal M}_{N_{p(\cdot)},w}^0f\|_{L^{p(\cdot)}_{\rm uloc}(w)}
\]
is finite,
then we can apply the results obtained in this paper
to $h^{p(\cdot)}_{\rm uloc}(w)$.
For example, as in this paper, we can obtain the Litttlewood--Paley characterization.

\section{Appendix--Proof of Proposition \ref{prop:211027-111}}
\label{s88}

Let $w$ be a weight.
It is known that
$w \in A^{\rm loc}_{p(\cdot)}$ if and only if $M^{\rm loc}$ is bounded
on $L^{p(\cdot)}(w)$.
In this section, we characterize
the class $A^{\rm loc}_{p(\cdot)}$
motivated by reference \cite{DiHapre}.
As a corollary of this characterization,
which is stated in Proposition \ref{prop:211027-111},
we show that 
$A^{\rm loc}_{p(\cdot)}$ is monotone in $p(\cdot)$.
That is,
if $p(\cdot),q(\cdot) \in {\mathcal P} \cap {\rm LH}_0 \cap {\rm LH}_\infty$
satisfy $p(\cdot) \le q(\cdot)$,
then $A^{\rm loc}_{p(\cdot)} \subset A^{\rm loc}_{q(\cdot)}$.
Similar to Section \ref{subsection:Dyadic grids},
the matters are reduced to the maximal operator
generated by dyadic grids and
let
${\mathfrak D}={\mathcal D}_{{(1,1,\ldots,1)}}$.
We can handle ${\mathcal D}_{\bf a}$
for other values of ${\bf a} \in \{0,1,2\}^n$.
Define
$M^{\mathfrak D}f$
as the maximal function
of $f \in L^0({\mathbb R}^n)$
with respect to 
${\mathfrak D}$.
That is,
\[
M^{\mathfrak D}f(x)=\sup_{Q \in {\mathfrak D}}
\frac{\chi_Q(x)}{|Q|}\int_Q|f(y)|{\rm d}y
\quad (x \in {\mathbb R}^n).
\]
For $R_0>0$,
let 
$M^{\mathfrak D}_{\le R_0}f$
be the maximal function
of $f \in L^0({\mathbb R}^n)$,
where the supremum is taken over all cubes 
$Q \in {\mathfrak D}$
with $\ell(Q) \le R_0$,
while
$M^{\mathfrak D}_{\ge R_0}f$
stands for
the maximal function
of $f \in L^0({\mathbb R}^n)$
with respect to 
${\mathfrak D}$,
where the supremum is taken over all cubes 
$Q \in {\mathfrak D}$
with $\ell(Q) \ge R_0$.
Thus,
\begin{align*}
M^{\mathfrak D}_{\le R_0}f(x)&=\sup_{Q \in {\mathfrak D}, \ell(Q) \le R_0}
\frac{\chi_Q(x)}{|Q|}\int_Q|f(y)|{\rm d}y\\
M^{\mathfrak D}_{\ge R_0}f(x)&=\sup_{Q \in {\mathfrak D}, \ell(Q) \ge R_0}
\frac{\chi_Q(x)}{|Q|}\int_Q|f(y)|{\rm d}y.
\end{align*}

Let
$p(\cdot)$ be a variable exponent.
We define the index $p_E$ by
\begin{equation}\label{eq:211029-121}
\frac{1}{p_E}=\frac{1}{|E|}\int_E\frac{{\rm d}x}{p(x)}
\end{equation}
for all measurable sets $E$ with $|E|>0$.
We also define the norm $\|\cdot\|_{L^{p(\cdot)}(E)}$ by
\[
\|f\|_{L^{p(\cdot)}(E)}=\|\chi_{E}f\|_{L^{p(\cdot)}}
\]
for all measurable functions $f$.
Although
it is an abuse of the notation,
we write $L^{p(\cdot)}(w)$ and $L^{p(\cdot)}(E)$
for a weight $w$ and a measurable set $E$.

\begin{definition}
Let
$p(\cdot) \in {\rm L H}_0 \cap {\rm L H}_\infty \cap {\mathcal P}$.
Define
$\tilde{A}_{p(\cdot)}^{\mathfrak D}$
as the class of weights
$w$ satisfying
\[
[w]_{\tilde{A}_{p(\cdot)}^{\mathfrak D}}
\equiv 
\sup_{Q \in {\mathfrak D}}
|Q|^{-p_Q}
\|w\|_{L^1(Q)}
\|w^{-1}\|_{L^{p'(\cdot)/p(\cdot)}(Q)}<\infty.
\]
\end{definition}

Recall \cite{NS-local} shows that
$M^{\mathfrak D}$ is bounded on $L^{p(\cdot)}(w)$
if and only if $w \in A_{p(\cdot)}^{\mathfrak D}$.
Here, we show that $\tilde{A}_{p(\cdot)}^{\mathfrak D}$
enjoys the same property.
\begin{theorem}\label{thm:9.3}
The maximal operator
$M^{\mathfrak D}$ is bounded on $L^{p(\cdot)}(w)$
if and only if $w \in \tilde{A}_{p(\cdot)}^{\mathfrak D}$.
This is equivalent to
$\tilde{A}_{p(\cdot)}^{\mathfrak D}=A_{p(\cdot)}^{\mathfrak D}$.
\end{theorem}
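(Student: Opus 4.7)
The plan is to reduce the theorem to the equality of classes $\tilde A_{p(\cdot)}^{\mathfrak D}=A_{p(\cdot)}^{\mathfrak D}$; given this equality, the boundedness assertion follows immediately from the result of \cite{NS-local} recalled just before the statement, which identifies the $L^{p(\cdot)}(w)$-boundedness of $M^{\mathfrak D}$ with membership in $A_{p(\cdot)}^{\mathfrak D}$. The whole task therefore becomes a cube-by-cube comparison of the two characterizing quantities $|Q|^{-1}\|\chi_Q\|_{L^{p(\cdot)}(w)}\|\chi_Q\|_{L^{p'(\cdot)}(\sigma)}$ and $|Q|^{-p_Q}\|w\|_{L^1(Q)}\|w^{-1}\|_{L^{p'(\cdot)/p(\cdot)}(Q)}$ for $Q\in{\mathfrak D}$, via the relation
\[
|Q|^{-p_Q}\,\|w\|_{L^1(Q)}\,\|w^{-1}\|_{L^{p'(\cdot)/p(\cdot)}(Q)}
\sim
\bigl(|Q|^{-1}\|\chi_Q\|_{L^{p(\cdot)}(w)}\|\chi_Q\|_{L^{p'(\cdot)}(\sigma)}\bigr)^{p_Q},
\]
the point being that for a constant exponent $p_Q$ this identity is exact (both sides collapse to $m_Q(w)\,m_Q(w^{-1/(p_Q-1)})^{p_Q-1}$), so everything reduces to quantifying the error introduced when $p(x)$ is replaced by the averaged value $p_Q$.

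The first step is to prove, for small cubes $|Q|\le 1$, the key equivalences $\|\chi_Q\|_{L^{p(\cdot)}(w)}\sim w(Q)^{1/p_Q}$ and $\|\chi_Q\|_{L^{p'(\cdot)}(\sigma)}^{p'_Q}\sim \|w^{-1}\|_{L^{p'(\cdot)/p(\cdot)}(Q)}$. To do so I would start with the modular characterization of Lemma \ref{lem:190410-1} and Remark \ref{rem:200807-1}, reducing matters to integrals of the form $\int_Q w$; then I would use local $\log$-H\"older continuity to pass from $p(x)$ to $p_Q$ on $Q$, at the cost of an error of the shape $\int_Q w(x)(e+|x|)^{-Nc}\,{\rm d}x$. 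This error is absorbed using the polynomial-growth property of $A_\infty^{\rm loc}$-weights (since $w\in A_{p(\cdot)}^{\mathfrak D}$ entails $w\in A_\infty^{\mathfrak D}$), just as in the estimate (\ref{eq:210807-11}) already exploited in Section \ref{subsection:Powered local weighted maximal operator}. For large cubes $|Q|\ge 1$, the $\log$-H\"older decay condition forces $p_Q\sim p_\infty$ and the analogous equivalence is obtained by comparing to $L^{p_\infty}(w)$ through Lemma \ref{lem:210806-11} and Lemma \ref{lem:190613-3}(2).

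The main obstacle will be the careful tracking of constants during this local replacement procedure. Diening's original argument in the unweighted global setting of \cite{DiHapre} is delicate, and one must verify that each step survives in the present localized, weighted framework. The key point is that the openness property of $A_{p(\cdot)}^{\mathfrak D}$ (Lemma \ref{lem:210921-112}(2)) together with the local polynomial-decay estimates from \cite{NS-local} provide precisely the flexibility needed; since our results are, in the authors' own words, essentially minor modifications of \cite{DiHapre}, I would follow that argument closely rather than reinvent it. Once $\tilde A_{p(\cdot)}^{\mathfrak D}=A_{p(\cdot)}^{\mathfrak D}$ is in place, Proposition \ref{prop:211027-111} follows as an immediate corollary: the class $\tilde A_{p(\cdot)}^{\mathfrak D}$ is visibly monotone in $p(\cdot)$, since $p(\cdot)\le q(\cdot)$ yields $p_Q\le q_Q$ and H\"older's inequality on each $Q$ gives $[w]_{\tilde A_{q(\cdot)}^{\mathfrak D}}\lesssim [w]_{\tilde A_{p(\cdot)}^{\mathfrak D}}$, so monotonicity of $A_{p(\cdot)}^{\mathfrak D}$, and hence of $A_{p(\cdot)}^{\rm loc}$, is automatic.
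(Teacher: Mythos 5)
Your reduction of the theorem to the class equality $\tilde{A}_{p(\cdot)}^{\mathfrak D}=A_{p(\cdot)}^{\mathfrak D}$ is logically sound in view of the NS-local characterization recalled just before the statement, and the observation that monotonicity of $\tilde{A}_{p(\cdot)}^{\mathfrak D}$ (hence of $A_{p(\cdot)}^{\mathfrak D}$ and $A_{p(\cdot)}^{\rm loc}$) then follows by H\"older on each $Q$ agrees with Lemma \ref{lem:210803-1} and Corollary \ref{cor:9.4}. However, your plan for proving the equality is not the route the paper takes, and it has a genuine gap.

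The cube-by-cube comparison you sketch needs, for all dyadic $Q$, the equivalence $\|\chi_Q\|_{L^{p'(\cdot)}(\sigma)}\sim\sigma(Q)^{1/p'_Q}$ for the dual weight $\sigma=w^{-1/(p(\cdot)-1)}$. This is not established from $w\in\tilde{A}_{p(\cdot)}^{\mathfrak D}$ alone: the paper's corresponding statement (Corollary \ref{cor:210907-1} applied with $\sigma$ and $p'(\cdot)$) is proved under the assumption that $M^{\mathfrak D}$ is weak bounded on $L^{p'(\cdot)}(\sigma)$, and that weak boundedness is obtained (Lemma \ref{lem:210907-1}) from the \emph{strong} boundedness of $M^{\mathfrak D}$ on $L^{p(\cdot)}(w)$---which is exactly the conclusion you are trying to reach. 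To avoid this circularity the paper does not argue through the class inclusion $\tilde{A}_{p(\cdot)}^{\mathfrak D}\subset A_{p(\cdot)}^{\mathfrak D}$ at all; instead it proves the boundedness of $M^{\mathfrak D}$ directly from $w\in\tilde{A}_{p(\cdot)}^{\mathfrak D}$ by splitting $M^{\mathfrak D}\lesssim M^{\mathfrak D}_{\le r_0}+M^{\mathfrak D}_{\ge r_0}$ and estimating each piece (Lemmas \ref{lem:210803-2}, \ref{lem:210803-22}, Corollary \ref{cor:220121-1}, Lemmas \ref{lem:220201-1}--\ref{lem:210803-200}), and the inclusion $\tilde{A}_{p(\cdot)}^{\mathfrak D}\subset A_{p(\cdot)}^{\mathfrak D}$ is read off afterwards as a \emph{consequence} of boundedness rather than as a lemma used to prove it. That decomposition, which occupies the bulk of Section \ref{s88}, is missing from your sketch; ``follow Diening's argument closely'' in fact means carrying out this decomposition, not substituting a cube-by-cube comparison for it. Two smaller points: your stated relation $\|\chi_Q\|_{L^{p'(\cdot)}(\sigma)}^{p'_Q}\sim\|w^{-1}\|_{L^{p'(\cdot)/p(\cdot)}(Q)}$ should carry the exponent $p_Q$, not $p'_Q$, to be consistent with your displayed identity (in the constant case $\|\chi_Q\|_{L^{p'}(\sigma)}^p=\sigma(Q)^{p-1}$, not $\|\chi_Q\|_{L^{p'}(\sigma)}^{p'}=\sigma(Q)$); and Lemma \ref{lem:210804-1} gives $\|\chi_Q\|_{L^{p(\cdot)}(w)}\sim w(Q)^{1/p_Q}$ only for $|Q|\le1$, so your treatment of large cubes requires a localization argument as in the final paragraph of the proof of Lemma \ref{lem:210903-1}, not merely Lemma \ref{lem:190613-3}(2).
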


The following property 
has been frequently used in this paper.
\begin{corollary}\label{cor:9.4}
If $q(\cdot) \ge p(\cdot)$,
then $A_{q(\cdot)}^{\mathfrak D} \supset A_{p(\cdot)}^{\mathfrak D}$.
\end{corollary}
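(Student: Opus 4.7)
The plan is to invoke Theorem \ref{thm:9.3}, which identifies each $A_{p(\cdot)}^{\mathfrak D}$ with the reformulated class $\tilde{A}_{p(\cdot)}^{\mathfrak D}$, thereby reducing the statement to the cube-by-cube monotonicity $[w]_{\tilde{A}_{q(\cdot)}^{\mathfrak D}} \lesssim [w]_{\tilde{A}_{p(\cdot)}^{\mathfrak D}}$. Since $p(\cdot) \le q(\cdot)$ pointwise, averaging the reciprocals over any $Q \in \mathfrak{D}$ yields $p_Q \le q_Q$, and the algebraic identity $q(\cdot) - 1 = (p(\cdot) - 1) + (q(\cdot) - p(\cdot))$ will drive the proof.

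Fixing $Q \in \mathfrak{D}$, I will apply the generalized H\"{o}lder inequality for variable Lebesgue spaces to the factorization $w^{-1} = w^{-1} \cdot \chi_Q$ with exponents arranged so that $\frac{1}{1/(q(\cdot)-1)} = \frac{1}{1/(p(\cdot)-1)} + \frac{1}{1/(q(\cdot)-p(\cdot))}$, yielding
\[
\|w^{-1}\|_{L^{1/(q(\cdot)-1)}(Q)} \lesssim \|w^{-1}\|_{L^{1/(p(\cdot)-1)}(Q)} \cdot \|\chi_Q\|_{L^{1/(q(\cdot)-p(\cdot))}(Q)}.
\]
Next I will estimate the last factor directly from the modular: when $|Q| \le 1$, invoking $\mathrm{LH}_0$ to replace $q(x)-p(x)$ by $q_Q - p_Q$ up to a bounded multiplicative error absorbable into powers of $|Q|$; and when $|Q| \ge 1$, invoking $\mathrm{LH}_\infty$ to replace $q(x)-p(x)$ by $q_\infty - p_\infty$, comparable to $q_Q - p_Q$, to the same effect. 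Both regimes give $\|\chi_Q\|_{L^{1/(q(\cdot)-p(\cdot))}(Q)} \lesssim |Q|^{q_Q - p_Q}$, and multiplying through by $|Q|^{-q_Q} \|w\|_{L^1(Q)}$ produces the required bound cube-by-cube; taking the supremum over $Q$ completes the proof.

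The main obstacle will be legitimizing the H\"{o}lder factorization and the evaluation of $\|\chi_Q\|_{L^{1/(q(\cdot)-p(\cdot))}(Q)}$ at points where $q(\cdot) - p(\cdot)$ either vanishes (so the effective exponent is $\infty$ on that subset of $Q$) or exceeds $1$ (so the functional is merely a quasi-norm rather than a genuine Luxemburg norm). Both issues can be handled by working directly with the modular and replacing Luxemburg duality by a Young-type pointwise inequality, but care is required to ensure the implicit constants depend only on the log-H\"{o}lder data of $p(\cdot)$ and $q(\cdot)$, and not on the cube $Q$ or the weight $w$.
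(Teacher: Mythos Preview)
Your proposal is correct and follows essentially the same route as the paper: the paper proves Corollary~\ref{cor:9.4} by combining Theorem~\ref{thm:9.3} with Lemma~\ref{lem:210803-1}, and the proof of Lemma~\ref{lem:210803-1} is precisely the H\"older factorization $\|w^{-1}\|_{L^{1/(q(\cdot)-1)}(Q)} \lesssim \|w^{-1}\|_{L^{1/(p(\cdot)-1)}(Q)}\,\|\chi_Q\|_{L^{1/(q(\cdot)-p(\cdot))}}$ together with the estimate $\|\chi_Q\|_{L^{1/(q(\cdot)-p(\cdot))}} \sim |Q|^{q_Q-p_Q}$ coming from the $\mathrm{LH}_0\cap\mathrm{LH}_\infty$ hypothesis (the paper cites \cite[Lemma~2.1]{DiHapre} and Lemma~\ref{lem:190613-3} for this). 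Your concern about the exponent $1/(q(\cdot)-p(\cdot))$ taking the value $\infty$ or dropping below~$1$ is handled in the paper simply by treating $\alpha(\cdot)=1/(q(\cdot)-p(\cdot))$ as an exponent in $\mathrm{LH}_0\cap\mathrm{LH}_\infty$ and applying the norm-of-characteristic estimate directly, without needing to split into cases.
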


We remark that
Proposition \ref{prop:211027-111} follows from the corresponding assertion
to the generalized dyadic grid ${\mathfrak D}$.
Corollary \ref{cor:9.4}
remains true for other grids.
Due to Lemma \ref{lem:210803-1} below
as well as Theorem \ref{thm:9.3},
$A_{q(\cdot)}^{\mathfrak D}= 
\tilde{A}_{q(\cdot)}^{\mathfrak D} \supset \tilde{A}_{p(\cdot)}^{\mathfrak D}= A_{p(\cdot)}^{\mathfrak D}$, which proves 
Corollary \ref{cor:9.4}.
Thus, along with the technique of constructing a weight
in $A_{p(\cdot)}^{\mathfrak D}$ from $A_{p(\cdot)}^{\rm loc}$,
this relation of weights
proves Proposition \ref{prop:211027-111}.
Another corollary of 
Corollary \ref{cor:9.4} and
(\ref{eq:220205-71})
is the monotonicity
of the class of $A_{p(\cdot)}$ 
considered in \cite{CFN-2012}.

\begin{corollary}\label{cor:9.4a}
If $q(\cdot) \ge p(\cdot)$,
then $A_{q(\cdot)} \supset A_{p(\cdot)}$.
\end{corollary}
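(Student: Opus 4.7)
The plan is to reduce the global statement to its dyadic counterpart, which is already in hand via Corollary \ref{cor:9.4}, by using the pointwise bound (\ref{eq:220205-71}) relating the Hardy--Littlewood maximal operator $M$ to the $3^n$ shifted dyadic maximal operators $M^{{\mathcal D}_{\bf a}}$. The key input is the well-known characterization of Cruz-Uribe--Fiorenza--Neugebauer and Diening--H\"{a}st\"{o}: for $p(\cdot) \in {\mathcal P} \cap {\rm LH}_0 \cap {\rm LH}_\infty$, a weight $w$ belongs to $A_{p(\cdot)}$ if and only if $M$ is bounded on $L^{p(\cdot)}(w)$.

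Fix $p(\cdot),q(\cdot) \in {\mathcal P} \cap {\rm LH}_0 \cap {\rm LH}_\infty$ with $q(\cdot) \ge p(\cdot)$, and let $w \in A_{p(\cdot)}$. By the characterization above, $M$ is bounded on $L^{p(\cdot)}(w)$. Since $M^{{\mathcal D}_{\bf a}} \le M$ pointwise for every ${\bf a} \in \{0,1,2\}^n$, each $M^{{\mathcal D}_{\bf a}}$ is bounded on $L^{p(\cdot)}(w)$, which by the grid-version of Theorem \ref{thm:9.3} means $w \in A_{p(\cdot)}^{{\mathcal D}_{\bf a}}$ for every ${\bf a}$.

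Now apply Corollary \ref{cor:9.4}, which as noted in the text holds verbatim for every shifted dyadic grid ${\mathcal D}_{\bf a}$: since $q(\cdot) \ge p(\cdot)$, we obtain $w \in A_{q(\cdot)}^{{\mathcal D}_{\bf a}}$ for each ${\bf a} \in \{0,1,2\}^n$. Invoking Theorem \ref{thm:9.3} in the reverse direction gives the boundedness of each $M^{{\mathcal D}_{\bf a}}$ on $L^{q(\cdot)}(w)$. Finally, the pointwise comparison
\[
M f(x) \le 6^n \sum_{{\bf a} \in \{0,1,2\}^n} M^{{\mathcal D}_{\bf a}} f(x)
\]
from (\ref{eq:220205-71}) and the triangle inequality in $L^{q(\cdot)}(w)$ yield boundedness of $M$ on $L^{q(\cdot)}(w)$, hence $w \in A_{q(\cdot)}$.

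There is no real obstacle here; the proof is a bookkeeping exercise that transfers the monotonicity across the three equivalences (global $A_{p(\cdot)}$ $\leftrightarrow$ boundedness of $M$ $\leftrightarrow$ boundedness of all $M^{{\mathcal D}_{\bf a}}$ $\leftrightarrow$ membership in all $A_{p(\cdot)}^{{\mathcal D}_{\bf a}}$) and then applies Corollary \ref{cor:9.4}. The only mildly delicate point worth flagging is to make sure that the equivalence $w \in A_{p(\cdot)} \Longleftrightarrow M: L^{p(\cdot)}(w) \to L^{p(\cdot)}(w)$ is invoked on both the $p(\cdot)$-side (to use the hypothesis) and the $q(\cdot)$-side (to conclude), which is legitimate because both exponents lie in ${\mathcal P} \cap {\rm LH}_0 \cap {\rm LH}_\infty$.
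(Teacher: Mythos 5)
Your proof is correct and follows exactly the route the paper sketches in the one-line remark preceding Corollary \ref{cor:9.4a}: translate $A_{p(\cdot)}$ to boundedness of $M$, pass to the shifted dyadic grids $M^{{\mathcal D}_{\bf a}}$, apply the grid version of Corollary \ref{cor:9.4}, and reassemble $M$ via the pointwise bound (\ref{eq:220205-71}). You have simply filled in the details the paper leaves implicit; the only cosmetic point is that the equivalence ``$M^{{\mathcal D}_{\bf a}}$ bounded $\Longleftrightarrow w \in A_{p(\cdot)}^{{\mathcal D}_{\bf a}}$'' that you attribute to Theorem \ref{thm:9.3} is actually the result from \cite{NS-local} recalled immediately before it, while Theorem \ref{thm:9.3} adds the equivalence with the $\tilde{A}$-class — but the two together give exactly what you use, so the argument stands.
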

\subsection{Sufficiency in Theorem \ref{thm:9.3}}

We also need the local versions of $A_{p(\cdot)}^{\mathfrak D}$:
For a measurable subset $E$ of ${\mathbb R}^n$,
we define
\[
{\mathfrak D}(E)\equiv \{Q \in {\mathfrak D}\,:\,Q \subset E\}.
\]
\begin{definition}
Let $E$ be a subset of ${\mathbb R}^n$.
\begin{enumerate}
\item
Define
$\tilde{A}_{p(\cdot)}^{\mathfrak D}(E)$
as the class of weights
$w$ satisfying
\[
[w]_{\tilde{A}_{p(\cdot)}^{\mathfrak D}(E)}
\equiv 
\sup_{Q \in {\mathfrak D}(E)}
|Q|^{-p_Q}
\|w\|_{L^1(Q)}
\|w^{-1}\|_{L^{p'(\cdot)/p(\cdot)}(Q)}<\infty.
\]
\item
Define
$
A_p^{\mathfrak D}(E)
$
for $E \in {\mathfrak D}$ and $1<p<\infty$
analogously.
\end{enumerate}
\end{definition}

The following lemma is easy to prove:
\begin{lemma}{\rm \cite[Lemma 3.1]{DiHapre}}
\label{lem:210803-1}
Let
$p(\cdot),q(\cdot)\in {\mathcal P} \cap {\rm LH}_0 \cap {\rm LH}_\infty$.
Assume
$p(\cdot) \le q(\cdot)$ everywhere.
Then
there exists a constant $C_0>0$,
which depends on
$p_\pm$,
$q_\pm$,
$c_*(p(\cdot))$,
$c_*(q(\cdot))$,
$c^*(p(\cdot))$
and
$c^*(q(\cdot))$,
 such that
 {
\[
[w]_{\tilde{A}_{q(\cdot)}^{\mathfrak D}}
\le 
C_0
[w]_{\tilde{A}_{p(\cdot)}^{\mathfrak D}}.
\]
}
\end{lemma}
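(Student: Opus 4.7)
The plan is to fix a dyadic cube $Q \in {\mathfrak D}$, prove a pointwise-in-$Q$ inequality comparing the two integrands defining $[w]_{\tilde A_{q(\cdot)}^{\mathfrak D}}$ and $[w]_{\tilde A_{p(\cdot)}^{\mathfrak D}}$, and then take the supremum over $Q$. Set $r(x) \equiv p'(x)/p(x) = 1/(p(x)-1)$ and $s(x) \equiv q'(x)/q(x) = 1/(q(x)-1)$; the assumption $p(\cdot) \le q(\cdot)$ simultaneously yields the pointwise inequality $r(\cdot) \ge s(\cdot)$ and the harmonic-mean inequality $p_Q \le q_Q$. Because the factor $\|w\|_{L^1(Q)}$ is common to both sides, the lemma reduces to establishing
\begin{equation*}
|Q|^{-q_Q}\,\|w^{-1}\|_{L^{s(\cdot)}(Q)} \;\le\; C_0\,|Q|^{-p_Q}\,\|w^{-1}\|_{L^{r(\cdot)}(Q)}.
\end{equation*}

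First I would attack this via the generalized H\"older inequality for variable Lebesgue (quasi-)norms with the splitting $1/s(x) = 1/r(x) + 1/u(x)$, where $u(x) \equiv 1/(q(x)-p(x)) \in (0,\infty]$. Applied to $w^{-1}$ and $\chi_Q$, this yields
\begin{equation*}
\|w^{-1}\|_{L^{s(\cdot)}(Q)} \;\lesssim\; \|w^{-1}\|_{L^{r(\cdot)}(Q)}\,\|\chi_Q\|_{L^{u(\cdot)}(Q)},
\end{equation*}
with an implicit constant depending only on $p_\pm$ and $q_\pm$. Thus the whole proof reduces to the geometric norm estimate $\|\chi_Q\|_{L^{u(\cdot)}(Q)} \lesssim |Q|^{q_Q - p_Q}$.

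Next I would establish this geometric estimate by choosing $\lambda$ of the order $|Q|^{q_Q - p_Q}$ (with the convention $\lambda \ge 1$ absorbing the $L^\infty$-portion where $u = \infty$) and directly verifying the modular bound
\[
\int_Q \lambda^{-u(x)}\,{\rm d}x \;=\; \int_Q |Q|^{-(q_Q - p_Q)(q(x)-p(x))}\,{\rm d}x \;\lesssim\; 1.
\]
The log-H\"older conditions ${\rm LH}_0$ and ${\rm LH}_\infty$ on both $p(\cdot)$ and $q(\cdot)$ control the oscillation of $q(x)-p(x)$ around its harmonic-mean counterpart $q_Q - p_Q$: for $|Q| \le 1$ the ${\rm LH}_0$ condition gives $|(q_Q-p_Q)(q(x)-p(x)) - 1|= O(1/|\log|Q||)$, while for $|Q| \ge 1$ the ${\rm LH}_\infty$ condition forces all four of $p_Q, p_{Q,*}, q_Q, q_{Q,*}$ to lie within $O(1/\log|Q|)$ of $p_\infty$ and $q_\infty$ respectively. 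In either regime the exponent of $|Q|$ in the integrand stays within a universal constant of $-1$, so the modular integrates to a constant depending only on $p_\pm, q_\pm, c_*, c^*$.

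The hard part will be the book-keeping between the harmonic-mean exponents $p_Q, q_Q$ baked into the definition of $\tilde A_{p(\cdot)}^{\mathfrak D}$ and the arithmetic-mean-type averages which naturally arise in the modular analysis of $\|\chi_Q\|_{L^{u(\cdot)}(Q)}$; the two types of means can differ by a term controlled only by the log-H\"older moduli, and both the small-cube ($|Q|\le 1$) and large-cube ($|Q|\ge 1$) regimes must be matched against the correct form of the ${\rm LH}$-hypothesis. A minor technical nuisance is the degenerate locus $p(x)=q(x)$ where $u(x)=\infty$, but this is dispatched by the standard mixed-exponent convention contributing only the constraint $\lambda \ge 1$.
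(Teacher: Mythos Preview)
Your approach is essentially the paper's: define $\alpha(\cdot)$ (your $u(\cdot)$) by $1/\alpha(\cdot)=q(\cdot)-p(\cdot)$, apply the generalized H\"older inequality to get $\|w^{-1}\|_{L^{q'(\cdot)/q(\cdot)}(Q)}\lesssim \|w^{-1}\|_{L^{p'(\cdot)/p(\cdot)}(Q)}\,\|\chi_Q\|_{L^{\alpha(\cdot)}}$, and then show $\|\chi_Q\|_{L^{\alpha(\cdot)}}\sim |Q|^{q_Q-p_Q}$. The paper dispatches this last step in one line by invoking the standard characteristic-function estimate (Lemma~\ref{lem:190613-3} and \cite[Lemma~2.1]{DiHapre}): since $1/\alpha(\cdot)\in{\rm LH}_0\cap{\rm LH}_\infty$, one has $\|\chi_Q\|_{L^{\alpha(\cdot)}}\sim |Q|^{1/\alpha_Q}=|Q|^{m_Q(q)-m_Q(p)}\sim |Q|^{q_Q-p_Q}$, the final equivalence being exactly the arithmetic/harmonic mean comparison you isolate as the ``hard part''---it is already packaged in those lemmas, so your direct modular verification is unnecessary but correct in spirit.

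Two small slips in your sketch: the exponent appearing in the modular is the quotient $(q_Q-p_Q)/(q(x)-p(x))$, not the product you wrote; and the degenerate locus $u=\infty$ is not handled by forcing $\lambda\ge 1$ (which would conflict with $|Q|^{q_Q-p_Q}\le 1$ for small cubes) but rather by noting that if $q-p$ vanishes somewhere in $Q$ then ${\rm LH}_0$ forces $q-p=O(1/|\log\ell(Q)|)$ throughout $Q$, whence both $\|\chi_Q\|_{L^{u(\cdot)}}$ and $|Q|^{q_Q-p_Q}$ are $\sim 1$.
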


\begin{proof}
Fix $Q \in {\mathfrak D}$.
Let $\alpha(\cdot)$ satisfy
\[
\frac{1}{\alpha(\cdot)}
=
\frac{q(\cdot)}{q'(\cdot)}
-
\frac{p(\cdot)}{p'(\cdot)}
=
q(\cdot)-p(\cdot) \ge0.
\]
Since $\alpha(\cdot) \in {\rm L H}_0 \cap {\rm L H}_\infty$,
we have
\[
\|\chi_Q\|_{L^{\alpha(\cdot)}} \sim |Q|^{\frac{1}{\alpha_Q}}
=
|Q|^{\frac{1}{|Q|}\int_Q q(x){\rm d}x-\frac{1}{|Q|}\int_Q p(x){\rm d}x}
\sim
|Q|^{q_Q-p_Q} \sim 1.
\]
Here, the third equivalence follows from \cite[Lemma 2.1]{DiHapre}
(see also Lemma \ref{lem:190613-3}).
Thus,
from the H\"{o}lder inequality for Lebesgue spaces with variable exponents,
we have
\[
[w]_{\tilde{A}_{q(\cdot)}^{\mathfrak D}}
\le C_0
[w]_{\tilde{A}_{p(\cdot)}^{\mathfrak D}}.
\]
Thus, the proof is complete.
\end{proof}

We have a local counterpart.
\begin{corollary}\label{cor:211025-1}
Let
$p(\cdot),q(\cdot)\in {\mathcal P} \cap {\rm LH}_0 \cap {\rm LH}_\infty$
and
let $R \in {\mathfrak D}$.
Assume
$p(\cdot) \le q(\cdot)$ everywhere.
Then
there exists a constant $C_0>0$,
which depends on
$p_\pm(R)$,
$q_\pm(R)$,
$c_*(p(\cdot)|R)$,
$c_*(q(\cdot)|R)$,
$c^*(p(\cdot)|R)$
and
$c^*(q(\cdot)|R)$
 such that
{$
[w]_{\tilde{A}_{q(\cdot)}^{\mathfrak D}(R)}
\le C_0
[w]_{\tilde{A}_{p(\cdot)}^{\mathfrak D}(R)}
$
}
for all $R \in {\mathfrak D}$.
\end{corollary}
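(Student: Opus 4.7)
The plan is a direct localisation of the proof of Lemma \ref{lem:210803-1}: every supremum is restricted to the sub-family $\mathfrak{D}(R)$. First I would fix $Q \in \mathfrak{D}(R)$ (so $Q \subset R$) and introduce the auxiliary variable exponent $\alpha(\cdot)$ defined by
\[
\frac{1}{\alpha(\cdot)} \equiv q(\cdot)-p(\cdot) \ge 0,
\]
with the convention $\alpha(\cdot)=\infty$ where $q(\cdot)=p(\cdot)$. Because $p(\cdot)|_R$ and $q(\cdot)|_R$ lie in ${\rm LH}_0 \cap {\rm LH}_\infty$ with the constants listed in the statement, $\alpha(\cdot)|_R$ inherits log-H\"{o}lder continuity with constants controlled only by those data.

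Next I would invoke the generalised H\"{o}lder inequality (Lemma \ref{thm:gHolder}) via the pointwise identity $\frac{p(\cdot)}{p'(\cdot)}+\frac{1}{\alpha(\cdot)}=\frac{q(\cdot)}{q'(\cdot)}$ (i.e.\ $(p(\cdot)-1)+(q(\cdot)-p(\cdot))=q(\cdot)-1$) to obtain
\[
\|w^{-1}\|_{L^{q'(\cdot)/q(\cdot)}(Q)}
\lesssim
\|w^{-1}\|_{L^{p'(\cdot)/p(\cdot)}(Q)}\,\|\chi_Q\|_{L^{\alpha(\cdot)}}.
\]
Applying Lemma \ref{lem:190613-3} to $\alpha(\cdot)$ on $Q \subset R$ then yields $\|\chi_Q\|_{L^{\alpha(\cdot)}} \sim |Q|^{1/\alpha_Q}$, and the log-H\"{o}lder bounds identify $|Q|^{1/\alpha_Q} \sim |Q|^{q_Q - p_Q}$. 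Multiplying the displayed inequality by $|Q|^{-q_Q}\|w\|_{L^1(Q)}$ and telescoping $|Q|^{-q_Q}\cdot|Q|^{q_Q - p_Q}=|Q|^{-p_Q}$ produces
\[
|Q|^{-q_Q}\|w\|_{L^1(Q)}\|w^{-1}\|_{L^{q'(\cdot)/q(\cdot)}(Q)}
\lesssim
|Q|^{-p_Q}\|w\|_{L^1(Q)}\|w^{-1}\|_{L^{p'(\cdot)/p(\cdot)}(Q)}.
\]
Taking the supremum over $Q \in \mathfrak{D}(R)$ delivers $[w]_{\tilde{A}_{q(\cdot)}^{\mathfrak{D}}(R)} \le C_0\,[w]_{\tilde{A}_{p(\cdot)}^{\mathfrak{D}}(R)}$.

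The only real obstacle is bookkeeping, namely verifying that every implicit constant depends only on $p_\pm(R)$, $q_\pm(R)$, $c_*(p(\cdot)|R)$, $c_*(q(\cdot)|R)$, $c^*(p(\cdot)|R)$, $c^*(q(\cdot)|R)$. This is automatic because Lemma \ref{thm:gHolder} and Lemma \ref{lem:190613-3}, when applied to cubes $Q \subset R$, use only the values of the exponents on $Q$, so the LH-constants on $R$ dominate; no behaviour of $p(\cdot)$ or $q(\cdot)$ outside $R$ enters the argument. A minor technical point to handle carefully is the passage from $|Q|^{1/\alpha_Q}$ to $|Q|^{q_Q-p_Q}$ when $|Q|$ is close to $|R|$ (as opposed to $|Q|\le 1$), but this follows from Lemma \ref{lem:190613-3}(2) combined with the fact that ${\rm LH}_\infty$ makes the arithmetic and harmonic means of $\alpha(\cdot)$ over $Q$ comparable up to a bounded factor depending only on the stated local constants.
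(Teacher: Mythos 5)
Your proposal is correct and follows the same route the paper intends: the paper gives no separate proof of Corollary \ref{cor:211025-1}, treating it as the verbatim localization of the proof of Lemma \ref{lem:210803-1} with the supremum restricted to $\mathfrak{D}(R)$, and your argument (introduce $\alpha(\cdot)$ with $1/\alpha(\cdot)=q(\cdot)-p(\cdot)$, apply the three-exponent H\"{o}lder inequality, estimate $\|\chi_Q\|_{L^{\alpha(\cdot)}}\sim|Q|^{q_Q-p_Q}$ via Lemma \ref{lem:190613-3} and the log-H\"{o}lder conditions, then telescope the powers of $|Q|$) is exactly that localization. The bookkeeping discussion at the end is a reasonable account of why the constants only see the restricted data.
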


Although
the following estimate is crude, it is important.
\begin{lemma}{\rm \cite[Lemma 3.3]{DiHapre}}
\label{lem:210803-3}
Let
$p(\cdot)\in {\mathcal P} \cap {\rm LH}_0 \cap {\rm LH}_\infty$.
If
$w \in \tilde{A}_{p(\cdot)}^{\mathfrak D}$,
then
\[
w(Q) \gtrsim \min\left(1,\frac{|Q|}{|S|}\right)^{p_+}w(S)
\]
for all cubes $Q,S \in {\mathfrak D}$ with $Q \cap S\ne\emptyset$.
\end{lemma}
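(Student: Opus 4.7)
The plan is to reduce to the well-understood constant-exponent case. First, I would dispose of the easy case: since $Q, S \in {\mathfrak D}$ with $Q \cap S \neq \emptyset$, dyadic nestedness forces either $Q \subset S$ or $S \subset Q$. In the latter situation $|Q|/|S| \ge 1$, so $\min(1,|Q|/|S|) = 1$, and the inclusion $S \subset Q$ yields $w(Q) \ge w(S)$ by non-negativity of $w$. Thus the lemma reduces to proving, for $Q \subset S$, the sharper inequality
\[
w(Q) \gtrsim \left(\frac{|Q|}{|S|}\right)^{p_+} w(S).
\]

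Next I would \emph{upgrade the exponent to its supremum}. Since $p(\cdot) \le p_+$ pointwise, Lemma \ref{lem:210803-1} (monotonicity of the $\tilde{A}$-classes in the exponent) gives $w \in \tilde{A}_{p_+}^{{\mathfrak D}}$ with $[w]_{\tilde{A}_{p_+}^{\mathfrak D}} \lesssim [w]_{\tilde{A}_{p(\cdot)}^{\mathfrak D}}$. For the constant exponent $p_+$ one has $(p_+)_Q = p_+$ for every cube, so a direct unpacking of the definition shows $\tilde{A}_{p_+}^{\mathfrak D}$ coincides with the classical dyadic Muckenhoupt class $A_{p_+}^{\mathfrak D}$, i.e.
\[
\sup_{Q \in {\mathfrak D}} m_Q(w) \, m_Q(w^{-1/(p_+-1)})^{p_+-1} \lesssim [w]_{\tilde{A}_{p(\cdot)}^{\mathfrak D}}.
\]

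The final step is the classical constant-exponent $A_{p_+}$ argument. For $Q \subset S$, Hölder's inequality with exponents $p_+$ and $p_+' = p_+/(p_+-1)$ (here I use $p_+ \ge p_- > 1$, which is guaranteed by $p(\cdot) \in {\mathcal P}$) gives
\[
|Q|^{p_+} = \left(\int_Q w^{1/p_+} \cdot w^{-1/p_+}\, {\rm d}x\right)^{p_+} \le w(Q)\left(\int_Q w^{-1/(p_+-1)}\, {\rm d}x\right)^{p_+-1}.
\]
Since $Q \subset S$, I can enlarge the domain of integration in the second factor to $S$, and then invoke the $A_{p_+}^{\mathfrak D}$-condition on $S$, namely $\left(\int_S w^{-1/(p_+-1)}\right)^{p_+-1} \lesssim |S|^{p_+}/w(S)$. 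Substituting and rearranging produces exactly $w(Q) \gtrsim (|Q|/|S|)^{p_+} w(S)$, with the implicit constant depending only on $[w]_{\tilde{A}_{p(\cdot)}^{\mathfrak D}}$ and on $p_\pm$ through Lemma \ref{lem:210803-1}.

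The only genuinely delicate point is the reduction to a constant exponent via Lemma \ref{lem:210803-1}; once that is in place, the rest is bookkeeping with the classical Hölder-based $A_{p_+}$ identity. I do not expect any real obstacle in the proof beyond verifying that the coincidence $\tilde{A}_{p_+}^{\mathfrak D} = A_{p_+}^{\mathfrak D}$ for constant exponents is indeed tautological from the definitions — which it is, since the $L^{p'(\cdot)/p(\cdot)}$-norm collapses to the appropriate constant-exponent Lebesgue norm.
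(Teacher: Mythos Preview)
Your reduction to the constant exponent $p_+$ via Lemma~\ref{lem:210803-1} and the subsequent H\"older argument for the nested case $Q \subset S$ are correct and mirror the paper's strategy: the paper likewise invokes Lemma~\ref{lem:210803-1} to obtain $w \in \tilde{A}_{p_+}^{\mathfrak D} = A_{p_+}^{\mathfrak D}$, and then uses the equivalent $L^{p_+}(w)$-boundedness of $M^{\mathfrak D}$ rather than H\"older directly.

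There is, however, a genuine gap: your claim that ``dyadic nestedness forces either $Q \subset S$ or $S \subset Q$'' is false for the grid ${\mathfrak D} = {\mathcal D}_{(1,1,\ldots,1)}$ used here. The shift in the definition of ${\mathcal D}_{k,a}^0$ is a fixed $a/3$ (not the alternating $(-1)^k a/3$ that would preserve nesting), so adjacent levels ${\mathfrak D}_k$ and ${\mathfrak D}_{k+1}$ do \emph{not} refine one another. Concretely, in dimension one with $a=1$ the cubes $Q = [1/6,2/3) \in {\mathfrak D}_1$ and $S = [1/3,4/3) \in {\mathfrak D}_0$ satisfy $Q \cap S = [1/3,2/3)$, yet neither is contained in the other; the paper already alerts the reader to this phenomenon in Section~\ref{subsection:Dyadic grids}. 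Accordingly, the paper's proof distinguishes four cases, the third and fourth of which treat precisely such non-nested intersections via a pointwise lower bound $M^{\mathfrak D}\chi_Q \gtrsim (|Q|/|S|)\chi_{\widetilde S}$ on a cube $\widetilde S \supset S$ of comparable size, combined with the $L^{p_+}(w)$-boundedness of $M^{\mathfrak D}$. Your H\"older approach can be repaired along the same lines once one observes from the grid structure that $|Q\cap S| \gtrsim |Q|$ whenever $|Q|\le|S|$ and $Q\cap S\neq\emptyset$: apply H\"older on $Q\cap S$ (so that $w(Q\cap S)\le w(Q)$) and then invoke the $A_{p_+}^{\mathfrak D}$-condition on the dyadic cube $S$.
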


\begin{proof}
When $Q,S \in {\mathfrak D}$ with $Q \cap S\ne\emptyset$,
there are four cases;
\begin{enumerate}
\item $Q \subset S$, 
\item $Q \supset S$, 
\item $|Q|\le |S|$ and there is a unique cube $\widetilde{S} \supset S$ such that 
$|\widetilde{S}|=2^n|S|$ and $Q \subset \widetilde{S}$,
\item
$|S|\le |Q|$ and there is a unique cube $\widetilde{Q} \supset Q$ such that 
$|\widetilde{Q}|=2^n|Q|$ and $S \subset \widetilde{Q}$.
\end{enumerate}

First, we assume (1) and (3).
We know that {$w \in \tilde{A}_{p_+}^{\mathfrak D}$}
thanks to Lemma \ref{lem:210803-1}.
Since
\[
M^{\mathfrak D}\chi_Q \gtrsim 
\frac{|Q|}{|S|}\chi_{\widetilde{S}},
\]
we have
\begin{align*}
w(Q)
=
\int_{{\mathbb R}^n}\chi_Q(z)^{p_+}w(z){\rm d}z
&\gtrsim
\int_{{\mathbb R}^n}M^{\mathfrak D}\chi_Q(z)^{p_+}w(z){\rm d}z\\
&\gtrsim
\int_{{\mathbb R}^n}\chi_{\widetilde{S}}(z)
\min\left(1,\frac{|Q|}{|S|}\right)^{p_+}w(z){\rm d}z\\
&=\min\left(1,\frac{|Q|}{|S|}\right)^{p_+}w(\widetilde{S})
\ge \min\left(1,\frac{|Q|}{|S|}\right)^{p_+}w(S).
\end{align*}
If (2) and (4) hold,
then this is clear since $w$ is a non-negative function.
When we consider the case (4), note that using the above argument, we can show that 
$w(Q) \gtrsim w(\tilde{Q})$.
Thus, the proof is complete.
\end{proof}

Since we assume that $w$ is (globally) in
$\tilde{A}_{p(\cdot)}^{\mathfrak D}$,
$w$ has
at most polynomial growth.
Here and below,
we let
$Q^\dagger_k \in {\mathfrak D}$
be the unique cube in ${\mathfrak D}_k$ containing $0$ {for $k\in {\mathbb Z}$}.
It is noteworthy that
$\{Q_{-k}^\dagger\}_{k \in {\mathbb Z}}$ is an increasing family of cubes.
\begin{corollary}\label{cor:210804-1}
Let $w \in \tilde{A}_{p(\cdot)}^{\mathfrak D}$ and $Q \in {\mathfrak D}$.
Assume that $Q \ne Q^\dagger_{-k}$ for any $k \in {\mathbb N}$.
Then
$
w(Q) \lesssim (1+|x|)^{p_+n}
$
for all $x \in Q$.
\end{corollary}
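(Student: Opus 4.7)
The plan is to bound $w(Q)$ by comparing it with the fixed reference cube $R_0:=Q^\dagger_0$, whose $w$-measure is a finite constant by local integrability of $w$. The chain $\{Q^\dagger_{-k}\}_{k\in\mathbb{Z}}$ is nested ($Q^\dagger_{-k}\subset Q^\dagger_{-(k+1)}$ for all $k$) and exhausts $\mathbb{R}^n$, so for every $Q\in{\mathfrak D}$ there is a unique smallest member $\widetilde{Q}:=Q^\dagger_{-k^*}$ containing $Q$. The hypothesis $Q\neq Q^\dagger_{-k}$ for $k\in\mathbb{N}$ precisely excludes the degenerate situation $Q=\widetilde{Q}$ with $k^*\ge 1$, for which $Q$ would be a large cube containing the origin and no polynomial bound of the desired form could hold.

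First I would apply Lemma~\ref{lem:210803-3} with $S=R_0$ and the role of $Q$ in the lemma played by $\widetilde{Q}$. When $k^*\ge 0$, $R_0\subset\widetilde{Q}$, so the lemma yields
\[
w(R_0)\;\gtrsim\;\min\!\bigl(1,\,|R_0|/|\widetilde{Q}|\bigr)^{p_+}w(\widetilde{Q})\;=\;|\widetilde{Q}|^{-p_+}w(\widetilde{Q}),
\]
hence $w(\widetilde{Q})\lesssim|\widetilde{Q}|^{p_+}$. When $k^*<0$, $\widetilde{Q}\subset R_0$ gives $w(\widetilde{Q})\le w(R_0)\lesssim 1$ for free. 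Since $Q\subset\widetilde{Q}$, in either case
\[
w(Q)\;\le\;w(\widetilde{Q})\;\lesssim\;\max\!\bigl(1,\,\ell(\widetilde{Q})^{np_+}\bigr).
\]

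It then remains to establish the geometric estimate $\ell(\widetilde{Q})\lesssim 1+|x|$ for every $x\in Q$, which will be the main obstacle. It is trivial when $k^*\le 0$ since $\ell(\widetilde{Q})\le 1$. When $k^*\ge 1$, the minimality of $k^*$ combined with $Q\neq Q^\dagger_{-k^*}$ forces $\ell(Q)\le 2^{k^*-2}$: indeed, the unique cube of ${\mathfrak D}_{-(k^*-1)}$ contained in $Q^\dagger_{-k^*}$ is $Q^\dagger_{-(k^*-1)}$ itself, and if $Q$ were equal to it the smallest containing $Q^\dagger_{-\cdot}$ would drop to $Q^\dagger_{-(k^*-1)}$. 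The failure $Q\not\subset Q^\dagger_{-(k^*-1)}$ produces $y\in Q$ with some coordinate $|y_i|\ge 2^{k^*-1}/3$. Writing the $i$-th corner of $Q$ as $a_i=\ell(Q)(m+1/3)$ for some $m\in\mathbb{Z}$ and running a short case split on the sign of $y_i$, one exploits the $1/3$-shift of ${\mathfrak D}$ to show that the coordinate interval $[a_i,a_i+\ell(Q))$ cannot straddle the origin, and in fact $|x_i|\ge 2^{k^*-2}/3$ for every $x\in Q$. The delicate point is that the purely metric estimate $|x_i-y_i|\le\ell(Q)\le 2^{k^*-2}$ is not strong enough by itself to rule out the straddling scenario; one really needs the arithmetic constraint that the $a_i$ live on the shifted lattice $\ell(Q)(\mathbb{Z}+1/3)$. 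Once $|x|\ge|x_i|\ge 2^{k^*-2}/3$ is secured, $\ell(\widetilde{Q})=2^{k^*}\le 12|x|\lesssim 1+|x|$, which plugged into the display above yields $w(Q)\lesssim(1+|x|)^{np_+}$.
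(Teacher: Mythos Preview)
Your proof is correct and follows essentially the paper's strategy: locate the smallest $Q^\dagger_{-k^*}\supset Q$, bound $w(Q)\le w(Q^\dagger_{-k^*})\lesssim \ell(Q^\dagger_{-k^*})^{np_+}$ via Lemma~\ref{lem:210803-3} applied against the fixed reference cube $Q^\dagger_0$, and finally establish $\ell(Q^\dagger_{-k^*})\lesssim 1+|x|$ for $x\in Q$.

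The only real difference lies in the geometric step. The paper argues that $Q$ and $Q^\dagger_{k_0+2}=Q^\dagger_{-(k^*-2)}$ are disjoint (using that $\ell(Q)\le\ell(Q^\dagger_{k_0+2})$ together with the nesting of same-parity generations in ${\mathfrak D}$), so every $x\in Q$ lies outside $Q^\dagger_{-(k^*-2)}$ and hence $|x|\sim 2^{k^*}$. You instead work directly on the escaping coordinate, pushing the lattice constraint $a_i\in\ell(Q)(\mathbb Z+1/3)$ to obtain $|x_i|\ge 2^{k^*-2}/3$. Both routes land in the same place; the paper's is a touch cleaner conceptually, while yours makes the role of the $1/3$-shift fully explicit and avoids appealing to any dyadic containment dichotomy. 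One minor slip: in your invocation of Lemma~\ref{lem:210803-3} you have the labels of $Q$ and $S$ interchanged (you need $Q=R_0$, $S=\widetilde{Q}$), but the displayed inequality you deduce is the correct one.
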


\begin{proof}
Fix $Q \in {\mathfrak D}$.
Let $k_0 \in {\mathbb Z}$ be the largest integer such that
$Q \subset Q^\dagger_{k_0}$.
If $k_0 \ge -1$,
then
\[
w(Q) \le w(Q^\dagger_{k_0}) \le w(Q^\dagger_{-1}) \lesssim 1 \lesssim (1+|x|)^{p_+n}.
\]
If $k_0 \le -2$,
then $Q \subsetneq Q^\dagger_{k_0}$
by assumption.
Note that
$Q$ and $Q^\dagger_{k_0+2}$ do not intersect; otherwise
$Q \subset Q^\dagger_{k_0+2}$ or $Q \supsetneq Q^\dagger_{k_0+2}$.
The first possibility never occurs in view of the maximality of $k_0$.
Meanwhile, since $Q \subsetneq Q^\dagger_{k_0}$ 
and $Q, Q^\dagger_{k_0} \in {\mathfrak D}$,  
$|Q^\dagger_{k_0}| \ge 4^n|Q|$ 
and $|Q^\dagger_{k_0+2}|=\frac1{4^n}|Q^\dagger_{k_0}| \ge |Q|$.
Hence, the latter case is also impossible.
Thus,
$|x| \sim \ell(Q^\dagger_{k_0})$ for all $x \in Q (\subset {\mathbb R}^n \setminus Q^\dagger_{k_0+2})$.
Consequently,
thanks to Lemma \ref{lem:210803-3}
\[
w(Q) \le w(Q^\dagger_{k_0}) \lesssim 
\left(\frac{|Q^\dagger_{k_0}|}{|Q^\dagger_0|}\right)^{p_+}w(Q^\dagger_0) \lesssim
(1+|x|)^{p_+n},
\]
as required.
\end{proof}

We have various quantities equivalent
to $\|\chi_Q\|_{L^{p(\cdot)}(w)}$
if the cube
$Q \in {\mathfrak D}$ is small.
\begin{lemma}\label{lem:210804-1}
Let $w \in \tilde{A}_{p(\cdot)}^{\mathfrak D}$
and $Q \in {\mathfrak D}_k$ with $k \ge -1$.
Then
\begin{equation}\label{eq:211101-1}
\|\chi_Q\|_{L^{p(\cdot)}(w)}
\sim
w(Q)^{\frac{1}{p_+(Q)}}
\sim
w(Q)^{\frac{1}{p_-(Q)}}
\sim
w(Q)^{\frac{1}{p(x)}}
\sim
w(Q)^{\frac{1}{p_Q}}
\end{equation}
for all $x \in Q$.
\end{lemma}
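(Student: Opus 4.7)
My plan is to apply the modular inequality Lemma \ref{lem:190410-1} to the function $w^{1/p(\cdot)}\chi_Q$ (equivalently, to invoke Remark \ref{rem:200807-1}), which sandwiches $\|\chi_Q\|_{L^{p(\cdot)}(w)}$ between $w(Q)^{1/p_+(Q)}$ and $w(Q)^{1/p_-(Q)}$, the order depending on whether $w(Q)\le 1$ or $w(Q)\ge 1$. Therefore the entire cluster of equivalences in (\ref{eq:211101-1}) reduces to the single bound
\[
w(Q)^{p_+(Q) - p_-(Q)} \sim 1,
\]
since once this is in hand, the intermediate quantities $w(Q)^{1/p(x)}$ (for any $x \in Q$) and $w(Q)^{1/p_Q}$ are automatically trapped between $w(Q)^{1/p_+(Q)}$ and $w(Q)^{1/p_-(Q)}$, given that $p_-(Q) \le p(x) \le p_+(Q)$ and $p_-(Q) \le p_Q \le p_+(Q)$.

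To prove this key estimate, I would combine the two log-H\"{o}lder conditions on $p(\cdot)$ with the polynomial-growth control on $w(Q)$ from Corollary \ref{cor:210804-1}. The bounded sidelength $\ell(Q) \le 2$, guaranteed by $k \ge -1$, allows ${\rm LH}_0$ to handle the local oscillation of $p$ across $Q$, while ${\rm LH}_\infty$ controls the decay of this oscillation as $Q$ escapes to infinity; together they yield a uniform bound of the shape
\[
p_+(Q) - p_-(Q) \le \frac{C}{\log(e + |x|)} \qquad (x \in Q).
\]
Combined with $w(Q) \lesssim (1 + |x|)^{p_+ n}$ from Corollary \ref{cor:210804-1}, this immediately delivers the upper half $w(Q)^{p_+(Q) - p_-(Q)} \lesssim 1$. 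For the lower half, I plan to establish the matching polynomial estimate $w(Q) \gtrsim (1 + |x|)^{-M}$ by verifying that the conjugate weight $\sigma = w^{-1/(p(\cdot)-1)}$ lies in $\tilde{A}_{p'(\cdot)}^{\mathfrak D}$, applying Corollary \ref{cor:210804-1} to $\sigma$ with exponent $p'(\cdot)$ to obtain $\sigma(Q) \lesssim (1+|x|)^{p'_+ n}$, and then inverting via the defining inequality of $\tilde{A}_{p(\cdot)}^{\mathfrak D}$ together with a H\"{o}lder-type comparison between $\|w^{-1}\|_{L^{p'(\cdot)/p(\cdot)}(Q)}$ and $\sigma(Q)$.

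The main obstacle is establishing the self-duality of the class $\tilde{A}_{p(\cdot)}^{\mathfrak D}$ so that $\sigma$ inherits a polynomial-growth analogue of Corollary \ref{cor:210804-1}. Unlike the classical constant-exponent Muckenhoupt theory, where $(w^{-1/(p-1)})^{-1/(p'-1)} = w$ gives duality for free, the averaged exponent $p_Q$ appearing in $[w]_{\tilde{A}_{p(\cdot)}^{\mathfrak D}}$ makes the two factors in the product play asymmetric roles, so the duality step is delicate and requires careful bookkeeping using the identity $1/p_Q + 1/p'_Q = 1$. Once that step is done, the lower bound on $w(Q)^{p_+(Q)-p_-(Q)}$ follows from the same ${\rm LH}_\infty$ argument applied to $w(Q)^{-1}$, completing the proof.
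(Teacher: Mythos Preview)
Your reduction to the single estimate $w(Q)^{p_+(Q)-p_-(Q)}\sim 1$ via Remark~\ref{rem:200807-1} is exactly what the paper does, and your upper-half argument (Corollary~\ref{cor:210804-1} plus the ${\rm LH}_0/{\rm LH}_\infty$ bound $p_+(Q)-p_-(Q)\lesssim 1/\log(e+|x|)$) is correct.

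The problem is your plan for the \emph{lower} half. Establishing self-duality $\sigma\in\tilde A_{p'(\cdot)}^{\mathfrak D}$ from $w\in\tilde A_{p(\cdot)}^{\mathfrak D}$ is not straightforward: passing between the two conditions requires comparing $\|w^{-1}\|_{L^{p'(\cdot)/p(\cdot)}(Q)}$ with $\sigma(Q)^{p_Q-1}$ and $\|\sigma^{-1}\|_{L^{p(\cdot)/p'(\cdot)}(Q)}$ with $w(Q)^{1/(p_Q-1)}$, and the modular inequality only pins these down up to the discrepancy between $p_-(Q)$ and $p_+(Q)$ --- which is precisely what you are trying to control. (Indeed, the paper only proves the comparison $\|w^{-1}\|_{L^{p'(\cdot)/p(\cdot)}(Q)}\sim\sigma(Q)^{p_Q-1}$ later, in Lemma~\ref{lem:210903-1}, \emph{using} Lemma~\ref{lem:210804-1}.) So your duality route is circular as stated.

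The paper avoids this entirely: it gets the lower bound on $w(Q)$ directly from Lemma~\ref{lem:210803-3}, the same lemma that underlies Corollary~\ref{cor:210804-1}. That lemma says $w(Q)\gtrsim \min(1,|Q|/|S|)^{p_+}w(S)$ for intersecting $Q,S\in\mathfrak D$, so by taking $S$ to be a fixed reference cube containing the origin (either $Q^\dagger_{-2}$ when $Q$ is near the origin, or the smallest $Q^\dagger_{k_0}\supset Q$ otherwise) one obtains a two-sided polynomial bound
\[
\Big(\tfrac{\ell(Q)}{1+|x|}\Big)^{np_+}\,\lesssim\, w(Q)\,\lesssim\,(1+|x|)^{np_+}
\]
for $x\in Q$, and the ${\rm LH}$ conditions finish as in your upper half. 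No duality is needed.
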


\begin{proof}
We concentrate on the proof of
$w(Q)^{\frac{1}{p_+(Q)}}
\sim
w(Q)^{\frac{1}{p_-(Q)}}$;
other equivalences are clear, since
other quantities are between
$
w(Q)^{\frac{1}{p_+(Q)}}
$
and
$
w(Q)^{\frac{1}{p_-(Q)}}
$.

First,
assume that $Q \cap Q_{-1}^\dagger=\emptyset$.
In this case, we choose the largest integer
$k_0 \in {\mathbb Z}$ such that
$Q \subset Q_{k_0}^\dagger$.
We have ${k_0} \le -2$.
Then
$Q$ and $Q_{k_0+2}^\dagger$
do not intersect.
Otherwise
we have either
$Q \subset Q_{k_0+2}^\dagger$
or
$Q_{k_0+2} \subsetneq Q \subsetneq Q_{k_0}^\dagger$.
As in the proof for Corollary \ref{cor:210804-1},
neither of these cases occurs.

Since $w \in \tilde{A}_{p(\cdot)}^{\mathfrak D}$
and $|x| \sim \ell(Q_{k_0}^\dagger) \sim \ell(Q_{k_0+2}^\dagger)\gtrsim 1$ 
for all $x \in Q$,
\begin{equation}\label{eq:220420-1}
\left(\frac{r}{1+|x|}\right)^{n p_+} w(Q_{-1}^\dagger)
\le
\left(\frac{r}{1+|x|}\right)^{n p_+}
w(Q_{k_0}^\dagger)
\lesssim
w(Q)
\le
w(Q_{k_0}^\dagger)
\lesssim
(1+|x|)^{n p_+} w(Q_{-1}^\dagger)
\end{equation}
for all $x \in Q$
thanks to Lemma \ref{lem:210803-3},
where $r=-\log_2\ell(Q)$.
If we use the
global/local $\log$-H\"{o}lder conditions,
then 
$
w(Q)^{\frac{1}{p_+(Q)}}
\sim
w(Q)^{\frac{1}{p_-(Q)}}$
and hence
(\ref{eq:211101-1}).

Let us deal with the case $Q \cap Q^\dagger_{-1} \ne \emptyset$.
Then we have $Q \subset Q^\dagger_{-2}$.
In fact, if $Q=Q_k^\dagger (k \ge -1)$, this claim is clear since $\{Q_j^\dagger\}_{j}$ is decreasing.
Otherwise, let $Q \in {\mathfrak D}_k \setminus \{Q_{k}^\dagger\} (k \ge 0)$.
If $k$ is even, then $Q \subset Q_0^\dagger$ thanks to the construction of the dyadic grids.
In particular $Q \subset Q_{-2}^\dagger$.
Similarly, if $k$ is odd, then 
$Q \subset Q_{-1}^\dagger \subset Q_{-2}^\dagger$.
Hence
\[
w(Q^\dagger_{-2}) \ge
w(Q) \gtrsim \left(\frac{|Q|}{|Q^\dagger_{-2}|}\right)^{p_+}w(Q^\dagger_{-2})
\]
thanks to Lemma \ref{lem:210803-3}.
Thus,
\[
w(Q)^{\left|\frac{1}{p_+(Q)}-\frac{1}{p_-(Q)}\right|}
\gtrsim
w(Q^\dagger_{-2})^{\left|\frac{1}{p_+(Q)}-\frac{1}{p_-(Q)}\right|}
\left(\frac{|Q^\dagger_{-2}|}{|Q|}\right)^{-p_+\left|\frac{1}{p_+(Q)}-\frac{1}{p_-(Q)}\right|}
\gtrsim 1
\]
and
\[
w(Q)^{\left|\frac{1}{p_+(Q)}-\frac{1}{p_-(Q)}\right|}
\le
w(Q^\dagger_{-2})^{\left|\frac{1}{p_+(Q)}-\frac{1}{p_-(Q)}\right|}
\lesssim 1.
\]
Thus,
$
w(Q)^{\frac{1}{p_+(Q)}}
\sim
w(Q)^{\frac{1}{p_-(Q)}}
$.
\end{proof}
Using a similar argument to \cite[Proposition 3.8]{DiHapre},
we have the following equivalence:
\begin{lemma}\label{lem:210903-1}
If 
$p(\cdot) \in {\mathcal P} \cap {\rm L H}_0 \cap {\rm L H}_\infty$
and
$w \in \tilde{A}_{p(\cdot)}^{\mathfrak D}$,
then,
for all $Q \in {\mathfrak D}$,
\[
|Q|^{-p_Q}w(Q)\left\|w^{-1}\right\|_{L^{\frac{p'(\cdot)}{p(\cdot)}}(Q)}
\sim
\frac{w(Q)}{|Q|}\left(\frac{\sigma(Q)}{|Q|}\right)^{p_Q-1}.
\]
\end{lemma}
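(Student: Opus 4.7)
The plan is to unpack the Luxemburg quasi-norm on the left-hand side and verify the equivalence by a direct modular computation. First observe that $p'(\cdot)/p(\cdot)=1/(p(\cdot)-1)$ and that $w^{-1/(p(x)-1)}=\sigma(x)$, so after dividing both sides of the claim by $w(Q)/|Q|^{p_Q}$ the asserted equivalence is algebraically equivalent to
\[
\|w^{-1}\|_{L^{1/(p(\cdot)-1)}(Q)} \sim \sigma(Q)^{p_Q-1}.
\]
By definition of the Luxemburg quasi-norm, I would show that $\Lambda=C\sigma(Q)^{p_Q-1}$ (resp.\ $c\sigma(Q)^{p_Q-1}$) is admissible (resp.\ inadmissible) in $\int_Q(w(x)^{-1}/\Lambda)^{1/(p(x)-1)}\,{\rm d}x\le 1$ for some constants $C$ large and $c$ small. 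Rewriting $(w^{-1}/\Lambda)^{1/(p(x)-1)}=\sigma(x)\Lambda^{-1/(p(x)-1)}$, the whole matter reduces to analyzing this integral.

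The key simplification is to replace the varying exponent $1/(p(x)-1)$ by the constant $1/(p_Q-1)$. For $\Lambda$ of the order $\sigma(Q)^{p_Q-1}$, the substitution is legitimate provided
\[
\sigma(Q)^{|p(x)-p_Q|/(p(x)-1)} \sim 1 \quad \text{uniformly in } x\in Q,
\]
after which the integral collapses to $\Lambda^{-1/(p_Q-1)}\sigma(Q)$, i.e.\ to a power of the absolute constant hidden in $\Lambda=C\sigma(Q)^{p_Q-1}$. Choosing $C$ large (resp.\ $c$ small) then yields the upper (resp.\ lower) bound. A parallel estimate $C^{|1/(p(x)-1)-1/(p_Q-1)|}\sim 1$ for the multiplicative constant is automatic once $\sigma(Q)$ is controlled polynomially, since $C$ is a fixed number.

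The main obstacle is precisely to establish the uniformity $\sigma(Q)^{|p(x)-p_Q|/(p(x)-1)}\lesssim 1$. For small cubes $(\ell(Q)\le 1)$, the ${\rm LH}_0$ bound $|p(x)-p_Q|\lesssim 1/\log(1/\ell(Q))$ must be paired with a bound $\sigma(Q)\lesssim \ell(Q)^{-N}$, while for cubes that meet the far field the ${\rm LH}_\infty$ bound $|p(x)-p_Q|\lesssim 1/\log(e+|x|)$ must be paired with $\sigma(Q)\lesssim (1+|x|)^N$ on $Q$. Both $\sigma$-bounds I would obtain by running the arguments of Lemma \ref{lem:210803-3}, Corollary \ref{cor:210804-1} and Lemma \ref{lem:210804-1} with $w$ replaced by $\sigma$; this substitution is legitimate because the defining quantity $|Q|^{-p_Q}\|w\|_{L^1(Q)}\|w^{-1}\|_{L^{p'(\cdot)/p(\cdot)}(Q)}$ is symmetric under the duality $w\leftrightarrow\sigma$, $p(\cdot)\leftrightarrow p'(\cdot)$ (which one checks by writing the exponent $1/(p-1)$ as $p'/p$ and using that $(p')_Q$ differs from $1+(p_Q-1)^{-1}$ by a bounded factor on each $Q$, thanks to the log-H\"older conditions). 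With these polynomial bounds on $\sigma(Q)$ in hand, the reduction of the preceding paragraph goes through and the equivalence follows, along the lines of \cite[Proposition 3.8]{DiHapre}.
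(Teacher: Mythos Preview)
Your overall strategy---reduce to $\|w^{-1}\|_{L^{1/(p(\cdot)-1)}(Q)}\sim\sigma(Q)^{p_Q-1}$ and verify this by a direct modular computation, freezing the variable exponent at $p_Q$ via the log-H\"older conditions---is sound and is genuinely different from the paper's argument. The paper instead shows, for small cubes, that the \emph{left-hand side itself} satisfies $|Q|^{-p_Q}w(Q)\|w^{-1}\|_{L^{p'(\cdot)/p(\cdot)}(Q)}\sim 1$: the upper bound is the $\tilde A_{p(\cdot)}^{\mathfrak D}$ hypothesis, and the lower bound comes from the H\"older inequality $|Q|=\int_Q w^{1/p}w^{-1/p}\lesssim w(Q)^{1/p_Q}\|w^{-1/p(\cdot)}\|_{L^{p'(\cdot)}(Q)}$ together with Lemma~\ref{lem:210804-1}. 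From this the paper reads off $\sigma(Q)^{p_Q-1}\sim|Q|^{p_Q}/w(Q)$ for small cubes, notes that this forces $\sigma\in A_\infty^{{\rm loc},\mathfrak D}$, and then handles large cubes via H\"ast\"o's localization principle (Lemma~\ref{lem:190627-11}) rather than a direct ${\rm LH}_\infty$ argument. Your route is more computational; the paper's is more structural and yields $\sigma\in A_\infty^{{\rm loc},\mathfrak D}$ as a by-product.

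There is, however, a circularity in your justification of the $\sigma$-bounds. You claim that Lemmas~\ref{lem:210803-3}--\ref{lem:210804-1} transfer to $\sigma$ because $w\in\tilde A_{p(\cdot)}^{\mathfrak D}$ is ``symmetric'' under $w\leftrightarrow\sigma$, $p\leftrightarrow p'$. But unwinding that symmetry, $[\sigma]_{\tilde A_{p'(\cdot)}^{\mathfrak D}}$ restricted to a cube $Q$ becomes (using $(p')_Q=(p_Q)'$ and $\|\sigma^{-1}\|_{L^{p(\cdot)-1}(Q)}\sim w(Q)^{1/(p_Q-1)}$, the latter via Lemma~\ref{lem:210804-1} for $w$) essentially $\bigl(|Q|^{-p_Q}w(Q)\sigma(Q)^{p_Q-1}\bigr)^{1/(p_Q-1)}$---which is a power of the \emph{right-hand side} of the lemma you are proving. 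So verifying the duality already requires $\sigma(Q)^{|p_+(Q)-p_-(Q)|}\sim1$, precisely the estimate you invoke the duality to obtain. The non-circular fix is to extract the polynomial control on $\sigma(Q)$ directly: from $\|w^{-1}\|_{L^{p'(\cdot)/p(\cdot)}(Q)}\le[w]_{\tilde A_{p(\cdot)}^{\mathfrak D}}|Q|^{p_Q}/w(Q)$ and Remark~\ref{rem:200807-1} you get $\sigma(Q)\le\bigl([w]_{\tilde A_{p(\cdot)}^{\mathfrak D}}|Q|^{p_Q}/w(Q)\bigr)^{1/(p_\pm(Q)-1)}$, and then the already-available bounds on $w(Q)$ (Lemma~\ref{lem:210803-3}, Corollary~\ref{cor:210804-1}) give the needed polynomial control on $\sigma(Q)$ without appealing to $\sigma\in\tilde A_{p'(\cdot)}^{\mathfrak D}$. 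With that correction your argument goes through.
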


\begin{proof}
Let
$w \in \tilde{A}_{p(\cdot)}^{\mathfrak D}$ and suppose that
$Q \in \bigcup\limits_{k=0}^\infty {\mathfrak D}_k$.
By the definition of $\|w\|_{\tilde{A}_{p(\cdot)}^{\mathfrak D}}$,
we have
\[
\frac{w(Q)}{|Q|^{p_Q}}\|w^{-1}\|_{L^{\frac{p'(\cdot)}{p(\cdot)}}(Q)}
\le 
\|w\|_{\tilde{A}_{p(\cdot)}^{\mathfrak D}}.
\]
Due to Lemma \ref{lem:210804-1},
$w(Q)^{\frac{1}{p_Q}} \sim \|\chi_Q\|_{L^{p(\cdot)}(w)} 
= \|w^{\frac{1}{p(\cdot)}}\|_{L^{p(\cdot)}(Q)}$.
By virtue of the H\"{o}lder inequality, we have
\begin{align*}
|Q|=
\int_Q w(y)^{\frac{1}{p(y)}}w(y)^{-\frac{1}{p(y)}}{\rm d}y
&\le
2\|w^{\frac{1}{p(\cdot)}}\|_{L^{p(\cdot)}(Q)}
\|w^{-\frac{1}{p(\cdot)}}\|_{L^{p'(\cdot)}(Q)}\\
&\lesssim
\|w(Q)^{\frac{1}{p_Q}}w^{-\frac{1}{p(\cdot)}}\|_{L^{p'(\cdot)}(Q)}.
\end{align*}
This means that
\[
\int_Q 
\left(\frac{w(Q)^{\frac{1}{p_Q}}}{|Q|}\right)^{p'(y)}
w(y)^{-\frac{p'(y)}{p(y)}}{\rm d}y \gtrsim 1.
\]
Again, using Lemma \ref{lem:210804-1}, we have
$
w(Q)^{\frac{1}{p_Q}} \sim w(Q)^{\frac{1}{p(y)}}$
for all $y \in Q$.
Since $|Q| \le 1$,
we obtain
\[
\int_Q 
\left(\frac{w(Q)}{|Q|^{p_Q}}\right)^{\frac{p'(y)}{p(y)}}
w(y)^{-\frac{p'(y)}{p(y)}}{\rm d}y \gtrsim 1.
\]
Therefore,
\[
\frac{w(Q)}{|Q|^{p_Q}}
\left\|w^{-1}\right\|_{L^{\frac{p'(\cdot)}{p(\cdot)}}(Q)}
\gtrsim 1.
\]
From the definition of the quantity 
$[w]_{\tilde{A}_{p(\cdot)}^{\mathfrak D}}$,
we conclude
\[
\frac{w(Q)}{|Q|^{p_Q}}
\left\|w^{-1}\right\|_{L^{\frac{p'(\cdot)}{p(\cdot)}}(Q)}
\sim 1.
\]
Hence
\[
\left\|w^{-1}\right\|_{L^{\frac{p'(\cdot)}{p(\cdot)}}(Q)}
\sim
\frac{|Q|^{p_Q}}{w(Q)}.
\]
This implies that by Lemma \ref{lem:210804-1}
\[
\sigma(Q)^{p_Q-1}
\sim
\frac{|Q|^{p_Q}}{w(Q)}.
\]
Thus, if $|Q| \le 1$, then we have the desired property.

This means that $\sigma \in A_\infty^{{\rm loc},{\mathfrak D}}$.
Furthermore, if $R$ satisfies $|R|=1$, then by Remark \ref{rem:200807-1}, 
and
\[
\min(\|\chi_{R}\|_{L^{\frac{1}{p(\cdot)-1}}(\sigma)}^{\frac{1}{p_+(R)-1}}, 
\|\chi_{R}\|_{L^{\frac{1}{p(\cdot)-1}}(\sigma)}^{\frac{1}{p_-(R)-1}})
\le
\sigma(R)
\le
\max(\|\chi_{R}\|_{L^{\frac{1}{p(\cdot)-1}}(\sigma)}^{\frac{1}{p_+(R)-1}}, 
\|\chi_{R}\|_{L^{\frac{1}{p(\cdot)-1}}(\sigma)}^{\frac{1}{p_-(R)-1}}),
\]
we have
\[
\|\chi_R\|_{L^{\frac{p'(\cdot)}{p(\cdot)}}(\sigma)} \sim \sigma(R)^{p_\infty-1}.
\]
By the localization property (Lemma \ref{lem:190627-11}),
we have
\[
\|w^{-1}\|_{L^{\frac{p'(\cdot)}{p(\cdot)}}(Q)}
=
\|\chi_Q\|_{L^{\frac{p'(\cdot)}{p(\cdot)}}(\sigma)}
\sim
\sigma(Q)^{p_\infty-1}
\]
for all cubes $Q$ with $|Q| \ge 1$.
Since we know that
$|Q|^{\frac{1}{p_\infty}-\frac{1}{p_Q}} \sim 
w(Q)^{\frac{1}{p_\infty}-\frac{1}{p_Q}} \sim 1$,
we see that
$\sigma(Q)^{\frac{1}{p_\infty}-\frac{1}{p_Q}} \sim 1$.
Thus, the proof is complete.

\end{proof}

Under these preparations, we establish 
the boundedness of the local maximal operator.

\begin{lemma}{\rm \cite[Lemma 5.1]{DiHapre}}
\label{lem:210803-2}
Let
$w \in \tilde{A}_{p(\cdot)}^{\mathfrak D}$.
Then there exists $r_0=r_0([w]_{\tilde{A}_{p(\cdot)}^{\mathfrak D}},p(\cdot)) \in (0,1)$ such that
$-\log_2 r_0$ is an integer and that
\[
\|\chi_{Q}M^{\mathfrak D}_{\le r_0}[\chi_{Q} f]\|_{L^{p(\cdot)}(w)}
\lesssim 
\|\chi_{3Q}f\|_{L^{p(\cdot)}(w)}
\]
for all $f \in L^{p(\cdot)}(w)$ and $Q \in {\mathfrak D}_j$
with $\ell(Q)<r_0\le\frac12$.
Here $j=1-\log_2r_0$.
\end{lemma}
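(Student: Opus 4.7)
The plan is to first observe that the hypothesis $\ell(Q)=r_0/2$ combined with the cutoff $M^{\mathfrak D}_{\le r_0}$ allows only dyadic cubes $R \in {\mathfrak D}$ of sidelength $\le r_0 = 2\ell(Q)$ in the supremum. For $x \in Q$ such a cube $R$ with $x \in R$ is either a dyadic subcube of $Q$, or is the dyadic parent $\widetilde Q$ of $Q$ of sidelength $r_0$. The parent's contribution is $2^{-n} m_Q(|f|)$, which is dominated by $M^{{\mathfrak D}(Q)}[\chi_Q f](x)$ (the dyadic maximal operator on subcubes of $Q$). The matter therefore reduces to showing
\[
\|\chi_Q M^{{\mathfrak D}(Q)}[\chi_Q f]\|_{L^{p(\cdot)}(w)} \lesssim \|\chi_{3Q} f\|_{L^{p(\cdot)}(w)}.
\]

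Next, I would choose $r_0 \in (0,1)$ small enough, depending on $[w]_{\tilde A_{p(\cdot)}^{\mathfrak D}}$, $c_\ast(p(\cdot))$, and $p_\pm$, so that by ${\rm LH}_0$ the oscillation satisfies $p_+(R)-p_-(R) \le \varepsilon_0$ for every dyadic subcube $R \subseteq Q$, where $\varepsilon_0$ is as small as required. This \emph{freezes} the exponent on each such $R$: Lemma \ref{lem:210804-1} gives $\|\chi_R\|_{L^{p(\cdot)}(w)} \sim w(R)^{1/p_R}$ and $\sigma(R)^{1/(p_R)'} \sim \|\chi_R\|_{L^{p'(\cdot)}(\sigma)}$, while Lemma \ref{lem:210903-1} supplies the constant-exponent Muckenhoupt-type bound
\[
\frac{w(R)}{|R|}\left(\frac{\sigma(R)}{|R|}\right)^{p_R - 1} \lesssim [w]_{\tilde A_{p(\cdot)}^{\mathfrak D}},
\]
which is uniform in $R \in {\mathfrak D}(Q)$.

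With the exponent effectively frozen on each subcube of $Q$, I would dualize using the equivalent expression $\|g\|_{L^{p(\cdot)}(w)} = \sup \int g h\, w\,{\rm d}x$ with $\|h\|_{L^{p'(\cdot)}(w)} \le 1$, and run a standard sparse (Calder\'on--Zygmund) decomposition of $\chi_Q f$ inside $Q$, producing a sparse family $\mathcal S \subset {\mathfrak D}(Q)$ with pairwise disjoint principal sets $E_R \subset R$ satisfying $|E_R| \gtrsim |R|$ and
\[
\chi_Q M^{{\mathfrak D}(Q)}[\chi_Q f] \lesssim \sum_{R \in \mathcal S} m_R(|f|\chi_{3Q})\chi_{E_R}.
\]
On each such $E_R$ the exponent is almost constant and the $A_{p_R}$-type bound above allows the standard ``$A_p$ boundedness of the dyadic maximal operator'' argument to be executed on each $R$ with a constant independent of $R$; summing the pairwise disjoint contributions produces the estimate by $\|\chi_{3Q}f\|_{L^{p(\cdot)}(w)}$.

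The main obstacle is the exponent-freezing step: one must transfer the variable-exponent norms on each subcube $R$ to the corresponding constant-exponent $L^{p_R}(w)$ norms with a uniform constant. This is where the choice of $r_0$ (in terms of $[w]_{\tilde A_{p(\cdot)}^{\mathfrak D}}$ and the log-H\"older constants) is essential, because the sparse/Calder\'on--Zygmund summation involves powers $p_R$ that vary with $R$, and only a small oscillation of $p(\cdot)$ on each $R$ keeps the composition of these powers with the $A_{p(\cdot)}$ condition tight. Once this is controlled, the remaining computations are routine dyadic manipulations and do not require any non-local information about $w$ beyond what Lemmas \ref{lem:210804-1} and \ref{lem:210903-1} supply.
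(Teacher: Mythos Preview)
Your proposal has two gaps, one minor and one substantive.

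First, the geometric reduction is not quite right for the shifted grid $\mathfrak D = \mathcal D_{(1,\ldots,1)}$: consecutive levels of this one--third grid do \emph{not} nest, so there is no ``dyadic parent $\widetilde Q$'' of $Q$ in $\mathfrak D$, and cubes $R\in\mathfrak D$ with $x\in R\cap Q$ and $\ell(R)\le r_0$ need not be subcubes of $Q$.  What is true (and what the paper uses) is simply that any such $R$ satisfies $R\subset 3Q$; this is why the right--hand side carries $\chi_{3Q}$.  This is easy to fix.

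The real issue is the ``summing the pairwise disjoint contributions'' step.  After sparse domination you face
\[
\int_Q \Big(\sum_{R\in\mathcal S} m_R(|f|)\,\chi_{E_R}\Big)^{p(x)} w(x)\,{\rm d}x
=\sum_{R\in\mathcal S}\int_{E_R} m_R(|f|)^{p(x)} w(x)\,{\rm d}x,
\]
and you propose to handle each summand via the $A_{p_R}$--type bound from Lemma~\ref{lem:210903-1}.  But $m_R(|f|)$ can be very large, so replacing $p(x)$ by $p_R$ on $E_R$ is \emph{not} free even when $|p(x)-p_R|$ is small; and the standard constant--exponent $A_p$ argument produces a bound in terms of $\int_R |f|^{p_R} w$, which you then have to convert back to $\int_R |f|^{p(\cdot)} w$ and sum over $R$ with \emph{different} $p_R$.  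None of these exponent conversions are justified in your outline, and they are exactly where the difficulty of variable exponents lies.

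The paper avoids this entirely by freezing at a \emph{single} exponent $p_-(3Q)$ on all of $3Q$.  The crucial ingredient you are missing is the openness property (Hyt\"onen--P\'erez, see Lemma~\ref{Lemma-Muckenhoupt}): from Corollary~\ref{cor:211025-1} one only gets $[w]_{A^{\mathfrak D}_{p_+(3Q)}(3Q)}\lesssim [w]_{\tilde A^{\mathfrak D}_{p(\cdot)}}$, and one needs the reverse H\"older inequality to pass to $[w]_{A^{\mathfrak D}_{p_-(3Q)}(3Q)}\lesssim 1$ once $r_0$ is chosen so that $p_+(3Q)-p_-(3Q)<\varepsilon$.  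With this in hand the paper proves a \emph{pointwise} modular inequality
\[
M^{\mathfrak D}_{\le r_0}(\chi_Q f)(x)^{p(x)}
\lesssim \big(M^{\mathfrak D}_{\le r_0}[|f|^{p(\cdot)/p_-(3Q)}](x)\big)^{p_-(3Q)}
+\max(1,w(3Q))^{-1},
\]
using the polynomial growth of $w$ (Corollary~\ref{cor:210804-1}) to absorb the exponent error; integrating against $w$ then reduces to the constant--exponent $A_{p_-(3Q)}$ boundedness on $3Q$.  No sparse decomposition or duality is needed, and no summation across cubes with varying exponents arises.
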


\begin{proof}
For now, let $r_0 \in (0,1)$ be small enough. 
We will specify it shortly.
Then, there exists 
$j \in {\mathbb N} 
$ 
such that $2^{-j} < r_0 \le 2^{-j+1}$.
Fix $Q \in {\mathfrak D}_j$.
Note that $\ell(Q)<r_0$ by the definition of $j$.
Let
$C_0$ be the constant from Lemma \ref{lem:210803-1}.
Write
$c_1\equiv C_0
[w]_{\tilde{A}_{p(\cdot)}^{\mathfrak D}}$.
Then there exist $c_2>0$ and
$\varepsilon \in (0,1)$,
which is
independent of $S\in{\mathfrak D}$ such that
$[\sigma]_{A_q(S)} \le c_1$ implies
$[\sigma]_{A_{q-\varepsilon}(S)} \le c_2$
for all
$\sigma \in A_q(S)$ and
$q \in [p_-,p_+
+1]$
by the openness property established
by 
Hyt\"{o}nen
and
P\'{e}rez
\cite{HyPe13}
(see also Lemma \ref{Lemma-Muckenhoupt}).

Next
using the $\log$-H\"{o}lder continuity, we can choose
$r_0<\frac12n^{-1/2}$
so that
$p_+(3S)-\varepsilon<p_-(3S)$
for all $S \in {\mathfrak D}_{-\log_2 r_0}$
and
$j\equiv-\log_2 r_0+1 \in {\mathbb N}$.
By virtue of Lemma \ref{lem:210803-1}, 
\[
[w]_{A_{p_+(3Q)}^{\mathfrak D}(3Q)} 
\le
C_0
[w]_{\tilde{A}_{p(\cdot)}^{\mathfrak D}(3Q)} 
\le
C_0 [w]_{\tilde{A}_{p(\cdot)}^{\mathfrak D}} = c_1.
\]
By the property of $c_2$,
we have
\[
[w]_{A_{p_-(3Q)}^{\mathfrak D}(3Q)} 
\le
C_0
[w]_{A_{p_+(3Q)-\varepsilon}^{\mathfrak D}(3Q)}
\le c_2C_0.
\]
Let
$f \in L^{p(\cdot)}(w)$
with
$\|\chi_{3Q}f\|_{L^{p(\cdot)}(w)} \le1$.
Set
$g\equiv\chi_{Q}f$ and 
\begin{equation}\label{eq:211029-31}
q(\cdot)\equiv \frac{p(\cdot)}{p_-({3Q})}.
\end{equation}
Fix
$x \in Q$ and
choose a cube $\displaystyle R \in \bigcup_{k=j}^{\infty}{\mathfrak D}_k$ with
$x \in R$. 
Note that $\ell(R)\le 2^{-j}<r_0$.
Then
for all $\beta>0$,
\begin{align*}
\left(
\frac{1}{|R|}\int_R |g(y)|{\rm d}y
\right)^{q(x)}
&\le
\left(
\frac{1}{|R|}\int_R |g(y)|^{q_-(R)}{\rm d}y
\right)^{\frac{q(x)}{q_-(R)}}
=
\left(
\frac{1}{\beta|R|}\int_R \beta|g(y)|^{q_-(R)}{\rm d}y
\right)^{\frac{q(x)}{q_-(R)}}
\end{align*}
by H\"{o}lder's inequality.
By the inequality $t \le 1+t^{\frac{q(\cdot)}{q_-(R)}}$, we have
\begin{align*}
\left(
\frac{1}{|R|}\int_R |g(y)|{\rm d}y
\right)^{q(x)}
&\le
\left(
\frac{1}{\beta|R|}\int_R 
\left(1+\beta^{\frac{q(y)}{q_-(R)}}|g(y)|^{q(y)}\right){\rm d}y
\right)^{\frac{q(x)}{q_-(R)}}\\
&=
\left(\frac{1}{\beta}+
\frac{1}{|R|}\int_R \beta^{\frac{q(y)}{q_-(R)}-1}|g(y)|^{q(y)}{\rm d}y
\right)^{\frac{q(x)}{q_-(R)}}.
\end{align*}
We choose
$\beta\equiv \max(1,w(3Q)^{\frac{1}{p_-(3Q)}})$.
We suppose that $0 \notin 5Q$.
Decompose equally $3Q$ into $3^n$ cubes
$Q_1,Q_2,\ldots,Q_{3^n}$.
By Corollary \ref{cor:210804-1}, we have
\begin{equation*}
w(Q_k) 
\lesssim (1+|y|)^{p_+n}
\end{equation*}
for all $y \in Q_k$ and $k=1,2, \ldots, 3^n$.
Note that for all $y \in Q_k$ and $z \in 3Q$, $|y| \sim |z|$.
Thus, 
\begin{equation}\label{eq:220129-1}
w(3Q) 
=
\sum_{k=1}^{3^n} w(Q_k)
\lesssim
(1+|y|)^{p_+n}
\sim
(1+|z|)^{p_+n}
\end{equation}
for all $z \in 3Q$.
Meanwhile, if $5Q \ni 0$, then 
$w(3Q) \lesssim w([-10,10]^n) \lesssim 1\lesssim
(1+|z|)^{p_+n}$
for all $z \in 3Q$.
Since $Q \in {\mathfrak D}_j$ for $j \in {\mathbb N}$, $R \subset 3Q$. 
Hence, 
estimate (\ref{eq:220129-1}) still holds for all $y \in R$.

Thus,
since $q(\cdot) \in {\rm L H}_\infty$,
$$
\beta^{\frac{q(y)}{q_-(R)}-1}
=\max(1,w(3Q)^{\frac{1}{p_-(3Q)}})^{\frac{q(y)}{q_-(R)}-1}
\lesssim ((1+|y|)^{p_+n})^{q(y)-q_-(R)}
\lesssim 1
$$
for all $y \in R$,
where the implicit constant depends on $p(\cdot)$.
Since 
$q(x) \ge q_-(R)$
and 
$\beta \ge 1$,
we obtain
\begin{align}
\lefteqn{
\left(
\frac{1}{|R|}\int_R |g(y)|{\rm d}y
\right)^{q(x)}
}
\label{eq:220121-2}\\
\nonumber
&\lesssim
\frac{1}{\beta}+
\left(
\frac{1}{|R|}\int_R |g(y)|^{q(y)}{\rm d}y
\right)^{\frac{q(x)}{q_-(R)}}\\
\nonumber
&=
\min(1,w(3Q)^{-\frac{1}{p_-(3Q)}})+
\left(
\frac{1}{|R|}\int_R |g(y)|^{q(y)}{\rm d}y
\right)^{\frac{q(x)}{q_-(R)}-1}
\times
\frac{1}{|R|}\int_R |g(y)|^{q(y)}{\rm d}y.
\end{align}
Thanks to the Young inequality,
the definition of the quantity
$A_{p_-(3Q)}^{\mathfrak D}(3Q)$
and the fact that $\|\chi_{3Q}f\|_{L^{p(\cdot)}(w)} \le 1$,
we have
\begin{align*}
\int_R |g(y)|^{q(y)}{\rm d}y
&\le
\int_R |g(y)|^{p(y)}w(y){\rm d}y
+
\int_R w(y)^{-\frac{1}{p_-(3Q)-1}}{\rm d}x\\
&=
\int_R |g(y)|^{p(y)}w(y){\rm d}y
+
\left\{\left(\int_R w(y)^{-\frac{1}{p_-(3Q)-1}}{\rm d}x\right)^{p_-(3Q)-1}\right\}^{\frac{1}{p_-(3Q)-1}}\\
&\lesssim
1+
([w]_{A_{p_-(3Q)}^{\mathfrak D}(3Q)}|R|^{p_-(3Q)}w(R)^{-1})^{-\frac{1}{p_-(3Q)-1}}.
\end{align*}
Recall that $\ell(R) \le \frac12$.
Since $\frac{q(x)}{q_-(R)}-1 \ge 0$,
thanks to the $\log$-H\"{o}lder continuity
of $q(\cdot)$ and Lemma \ref{lem:210804-1},
we have
\[
\left(
\frac{1}{|R|}\int_R |g(y)|^{q(y)}{\rm d}y
\right)^{\frac{q(x)}{q_-(R)}-1}\lesssim
\left(
1+
([w]_{A_{p_-(3Q)}^{\mathfrak D}(3Q)}|R|^{p_-(3Q)}w(R)^{-1})^{-\frac{1}{p_-(3Q)-1}}
\right)^{\frac{q(x)}{q_-(R)}-1}\sim 1.
\]
From
(\ref{eq:220121-2}), we obtain
\[
\left(
\frac{1}{|R|}\int_R |g(y)|{\rm d}y
\right)^{q(x)}
\lesssim
\min(1,w(3Q)^{-\frac{1}{p_-(3Q)}})+
\frac{1}{|R|}\int_R |g(y)|^{q(y)}{\rm d}y.
\]
Recall that $R$ is a cube with $\ell(R) \le r_0 \le 1/2$.
Therefore from the definition of $M^{\mathfrak D}_{\le r_0}$,
we deduce
\begin{equation}\label{eq:220121-3}
M^{\mathfrak D}_{\le r_0}g(x)^{q(x)}
\lesssim
M^{\mathfrak D}_{\le r_0}[|g|^{q(\cdot)}](x)
+
\min(1,w(3Q)^{-\frac{1}{p_-(3Q)}})
\end{equation}
Recall that $q(\cdot)$ is given by $(\ref{eq:211029-31})$.
Inserting the definition of $q(\cdot)$ into
(\ref{eq:220121-3}),
we obtain
\[
M^{\mathfrak D}_{\le r_0}g(x)^{p(x)}
\lesssim
(M^{\mathfrak D}_{\le r_0}[|g|^{q(\cdot)}](x))^{p_-(3Q)}
+
\frac{1}{\max(1,w(3Q))}.
\]
Since $w \in A_{p_-(3Q)}(3Q)$,
integrating the above inequality for the measure $w(x){\rm d}x$
over $Q$
gives
\begin{align*}
\int_{Q} 
M^{\mathfrak D}_{\le r_0}g(x)^{p(x)} w(x){\rm d}x
&\lesssim
\int_{Q}
\left[
(M^{\mathfrak D}_{\le r_0}[|g|^{q(\cdot)}](x))^{p_-(3Q)}
+
\frac{1}{\max(1,w(3Q))}
\right] 
w(x){\rm d}x\\
&\lesssim
\int_{Q}
|f(x)|^{q(\cdot)p_-(3Q)}
w(x) {\rm d}x
+1
\sim 1.
\end{align*}
Hence, we have 
$\|\chi_QM^{\mathfrak D}_{\le r_0}g\|_{L^{p(\cdot)}(w)}
=
\|\chi_QM^{\mathfrak D}_{\le r_0}[\chi_Qf]\|_{L^{p(\cdot)}(w)} 
\le 1$, as desired.
\end{proof}

By the localization argument
(see Lemma \ref{lem:190627-11}), 
we can prove $M^{\mathfrak D}_{\le r_0}$ is bounded on $L^{p(\cdot)}(w)$.
\begin{lemma}{\rm \cite[Lemma 5.3]{DiHapre}}
\label{lem:210803-22}
Let
$w \in \tilde{A}_{p(\cdot)}^{\mathfrak D}$.
Then there exists $r_0=r_0([w]_{\tilde{A}_{p(\cdot)}^{\mathfrak D}},p(\cdot)) \in (0,1)$ such that
\[
\|M^{\mathfrak D}_{\le r_0}f\|_{L^{p(\cdot)}(w)}
\lesssim 
\|f\|_{L^{p(\cdot)}(w)}
\]
for all $f \in L^{p(\cdot)}(w)$.
\end{lemma}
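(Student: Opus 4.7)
The plan is to combine Lemma \ref{lem:210803-2} with H\"asto's localization principle (Lemma \ref{lem:190627-11}). First, I would take $r_0$ from Lemma \ref{lem:210803-2} and, shrinking it if necessary, arrange $r_0 = 2^{-k_0}$ with $k_0 \in {\mathbb N}$. The key geometric observation is that by dyadic nestedness, for any $Q \in {\mathfrak D}_{k_0}$ and $x \in Q$ every $R \in {\mathfrak D}$ with $x \in R$ and $\ell(R) \le r_0 = \ell(Q)$ is contained in $Q$; hence
\[
\chi_Q \cdot M^{\mathfrak D}_{\le r_0} f = \chi_Q \cdot M^{\mathfrak D}_{\le r_0}[\chi_Q f].
\]

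Next I would split each such $Q$ into its $2^n$ dyadic children $S_1,\ldots,S_{2^n} \in {\mathfrak D}_{k_0+1}$ (precisely the level $j=1-\log_2 r_0$ in Lemma \ref{lem:210803-2}). For $x \in S_i$, the cubes $R$ contributing to $M^{\mathfrak D}_{\le r_0}[\chi_Q f](x)$ are of two types: either $R=Q$, or $\ell(R) \le r_0/2$ and then $R \subset S_i$; this yields the pointwise bound
\[
\chi_{S_i} M^{\mathfrak D}_{\le r_0}[\chi_Q f] \le \chi_{S_i}\, m_Q(|f|) + \chi_{S_i} M^{\mathfrak D}_{\le r_0}[\chi_{S_i} f].
\]
Lemma \ref{lem:210803-2} bounds the second term by $\|\chi_{3S_i} f\|_{L^{p(\cdot)}(w)} \le \|\chi_{3Q} f\|_{L^{p(\cdot)}(w)}$. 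For the first, I would apply the generalized H\"older inequality to get $m_Q(|f|) \lesssim |Q|^{-1}\|\chi_Q f\|_{L^{p(\cdot)}(w)} \|\chi_Q\|_{L^{p'(\cdot)}(\sigma)}$, and invoke Lemmas \ref{lem:210804-1} and \ref{lem:210903-1} to deduce $\|\chi_Q\|_{L^{p(\cdot)}(w)}\|\chi_Q\|_{L^{p'(\cdot)}(\sigma)} \lesssim |Q|$ uniformly in $Q \in {\mathfrak D}_{k_0}$. Summing over $i$ gives the local bound
\[
\|\chi_Q M^{\mathfrak D}_{\le r_0} f\|_{L^{p(\cdot)}(w)} \lesssim \|\chi_{3Q} f\|_{L^{p(\cdot)}(w)}.
\]

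Finally, since $\|g\|_{L^{p(\cdot)}(w)} = \|g \cdot w^{1/p(\cdot)}\|_{L^{p(\cdot)}}$, applying the localization principle at level $k_0$ to $g = M^{\mathfrak D}_{\le r_0} f$ yields
\[
\|M^{\mathfrak D}_{\le r_0} f\|_{L^{p(\cdot)}(w)}^{p_\infty}
\sim \sum_{Q \in {\mathfrak D}_{k_0}} \|\chi_Q M^{\mathfrak D}_{\le r_0} f\|_{L^{p(\cdot)}(w)}^{p_\infty}
\lesssim \sum_{Q \in {\mathfrak D}_{k_0}} \|\chi_{3Q} f\|_{L^{p(\cdot)}(w)}^{p_\infty}.
\]
Since each $3Q$ is covered by at most $3^n$ cubes in ${\mathfrak D}_{k_0}$, the inequality $(\sum_{i=1}^{3^n} x_i)^{p_\infty} \le 3^{n(p_\infty-1)}\sum x_i^{p_\infty}$ (valid since $p_\infty \ge p_- > 1$) and a reindexing exploiting the symmetric bounded overlap let me dominate the last sum by $\sum_{Q'\in {\mathfrak D}_{k_0}}\|\chi_{Q'}f\|_{L^{p(\cdot)}(w)}^{p_\infty} \sim \|f\|_{L^{p(\cdot)}(w)}^{p_\infty}$, completing the argument. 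I expect the main technical obstacle to be confirming that the Muckenhoupt-type ratio $\|\chi_Q\|_{L^{p(\cdot)}(w)}\|\chi_Q\|_{L^{p'(\cdot)}(\sigma)}/|Q|$ is uniformly bounded over $Q \in {\mathfrak D}_{k_0}$, which is exactly where the full strength of the $\tilde{A}_{p(\cdot)}^{\mathfrak D}$ hypothesis enters via Lemmas \ref{lem:210804-1} and \ref{lem:210903-1}.
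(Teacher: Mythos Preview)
Your approach---combine the local estimate of Lemma~\ref{lem:210803-2} with H\"asto's localization principle (Lemma~\ref{lem:190627-11})---is exactly what the paper has in mind; its proof consists of a single line invoking the localization argument, and you have supplied the details.

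One technical point deserves care. Your ``key geometric observation'' that for $x\in Q\in\mathfrak D_{k_0}$ every $R\in\mathfrak D$ with $x\in R$ and $\ell(R)\le\ell(Q)$ satisfies $R\subset Q$ presupposes level-to-level dyadic nestedness. The grid $\mathfrak D=\mathcal D_{(1,\dots,1)}$ used in the paper does \emph{not} have this property: as noted after its definition, two cubes of $\mathfrak D$ can overlap without one containing the other (nesting only holds between levels of the same parity). Concretely, in one dimension with $a=1$ the level-$1$ cube $[1/6,2/3)$ contains $1/2\in[1/3,4/3)\in\mathfrak D_0$ but is not contained in $[1/3,4/3)$. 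Hence your identity $\chi_Q M^{\mathfrak D}_{\le r_0}f=\chi_Q M^{\mathfrak D}_{\le r_0}[\chi_Q f]$ and the subsequent splitting into ``children'' $S_i\subset Q$ are not literally valid. The repair is routine: one has $R\subset 3Q$ in all cases (this is precisely why $3Q$ appears in Lemma~\ref{lem:210803-2}), so replace $\chi_Q f$ by $\chi_{3Q}f$ and cover $3Q$ by boundedly many cubes of $\mathfrak D_{k_0+1}$ before invoking Lemma~\ref{lem:210803-2}; the bounded-overlap argument in your final step then goes through unchanged.
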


Let
$w \in \tilde{A}_{p(\cdot)}^{\mathfrak D}$.
Define
\[
E_k f\equiv \sum_{Q \in {\mathfrak D}_k}\chi_Qm_Q(f)
\quad
(k \in {\mathbb Z})
\]
for $f \in L^1_{\rm loc}({\mathbb R}^n)$.
We harvest a corollary
of Lemmas \ref{lem:210803-2} and \ref{lem:210803-22}.
\begin{corollary}\label{cor:210823-1}
Let
$w \in \tilde{A}_{p(\cdot)}^{\mathfrak D}$.
If $k \gg 1$,
then $E_k$ is bounded on $L^{p(\cdot)}(w)$.
\end{corollary}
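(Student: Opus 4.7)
The plan is to observe that the conditional expectation operator $E_k$ is dominated pointwise by the truncated dyadic maximal operator $M^{\mathfrak D}_{\le r_0}$, and then to invoke Lemma~\ref{lem:210803-22} directly.

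More concretely, let $r_0 \in (0,1)$ be the constant produced by Lemma~\ref{lem:210803-22}, and choose $k_0 \in \mathbb{N}$ so that $2^{-k_0} \le r_0$. For any $k \ge k_0$ and any $x \in \mathbb{R}^n$, there is a unique $Q \in \mathfrak{D}_k$ containing $x$, and $\ell(Q) = 2^{-k} \le r_0$, so $Q$ is among the cubes over which the supremum defining $M^{\mathfrak D}_{\le r_0} f(x)$ is taken. Hence
\[
|E_k f(x)| = |m_Q(f)| \le \frac{1}{|Q|}\int_Q |f(y)|\,\mathrm{d}y \le M^{\mathfrak D}_{\le r_0} f(x).
\]
Taking $L^{p(\cdot)}(w)$-norms and applying Lemma~\ref{lem:210803-22} then yields
\[
\|E_k f\|_{L^{p(\cdot)}(w)} \le \|M^{\mathfrak D}_{\le r_0} f\|_{L^{p(\cdot)}(w)} \lesssim \|f\|_{L^{p(\cdot)}(w)},
\]
with an implicit constant independent of $k \ge k_0$.

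There is no real obstacle here: the only thing one has to verify is that the constant $r_0$ from Lemma~\ref{lem:210803-22} depends only on $[w]_{\tilde{A}_{p(\cdot)}^{\mathfrak D}}$ and $p(\cdot)$ (which is exactly what is asserted there), so that the choice of $k_0$ is legitimate and the qualification \emph{$k \gg 1$} in the statement is made quantitative. Once this is recorded, the proof is a one-line pointwise domination, and the assertion follows.
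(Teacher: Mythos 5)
Your proof is correct and takes exactly the same route as the paper: the paper's own proof of Corollary~\ref{cor:210823-1} reads ``Simply observe that $|E_k f| \le M^{\mathfrak D}_{\le r_0}f$,'' which is precisely your pointwise domination followed by Lemma~\ref{lem:210803-22}. Your only addition is making the threshold $k \ge k_0$ with $2^{-k_0}\le r_0$ explicit, which is a welcome clarification of the ``$k\gg1$'' in the statement but not a departure in method.
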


\begin{proof}
Simply observe that
$|E_k f| \le M^{\mathfrak D}_{\le r_0}f$
for $f \in L^1_{\rm loc}({\mathbb R}^n)$. 
\end{proof}

We obtain another corollary
of Lemmas \ref{lem:210803-2} and \ref{lem:210803-22}.
\begin{corollary}\label{cor:220121-1}
Let
$w \in \tilde{A}_{p(\cdot)}^{\mathfrak D}$
and 
$r_0 \in (0,1)$ be the same as in Lemma \ref{lem:210803-2}.
Then
\begin{equation}\label{eq:220121-4}
\|M^{\mathfrak D}_{\ge r_0}[\chi_{R}f]\|_{L^{p(\cdot)}(w)}
\lesssim
\|f\|_{L^{p(\cdot)}(w)}
\end{equation}
for all $f \in L^{p(\cdot)}(w)$ and $R \in {\mathfrak D}$ with $\ell(R)=r_0$
with the implicit constant dependent on $r_0$.
\end{corollary}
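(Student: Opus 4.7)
The plan is to derive an explicit pointwise formula for $M^{\mathfrak D}_{\ge r_0}[\chi_R f]$ using the dyadic structure, and then estimate each factor separately. For any $Q \in \mathfrak{D}$ with $\ell(Q) \ge r_0 = \ell(R)$, the condition $Q \cap R \ne \emptyset$ forces $R \subseteq Q$, so $Q$ must be one of the dyadic ancestors $\{R_k\}_{k \ge 0}$ of $R$, where $R_0 = R$ and $|R_k| = 2^{kn}|R|$. Writing $S_x$ for the smallest $R_k$ containing $x$, this yields
\[
M^{\mathfrak D}_{\ge r_0}[\chi_R f](x) = m_R(|f|)\,\frac{|R|}{|S_x|} = m_R(|f|)\,M^{\mathfrak D}\chi_R(x),
\]
which reduces matters to a scalar bound on $m_R(|f|)$ and the norm estimate $\|M^{\mathfrak D}\chi_R\|_{L^{p(\cdot)}(w)}\lesssim\|\chi_R\|_{L^{p(\cdot)}(w)}$.

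For the scalar factor, I would apply the generalized H\"older inequality (Lemma \ref{thm:gHolder}) together with the dual form of Lemma \ref{lem:210804-1} (applied to $\sigma$ and $p'(\cdot)$) and the identity in Lemma \ref{lem:210903-1} to obtain
\[
m_R(|f|) \lesssim \frac{\sigma(R)^{1/p'_R}}{|R|}\|f\|_{L^{p(\cdot)}(w)} \lesssim \frac{\|f\|_{L^{p(\cdot)}(w)}}{w(R)^{1/p_R}} \sim \frac{\|f\|_{L^{p(\cdot)}(w)}}{\|\chi_R\|_{L^{p(\cdot)}(w)}}.
\]
This is essentially the self-improvement that recurs throughout the appendix.

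For the norm of $M^{\mathfrak D}\chi_R$, I would use the explicit representation $M^{\mathfrak D}\chi_R = \sum_{k \ge 0}2^{-kn}\chi_{R_k \setminus R_{k-1}}$ (with $R_{-1} = \emptyset$) and split the sum according to whether $|R_k| \le 1$ or $|R_k| > 1$. For the $m+1$ small scales $k \le m := -\log_2 r_0$, the local doubling bound $w(R_k) \lesssim 2^{knp_+}w(R)$ from Lemma \ref{lem:210803-3} combined with Lemma \ref{lem:210804-1} gives each term $L^{p(\cdot)}(w)$-norm $\lesssim C(r_0)\|\chi_R\|_{L^{p(\cdot)}(w)}$, and since only finitely many such scales appear the sum is absorbed into a constant depending on $r_0$. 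For the large scales $k > m$, the corresponding part of the sum is supported in $\mathbb{R}^n \setminus R_m$; here I would invoke the localization principle (Lemma \ref{lem:190627-11}) with $k_0 = 0$ to pass to an $\ell^{p_\infty}$-sum over unit cubes, then use that $p(\cdot)$ equals $p_\infty$ up to log-H\"older decay at infinity, together with the polynomial-growth bound on $w$ from Corollary \ref{cor:210804-1}, to sum a geometric series whose ratio is strictly less than $1$ thanks to the decay $2^{-kn}$.

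The main obstacle is the large-scale tail: the polynomial-growth bound $w(R_k) \lesssim 2^{knp_+}w(R)$ at first glance fights the decay $2^{-knp(x)}$ appearing in the modular integral, and only after passing to the constant exponent $p_\infty$ via Lemma \ref{lem:190627-11} and exploiting the log-H\"older decay of $p(\cdot)$ at infinity does one recover a convergent geometric series. The small-scale contribution, by contrast, is harmless because the number of relevant scales is dictated by $r_0$, which is allowed to inflate the implicit constant.
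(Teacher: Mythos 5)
Your factorization $M^{\mathfrak D}_{\ge r_0}[\chi_R f]=m_R(|f|)\cdot M^{\mathfrak D}\chi_R$, via the observation that every admissible cube is a dyadic ancestor of $R$, is a nice structural observation that the paper's proof does not use: the paper instead splits $\mathbb{R}^n$ into a fixed central cube $Q_k^\dagger$ and its complement, and on the complement exploits the \emph{central} geometry to get the pointwise bound $M^{\mathfrak D}_{\ge r_0}[\chi_{Q_k^\dagger}f](x)\lesssim |x|^{-n}\int_{Q_k^\dagger}|f|$. Your treatment of $m_R(|f|)$ via H\"older, Lemma~\ref{lem:210804-1} and Lemma~\ref{lem:210903-1}, and your bound for the finitely many small scales $k\le m$, are both sound.

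The gap is in the large-scale tail of $\|M^{\mathfrak D}\chi_R\|_{L^{p(\cdot)}(w)}$. You claim that the localization principle plus the polynomial-growth bound on $w$ yields a geometric series, with the ratio below $1$ ``thanks to the decay $2^{-kn}$''; but the only growth estimate at your disposal, Lemma~\ref{lem:210803-3}, gives $w(R_k)\lesssim 2^{knp_+}w(R)$, and the $k$-th annulus contributes to the modular roughly $(2^{-kn})^{p(\cdot)}w(R_k)\lesssim 2^{kn(p_+-p(\cdot))}w(R)$. Since $p(\cdot)$ can be arbitrarily close to (or equal to) $p_+$ far from the origin, the effective ratio is $1$ and the series need not converge. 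The decisive ingredient you are missing is the \emph{openness} (self-improvement) of the weight class — that $w$ in fact lies in $A_{q-\varepsilon}^{\mathfrak D}$ on the exterior region for some $\varepsilon>0$ — which is precisely what the paper's modular argument invokes via Hyt\"onen--P\'erez (Lemma~\ref{Lemma-Muckenhoupt}, through the $A_{p_\infty-\frac23\varepsilon}^{\mathfrak D}(\mathbb{R}^n\setminus Q_{k'}^\dagger)$ step) to buy the extra $\varepsilon$ needed to sum. Your sketch cites only Corollary~\ref{cor:210804-1} and log-H\"older decay, neither of which supplies that room. Moreover, because your $R$ is allowed to be far from the origin, the annuli $R_k\setminus R_{k-1}$ are not concentric with $0$; the value $2^{-kn}$ of $M^{\mathfrak D}\chi_R$ there is not comparable to $|x|^{-n}$, so the paper's power-weight modular bound cannot be imported verbatim — you would have to carry out the openness argument against the chain $\{R_k\}_k$ rather than against the central chain. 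Until that step is supplied, the proof is incomplete.
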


\begin{proof}


Let $k\equiv1-\log_2r_0$.
Fix $R \in {\mathfrak D}$ with $\ell(R)=r_0$.

Let $x \in Q_k^\dagger$. Then,
a geometric observation shows that
there exists
the smallest cube $Q_{i}^\dagger$ 
such that $|\ell(Q_{i})|\ge r_0, Q^\dagger_k, R \subset Q^\dagger_{i}$ 
and $Q^\dagger_k \cap Q =\emptyset$ 
for $\displaystyle Q \in \bigcup_{j=-\infty}^{i}
({\mathfrak D}_j \setminus \{Q^\dagger_j\})$.
Thus,
\[
\chi_{Q_k^\dagger}(x)
M^{\mathfrak D}_{\ge r_0}[\chi_{R}f](x)
=
\frac{\chi_{Q_k^\dagger}(x)}{|Q^\dagger_{i}|} 
\int_{Q^\dagger_{i}} \chi_{R}(y)|f(y)| {\rm d}y.
\] 
From this pointwise estimate, we have
\begin{align*}
\|\chi_{Q_k^\dagger}
M^{\mathfrak D}_{\ge r_0}[\chi_{R}f]
\|_{L^{p(\cdot)}(w)}
&\le
\left\|\frac{\chi_{Q_k^\dagger}}{|R|} 
\int_{Q^\dagger_{i}} \chi_{R}(y)|f(y)| {\rm d}y\right\|_{L^{p(\cdot)}(w)}\\
&=
\frac{\|\chi_{Q_k^\dagger}\|_{L^{p(\cdot)}(w)}}{\|\chi_{R}\|_{L^{p(\cdot)}(w)}}
\left\|\frac{\chi_{R}}{|R|} 
\int_{Q^\dagger_{i}} \chi_{R}(y)|f(y)| {\rm d}y\right\|_{L^{p(\cdot)}(w)}\\
&\lesssim
\|
M^{\mathfrak D}_{\le r_0}f\|_{L^{p(\cdot)}(w)}\\
&\lesssim
\|f\|_{L^{p(\cdot)}(w)}.
\end{align*}
Thus, we obtain 
\begin{equation}\label{eq:220128-1}
\left\|
\chi_{Q_k^\dagger}
M^{\mathfrak D}_{\ge r_0}[\chi_{Q_k^\dagger}f]
\right\|_{L^{p(\cdot)}(w)}
\le
\left\|
M^{\mathfrak D}_{\le r_0}f
\right\|_{L^{p(\cdot)}(w)}
\lesssim
\|f\|_{L^{p(\cdot)}(w)}
\end{equation}
by Lemma \ref{lem:210803-22}.

Let $x \in {\mathbb R}^n \setminus Q_k^\dagger$.
Then, 
a geometric observation shows that
there exists the largest number $\ell < k$ such that
$x \in Q_{\ell}^\dagger$.
Then, by the maximality of $\ell$, we have $|x| \sim |Q_{\ell}^\dagger|$.
Since $Q_{k}^\dagger \subset Q_{\ell}^\dagger$, we obtain
\begin{align*}
\lefteqn{
\chi_{{\mathbb R}^n \setminus Q_k^\dagger}(x)
M^{\mathfrak D}_{\ge r_0}[\chi_{Q_k^\dagger}f](x)
}\\
&=
\frac{\chi_{Q_{\ell}^\dagger \setminus Q_k^\dagger}(x)}{|Q_{\ell}^\dagger|} 
\int_{Q_{\ell}^\dagger} \chi_{Q_k^\dagger}(y)|f(y)| {\rm d}y
\sim
\frac{\chi_{Q_{\ell}^\dagger \setminus Q_k^\dagger}(x)}{|x|^n} 
\int_{Q_k^\dagger}|f(y)| {\rm d}y
\end{align*}
Then, by H\"older's inequality and Lemma \ref{lem:210804-1}, we have
\begin{align*}
\int_{Q_{k}^\dagger}|f(y)|{\rm d}y
&\lesssim 
\|\chi_{Q_{k}^\dagger}f\|_{L^{p(\cdot)}(w)}
\|w^{-1}\chi_{Q_{k}^\dagger}\|_{L^{p'(\cdot)}(w)}\\
&\le
\|\chi_{Q_{k}^\dagger}\|_{L^{p'(\cdot)}(\sigma)}
\|f\|_{L^{p(\cdot)}(w)}
\lesssim
\sigma(Q_{k}^\dagger)^{\frac{1}{p'_{Q_{k}^\dagger}}}\|f\|_{L^{p(\cdot)}(w)},
\end{align*}
where
$\sigma\equiv w^{-{\frac{1}{p(\cdot)-1}}}$ stands
for the dual weight.
We obtain
\begin{equation*}
\chi_{{\mathbb R}^n \setminus Q_k^\dagger}(x)
M^{\mathfrak D}_{\ge r_0}[\chi_{Q_k^\dagger}f](x)
 \lesssim
 \chi_{{\mathbb R}^n \setminus Q_k^\dagger}(x)
|x|^{-n} \sigma(Q_{k}^\dagger)^{\frac{1}{p'_{Q_{k}^\dagger}}}
\|f\|_{L^{p(\cdot)}(w)}.
\end{equation*}
Thus,
\begin{equation} \label{eq:220128-2}
\left\|
\chi_{{\mathbb R}^n \setminus Q_k^\dagger}
M^{\mathfrak D}_{\ge r_0}[\chi_{Q_k^\dagger}f]
\right\|_{L^{p(\cdot)}(w)}
\lesssim
\||\cdot|^{-n}\chi_{{\mathbb R}^n \setminus Q_k^\dagger}\|_{L^{p(\cdot)}(w)}
\|f\|_{L^{p(\cdot)}(w)}.
\end{equation}



So we will estimate $\||\cdot|^{-n}\chi_{{\mathbb R}^n \setminus Q_k^\dagger}\|_{L^{p(\cdot)}(w)}$ 
using the modular.
Let
$C_0$ be the constant from Lemma \ref{lem:210803-1}.
Write
$c_1\equiv C_0{}^2
[w]_{\tilde{A}_{p(\cdot)}^{\mathfrak D}}$.
Then there exist $c_2>0$ and
$\varepsilon \in (0,1)$
independent of the set $E$ such that
$[\sigma]_{A_q^{\mathfrak D}(E)} \le c_1$ implies
$[\sigma]_{A_{q-\varepsilon}^{\mathfrak D}(E)} \le c_2$
for all
$\sigma \in A_q^{\mathfrak D}(E)$ and
$q \in [p_-,p_+
+1]$
for all sets $E$
again by the openness property established
by 
Hyt\"{o}nen
and
P\'{e}rez
\cite{HyPe13}.
Let $k' \ll -1$ be an integer
so that
$p_+({\mathbb R}^n \setminus Q^\dagger_{k'})-\frac13\varepsilon \le p_\infty$.
Then
$w \in A_{p_\infty+\frac13\varepsilon}({\mathbb R}^n \setminus Q^\dagger_{k'})$
with
$$
[w]_{A_{p_\infty+\frac13\varepsilon}^{\mathfrak D}({\mathbb R}^n \setminus Q^\dagger_{k'})} \le 
C_0[w]_{A_{p_+({\mathbb R}^n \setminus Q^\dagger_{k'})}^{\mathfrak D}({\mathbb R}^n \setminus Q^\dagger_{k'})} \le 
C_0{}^2
[w]_{\tilde{A}_{p(\cdot)}^{\mathfrak D}}
=c_1
$$
from Lemma \ref{lem:210803-1}.
As a result,
\[
[w]_{A_{p_\infty-\frac23\varepsilon}^{\mathfrak D}({\mathbb R}^n \setminus Q^\dagger_{k'})}
\le c_2,
\]
or equivalently,
$w \in A_{p_\infty-\frac23\varepsilon}^{\mathfrak D}({\mathbb R}^n \setminus Q^\dagger_{k'})$.
Thus, we have
\begin{align*}
\int_{{\mathbb R}^n \setminus Q_{k}^\dagger}
\frac{w(x){\rm d}x}{|x|^{n p(x)}}
&\lesssim
\int_{Q_{k'}^\dagger \setminus Q_{k}^\dagger}
\frac{w(x){\rm d}x}{(1+|x|)^{n p(x)}}
+
\int_{{\mathbb R}^n \setminus Q_{k'}^\dagger}
\frac{w(x){\rm d}x}{|x|^{n p(x)}}\\
&\lesssim
\int_{Q_{k'}^\dagger 
}
\frac{w(x){\rm d}x}{(1+|x|)^{n p(x)}}
+
\int_{{\mathbb R}^n \setminus Q_{k'}^\dagger}
\frac{w(x){\rm d}x}{|x|^{n\left(p_\infty-\frac23\varepsilon\right)}}\\
&\lesssim
\int_{Q_{k'}^\dagger 
}
w(x){\rm d}x
+
\int_{{\mathbb R}^n \setminus Q_{k'}^\dagger}
\left[
M^{\mathfrak D}\chi_{[-1,1]^n}(x)
\right]^{p_\infty-\frac23\varepsilon}w(x){\rm d}x\\
&\lesssim
\int_{Q_{k'}^\dagger 
}
w(x){\rm d}x
+
\int_{[-1,1]^n}w(x){\rm d}x 
\lesssim 1.
\end{align*}
Consequently,
$\||\cdot|^{-n}\chi_{{\mathbb R}^n \setminus Q_k^\dagger}\|_{L^{p(\cdot)}(w)} \lesssim 1$.
By
combining estimates (\ref{eq:220128-1}) and (\ref{eq:220128-2}),
 we conclude that (\ref{eq:220121-4}) holds.

\end{proof}

From this corollary, we can obtain the boundedness property 
of $M^{\mathfrak D}_{\ge r_0}$
for the function supported on the cube
with comparative ease.
Next, consider  
this operator 
for the function supported on the outside of the cube.
However, this case is very complicated.
We must prepare some lemmas. 

\begin{lemma}\label{lem:220201-1}
Let $w \in \tilde{A}_{p(\cdot)}^{\mathfrak D}$.
Then
\[
\sup_{R \in {\mathfrak D}, |R \setminus Q_{k'}^\dagger|>0}
|R\setminus Q_{k'}^\dagger|^{-p_\infty}\|w\|_{L^1(R\setminus Q_{k'}^\dagger)}
\|w^{-1}\|_{L^{\frac{1}{p_\infty-1}}(R\setminus Q_{k'}^\dagger)}<\infty
\]
for some $k' \le -1$.
\end{lemma}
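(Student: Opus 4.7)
The inequality to be proved is precisely the statement that $w$ satisfies a Muckenhoupt $A_{p_\infty}$-type condition on the truncated sets $R\setminus Q_{k'}^\dagger$, i.e., an $A_{p_\infty,(Q_{k'}^\dagger)^\times}^{\mathfrak D}$-type condition in the sense of Section \ref{s2.20}. My plan has three steps: (i) localize the variable-exponent hypothesis on an exterior set; (ii) push the effective exponent down to $p_\infty$ via openness; (iii) handle truncated ancestor cubes.

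First, by Lemma \ref{lem:210803-1} and its localized version Corollary \ref{cor:211025-1}, the assumption $w\in\tilde{A}_{p(\cdot)}^{\mathfrak D}$ yields $[w]_{A_{p_+(E)}^{\mathfrak D}(E)}\le C_0[w]_{\tilde{A}_{p(\cdot)}^{\mathfrak D}}$ uniformly for dyadic subsets $E$. I take $E={\mathbb R}^n\setminus Q_{k'}^\dagger$ and use $p(\cdot)\in{\rm LH}_\infty$ to make $p_+(E)$ arbitrarily close to $p_\infty$ by choosing $k'\ll -1$. Then, exactly as in the proof of Corollary \ref{cor:220121-1}, the Hyt\"onen--P\'erez openness property (Lemma \ref{Lemma-Muckenhoupt}) produces a fixed $\varepsilon>0$ (depending only on $[w]_{\tilde{A}_{p(\cdot)}^{\mathfrak D}}$ and $p(\cdot)$) such that $w\in A_{p_+(E)-\varepsilon}^{\mathfrak D}(E)$ with controlled constant. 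Taking $k'$ further negative so that $p_+({\mathbb R}^n\setminus Q_{k'}^\dagger)-\varepsilon\le p_\infty$, the elementary monotonicity of Muckenhoupt classes in the exponent gives $w\in A_{p_\infty}^{\mathfrak D}({\mathbb R}^n\setminus Q_{k'}^\dagger)$. This already disposes of those $R\in{\mathfrak D}$ with $R\cap Q_{k'}^\dagger=\emptyset$, since for these $R\setminus Q_{k'}^\dagger=R\subset E$ and the quantity in the statement is exactly the $A_{p_\infty}^{\mathfrak D}(E)$-constant on $R$.

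The remaining case is $R\supsetneq Q_{k'}^\dagger$, i.e., $R=Q_\ell^\dagger$ for some $\ell<k'$. I plan to exploit two geometric facts: $|R\setminus Q_{k'}^\dagger|\ge (1-2^{-n})|R|$, so the ambient measure is comparable to $|R|$; and $R\setminus Q_{k'}^\dagger$ decomposes as the disjoint union $\bigsqcup_{j=\ell}^{k'-1}(Q_j^\dagger\setminus Q_{j+1}^\dagger)$, each annulus further splitting into $2^n-1$ dyadic cubes contained in $E$. Majorizing $m_{R\setminus Q_{k'}^\dagger}(w)$ and $m_{R\setminus Q_{k'}^\dagger}(w^{-1/(p_\infty-1)})$ by the corresponding averages on the dyadic constituents in $E$, and using the local doubling property of Lemma \ref{lem:210803-3} to sum the resulting geometric series across the scales $\ell\le j<k'$, the $A_{p_\infty}$-quantity on $R\setminus Q_{k'}^\dagger$ is controlled by the uniform bound obtained in the previous step.

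The main obstacle is step (iii): the passage from cubes inside $E$ to truncated ancestors of $Q_{k'}^\dagger$ requires a careful scale-by-scale bookkeeping, since the dyadic pieces of $R\setminus Q_{k'}^\dagger$ range in sidelength from $\ell(Q_{k'}^\dagger)$ up to $\ell(R)$. A clean alternative is to run the $(Q_{k'}^\dagger)^\times$-machinery of Section \ref{s2.20} in parallel with the openness step above: Lemma \ref{Lemma-Muckenhoupt} is precisely the reverse-H\"older counterpart of what is needed here, so one may essentially adapt its proof to directly obtain the openness for the $A_{p_\infty,(Q_{k'}^\dagger)^\times}^{\mathfrak D}$-type quantity, thereby bypassing the decomposition argument altogether.
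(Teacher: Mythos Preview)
Your steps (i)--(ii) are sound and indeed yield $w\in A_{p_\infty}^{\mathfrak D}(E)$ with $E={\mathbb R}^n\setminus Q_{k'}^\dagger$, disposing of the cubes $R\subset E$. The gap is in step (iii). From $w\in A_{p_\infty}^{\mathfrak D}(E)$ on the dyadic constituents $S_{j,m}$ of $R\setminus Q_{k'}^\dagger$ you only get $\sigma_\infty(S_{j,m})\lesssim(|S_{j,m}|^{p_\infty}/w(S_{j,m}))^{1/(p_\infty-1)}$. Combining this with the doubling bound of Lemma \ref{lem:210803-3}, which carries the exponent $p_+$ (namely $w(S_{j,m})\gtrsim(|S_{j,m}|/|R|)^{p_+}w(R)$), the layer contributions pick up a factor $(|S_{j,m}|/|R|)^{(p_\infty-p_+)/(p_\infty-1)}$; since $p_\infty\le p_+$, the exponent is nonpositive and the ``geometric series'' fails to converge uniformly in $\ell$. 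To salvage the sum one would need the sharper doubling exponent $p_\infty-\delta$ coming from $w\in A_{p_\infty-\delta}^{\mathfrak D}(E)$, but that doubling is only available along chains of cubes that stay inside $E$, and the natural ancestors in your telescoping all contain $Q_{k'}^\dagger$. Your alternative of running Section \ref{s2.20} on truncated sets points in the right direction but is not a completion as stated.

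The paper avoids this difficulty by never separating the truncated ancestors from the cubes in $E$. It first observes that, for any $R\in{\mathfrak D}$ with $|R\setminus Q_{k'}^\dagger|>0$, the \emph{variable-exponent} quantity $|R\setminus Q_{k'}^\dagger|^{-p_{R\setminus Q_{k'}^\dagger}}w(R\setminus Q_{k'}^\dagger)\|w^{-1}\|_{L^{p'(\cdot)/p(\cdot)}(R\setminus Q_{k'}^\dagger)}$ is uniformly bounded directly from $[w]_{\tilde A_{p(\cdot)}^{\mathfrak D}}$ (using $|R\setminus Q_{k'}^\dagger|\sim|R|$ and $p_R\sim p_{R\setminus Q_{k'}^\dagger}$). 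Then H\"older for variable exponents together with \cite[Corollary 3.7]{DiHapre} yields a uniform $A_{p_\infty+\tau}$-type bound on truncated sets for every small $\tau>0$. Finally, this is upgraded to $A_{p_\infty}$ by applying the truncated reverse H\"older inequality (Lemma \ref{Lemma-Muckenhoupt}) to the dual weight $w^{-1/(p_\infty+\tau-1)}\in A^{\mathfrak D}_{\infty,(Q_{k'}^\dagger)^\times}$ and taking $\tau$ small enough that $1+(p_\infty+\tau-1)/(1+\varepsilon)<p_\infty$. The point is that Lemma \ref{Lemma-Muckenhoupt} was engineered precisely for the family $\{R\setminus Q_{k'}^\dagger\}$, so no decomposition into cubes of $E$ is needed.
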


For the proof,
we invoke the following fact:
from
\cite[Corollary 3.7]{DiHapre}
(see also Remark \ref{rem:220420-1} below),
$\|\chi_Q\|_{L^{p(\cdot)}(w)}
\sim w(Q)^{\frac{1}{p_Q}}$
for all cubes $Q$ 
as long as $\omega \in A_\infty$ and $p(\cdot) \in 
{\mathcal P}_0 \cap {\rm L H}_0 \cap {\rm L H}_\infty$.
\begin{proof}
Since $w \in \tilde{A}_{p(\cdot)}^{\mathfrak D}$, 
the weight $w$ satisfies the condition  
\[
\sup_{R \in {\mathfrak D}}
|R|^{-p_R}\|w\|_{L^1(R)}\|w^{-1}\|_{L^{\frac{p'(\cdot)}{p(\cdot)}}(R)}
<\infty.
\]
Since
\[
|R|^{-p_R}\sim
|R\setminus Q_{k'}^\dagger|^{-p_{R}}\sim
|R\setminus Q_{k'}^\dagger|^{-p_{R\setminus Q_{k'}^\dagger}}
\]
for any cube $R \in {\mathfrak D}$ such that $R \subset Q_{k'}^\dagger$ fails,
we have
\[
\sup_{R \in {\mathfrak D}, |R \setminus Q_{k'}^\dagger|>0}
|R\setminus Q_{k'}^\dagger|^{-p_{R\setminus Q_{k'}^\dagger}}\|w\|_{L^1(R\setminus Q_{k'}^\dagger)}
\|w^{-1}\|_{L^{\frac{p'(\cdot)}{p(\cdot)}}(R\setminus Q_{k'}^\dagger)}<\infty.
\]
Let $\tau\in (0,1)$ be small enough.
By the H\"{o}lder inequality for variable exponent Lebesgue spaces,
\[
\|w^{-1}\|_{L^{\frac{1}{p_\infty-1+\tau}}(R\setminus Q_{k'}^\dagger)}
\le
2
\|w^{-1}\|_{L^{\frac{1}{p(\cdot)-1}}(R\setminus Q_{k'}^\dagger)}
\|\chi_{R\setminus Q_{k'}^\dagger}\|_{L^{\frac{1}{p_\infty+\tau-p(\cdot)}}}.
\]
By \cite[Corollary 3.7 and Lemma 2.1]{DiHapre}
(see also Lemma \ref{lem:190613-3} and Remark \ref{rem:220420-1} below), 
$\|\chi_{R\setminus Q_{k'}^\dagger}\|_{L^{\frac{1}{p_\infty+\tau-p(\cdot)}}}
\sim
|R\setminus Q_{k'}^\dagger|^{p_\infty+\tau-p_{R\setminus Q_{k'}^\dagger}}$.
Thus,
\begin{align*}
\lefteqn{
\sup_{R \in {\mathfrak D}, |R \setminus Q_{k'}^\dagger|>0}
|R\setminus Q_{k'}^\dagger|^{-p_\infty-\tau}\|w\|_{L^1(R\setminus Q_{k'}^\dagger)}
\|w^{-1}\|_{L^{\frac{1}{p_\infty-1+\tau}}(R\setminus Q_{k'}^\dagger)}
}\\
&\lesssim
\sup_{R \in {\mathfrak D}, |R \setminus Q_{k'}^\dagger|>0}
|R\setminus Q_{k'}^\dagger|^{-p_{R\setminus Q_{k'}^\dagger}}\|w\|_{L^1(R\setminus Q_{k'}^\dagger)}
\|w^{-1}\|_{L^{\frac{p'(\cdot)}{p(\cdot)}}(R\setminus Q_{k'}^\dagger)}
<\infty,
\end{align*}
or equivalently,
\[
\sup_{R \in {\mathfrak D}, |R \setminus Q_{k'}^\dagger|>0}
|R\setminus Q_{k'}^\dagger|^{-\frac{p_\infty+\tau}{p_\infty+\tau-1}}
\|w^{\frac{1}{p_\infty-1+\tau}}\|_{L^{p_\infty+\tau-1}(R\setminus Q_{k'}^\dagger)}
\|w^{-{\frac{1}{p_\infty-1+\tau}}}\|_{L^1(R\setminus Q_{k'}^\dagger)}<\infty.
\]
This means that
$w^{-{\frac{1}{p_\infty-1+\tau}}} \in A_{1+\frac{1}{p_\infty+\tau-1}} \subset A_{1+\frac{1}{p_\infty-1}}$.
By virtue of Lemma \ref{Lemma-Muckenhoupt} 
with $\varepsilon=(4^{n+6}[w]_{A^{\mathfrak D}_{\infty, R^{\times}}})^{-1}$, 
\[
\sup_{R \in {\mathfrak D}, |R \setminus Q_{k'}^\dagger|>0}
|R\setminus Q_{k'}^\dagger|^{-\frac{p_\infty+\tau}{p_\infty+\tau-1}-\frac{1}{1+\varepsilon}}
\|w^{\frac{1}{p_\infty-1+\tau}}\|_{L^{p_\infty+\tau-1}(R\setminus Q_{k'}^\dagger)}
(\|w^{-{\frac{1+\varepsilon}{p_\infty-1+\tau}}}\|_{L^1(R\setminus Q_{k'}^\dagger)})^{\frac{1}{1+\varepsilon}}<\infty.
\]
Since
\[
1+\frac{p_\infty-1+\tau}{1+\varepsilon}<p_\infty,
\]
as long as $\tau$ is small enough,
we obtain 
\[
\sup_{R \in {\mathfrak D}, |R \setminus Q_{k'}^\dagger|>0}
|R\setminus Q_{k'}^\dagger|^{-p_\infty}\|w\|_{L^1(R\setminus Q_{k'}^\dagger)}
\|w^{-1}\|_{L^{\frac{1}{p_\infty-1}}(R\setminus Q_{k'}^\dagger)}<\infty
\]
by H\"{o}lder's inequality.
\end{proof}

Assuming $f=E_k f$,
we obtain some growth information of $f$.
\begin{lemma}\label{lem:220202-2}
Let $r_0$ be the same
as in Lemma \ref{lem:210803-2}.
Suppose that $f \in L^{p(\cdot)}(w)$ satisfies $f=E_k f$
for some $k \in {\mathbb Z}$
such that $2^{-k}<r_0$.
Let
$w \in \tilde{A}_{p(\cdot)}^{\mathfrak D}$.
Then
\begin{equation}\label{eq:220103-1}
|f(x)|=|E_k f(x)| \lesssim
M^{\mathfrak D}_{\ge r_0}f(x) \lesssim
(1+|x|)^{\frac{p_+n}{p_-}}\|f\|_{L^{p(\cdot)}(w)}
\end{equation}
In particular, 
we have
\begin{equation*}
\|f\|_{L^{p_\infty}(w)} \lesssim \|f\|_{L^{p(\cdot)}(w)}.
\end{equation*}
\end{lemma}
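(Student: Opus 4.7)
The plan is to establish the pointwise chain
\[
|f(x)| = |E_k f(x)| \lesssim M^{\mathfrak D}_{\ge r_0} f(x) \lesssim (1+|x|)^{n p_+/p_-}\,\|f\|_{L^{p(\cdot)}(w)}
\]
and then deduce the $L^{p_\infty}(w)$ bound by Diening's comparison principle (Lemma \ref{lem:210806-11}). For the first inequality, fix $x$, let $Q_0 \in {\mathfrak D}_k$ be the cube containing $x$, and let $R_*(x) \in {\mathfrak D}$ be the smallest cube containing $x$ with $\ell(R_*(x)) \ge r_0$. Since $2^{-k} < r_0$ and $-\log_2 r_0 \in \mathbb{Z}$, we have $R_*(x) \supset Q_0$ with $|R_*(x)|/|Q_0| = (r_0 \cdot 2^k)^n$ bounded. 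Because $f = E_k f$ is constant on $Q_0$, $|f(x)| = |m_{Q_0}(f)| \le (|R_*(x)|/|Q_0|)\, m_{R_*(x)}(|f|) \le (r_0 \cdot 2^k)^n\,M^{\mathfrak D}_{\ge r_0} f(x)$.

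For the second inequality, for any admissible $R \ni x$ the generalised H\"older inequality (Lemma \ref{thm:gHolder}) gives
\[
m_R(|f|) \le \frac{2}{|R|}\,\|f\|_{L^{p(\cdot)}(w)}\,\|\chi_R\|_{L^{p'(\cdot)}(\sigma)},
\]
with $\sigma = w^{-1/(p(\cdot)-1)}$. Combining the definition of $[w]_{\tilde{A}_{p(\cdot)}^{\mathfrak D}}$ with Lemma \ref{lem:210903-1} and Lemma \ref{lem:210804-1} (applied symmetrically to $\sigma$) yields $\|\chi_R\|_{L^{p'(\cdot)}(\sigma)} \lesssim |R|/\|\chi_R\|_{L^{p(\cdot)}(w)}$, so $m_R(|f|) \lesssim \|f\|_{L^{p(\cdot)}(w)}/\|\chi_R\|_{L^{p(\cdot)}(w)}$. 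Since $\|\chi_R\|_{L^{p(\cdot)}(w)}$ is monotone in $R$, the supremum over admissible $R$ reduces to bounding $\|\chi_{R_*(x)}\|_{L^{p(\cdot)}(w)}$ from below.

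The main obstacle is this weight lower bound, which must decay at most polynomially in $|x|$. Since $\ell(R_*(x)) \le 2r_0 \le 1$, Lemma \ref{lem:210804-1} gives $\|\chi_{R_*(x)}\|_{L^{p(\cdot)}(w)} \sim w(R_*(x))^{1/p_{R_*(x)}}$. Let $R_1 \in \{Q_j^\dagger : j \le -1\}$ be the smallest member containing $R_*(x)$; such $R_1$ exists because $\bigcup_{j} Q_j^\dagger = \mathbb{R}^n$, and the dyadic nesting in ${\mathfrak D}$ forces both $R_*(x)$ and $Q_{-1}^\dagger$ to be contained in $R_1$ with $\ell(R_1) \sim 1+|x|$. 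Applying Lemma \ref{lem:210803-3} to $R_*(x) \subset R_1$ then yields
\[
w(R_*(x)) \gtrsim (|R_*(x)|/|R_1|)^{p_+}\,w(R_1) \gtrsim r_0^{np_+}\,(1+|x|)^{-np_+}\,w(Q_{-1}^\dagger).
\]
A case split on whether $w(R_*(x)) \le 1$ or $> 1$, using $p_- \le p_{R_*(x)} \le p_+$, converts this into $\|\chi_{R_*(x)}\|_{L^{p(\cdot)}(w)} \gtrsim (1+|x|)^{-np_+/p_-}$, completing the second inequality.

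For the ``in particular'' statement, after normalising $f$ by its $L^{p(\cdot)}(w)$-norm the pointwise bound forces $|f(x)|/\|f\|_{L^{p(\cdot)}(w)} \le C(1+|x|)^{np_+/p_-}$, and $w$ itself has polynomial growth by Corollary \ref{cor:210804-1}. Lemma \ref{lem:210806-11}(i) (Diening's comparison) therefore applies to the rescaled function and delivers $\|f\|_{L^{p_\infty}(w)} \lesssim \|f\|_{L^{p(\cdot)}(w)}$, as required.
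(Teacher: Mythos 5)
Your proof is correct and follows essentially the same route as the paper: the generalized H\"older inequality, the norm identities from Lemmas \ref{lem:210804-1}/\ref{lem:210903-1}, the polynomial weight growth from Lemma \ref{lem:210803-3}, and Diening's comparison Lemma \ref{lem:210806-11}; reducing to the extremal cube $R_*(x)$ via monotonicity of $\|\chi_R\|_{L^{p(\cdot)}(w)}$ is simply a tidier packaging of the paper's case split on $0\in 2Q$ versus $0\notin 2Q$. Two small caveats: writing ``Lemma \ref{lem:210804-1} applied symmetrically to $\sigma$'' tacitly assumes $\sigma$ satisfies that lemma's hypotheses, which the paper instead supplies by checking $\sigma\in A_\infty^{{\rm loc},{\mathfrak D}}$ inside the proof of Lemma \ref{lem:210903-1} and quoting \cite[Corollary 3.7]{DiHapre}; and the constant $(r_0 2^k)^n$ in your first inequality is not ``bounded'' but genuinely $k$-dependent (the example $f=\chi_{Q_0}$ shows this is sharp), which is harmless only because $k$ is held fixed in the downstream application in Lemma \ref{lem:210803-200}.
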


\begin{proof}

Fix $x \in {\mathbb R}^n$ and a cube $Q
\in {\mathfrak D}$ satisfying $x \in Q$ and $\ell(Q) \ge r_0$.
Then
by \cite[Corollary 3.7]{DiHapre}
and Lemma \ref{lem:210903-1}
(see also Remark \ref{rem:220420-1} below),
\begin{align*}
\frac{1}{|Q|}\int_Q|f(y)|{\rm d}y
&\le 
\frac{2}{|Q|}
\|\chi_Qf\|_{L^{p(\cdot)}(w)}
\|w^{-1}\chi_Q\|_{L^{p'(\cdot)}(w)}\\
&\le
\frac{2}{|Q|}
\|\chi_Q\|_{L^{p'(\cdot)}(\sigma)}\|f\|_{L^{p(\cdot)}(w)}\\
&\le
\frac{2\sigma(Q)^{\frac{1}{p'_Q}}}{|Q|}\|f\|_{L^{p(\cdot)}(w)}\\
&\lesssim
\left(\frac{[w]_{{A}_{p_-}^{\mathfrak D}}}{w(Q)}\right)^{\frac{1}{p_-}}\|f\|_{L^{p(\cdot)}(w)},
\end{align*}
where
$\sigma\equiv w^{-{\frac{1}{p(\cdot)-1}}}$ stands
for the dual weight.
Thus,
\begin{equation}\label{eq:210903-11}
\frac{1}{|Q|}\int_Q|f(y)|{\rm d}y
\lesssim
\left(\frac{[w]_{{A}_{p_-}^{\mathfrak D}}}{w(Q)}\right)^{\frac{1}{p_-}}\|f\|_{L^{p(\cdot)}(w)}.
\end{equation}


If $0 \notin 2Q$, then there exists the largest number $\ell \le -1$ such that 
$Q \subset Q_{\ell}^\dagger$.
By the geometric observation, we have $|x| \sim \ell(Q_{\ell}^\dagger)$.
By Lemma \ref{lem:210803-3}, since 
\[
w(Q) \gtrsim 
w(Q^\dagger_{\ell})\min\left(1,\frac{|Q|}{|Q_{\ell}^\dagger|}\right)^{p_+},
\] 
we have
\[
\frac{1}{w(Q)} \lesssim 
\frac{1}{w(Q_{\ell}^\dagger)} \left(1+\frac{|Q^\dagger_{\ell}|}{|Q|}\right)^{p_+}
\lesssim
\frac{1}{w(Q_{\ell}^\dagger)} \left(1+|x|\right)^{p_+n}
\le
\frac{1}{w(Q_{0}^\dagger)} \left(1+|x|\right)^{p_+n}
\]

Meanwhile, if $0 \in 2Q$,
then $64Q \supset [-r_0,r_0]^n$.
By the doubling condition, we have
\begin{equation}\label{eq:210903-13}
\frac{1}{w(Q)} \lesssim \frac{1}{w(64Q)} \le \frac{1}{w([-r_0,r_0]^n)} \lesssim1.
\end{equation}
Inserting this estimate into (\ref{eq:210903-11}),
we obtain
\begin{equation}\label{eq:210903-12}
M^{\mathfrak D}_{\ge r_0}f(x) \lesssim
(1+|x|)^{\frac{p_+n}{p_-}}\|f\|_{L^{p(\cdot)}(w)}.
\end{equation}
Then we have
\begin{equation*}
|f(x)|=|E_k f(x)| \lesssim
M^{\mathfrak D}_{\ge r_0}f(x) \lesssim
(1+|x|)^{\frac{p_+n}{p_-}}\|f\|_{L^{p(\cdot)}(w)}.
\end{equation*}
Moreover, $C\|f\|_{L^{p(\cdot)}(w)}^{-1}f$ satisfies 
the assumption of Lemma \ref{lem:210806-11} for some constant $C>0$.
By Lemma \ref{lem:210806-11} (i),
we have
\begin{equation}\label{eq:220104-1}
\|f\|_{L^{p_\infty}(w)} \lesssim \|f\|_{L^{p(\cdot)}(w)}.
\end{equation}
\end{proof}

\begin{lemma}\label{lem:210803-200}
Let $r_0$ be the same as in Lemma \ref{lem:210803-2}.
Let 
$w \in \tilde{A}_{p(\cdot)}^{\mathfrak D}$.
Then
$M^{\mathfrak D}_{\ge r_0}$
is bounded
on $L^{p(\cdot)}(w)$.
\end{lemma}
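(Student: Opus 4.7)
The plan is to assemble the lemmas of this section into a three-step reduction: first to functions of the form $f=E_k f$ for large $k$, then from the variable exponent space $L^{p(\cdot)}(w)$ to the constant exponent space $L^{p_\infty}(w)$, and finally to prove the boundedness in the latter, using Lemma \ref{lem:220201-1} together with the excised Muckenhoupt framework of Section \ref{s2.20}.

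First I would observe that $M^{\mathfrak D}_{\ge r_0}f = M^{\mathfrak D}_{\ge r_0}(E_k f)$ whenever $2^{-k}<r_0$, because every cube entering the supremum has side length at least $r_0$, so the average of $f$ over it coincides with the average of $E_k f$. Since $E_k$ is bounded on $L^{p(\cdot)}(w)$ by Corollary \ref{cor:210823-1}, it suffices to prove the inequality on the subclass of functions satisfying $f=E_k f$. On this subclass, Lemma \ref{lem:220202-2} gives both the pointwise polynomial growth $|f(x)|\lesssim (1+|x|)^{p_+n/p_-}\|f\|_{L^{p(\cdot)}(w)}$ and the comparison $\|f\|_{L^{p_\infty}(w)}\lesssim\|f\|_{L^{p(\cdot)}(w)}$. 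Revisiting the proof of that lemma with $M^{\mathfrak D}_{\ge r_0}f$ in place of $f$ (which is legitimate, since both are built from dyadic averages of the same function), one gets the same polynomial growth for $M^{\mathfrak D}_{\ge r_0}f$, with constants depending only on $\|f\|_{L^{p(\cdot)}(w)}$.

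Second, having secured uniform polynomial growth, I would apply Diening's comparison principle (Lemma \ref{lem:210806-11}) to $M^{\mathfrak D}_{\ge r_0}f$, normalized by $\|f\|_{L^{p(\cdot)}(w)}$. This yields
\[
\|M^{\mathfrak D}_{\ge r_0}f\|_{L^{p(\cdot)}(w)}\lesssim \|M^{\mathfrak D}_{\ge r_0}f\|_{L^{p_\infty}(w)}+\|f\|_{L^{p(\cdot)}(w)},
\]
so that matters reduce to establishing the constant-exponent inequality $\|M^{\mathfrak D}_{\ge r_0}f\|_{L^{p_\infty}(w)}\lesssim \|f\|_{L^{p_\infty}(w)}$, which combined with the second half of Lemma \ref{lem:220202-2} completes the argument.

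For the last step I would fix $k'$ as in Lemma \ref{lem:220201-1}, and split cubes $Q\in{\mathfrak D}$ with $\ell(Q)\ge r_0$ into those contained in $Q^\dagger_{k'}$ and those satisfying $|Q\setminus Q^\dagger_{k'}|>0$. The contribution from the first family is handled on the bounded set $Q^\dagger_{k'}$ via a finite decomposition into dyadic cubes of side $r_0$ together with Corollary \ref{cor:220121-1}. The contribution from the second family is precisely the situation to which Lemma \ref{Lemma-Muckenhoupt} applies, once one feeds in the uniform $A_{p_\infty}$-type constant supplied by Lemma \ref{lem:220201-1}; the resulting reverse H\"older inequality produces, by the standard weak-type-to-strong-type argument for the excised maximal operator $M^{\mathfrak D}_{R^\times}$ already used in Section \ref{s2.20}, the required $L^{p_\infty}(w)$-boundedness.

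The hard part will be the third step: one must verify that the excised Muckenhoupt framework of Section \ref{s2.20} is applicable with the constant $[w]_{A_{\infty,(Q^\dagger_{k'})^\times}^{\mathfrak D}}$ controlled by what Lemma \ref{lem:220201-1} provides, and that the two parts of the decomposition (cubes inside $Q^\dagger_{k'}$ versus cubes exiting it) can be added up without losing the $L^{p_\infty}(w)$ bound. All the other pieces---reducing to $f=E_k f$, extracting polynomial growth, and passing between $L^{p(\cdot)}(w)$ and $L^{p_\infty}(w)$---are direct quotations of earlier results.
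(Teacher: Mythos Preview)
Your overall plan coincides with the paper's: reduce to $f=E_kf$, exploit the polynomial growth of Lemma~\ref{lem:220202-2}, shuttle between $L^{p(\cdot)}(w)$ and $L^{p_\infty}(w)$ via Lemma~\ref{lem:210806-11}, and handle the region outside a large cube $Q^\dagger_{k'}$ through the excised $A_{p_\infty}$-condition of Lemma~\ref{lem:220201-1}. The ingredients are identical; what differs is the order in which you assemble them, and your order leaves a gap.

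You first pass globally to $L^{p_\infty}(w)$, asserting that it suffices to prove $\|M^{\mathfrak D}_{\ge r_0}f\|_{L^{p_\infty}(w)}\lesssim\|f\|_{L^{p_\infty}(w)}$, and only afterwards split the cubes and invoke Corollary~\ref{cor:220121-1} for those contained in $Q^\dagger_{k'}$. But Corollary~\ref{cor:220121-1} is an $L^{p(\cdot)}(w)$ statement, and there is no $A_{p_\infty}$-type control on cubes inside $Q^\dagger_{k'}$: since $p(\cdot)$ may exceed $p_\infty$ there, the dual weight $w^{-1/(p_\infty-1)}$ need not even be locally integrable on $Q^\dagger_{k'}$, so the averaging operators over those cubes are not a priori $L^{p_\infty}(w)$-bounded. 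The global $L^{p_\infty}\to L^{p_\infty}$ inequality you aim for is therefore not the right intermediate target.

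The paper fixes this by reversing the order. It first applies Corollary~\ref{cor:220121-1} finitely many times, in $L^{p(\cdot)}(w)$, to reduce to $f$ supported on ${\mathbb R}^n\setminus Q^\dagger_{k'}$. It then splits the \emph{output} region rather than the cube family: the piece $\chi_{Q^\dagger_{k'}}M^{\mathfrak D}_{\ge r_0}f$ is estimated directly in $L^{p(\cdot)}(w)$ by H\"older over the excised sets $Q^\dagger_\ell\setminus Q^\dagger_{k'}$ (using Lemmas~\ref{lem:220201-1} and~\ref{lem:220202-2}); only the exterior piece $\chi_{{\mathbb R}^n\setminus Q^\dagger_{k'}}M^{\mathfrak D}_{\ge r_0}f$ is routed through $L^{p_\infty}(w)$, where the paper introduces the auxiliary operator ${\mathfrak N}$ and cites Jawerth and Lerner for its $L^{p_\infty}(w)$-boundedness from the excised $A_{p_\infty}$-condition. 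Your proposed route through Lemma~\ref{Lemma-Muckenhoupt} plus a weak-to-strong argument could replace that citation, though note that Lemma~\ref{Lemma-Muckenhoupt} itself yields only a reverse H\"older inequality, not $L^p$-boundedness; the latter step is standard but is not actually carried out in Section~\ref{s2.20}.
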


\begin{proof}
Let $c_1, c_2, $ and $k' \ll -1$ be
the same constants in
the proof of Corollary \ref{cor:220121-1}.
Then, the integer $k'$ satisfies 
$p_+({{\mathbb R}^n \setminus Q^\dagger_{k'}})-\varepsilon 
\le 
p_+({{\mathbb R}^n \setminus Q^\dagger_{k'}})-\varepsilon/3
\le
p_\infty$.
Thus we have
$w \in A_{p_\infty+\varepsilon}({\mathbb R}^n \setminus Q^\dagger_{k'})$
with
$$
[w]_{A_{p_\infty+\varepsilon}^{\mathfrak D}({\mathbb R}^n \setminus Q^\dagger_{k'})} \le 
C_0
[w]_{A_{p_+({{\mathbb R}^n \setminus Q^\dagger_{k'}})}^{\mathfrak D}({\mathbb R}^n \setminus Q^\dagger_{k'})} \le 
C_0{}^2
[w]_{\tilde{A}_{p(\cdot)}^{\mathfrak D}}
=c_1
$$
from Lemma \ref{lem:210803-1}.
By the property of $\varepsilon>0$,
we have
$
[w]_{A_{p_\infty}^{\mathfrak D}({\mathbb R}^n \setminus Q^\dagger_{k'})} 
\le c_2
$.

Let $k \gg 1$
have the same parity as $k'$.
Since
$M^{\mathfrak D}_{\ge r_0}f=M^{\mathfrak D}_{\ge r_0} \circ E_kf$,
we can assume $f=E_k f=\chi_{{\mathbb R}^n \setminus Q^\dagger_{k'}}E_k f$
thanks to Corollary \ref{cor:220121-1}.
Let us establish
\[
\|\chi_{Q^\dagger_{k'}}M^{\mathfrak D}_{\ge r_0}f\|_{L^{p(\cdot)}(w)}
+
\|\chi_{{\mathbb R}^n \setminus Q^\dagger_{k'}}M^{\mathfrak D}_{\ge r_0}f\|_{L^{p(\cdot)}(w)}
\lesssim\|f\|_{L^{p(\cdot)}(w)}.
\]

First, let $x \in Q_{k'}^\dagger$.
Since $\ell(Q_{k'}^\dagger) \ge 2>r_0$, 
all cubes $Q \in {\mathfrak D}$ satisfying $x \in Q$ and $\ell(Q) \ge r_0$
must either
include the cube $Q_{k'}^\dagger$
or be a cube in $\bigcup\limits_{j=k'}^{-\log_2 r_0}{\mathfrak D}_j$.
Thus, we can write such cubes $Q$ as $Q_{\ell}^\dagger$ for $\ell \le k'-1$. 
By virtue of these observations, we have
\[
M^{\mathfrak D}_{\ge r_0}f(x)
\lesssim
\sup_{\ell \in {\mathbb Z}, \ell \le k'-1} \frac{1}{|Q_{\ell}^\dagger|} 
\int_{Q_{\ell}^\dagger \setminus Q_{k'}^\dagger} |f(y)| {\rm d}y.
\]
Then, using the H\"older inequality, Lemma \ref{lem:220201-1},
Lemma \ref{lem:220202-2}
and the fact $Q_{k'}^\dagger \subset Q_{\ell}^\dagger$, we have
\begin{align}
\label{eq:211029-42}
\lefteqn{
\|\chi_{Q^\dagger_{k'}}M^{\mathfrak D}_{\ge r_0}f\|_{L^{p(\cdot)}(w)}
}\\
\nonumber
&\lesssim
\sup_{\ell \in {\mathbb Z}, \ell \le k'-1}m_{Q_{\ell}^\dagger \setminus Q_{k'}^\dagger}(|f|) 
\times \|\chi_{Q_{k'}^\dagger}\|_{L^{p(\cdot)}(w)}\\ \nonumber
&\sim
\sup_{\ell \in {\mathbb Z}, \ell \le k'-1}m_{Q_{\ell}^\dagger \setminus Q_{k'}^\dagger}(|f|) 
\times \|\chi_{Q_{k'-1}^\dagger \setminus Q_{k'}^\dagger}\|_{L^{p(\cdot)}(w)} \\
&\le
\sup_{\ell \in {\mathbb Z}, \ell \le k'-1}
\frac{2}{|Q_{\ell}^\dagger|}
\|f\|_{L^{p_\infty}(w)}
\|w^{-1}\chi_{Q_{\ell}^\dagger \setminus Q_{k'}^\dagger}\|_{L^{p'_\infty}(w)}
\|\chi_{Q_{\ell}^\dagger \setminus Q_{k'}^\dagger}\|_{L^{p_\infty}(w)} \nonumber\\
&\lesssim
\sup_{\ell \in {\mathbb Z}, \ell \le k'-1}
\frac{1}{|Q_{\ell}^\dagger|}\|\chi_{Q_{\ell}^\dagger \setminus Q_{k'}^\dagger}\|_{L^{p_\infty}(w)}
\|\chi_{Q_{\ell}^\dagger \setminus Q_{k'}^\dagger}\|_{L^{p_\infty'}(\sigma)}\|f\|_{L^{p_\infty}(w)} \nonumber\\
&\lesssim
\|f\|_{L^{p(\cdot)}(w)}. \nonumber
\end{align}
Next, we define
\[
{\mathfrak N}g(x)
\equiv
\sup_{R \in {\mathfrak D}, R \cap Q^\dagger_{k'}=\emptyset \mbox{ or } Q^\dagger_{k'} \subsetneq R}
\frac{\chi_{R \setminus Q^\dagger_{k'}}(x)}{|R \setminus Q^\dagger_{k'}|}
\int_{R \setminus Q^\dagger_{k'}}|f(y)|{\rm d}y.
\]
By Lemma \ref{lem:220201-1}, $w$ satisfies the condition
\[
\sup_{R \in {\mathfrak D}, R \cap Q^\dagger_{k'}=\emptyset \mbox{ or } Q^\dagger_{k'} \subsetneq R}
|R \setminus Q^\dagger_{k'}|^{-p_\infty}
\|w\|_{L^1(R \setminus Q^\dagger_{k'})}
\|w^{-1}\|_{L^{p_\infty'/p_\infty}(R \setminus Q^\dagger_{k'})}<\infty.
\]
Therefore, we have
\[
\|{\mathfrak N}g\|_{L^{p_\infty}(w)}
\lesssim
\|g\|_{L^{p_\infty}(w)}
\]
thanks to \cite[Theorem 1.1]{Jawerth86} and \cite[Theorem B]{Lerner08}.
Here, we can verify that $M^{\mathfrak D}_{\ge r_0}f(z) \le {\mathfrak N}f(z)$ for $z \in {\mathbb R}^n \setminus Q^\dagger_{k'}$.
Since
${\mathfrak N}$ is bounded on $L^{p_\infty}(w)$,
we deduce
\begin{equation}\label{eq:220104-2}
\|\chi_{{\mathbb R}^n \setminus Q^\dagger_{k'}}M^{\mathfrak D}_{\ge r_0}f\|_{L^{p_\infty}(w)}
\lesssim
\|\chi_{{\mathbb R}^n \setminus Q^\dagger_{k'}}{\mathfrak N}f\|_{L^{p_\infty}(w)}\\\lesssim
\|f\|_{L^{p_\infty}(w)}
\lesssim 
\|f\|_{L^{p(\cdot)}(w)}
\end{equation}
from Lemma \ref{lem:220202-2}.
Meanwhile, thanks to (\ref{eq:210903-12}), 
$C\|f\|_{L^{p(\cdot)}(w)}^{-1}\chi_{{\mathbb R}^n \setminus Q^\dagger_{k'}}M^{\mathfrak D}_{\ge r_0}f$ 
satisfies 
the assumption of Lemma \ref{lem:210806-11} for some constant $C>0$.
By (\ref{eq:220104-2}) and  Lemma \ref{lem:210806-11} (2),
we obtain
\[
\|\chi_{{\mathbb R}^n \setminus Q^\dagger_{k'}}M^{\mathfrak D}_{\ge r_0}f\|_{L^{p(\cdot)}(w)}
\lesssim
\|\chi_{{\mathbb R}^n \setminus Q^\dagger_{k'}}M^{\mathfrak D}_{\ge r_0}f\|_{L^{p_\infty}(w)}.
\]
By combining 
(\ref{eq:220104-2}) and (\ref{eq:220104-3}),
we obtain 
\begin{equation}\label{eq:220104-3}
\|\chi_{{\mathbb R}^n \setminus Q^\dagger_{k'}}M^{\mathfrak D}_{\ge r_0}f\|_{L^{p(\cdot)}(w)}
\lesssim
\|f\|_{L^{p(\cdot)}(w)}.
\end{equation}


Thus,
the desired result
is given
from (\ref{eq:211029-42}) and (\ref{eq:220104-3}).
\end{proof}
By combining Lemmas \ref{lem:210803-2}
and \ref{lem:210803-200},
we conclude that $M^{\mathfrak D}$ is bounded
on $L^{p(\cdot)}(w)$,
which implies that
$\tilde{A}_{p(\cdot)}^{\mathfrak D} \subset A_{p(\cdot)}^{\mathfrak D}$.
\subsection{Necessity of Theorem \ref{thm:9.3}}
Now let us prove
$\tilde{A}_{p(\cdot)}^{\mathfrak D} \supset A_{p(\cdot)}^{\mathfrak D}$.

Consider the converse.
We suppose that we have a weight $w$
such that
$M^{\mathfrak D}$ is bounded on $L^{p(\cdot)}(w)$.

A weight $w$ is doubling if
$w(\tilde{\tilde{Q}}) \lesssim w(Q)$
for any $Q \in {\mathfrak D}$,
where $\tilde{\tilde{Q}} \in {\mathfrak D}$ is the dyadic grand parent of $Q$,
That is,
$\tilde{\tilde{Q}}$ is
a cube $R \in {\mathfrak D}$ with $|Q|=4^{-n}|R|$ and $Q \subset R$.
We will use the following observation:
\begin{lemma}\label{lem:220131-4}
Let a doubling weight $w$, $p(\cdot)\in {\mathcal P}$ and $C>0$ satisfy
\begin{equation}\label{eq:220131-11}
\sup_{\lambda>0}\lambda\|\chi_{(\lambda,\infty]}(M^{{\mathfrak D}} f)\|_{L^{p(\cdot)}(w)}\le C
\|f\|_{L^{p(\cdot)}(w)},
\end{equation}
or equivalently
$M^{\mathfrak D}$ is weak bounded on $L^{p(\cdot)}(w)$.
Then
\[
\|\chi_Q\|_{L^{p(\cdot)}(w)}
\gtrsim
\min\left(1,\frac{|Q|}{|R|}\right)
\|\chi_R\|_{L^{p(\cdot)}(w)}
\]
for all
$Q,R \in {\mathfrak D}$ satisfying $Q \cap R \ne \emptyset$.
\end{lemma}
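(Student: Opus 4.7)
My strategy is to apply the weak-type bound (\ref{eq:220131-11}) to the test function $f=\chi_Q$. For any $S \in {\mathfrak D}$ with $Q\subset S$, the defining property of $M^{\mathfrak D}$ gives $M^{\mathfrak D}\chi_Q(x)\ge |Q|/|S|$ for every $x\in S$, so taking $\lambda=|Q|/(2|S|)$ in (\ref{eq:220131-11}) yields
\[
\|\chi_S\|_{L^{p(\cdot)}(w)} \le \frac{2C|S|}{|Q|} \|\chi_Q\|_{L^{p(\cdot)}(w)}.
\]
My task will therefore be to locate, for each pair $Q,R\in{\mathfrak D}$ with $Q\cap R \ne\emptyset$, a cube $S\in{\mathfrak D}$ containing $Q$ together with a substantial portion of $R$ and satisfying $|S|\lesssim\max(|Q|,|R|)$; then the monotonicity $\|\chi_R\|_{L^{p(\cdot)}(w)}\le\|\chi_S\|_{L^{p(\cdot)}(w)}$, combined with the display above, will produce the asserted estimate with the correct dependence on $|Q|/|R|$.

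I will split into cases. When $R\subset Q$ or $Q\subset R$, I take $S$ to be the larger cube, and the inequality follows directly. When neither cube is contained in the other but $|Q\cap R|>0$, I plan to exploit the $1/3$-shift structure of ${\mathfrak D}$: whenever a cube of ${\mathfrak D}$ properly straddles the boundary of another, the smaller retains a dimensionally bounded positive fraction of its volume inside the larger one. Applying (\ref{eq:220131-11}) to $\chi_Q$ with $\lambda$ comparable to $|Q\cap R|/\max(|Q|,|R|)$ then gives the bound with no use of doubling.

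The delicate scenario is $|Q\cap R|=0$, i.e.\ $Q$ and $R$ share only a boundary face. Here I plan to use two ingredients. First, in ${\mathfrak D}$ every cube admits a unique grandparent, so iterating grandparents (of $Q$, of $R$, or both) should produce an ancestor whose interior strictly contains the shared face and therefore overlaps the other cube with positive measure. Second, the doubling hypothesis $w(\tilde{\tilde{Q}})\lesssim w(Q)$ will let me transfer norm estimates along the chain of grandparents via Remark \ref{rem:200807-1} relating $\|\chi_T\|_{L^{p(\cdot)}(w)}$ and $w(T)^{1/p_T}$. The hardest point will be that the number of grandparent iterations needed to ``jump over'' the shared face is not a priori bounded uniformly in the positions of $Q$ and $R$ within ${\mathfrak D}$; this is precisely where the doubling of $w$ becomes indispensable, since it controls the accumulated multiplicative growth along the chain and allows me to absorb it into the implicit constant of the conclusion.
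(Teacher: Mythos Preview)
Your opening paragraph is exactly the paper's idea: test the weak inequality on $f=\chi_Q$ and choose a single ambient cube $S\in{\mathfrak D}$ that contains $R$ and captures essentially all of $Q$. The paper then dispatches the whole lemma in two lines: assume $\ell(Q)\le\ell(R)$ (the case $\ell(Q)>\ell(R)$ is declared trivial by doubling), take $S=\tilde{\tilde R}$ the dyadic \emph{grandparent} of $R$, note $|\tilde{\tilde R}|=4^n|R|$ and $R\subset\tilde{\tilde R}$, and conclude
\[
M^{\mathfrak D}\chi_Q \ge \frac{|Q\cap\tilde{\tilde R}|}{|\tilde{\tilde R}|}\chi_{\tilde{\tilde R}} \gtrsim \frac{|Q|}{|R|}\chi_R,
\]
so (\ref{eq:220131-11}) with $\lambda\sim|Q|/|R|$ gives the result. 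No further case split is needed.

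Your execution goes astray after the first paragraph. The ``delicate scenario'' $Q\cap R\ne\emptyset$ with $|Q\cap R|=0$ simply does not occur in ${\mathfrak D}$: the cubes are half-open products of intervals of the form $2^{-k}[m+1/3,m+4/3)$, and two such sets that meet always overlap on a set of positive measure. So the case you identify as ``the hardest point'' is vacuous. Relatedly, your claim that ``the number of grandparent iterations needed to jump over the shared face is not a priori bounded'' is false: a single grandparent of the larger cube suffices, as the paper shows. You are also misattributing the role of doubling. The paper invokes it only for the easy direction $\ell(Q)>\ell(R)$ (where $\min(1,|Q|/|R|)=1$ and one needs $\|\chi_Q\|\gtrsim\|\chi_R\|$); it plays no role in the main case. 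Finally, the plan to ``transfer norm estimates along the chain of grandparents via Remark~\ref{rem:200807-1}'' would not work as written: that remark only gives $\|\chi_T\|^{p_\pm(T)}$ bounds for $w(T)$, and passing from $w(\tilde{\tilde T})\lesssim w(T)$ back to a norm inequality introduces uncontrolled factors $w(T)^{1/p_-(T)-1/p_+(T)}$ unless you already know log-H\"older control on $p(\cdot)$ together with polynomial growth of $w$---precisely what later lemmas deduce \emph{from} this one.

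In short: keep your first paragraph, drop the case analysis, and take $S=\tilde{\tilde R}$.
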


\begin{proof}
We can assume $\ell(Q) \le \ell(R)$;
otherwise the conclusion is trivial by the doubling property of $w$.
If we denote 
by $\tilde{\tilde{R}} \in {\mathfrak D}$,
which is the dyadic grand parent of $R$,
then 
\[
M^{{\mathfrak D}}\chi_Q \ge \frac{|Q \cap \tilde{\tilde{R}}|}{|\tilde{\tilde{R}}|}\chi_{\tilde{\tilde{R}}}
=
\frac{|Q|}{4^n|R|}\chi_{\tilde{\tilde{R}}}
\ge
\frac{|Q|}{4^n|R|}\chi_{R}.
\]
Therefore,
\[
R \subset
\left\{x \in {\mathbb R}^n\,:\,
M^{{\mathfrak D}}\chi_Q(x) \ge
\frac{|Q|}{4^n|R|}\chi_{R}(x)\right\}.
\]
Hence
from (\ref{eq:220131-11}), we have
\[
\|\chi_R\|_{L^{p(\cdot)}(w)}
\le
\left\|\chi_{(\frac{|Q|}{5^n|R|},\infty)}(M^{{\mathfrak D}}\chi_Q)
\right\|_{L^{p(\cdot)}(w)}
\le
\frac{5^n C|R|}{|Q|}\|\chi_Q\|_{L^{p(\cdot)}(w)},
\]
which proves Lemma \ref{lem:220131-4}.
\end{proof}

We prove the weighted analogy to Lemma \ref{lem:190613-3}.
\begin{lemma}\label{lem:211029-115}
Let
$p(\cdot) \in {\rm L H}_0 \cap {\rm L H}_\infty$
and
$w$
be a variable exponent
and a weight such that
$M^{\mathfrak D}$ is weak bounded on $L^{p(\cdot)}(w)$.
Then
for all $Q \in {\mathfrak D}_k$ with $k \ge 0$,
\begin{equation}\label{eq:220121-71}
\|\chi_Q\|_{L^{p(\cdot)}(w)}
\sim
w(Q)^{\frac{1}{p_Q}}
\sim
w(Q)^{\frac{1}{p_-(Q)}}
\sim
w(Q)^{\frac{1}{p_+(Q)}}.
\end{equation}
\end{lemma}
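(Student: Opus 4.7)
The plan is to reduce (\ref{eq:220121-71}) to a single uniform estimate, namely
\[
w(Q)^{|1/p_+(Q)-1/p_-(Q)|}\sim 1 \quad \text{for all } Q\in{\mathfrak D}_k,\ k\ge 0,
\]
and then obtain this estimate by playing the weak-type boundedness of $M^{\mathfrak D}$ against the log-H\"older conditions on $p(\cdot)$. The starting observation is that Remark \ref{rem:200807-1} sandwiches $\|\chi_Q\|_{L^{p(\cdot)}(w)}$ between $w(Q)^{1/p_+(Q)}$ and $w(Q)^{1/p_-(Q)}$ (the ordering flipping according to whether $\|\chi_Q\|_{L^{p(\cdot)}(w)}$ exceeds $1$), and the arithmetic mean $p_Q$ defined in (\ref{eq:211029-121}) automatically lies between $p_-(Q)$ and $p_+(Q)$. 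Hence all four quantities in (\ref{eq:220121-71}) collapse to a common equivalence once the displayed ratio is controlled.

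First I would upgrade the weak boundedness into dyadic doubling for $w$: since $M^{\mathfrak D}\chi_Q\ge 2^{-n}\chi_{\tilde Q}$ for the dyadic parent $\tilde Q$, the weak-type inequality applied at level $\lambda=1/(2\cdot 2^n)$ gives $\|\chi_{\tilde Q}\|_{L^{p(\cdot)}(w)}\lesssim \|\chi_{Q}\|_{L^{p(\cdot)}(w)}$, and Remark \ref{rem:200807-1} (handling the two norm-regimes separately, with the log-H\"older slack across $\tilde Q$ absorbing the difference between $p_\pm(Q)$ and $p_\pm(\tilde Q)$) converts this into $w(\tilde Q)\lesssim w(Q)$. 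With $w$ dyadically doubling, Lemma \ref{lem:220131-4} applies, yielding the inverse-doubling estimate $\|\chi_Q\|_{L^{p(\cdot)}(w)}\gtrsim\min(1,|Q|/|R|)\|\chi_R\|_{L^{p(\cdot)}(w)}$ for every $R\in{\mathfrak D}$ meeting $Q$.

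Next I would extract polynomial control of $w(Q)$. Fix an ancestor $Q^\sharp\in{\mathfrak D}_0$ of $Q$; iterating the doubling/inverse-doubling comparisons yields
\[
\ell(Q)^{\,C_1}\,w(Q^\sharp)\;\lesssim\; w(Q)\;\le\; w(Q^\sharp),
\]
with constants $C_1$ depending only on the weak-bound constant and $p(\cdot)$. Meanwhile LH$_0$ gives $p_+(Q)-p_-(Q)\lesssim 1/\log(1/\ell(Q))$ whenever $\ell(Q)\le 1/2$, while LH$_\infty$ controls the size of $p(\cdot)$ on $Q^\sharp$, which in turn bounds $\log w(Q^\sharp)$ essentially by $\log(e+\mathrm{dist}(Q,0))$ through the standard estimate $\|\chi_{Q^\sharp}\|_{L^{p(\cdot)}(w)}\lesssim 1+\mathrm{dist}(Q,0)^{p_+ n}$ (obtained from the weak-type inequality tested against a fixed reference cube near the origin). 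Combining these ingredients,
\[
\bigl|(1/p_-(Q)-1/p_+(Q))\,\log w(Q)\bigr|\;\lesssim\;\frac{C_1\log(1/\ell(Q))+c\log(e+\mathrm{dist}(Q,0))}{p_-^{\,2}\log(1/\ell(Q))\wedge p_-^{\,2}\log(e+\mathrm{dist}(Q,0))}\;\lesssim\;1,
\]
which is precisely $w(Q)^{|1/p_+(Q)-1/p_-(Q)|}\sim 1$.

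The technical heart, and the step I expect to be the main obstacle, is the conversion between norm comparisons $\|\chi_Q\|_{L^{p(\cdot)}(w)}\sim\|\chi_{\tilde Q}\|_{L^{p(\cdot)}(w)}$ (which the weak inequality gives cleanly) and the corresponding measure comparisons $w(Q)\sim w(\tilde Q)$, since Remark \ref{rem:200807-1} splits into cases according to whether the norm is above or below $1$ and uses different exponents on either side. One must verify that the discrepancy between $p_\pm(Q)$ and $p_\pm(\tilde Q)$ introduced by this case analysis is exactly what the log-H\"older condition allows us to absorb; this is the same mechanism that rescues the proof of Lemma \ref{lem:210804-1}, but here it has to be set up from scratch because we do not yet know that $w\in\tilde A^{\mathfrak D}_{p(\cdot)}$.
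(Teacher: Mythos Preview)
Your overall strategy matches the paper's: obtain two-sided polynomial control of a size quantity attached to $Q$, then cancel it against the log-H\"older smallness of $p_+(Q)-p_-(Q)$. The difference is that you run the argument with $w(Q)$, whereas the paper runs it with $\|\chi_Q\|_{L^{p(\cdot)}(w)}$, and this choice is exactly what creates the obstacle you flag at the end.

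Your first step --- deducing $w(\tilde Q)\lesssim w(Q)$ from $\|\chi_{\tilde Q}\|_{L^{p(\cdot)}(w)}\lesssim\|\chi_Q\|_{L^{p(\cdot)}(w)}$ via Remark \ref{rem:200807-1} --- is genuinely circular, not merely technical. In the regime $\|\chi_Q\|_{L^{p(\cdot)}(w)}\le 1$ one obtains $w(\tilde Q)/w(Q)\lesssim\|\chi_Q\|_{L^{p(\cdot)}(w)}^{\,p_-(\tilde Q)-p_+(Q)}$ with a nonpositive exponent, so a uniform bound would require $\|\chi_Q\|_{L^{p(\cdot)}(w)}\gtrsim\ell(Q)^{C}$, which is precisely the polynomial control you are trying to establish.

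The paper avoids the detour by staying at the norm level throughout. The direction of Lemma \ref{lem:220131-4} actually used, namely $\|\chi_Q\|_{L^{p(\cdot)}(w)}\gtrsim(|Q|/|R|)\|\chi_R\|_{L^{p(\cdot)}(w)}$ for $Q\subset R$, comes directly from the weak-type inequality and does not need doubling of $w$ (in that lemma doubling is invoked only for the trivial case $\ell(Q)>\ell(R)$). Taking $R=Q_\ell^\dagger$, the smallest origin-containing dyadic cube with $Q\subset Q_\ell^\dagger$, together with the trivial bound $\|\chi_Q\|_{L^{p(\cdot)}(w)}\le\|\chi_{Q_\ell^\dagger}\|_{L^{p(\cdot)}(w)}$ and one further comparison with the fixed reference $Q_0^\dagger$, yields $|\log\|\chi_Q\|_{L^{p(\cdot)}(w)}|\lesssim\log(1/\ell(Q))+\log(e+|x|)$ for $x\in Q$. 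Then LH$_0$ handles the first summand and LH$_\infty$ the second, each paired separately (not via a single minimum in the denominator as in your display, which as written does not give a bounded ratio). This gives $\|\chi_Q\|_{L^{p(\cdot)}(w)}^{\,|1/p_-(Q)-1/p_+(Q)|}\sim 1$, and only then does Remark \ref{rem:200807-1} (equivalently Lemma \ref{lem:190410-1}) convert to $w(Q)$; at that point the exponent mismatch is already known to be harmless.
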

Before the proof, a couple of remarks may be in order.
\begin{remark}\label{rem:220420-1}\
\begin{enumerate}
\item
Notice that 
$M^{\mathfrak D}$ is not assumed bounded on $L^{p(\cdot)}(w)$.
However, it is absolutely necessary to assume that
$M^{\mathfrak D}$ is weak bounded on $L^{p(\cdot)}(w)$
(see Lemma \ref{lem:210907-1} below).
\item
As in \cite[Lemma 3.4]{DiHapre},
the same conclusion holds
for the case of $w \in A_\infty^{\rm loc,{\mathfrak D}}$.
In fact, 
assuming $w \in A_u^{\rm loc,{\mathfrak D}}$,
we have
(\ref{eq:220420-1})
with $p_+$ replaced by $u$,
which corresponds to
\cite[(3.5)]{DiHapre}.
\item
As in \cite[Corollary 3.7]{DiHapre},
if $w \in A_\infty^{\mathfrak D}$,
(\ref{eq:220121-71}) remains valid for any $Q \in {\mathfrak D}$.
In fact, 
assuming $w \in A_u^{\rm loc,{\mathfrak D}}$,
we have
(\ref{eq:220420-1})
with $p_+$ replaced by $u$,
which corresponds to the key inequality 
in the proof of
\cite[Corollary 3.7]{DiHapre}.
\end{enumerate}
\end{remark}

\begin{proof}
Let $k \ge 0$.
Fix $Q \in {\mathfrak D}_k$.
Recall that
$Q^\dagger_k \in {\mathfrak D}$
is the unique cube in ${\mathfrak D}_k$ containing $0$.
Then, we can find the smallest cube $Q_{\ell}^\dagger (\ell \le k)$ 
such that $Q \subset Q_{\ell}^\dagger$.
Due to Lemma \ref{lem:220131-4},
\[
\|\chi_{Q_{\ell}^\dagger}\|_{L^{p(\cdot)}(w)} \ge
\|\chi_{Q}\|_{L^{p(\cdot)}(w)} \gtrsim
\frac{|Q|}{|Q^\dagger_{\ell}|}\|\chi_{Q_{\ell}^\dagger}\|_{L^{p(\cdot)}(w)}.
\]
By the $\log$-H\"{o}lder condition, we obtain 
\[
(\|\chi_{Q}\|_{L^{p(\cdot)}(w)})^{\left|\frac{1}{p_-(Q)}-\frac{1}{p_+(Q)}\right|}\sim 1.
\]
Due to Lemma \ref{lem:190410-1},
we have
\[
\min(\|\chi_{Q}\|_{L^{p(\cdot)}(w)}^{p_-(Q)},\|\chi_{Q}\|_{L^{p(\cdot)}(w)}^{p_+(Q)})
\sim
\max(\|\chi_{Q}\|_{L^{p(\cdot)}(w)}^{p_-(Q)},\|\chi_{Q}\|_{L^{p(\cdot)}(w)}^{p_+(Q)})
\sim
w(Q).
\]
Thus, we obtain (\ref{eq:220121-71}).
\end{proof}

Now, let us investigate how fast $w$ grows.
\begin{lemma}\label{lem:210806-12}
Let
$p(\cdot) \in {\rm L H}_0 \cap {\rm L H}_\infty$
be a variable exponent such that
$M^{\mathfrak D}$ is weak bounded on $L^{p(\cdot)}(w)$.
Then
$w$ has at most polynomial growth. More precisely,
\begin{equation}\label{eq:211029-91}
w(\{y \in {\mathbb R}^n\,:\,|y| \le |x|\}) \lesssim (1+|x|)^{p_+n}
\end{equation}
for all $x \in {\mathbb R}^n$.
\end{lemma}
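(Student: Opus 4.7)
The plan is to realize $\{y\in\mathbb{R}^n:|y|\le|x|\}$ inside a large dyadic cube from the grid ${\mathfrak D}$ whose modular we already control. The natural candidates are the cubes $Q_{-k}^\dagger\in{\mathfrak D}_{-k}$ of side length $2^k$, each containing $0$ and forming an increasing family as $k$ grows. A direct inspection of the grid ${\mathfrak D}={\mathcal D}_{(1,1,\ldots,1)}$ shows that the origin sits far enough inside $Q_{-k}^\dagger$ that, taking $k$ to be the least integer with $2^k\ge 3\sqrt{n}\,(1+|x|)$, one has the containment
\[
\{y\in\mathbb{R}^n:|y|\le|x|\}\subset Q_{-k}^\dagger,
\qquad\text{and}\qquad 2^k\sim 1+|x|.
\]

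Next, I would invoke Lemma \ref{lem:220131-4} with $Q=Q_0^\dagger$ and $R=Q_{-k}^\dagger$: both contain $0$, so $Q\cap R\ne\emptyset$, and since $\ell(Q_0^\dagger)=1\le 2^k=\ell(Q_{-k}^\dagger)$ we are in the regime of that lemma that does \emph{not} require doubling of $w$ (the doubling assumption there is only called upon in the auxiliary case $\ell(R)\le\ell(Q)$). This yields
\[
\|\chi_{Q_0^\dagger}\|_{L^{p(\cdot)}(w)}
\gtrsim \frac{|Q_0^\dagger|}{|Q_{-k}^\dagger|}\|\chi_{Q_{-k}^\dagger}\|_{L^{p(\cdot)}(w)}
=2^{-kn}\|\chi_{Q_{-k}^\dagger}\|_{L^{p(\cdot)}(w)},
\]
so that $\|\chi_{Q_{-k}^\dagger}\|_{L^{p(\cdot)}(w)}\lesssim 2^{kn}\lesssim(1+|x|)^n$. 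To pass from this norm estimate to a bound on the weight, I would apply the modular inequality of Lemma \ref{lem:190410-1} to $f=w^{1/p(\cdot)}\chi_{Q_{-k}^\dagger}$ (exactly as in Remark \ref{rem:200807-1}). Splitting into the two cases $\|\chi_{Q_{-k}^\dagger}\|_{L^{p(\cdot)}(w)}\le 1$ and $\ge 1$, both inequalities in Lemma \ref{lem:190410-1} combine with the crude bound $p_\pm(Q_{-k}^\dagger)\le p_+$ to give
\[
w(Q_{-k}^\dagger)\le \max\bigl(1,\|\chi_{Q_{-k}^\dagger}\|_{L^{p(\cdot)}(w)}\bigr)^{p_+}\lesssim (1+|x|)^{np_+},
\]
and monotonicity of $w$ finishes the argument:
\[
w(\{y:|y|\le|x|\})\le w(Q_{-k}^\dagger)\lesssim (1+|x|)^{np_+}.
\]

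The only genuinely subtle point — and the one I would flag as the main obstacle — is the applicability of Lemma \ref{lem:220131-4}: its hypotheses formally require $w$ to be doubling, which is not assumed here. The saving observation is that doubling is only invoked in the trivial direction $\ell(R)\le\ell(Q)$ of that lemma's proof, while our use lies in the complementary regime $\ell(Q_0^\dagger)\le\ell(Q_{-k}^\dagger)$. If one preferred not to rely on this internal feature of the proof, the alternative is to first establish doubling of $w$ directly from the weak-type boundedness of $M^{\mathfrak D}$ on $L^{p(\cdot)}(w)$ via the standard Calderón--Zygmund stopping-time argument applied to $\chi_Q$; but the cleaner route is simply to re-use the doubling-free half of the proof of Lemma \ref{lem:220131-4}.
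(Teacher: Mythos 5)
Your argument is correct and matches the paper's proof in essence: both apply Lemma \ref{lem:220131-4} with the small fixed cube $Q^\dagger_0$ against the growing central cubes $Q^\dagger_k$ ($k\le 0$) to obtain $\|\chi_{Q^\dagger_k}\|_{L^{p(\cdot)}(w)}\lesssim 2^{-kn}$, and then pass to $w(Q^\dagger_k)\lesssim 2^{-knp_+}$ via Remark \ref{rem:200807-1}. Your flag about the doubling hypothesis of Lemma \ref{lem:220131-4} is also well taken and correctly resolved: that hypothesis is invoked only in the case $\ell(R)<\ell(Q)$, whereas here $\ell(Q^\dagger_0)\le\ell(Q^\dagger_k)$, so the weak boundedness of $M^{\mathfrak D}$ alone suffices --- the paper's proof silently relies on this same point.
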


\begin{proof}
By Remark \ref{rem:200807-1} and Lemma \ref{lem:220131-4},
\[
\min\{w(Q^\dagger_k)^{\frac{1}{p_+(Q)}},w(Q^\dagger_k)^{\frac{1}{p_-(Q)}}\}
\le
\|\chi_{Q^\dagger_k}\|_{L^{p(\cdot)}(w)}
\lesssim
\frac{|Q^\dagger_k|}{|Q^\dagger_0|}
\|\chi_{Q^\dagger_0}\|_{L^{p(\cdot)}(w)}
\lesssim
2^{-k n}.
\]
As a result,
$
w(Q^\dagger_k) \lesssim 2^{-k p_+n}
$
for all $k \le 0$
or equivalently,
(\ref{eq:211029-91}) holds.
\end{proof}
We obtain a crude conclusion assuming that
$M^{\mathfrak D}$ is weak bounded on $L^{p(\cdot)}(w)$.
\begin{lemma}[{\rm c.f. \cite[Lemma 6.3]{DiHapre}}]
\label{lem:220121-11}
Let
$p(\cdot) \in {\rm L H}_0 \cap {\rm L H}_\infty$
be a variable exponent such that
$M^{\mathfrak D}$ is weak bounded on $L^{p(\cdot)}(w)$.
Then
$w \in A_\infty^{{\mathfrak D},{\rm loc}}$. That is, there exists $q>1$ such that
\[
\sup_{Q \in {\mathfrak D}, |Q| \le 1}
\frac{1}{|Q|}\int_Q w(x){\rm d}x
\left(\frac{1}{|Q|}\int_Q w(x)^{-\frac{1}{q-1}}{\rm d}x\right)^{q-1} \lesssim 1
\]
\end{lemma}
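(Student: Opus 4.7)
The plan is to imitate the classical derivation of the Muckenhoupt condition from the weak-type boundedness of the maximal operator, adapted to the variable-exponent setting. First I would fix $Q \in \mathfrak{D}$ with $|Q| \le 1$, let $p_Q$ be the averaged exponent from (\ref{eq:211029-121}), and set $\sigma_Q \equiv w^{-1/(p_Q-1)}$. Then I would test the weak-type hypothesis on $f_Q \equiv \sigma_Q \chi_Q$. Since $M^{\mathfrak D} f_Q(x) \ge m_Q(\sigma_Q)$ for every $x \in Q$, the cube $Q$ sits inside the level set $\{M^{\mathfrak D} f_Q > m_Q(\sigma_Q)/2\}$, and the weak-type inequality yields
\[
m_Q(\sigma_Q)\,\|\chi_Q\|_{L^{p(\cdot)}(w)} \lesssim \|\sigma_Q \chi_Q\|_{L^{p(\cdot)}(w)}.
\]

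Next I would simplify both sides. By Lemma \ref{lem:211029-115}, the left-hand side is $m_Q(\sigma_Q)\,w(Q)^{1/p_Q}$ up to constants. For the right-hand side a modular computation gives, after suitable control (see below), $\|\sigma_Q \chi_Q\|_{L^{p(\cdot)}(w)} \lesssim \sigma_Q(Q)^{1/p_Q}$. Substituting and raising to the $p_Q$-th power yields the cubewise Muckenhoupt bound
\[
m_Q(w)\,m_Q(\sigma_Q)^{p_Q - 1} \lesssim 1
\]
uniformly in $Q \in \mathfrak{D}$ with $|Q| \le 1$. To upgrade this $Q$-dependent bound into an $A_q^{\mathfrak D, \rm loc}$ condition with a single constant exponent $q$, I would exploit the fact that $p_Q \le p_+$ and that the map $t \mapsto t^{(p_Q-1)/(p_+-1)}$ is concave. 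Jensen's inequality then gives $m_Q(w^{-1/(p_+-1)}) \le m_Q(\sigma_Q)^{(p_Q-1)/(p_+-1)}$, and raising to the power $p_+ - 1$ yields $m_Q(w^{-1/(p_+-1)})^{p_+-1} \le m_Q(\sigma_Q)^{p_Q-1}$. Combining this with the cubewise bound produces $m_Q(w)\,m_Q(w^{-1/(p_+-1)})^{p_+-1} \lesssim 1$ uniformly, so that $w \in A_{p_+}^{\mathfrak D, \rm loc}$ and we may take $q = p_+$.

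The hard part will be the modular estimate for $\|\sigma_Q \chi_Q\|_{L^{p(\cdot)}(w)}$: the integrand of the associated modular is $\sigma_Q(x)^{p(x)} w(x) = \sigma_Q(x) \cdot w(x)^{(p_Q - p(x))/(p_Q - 1)}$, and the correction factor $w^{(p_Q - p(x))/(p_Q - 1)}$ cannot be controlled pointwise. Resolving this requires combining three ingredients: (i) the local log-H\"older continuity of $p(\cdot)$ together with $|Q|^{p_+(Q) - p_-(Q)} \lesssim 1$ from Lemma \ref{lem:190613-3}, which keeps $|p_Q - p(x)|$ small on the logarithmic scale of $\ell(Q)$; (ii) the polynomial growth of $w$ from Lemma \ref{lem:210806-12}, used to absorb the correction in regions where $w$ is large; and (iii) both branches of Lemma \ref{lem:190410-1}, according to whether $\|\sigma_Q \chi_Q\|_{L^{p(\cdot)}(w)}$ exceeds or falls below $1$. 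Once these are assembled, the correction integrates to a bounded multiple of $\sigma_Q(Q)$, justifying the estimate $\|\sigma_Q \chi_Q\|_{L^{p(\cdot)}(w)} \lesssim \sigma_Q(Q)^{1/p_Q}$ used above and completing the argument.
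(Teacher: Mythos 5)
Your proposal takes a genuinely different route from the paper's. The paper tests the weak-type bound on indicator functions $\chi_E$ for measurable $E \subset Q$ and derives the classical $A_\infty$-type comparison $w(E)/w(Q) \gtrsim (|E|/|Q|)^{p_+}$. Because $\chi_E{}^{p(x)} = \chi_E$, there is no exponent mismatch anywhere: the only norm computations needed are the elementary indicator bound from Remark \ref{rem:200807-1} and Lemma \ref{lem:211029-115}, after which the classical $A_\infty$ characterization gives the result. You instead test on $f_Q = \sigma_Q \chi_Q$ with $\sigma_Q = w^{-1/(p_Q - 1)}$ and aim directly at a cubewise $A_{p_Q}$ bound, which you then upgrade to $A_{p_+}$ by Jensen. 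The upgrading step is fine, but the approach is more ambitious than what the lemma actually claims, and it runs into a genuine gap.

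The step you flag as the hard part is indeed the problem, and the three ingredients you cite do not close it. The modular of $\sigma_Q \chi_Q / \sigma_Q(Q)^{1/p_Q}$ reduces, after factoring out $\sigma_Q(x)/\sigma_Q(Q)$, to the average against the probability measure $\sigma_Q(x)\,{\rm d}x/\sigma_Q(Q)$ of the correction
\[
\left(\frac{\sigma_Q(x)}{\sigma_Q(Q)^{1/p_Q}}\right)^{p(x)-p_Q}.
\]
Local log-H\"older continuity makes the exponent $|p(x)-p_Q|$ small, of order $1/\log(1/\ell(Q))$, but the base $\sigma_Q(x)/\sigma_Q(Q)^{1/p_Q}$ is \emph{not} controlled pointwise: an arbitrary weight $w$ with no reverse-H\"older property can make $\sigma_Q(x)$ blow up on a subset of $Q$ of small Lebesgue measure in a way that defeats the small exponent. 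Lemma \ref{lem:210806-12} gives only an integral bound on $w(Q^\dagger_k)$ for the dyadic cubes containing the origin; it says nothing about how $w$ oscillates inside a fixed small $Q$ away from the origin, so it cannot be used to absorb the correction pointwise. Both branches of Lemma \ref{lem:190410-1} convert between norm and modular but do not touch the mismatch. What is really needed is a reverse-H\"older inequality for $w$ (equivalently $\sigma_Q$) on $Q$, and that is exactly the $A_\infty^{{\mathfrak D},{\rm loc}}$ conclusion you are trying to prove --- so the argument is circular. Note that the paper, in the full necessity proof of Theorem \ref{thm:9.3}, \emph{does} run a computation of your type, but only after using strong boundedness of $M^{\mathfrak D}$ together with Lemma \ref{lem:210907-1} to obtain weak boundedness on $L^{p'(\cdot)}(\sigma)$, which then unlocks Corollary \ref{cor:210907-1} for $\sigma$. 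With weak boundedness on $L^{p(\cdot)}(w)$ alone, as hypothesized here, that tool is unavailable. The lesson of the paper's proof of this lemma is precisely that testing only on indicators sidesteps the exponent mismatch entirely and yields the $A_\infty$ conclusion with minimal machinery.
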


\begin{proof}
Let $Q \in {\mathfrak D}$ with $|Q| \le 1$
and $E \subset Q$ be a measurable set.
Using
\[
\|\chi_E\|_{L^{p(\cdot)}(w)} \le \max\{w(E)^{\frac{1}{p_+(Q)}},w(E)^{\frac{1}{p_-(Q)}}\},
\quad
Q \subset \{M^{\mathfrak D}\chi_E \ge |Q|^{-1}|E|\},
\]
we will show that
\begin{equation}\label{eq:210907-1112}
\frac{w(E)}{w(Q)}
\gtrsim
\left(\frac{|E|}{|Q|}\right)^{p_+}.
\end{equation}
Once this is achieved,
we will have $q \in [1,\infty)$
such that $w \in A_q^{\mathfrak D}$.
Then,
due to Lemma \ref{lem:211029-115},
\begin{align*}
\max\{w(Q)^{\frac{1}{p_+(Q)}},w(Q)^{\frac{1}{p_-(Q)}}\}
&\sim
\|\chi_Q\|_{L^{p(\cdot)}(w)}\\
&\le
\|\chi_{\{M^{\mathfrak D}\chi_E \ge |Q|^{-1}|E|\}}\|_{L^{p(\cdot)}(w)}\\
&\lesssim
\frac{|Q|}{|E|}
\|\chi_E\|_{L^{p(\cdot)}(w)}\\
& \le 
\frac{|Q|}{|E|}\max\{w(E)^{\frac{1}{p_+(Q)}},w(E)^{\frac{1}{p_-(Q)}}\}.
\end{align*}
As a result
\[
\frac{w(E)}{w(Q)}
\gtrsim
\min\left\{
\left(\frac{|E|}{|Q|}\right)^{{p_-}(Q)},
\left(\frac{|E|}{|Q|}\right)^{{p_+}(Q)}
\right\}
=
\left(\frac{|E|}{|Q|}\right)^{{p_+}(Q)}
\ge
\left(\frac{|E|}{|Q|}\right)^{{p_+}}.
\]
Thus, the proof of
(\ref{eq:210907-1112})
is complete.
\end{proof}

\begin{corollary}[{\rm \cite[Corollary 6.6]{DiHapre}}]\label{cor:210907-1}
Let
$p(\cdot),q(\cdot) \in {\rm L H}_0 \cap {\rm L H}_\infty$
be variable exponents such that
$M^{\mathfrak D}$ is weak bounded on $L^{p(\cdot)}(w)$.
Then
\begin{equation} \label{eq:220121-33}
\|\chi_Q\|_{L^{q(\cdot)}(w)} \sim w(Q)^{\frac{1}{q_Q}}
\end{equation}
for all cubes $Q \in {\mathfrak D}$.
\end{corollary}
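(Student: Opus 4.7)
\textbf{Proof plan for Corollary \ref{cor:210907-1}.}
The plan is to extend the conclusion of Lemma \ref{lem:211029-115} from the original exponent $p(\cdot)$ (and small cubes) to an arbitrary $q(\cdot) \in {\rm LH}_0 \cap {\rm LH}_\infty$ (and every cube $Q \in {\mathfrak D}$). The three ingredients I would mobilise are (a) Lemma \ref{lem:220121-11}, which yields $w \in A_\infty^{{\mathfrak D},{\rm loc}}$ and hence doubling of $w$ on cubes of volume $\le 1$; (b) Lemma \ref{lem:210806-12}, which provides the polynomial upper bound $w(\{|y|\le|x|\}) \lesssim (1+|x|)^{p_+ n}$; and (c) the modular/norm identities of Lemma \ref{lem:190410-1} and Remark \ref{rem:200807-1}. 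Given these, the desired equivalence $\|\chi_Q\|_{L^{q(\cdot)}(w)} \sim w(Q)^{1/q_Q}$ is equivalent (via Lemma \ref{lem:190410-1}) to showing
\[
w(Q)^{1/q_+(Q)} \sim w(Q)^{1/q_-(Q)},
\]
that is, to bounding $w(Q)^{|1/q_+(Q) - 1/q_-(Q)|}$ both above and below by universal constants.

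First I would dispose of small cubes $Q \in {\mathfrak D}$ with $|Q|\le 1$. By Remark \ref{rem:220420-1}(2), applied to $w \in A_\infty^{{\mathfrak D},\rm loc}$ from Lemma \ref{lem:220121-11}, we already have the analogue of (\ref{eq:220121-71}) with $p_+$ replaced by $u$ (where $w \in A_u^{{\mathfrak D},\rm loc}$), but crucially the exponent in that statement is $p(\cdot)$. To transplant the conclusion to $q(\cdot)$, I would combine the doubling on small cubes with the fact that the smallest $Q^\dagger_\ell \supset Q$ satisfies $w(Q^\dagger_\ell) \lesssim w(Q^\dagger_0)$ plus the polynomial growth estimate (\ref{eq:211029-91}), giving a polynomial upper bound $w(Q) \lesssim (1+|x_Q|)^{p_+ n}$ and a polynomial lower bound $w(Q) \gtrsim (|Q|/(1+|x_Q|))^C$ of Lemma \ref{lem:210803-3}-type. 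The log-H\"older-0 condition on $q(\cdot)$ makes $|1/q_+(Q)-1/q_-(Q)| \lesssim 1/\log(|Q|^{-1})$ for $|Q| \le 1/2$, and log-H\"older-$\infty$ makes the same quantity $\lesssim 1/\log(e+|x_Q|)$; multiplying with the polynomial $\log w(Q)$ bound yields the required $w(Q)^{|1/q_+(Q)-1/q_-(Q)|} \sim 1$.

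For large cubes $|Q|>1$, I would use the localization principle (Lemma \ref{lem:190627-11}) applied to the weighted space $L^{q(\cdot)}(w)$ (which, as noted in \cite[Lemma 2.7]{DiHapre} and used elsewhere in the paper, holds after an obvious modification): decompose $\chi_Q$ as a sum of characteristic functions of unit-side dyadic cubes and observe that $q_R \to q_\infty$ and $q_Q \to q_\infty$ for such $R$ by log-H\"older-$\infty$, so that the small-cube case of the previous paragraph combined with the $\ell^{q_\infty}$-sum gives $\|\chi_Q\|_{L^{q(\cdot)}(w)} \sim w(Q)^{1/q_\infty} \sim w(Q)^{1/q_Q}$, where the last equivalence uses that $q_Q$ differs from $q_\infty$ only by $O(1/\log|Q|)$ and that $\log w(Q) = O(\log|Q|)$ (polynomial growth again).

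The main obstacle I anticipate is Case 2, namely controlling the discrepancy between $1/q_Q$ and $1/q_\infty$ for large cubes when the weight $w$ is not itself ``comparable to a constant'' at infinity; here the interplay between log-H\"older decay of $q(\cdot)$ and the polynomial growth of $w$ has to be exploited carefully, and one must verify that the localization principle of Lemma \ref{lem:190627-11} is robust enough in the weighted setting, which essentially amounts to checking that for a cube $Q$ of side $2^N \gg 1$, a modular computation yields $\int_Q w(x)^{?}\,dx \sim \sum_{R \in {\mathfrak D}_0, R \subset Q}\|\chi_R\|_{L^{q(\cdot)}(w)}^{q_\infty}$ uniformly in $N$.
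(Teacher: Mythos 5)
Your plan follows the same route as the paper's proof: both extract from the weak boundedness on $L^{p(\cdot)}(w)$ that $w \in A_\infty^{{\mathfrak D},\rm loc}$ (Lemma \ref{lem:220121-11}) and that $w$ has at most polynomial growth (Lemma \ref{lem:210806-12}), treat $|Q| \le 1$ and $|Q| \ge 1$ separately, invoke the DiHapre-type estimate (via Remark \ref{rem:220420-1}) for small cubes, and pass to large cubes through the localization principle (Lemma \ref{lem:190627-11}) and Diening's comparison principle (Lemma \ref{lem:210806-11}).

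One clarification, which removes your anticipated ``transplanting'' concern: once the hypothesis is converted into the pair of intrinsic statements ``$w \in A_u^{{\mathfrak D},\rm loc}$ for some $u$, and $w$ has polynomial growth,'' the argument that produces $\|\chi_Q\|_{L^{q(\cdot)}(w)} \sim w(Q)^{1/q_Q}$ no longer refers to the maximal operator nor to the original exponent $p(\cdot)$ at all. It only uses (a) the $A_u$-type lower bound on $w(Q)$ in terms of $w(\widetilde Q)$ for dyadic $\widetilde Q \supset Q$, (b) the polynomial upper bound on $w(Q)$, and (c) the ${\rm LH}_0 \cap {\rm LH}_\infty$ property of whichever exponent $q(\cdot)$ is fed in, to deduce $w(Q)^{|1/q_+(Q) - 1/q_-(Q)|} \sim 1$ via the bound $(a+b)\min(1/a,1/b) \le 2$ with $a = \log(1/\ell(Q))$ and $b = \log(e+|x_Q|)$. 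So the argument runs verbatim for $q(\cdot)$ in place of $p(\cdot)$; nothing needs to be transferred. Your sketch of the large-cube case is also sound, though the claim $\log w(Q) = O(\log|Q|)$ needs the lower bound from the $A_u$-type estimate (a Lemma \ref{lem:210803-3}-style inequality) in addition to the polynomial upper bound, and the modulus of $|q_Q - q_\infty|$ depends on the position of $Q$ as well as on its size; the key product bound goes through in all cases.
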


This corollary seems to be
the same as Lemma \ref{lem:211029-115}.
However, 
we
remark that the weak boundedness is assumed
on $L^{p(\cdot)}(w)$ and 
 the equivalence for a different exponent $q(\cdot)$
is obtained.

\begin{proof}
Simply combine
Lemmas
\ref{lem:210806-11}
and
\ref{lem:210806-12}.
In fact, as in \cite[Lemma 3.4 and Corollary 3.7]{DiHapre}
(see also Remark \ref{rem:220420-1}),
we have
\begin{equation}\label{eq:220121-33a}
\|\chi_{Q}\|_{L^{q(\cdot)}(w)}
\sim
w(Q)^{\frac{1}{q_Q}},
\end{equation}
where we use Lemma \ref{lem:220121-11} if $|Q| \le 1$
but we use Lemma \ref{lem:190627-11}
and (\ref{eq:220121-33a}) for cubes having volume $1$ if $|Q| \ge 1$
\end{proof}
As the following lemma shows,
the dual space inherits the boundedness
of the operator $M^{\mathfrak D}$ from the original space.
\begin{lemma}\label{lem:210907-1}
Let
$p(\cdot) \in {\rm L H}_0 \cap {\rm L H}_\infty$
be a variable exponent such that
$M^{\mathfrak D}$ is bounded on $L^{p(\cdot)}(w)$.
Then
$M^{\mathfrak D}$ is weak bounded on $L^{p'(\cdot)}(\sigma)$,
where $\sigma\equiv w^{-{\frac{1}{p(\cdot)-1}}}$ stands for the dual weight.
\end{lemma}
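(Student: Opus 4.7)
The plan is to exploit the duality between $L^{p(\cdot)}(w)$ and $L^{p'(\cdot)}(\sigma)$. First I would verify the pointwise identity $\sigma^{1/p'(\cdot)}\,w^{1/p(\cdot)} \equiv 1$, which together with Lemma \ref{thm:gHolder} yields the weighted Hölder inequality
\[
\int_{{\mathbb R}^n} |uv|\,{\rm d}x \lesssim \|u\|_{L^{p'(\cdot)}(\sigma)}\,\|v\|_{L^{p(\cdot)}(w)}
\]
for all $u \in L^{p'(\cdot)}(\sigma)$ and $v \in L^{p(\cdot)}(w)$. As a standard consequence, $\|\chi_E\|_{L^{p'(\cdot)}(\sigma)}$ is equivalent, up to a universal constant, to the supremum of $\int_E v\,{\rm d}x$ over nonnegative $v$ with $\|v\|_{L^{p(\cdot)}(w)} \le 1$. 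Thus the weak-type inequality reduces to controlling $\int_E v\,{\rm d}x$ uniformly for such $v$, where $E \equiv \{M^{\mathfrak D} f > \lambda\}$.

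Next I would perform the usual Calderón--Zygmund stopping-time decomposition: $E$ is the disjoint union of the maximal dyadic cubes $\{Q_j\} \subset {\mathfrak D}$ satisfying $m_{Q_j}(|f|) > \lambda$. For each $j$ the bound $|Q_j| \le \lambda^{-1}\int_{Q_j}|f|\,{\rm d}x$ holds, and $\sum_j \chi_{Q_j}(x)\,m_{Q_j}(v) \le M^{\mathfrak D} v(x)$ at every point (on $E$ the sum reduces to the single term $m_{Q_j}(v)$ for the unique $Q_j$ containing $x$, and outside $E$ it vanishes). Combining these,
\[
\int_E v\,{\rm d}x = \sum_j |Q_j|\, m_{Q_j}(v) \le \lambda^{-1}\int_E |f(x)|\,M^{\mathfrak D} v(x)\,{\rm d}x.
\]

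Finally I would apply the weighted Hölder inequality from the first paragraph to the right-hand side, followed by the assumed strong boundedness of $M^{\mathfrak D}$ on $L^{p(\cdot)}(w)$, to obtain
\[
\int_E v\,{\rm d}x \lesssim \lambda^{-1}\,\|f\|_{L^{p'(\cdot)}(\sigma)}\,\|M^{\mathfrak D} v\|_{L^{p(\cdot)}(w)} \lesssim \lambda^{-1}\,\|f\|_{L^{p'(\cdot)}(\sigma)}\,\|v\|_{L^{p(\cdot)}(w)}.
\]
Taking the supremum over admissible $v$ yields $\lambda\,\|\chi_E\|_{L^{p'(\cdot)}(\sigma)} \lesssim \|f\|_{L^{p'(\cdot)}(\sigma)}$, which is the claimed weak boundedness. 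The only mildly delicate point is the duality identity used to pass from the norm $\|\chi_E\|_{L^{p'(\cdot)}(\sigma)}$ to a pairing supremum; this is standard for variable exponent Lebesgue spaces once $p_->1$, and $p_->1$ is forced implicitly by the strong boundedness of $M^{\mathfrak D}$ on $L^{p(\cdot)}(w)$. Aside from that, the argument is essentially a transcription of the classical duality proof for the unweighted maximal operator, with the Muckenhoupt-type information absorbed entirely into the $L^{p(\cdot)}(w)$-boundedness hypothesis.
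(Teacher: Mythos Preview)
Your proof is correct and follows essentially the same route as the paper's: both dualize $\|\chi_{\{M^{\mathfrak D}f>\lambda\}}\|_{L^{p'(\cdot)}(\sigma)}$ against $L^{p(\cdot)}(w)$, bound the resulting pairing by $\lambda^{-1}\int|f|\,M^{\mathfrak D}g$, and finish with H\"older plus the assumed $L^{p(\cdot)}(w)$-boundedness of $M^{\mathfrak D}$. The only difference is that the paper invokes the middle step as a black-box ``Stein type dual inequality,'' whereas you spell it out explicitly via the maximal-cube decomposition of $\{M^{\mathfrak D}f>\lambda\}$.
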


\begin{proof}
Let $\lambda>0$ be fixed.
Also, let $f \in L^{p'(\cdot)}(\sigma)$.
By the duality
$L^{p(\cdot)}(w)$-$L^{p'(\cdot)}(\sigma)$ and the Stein type dual inequality, we have
\begin{align*}
\|\lambda\chi_{[\lambda,\infty]}(M^{\mathfrak D} f)\|_{L^{p'(\cdot)}(\sigma)}
&\sim
\sup_{g \in L^{p(\cdot)}(w), \|g\|_{L^{p(\cdot)}(w)}=1}
\int_{{\mathbb R}^n}\lambda\chi_{[\lambda,\infty]}(M^{\mathfrak D} f(x))|g(x)|{\rm d}x\\
&\lesssim
\sup_{g \in L^{p(\cdot)}(w), \|g\|_{L^{p(\cdot)}(w)}=1}
\int_{{\mathbb R}^n}|f(x)|M^{\mathfrak D}g(x){\rm d}x.
\end{align*}
Finally,
use the $L^{p(\cdot)}(w)$-boundedness of $M^{\mathfrak D}$
and the H\"{o}lder inequality.
\end{proof}

If we reexamine the proof of
Lemma \ref{lem:210907-1},
then we see that
Lemma \ref{lem:210907-1} holds
for a wider class of function spaces.
We summarize our observation below.
\begin{remark}
A Banach lattice over ${\mathbb R}^n$
is a Banach space
$({\mathcal X}({\mathbb R}^n),\|\cdot\|_{\mathcal X})$ contained in $L^0({\mathbb R}^n)$
such that,
for all $g \in {\mathcal X}({\mathbb R}^n)$ and $f \in L^0({\mathbb R}^n)$,
the implication
$\lq\lq|f| \le |g| \Rightarrow$ $f \in {\mathcal X}({\mathbb R}^n)$ 
and $\|f\|_{\mathcal X} \le \|g\|_{\mathcal X}$''
holds. 
The dual lattice ${\mathcal X}'({\mathbb R}^n)$ of ${\mathcal X}({\mathbb R}^n)$
is given by the set of all
$g \in L^0({\mathbb R}^n)$
for which
\[
\|f\|_{{\mathcal X}'}=
\sup\{\|f g\|_{L^1}\,:\,g \in {\mathcal X}\}
\]
is finite.
According to \cite{BeSh-text-98},
${\mathcal X}'$ is a Banach lattice over ${\mathbb R}^n$.
Lemma \ref{lem:210907-1} is available for Banach lattices.
Namely,
if ${\mathcal X}$ is a Banach lattice over ${\mathbb R}^n$
$\tilde{M}^{{\mathcal D}(Q)}$ is bounded on ${\mathcal X}$.
Then
$\tilde{M}^{{\mathcal D}(Q)}$ is weak bounded on ${\mathcal X}'({\mathbb R}^n)$.
As the example
of ${\mathcal X}({\mathbb R}^n)=L^1({\mathbb R}^n)$
shows,
it can happen that
$\tilde{M}^{{\mathcal D}(Q)}$
is not bounded on ${\mathcal X}({\mathbb R}^n)$.
\end{remark}

We conclude the proof of necessity.
Thus, we suppose that there exists a constant $C>0$
such that
\[
\|M^{\mathfrak D} f\|_{L^{p(\cdot)}(w)}
\le C
\|f\|_{L^{p(\cdot)}(w)}.
\]
Fix a cube $Q \in {\mathfrak D}$.
Then we have
\[
M^{\mathfrak D}[\sigma \chi_Q](x) \ge \frac{\sigma(Q)}{|Q|}\chi_Q(x).
\]
As a result,
\begin{equation}\label{eq:210907-111}
\frac{\sigma(Q)}{|Q|}\|\chi_Q\|_{L^{p(\cdot)}(w)}
\le C
\|\chi_Q\|_{L^{p(\cdot)}(\sigma)}.
\end{equation}
Note that $w$ has at most polynomial growth thanks to
Lemma \ref{lem:210806-12}.
Additionally it should be observed that
$M^{\mathfrak D}$ is weak bounded on $L^{p'(\cdot)}(\sigma)$
thanks to Lemma \ref{lem:210907-1}.
Thus, we conclude from 
Lemma \ref{lem:210806-12}
that $\sigma$ has at most polynomial growth.
Thus, 
Corollary \ref{cor:210907-1}
can be applied to both $w$ and $\sigma$.
Due to Corollary \ref{cor:210907-1},
we have
\[
\|\chi_Q\|_{L^{p(\cdot)}(w)}\sim w(Q)^{\frac{1}{p_Q}}, \quad
\|\chi_Q\|_{L^{p(\cdot)}(\sigma)} \sim \sigma(Q)^{\frac{1}{p_Q}}.
\]
Inserting these estimates into (\ref{eq:210907-111}),
we obtain
\[
\frac{\sigma(Q)}{|Q|}w(Q)^{\frac{1}{p_Q}}
\le C
\sigma(Q)^{\frac{1}{p_Q}},
\]
or equivalently,
\[
|Q|^{-p_Q}\|w\|_{L^1(Q)}\sigma(Q)^{p_Q-1} \le C,
\]
where constant $C$ is independent of $Q$.
If we use Corollary \ref{cor:210907-1} once again, we conclude
\[
|Q|^{-p_Q}\|w\|_{L^1(Q)}\|\sigma\|_{L^{\frac{p(\cdot)}{p'(\cdot)}}(Q)} \le C,
\]
as required.

\section*{Acknowledgement}

\noindent
The authors thank  Professor Lars Diening
for the preprint \cite{DiHapre},
which motivated us to complete Section \ref{s88}.
Mitsuo Izuki was partially supported 
by Grand-in-Aid for Scientific Research (C), No.\,15K04928, 
for Japan Society for the Promotion of Science. 
Toru Nogayama 
was supported financially by Research Fellowships of the Japan Society for
the Promotion of Science for Young Scientists (20J10403 and 22J00614).
Takahiro Noi was partially supported by Grand-in-Aid for Young Scientists (B), No.\,17K14207, for Japan Society for the Promotion of Science. 
Yoshihiro Sawano was partially supported by Grand-in-Aid for Scientific Research (C), No.\,19K03546, for Japan Society for the Promotion of Science. 
This work was partly supported by Osaka City University Advanced Mathematical Institute (MEXT Joint Usage/Research Center on Mathematics and Theoretical Physics).

Mitsuo Izuki,\\
Faculty of Liberal Arts and Sciences, \\
Tokyo City University, \\
1-28-1, Tamadutsumi Setagaya-ku Tokyo 158-8557, Japan. \\ 
E-mail: izuki@tcu.ac.jp

\smallskip

Toru Nogayama (Corresponding author),\\
Graduate School of Science and Engineering,\\
Chuo University, 1-13-27 Kasuga, Bunkyo-Ku, Tokyo,
112-8551, Japan.\\
E-mail: toru.nogayama@gmail.com

\smallskip

Takahiro Noi,\\
Center for Basic Education and Integrated Learning,\\
Kanagawa Institute of Technology,\\
1030 Shimoogino Atsugi-city Kanagawa, 243-0292, Japan.\\
E-mail: taka.noi.hiro@gmail.com

\smallskip

Yoshihiro Sawano,\\
Graduate School of Science and Engineering,\\
Chuo University, 1-13-27 Kasuga, Bunkyo-Ku, Tokyo,
112-8551, Japan\\
+\\
People's Friendship University of Russia.\\
\\
E-mail: yoshihiro-sawano@celery.ocn.ne.jp
\end{document}